\documentclass[11pt,reqno]{amsart}
\usepackage{xcolor}
\usepackage{cancel}
\usepackage{color}
\usepackage{graphicx}
\usepackage[normalem]{ulem}
\parindent7mm
\parskip=2mm
\textwidth=6.6truein
\mathsurround=1pt
\textheight=9.5truein
\hoffset-21mm
\setlength{\footskip}{24pt}
\usepackage[latin1]{inputenc}
\usepackage{pb-diagram}
\usepackage{mathrsfs}
\numberwithin{equation}{section}

\usepackage{amsmath,amsthm,amscd}
\usepackage[psamsfonts]{amssymb}
\usepackage[all]{xy}
\usepackage{here}

\newtheorem{Thm}{Theorem}[section]
\newtheorem{Prop}[Thm]{Proposition}
\newtheorem{Lem}[Thm]{Lemma}
\newtheorem{Cor}[Thm]{Corollary}

\newtheorem{Prob}[Thm]{Problem}

\newtheorem{Claim}[Thm]{Claim}

\newtheorem{Obs}[Thm]{Observation}
\newtheorem{mainThm}{Theorem}

\newtheorem{mainCor}[mainThm]{Corollary}

\theoremstyle{remark}
\newtheorem{Rem}[Thm]{Remark}

\theoremstyle{definition}
\newtheorem{Def}[Thm]{Definition}
\newtheorem{Assum}[Thm]{Assumption}
\newtheorem{Exa}[Thm]{Example}

\setlength{\topmargin}{-1cm} 

\def\Z{{\mathbb Z}}
\def\R{{\mathbb R}}
\def\Q{{\mathbb Q}}

\def\L{{\mathbb L}}

\def\calB{\mathcal{B}}

\def\calE{\mathcal{E}}

\def\calI{\mathcal{I}}

\def\calM{\mathcal{M}}

\def\calP{\mathcal{P}}

\def\calV{\mathcal{V}}

\def\calX{\mathcal{X}}

\def\Diff{\mathrm{Diff}}
\def\Emb{\mathrm{Emb}}
\def\fEmb{\mathrm{Emb}^{\mathrm{fr}}}
\def\bEmb{\overline{\Emb}}
\def\CEmb{\mathrm{CEmb}}

\def\Map{\mathrm{Map}}
\def\ve{\varepsilon}

\def\fr{{\mathrm{fr}}}
\def\fig#1#2#3{
\raisebox{#1}{\includegraphics[height=#2]{#3}}
}
\def\tvec#1{\mbox{\boldmath{$\scriptstyle#1$}}}
\def\st{\iota}
\def\tbigcup{\textstyle\bigcup}
\def\colim#1#2{\underset{#1}{\mathrm{colim}}\,#2}
\def\holim#1#2{\underset{#1}{\mathrm{holim}}\,#2}
\def\hocolim#1#2{\underset{#1}{\mathrm{hocolim}}\,#2}
\def\hofib{\mathrm{hofib}}
\def\tcoprod{\,\raisebox{1pt}{$\scriptstyle\coprod$}\,}
\def\tbigwedge{\textstyle\bigwedge}
\def\Aut{\mathrm{Aut}\,}
\def\vect#1{\mbox{\boldmath{$#1$}}}
\def\tvec#1{\mbox{\boldmath{$\scriptstyle#1$}}}
\def\bvec#1{\mbox{\boldmath{$#1$}}}
\def\tbvec#1{\mbox{\scriptsize\boldmath{$#1$}}}
\def\const{\mathrm{const}}

\def\10T{\mathrm{10T}}
\def\pr{\mathrm{pr}}

\def\Lk{\mathrm{Lk}}
\def\supp{\mathrm{supp}}

\def\rh{\hookrightarrow}

\begin{document}

\title[Brunnian Links and Kontsevich Graph Complex I]{Brunnian Links and Kontsevich Graph Complex I}

\author{Boris Botvinnik}
\address{
Department of Mathematics\\
University of Oregon \\
Eugene, OR, 97405\\
USA
}
\thanks{BB was partially supported by Simons collaboration grant 708183}
\email{botvinn@uoregon.edu}

\author{Tadayuki Watanabe} 
\address{Department of Mathematics \\ 
Kyoto University \\ 
Kyoto 606-8502 \\ 
Japan 
}
\email{tadayuki.watanabe@math.kyoto-u.ac.jp}
\thanks{TW was partially supported by JSPS Grant-in-Aid for Scientific Research 21K03225, 20K03594 and by RIMS, Kyoto University.}
\subjclass[2010]{57R50, 57Q45, 55Q15, 55R40, 57R65}
\date{\today}
\dedicatory{Dedicated to Professor Tomotada Ohtsuki on the occasion of his 60th birthday.}

\maketitle
\vspace*{-10mm}

\begin{abstract}
We construct a natural chain map from the Kontsevich graph complex to the rational singular chain complex of $B\mathrm{Diff}_\partial(D^{2k})$ when the dimension $2k$ is sufficiently large, generalizing Goussarov and Habiro's theories of surgery on 3-valent graphs in 3-manifolds. Our construction can be considered as a topological realization of the Kontsevich graph complex. We also give new constructions of elements in the rational homotopy groups of $B\mathrm{Diff}_\partial(D^{2k})$ which are determined by well-known cycles in the graph complex.
\end{abstract}

\setcounter{tocdepth}{2}
\parskip=1.2mm
\tableofcontents
\parskip=2mm

\renewcommand{\baselinestretch}{1}\normalsize
\section{Introduction}

\subsection{Motivation and main results}

It is known that the homology of the Kontsevich graph complex (\cite{Kon}), also called graph homology, appears naturally in various mathematical objects.  An example relevant to this paper is that the part of the graph homology consisting of 3-valent graphs plays a central role in the study of finite type invariants of homology 3-spheres (\cite{Oh96}, see also \cite{Oh02}). 
In particular, in their independent works, Goussarov and Habiro invented elegant methods of surgery on 3-valent graphs for homology 3-spheres (\cite{Gou,Hab,GGP}). Their surgery construction is known to give an isomorphism between the degree $k$ 3-valent part of the graph homology and the homogeneous degree $3k$ part of the Ohtsuki filtration of integral homology 3-spheres defining the finite type invariants (\cite{Hab,GGP}). For higher dimensional manifolds, the second author studied higher dimensional analogues of Goussarov--Habiro's surgery for 3-valent graphs and found that they determine nontrivial subspaces in the rational homotopy groups of $B\Diff_\partial(D^d)$, the classifying space of the group of diffeomorphisms of $D^d$ pointwise fixing a neighborhood of the boundary, for $d\geq 4$ (see \cite{Wa09,Wa18,Wa21,Wa22}). On the other hand, it is known that there are also many nontrivial classes in the graph homology that are not 3-valent (\cite{BNM,Br,Wi,WZ,BrWi}). 

Recently, Kupers and Randal-Williams computed $\pi_i(B\Diff_\partial(D^{2k}))\otimes\Q$ for quite general $i$, updating the computation by Farrell and Hsiang based on stable pseudoisotopy theory (\cite{FH}). 
\begin{Thm}[\cite{KRW}]\label{thm:KRW}
For $2k\geq 6$, we have
\[ \pi_i(B\Diff_\partial(D^{2k}))\otimes\Q
=\left\{\begin{array}{ll}
\Q & (i\equiv 2k-1\text{ mod }4,\text{ and }i\geq 2k-1),\\
0 & (i\not\equiv 2k-1\text{ mod }4,\text{ or }i<2k-1)
\end{array}\right. \]
outside the ``bands'' $\bigcup_{n\geq 2}[2kn-4n-1,2kn-1]$ of degrees. The term $\Q$ above is detected by Pontryagin--Weiss classes (\cite{We})\footnote{There is also a major update of the computation of \cite{FH} by Krannich and Randal-Williams (\cite{KrRW}) for odd-dimensional disks.}.
\end{Thm}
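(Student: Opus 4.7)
Although this theorem is cited from [KRW] and its proof is not the subject of the present paper, here is how I would plan a proof. The overall strategy splits into three pieces: an upper bound from a computable surgery-theoretic model of $B\Diff_\partial(D^{2k})$, a lower bound from explicit constructions detected by characteristic classes, and a careful range analysis controlling when the two match.

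For the upper bound I would use surgery theory to fit $\Diff_\partial(D^{2k})$ into the fibration
$$\Diff_\partial(D^{2k}) \to \widetilde{\Diff}_\partial(D^{2k}) \to \widetilde{\Diff}_\partial(D^{2k})/\Diff_\partial(D^{2k}),$$
where, in Igusa's stable range, the homotopy fiber of the map to the block group is the smooth concordance (pseudoisotopy) space of $D^{2k}$. The concordance input is controlled rationally by the parametrized $h$-cobordism theorem of Waldhausen--Jahren--Rognes, which reduces the question to the rational algebraic $K$-theory of $\Z$, computed by Borel to be $\Q$ in degrees $4j+1$ for $j\geq 1$. The block piece $\widetilde{\Diff}_\partial(D^{2k})$ is accessible via smoothing theory: rationally it is built from tangential data and contributes nothing new outside the degrees listed in the theorem.

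For the lower bound I would invoke the Pontryagin--Weiss classes of \cite{We}, which are characteristic classes of smooth disc bundles built from Pontryagin classes of the vertical tangent bundle via a transfer/scanning construction into an appropriate Thom spectrum. One must verify that these evaluate nontrivially on the homotopy classes produced above; granted this pairing, the upper and lower bounds match and give the stated $\Q$ outside the bands.

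The main obstacle, and the reason the answer is only stated outside the bands $\bigcup_{n\geq 2}[2kn-4n-1,2kn-1]$, is that the stability theorems in play (Igusa's pseudoisotopy stability, convergence of the Goodwillie--Weiss embedding calculus tower for $\Emb_\partial(D^{2k},D^{2k})$) each carry explicit ranges governed by the connectivity $2k-4$. Inside these bands the stable computation may fail to capture the unstable homotopy groups, and to extract information one must run the full Goodwillie--Weiss tower and analyze its layers in terms of ordered configuration spaces of $D^{2k}$; this layer-by-layer analysis, together with the vanishing results needed to push the computation outside the bands, is where the technical heart of the proof in \cite{KRW} lies.
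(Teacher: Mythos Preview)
The paper does not prove this theorem; it is quoted from \cite{KRW} as background. So the comparison is between your sketch and the actual argument of Kupers--Randal-Williams.

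Your outline has a genuine gap. The fibration with block diffeomorphisms and pseudoisotopies, together with Waldhausen--Borel input, is the Farrell--Hsiang argument \cite{FH}. It is valid only in Igusa's concordance stable range, roughly $i\lesssim \tfrac{2k}{3}$, which is far smaller than ``all degrees outside the bands $[2kn-4n-1,2kn-1]$''. The whole point of \cite{KRW} is to push the computation well beyond this range, and for that the pseudoisotopy fibration is not the controlling device.

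What \cite{KRW} actually does is different in kind: it uses the Weiss fibre sequence
\[
B\Diff_\partial(D^{2k})\longrightarrow B\Emb^{\cong}_{\frac12\partial}(W_{g,1})\longrightarrow B\,B\Diff_\partial(W_{g,1}),
\]
with $W_{g,1}=\#^g(S^k\times S^k)\setminus \mathrm{int}\,D^{2k}$. The rightmost term is understood rationally via the Galatius--Randal-Williams theory of moduli spaces of high-dimensional manifolds and the authors' earlier work on Torelli groups; the embedding term is analysed via embedding calculus, whose connectivity estimates produce exactly the band structure. So embedding calculus does enter, but for self-embeddings of $W_{g,1}$, not for $\Emb_\partial(D^{2k},D^{2k})$ as you suggest, and the decisive upper bounds come from the manifold moduli side rather than from pseudoisotopy theory.
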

We study what happens inside the ``bands''.
Let us remind briefly what the Kontsevich graph complex $(\mathcal{GC},\partial)$ is.
Let $\mathcal{GC}$ be the vector space over $\Q$ spanned by equivalence classes of oriented graphs $(\Gamma,o)$, where
\begin{itemize}
\item $\Gamma$ is a finite connected graph with only vertices of valence at least 3,
\item $o$ is a choice of orientation of $\R^{\{\text{edges of $\Gamma$}\}}$, and 
\item $(\Gamma,-o)= -(\Gamma,o)$.
\end{itemize}
We will simply write $\Gamma$ for $(\Gamma,o)$. As in \cite{BNM}, we consider the bigrading $\mathcal{GC}=\bigoplus_{n,m}\mathcal{GC}^{(n,m)}$ by the ``degree'' $n=|E(\Gamma)|-|V(\Gamma)|$, and the ``excess'' $m=2|E(\Gamma)|-3|V(\Gamma)|$, where $E(\Gamma)=\{\text{edges of $\Gamma$}\}$ and $V(\Gamma)=\{\text{vertices of $\Gamma$}\}$. 
The boundary operator $\partial\colon \mathcal{GC}^{(n,m)}\to \mathcal{GC}^{(n,m-1)}$ is defined by
\begin{equation}\label{eq:dGamma}
 \partial\Gamma=\sum_{v\in V(\Gamma)} \text{split}(\Gamma,v), 
\end{equation}
where $\text{split}(\Gamma,v)$ is the sum of all possible graphs obtained from $\Gamma$ by replacing a vertex $v$ with an edge with two boundary vertices. The orientation of each term in $\text{split}(\Gamma,v)$ is given in a natural way so that the new edge is labelled by $1$. The formula (\ref{eq:dGamma}) is analogous to the defining relations in $L_\infty$-algebra, introduced in works of Stasheff with Schlessinger (\cite{St}). See also Example~\ref{ex:brackets}. We denote by $H_{n,m}(\mathcal{GC})$ the $(n,m)$-th term in the homology of $(\mathcal{GC},\partial)$. Kontsevich used the dual graph complex to define characteristic classes of smooth homology sphere bundles (\cite{Kon}). The version of the graph complex given here works for even dimensional manifolds. There is also another version which works for odd dimensional manifolds (see \cite{Kon}). By the definition of $\partial$, the 3-valent part $H_{n,0}(\mathcal{GC})$ is $\mathcal{GC}^{(n,0)}/{\text{IHX}}$, where IHX is the relation shown in Figure~\ref{fig:ihx}.
\begin{figure}[h]
\[ \includegraphics[height=18mm]{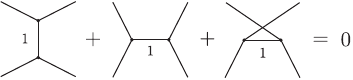} \]
\caption{The IHX relation.}\label{fig:ihx}
\end{figure}

The works of Goussarov and Habiro can be considered as a ``topological realization'' of the 3-valent part of the graph homology. Let us review this point more precisely. 
Goussarov--Habiro's graph surgery is defined by iterating surgeries on several disjoint genus 3 handlebodies embedded in a 3-manifold as in Figure~\ref{fig:theta-Y}. Surgery on a single genus 3 handlebody corresponds to a 3-valent vertex in a graph (Figure~\ref{fig:theta-Y}, right), and it was originally defined by Matveev as the replacement of the handlebody with the complement of a thickened Borromean string link (\cite{Mat}).
\begin{figure}[h]
\[ \includegraphics[height=35mm]{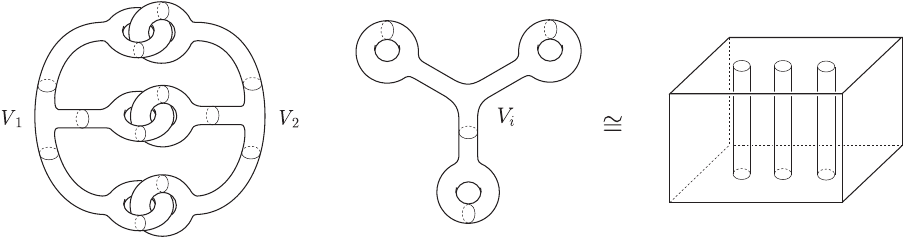} \]
\par\bigskip
\caption{From a 3-valent graph to the union of genus 3 handlebodies, each of which is identified with the complement of a thickened string link.}\label{fig:theta-Y}%
\end{figure}%

For higher dimensional manifolds, surgery on a 3-valent vertex is defined analogously by using a higher dimensional Borromean string link. In particular, an iteration of 3-valent vertex surgeries along a 1,3-valent tree $T$ with $\ell$ 1-valent vertices, which are attached to the components of a standard inclusion $I^{a_1}\cup\cdots\cup I^{a_{\ell}}\subset I^N$, yields another Brunnian string link $f_T\colon I^{a_1}\cup\cdots\cup I^{a_{\ell}}\to I^N$ when the condition $a_1+\cdots+a_\ell=(\ell-1)N-\ell$ is satisfied (see \cite[Lemma~3.7]{BW}). We will see that the $\ell$-th component of $f_T$ represents the iterated Whitehead product in $I^N-(I^{a_1}\cup\cdots\cup I^{a_{\ell-1}})\simeq S^{N-a_1-1}\vee \cdots\vee S^{N-a_{\ell-1}-1}$ of the embeddings $\gamma_1,\ldots,\gamma_{\ell-1}\colon I^{N-a_j-1}\to I^N-(I^{a_1}\cup\cdots\cup I^{a_{\ell-1}})$ that generate the rational homotopy group, where the type of the bracketting is determined by the tree $T$. For example, we have the following correspondence:
\[ \begin{split}
  T=\fig{-7mm}{15mm}{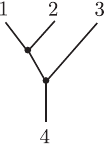} &\quad\longmapsto\quad f_T\in \Emb_\partial(I^{a_4},I^N-(I^{a_1}\cup I^{a_2}\cup I^{a_3}))\\
  &\quad\longmapsto\quad [[\gamma_1,\gamma_2],\gamma_3]\in \pi_{a_4}(I^N-(I^{a_1}\cup I^{a_2}\cup I^{a_3})).
\end{split}\]
Each 3-valent vertex produces an embedded Whitehead product (relevant works are found in \cite{Hab,GGP,Kos24a,Kos24b,KKV20,KKV24,Koy,KMV,STT}). 
From this viewpoint, the following holds\footnote{A proof based on Kirby calculus in higher dimensions was suggested by K.~Habiro to the second author many years ago.}. 

\begin{Obs}[See also Corollary~\ref{cor:jacobi-wh}]
If $N-a_i\geq 3$ for $i=1,2,3,4$, the IHX relation (Figure~\ref{fig:ihx}) for 1,3-valent trees with four 1-valent vertices corresponds to the Jacobi identity for Whitehead products in the homotopy group\footnote{When $N=3$ and $a_i=1$, the Jacobi identity still holds modulo higher commutators. The lift of the Jacobi identity to string links is given in \cite{Hab,GGP}.} $\pi_{a_4}(I^N-(I^{a_1}\cup I^{a_2}\cup I^{a_3}))$:
\[ (-1)^{q_1q_3}[[\gamma_1,\gamma_2],\gamma_3]+(-1)^{q_1q_2}[[\gamma_2,\gamma_3],\gamma_1]+(-1)^{q_2q_3}[[\gamma_3,\gamma_1],\gamma_2]=0, \]
where $q_i=N-a_i-1$.
\end{Obs}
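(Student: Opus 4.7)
The plan is to reduce the Observation to the classical graded Jacobi identity for Whitehead products in the homotopy of the wedge of spheres $I^N - (I^{a_1}\cup I^{a_2}\cup I^{a_3}) \simeq S^{q_1}\vee S^{q_2}\vee S^{q_3}$, with $q_i = N - a_i - 1$. Granting the tree-to-bracket correspondence recalled just above the Observation --- which sends a 1,3-valent tree $T$ with four 1-valent vertices, one of them distinguished as an ``output'' leaf, to an iterated Whitehead bracket of $\gamma_1,\gamma_2,\gamma_3$ whose parenthesization is encoded by $T$ --- the problem reduces to a matching of the three IHX terms with the three terms in Jacobi.

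The first step is enumeration. On four 1-valent vertices labeled $1,2,3,4$ there are exactly three 1,3-valent trees up to isomorphism, distinguished by which pair of non-root leaves shares an internal 3-valent vertex. Fixing leaf $4$ as the output, the partitions $\{1,2\}|\{3,4\}$, $\{1,3\}|\{2,4\}$, $\{1,4\}|\{2,3\}$ give the I, H, and X trees of Figure~\ref{fig:ihx}. Under the tree-to-bracket correspondence, and after normalizing via graded commutativity $[\alpha,\beta] = -(-1)^{q_iq_j}[\beta,\alpha]$ so that the outermost bracket has the leaf $4$-subtree on the right, these three trees map to $[[\gamma_1,\gamma_2],\gamma_3]$, $[[\gamma_2,\gamma_3],\gamma_1]$, and $[[\gamma_3,\gamma_1],\gamma_2]$ in $\pi_{a_4}(I^N - (I^{a_1}\cup I^{a_2}\cup I^{a_3}))$, respectively.

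The bulk of the argument is sign-matching. The orientation of an IHX tree is an orientation of $\R^{E(T)}$, and the standard IHX convention assigns a definite triple of relative signs among the three terms. On the Whitehead-product side, the Koszul-type signs produced by reordering the arguments of iterated brackets (via graded commutativity applied repeatedly to bring the brackets into a chosen normal form) are precisely the $(-1)^{q_iq_j}$ factors. I would verify the agreement by rerooting each of the three trees at leaf $4$, tracking the induced sign on the edge ordering, and comparing with the sign obtained by rewriting each iterated Whitehead product into the normal form of the previous paragraph. This is a finite combinatorial check on three diagrams, done once.

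Given these identifications, the conclusion follows from the classical graded Jacobi identity for Whitehead products: for $\alpha_i \in \pi_{q_i}(X)$ with $q_i \geq 2$,
\[ (-1)^{q_1q_3}[[\alpha_1,\alpha_2],\alpha_3] + (-1)^{q_1q_2}[[\alpha_2,\alpha_3],\alpha_1] + (-1)^{q_2q_3}[[\alpha_3,\alpha_1],\alpha_2] = 0. \]
The main obstacle will be the sign bookkeeping: both the IHX relation and the graded Jacobi identity hide orientation and ordering conventions, and matching them cleanly requires a consistent choice of root and edge ordering across all three trees, together with a careful translation through the Koszul sign rule. The hypothesis $N - a_i \geq 3$ is used precisely to guarantee $q_i \geq 2$, placing us in the range where the graded Jacobi identity for Whitehead products holds on the nose, with no low-dimensional correction terms from $\pi_1$ or $\pi_2$ of the complement.
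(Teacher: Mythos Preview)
Your proposal is correct for the Observation as stated, but it proceeds by a genuinely different route than the paper. You take the classical graded Jacobi identity for Whitehead products as input (citing the 1950s literature) and reduce the Observation to a finite sign-matching between IHX orientations and Koszul signs. The paper instead \emph{derives} the Jacobi identity geometrically: it builds an explicit $(p+q+r-1)$-disk $W_{\mathrm{IHX}}$ inside $\partial(D^p\times D^q\times D^r)$ whose boundary is the connected sum of the three iterated embedded Whitehead products (Lemma~\ref{lem:null-iso}, Theorem~\ref{thm:phi-jacobi}), and the homotopy Jacobi identity of Corollary~\ref{cor:jacobi-wh} falls out as a byproduct. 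So you use Jacobi to get IHX; the paper uses a constructive null-isotopy to get both at once.

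Your approach is shorter and entirely adequate for the Observation in isolation. What it does not give you is the embedded null-isotopy $[f,g,h]$ with its Brunnian structure, which is the real payload of Section~\ref{s:4-valent} and the base case for the inductive construction of higher brackets. If your goal is only to justify the Observation, citing the classical identity is fine; if you need the geometric content downstream, you will still have to build $W_{\mathrm{IHX}}$ or an equivalent object.
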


The goal of this paper is to give a new topological realization of arbitrary connected graphs with valence at least 3 as singular chains of $B\Diff_\partial(D^d)$, similarly to the 3-valent case, to obtain homologically nontrivial cycles of $B\Diff_\partial(D^d)$. Let $B\Diff_\partial^\fr(D^d)$ denote the component of the classifying space of smooth $(D^d,\partial)$-bundles with framings along fibers that agree near $\partial D^d$ with the standard one obtained from the Euclidean coordinates and whose restriction to the base fiber is homotopic relative to the boundary to the standard one. We use certain Brunnian string links (which are natural analogues of the Borromean string link) for higher $\ell$-valent vertex (Theorem~\ref{thm:vertex}) to prove the following result.
\begin{mainThm}[Main Theorem]\label{thm:main}
Let $\mu\geq 1$ be an integer. Let $\mathcal{GC}^{(\leq \mu)}$ denote the subcomplex of $\mathcal{GC}$ obtained by replacing $\mathcal{GC}^{(n,m)}$ for $m>\mu$ with 0. Let $S_*(-;\Q)$ denote the normalized rational singular chain complex, i.e. the complex of rational singular chains modulo degenerate chains {\rm (e.g. \cite[I.4.(a)]{FHT})}. For a sufficiently large integer $k$, we have a natural chain map
\[ \overline{\phi}\colon \mathcal{GC}^{(\leq \mu)}\to S_*(B\Diff_\partial^\fr(D^{2k});\Q) \]
that extends the 3-valent graph surgery and that maps $\mathcal{GC}^{(n,m)}$ to $S_{(2k-3)n+m}(B\Diff_\partial^\fr(D^{2k});\Q)$. Hence we have the induced map
\[ \overline{\phi}_*\colon H_{n,m}(\mathcal{GC}^{(\leq \mu)})\to H_{(2k-3)n+m}(B\Diff_\partial^\fr(D^{2k});\Q). \]
\end{mainThm}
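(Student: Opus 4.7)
The plan is to associate to each connected graph $\Gamma$ with all vertex valences at least $3$ a family of framed $(D^{2k},\partial)$-bundles obtained by gluing per-vertex Brunnian-link families along the edges of $\Gamma$. At each vertex $v$ of valence $\ell$, I would invoke Theorem~\ref{thm:vertex} to obtain an $\ell$-component Brunnian string link realizing an iterated Whitehead bracket of the local generators $\gamma_1,\ldots,\gamma_{\ell-1}$, together with a compactly supported parameterized family over a polytope $P_v$ whose codimension-$1$ faces correspond to the ways of splitting the $\ell$-bracket into two shorter brackets. After fixing a spanning tree of $\Gamma$, I would place these per-vertex families in disjoint balls inside a single $D^{2k}$, concatenate the arcs along the tree edges using standard arcs, and introduce an $S^{2k-1}$-rotation of the gluing direction for each non-tree edge. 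Extending the resulting family of embedded Brunnian configurations to framed self-diffeomorphisms of $D^{2k}$ by a higher-dimensional Goussarov--Habiro--Matveev twist (replacing a tubular neighborhood by the standard complement) would produce the singular chain $\overline{\phi}(\Gamma)$.

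A direct count should give the right degree: with total parameter space $\prod_v P_v \times \prod_{e\notin\mathrm{tree}} S^{2k-1}$, the dimension works out to
\[ (2k-1)|E(\Gamma)|-2k|V(\Gamma)| = (2k-3)n+m. \]
Naturality in $\Gamma$ and dependence on the orientation of $\R^{E(\Gamma)}$ would be built in by making all choices symmetrically, so that a sign change of $o$ multiplies the chain by $-1$. Taking $k$ sufficiently large relative to $\mu$ should ensure that, for every graph with excess $\leq\mu$ encountered, the Brunnian arcs have large enough codimension for transversality and for the Whitehead-bracket interpretation (Corollary~\ref{cor:jacobi-wh}) to apply without low-dimensional corrections; this is precisely why the statement is restricted to $\mathcal{GC}^{(\leq\mu)}$.

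The core of the proof is the chain-map identity $\partial\overline{\phi}(\Gamma)=\overline{\phi}(\partial\Gamma)$. Since the parameter space is a product, its boundary decomposes as a sum over vertices $v$ of contributions from $\partial P_v$ times the remaining factors, so it suffices to identify the boundary chain at each $v$ with $\overline{\phi}(\mathrm{split}(\Gamma,v))$. The plan is to check that the codimension-$1$ faces of $P_v$ biject with two-block partitions of the $\ell$ half-edges at $v$ with each block of size at least $2$, and that on such a face the Brunnian link at $v$ degenerates into two shorter Brunnian links coupled by a new arc; globally this yields the graph in which $v$ has been replaced by an edge separating the two blocks, with the new edge labelled $1$ and the sign dictated by the induced orientation on $\partial P_v$. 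Summing over $v$ would then reproduce the formula \eqref{eq:dGamma}. For $\ell=3$ this reduces to the Jacobi/IHX argument already carried out by the second author.

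The hard part is this boundary analysis for $\ell\geq 4$. It demands an explicit choice of the polytopes $P_v$ of $L_\infty$-operadic type (associahedron- or Fulton--MacPherson-like), together with a Brunnian-link family whose degeneration along each facet is of the predicted split form, and coherent sign and framing bookkeeping so that any non-split boundary strata either cancel in pairs or lie in degenerate singular chains (which are killed in the normalized $S_*$). Arranging this uniformly in the valence $\ell$, and verifying the antisymmetry under the graph automorphisms that define the equivalence $(\Gamma,-o)=-(\Gamma,o)$, is where I expect the bulk of the technical work to lie.
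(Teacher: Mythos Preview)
Your outline captures the high-level architecture---per-vertex Brunnian families glued along the graph, with the boundary coming from vertex-splits---but the specific mechanism you propose does not work, and the discrepancy already shows up in the dimension count. If $P_v$ is an $L_\infty$-type polytope with one facet per split of $v$ (so $\dim P_v=\ell_v-3$), then $\sum_v\dim P_v=2|E|-3|V|=m$, while a spanning tree leaves $|E|-|V|+1=n+1$ non-tree edges; your parameter space $\prod_v P_v\times\prod_{e\notin\mathrm{tree}}S^{2k-1}$ therefore has dimension $m+(n+1)(2k-1)$, not $(2k-3)n+m$. The algebraic identity $(2k-1)|E|-2k|V|=(2k-3)n+m$ is correct, but it is not the dimension of the space you wrote down, and no rotation-of-gluing parameter on non-tree edges will repair this.

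In the paper the per-vertex parameter space is not a bare polytope. One starts from the basic bracket of Theorem~\ref{thm:vertex} (a family over $I^{\ell-3}$) and then applies \emph{suspensions} and \emph{deloopings} of individual link components (Examples~\ref{ex:3-valent-family}--\ref{ex:5-valent-family}, Remark~\ref{rem:n-l-valent}) to reach the ambient dimension $2k$ and the component dimensions $k$ or $k-1$ dictated by a choice of \emph{edge directions} $\alpha$ on $\Gamma$; this produces $\widehat{B}_{T_\ell(p,q)}\simeq I^{\ell-3}\times S^{|\tvec{a}|}$ with $|\vect{a}|=(\ell-2)k-2\ell+q+3$. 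The graph is then assembled not via a spanning tree but by splitting \emph{every} edge with a Hopf link into two leaves (\S\ref{s:graph-surg}), so that the total base is $\prod_v\widehat{B}_{T(v)}$, whose dimension is exactly $(2k-3)n+m$. The boundary of the $I^{\ell-3}$ factor does give the vertex splits, but the sphere factors for the two new vertices are not literally a face of $S^{|\tvec{a}|}$, so an additional \emph{thickening} by homotopy colimits over $\overrightarrow{\calP}_{T_\ell(p,q)}$ (\S\ref{s:thicken}) is required before the identity $\partial\overline{\phi}(\Gamma)=\overline{\phi}(\partial\Gamma)$ can even be formulated at the chain level. Finally, the signs are not handled by abstract symmetry but by constructing explicit \emph{leaf forms} (\S\ref{s:ind-ori}) whose wedge products orient each piece compatibly with the half-edge orientation. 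Your plan misses the suspension/delooping step entirely---which is why the dimensions do not match---and underestimates the gluing and orientation work that occupies Sections~\ref{s:thicken} and~\ref{s:ind-ori}.
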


Theorem~\ref{thm:main} is separated into Theorems~\ref{thm:graph-surgery} and \ref{thm:chainmap}. The condition for the dimension $2k$ is satisfied if $2k\geq 2\mu^2+8\mu+10$. See Remark~\ref{rem:n-l-valent}. Since $m=2n-|V(\Gamma)|\leq 2n-1$, we have $H_{n,m}(\mathcal{GC}^{(\leq 2n-1)})=H_{n,m}(\mathcal{GC})$.
Our construction of the chain map $\overline{\phi}$ is a natural extension of the construction for 3-valent graphs. Namely, just as families of genus 3 handlebodies and the Borromean links are used to define surgery on 3-valent graphs, we use families of higher-genus handlebodies and families of Brunnian links satisfying $L_\infty$-like relations to define surgery on graphs. 

An explicit example of a homology class in $H_*(B\Diff_\partial^\fr(D^{2k});\Q)$ obtained from excess 2 graphs is given as follows. Let $X$ be the wheel graph with 5 spokes. Let $Y$ be another graph with 6 vertices and 10 edges of excess 2 that is nontrivial in $\mathcal{GC}$ (see Figure~\ref{fig:XY}). We give the orientations of $X$ and $Y$ by the labellings on edges in Figure~\ref{fig:XY}. 
According to Bar-Natan--McKay (\cite{BNM}) or as can be easily checked, the following linear combination is a nontrivial $\partial$-cycle in $\mathcal{GC}$. 
\[ \gamma:=\frac{2X}{|\mathrm{Aut}\,X|}-\frac{Y}{|\mathrm{Aut}\,Y|}=\frac{X}{5}-\frac{Y}{2} \]
\begin{figure}[h]
\[ X=\fig{-12mm}{25mm}{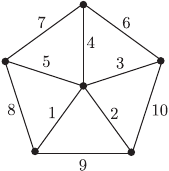},\qquad
  Y=\fig{-11mm}{23mm}{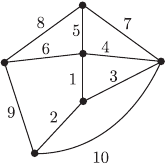}.\]
\caption{The graphs $X$ and $Y$.}\label{fig:XY}
\end{figure}
\begin{mainCor}\label{cor:gamma}
Let $k\geq 17$. Then $\overline{\phi}_{\gamma}$ is a $(8k-10)$-cycle in $S_{8k-10}(B\Diff_\partial^\fr(D^{2k});\Q)$.
\end{mainCor}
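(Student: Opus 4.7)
The plan is to reduce Corollary~\ref{cor:gamma} to an application of Theorem~A (\,=\,Theorem~\ref{thm:main}) together with a purely combinatorial check that $\gamma$ is a $\partial$-cycle in the graph complex. Indeed, since $\overline{\phi}$ is a chain map, once we know $\partial\gamma=0$ we obtain $\partial\overline{\phi}(\gamma)=\overline{\phi}(\partial\gamma)=0$ immediately; all that remains is to match bidegrees and verify the dimension hypothesis on $k$.

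First I would identify the bidegree of $\gamma$. Both $X$ and $Y$ have $|V|=6$ and $|E|=10$, giving $n=|E|-|V|=4$ and excess $m=2|E|-3|V|=2$, so $\gamma\in\mathcal{GC}^{(4,2)}\subset\mathcal{GC}^{(\le 2)}$. Applying Theorem~A with $\mu=2$ requires $2k\ge 2\mu^{2}+8\mu+10=34$, i.e.\ $k\ge 17$, which is precisely the hypothesis. For such $k$, Theorem~A supplies a chain map $\overline{\phi}\colon\mathcal{GC}^{(\le 2)}\to S_*(B\Diff_\partial^{\fr}(D^{2k});\Q)$ carrying $\mathcal{GC}^{(4,2)}$ into $S_{(2k-3)\cdot 4+2}=S_{8k-10}$, which matches the degree asserted in the statement.

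The substantive step is to verify $\partial\gamma=0$ in $\mathcal{GC}^{(4,1)}$. Splitting a $3$-valent vertex contributes nothing in $\mathcal{GC}$ (one cannot distribute three incident edges between two new vertices so that each, counting the inserted bridge edge, has valence $\ge 3$). Consequently $\partial X$ is the sum over the $\binom{5}{2}=10$ ways of splitting the central $5$-valent vertex of the wheel $X$ into a $3$-valent and a $4$-valent vertex, while $\partial Y$ is the analogous sum over splittings of the two $4$-valent vertices of $Y$. Taking into account the symmetries $|\mathrm{Aut}\,X|=10$ (the dihedral group $D_5$) and $|\mathrm{Aut}\,Y|=2$ --- which are exactly what produce the coefficients $\tfrac{2}{|\mathrm{Aut}\,X|}=\tfrac{1}{5}$ and $\tfrac{1}{|\mathrm{Aut}\,Y|}=\tfrac{1}{2}$ in $\gamma$ --- one pairs each oriented term in $\tfrac{1}{5}\partial X$ with a unique cancelling term in $\tfrac{1}{2}\partial Y$.

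The main obstacle is this last cancellation: a finite but mechanical bookkeeping of split-vertex terms and of the orientation signs coming from the new bridge edge being labelled $1$. Rather than enumerate the pairings by hand, I would invoke the Bar-Natan--McKay tabulation~\cite{BNM} already cited in the setup of the corollary, where $\gamma$ is recorded as a nontrivial class in $H_{4,2}(\mathcal{GC})$; the identity $\partial\gamma=0$ is part of that tabulation and can also be checked directly in a few lines once the $10+2\cdot 3$ split terms are drawn. Once $\partial\gamma=0$ is secured, Corollary~\ref{cor:gamma} follows from the chain-map property of $\overline{\phi}$ together with the bidegree computation above.
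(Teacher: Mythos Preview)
Your proposal is correct and follows essentially the same approach as the paper: the corollary is stated immediately after noting (via \cite{BNM}) that $\gamma$ is a $\partial$-cycle, and is treated as an immediate consequence of Theorem~A together with the bidegree computation $n=4$, $m=2$ and the dimension bound $2k\ge 2\mu^2+8\mu+10=34$ from Remark~\ref{rem:n-l-valent}.
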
%

An important feature of Theorem~\ref{thm:main} is that the cycles obtained from graphs can be evaluated by characteristic classes. In particular, Kontsevich's configuration space integrals give a map
\[ H^*(\mathcal{GC}^\vee)\otimes H_*(\mathcal{GC}^{(\leq \mu)})\stackrel{\overline{\phi}_*}{\longrightarrow}
 H^*(\mathcal{GC}^\vee)\otimes H_*(B\Diff_\partial^\fr(D^{2k});\Q)\longrightarrow \R, \]
where $\mathcal{GC}^\vee=\mathrm{Hom}_\Q(\mathcal{GC},\Q)$ is the dual complex.
In Part II (\cite{BWII}), we prove that this is far from trivial. In particular, we have the following.

\begin{mainThm}[{\cite{BWII}}]\label{thm:injective}
Let $\mu$ be an integer $\geq 1$. For a sufficiently large integer $k$, the induced map $\overline{\phi}_*\colon H_{m,n}(\mathcal{GC}^{(\leq \mu)})\to H_{(2k-3)n+m}(B\Diff_\partial^\fr(D^{2k});\Q)$ is injective up to excess $\mu-1$. Moreover, the injectivity can be detected by Kontsevich's configuration space integrals.
\end{mainThm}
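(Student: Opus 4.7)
The plan is to construct, via Kontsevich's configuration space integrals, a cochain map
\[ Z\colon \mathcal{GC}^\vee \longrightarrow S^*(B\Diff_\partial^\fr(D^{2k});\R), \]
and to show that after restriction to excess $\leq\mu-1$ the composition with $\overline{\phi}_*$ yields a non-degenerate pairing on $H_{n,m}(\mathcal{GC}^{(\leq\mu)})$. Since the composition factors through $\overline{\phi}_*$, this implies injectivity and simultaneously identifies the detecting cochains as Kontsevich integrals. To define $Z$, I associate to an oriented graph $\Gamma'$ with $v$ vertices and $e$ edges the fiberwise integral along the Fulton--MacPherson compactified configuration space $\bConf_v$ of a wedge product of $e$ closed propagator $(2k-1)$-forms on $\bConf_2(D^{2k})$, one per edge of $\Gamma'$. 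A dimension count gives exactly a degree $(2k-3)n+m$ form on the base, matching the dimension of $\overline{\phi}(\Gamma)$. That $Z$ is a cochain map follows from Stokes' theorem on $\bConf_v$: the principal codimension-one strata are two-point collisions, and these produce the graph-cohomology differential of $\Gamma'$ (the linear dual of the vertex-splitting $\partial$); point-escape strata vanish by the framing condition near $\partial D^{2k}$; and ``hidden face'' strata (three or more points colliding) vanish by Kontsevich's vanishing lemma once $k$ is large enough.

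The core of the argument is the explicit computation of the pairing $\langle Z(\Gamma'), \overline{\phi}(\Gamma)\rangle$ for $\Gamma,\Gamma'\in\mathcal{GC}^{(n,m)}$. By the construction in Theorem~\ref{thm:main}, $\overline{\phi}(\Gamma)$ is a family of framed $D^{2k}$-bundles parametrized by a polytope of dimension $(2k-3)n+m$, built by surgery on handlebodies arranged according to $\Gamma$; each $\ell$-valent vertex contributes a family of higher-dimensional Brunnian string links whose components realize an iterated Whitehead product. Pulling back the propagator along the surgered bundle and localizing the fiberwise integral onto the loci where configuration points track the link cores, the pairing decomposes as a sum over assignments of vertices and edges of $\Gamma'$ to those of $\Gamma$. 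A chamber analysis shows that the only assignments producing a non-vanishing principal contribution are graph isomorphisms $\Gamma'\cong\Gamma$, each contributing a product of elementary linking integrals that can be normalized to $\pm 1$. Hence
\[ \langle Z(\Gamma'), \overline{\phi}(\Gamma)\rangle = |\mathrm{Aut}\,\Gamma|\,\delta_{[\Gamma],[\Gamma']} + (\text{terms arising from graphs of strictly smaller excess}). \]
Restricted to excess $\leq\mu-1$, the matrix of the pairing in the basis of isomorphism classes of graphs is upper-triangular with non-zero diagonal, and therefore non-degenerate.

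The main obstacle will be justifying the localization claim above, namely that configurations in which several vertices of $\Gamma'$ collapse into a single handlebody family of $\Gamma$, or in which an edge of $\Gamma'$ tracks a trajectory not supported by an edge of $\Gamma$, contribute only lower-excess terms. This requires a careful stratification of the parametrized configuration space compatible with the surgery loci, together with a ``propagator spreading'' argument: the restriction of the propagator form to the surgered bundle differs by an exact form from a distributional form supported on the linking cycles, and the resulting error terms assemble into configuration integrals over graphs of strictly smaller excess. The assumption that $k$ is large also serves to make the Fulton--MacPherson compactification sufficiently transverse to the surgery loci, suppresses anomalous boundary contributions, and places the higher-dimensional Whitehead products in a range where they are detected uniquely by linking numbers. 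Once these analyses are in place, the upper-triangular structure of $Z\circ\overline{\phi}_*$ yields Theorem~\ref{thm:injective}.
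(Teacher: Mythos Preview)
This theorem is not proved in the present paper; it is explicitly deferred to Part~II (\cite{BWII}), so there is no proof here to compare against line by line. That said, the paper does indicate the method (``the same as \cite{Wa18,Wa21}'') and singles out Lemmas~\ref{lem:int-wedge-theta} and~\ref{lem:leaf-form-barE} as the crucial inputs, so one can assess your outline against that.

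Your overall architecture---build a cochain map $Z$ from configuration space integrals, pair it with $\overline{\phi}$, and show the resulting matrix is upper-triangular in excess with nonzero diagonal---is the correct one and matches what the paper signals. However, the mechanism you propose for the diagonal computation is not the one the paper sets up. You speak of ``localizing onto loci where configuration points track the link cores'' and ``elementary linking integrals normalized to $\pm 1$''; the paper instead invests Section~\ref{s:ind-ori} in constructing \emph{leaf forms} $\theta_{a_j}$ on the $V_v$-bundles $\widehat{E}^{T_\ell(p,q)}$ with the nondegeneracy property $\int_{\widehat{E}^{T_\ell(p,q)}}\theta_{a_1}\wedge\cdots\wedge\theta_{a_\ell}=m_\ell$ (Lemma~\ref{lem:int-wedge-theta}), together with the vanishing of the integral over the thickening (Lemma~\ref{lem:leaf-form-barE}). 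These are what replace your ``propagator spreading'' argument: the propagator restricted to the surgered bundle is cohomologous to a sum of leaf forms, and the diagonal term is then a product over vertices of the integrals in Lemma~\ref{lem:int-wedge-theta}, which equal $m_\ell$ rather than $\pm 1$ (this is why $\overline{\phi}$ carries the rational coefficients $1/m_\sigma$). Your outline does not engage with this machinery, and without it the ``chamber analysis'' you describe for higher-valent vertices has no content---the Brunnian families for $\ell\geq 4$ are not given by explicit links with visible linking numbers, so the nondegeneracy must be extracted inductively via the leaf-form construction (see Remark~\ref{rem:crucial-nontriviality}).

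A second gap: your treatment of hidden faces (``vanish by Kontsevich's vanishing lemma once $k$ is large enough'') is too quick. For even-dimensional fibers the standard symmetry argument does not kill all hidden faces, and the approach of \cite{Wa18,Wa21} handles this differently (via framings and specific propagator normalizations). This is not addressed by merely taking $k$ large.
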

Theorem~\ref{thm:injective} is the easier part of our results in the sense that the method of detecting classes in $H_{(2k-3)n+m}(B\Diff_\partial^\fr(D^{2k});\Q)$ from graphs is the same as \cite{Wa18,Wa21}.
Lemmas~\ref{lem:int-wedge-theta} and \ref{lem:leaf-form-barE} in this paper play crucial roles in the computation in \cite{BWII}. See also Remark~\ref{rem:crucial-nontriviality}. 
By \cite{Wa21}, Theorem~\ref{thm:injective} for excess $m-1=0$ (3-valent graphs) holds for $2k\geq 4$. For positive excess graphs, we have the following.

\begin{mainCor}[{\cite{BWII}}]\label{cor:nontriv}
Let $k\geq 17$. Then the $(8k-10)$-cycle $\overline{\phi}_\gamma$ represents a nontrivial class in $H_{8k-10}(B\Diff_\partial^\fr(D^{2k});\Q)$. 
\end{mainCor}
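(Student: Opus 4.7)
The plan is to combine nontriviality of $[\gamma]$ at the level of graph homology with the injectivity statement of Theorem~\ref{thm:injective}, which is the main result of the sequel \cite{BWII}. Both $X$ and $Y$ have $|V|=6$ and $|E|=10$, so each lies in $\mathcal{GC}^{(4,2)}$; in particular $\gamma$ has bidegree $(n,m)=(4,2)$, and the chain degree of $\overline{\phi}_\gamma$ is $(2k-3)\cdot 4 + 2 = 8k-10$, consistent with Corollary~\ref{cor:gamma}.

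First I would verify that $[\gamma]\neq 0$ in $H_{4,2}(\mathcal{GC})$. This is a finite-dimensional linear-algebra computation: enumerate all oriented connected graphs of bidegrees $(4,2)$ and $(4,3)$ whose vertices all have valence $\geq 3$, write out the boundary operator $\partial$ explicitly, and check that $X/5 - Y/2$ defines a nonzero class in $\ker\partial/\mathrm{im}\,\partial$. This tabulation was carried out by Bar-Natan and McKay in \cite{BNM}, and I would simply invoke their computation rather than redo it.

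Next, since the excess of $\gamma$ is $m=2$, I would apply Theorem~\ref{thm:injective} with any fixed $\mu\geq 3$, so that $m\leq \mu-1$. The theorem asserts that for sufficiently large $k$ the map
\[ \overline{\phi}_*\colon H_{n,m}(\mathcal{GC}^{(\leq\mu)})\to H_{(2k-3)n+m}(B\Diff_\partial^\fr(D^{2k});\Q) \]
is injective in the excess $\leq\mu-1$ range. Combined with the previous step this yields $\overline{\phi}_*[\gamma]\neq 0$. The explicit bound $k\geq 17$ reflects the smallest $k$ for which the Kontsevich-type configuration space integrals used in \cite{BWII} can be arranged so as to detect $\gamma$, governed by the dimensional constraints that already shape Theorem~\ref{thm:main} (cf.\ Remark~\ref{rem:n-l-valent}).

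The main obstacle is of course Theorem~\ref{thm:injective} itself, whose proof is deferred to Part II: one must construct, for each generator $\Gamma$ of $H_{n,m}(\mathcal{GC})$, a cocycle in the dual complex $\mathcal{GC}^\vee$ whose configuration space integral pairs nontrivially and diagonally with $\overline{\phi}_\Gamma$ modulo lower-excess corrections. Lemmas~\ref{lem:int-wedge-theta} and \ref{lem:leaf-form-barE} of the present paper supply exactly the propagator-type differential forms that make this pairing explicit, as emphasized in Remark~\ref{rem:crucial-nontriviality}; the conceptual step beyond the 3-valent case \cite{Wa18,Wa21} is the careful bookkeeping of contributions from graphs of different excesses, which is why the injectivity only holds up to excess $\mu-1$ rather than up to $\mu$.
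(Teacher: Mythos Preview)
Your proposal is correct and matches the paper's approach: the paper itself does not prove Corollary~\ref{cor:nontriv} here but defers it to \cite{BWII}, and the argument you outline---combine the nontriviality of $[\gamma]\in H_{4,2}(\mathcal{GC})$ from \cite{BNM} with the injectivity of Theorem~\ref{thm:injective}---is exactly how the corollary is meant to follow from Theorem~\ref{thm:injective}.

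One point deserves a closer look. You write that you apply Theorem~\ref{thm:injective} ``with any fixed $\mu\geq 3$, so that $m\leq \mu-1$'', and then assert that the bound $k\geq 17$ suffices. But the explicit sufficient bound after Theorem~\ref{thm:main}, namely $2k\geq 2\mu^2+8\mu+10$, gives $k\geq 26$ for $\mu=3$, not $k\geq 17$. The bound $k\geq 17$ in Corollaries~\ref{cor:gamma} and~\ref{cor:nontriv} instead comes from the valence-based estimate $2k\geq 2\ell^2-4\ell+3$ in Remark~\ref{rem:n-l-valent} with $\ell=5$ (the maximal valence occurring in $X$), which yields $2k\geq 33$. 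So the threshold $k\geq 17$ is tied to the specific graphs $X,Y$ rather than to a uniform $\mu=3$ bound. Your hand-wave about ``dimensional constraints'' is pointing in the right direction, but the precise matching of this with the ``sufficiently large $k$'' in Theorem~\ref{thm:injective} is part of what is carried out in \cite{BWII}, not something that follows formally from the statements in Part~I.
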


A key idea in the construction of $\overline{\phi}$ is to define surgery on a vertex which implement particular algebraic structure in the space of string links. This algebraic structure generalizes a graded Lie algebra structure on the homotopy groups of the wedge of spheres 
\begin{equation*}
  \pi_*(I^N-(I^{a_1}\cup\cdots\cup
I^{a_r}))=\pi_*(S^{N-a_1-1}\vee\cdots\vee S^{N-a_r-1}),
\end{equation*}
where surgeries on 3-valent vertices provide its geometric realization. 
What we do for vertices of valence higher than three is to realize higher relations in a $L_\infty$-algebra by families of string links with {\it Brunnian property}. We will see that such realizations exist by constucting vertex surgeries inductively on the valence of vertices. 

\begin{mainThm}[Theorem~\ref{thm:Q-Hurewicz}]\label{thm:pi}
Let $k\geq 17$. Then the $(8k-10)$-cycle $\overline{\phi}_\gamma$ represents a class in the image of the rational Hurewicz map $\pi_{8k-10}(B\Diff_\partial^\fr(D^{2k}))\otimes\Q\to H_{8k-10}(B\Diff_\partial^\fr(D^{2k});\Q)$. 
\end{mainThm}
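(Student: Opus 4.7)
The plan is to show that $\overline{\phi}_\gamma$ is a primitive element in the rational homology Hopf algebra of $B\Diff_\partial^\fr(D^{2k})$ under a stacking H-space structure, and then invoke Cartan--Serre (Milnor--Moore): for a simply connected rational H-space $Y$, the image of the rational Hurewicz map $\pi_*(Y)\otimes \Q \to H_*(Y;\Q)$ equals the primitive part of the Hopf algebra $H_*(Y;\Q)$. Fix once and for all an embedding $D^{2k}\sqcup D^{2k}\hookrightarrow D^{2k}$ (as two ``half-disks'', say) and use it to define a stacking product
\[ B\Diff_\partial^\fr(D^{2k})\times B\Diff_\partial^\fr(D^{2k})\longrightarrow B\Diff_\partial^\fr(D^{2k}) \]
by assembling two framed $(D^{2k},\partial)$-bundles into one; this is the usual little-disks-operad multiplication, which endows $B\Diff_\partial^\fr(D^{2k})$ with an H-space structure. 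By Theorem~\ref{thm:KRW} this space is rationally highly connected (hence simply connected) for $k\geq 17$, so Milnor--Moore applies.

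Next, enlarge $\mathcal{GC}$ to the complex $\mathcal{GC}^{\mathrm{full}}$ of possibly disconnected graphs with vertices of valence $\geq 3$, equipped with disjoint union as product. This is the graded symmetric algebra on the connected graph classes, and hence a cocommutative Hopf algebra whose primitives are exactly the connected graphs; in particular $\gamma=X/5-Y/2$ is a primitive cycle. Extend $\overline{\phi}$ to a chain map $\overline{\phi}^{\mathrm{full}}\colon \mathcal{GC}^{\mathrm{full}}\to S_*(B\Diff_\partial^\fr(D^{2k});\Q)$ by performing the surgery for a disjoint union $\Gamma_1\sqcup\Gamma_2$ with the vertex surgeries of $\Gamma_1$ supported in the first half-disk and those of $\Gamma_2$ in the second; by construction the resulting family is the stacking product of the individual families. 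Therefore $\overline{\phi}^{\mathrm{full}}_*$ is a map of Hopf algebras in homology and sends primitives to primitives. Thus $\overline{\phi}_\gamma$ is primitive in $H_*(B\Diff_\partial^\fr(D^{2k});\Q)$, and by the previous paragraph it lies in the image of the rational Hurewicz map $\pi_{8k-10}(B\Diff_\partial^\fr(D^{2k}))\otimes\Q\to H_{8k-10}(B\Diff_\partial^\fr(D^{2k});\Q)$.

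The main technical obstacle is verifying the multiplicativity of the extended $\overline{\phi}^{\mathrm{full}}$: one must show that the surgery construction for $\Gamma_1\sqcup\Gamma_2$, built inductively from the vertex families of Theorem~\ref{thm:vertex} and the handle gluings, is chain-homotopic in $B\Diff_\partial^\fr(D^{2k})$ to the stacking product of the families for $\Gamma_1$ and $\Gamma_2$. This requires a careful tracking of supports and framings: shrinking all vertex surgeries of $\Gamma_1$ (resp.\ $\Gamma_2$) into the first (resp.\ second) half-disk should not change the homology class, and the resulting side-by-side family should coincide, up to chain homotopy, with the stacking-product family defined on classifying spaces. Once this localization-and-stacking compatibility is established, the formal Hopf-algebra arguments above conclude the proof.
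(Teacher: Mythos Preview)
Your approach has a genuine gap. You argue that $\overline{\phi}^{\mathrm{full}}_*$ is a map of Hopf algebras and hence sends primitives to primitives, but the only property you actually establish (or attempt to establish) is \emph{multiplicativity}: $\overline{\phi}^{\mathrm{full}}(\Gamma_1\sqcup\Gamma_2)$ is chain-homotopic to the stacking product of $\overline{\phi}(\Gamma_1)$ and $\overline{\phi}(\Gamma_2)$. An algebra homomorphism between Hopf algebras need not preserve primitives; that requires compatibility with the \emph{coproduct}. On the target $H_*(B\Diff_\partial^\fr(D^{2k});\Q)$ the coproduct comes from the diagonal map of the space, and there is no reason why the abstract chain map $\overline{\phi}^{\mathrm{full}}$ should intertwine the component-splitting coproduct on $\mathcal{GC}^{\mathrm{full}}$ with the Alexander--Whitney diagonal. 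Worse, for a \emph{connected} $\Gamma$ the statement ``$\overline{\phi}^{\mathrm{full}}_*$ is a coalgebra map at $\Gamma$'' is literally the equation $\Delta(\overline{\phi}_*(\Gamma))=\overline{\phi}_*(\Gamma)\otimes 1+1\otimes\overline{\phi}_*(\Gamma)$, i.e.\ primitivity of $\overline{\phi}_*(\Gamma)$, which is exactly what you are trying to prove. So the Hopf-algebra packaging is circular: multiplicativity buys you nothing toward primitivity.

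The paper proceeds quite differently. It represents $\overline{\phi}_{5\vec{\gamma}}$ by an explicit map $\kappa_{5\vec{\gamma}}\colon K_{5\vec{\gamma}}\to B\Diff_\partial(D^{2k})$ from a finite complex built as a homotopy colimit of pieces $\widehat{M}_{(\Gamma,\alpha)}$, and then computes that $H_i(K_{5\vec{\gamma}};\Q)=0$ for $3\le i\le 8k-13$. Combining this with the rational $(2k-2)$-connectivity of $B\Diff_\partial(D^{2k})$ (Kupers--Randal-Williams), one sees that $\kappa_{5\vec{\gamma}}^*(p\cup q)=\kappa_{5\vec{\gamma}}^*p\cup\kappa_{5\vec{\gamma}}^*q=0$ for any decomposable class $p\cup q$ in degree $8k-10$, since the factors live in the vanishing range $[2k-1,6k-9]\subset[3,8k-13]$. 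Hence $[\kappa_{5\vec{\gamma}}]$ pairs trivially with all decomposables, which by Milnor--Moore forces it into the image of the rational Hurewicz map. The point is that the input is a geometric fact about the \emph{domain} of the cycle (its intermediate rational homology vanishes), not an algebraic compatibility of $\overline{\phi}$ with a coproduct.
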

Corollary~\ref{cor:nontriv} and Theorem~\ref{thm:pi} imply that $\pi_{8k-10}(B\Diff_\partial^\fr(D^{2k}))\otimes\Q$ is nontrivial. Since the homotopy fiber of the natural map $B\Diff^\fr_\partial(D^{2k})\to B\Diff_\partial(D^{2k})$ is given by the base point component of $\Omega^{2k}SO_{2k}$, and $\pi_i(SO_{2k})\otimes\Q=0$ for $i\geq 4k-4$ (e.g. \cite[p.220]{FHT}), we have $\pi_{8k-10}(B\Diff_\partial^\fr(D^{2k}))\otimes\Q=\pi_{8k-10}(B\Diff_\partial(D^{2k}))\otimes\Q$. It is compatible with Theorem~\ref{thm:KRW} since $8k-10$ is in the 8-th band $[8k-17,8k-2]$ of \cite[Theorem~C]{KRW}. More generally, the degrees $(2k-3)n+m=2kn-3n+m$ for $0\leq m\leq 2n-1$ are in the $2n$-th band $[2kn-4n-1,2kn-1]$ of \cite{KRW}. The set of pairs (homology degree, $2k$) for which we have the injectivity of $\overline{\phi}_*$ is $\bigcup_{n\geq 1,2k\geq 20}D(n,2k)$, where
\[ D(n,2k)=\{((2k-3)n+m,2k)\mid 0\leq m\leq \min\{\sqrt{k-1}-3,2n-1\}\}. \]
Note that the band $D(n,2k)$ is included in the $2n$-th band $[2kn-4n-1,2kn-1]$.

\subsection{Basic brackets in the space of string links}

We introduce families of links that realize vertices of higher valences. First, we do so for simple cases.

\subsubsection{String link}\label{ss:string-link}

Let $N$ be an integer such that $N\geq 3$, and let $a_1,\ldots,a_r$ be integers such that $1\leq a_1,\ldots,a_r\leq N-2$. Let $I=[0,1]$. 

\begin{Def}[String link]\label{def:string-link}
\begin{enumerate}
\item A {\it string link} of type $(a_1,\ldots,a_r;N)$ is a neat embedding
\[ I^{a_1}\cup\cdots\cup I^{a_r}\hookrightarrow I^N \]
that agrees with the \ ``standard inclusion'' \ $\st\colon I^{a_1}\cup\cdots\cup I^{a_r}\hookrightarrow I^N$ \ near the boundary $\partial(I^{a_1}\cup\cdots\cup I^{a_r})$. The standard inclusion $\st$ is defined as follows.  We fix distinct points $p_1,p_2,\ldots,p_r\in \mathrm{Int}\,I$, and consider the planes $\{p_j\}\times I^{N-1}\subset I^N$. Then we assume that $\st$ embeds $I^{a_j}$ into $\{p_j\}\times I^{N-1}$ by identifying $I^{a_j}$ with the subspace $\{p_j\}\times I^{a_j}\times \{(\textstyle\frac{1}{2},\ldots,\frac{1}{2})\}$ of $\{p_j\}\times I^{N-1}$ in the straightforward manner.

\item A {\it framing} of a string link is a trivialization of its normal bundle that agrees with the standard one near the boundary.

\item Let $\Emb_\partial(I^{a_1}\cup\cdots\cup I^{a_r},I^N)$ be the space of string links of type $(a_1,\ldots,a_r;N)$. Let $\Emb_\partial^\fr(I^{a_1}\cup\cdots\cup I^{a_r},I^N)$ be the space of framed string links of type $(a_1,\ldots,a_r;N)$. We assume that the basepoints of these spaces are $\st$ possibly with the standard normal framing.
\end{enumerate}
\end{Def}

For a subset $S$ of $\Omega=\{1,2,\ldots,r\}$, let $\calE(S)=\Emb_\partial(\tbigcup_{\lambda\in \Omega\setminus S}I^{a_{\lambda}},I^N)$, and let
\[ \pi_{S}\colon \calE(\emptyset)\to \calE(S) \]
be the map which forgets the components labelled by $S$.

\begin{Def}[Brunnian property]\label{def:brunnian}
\begin{enumerate}
\item We say that an embedding $f\in \Emb_\partial(I^{a_1}\cup\cdots\cup I^{a_r},I^N)$ has a {\it Brunnian property} if removing any one of the components in $I^{a_1}\cup\cdots\cup I^{a_r}$ makes it isotopic to the standard inclusion $\iota$. In other words, the point $\prod_{j\in \Omega}\pi_{\{j\}}(f)\in\prod_{j\in\Omega}\calE(\{j\})$ lies in the same path-component as the basepoint. 

\item In that case, we call a path in $\calE(\{j\})$ from $\pi_{\{j\}}(f)$ to the basepoint a {\it Brunnian null-isotopy of $f$ with respect to removing the $j$-th component}.
\end{enumerate}
\end{Def}

We extend Definition~\ref{def:brunnian} to families of string links. We use the standard model for the homotopy fiber of a fibration $\pi\colon E\to X$, namely, the homotopy fiber $\hofib_{x_0}(\pi)$ over the point $x_0\in X$ is the space of pairs $(\gamma,x)$, where $x\in X$ and $\gamma$ is a path $I\to X$ such that $\gamma(0)=x$ and $\gamma(1)=x_0$.

\begin{Def}[Brunnian property for family]
Suppose we are given a continuous map
\[ \sigma\colon B\to \Emb_\partial(I^{a_1}\cup\cdots\cup I^{a_r},I^N), \]
where $B$ is a finite CW-complex.
\begin{enumerate}
\item We say that $\sigma$ has a {\it Brunnian property} if it has a lift to $\hofib_*(\prod_{j\in\Omega}\pi_{\{j\}})$:
\[ \xymatrix{
  & B \ar[d]^-{\sigma} \ar@{.>}[dl] &
  \\ \hofib_*(\prod_{j}\pi_{\{j\}}) \ar[r] &
  \Emb_\partial(I^{a_1}\cup\cdots\cup I^{a_r},I^N)
  \ar[r]^-{\prod_{j}\pi_{\{j\}}} & \prod_{j\in\Omega} \calE(\{j\})
} \] In that case, we call such a lift to
$\hofib_*(\pi_{\{j\}})$ a {\it Brunnian null-isotopy of $\sigma$ with
  respect to removing the $j$-th component}.

\item Suppose that the restriction of $\sigma$ to $B_i$ in a sequence of subcomplexes $B_0\subset B_1\subset\cdots \subset B_k$ of $B$ has a Brunnian null-isotopy with respect to removing the $j$-th component for each $i$. We say that $\sigma$ has a {\it Brunnian null-isotopy relative to $\{B_i\}$ with respect to removing the $j$-th component} if 
\begin{itemize}
\item $\sigma|_{B_i}$ has a lift to $\hofib_*(\pi_{\{j\}})$ that extends the given one on $B_{i-1}$ for $i\geq 1$, and
\item $\sigma$ has a lift to $\hofib_*(\pi_{\{j\}})$ that extends the given one on $B_k$.
\end{itemize} 
\end{enumerate}
\end{Def}

\subsubsection{Basic bracket operation}

Let $T_\ell$ denote the connected tree with one $\ell$-valent vertex (``internal vertex'') and $\ell$ leaves (``external vertices'') labelled by $\{1,2,\ldots,\ell\}$ (see Figure~\ref{fig:1-val}). 
\begin{figure}[h]
\[
T_\ell=\raisebox{-12mm}{\includegraphics[height=25mm]{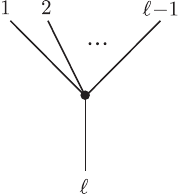}}
\]
\caption{The tree $T_\ell$.}\label{fig:1-val}
\end{figure} 
 
Let $\calP_{T_\ell}$ denote the poset of connected trees with $\ell$ labelled leaves (external vertices) such that a morphism $\sigma\to \sigma'$ or an order $\sigma \leq \sigma'$ is defined if $\sigma'$ is obtained from $\sigma$ by contracting some internal (i.e. non-leg) edges. We say that $\rho\in\calP_{T_\ell}$ is of {\it excess $k$} if there is a sequence $\rho_0\to \rho_1\to \cdots\to \rho_k=\rho$ of contractions of internal edges from a 3-valent tree $\rho_0$ with $\ell$ leaves. In particular, $T_\ell$ is of excess $\ell-3$.

\begin{Thm}[Basic bracket operation]\label{thm:vertex}
Let $\ell\geq 3$ and $n\geq 2\ell-3$. For each object $\sigma$ of $\calP_{T_\ell}$ of excess $k$, we have a $(k+\ell-3)$-chain $\beta_{T_\ell}(\sigma)=\frac{1}{m_\sigma}\omega_{T_\ell}(\sigma)$ in 
\[ S_{k+\ell-3}(B^{\ell-3}\Emb_\partial^\fr((I^{(\ell-1)n-\ell+2})^{\cup\ell},I^{\ell n-\ell+3});\Q),\]
which we call a ``bracket'', defined as a rational multiple of a map 
\[ \omega_{T_\ell}(\sigma)\colon I^k\times I^{\ell-3} \to B^{\ell-3}\Emb_\partial^\fr((I^{(\ell-1)n-\ell+2})^{\cup\ell},I^{\ell n-\ell+3}) \]
satisfying the following conditions (1)--(6).
\begin{enumerate}
\item (Compatibility) If $\sigma$ is of excess $<\ell-3$ and has internal (i.e. non-external) vertices $v_1,\ldots,v_r$ with $\deg(v_i)=\ell_i$, then the map $\omega_{T_\ell}(\sigma)$ is obtained by compositions of iterated suspensions of the maps $\omega_{T_{\ell_i}}(T_{\ell_i})$ for the internal vertex $v_i$. (See \S\ref{ss:suspension} for the definition of suspension and \S\ref{ss:iter-surg} for the composition.) 

\item (Boundary) The boundary $\partial \omega_{T_\ell}(T_\ell)$ is represented by a pointed loop in 
\[ \Omega^{2\ell-7}B^{\ell-3}\Emb_\partial^\fr((I^{(\ell-1)n-\ell+2})^{\cup\ell},I^{\ell n-\ell+3})\simeq \Omega^{\ell-4}\Emb_\partial^\fr((I^{(\ell-1)n-\ell+2})^{\cup\ell},I^{\ell n-\ell+3}). \]
For each $\sigma\in\calP_{T_\ell}$ of excess $k$, the term $\omega_{T_\ell}(\sigma)$ is represented by a map
\[ I^k\to \Omega^{\ell-3}B^{\ell-3}\Emb_\partial^\fr((I^{(\ell-1)n-\ell+2})^{\cup\ell},I^{\ell n-\ell+3})\simeq \Emb_\partial^\fr((I^{(\ell-1)n-\ell+2})^{\cup\ell},I^{\ell n-\ell+3}). \]
\item ($L_\infty$-relation) For $\ell\geq 4$, there exists a choice of the signs $\ve_{T_\ell}^n(\sigma)$ such that the relation
\[ \partial \beta_{T_\ell}(T_\ell)=\displaystyle\sum_{{{\sigma\in\calP_{T_\ell}}\atop{\text{excess}(\sigma)=\ell-4}}}^{\phantom{X}}\ve_{T_\ell}^n(\sigma)\beta_{T_\ell}(\sigma) \]
holds in $S_{2\ell-7}(B^{\ell-3}\Emb_\partial^\fr((I^{(\ell-1)n-\ell+2})^{\cup\ell},I^{\ell n-\ell+3});\Q)$ modulo degenerate chains.

\item (Brunnian property) The map $\omega_{T_\ell}(T_\ell)$ has a Brunnian property relative to those $\omega_{T_\ell}(\sigma)$ on $\partial (I^{\ell-3}\times I^{\ell-3})$ such that $\sigma\in\calP_{T_\ell}\setminus\{T_\ell\}$.

\item (Cyclic symmetry) The map $\omega_{T_\ell}(T_\ell)$ has a cyclic symmetry (see Definition~\ref{def:cyclic}).

\item (Excess 0) The map $\omega_{T_3}(T_3)\colon *\times *\to \Emb_\partial^\fr((I^{2n-1})^{\cup 3},I^{3n})$ is given by the Borromean link. 
\end{enumerate}
\end{Thm}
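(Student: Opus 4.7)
The plan is to induct on the valence $\ell$. For the base case $\ell=3$, condition~(6) directly defines $\omega_{T_3}(T_3)$ as the classical Borromean string link. Here excess equals $0$, so $I^{k}\times I^{\ell-3}$ is a single point and there is no filling to construct; conditions~(1)--(5) reduce to well-known properties of the Borromean link, namely its Brunnian property (unlinking any one of the three components trivializes the other two) and its evident $\Z/3$ cyclic symmetry. The hypothesis $n\geq 2\ell-3=3$ is exactly what is needed for the Borromean link to exist in the prescribed codimensions.

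For the inductive step, fix $\ell\geq 4$ and assume $\omega_{T_{\ell'}}(T_{\ell'})$ has been constructed for all $3\leq \ell'<\ell$ satisfying~(1)--(6). For any $\sigma\in\calP_{T_\ell}$ of excess $k<\ell-3$, I define $\omega_{T_\ell}(\sigma)$ directly via condition~(1): plug the lower-valence brackets $\omega_{T_{\ell_i}}(T_{\ell_i})$ into each internal vertex $v_i$ of $\sigma$ using iterated suspensions (\S\ref{ss:suspension}) and the composition of \S\ref{ss:iter-surg}. By the Leibniz rule and the inductive form of~(3), the boundary of each such $\beta_{T_\ell}(\sigma)$ is a signed sum over single internal-edge contractions in $\sigma$; collecting these contributions over all excess $\ell-4$ trees produces a genuine cycle. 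The Brunnian property~(4) propagates through this composition, since removing any of the $\ell$ external legs of $\sigma$ trivializes exactly one of the internal brackets and hence the whole composite.

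The essential step is constructing $\omega_{T_\ell}(T_\ell)$ itself. The preceding paragraph provides a $(2\ell-7)$-cycle
\[ C:=\sum_{\substack{\sigma\in\calP_{T_\ell} \\ \mathrm{excess}(\sigma)=\ell-4}}\ve_{T_\ell}^n(\sigma)\,\beta_{T_\ell}(\sigma) \]
in $S_{2\ell-7}\bigl(B^{\ell-3}\Emb_\partial^\fr\bigl((I^{(\ell-1)n-\ell+2})^{\cup\ell},I^{\ell n-\ell+3}\bigr);\Q\bigr)$, which via the delooping equivalence in~(2) corresponds to a based map $S^{\ell-4}\to\Emb_\partial^\fr\bigl((I^{(\ell-1)n-\ell+2})^{\cup\ell},I^{\ell n-\ell+3}\bigr)$. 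I take $\omega_{T_\ell}(T_\ell)$ to be a null-homotopy of $C$, so the main task is to show $C$ is null-homotopic for a suitable choice of signs $\ve_{T_\ell}^n(\sigma)$. The Brunnian property~(4) inherited from the lower brackets lifts $C$ into an iterated homotopy fiber whose rational homotopy in the relevant range models the free $L_\infty$-algebra on iterated Whitehead brackets in $\pi_*\bigl(I^N-(I^{a_1}\cup\cdots\cup I^{a_{\ell-1}})\bigr)\cong \pi_*(S^{N-a_1-1}\vee\cdots\vee S^{N-a_{\ell-1}-1})$. With the standard $L_\infty$-signs, $C$ then becomes the image of the higher Jacobi identity, which vanishes identically; this yields the required filling.

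The principal obstacle is precisely this last identification: pinning down the signs $\ve_{T_\ell}^n(\sigma)$ and verifying that the iterated loop-space expression truly realizes the defining $L_\infty$-relation in a regime where the target Lie algebra is free on the Whitehead generators. This is where the dimensional hypothesis $n\geq 2\ell-3$ is used, together with Hilton--Milnor-type descriptions of $\pi_*$ of a wedge of spheres. Once the null-homotopy exists, conditions (1)--(3) hold for $\omega_{T_\ell}(T_\ell)$ by construction; (4) is arranged by choosing the filling within the Brunnian null-isotopy space (the relative Brunnian structure over $\partial(I^{\ell-3}\times I^{\ell-3})$ exists because every $\omega_{T_\ell}(\sigma)$ with $\sigma\neq T_\ell$ is already Brunnian by induction); and (5) is obtained by averaging the filling over the $\Z/\ell$-action permuting external legs, which is legitimate because we work with rational chains and $C$ is cyclically equivariant by construction.
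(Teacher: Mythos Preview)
Your inductive framework is right, and the construction of $\omega_{T_\ell}(\sigma)$ for excess $<\ell-3$ via composition of lower brackets matches the paper. The gap is in your key step: showing the cycle $C$ is null-homotopic. You claim that after lifting via the Brunnian property, $C$ lands in something modelling a ``free $L_\infty$-algebra on iterated Whitehead brackets'' and then ``becomes the image of the higher Jacobi identity, which vanishes identically.'' This is circular. The rational homotopy of a wedge of spheres is a strict graded \emph{Lie} algebra (Hilton--Milnor), with all higher brackets zero; the higher $L_\infty$ Jacobi identities there reduce to $0=0$ and say nothing about $C$. If instead you mean the $L_\infty$ structure whose higher brackets are the $\omega_{T_{\ell'}}$ already built for $\ell'<\ell$, then the level-$\ell$ relation reads $\partial[x_1,\ldots,x_{\ell-1}]=C$, and producing $[x_1,\ldots,x_{\ell-1}]$ is exactly producing a null-homotopy of $C$ --- the thing you are trying to prove. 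No algebraic identity available at stage $\ell-1$ forces $C$ to bound at stage $\ell$.

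The paper's mechanism is entirely different. For $\ell=4$ it gives an explicit geometric null-isotopy: the sphere $\Sigma_{\mathrm{IHX}}$ is realized inside $\partial(D^n\times D^n\times D^n)$, and its spanning disk $W_{\mathrm{IHX}}$ is the complement of three explicit contractible pieces (Lemma~\ref{lem:null-iso}). For $\ell\geq 5$ the paper does not identify $C$ algebraically at all; instead it shows the ambient homotopy group is \emph{finite}. Goodwillie's concordance connectivity (Theorem~\ref{thm:goodwillie}, Corollary~\ref{cor:goodwillie}) reduces $\pi_{\ell-4}(\Emb_\partial((I^{(\ell-1)n-(\ell-2)})^{\cup\ell},I^{\ell n-\ell+3}))$ to a $\pi_0$ of links, and the Crowley--Ferry--Skopenkov classification (Theorem~\ref{thm:CFS}) shows that $\pi_0$ is finite in the relevant dimensions. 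Hence some $q_\ell$-fold connected sum of $C$ is null-homotopic (Lemma~\ref{lem:null-iso-10T} and its analogues). The hypothesis $n\geq 2\ell-3$ is used precisely to apply Corollary~\ref{cor:goodwillie} enough times for both this step and the relative Brunnian property (Lemma~\ref{lem:brunnian-4}), not for Hilton--Milnor. Your treatment of the Brunnian property (``choose the filling within the Brunnian null-isotopy space'') is also too quick: the paper needs a separate argument (Assumption~\ref{assum:single-comp}, Lemma~\ref{lem:brunnian-4}) to match the chosen null-homotopy to the \emph{prescribed} system of null-isotopies on the boundary, again using finiteness of link groups and possibly further multiples $r_\ell$.
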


\begin{Rem}\label{rem:brun-stronger}
\begin{enumerate}
\item It follows from the property (Boundary) and ($L_\infty$-relation) that $\omega_{T_\ell}(T_\ell)$ can be considered as a null-homotopy of a pointed $(\ell-4)$-loop in $\Emb_\partial^\fr((I^{(\ell-1)n-\ell+2})^{\cup\ell},I^{\ell n-\ell+3})$ such that the boundary is decomposed into pieces that can be delooped.
\item Although each term $\omega_{T_\ell}(\sigma)$ is represented by a chain in $\calE=\Emb_\partial^\fr((I^{(\ell-1)n-\ell+2})^{\cup\ell},I^{\ell n-\ell+3})$ via a fixed homotopy equivalence $\calE\simeq\Omega^{\ell-3}B^{\ell-3}\calE$, we consider the $(\ell-3)$-fold delooping to realize the terms as chain summands in a single $(2\ell-7)$-loop. 
It can be shown that the space $\calE$ admits a natural action of the little $((\ell-1)n-\ell+2)$-cubes operad. Thus by the recognition principle (\cite{May}), the space $\calE$ is a $((\ell-1)n-\ell+2)$-fold loop space. Since $(\ell-1)n-\ell+2\geq \ell-3$ whenever $n\geq 2\ell-3$ and $\ell\geq 3$, the $(\ell-3)$-fold delooping makes sense.
\item In fact, we will show that $\omega_{T_\ell}$ satisfies a stronger Brunnian property, which requires Brunnian null-isotopies for any choice of a subset of the set of components (Definition~\ref{def:brun-null-iso}). Such a stronger Brunnian property will be used in several places later: in clarifying the inductive construction of $\omega_{T_\ell}$ (Lemma~\ref{lem:pres-brunnian-10T}, Assumption~\ref{assum:single-comp}), in the construction of compatible framings (Lemma~\ref{lem:framing-brunnian}), in the definition of suspension of string links (Remark~\ref{rem:suspension-generalized}, Definition~\ref{def:suspension}), in the inductive construction of Brunnian null-isotopies (Lemmas~\ref{lem:compos-brun}, \ref{lem:pres-brunnian-10T}, \ref{lem:brunnian-4}), and in giving canonical gluings between chains (Lemmas~\ref{lem:brun-cone}, \ref{lem:hcoherence}). 
\end{enumerate}
\end{Rem}

\begin{Exa}\label{ex:brackets}
\begin{itemize}
\item The bracket $\beta_{T_3}(T_3)=\omega_{T_3}(T_3)$ is a 0-chain, i.e. a point, in $\Emb_\partial^\fr((I^{2n-1})^{\cup 3},I^{3n})$. This can be represented by an embedding $I^{2n-1}\to I^{3n}-(I^{2n-1}\cup I^{2n-1})\simeq S^n\vee S^n$, which gives the Whitehead product in $\pi_{2n-1}(S^n\vee S^n)$ (see \S\ref{ss:emb-Wh}).
\item The bracket $\beta_{T_4}(T_4)=\frac{1}{4}\omega_{T_4}(T_4)$ is a 2-chain in $S_2(B\Emb_\partial^\fr((I^{3n-2})^{\cup 4},I^{4n-1});\Q)$ such that the following identity holds in $S_1(B\Emb_\partial^\fr((I^{(\ell-1)n-\ell+2})^{\cup\ell},I^{\ell n-\ell+3});\Q)$
\[ \partial \beta_{T_4}(T_4)= 
\pm\beta_{T_4}(\sigma_1)
\pm\beta_{T_4}(\sigma_2)
\pm\beta_{T_4}(\sigma_3)
\]
for the following $\sigma_i$ ($i=1,2,3$) with two 3-valent vertices and 4 labelled legs:
\[ \sigma_1=\fig{-10mm}{20mm}{sigma_1.pdf},\quad 
\sigma_2=\fig{-10mm}{20mm}{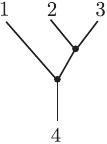},\quad
\sigma_3=\fig{-10mm}{20mm}{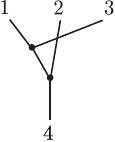}. \]
The signs $\pm$ are determined if we fix half-edge orientations of the chains as in (\ref{eq:ori-o(v)}) below for the trees.
This gives the Jacobi identity 
\[ [[a,b],c]+(-1)^n[a,[b,c]]+(-1)^n[[a,c],b]=0 \]
for iterated Whitehead products in $\pi_{3n-2}(S^n\vee S^n\vee S^n)$ realized by embeddings (see Theorem~\ref{thm:phi-jacobi}).
\item The bracket $\beta_{T_5}(T_5)$ is a 4-chain in $S_4(B^2\Emb_\partial^\fr((I^{4n-3})^{\cup 5},I^{5n-2});\Q)$ such that $\partial \beta_{T_5}(T_5)$ consists of 10 terms of trees with one 4-valent vertex, one 3-valent vertex, and 5 labelled legs (see \S\ref{s:5-valent}):
\[ \begin{split}
\partial\beta_{T_5}(T_5)=&\,\pm\beta_{T_5}(\tau_1)
\pm\beta_{T_5}(\tau_2)
\pm\beta_{T_5}(\tau_3)
\pm\beta_{T_5}(\tau_4)
\pm\beta_{T_5}(\tau_5)\\
&\pm\beta_{T_5}(\tau_6)
\pm\beta_{T_5}(\tau_7)
\pm\beta_{T_5}(\tau_8)
\pm\beta_{T_5}(\tau_9)
\pm\beta_{T_5}(\tau_{10}),\quad\text{where}
\end{split} 
\]
\[\begin{split}
&\tau_1=\fig{-8mm}{16mm}{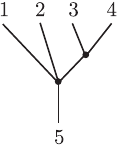},\quad 
\tau_2=\fig{-8mm}{16mm}{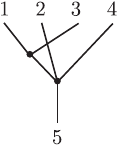},\quad
\tau_3=\fig{-8mm}{16mm}{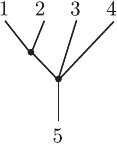},\quad
\tau_4=\fig{-8mm}{16mm}{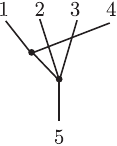},\quad
\tau_5=\fig{-8mm}{16mm}{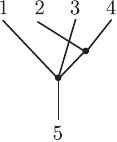},\quad\\
&\tau_6=\fig{-8mm}{16mm}{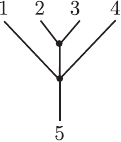},\quad
\tau_7=\fig{-8mm}{16mm}{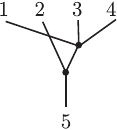},\quad
\tau_8=\fig{-8mm}{16mm}{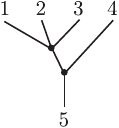},\quad
\tau_9=\fig{-8mm}{16mm}{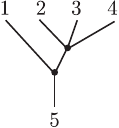},\quad
\tau_{10}=\fig{-8mm}{16mm}{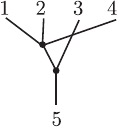}.
\end{split} \]
The signs $\pm$ are determined if we fix half-edge orientations of the chains as in (\ref{eq:ori-o(v)}) below for the trees.
This realizes the following relation in $L_\infty$-algebra:
\[\begin{split}
& [a,b,[c,d]]+(-1)^n[[a,c],b,d]+[[a,b],c,d]+[[a,d],b,c]+(-1)^n[a,c,[b,d]]\\
& +(-1)^n[a,[b,c],d]-[[a,c,d],b]-[[a,b,c],d]-[a,[b,c,d]]-[[a,b,d],c]=\partial[a,b,c,d]
\end{split} \]
for $\partial$-closed formal variables $a,b,c,d$ of degree $n$ (\cite[Example 3.133]{MSS}).
\end{itemize}
\end{Exa}

\begin{Rem}
Roughly, Theorem~\ref{thm:vertex} gives higher order generalizations of Whitehead brackets for a family of embedding spaces of string links. 
\end{Rem}

\subsection{General bracket operation}\label{ss:gen-bracket}

We will iterate certain operations, ``suspensions'' and ``deloopings'' (\S\ref{s:operations}), to generalize the construction of Theorem~\ref{thm:vertex} to more general set of dimensions. Generally, an $\ell$-valent vertex may give a $\{(N-1)\ell-N-(a_1+\cdots+a_\ell)\}$-chain in 
\[ \Emb_\partial^\fr(I^{a_1}\cup \cdots \cup I^{a_\ell},I^N), \]
where $a_1,\ldots,a_\ell$ are such that $1\leq a_i\leq N-2$ and 
\begin{equation}\label{eq:dim_p}
 a_1+\cdots+a_\ell\leq (N-2)\ell-N+3. 
\end{equation}
The equality in (\ref{eq:dim_p}) holds when there is an embedding
$I^{a_1}\to I^N-(I^{a_2}\cup\cdots\cup I^{a_\ell})\simeq S^{N-a_2-1}\vee\cdots\vee S^{N-a_\ell-1}
$ that represents an iterated Whitehead product of length $\ell-1$.

In the following, we only consider string links $(I^k)^{\cup p}\cup (I^{k-1})^{\cup q}\hookrightarrow I^{2k}$ with only $I^k$ and $I^{k-1}$ components, for simplicity. Let $T_\ell(p,q)$, $p+q=\ell$, denote $T_\ell$ equipped with directions on edges such that the legs labelled by $1,\ldots, p$ are incoming and those labelled by $p+1,\ldots,\ell$ are outgoing (see Figure~\ref{fig:t-l-pq}). 
\begin{figure}[h]
  \[ T_\ell(p,q)=\raisebox{-12mm}{\includegraphics[height=25mm]{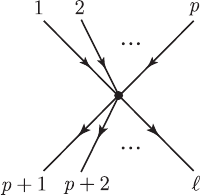}} \]
\caption{The tree $T_\ell(p,q)$, $\ell=p+q$.}\label{fig:t-l-pq}
\end{figure}

Let $\overrightarrow{\calP}_{T_\ell(p,q)}$ denote the poset of directed trees whose objects are $(\sigma,\alpha)$, where $\sigma\in \calP_{T_\ell}$ and $\alpha$ is a choice of directions on edges of $\sigma$ such that the directions on labelled legs are the same as those of $T_\ell(p,q)$, and morphisms are given by contractions of directed edges. Note that if $\sigma$ is of excess $\mu$, there are $2^{\ell-3-\mu}$  choices of $\alpha$ for each $\sigma$. We fix one to one correspondences between the incoming (resp. outgoing) legs of $T_\ell(p,q)$ and the components $I^k$ (resp. $I^{k-1}$) of $(I^k)^{\cup p}\cup (I^{k-1})^{\cup q}$.
We now assume that $(\ell-2)k-2\ell+q+3>0$, in which case the inequality in (\ref{eq:dim_p}) is strict for $a_1=\cdots=a_p=k$, $a_{p+1}=\cdots=a_\ell=k-1$. 

\begin{Thm}[General bracket operation]\label{thm:l-valent-general}
Suppose that $k$ is sufficiently large.
For each object $(\sigma,\alpha)$ of $\overrightarrow{\calP}_{T_\ell(p,q)}$, we have a chain $\overline{\beta}_{(\sigma,\alpha)}=\frac{1}{m_\sigma}\overline{\omega}_{(\sigma,\alpha)}$ in $S_{(\ell-2)k-\ell+q}(\Emb_\partial^\fr((I^k)^{\cup p}\cup (I^{k-1})^{\cup q},I^{2k});\Q)$, which we also call a ``bracket'', defined as a rational multiple of a map
\[ \overline{\omega}_{(\sigma,\alpha)}\colon \overline{B}_{(\sigma,\alpha)}\to \Emb_\partial^\fr((I^k)^{\cup p}\cup (I^{k-1})^{\cup q},I^{2k}) \]
from a finite CW-complex $\overline{B}_{(\sigma,\alpha)}$ with a relative fundamental class satisfying the following conditions.
\begin{enumerate}
\item (Compatibility) If there is a morphism $(\sigma,\alpha)\to (\sigma',\alpha')$ in $\overrightarrow{\calP}_{T_\ell(p,q)}$, then we have an inclusion $\overline{B}_{(\sigma,\alpha)}\hookrightarrow \overline{B}_{(\sigma',\alpha')}$ and $\overline{\beta}_{(\sigma,\alpha)}$ is the restriction of $\overline{\beta}_{(\sigma',\alpha')}$ to $\overline{B}_{(\sigma,\alpha)}$.
\item ($L_\infty$-relation) There is a choice $\ve_{T_\ell(p,q)}(\sigma,\alpha)$ of signs such that the relation between chains
\begin{equation}\label{eq:rel-general}
\partial \overline{\beta}_{T_\ell(p,q)}=\displaystyle\sum_{{{(\sigma,\alpha)\in\overrightarrow{\calP}_{T_\ell(p,q)}}\atop{\text{excess}(\sigma)=\ell-4}}}\ve_{T_\ell(p,q)}(\sigma,\alpha)\,\overline{\beta}_{(\sigma,\alpha)} 
\end{equation}
holds modulo degenerate chains. 
\item (Brunnian property) The map $\overline{\omega}_{T_\ell(p,q)}$ has a Brunnian property relative to those $\overline{\omega}_{T_\ell(p,q)}(\sigma,\alpha)$ on $\partial \overline{B}_{T_\ell(p,q)}$ such that $(\sigma,\alpha)\in\overrightarrow{\calP}_{T_\ell(p,q)}\setminus\{T_\ell(p,q)\}$.
\item (Excess 0) The bracket $\overline{\beta}_{T_3(p,q)}=\overline{\omega}_{T_3(p,q)}\colon \overline{B}_{T_3(p,q)}\to \Emb_\partial^\fr((I^k)^{\cup p}\cup (I^{k-1})^{\cup q},I^{2k})$ is a cycle that is homologous to a cycle from $S^{k-3+q}$ obtained by deloopings (\S\ref{ss:delooping}) of a Borromean link. If $(\sigma,\alpha)\in\overrightarrow{\calP}_{T_\ell(p,q)}$ is of excess 0, the chain $\overline{\beta}_{(\sigma,\alpha)}$ is a cycle from $\prod_{v\in V^{\mathrm{int}}(\sigma)}\overline{B}_{T(v)}$ obtained by composing $\overline{\beta}_{T(v)}=\overline{\omega}_{T(v)}$ (\S\ref{ss:iter-surg}) for the 3-valent vertices $v$ in the set $V^{\mathrm{int}}(\sigma)$ of internal vertices of $\sigma$.
\end{enumerate}
\end{Thm}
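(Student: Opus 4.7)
The plan is to prove Theorem~\ref{thm:l-valent-general} by induction on $\ell$ (equivalently, on the excess of $\sigma$), using Theorem~\ref{thm:vertex} as the seed and transporting its output through the suspension and delooping operations of \S\ref{s:operations} to reach the asymmetric dimension setup $((I^k)^{\cup p}\cup(I^{k-1})^{\cup q},\,I^{2k})$ of the present statement. The condition ``$k$ sufficiently large'' will be used to guarantee the connectivity margin that the inductive construction of Brunnian null-isotopies needs, and in particular it forces (\ref{eq:dim_p}) to be strict as already assumed.

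For the base case ($\ell=3$, excess $0$), I would build $\overline{\omega}_{T_3(p,q)}$ from the Borromean link supplied by Theorem~\ref{thm:vertex}(6), which lives in $\Emb_\partial^\fr((I^{2n-1})^{\cup 3},I^{3n})$, applying iterated suspensions (\S\ref{ss:suspension}) to adjust the component dimensions so that $p$ of them become $k$-dimensional and $q$ of them become $(k-1)$-dimensional inside $I^{2k}$. For a general excess-$0$ directed tree $(\sigma,\alpha)$ I would then compose these Borromean brackets along the internal edges of $\sigma$ using the iterated surgery construction of \S\ref{ss:iter-surg}, with the directions $\alpha$ dictating which pair of components (one $I^k$, one $I^{k-1}$) is glued at each internal edge. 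This produces the cycle from $\prod_{v\in V^{\mathrm{int}}(\sigma)}\overline{B}_{T(v)}$ required by property~(4), and the homology between $\overline{\beta}_{T_3(p,q)}$ and a cycle built by deloopings from $S^{k-3+q}$ follows from the standard comparison between Whitehead products and Hopf-type classes after delooping (\S\ref{ss:delooping}).

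For the inductive step, assume the brackets have been constructed for all directed trees with strictly fewer than $\ell$ leaves. I would construct the ``universal'' bracket $\overline{\omega}_{T_\ell(p,q)}$ by taking $\omega_{T_\ell}(T_\ell)$ of Theorem~\ref{thm:vertex}, which already carries the $L_\infty$-relation and Brunnian property in the symmetric setting, and applying a sequence of suspensions and deloopings to land in $\Emb_\partial^\fr((I^k)^{\cup p}\cup(I^{k-1})^{\cup q},I^{2k})$; the CW-complex $\overline{B}_{T_\ell(p,q)}$ is obtained by the same operations applied to the cube $I^{\ell-3}\times I^{\ell-3}$ indexing $\omega_{T_\ell}(T_\ell)$. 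For a general $(\sigma,\alpha)$, I would glue together the bracket $\overline{\omega}_{T(v)}$ associated with each internal vertex $v$ of $\sigma$ using the composition of \S\ref{ss:iter-surg}, with $\alpha$ selecting the matching between incoming and outgoing components along each internal edge. Compatibility~(1) is then inherited from its counterpart in Theorem~\ref{thm:vertex} together with functoriality of suspension under contractions of directed edges. For the $L_\infty$-relation~(2), applying $\partial$ to $\overline{\beta}_{T_\ell(p,q)}$ and using naturality of $\partial$ under suspension and delooping reduces to Theorem~\ref{thm:vertex}(3), with each $\beta_{T_\ell}(\sigma)$ unfolded over the $2^{\ell-3-\mu}$ compatible direction choices $\alpha$; the signs $\ve_{T_\ell(p,q)}(\sigma,\alpha)$ are read off from the half-edge orientations together with the combinatorial rules governing how a boundary operator passes through a suspension.

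The main obstacle I anticipate is verifying the Brunnian property~(3) relative to the stratified boundary $\partial\overline{B}_{T_\ell(p,q)}$. It has to be checked that suspension and delooping preserve Brunnian null-isotopies, and further that the null-isotopies produced on $\overline{B}_{T_\ell(p,q)}$ agree on the strata labelled by $(\sigma,\alpha)\neq T_\ell(p,q)$ with those inductively supplied for the smaller brackets. This is where the stronger relative Brunnian property indicated in Remark~\ref{rem:brun-stronger}(3) (formalized in Definition~\ref{def:brun-null-iso}) is essential, and I would invoke the preservation results Lemma~\ref{lem:pres-brunnian-10T}, Lemma~\ref{lem:brunnian-4}, and Lemma~\ref{lem:framing-brunnian} (the last of these to align framings at every gluing) to propagate the property through each suspension, delooping, and composition step. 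Provided $k$ is large enough that each obstruction lies in a range where the relevant mapping spaces are sufficiently connected, the inductive construction goes through and produces the bracket with all four listed properties.
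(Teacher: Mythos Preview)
Your outline correctly identifies the seed (Theorem~\ref{thm:vertex}) and the transport via suspensions and deloopings, but it misses the central difficulty that the paper's proof in \S\ref{s:thicken} is built to resolve. After suspending and delooping $\omega_{T_\ell}$ to obtain $\omega_{T_\ell(p,q)}\colon I^{\ell-3}\times S^{\tvec{a}}\to\Emb_\partial^\fr(\cdots)$, the boundary piece $K(\sigma)/A_{T_\ell(p,q)}$ corresponding to a face tree $\sigma$ is (a quotient of) a cube of the form $I^{\ell_1-3}\times I^{\ell_2-3}\times S^{|\tvec{a}_1|+|\tvec{a}_2|}$, whereas the iterated bracket built from the induction hypothesis is parametrized by $\widehat{B}_{T'}\times\widehat{B}_{T''}\simeq(I^{\ell_1-3}\times S^{|\tvec{a}_1|})\times(I^{\ell_2-3}\times S^{|\tvec{a}_2|})$. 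These domains are genuinely different, so ``Compatibility is inherited from Theorem~\ref{thm:vertex}(1) together with functoriality of suspension'' is not enough: there is no inclusion $\overline{B}_{(\sigma,\alpha)}\hookrightarrow\overline{B}_{T_\ell(p,q)}$ in your construction, and condition~(1) of the theorem fails on the nose.

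The paper handles this by \emph{thickening}: $\overline{B}_{T_\ell(p,q)}$ is defined not by transporting the cube but as the homotopy colimit $\underset{(\sigma,\alpha)}{\mathrm{hocolim}}\,n_\sigma\widehat{B}_{T_\ell(p,q)}(\sigma,\alpha)$ (Definition~\ref{def:barB}), with mapping cylinders interpolating between the two incompatible parametrizations. The real work is then to show that the chains extend over the homotopy colimit, which requires (i) using the Brunnian null-isotopies to merge the sphere factors (Lemma~\ref{lem:brun-cone}), (ii) proving the collapse maps $\widehat{B}_{T_\ell(p,q)}(\sigma,\alpha)\to K(\sigma)/A_{T_\ell(p,q)}$ make the relevant triangles homotopy commute at codimension~1 (Lemma~\ref{lem:codim1-coherence}, via Lemma~\ref{lem:slide-para}), and (iii) establishing homotopy coherence at all higher codimensions (Lemma~\ref{lem:hcoherence}). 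This coherence argument, not the Brunnian property~(3), is the main obstacle; once the map on the hocolim exists, (3) follows rather formally (Lemma~\ref{lem:omega-hat-brunnian}). The lemmas you cite (Lemma~\ref{lem:pres-brunnian-10T}, Lemma~\ref{lem:brunnian-4}) belong to the proof of Theorem~\ref{thm:vertex} itself, not to this thickening step.
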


Roughly speaking, Theorem~\ref{thm:l-valent-general} gives a ``thickening'' $\overline{\beta}_{T_\ell(p,q)}$ of a multiple of a chain 
\[ \omega_{T_\ell(p,q)}\colon B_{T_\ell(p,q)}\to \Emb_\partial^\fr((I^k)^{\cup p}\cup (I^{k-1})^{\cup q},I^{2k}), \] 
where $B_{T_\ell(p,q)}=I^{\ell-3}\times S^{\tvec{a}}$ for some compact manifold $S^{\tvec{a}}$ of dimension $(\ell-2)k-2\ell+q+3$, obtained from the basic bracket operation for $T_\ell$ by suspensions and deloopings (see \S\ref{ss:suspension} and \S\ref{ss:delooping}).  
The effect of the thickening is that $\overline{\beta}_{T_\ell(p,q)}$ is compatible with $\overline{\beta}_{(\sigma,\alpha)}$ for the face graphs $(\sigma,\alpha)$ of $T_\ell(p,q)$, which is not obvious for $\beta_{T_\ell(p,q)}$ due to the suspensions and deloopings.

\subsection{Graph surgery}

Let $(\Gamma,\alpha)$ be a directed graph of excess $p\geq 1$ and $r$ vertices. For each such directed graph, we construct a framed $D^{2k}$-bundle by combining surgeries given by several general brackets of Theorem~\ref{thm:l-valent-general}.
We say that an object $\calX_{(\Gamma,\alpha)}$ determined by $(\Gamma,\alpha)$ satisfies a condition $C$ {\it relative to lower excess graphs} if it satisfies $C$ under the assumption that $\calX_{(\Gamma',\alpha')}$ does for all directed graphs $(\Gamma',\alpha')$ of excess at most $p-1$.

\begin{Thm}\label{thm:graph-surgery}
Suppose that $k$ is sufficiently large. Let $(\Gamma,\alpha)$ be a directed graph of excess $p\geq 0$ with $r$ vertices. We construct a chain $\overline{\phi}_{(\Gamma,\alpha)}=\frac{1}{m_{(\Gamma,\alpha)}}\overline{\omega}_{(\Gamma,\alpha)}$ in $S_*(B\Diff_\partial^\fr(D^{2k});\Q)$, which is a rational multiple of a map
\[ \overline{\omega}_{(\Gamma,\alpha)}\colon \overline{B}_{(\Gamma,\alpha)}\to B\Diff_\partial^\fr(D^{2k}) \]
satisfying the following conditions (1)--(4) relative to lower excess graphs.
\begin{enumerate}
\item $\overline{B}_{(\Gamma,\alpha)}=\prod_{v\in V(\Gamma)}\overline{B}_{T(v)}$, where $T(v)$ is $T_{\ell(v)}(p,q)$ for $\ell(v)=\deg(v)$ and for some $p,q$ determined by the direction structure $\alpha$. The integer $m_{(\Gamma,\alpha)}$ is $\prod_{v\in V(\Gamma)}m_{\ell(v)}$.

\item There is a disjoint union $V_1\cup\cdots\cup V_r$ of compact codimension 0 submanifolds of $\mathrm{Int}\,D^{2k}$ such that $(D^{2k},\partial)$-bundle $\overline{\pi}^{(\Gamma,\alpha)}\colon \overline{E}^{(\Gamma,\alpha)}\to \overline{B}_{(\Gamma,\alpha)}$ associated to $\overline{\omega}_{(\Gamma,\alpha)}$ has a trivialization $\tau(\Gamma,\alpha)$ over a subbundle with fiber $Q_{(\Gamma,\alpha)}:=D^{2k}-\mathrm{Int}\,(V_1\cup \cdots \cup V_r)$. It is compatible with those for lower excess graphs in $\partial(\Gamma,\alpha)$.

\item The restriction of $\overline{\pi}^{(\Gamma,\alpha)}$ to the subbundle with fiber $V_i$ is the pullback of $\overline{\rho}_{T_\ell(p,q)}$ under the projection $\overline{B}_{(\Gamma,\alpha)}\to \overline{B}_{T_\ell(p,q)}$. The pullback is compatible with those for lower excess graphs in $\partial(\Gamma,\alpha)$.

\item There is a vertical framing on $\overline{E}^{(\Gamma,\alpha)}$ that is standard near $Q_{(\Gamma,\alpha)}$, and is compatible with those for lower excess graphs in $\partial(\Gamma,\alpha)$.
\end{enumerate}
\end{Thm}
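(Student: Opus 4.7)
The plan is to build $\overline{\omega}_{(\Gamma,\alpha)}$ by induction on the excess $p$, by placing one vertex handlebody $V_v \subset \mathrm{Int}\,D^{2k}$ for each vertex $v \in V(\Gamma)$, applying the general bracket operation of Theorem~\ref{thm:l-valent-general} inside each $V_v$, and gluing the resulting string-link families along tubes corresponding to edges of $\Gamma$. The parameter space is then naturally the product $\overline{B}_{(\Gamma,\alpha)}=\prod_v \overline{B}_{T(v)}$, matching condition~(1), with rational weight $m_{(\Gamma,\alpha)}=\prod_v m_{\ell(v)}$.

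More concretely, I would first fix a disjoint collection $V_1,\dots,V_r \subset \mathrm{Int}\,D^{2k}$ where $V_i$ is a standard handlebody associated to $T(v_i)=T_{\ell(v_i)}(p_i,q_i)$, so that its prongs split into $p_i$ incoming handles (identified with $(I^k)^{\cup p_i}$) and $q_i$ outgoing handles (identified with $(I^{k-1})^{\cup q_i}$), following the direction structure $\alpha$ at $v_i$. Every edge of $\Gamma$ joining $v_i$ to $v_j$ determines a pair of such handles that are connected by a disjoint tube in $D^{2k}-\bigcup V_i$; this combinatorial layout is fixed once and for all. By Theorem~\ref{thm:l-valent-general}, for each $v_i$ we have a map
\[
 \overline{\omega}_{T(v_i)}\colon \overline{B}_{T(v_i)} \to \Emb^\fr_\partial\bigl((I^k)^{\cup p_i}\cup(I^{k-1})^{\cup q_i},I^{2k}\bigr),
\]
and the standard higher-dimensional surgery construction (as in the $3$-valent Goussarov--Habiro case, but with framed Brunnian links replacing Borromean string links) converts this into a family of diffeomorphisms of $V_i$ supported in the interior of $V_i$ and identity on the tubes. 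Taking the product over vertices yields a family of diffeomorphisms of $D^{2k}$, hence a map $\overline{\omega}_{(\Gamma,\alpha)}\colon \overline{B}_{(\Gamma,\alpha)}\to B\Diff_\partial^\fr(D^{2k})$, and the classified bundle $\overline{\pi}^{(\Gamma,\alpha)}$ is tautologically trivial on the subbundle with fiber $Q_{(\Gamma,\alpha)}=D^{2k}-\mathrm{Int}(\bigcup V_i)$, yielding condition~(2), and is by definition the pullback of $\overline{\rho}_{T(v_i)}$ over the $V_i$-subbundle, yielding~(3). The vertical framing required by~(4) is obtained by combining the standard framing of $D^{2k}$ on $Q_{(\Gamma,\alpha)}$ with the framings delivered by $\overline{\omega}_{T(v_i)}$ inside each $V_i$; these agree on the tube boundaries because each bracket takes values in the framed embedding space, and a coherent choice across the vertices is arranged using the framing lemma cited in Remark~\ref{rem:brun-stronger}(3) (Lemma~\ref{lem:framing-brunnian}).

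The main obstacle, and the place where the Brunnian structure of Theorem~\ref{thm:l-valent-general} really gets used, is verifying the four clauses ``relative to lower excess graphs''. The boundary $\partial \overline{B}_{(\Gamma,\alpha)}$ decomposes into faces $\partial \overline{B}_{T(v)} \times \prod_{v'\neq v}\overline{B}_{T(v')}$, and on each such face the $L_\infty$-relation (\ref{eq:rel-general}) breaks $\overline{\beta}_{T(v)}$ into a signed sum of $\overline{\beta}_{(\sigma,\alpha_\sigma)}$ over trees $\sigma$ of excess $\ell(v)-4$. Geometrically, replacing $T(v)$ by such a $\sigma$ corresponds to replacing the single handlebody $V_v$ by several handlebodies joined by a tube, which is precisely the vertex-splitting that produces a directed graph $(\Gamma',\alpha')$ of excess $p-1$. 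I would have to check that the restricted bundle, with its trivialization and framing, agrees on the nose with the bundle produced by the inductive hypothesis for $(\Gamma',\alpha')$. This matching is not automatic: it depends on the stronger Brunnian property (Remark~\ref{rem:brun-stronger}(3)) to ensure that the canonical gluings of pieces (via Lemmas~\ref{lem:brun-cone}, \ref{lem:hcoherence}) chosen in different pieces of the construction are consistent, and on the compatibility of the suspensions and deloopings used to build $\overline{\omega}_{T_\ell(p,q)}$ with the tube-joining operation at each edge. Once this bookkeeping, essentially parallel to but more elaborate than the $3$-valent Goussarov--Habiro case, is set up carefully, the four conditions follow simultaneously; but the bookkeeping itself is where I expect the real work to lie.
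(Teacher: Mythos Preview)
Your approach is essentially the same as the paper's, but you are overestimating the work that remains. The paper's proof is only a few lines: it defines $\overline{\omega}_{(\Gamma,\alpha)}$ as the composition
\[
\prod_{v\in V(\Gamma)}\overline{B}_{T(v)} \xrightarrow{\prod_v \overline{\rho}_{T(v)}} \prod_v B\Diff_\partial^\fr(V_v) \xrightarrow{\simeq} B\Diff_{Q_{(\Gamma,\alpha)}}^\fr(D^{2k}) \to B\Diff_\partial^\fr(D^{2k}),
\]
and then observes that conditions (1)--(4) are immediate from this construction, with compatibility for lower excess graphs following directly from Theorem~\ref{thm:l-valent-general}.

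The point you are missing is that the ``conversion from framed embedding families to families of diffeomorphisms of $V_i$'' and its framing are already packaged in Lemma~\ref{lem:vert-surg}: that lemma produces the map $\overline{\rho}_{T_\ell(p,q)}\colon \overline{B}_{T_\ell(p,q)}\to B\Diff_\partial^\fr(V;\mathrm{st})$ together with its compatibility for face trees, via a lift to $\bEmb_\partial$ and the complement map $\widetilde{c}$ in diagram~(\ref{eq:2x3-diag}). Once you have this, there is no further ``bookkeeping'' to do at the graph level: the boundary face of $\overline{B}_{(\Gamma,\alpha)}$ corresponding to splitting a vertex $v$ is $\partial\overline{B}_{T(v)}\times\prod_{v'\neq v}\overline{B}_{T(v')}$, and on it $\overline{\rho}_{T(v)}$ restricts to the sum of $\overline{\rho}_{(\sigma,\alpha)}$ by the compatibility clause of Theorem~\ref{thm:l-valent-general} and Lemma~\ref{lem:vert-surg}. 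Your worries about Brunnian null-isotopies, consistency of gluings, and Lemmas~\ref{lem:brun-cone}, \ref{lem:hcoherence} are legitimate, but that work lives entirely inside the proofs of Theorem~\ref{thm:l-valent-general} and Lemma~\ref{lem:vert-surg}, not here. No induction on excess is needed for this theorem itself.
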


That $\overline{\phi}$ is a chain map in Theorem~\ref{thm:main} almost follows from Theorem~\ref{thm:graph-surgery} except orientations. To complete the proof of Theorem~\ref{thm:main}, we need to define suitable orientations to make the map $\overline{\omega}_{(\Gamma,\alpha)}$ into a chain. It will be done in Section~\ref{s:ind-ori}.

\subsection{Organization of the paper}

In Part I, we prove Theorems~\ref{thm:main} and \ref{thm:pi}.

In Section~\ref{s:preliminaries}, we recall some facts about homotopy groups of the spaces of string links that will be important. And also, we define a strong Brunnian property for a family of string links.

In Section~\ref{s:graph-surg}, we explain how Theorem~\ref{thm:main} follows from Theorems~\ref{thm:vertex} and \ref{thm:l-valent-general}, whose proofs are postponed to Sections~\ref{s:4-valent}--\ref{s:thicken}. The orientation convention for the chains, which is technical, is postponed to Section~\ref{s:ind-ori}. 

In Sections~\ref{s:4-valent}, \ref{s:5-valent}, and \ref{s:6-valent}, we construct the basic brackets of Theorem~\ref{thm:vertex} for vertices of 4-, 5-, and at least 6-valent, respectively. In Section~\ref{s:operations}, we provide operations for brackets that are necessary in Sections~\ref{s:5-valent} and \ref{s:6-valent}. Theorem~\ref{thm:vertex} is proved.

In Section~\ref{s:thicken}, we upgrade the basic brackets into more general ones by thickening and gluing iterated brackets for various trees. Theorem~\ref{thm:l-valent-general} is proved. 

In Section~\ref{s:homotopy}, we prove Theorem~\ref{thm:pi}. This section is independent of the proof of Theorem~\ref{thm:main}.

In Section~\ref{s:ind-ori}, we give the orientation convention for the chains, which was postponed in Section~\ref{s:graph-surg}, and complete the proof of Theorem~\ref{thm:main}.

\if0 
The dependences on the contents of this paper for the main results are as follows.
\subsubsection{Theorem~\ref{thm:main}}
\begin{itemize}
\item Theorem~\ref{thm:graph-surgery} (Graph surgery).
  \begin{itemize}
  \item Theorem~\ref{thm:l-valent-general} (General bracket operation).
    \begin{itemize}
    \item Theorem~\ref{thm:vertex} (Basic bracket operation).
      \begin{itemize}
      \item Section~\ref{s:preliminaries}. Preliminaries.
      \item Section~\ref{s:4-valent}. 4-valent vertex.
      \item Section~\ref{s:operations}. Operations for vertex surgey.
      \item Section~\ref{s:5-valent}. 5-valent vertex.
      \item Section~\ref{s:6-valent}. 6-valent or higher vertex.
      \end{itemize}
    \item Section~\ref{s:thicken}. Thickening of a vertex surgery: Proof of Theorem~\ref{thm:l-valent-general}.
    \end{itemize}
  \item Section~\ref{s:graph-surg}. Graph surgery.
  \end{itemize}
\item Theorem~\ref{thm:chainmap} (Chain map). 
  \begin{itemize}
  \item Theorem~\ref{thm:l-valent-general} (General bracket operation).
  \item Theorem~\ref{thm:graph-surgery} (Graph surgery).
  \item Subsection~\ref{ss:ori-chain}. Half-edge orientations of the chains.
  \item Section~\ref{s:ind-ori}. Leaf forms and orientations.
    \begin{itemize}
    \item Section~\ref{s:thicken}. Thickening of a vertex surgery: Proof of Theorem~\ref{thm:l-valent-general}.
    \end{itemize}
  \end{itemize}
\end{itemize}
\subsubsection{Theorem~\ref{thm:pi}}
\begin{itemize}
\item Corollary~\ref{cor:gamma} (of Theorem~\ref{thm:main}).
\item Section~\ref{s:homotopy}. Deformation to the rational homotopy groups.
  \begin{itemize}
  \item Subsection~\ref{ss:gen-bracket}. General bracket operation.
  \item Section~\ref{s:thicken}. Thickening of a vertex surgery: Proof of Theorem~\ref{thm:l-valent-general}.
  \end{itemize}
\end{itemize}
\fi 

In Section~\ref{s:concluding}, we describe concluding remarks with some open problems. In Appendix~\ref{s:leaf-form-mfd}, we give explicit leaf forms for the basic bracket for a 4-valent vertex. Appendix~\ref{s:ext-closed} gives a criterion to extend a closed form. In Appendix~\ref{s:sign-jacobi}, we check the signs in the Jacobi identity.


\subsection*{Acknowledgements}
We would like to thank Danica Kosanovi\'{c} and Robin Koytcheff for giving us information about their works and for their helpful comments. TW would like to thank the Department of Mathematics, University of Oregon for its hospitality.

\section{Preliminaries on string links}\label{s:preliminaries}

We collect some known results and necessary notations that will be important later. The reader may first skip to \S\ref{s:graph-surg} and return here when necessary.

\subsection{Some homotopy groups of the spaces of string links}

We use two theorems at the heart of the proof of Theorem~\ref{thm:vertex}. The following theorem, which is based on Haefliger's exact sequence in \cite[1.3. Th\'{e}or\`{e}me]{Hae}, gives a condition when the space of spherical links of codimension at least three has infinite number of components.

\begin{Thm}[{Crowley--Ferry--Skopenkov \cite[Corollary~1.8]{CFS}}]\label{thm:CFS}
Assume that $p_1,\ldots,p_r<m-2$. Then the group $\pi_0(\Emb(\coprod_{k=1}^r S^{p_k},S^m))$ is infinite if and only if there is a subsequence $(k_1,\ldots,k_s)\subset (1,\ldots,r)$ satisfying one of the following conditions:
\begin{itemize}
\item[(a)] $s=1$, $4|p_{k_1}+1$, and $m<3p_{k_1}/2+2$;
\item[(b)] $s=2$ and there is $(x_1,x_2)\in FCS(m-p_{k_1},m-p_{k_2})$ such that 
\begin{equation}\label{eq:CFS2}
 (m-p_{k_1}-2)x_1+(m-p_{k_2}-2)x_2=m-3;
\end{equation} 
\item[(c)] $s\geq 3$ and the equation 
\begin{equation}\label{eq:CFS3}
 (m-p_{k_1}-2)x_1+\cdots+(m-p_{k_s}-2)x_s=m-3
\end{equation}
has a solution in positive integers.
\end{itemize}
Here, $FCS(i,j)\subset \Z^2$ is a subset which depends only on the parity of $i$ and $j$. The infinite order elements in the case (a) are given by connect-summing knots in $\pi_0(\Emb(S^{p_k},S^m))$ to the $k$-th component. 
\end{Thm}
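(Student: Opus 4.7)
The plan is to derive this criterion from Haefliger's exact sequence (\cite{Hae}) classifying link embeddings in the metastable range. That sequence, for a disjoint union of spheres, relates $\pi_0(\Emb(\coprod_k S^{p_k}, S^m))$ to (i) the classical Haefliger knot groups $C^m_{p_k}=\pi_0(\Emb(S^{p_k},S^m))$ of individual components, and (ii) ``linking'' invariants which record, for each subset of components, the homotopy class of one component in the complement of the others, regarded as living in an appropriate homotopy group of a wedge of spheres. The strategy is to split the analysis according to which of these sources produces an infinite-rank subgroup, giving rise to the three cases (a), (b), (c).

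First I would dispose of case (a). Isotoping the other components far away from $S^{p_{k_1}}$ shows that $C^m_{p_{k_1}}$ injects into $\pi_0(\Emb(\coprod_k S^{p_k}, S^m))$, and Haefliger's classical computation of $C^m_p$ (in the range $m < 3p/2+2$, i.e.\ $2m\leq 3p+3$) gives an infinite group precisely when $4\mid p+1$. Connect-summing knots to the $k_1$-th component then realises this infinite subgroup inside the link group.

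For cases (b) and (c) I would identify the Brunnian subgroup, i.e.\ the kernel of the maps that forget components, with the image of a Hilton-type iterated Whitehead bracket map landing in $\pi_{p_{k_s}}$ of the complement $S^m\setminus(\bigcup_{i<s}S^{p_{k_i}})\simeq \bigvee_{i<s} S^{m-p_{k_i}-1}$. An iterated bracket that uses the $i$-th generator $x_i$ times lies in degree $\sum_i x_i(m-p_{k_i}-2)+1$ when the $s$-th component is included as the outermost variable, so matching against $p_{k_s}$ (equivalently $m-3$ after a cohomological shift by $m-p_{k_s}-2$) yields the stated equation. By Hilton's theorem and Haefliger's isotopy classification, infinite-order elements of the correct dimension exist precisely when (\ref{eq:CFS3}) (or its two-variable refinement (\ref{eq:CFS2}) with the set $FCS$) has a positive-integer solution; in case (b) the parity-dependent set $FCS(i,j)$ encodes exactly when the relevant $\pi_{p_{k_2}}(S^{m-p_{k_1}-1})$ is infinite, via James's 2-primary analysis of homotopy groups of spheres.

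The main obstacle is the necessity direction: one must show that when none of the conditions (a)--(c) hold, every element of $\pi_0(\Emb(\coprod S^{p_k},S^m))$ has finite order. This requires the full strength of Haefliger's exact sequence together with finiteness of the relevant homotopy groups of wedges of spheres outside the exceptional dimensions (the only infinite contributions coming from Hopf-type classes, stably from $\pi_p^{\mathrm{st}}$ in the Whitehead range, which accounts for the parity restriction $4\mid p+1$). Carefully tracking how the exact sequence assembles the contributions from subsequences of different lengths, and verifying that no extension or cancellation introduces spurious infinite subgroups, is the most delicate part; the two-variable case (b) in particular needs the explicit description of $FCS$ to avoid missing genuine infinite orders that do not come from higher Brunnian configurations.
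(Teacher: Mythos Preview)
This theorem is not proved in the paper: it is quoted verbatim as \cite[Corollary~1.8]{CFS} and used as a black box (see the text immediately following the statement, which proceeds directly to Lemma~\ref{lem:S-I} without any argument for Theorem~\ref{thm:CFS}). So there is no ``paper's own proof'' to compare against.

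Your sketch is a plausible outline of the Crowley--Ferry--Skopenkov argument, and indeed the paper remarks that the theorem ``is based on Haefliger's exact sequence in \cite[1.3. Th\'{e}or\`{e}me]{Hae}'', which matches your starting point. But since the present paper only \emph{cites} this result, the appropriate response here is simply to refer to \cite{CFS} rather than to reproduce or reconstruct their proof.
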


This result for spherical links can be applied to string links according to the following lemma.

\begin{Lem}\label{lem:S-I}
Suppose that $p<m-2$. Then we have an isomorphism
\[ \pi_0(\Emb((S^p)^{\cup r},S^m))\cong \pi_0(\Emb_\partial((I^{p})^{\cup r},I^m)). \]
\end{Lem}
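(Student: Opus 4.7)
The plan is to construct inverse maps $\Phi$ and $\Psi$ between the two $\pi_0$'s using the codimension-$\geq 3$ unknotting theorem of Haefliger and Zeeman, which applies because the hypothesis $p<m-2$ gives $m-p\geq 3$.

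For $\Phi$, I would describe $S^m$ as the double of $I^m$, glued along $\partial I^m$, and set $\Phi(f)$ to be the embedding whose restriction to one copy of $I^m$ is $f$ and whose restriction to the other copy is the standard inclusion $\iota$. Because $f$ agrees with $\iota$ near $\partial I^m$, these fit together smoothly, and each component $I^p\cup_\partial I^p$ becomes a smoothly embedded $S^p$ in $S^m$. An isotopy of string links induces an isotopy of their closures (keep the second copy standard), so $\Phi$ descends to a well-defined map on $\pi_0$.

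For $\Psi$, start from a spherical link $g$ and pick a point $x_0\in S^m\setminus g((S^p)^{\cup r})$ together with a small closed ball $B\subset S^m$ around $x_0$. By an ambient isotopy of $S^m$ that drags a small ``finger'' of each component across $x_0$ and then shrinks it, arrange that each $g(S^p_k)$ meets $\partial B$ transversally in a single $(p-1)$-sphere bounding a disk $D^p_k\subset S^p_k$ whose $g$-image lies in $B$. Since the boundary spheres have codimension $m-p\geq 3$ in $\partial B\cong S^{m-1}$, a further ambient isotopy puts them in the standard positions on $\partial B$ associated with $\iota$. Applying the Haefliger--Zeeman unknotting theorem (again valid because $m-p\geq 3$) isotopes $g|_{\coprod D^p_k}$ rel $\partial$ to a fixed standard system of caps. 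Removing $\mathrm{Int}\,B$ and identifying $S^m\setminus\mathrm{Int}\,B\cong I^m$ then yields a string link $\Psi(g)\in\Emb_\partial((I^p)^{\cup r},I^m)$.

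The main obstacle is the well-definedness of $\Psi$ on $\pi_0$: different choices of $(x_0,B,D^p_k,\text{unknotting})$ must give the same isotopy class, and isotopic spherical links must yield isotopic string links. Both points reduce to the \emph{relative} (one-parameter) Haefliger--Zeeman theorem applied to concordances of caps $\coprod D^p_k\times I\to B\times I$ with standard boundary, which are standard up to isotopy rel $\partial$ because the ambient codimension $(m+1)-p\geq 4$ is again $\geq 3$. This converts the different choices, or an ambient isotopy of $g$, into an isotopy of string links, and the same argument shows $\Psi$ is compatible with isotopy of $g$. Finally, $\Phi\circ\Psi=\mathrm{id}$ and $\Psi\circ\Phi=\mathrm{id}$ on $\pi_0$ follow from the constructions: $\Phi\Psi$ reglues the same caps that were removed (up to the standard-caps unknotting), while $\Psi\Phi$ cuts out the already standard capping half introduced by $\Phi$.
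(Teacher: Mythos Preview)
Your proof is correct and complete in outline. The paper, however, takes a somewhat different route: rather than constructing an explicit inverse $\Psi$ and then wrestling with its well-definedness, it introduces an intermediate domain $\underline{S}^p$, obtained from $S^p\sqcup\R^p$ by attaching a $1$-handle $I\times D^p$ connecting a small disk in $S^p$ to a small disk in $\R^p$. It then considers the space $\Emb((\underline{S}^p)^{\cup r},\R^m)$ of embeddings that are standard near the planar part, and observes that the two forgetful maps---one to $\Emb((\R^p)^{\cup r},\R^m)$ (removing the handle interior) and one to $\pi_0(\Emb((S^p)^{\cup r},\R^m))$ (restricting to the spheres)---are a homotopy equivalence and a $\pi_0$-bijection respectively when $m-p>2$.

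The two arguments rest on the same underlying fact (codimension $\geq 3$ lets one freely add, remove, and unknot arcs and disks), but the packaging differs. Your direct construction of $\Psi$ makes the geometry very explicit---the finger moves and the Haefliger--Zeeman standardization of caps are visible---at the cost of having to verify well-definedness carefully via the one-parameter unknotting theorem. The paper's intermediate-space trick sidesteps that verification: well-definedness is absorbed into showing that two forgetful maps from a common source are equivalences, which is arguably cleaner but also less transparent about where exactly the codimension hypothesis enters. Either approach is fine here; yours has the advantage that a reader can see precisely which unknotting statement is being invoked at each step.
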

\begin{proof}
Let $\underline{S}^p$ denote the union of $S^p\sqcup\R^p$ and a 1-handle $I\times D^p$ such that $\{0\}\times D^p$ is glued to a small disk in $S^p$ and $\{1\}\times D^p$ is glued to a small disk in $\R^p$. Let $\Emb((\underline{S}^p)^{\cup r},\R^m)$ denote the space of embeddings $(\underline{S}^p)^{\cup r}\to \R^m$ that are standard near the plane $\R^p$. 

We see that the natural map $\Emb((\underline{S}^p)^{\cup r},\R^m)\to \Emb((\R^p)^{\cup r},\R^m)$ given by removing $I\times \mathrm{Int}\,D^p$ is a homotopy equivalence, and that if $m-p>2$, the induced map 
\[ \pi_0(\Emb((\underline{S}^p)^{\cup r},\R^m))\to \pi_0(\Emb((S^p)^{\cup r},\R^m)) \]
given by restricting to the spheres $(S^p)^{\cup r}$ in $(\underline{S}^p)^{\cup r}$ is an isomorphism. It is well-known that the latter group is isomorphic to $\pi_0(\Emb((S^p)^{\cup r},S^m))$.
\end{proof}

\begin{Thm}[{Goodwillie \cite{Go}. See also \cite[3.2.2. Corollary]{GKW}}]\label{thm:goodwillie}
Let $M$ be a neat submanifold of a manifold $N$, and let $m=\dim{M}$, $n=\dim{N}$. If $m\leq n-3$, then the space $\CEmb(M,N)$ of concordance embeddings $M\times I\to N\times I$ between the inclusion $M\times\{0\}\subset N\times\{0\}$ and embeddings $M\times\{1\}\to N\times\{1\}$, which restricts to $\partial M\times I\to \partial N\times I$; $(x,t)\mapsto (x,t)$, is $(n-m-3)$-connected. (See e.g. \cite[\S{3.2}]{GKW} for the definition of $\CEmb(M,N)$.)
\end{Thm}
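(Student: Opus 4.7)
The plan is to prove this by induction on a relative handle decomposition of $M$ on $\partial M$, reducing the question to a local model of a single handle (a disk), and then analyzing that case via parametrized general-position and Whitney-trick arguments that are available in codimension at least three. This is essentially Goodwillie's multiple-disjunction strategy.

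I would first reformulate the claim $\pi_i(\CEmb(M,N))=0$ for $i \leq n-m-3$ as the assertion that any $i$-parameter family, presented as an embedding $M \times I \times S^i \to N \times I$ that is standard on $M \times \{0\} \times S^i$ and on the boundary, can be deformed through such families to the constant family. Choose a relative handle decomposition $M = \partial M \cup h_1 \cup \cdots \cup h_r$, and set $M_k = \partial M \cup h_1 \cup \cdots \cup h_k$. Parametrized isotopy extension together with a parametrized tubular neighborhood theorem promote the restriction
\[ \CEmb(M_k,N) \longrightarrow \CEmb(M_{k-1},N) \]
to a (quasi)fibration whose homotopy fiber over a concordance embedding $f$ is a space of concordance embeddings of the handle $h_k$ into the complement of the image of $f$. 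Since a handle is a neatly embedded disk after collaring, the induction on $k$ reduces the problem to proving $(n-m-3)$-connectivity of $\CEmb(D^m, N')$ for various auxiliary codimension-$(n-m)$ pairs $(D^m, N')$.

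For the disk case, I would represent a class in $\pi_i$ by a family of neat embeddings $D^m \times I \times S^i \hookrightarrow N' \times I$, whose image is of dimension $m+1+i$ in an $(n+1)$-dimensional ambient, and then deform it to the trivial family by a parametrized version of the Whitney trick. The trick consumes two units of codimension to embed the Whitney disks and a third to carry it out in the family, giving exactly the bound $i \leq n-m-3$. Combined with the fact that in codimension $\geq 3$ concordance implies isotopy for a disk, this trivializes each family.

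The hard part is the multiple-disjunction step built into the handle induction: each homotopy fiber is itself a concordance space over a \emph{varying} complement, so handles must be straightened simultaneously in a way coherent with the earlier stages. This is the content of Goodwillie's multiple-disjunction theorem, which inductively bootstraps the single-fold parametrized Whitney trick into the multi-fold statement needed here. The codimension-$\geq 3$ hypothesis enters essentially at every stage, and the combinatorial bookkeeping for the cube of homotopy fibers and their connectivities carries most of the technical weight of the proof.
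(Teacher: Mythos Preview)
The paper does not supply its own proof of this theorem; it is stated as a cited result of Goodwillie (with a pointer to the Goodwillie--Klein--Weiss survey) and is used as a black box, for instance in Corollary~2.4. So there is nothing in the paper to compare your argument against.

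Your outline is a reasonable high-level sketch of how the result is actually established in those references: handle induction relative to $\partial M$ reduces to the case of a single disk, and the disk case is handled by parametrized general-position arguments available in codimension $\geq 3$. One caution: your final paragraph invokes ``Goodwillie's multiple-disjunction theorem'' as an ingredient, but the connectivity of $\CEmb(M,N)$ \emph{is} essentially a reformulation of (a special case of) that theorem. A self-contained proof would have to actually carry out the cubical induction and the connectivity bookkeeping rather than cite the end result; as written, the last step is close to assuming what you are trying to prove. Also, the sentence about the Whitney trick ``consuming two units of codimension to embed the Whitney disks and a third to carry it out in the family'' is heuristic at best---the genuine argument in \cite{Go} is considerably more delicate than a single parametrized Whitney move.
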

This can be used to estimate the connectivity of the first map (the graphing map) in the fibration sequence
\begin{equation}\label{eq:concordance-fibration}
 \Omega\Emb_\partial(M,N)\to \Emb_\partial(M\times I,N\times I)\to \CEmb(M,N)\to \Emb_\partial(M,N). 
\end{equation}
In particular, Theorem~\ref{thm:goodwillie} greatly simplifies problem since the study of higher homotopy groups of embedding spaces up to some degrees is reduced to that of $\pi_0$. More precisely, we use the following corollary. (A similar result for two component string links has been given as a part of \cite[Theorem~B]{Koy}.)
\begin{Cor}\label{cor:goodwillie}
Let $\ell\geq 3$ and $j\geq 1$. If $n\geq j+2$, the graphing map induces an isomorphism
\[ \pi_j(\Emb_\partial((I^{(\ell-1)n-\ell+2})^{\cup \ell},I^{\ell n-\ell+3}))\cong 
\pi_0(\Emb_\partial((I^{(\ell-1)n-\ell+2+j})^{\cup \ell},I^{\ell n-\ell+3+j})), \]
and both the conditions (b) and (c) of Theorem~\ref{thm:CFS} are not satisfied for the latter group.
\end{Cor}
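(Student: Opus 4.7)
I would deduce the isomorphism by iterating the fibration~(\ref{eq:concordance-fibration}) exactly $j$ times, using Theorem~\ref{thm:goodwillie} at each step. Set $M_k=(I^{(\ell-1)n-\ell+2+k})^{\cup\ell}$ and $N_k=I^{\ell n-\ell+3+k}$, so that $M_k\times I=M_{k+1}$, $N_k\times I=N_{k+1}$, and the codimension $\dim N_k-\dim M_k=n+1$ is independent of $k$. By Theorem~\ref{thm:goodwillie}, $\CEmb(M_k,N_k)$ is $(n-2)$-connected for every $k$, so the long exact sequence of the fibration $\Emb_\partial(M_{k+1},N_{k+1})\to\CEmb(M_k,N_k)\to\Emb_\partial(M_k,N_k)$ produces an isomorphism
\[
\pi_{j-k}(\Emb_\partial(M_k,N_k))\;\cong\;\pi_{j-k-1}(\Emb_\partial(M_{k+1},N_{k+1}))
\]
whenever $j-k\leq n-2$. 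Under the hypothesis $n\geq j+2$ this inequality holds for every $k\in\{0,\ldots,j-1\}$ (the worst case being $k=0$), and composing the $j$ resulting isomorphisms yields the claimed iso $\pi_j(\Emb_\partial(M_0,N_0))\cong\pi_0(\Emb_\partial(M_j,N_j))$.

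For the second assertion I would apply Lemma~\ref{lem:S-I}---legitimate since with $p=(\ell-1)n-\ell+2+j$ and $d=\ell n-\ell+3+j$ one has $d-p-2=n-1\geq j+1>0$---to identify this $\pi_0$ with $\pi_0(\Emb((S^p)^{\cup\ell},S^d))$, and then inspect Theorem~\ref{thm:CFS} with all $p_i=p$ and $m=d$. Every factor $d-p_{k_i}-2$ then equals $n-1$, and $d-3=\ell(n-1)+j$, so equations~(\ref{eq:CFS2}) and~(\ref{eq:CFS3}) both reduce to
\[
(n-1)\bigl(x_1+\cdots+x_s-\ell\bigr)=j.
\]
The hypothesis $n\geq j+2$ forces $0<j<n-1$, so $j$ is not divisible by $n-1$; hence this equation admits no integer solution $x_1+\cdots+x_s$, and neither condition~(b) nor~(c) can be satisfied. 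In particular no knowledge of the subset $FCS(n+1,n+1)\subset\Z^2$ is needed beyond the fact that its elements are integer pairs.

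There is no serious obstacle here: the graphing construction preserves the codimension $n+1$, so Theorem~\ref{thm:goodwillie} supplies a uniform connectivity bound across the $j$ iterations, and the failure of~(b) and~(c) is a one-line divisibility check once the arithmetic is laid out. The only bookkeeping care is to verify that the hypothesis $p<m-2$ of Lemma~\ref{lem:S-I} and the codimension-$\geq 3$ hypothesis of Theorem~\ref{thm:goodwillie} hold throughout; both follow immediately from $n\geq j+2\geq 3$.
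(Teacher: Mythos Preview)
Your proof is correct and follows essentially the same route as the paper's: iterate the graphing isomorphism using the $(n-2)$-connectivity of $\CEmb$ from Theorem~\ref{thm:goodwillie}, then observe that the Diophantine equation $(n-1)\sum x_i=\ell(n-1)+j$ has no integer solution since $0<j<n-1$. Your version is slightly more explicit in invoking Lemma~\ref{lem:S-I} to pass to spherical links before applying Theorem~\ref{thm:CFS}, and in rewriting the equation as $(n-1)(\sum x_i-\ell)=j$; both are welcome clarifications but not substantive differences.
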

\begin{proof}
By Theorem~\ref{thm:goodwillie}, the space $\CEmb((I^{(\ell-1)n-\ell+2})^{\cup \ell},I^{\ell n-\ell+3})$ is $(\ell n-\ell+3)-\{(\ell-1)n-\ell+2\}-3=(n-2)$-connected. Since $n-2\geq j$, we have $\pi_k(\CEmb((I^{(\ell-1)n-\ell+2})^{\cup \ell},I^{\ell n-\ell+3}))=0$ for $k=j,j-1$, and the homotopy exact sequence for the fibration (\ref{eq:concordance-fibration}) shows that 
\[ \pi_j(\Emb_\partial((I^{(\ell-1)n-\ell+2})^{\cup \ell},I^{\ell n-\ell+3}))\cong
\pi_{j-1}(\Emb_\partial((I^{(\ell-1)n-\ell+3})^{\cup \ell},I^{\ell n-\ell+4})). \]
This argument can be continued since the codimension of the embeddings do not change, and we obtain the desired isomorphism.

For the second assertion, the equation (\ref{eq:CFS2}) or (\ref{eq:CFS3}) is
$(n-1)(x_1+\cdots+x_s)=\ell(n-1)+j$,
and this does not have an integer solution if $n-1\geq j+1$.
\end{proof}

\subsection{Embeddings with Brunnian null-isotopies}\label{ss:def-brunnian}

We consider the space of string links equipped with null-isotopies of a subset of the set of components. We start with the space
$\Emb_\partial(\tbigcup_{\lambda\in \Omega}I^{a_{\lambda}},I^N)$, where $\Omega=\{1,2,\ldots,r\}$ and the dimensions $a_{\lambda}$, $\lambda\in \Omega$ are given, and $\iota\colon \tbigcup_{\lambda\in \Omega}I^{a_{\lambda}}\rh I^N$ is a standard inclusion.

For a subset $S$ of $\Omega=\{1,2,\ldots,r\}$, let $\calE(S)=\Emb_\partial(\tbigcup_{\lambda\in \Omega\setminus S}I^{a_{\lambda}},I^N)$, and let $\calB(S)$ denote the homotopy fiber over $\iota$ of the map
\[ \pi_{S}\colon \calE(\emptyset)\to \calE(S) \]
which forgets the components labelled by $S$. 
In more detail, $\calB(S)$ is the space of pairs $(f,\gamma)$, where $f\in \calE(\emptyset)$, and $\gamma$ is a path in $\calE(S)$ that gives a null-isotopy of $\pi_{S}(f)$. The assignment $S\mapsto \calB(S)$ gives a functor from the power set $\calP(\Omega)$ of $\Omega$ to $\mathrm{Top}$, where a morphism $\calB(S)\to \calB(S')$ for $S\subset S'$ is given by forgetting the components in $S'\setminus S$. Clearly, the collection of spaces $\{\calB(S)\}_{S\subset \Omega}$ and morphisms $\{\calB(S)\to \calB(S')\}_{S\subset S'\subset \Omega}$ naturally forms a cubical diagram. 

\begin{Def}[Brunnian null-isotopies for embedding]\label{def:brun-null-iso}
A {\it system of Brunnian null-isotopies} of an embedding $f\in \calE(\emptyset)$ is a choice of a lift of the constant map $\mathrm{const}_f$ at $f$ for the projection map
\[ \holim{\emptyset\neq S\subset \Omega}{\calB(S)}\to \Map(\Delta^{r-1},\calB(\Omega))=\Map(\Delta^{r-1},\calE(\emptyset)). \]
(See e.g. \cite[\S{5.3}]{MV} for the homotopy limits in cubical diagrams.)
\end{Def}
We give an example of a system of Brunnian null-isotopies for $\Omega=\{1,2,3\}$ in \S\ref{ss:pres-brun-2}.
Roughly speaking, an element of $\holim{\emptyset\neq S\subset \Omega}{\calB(S)}$ is a collection of families of null-isotopies in $\calE(S)$ parametrized by $(|S|-1)$-dimensional simplices in $\Delta^{r-1}$ which are compatible with respect to the face inclusions. The Brunnian property can be interpreted  in terms of a lifting problem. Let $P$ be the pullback of the following diagram:
\[ \xymatrix{
 \calE(\emptyset) \ar[r]^-{\mathrm{const}_\bullet} & \Map(\Delta^{\ell-1},\calE(\emptyset)) & \holim{\emptyset\neq S\subset\Omega}\calB(S) \ar[l]_-{\text{proj}},
}\]
where $\calE(\emptyset):=\Emb_\partial(\tbigcup_{\lambda\in \Omega}I^{a_{\lambda}},I^N)$ and $\mathrm{const}_\bullet(f)=\mathrm{const}_f$. Then a system of Brunnian null-isotopies of $f\in\calE(\emptyset)$ is a lift of $f$ to $P$.

We also need a concept of a relative Brunnian property for a null-isotopy of a string link. A null-isotopy of a string link is given by an element $(f,\gamma)\in \calB(\emptyset)$, where $f\in\calE(\emptyset)$, $\gamma$ is a path in $\calE(\emptyset)$ from $f$ to $\iota$. There is a natural map
$\mathrm{const}_\bullet\colon\calB(\emptyset)\to \holim{\emptyset\neq S\subset \Omega}{\calB(S)}$
which takes $(f,\gamma)$ to $\{\mathrm{const}_{(f,\gamma)}\}$.

\begin{Def}[Relative Brunnian property for null-isotopy]\label{def:rel-brunnian}
We say that an element $(f,\gamma)\in \calB(\emptyset)$ has a {\it relative Brunnian property} with respect to a system of Brunnian null-isotopies 
\[ \{\gamma_S\}_{\emptyset\neq S\subset \Omega}\in \holim{\emptyset\neq S\subset \Omega}{\calB(S)} \]
of $f$, if the point $\{\mathrm{const}_{(f,\gamma)}\}$ is homotopic to $\{\gamma_S\}_{\emptyset\neq S\subset \Omega}$ in $\mathrm{proj}^{-1}(\mathrm{const}_f)$.
\end{Def}

\begin{Rem}
The Brunnian properties defined here are stronger than those given in \S\ref{ss:string-link}. The stronger version will be important in several places in the constructions in this paper. See also Remark~\ref{rem:brun-stronger}.  
\end{Rem}%
There is a natural embedding of $\calB(\emptyset)$ into $P$. Also, there is a natural projection $P\to \calE(\emptyset)$. The following is a natural analogue of Definitions~\ref{def:brun-null-iso} and \ref{def:rel-brunnian} for families.
\begin{Def}[Brunnian null-isotopies for families]\label{def:brunnian-framily}
A {\it system of Brunnian null-isotopies of a family} $\sigma\colon B\to \calE(\emptyset)$ of embeddings is a lift of $\sigma$ to $P$. We say that a family $\tau\colon B\to \calB(\emptyset)\subset P$ has a {\it relative Brunnian property} with respect to a system of Brunnian null-isotopies $\widetilde{\sigma}\colon B\to P$ if there is a homotopy between $\tau$ and $\widetilde{\sigma}$ in $P$, which does not change the projection to $\calE(\emptyset)$.
\end{Def}

\subsection{Vertex surgery and framing}\label{ss:v-surgery}

By taking the complements, a family of framed string links gives a family of handlebodies $V$ (or a $V$-bundle) relative to the boundaries. We will define below a vertex surgery by replacing a trivial bundle with this fiber bundle. 
As in \cite[Proof of Lemma~A]{Wa22}, we consider the following diagram, which is commutative up to homotopy:
\begin{equation}\label{eq:2x3-diag}
 \xymatrix{
  \bEmb_\partial((I^k)^{\cup p}\cup (I^{k-1})^{\cup q},I^{2k})_{\iota} \ar[d] & \bEmb_\partial(N_{I^k}^{\cup p}\cup N_{I^{k-1}}^{\cup q},I^{2k})_{\widetilde{\iota}} \ar[l]_-{\simeq} \ar[r]^-{\widetilde{c}}  \ar[d]& B\Diff^\fr_\partial(V;\mathrm{st}) \ar[d]\\
  \Emb_\partial^\fr((I^k)^{\cup p}\cup (I^{k-1})^{\cup q},I^{2k})_{\iota} & \Emb_\partial(N_{I^k}^{\cup p}\cup N_{I^{k-1}}^{\cup q},I^{2k})_{\widetilde{\iota}} \ar[l]_-{\simeq} \ar[r]^-{c} & B\Diff_\partial(V)
} \end{equation}
Here, 
\begin{itemize}
\item $\bEmb_\partial(A,B)_{f_0}$ denotes the homotopy fiber of the natural map $\Emb_\partial(A,B)\to \mathrm{Imm}_\partial(A,B)$ over $f_0\in \mathrm{Imm}_\partial(A,B)$, where $\mathrm{Imm}_\partial(A,B)$ is the space of immersions $A\looparrowright B$, 
\item the leftmost vertical map is induced by the natural maps $\Omega^k(SO_{2k}/SO_k)\to \Omega^k(BSO_k)$ and $\Omega^{k-1}(SO_{2k}/SO_{k+1})\to \Omega^{k-1}(BSO_{k+1})$,
\item $B\Diff_\partial(X)$ denotes the classifying space for smooth $X$-bundles trivialized over $\partial X$, and we will call corresponding bundles $(X,\partial)$-bundles.
\item $B\Diff^\fr_\partial(A;\tau)$ denotes the classifying space of smooth $(A,\partial A)$-bundles with framings along fibers that agrees with $\tau$ near $\partial A$.
\item $N_S$ denotes an open tubular neighborhood of a neat submanifold $S$ of $I^{2k}$. 
\item $c\colon \Emb_\partial(N_{I^k}^{\cup p}\cup N_{I^{k-1}}^{\cup q},I^{2k})_{\widetilde{\iota}}\to B\Diff_\partial(V)$ is the map taking the complement, and $\widetilde{c}$ is a natural lift of the map $c$ defined in \cite[Proof of Lemma~A]{Wa22}.
\end{itemize} 
It follows from the diagram above that a family $B\to \bEmb_\partial((I^k)^{\cup p}\cup (I^{k-1})^{\cup q},I^{2k})_{\iota}$ gives a family $B\to \Emb_\partial^\fr((I^k)^{\cup p}\cup (I^{k-1})^{\cup q},I^{2k})_{\iota}$, and that in $B\Diff^\fr_\partial(V;\mathrm{st})$.

\begin{Lem}[Vertex surgery]\label{lem:vert-surg}
The chain $\overline{\omega}_{T_\ell(p,q)}\colon \overline{B}_{T_\ell(p,q)}\to \Emb_\partial^\fr((I^k)^{\cup p}\cup (I^{k-1})^{\cup q},I^{2k})$ of Theorem~\ref{thm:l-valent-general} admits a lift to the space $\bEmb_\partial(N_{I^k}^{\cup p}\cup N_{I^{k-1}}^{\cup q},I^{2k})_{\widetilde{\iota}}$, which is compatible with those for the face graphs $\sigma\in\overrightarrow{\calP}_{T_\ell(p,q)}\setminus\{T_\ell(p,q)\}$. Let $\overline{\rho}_{T_\ell(p,q)}\colon \overline{B}_{T_\ell(p,q)}\to B\Diff^\fr_\partial(V;\mathrm{st})$ denote the composition of this lift with $\widetilde{c}$.
\end{Lem}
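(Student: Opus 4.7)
The plan is to construct the lift into $\bEmb_\partial(N_{I^k}^{\cup p}\cup N_{I^{k-1}}^{\cup q},I^{2k})_{\widetilde{\iota}}$ inductively, in tandem with the inductive construction of $\overline{\omega}_{T_\ell(p,q)}$ in Theorem~\ref{thm:l-valent-general}. The left and middle horizontal maps in diagram~(\ref{eq:2x3-diag}) are weak equivalences (the tubular neighborhoods deformation retract onto the string link functorially), so it suffices to produce a lift of the chain from $\Emb_\partial^\fr((I^k)^{\cup p}\cup (I^{k-1})^{\cup q},I^{2k})$ to $\bEmb_\partial((I^k)^{\cup p}\cup (I^{k-1})^{\cup q},I^{2k})_{\iota}$; this is a coherent choice of null-homotopies through immersions from the embeddings appearing in the family to the standard inclusion $\iota$. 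By Smale--Hirsch, the relevant fiber is controlled by the fibers of $\Omega^k(SO_{2k}/SO_k)\to \Omega^k(BSO_k)$ and its $(k-1)$-analogue, which for $k$ sufficiently large are highly connected relative to $\dim\overline B_{T_\ell(p,q)}$, so that the abstract obstruction theory certainly allows such a lift. However, I would construct it canonically from the Brunnian data already present in the family.

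For the base case $\ell=3$, the basic bracket $\omega_{T_3(p,q)}(T_3(p,q))$ is a single (framed) Borromean-type embedding, and a lift to $\bEmb_\partial$ is just a choice of null-homotopy in $\Imm_\partial$. One gets such a null-homotopy by post-composing any Brunnian null-isotopy in $\Emb_\partial$ with the forgetful map to $\Imm_\partial$; doing this simultaneously for all non-empty subsets of components produces a point in $\holim{\emptyset\neq S\subset\Omega}\calB(S)$, which is exactly the datum provided by the system of Brunnian null-isotopies of Definition~\ref{def:brun-null-iso}. For the inductive step, Theorem~\ref{thm:l-valent-general} builds $\overline{\omega}_{T_\ell(p,q)}$ from lower-excess brackets via the operations of suspension (Definition~\ref{def:suspension}), delooping (\S\ref{ss:delooping}), composition (\S\ref{ss:iter-surg}), and gluing along face graphs. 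Each such operation is defined at the level of embeddings and is natural in the forgetful map to $\Imm_\partial$, so each induces the analogous operation on $\bEmb_\partial$ once Brunnian null-isotopies are specified. These null-isotopies are supplied by Theorem~\ref{thm:vertex}(4) and the Brunnian clause of Theorem~\ref{thm:l-valent-general}, and transport canonically through each operation.

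The main obstacle is coherence across face graphs: the lifts for different $(\sigma,\alpha)\in\overrightarrow{\calP}_{T_\ell(p,q)}$ must agree on their overlaps inside $\overline B_{T_\ell(p,q)}$, which translates into the statement that the various Brunnian null-isotopies used at different stages are compatible as maps into the cubical homotopy limit $\holim{\emptyset\neq S\subset\Omega}\calB(S)$ rather than merely at the level of individual components. This is exactly why the strong version of the Brunnian property in Definition~\ref{def:brun-null-iso} is used throughout the paper (see Remark~\ref{rem:brun-stronger}(3)); once one works consistently with this cubical notion, the compatibility of lifts follows directly from the face-compatibility of the underlying $\overline{\omega}_{(\sigma,\alpha)}$ already established in Theorem~\ref{thm:l-valent-general}. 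Once this coherent lift is produced, the map $\overline{\rho}_{T_\ell(p,q)}$ is defined by post-composing with $\widetilde c$ as stated.
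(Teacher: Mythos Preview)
Your proposal is correct and follows essentially the same route as the paper's proof (given in \S\ref{ss:lift-bEmb} as Lemma~\ref{lem:vert-surg-2}). The paper formalizes the passage from the Brunnian data to the $\bEmb$-lift through two auxiliary pullback spaces $P$ and $Q$ and two short lemmas (a lift to $P$ gives a lift to $Q$, and a lift to $Q$ gives a lift to $\overline{\calE}(\emptyset)$), whereas you describe the same mechanism more informally as ``post-composing Brunnian null-isotopies with the forgetful map to $\Imm_\partial$''; both arguments extract, for each component $i$, a null regular homotopy from $\calB(\Omega\setminus\{i\})$ and assemble these into a point of $\bEmb_\partial$, then check that suspensions, deloopings, and the passage to $\widehat\omega$ and $\overline\omega$ carry this lift along.
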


Lemma~\ref{lem:vert-surg} will be proved in \S\ref{ss:lift-bEmb}.

\begin{Rem}\label{rem:complement-map}
  We note the following property of the complement map $c$. 
  It turns out that the fibration sequence 
\[ \Emb_\partial^\fr((I^k)^{\cup p}\cup (I^{k-1})^{\cup q},I^{2k})_{\iota} \xrightarrow{c} B\Diff_\partial(V)\to B\Diff_\partial(I^{2k})  \]
induced by the Cerf--Palais fibration $\Diff_\partial(I^{2k})\to \Emb_\partial^\fr((I^k)^{\cup p}\cup (I^{k-1})^{\cup q},I^{2k})_\iota$ splits, and we have a homotopy equivalence
\[ B\Diff_\partial(V)\simeq B\Diff_\partial(I^{2k})\times \Emb_\partial^\fr((I^k)^{\cup p}\cup (I^{k-1})^{\cup q},I^{2k})_\iota. \]
Hence the map $c$ has the homotopy left inverse taking the quotient by $B\Diff_\partial(I^{2k})$. This kind of relation will be used to move back and forth between families of embeddings and fiber bundles.

We give a canonical choice of the homotopy left inverse to $c$. Let $D^{k+1}$ and $D^k$ be spanning disks of the inclusions $I^k\subset I^{2k}$ and $I^{k-1}\subset I^{2k}$, respectively. Let $\partial_\sqcup D^{k+1}$ and $\partial_\sqcup D^k$ denote $D^{k+1}\cap \partial I^{2k}$ and $D^k\cap\partial I^{2k}$, respectively, and let $\Emb_{\partial_\sqcup}((D^{k+1})^{\cup p}\cup (D^k)^{\cup q},I^{2k})$ denote the space of embeddings $(D^{k+1})^{\cup p}\cup (D^k)^{\cup q}\to I^{2k}$ fixed on $(\partial_\sqcup D^{k+1})^{\cup p}\cup (\partial_\sqcup D^k)^{\cup q}$. Then we have the fiber bundle
\[  \Emb_{\partial_\sqcup}((D^{k+1})^{\cup p}\cup (D^k)^{\cup q},I^{2k})
\to \Emb_\partial^\fr((I^k)^{\cup p}\cup (I^{k-1})^{\cup q},I^{2k})_\iota \]
with fiber $\Emb_\partial((D^{k+1})^{\cup p}\cup (D^k)^{\cup q},V)$. Since $\Emb_{\partial_\sqcup}((D^{k+1})^{\cup p}\cup (D^k)^{\cup q},I^{2k})$ is contractible, the map $\Omega\Emb_\partial^\fr((I^k)^{\cup p}\cup (I^{k-1})^{\cup q},I^{2k})_\iota\to\Emb_\partial((D^{k+1})^{\cup p}\cup (D^k)^{\cup q},V)$ is a homotopy equivalence. The homotopy inverse of this map can be given explicitly as follows. We consider an embedding $D^{k+1}\to V$ (resp. $D^k\to V$) as a path of framed embeddings $I^k\to I^{2k}$ (resp. $I^{k-1}\to I^{2k}$) from the standard inclusion $I^k\subset I^{2k}$ (resp. $I^{k-1}\subset I^{2k}$) to $I^k\cong \partial_\sqcup D^{k+1}\subset I^{2k}$ (resp. $I^{k-1}\cong \partial_\sqcup D^k\subset I^{2k}$). By connecting this path with the path given by the standard inclusion $D^{k+1}\subset I^{2k}$ (resp. $D^k\subset I^{2k}$), and by modifying near $\partial_\sqcup D^{k+1}$ (resp. $\partial_\sqcup D^k$), we get a loop of framed embeddings $I^k\to I^{2k}$ (resp. $I^{k-1}\to I^{2k}$), thus obtaining a map
\[  \Emb_\partial((D^{k+1})^{\cup p}\cup (D^k)^{\cup q},V)
\to \Omega \Emb_\partial^\fr((I^k)^{\cup p}\cup (I^{k-1})^{\cup q},I^{2k})_\iota. \]
By precomposing with the action $\Diff_\partial(V)\to \Emb_\partial((D^{k+1})^{\cup p}\cup (D^k)^{\cup q},V)$, we get a map
\[ \lambda\colon B\Diff_\partial(V)\to \Emb_\partial^\fr((I^k)^{\cup p}\cup (I^{k-1})^{\cup q},I^{2k})_\iota, \]
which is a homotopy left inverse of $c$.
\end{Rem}

We say that two framings of a string link are homotopic if they are homotopic relative to the boundary as sections of the normal frame bundle.
\begin{Lem}\label{lem:framing-brunnian}
If a string link $f\in \calE(\emptyset)$ has a system of Brunnian null-isotopies, then it defines a unique homotopy class of a framing on $f$, which is compatible with the framings defined for $f$ with some components removed.
\end{Lem}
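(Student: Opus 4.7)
The plan is to pull back the standard framing of $\iota$ along the null-isotopies of single components furnished by the Brunnian system, assembling a framing on $f$ one component at a time.

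First, for each $i\in\Omega$, let $S_i=\Omega\setminus\{i\}$ and apply the projection $\holim{\emptyset\neq S\subset\Omega}{\calB(S)}\to \calB(S_i)$ to the given system to extract a null-isotopy $\gamma_i$ in $\calE(S_i)=\Emb_\partial(I^{a_i},I^N)$ from the $i$-th component $f_i:=\pi_{S_i}(f)$ to the standard inclusion $\iota_i$. The forgetful map $\Emb_\partial^\fr(I^{a_i},I^N)\to \Emb_\partial(I^{a_i},I^N)$ is a Hurewicz fibration, and the standard framed $\iota_i$ is a preferred lift of $\iota_i$. By the homotopy lifting property (with the endpoint prescribed at time $1$), I would lift $\gamma_i$ to a path $\widetilde{\gamma}_i$ in $\Emb_\partial^\fr(I^{a_i},I^N)$ terminating at the standard framed $\iota_i$; its initial point is a framing $\varphi_i$ on $f_i$. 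Collecting these for all $i$ yields a framing $\varphi:=\bigsqcup_i \varphi_i$ on $f$.

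For well-definedness up to homotopy, any two lifts of $\gamma_i$ sharing the terminal point at the standard framed $\iota_i$ have initial points in the same path-component of the fiber over $f_i$, so $\varphi_i$ is a well-defined homotopy class of framings on $f_i$. Independence from the specific choice of $\gamma_i$ within the given Brunnian system reduces to showing that any two such $\gamma_i$ are path-homotopic rel endpoints in $\calE(S_i)$; this is exactly the content of the holim coherence, since the null-isotopies $\gamma_S$ assigned to strictly smaller subsets $S\subsetneq S_i$ project down to $\calB(S_i)$ and exhibit the required homotopies between competing extractions. Path-homotopic $\gamma_i$ then lift to path-homotopic $\widetilde{\gamma}_i$ by a standard covering-homotopy argument applied to the square $(I\times I,\, I\times\{1\}\cup\partial I\times I)$, so the induced framings agree up to homotopy.

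Compatibility with removing components is built into the construction: if $\Omega'\subsetneq\Omega$ and $f':=\pi_{\Omega\setminus\Omega'}(f)$, then restricting the Brunnian system of $f$ to the subcube indexed by nonempty subsets of $\Omega'$ yields a Brunnian system for $f'$ whose extracted $\gamma_i$ (for each $i\in\Omega'$) coincides with the one extracted from the original system on $f$, so the framings on the surviving components agree. The main technical obstacle is the uniqueness argument in the previous paragraph: carefully unpacking the holim of $\calB(S)$ over nonempty $S\subset\Omega$ to produce, from the cubical data of a system of Brunnian null-isotopies, the explicit path-homotopies between competing choices of $\gamma_i$, so as to reduce all ambiguity in $\varphi_i$ to path-components of the fiber.
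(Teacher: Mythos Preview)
Your approach is correct and essentially equivalent to the paper's, though packaged differently. The paper compresses your path-lifting argument into a single composite: it observes that a null-isotopy is in particular a null regular homotopy, so there is a natural map $\calB(S)\to\bEmb_\partial(\bigcup_{\lambda\in\Omega\setminus S}I^{a_\lambda},I^N)_\iota$, and then invokes the map $\bEmb_\partial\to\Emb_\partial^\fr$ from diagram~(\ref{eq:2x3-diag}). Applied with $S=\Omega\setminus\{\lambda\}$, this yields the framing on the $\lambda$-th component in one line; compatibility is then immediate from the functoriality of these maps together with the coherence built into the holim. Your direct use of the Hurewicz fibration $\Emb_\partial^\fr\to\Emb_\partial$ and path-lifting is the same mechanism unwound, and has the advantage of not requiring the $\bEmb$ machinery, at the cost of a longer uniqueness discussion.

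One minor imprecision worth noting: the projection $\holim{\emptyset\neq S\subset\Omega}{\calB(S)}\to\calB(S_i)$ for $S_i=\Omega\setminus\{i\}$ does not give a single null-isotopy but rather a map $\Delta^{|S_i|-1}=\Delta^{r-2}\to\calB(S_i)$, i.e.\ an $(r-2)$-parameter family of null-isotopies of the $i$-th component. This does not harm your argument, since $\Delta^{r-2}$ is connected and hence any two members of the family are path-homotopic, which is all you need; but your phrasing about ``strictly smaller subsets $S\subsetneq S_i$'' providing homotopies is slightly backwards---those smaller $S$ furnish the \emph{faces} of this simplex via the maps $\calB(S)\to\calB(S_i)$, and the interior of the simplex already contains the connecting paths.
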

\begin{proof}
There is a natural map $\calB(S)\to \bEmb_\partial(\bigcup_{\lambda\in \Omega\setminus S}I^{a_\lambda},I^N)$ which maps null-isotopy to a null regular homotopy. Thus for each $\lambda\in\Omega$, we get a homotopy class of framing of the $\lambda$-th component by the composite map
$\calB(\Omega\setminus\{\lambda\})\to \bEmb_\partial(I^{a_\lambda},I^N)_\iota\to \Emb_\partial^\fr(I^{a_\lambda},I^N)_\iota$.
The compatibility with other choices of $S\neq \emptyset$ follows by the compatibility condition in the definition of homotopy limit.
\end{proof}

\section{Graph surgery}\label{s:graph-surg}

In this section, we prove Theorem~\ref{thm:graph-surgery} assuming Theorem~\ref{thm:l-valent-general}.

\subsection{Directed graph cycle}\label{ss:directed-graph}

We take lifts of cycles in $\mathcal{GC}$ to those for directed graphs.
For a labelled graph $\Gamma$, we consider directed graphs $(\Gamma,\alpha)$, where $\alpha$ is a direction assignment on the edges of $\Gamma$. We consider a label on $\Gamma\in \mathcal{GC}$ represents an orientation of the real vector space $\R^{E(\Gamma)}$. 

\begin{Def}[Directed graph complex]
Let $\overrightarrow{\mathcal{GC}}$ denote the chain complex freely spanned over $\Q$ by directed graphs, oriented by labels on edges, with the differential defined by the half of the sum of the results of all possible ways to expand one vertex of degree $\geq 4$ with directions.
\end{Def}
There is a natural map $\eta\colon \mathcal{GC}\to \overrightarrow{\mathcal{GC}}$, which takes a labelled graph $\Gamma$ to
\begin{equation}\label{eq:eta}
 \frac{1}{2^{|E(\Gamma)|}}\sum_\alpha (\Gamma,\alpha), 
\end{equation}
where the sum is over all direction assignments on the labelled graph $\Gamma$. Or in other words, it takes each edge to the average of the two directions:
\begin{equation}\label{eq:doubling}
 \fig{-0mm}{1mm}{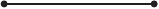}\quad\mapsto\quad 
\frac{1}{2}\Bigl(\quad\fig{-0mm}{1.5mm}{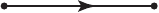}\quad+\quad\fig{-0mm}{1.5mm}{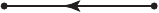}\quad\Bigr).
\end{equation}
If we forget labels, the element $\eta(\Gamma)$ can be rewritten as  
\begin{equation}\label{eq:sum-graphs}
 \frac{1}{2^{|E(\Gamma)|}}\sum_\alpha \frac{|\mathrm{Aut}\,\Gamma|}{|\mathrm{Aut}\,(\Gamma,\alpha)|}(\Gamma,\alpha), 
\end{equation}
where the sum is over all classes of direction assignments on $\Gamma$. For example,
\[ \frac{1}{10}\,\,\fig{-10mm}{20mm}{graph_X.pdf}
\stackrel{\eta}{\longmapsto}
\frac{1}{2^{10}}\left(\frac{1}{5}\,\,\fig{-10mm}{20mm}{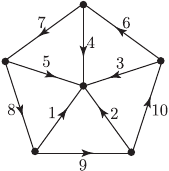}+\cdots\Bigl({{\text{all possible ways}}\atop{\text{to assign directions}}}\Bigr)\right),  \]
where the labels on edges represent a graph orientation, which is a structure independent of directions. Note that for each direction assignment $\alpha_X$ on $X$, $|\Aut(X,\alpha_X)|=5$ or $1$. The following lemma is evident from the definitions of the differentials.
\begin{Lem}
$\eta\colon \mathcal{GC}\to \overrightarrow{\mathcal{GC}}$ is a chain map, which is a splitting of the forgetful map $\overrightarrow{\mathcal{GC}}\to \mathcal{GC}$. Hence $\eta_*$ identifies $H_*(\mathcal{GC})$ with a direct summand of $H_*(\overrightarrow{\mathcal{GC}})$.
\end{Lem}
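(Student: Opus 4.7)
The plan is to verify two separate claims and then combine them by standard homological algebra: (i) the forgetful map $p\colon \overrightarrow{\mathcal{GC}}\to\mathcal{GC}$ satisfies $p\circ\eta=\mathrm{id}_{\mathcal{GC}}$, and (ii) $\eta$ commutes with the differentials on $\mathcal{GC}$ and $\overrightarrow{\mathcal{GC}}$. The direct summand conclusion then follows from the fact that any chain map admitting a chain map left inverse splits: $\overrightarrow{\mathcal{GC}}=\eta(\mathcal{GC})\oplus\ker(p)$ as chain complexes, and the same decomposition descends to homology. Claim (i) is immediate from the formula (\ref{eq:eta}): there are $2^{|E(\Gamma)|}$ direction assignments $\alpha$ on $\Gamma$, each sent by $p$ back to $\Gamma$ with the same edge-labelling, so the prefactor $1/2^{|E(\Gamma)|}$ cancels and $p\eta(\Gamma)=\Gamma$.

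For the chain map property (ii), the strategy is to expand both $\eta\partial\Gamma$ and $\partial\eta\Gamma$ from the definitions and check that each directed split graph appears on the two sides with the same coefficient. Writing $\Gamma'_v$ for the result of a splitting of a vertex $v$ of $\Gamma$, so that $\Gamma'_v$ has $|E(\Gamma)|+1$ edges, one has
\[ \eta(\partial\Gamma)=\sum_{v}\sum_{\text{splits of }v}\eta(\Gamma'_v)=\sum_{v,\,\text{split}}\frac{1}{2^{|E(\Gamma)|+1}}\sum_{\alpha'}(\Gamma'_v,\alpha'), \]
where $\alpha'$ ranges over all direction assignments on $\Gamma'_v$. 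On the other hand, splitting a vertex of a directed graph preserves the directions of the old edges and adds a new edge whose direction is summed over both choices (weighted by the factor $\tfrac{1}{2}$ in the definition of the directed differential), so
\[ \partial\eta(\Gamma)=\frac{1}{2^{|E(\Gamma)|}}\sum_{\alpha}\partial(\Gamma,\alpha)=\frac{1}{2^{|E(\Gamma)|+1}}\sum_{v,\,\text{split}}\sum_{\alpha}\sum_{\beta\in\{+,-\}}(\Gamma'_v,\alpha\cup\beta). \]
The combinatorial identity $\sum_{\alpha}\sum_{\beta}(\Gamma'_v,\alpha\cup\beta)=\sum_{\alpha'}(\Gamma'_v,\alpha')$, valid because specifying an assignment $\alpha$ on the old edges together with a direction $\beta$ on the new edge is the same as specifying a direction assignment $\alpha'$ on the entire split graph, then shows the two expressions coincide.

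The main obstacle I expect is sign bookkeeping, namely verifying that the orientations (edge labellings that provide the graph orientations) transport consistently on both sides so that coefficients match with correct signs, not merely as underlying graphs. The convention used in defining $\text{split}(\Gamma,v)$ is that the new edge receives label $1$; provided the directed differential on $\overrightarrow{\mathcal{GC}}$ uses the same convention, the signs will line up automatically by the calculation above. A secondary but mild point is consistency for vertices of valence three, where both differentials contribute zero because no admissible splitting exists (each side of such a split would carry only two of the original edges plus the new edge, giving a forbidden valence-two vertex). With (i) and (ii) in hand, the direct summand conclusion on homology is purely algebraic and requires no further geometric input.
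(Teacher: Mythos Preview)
Your proof is correct and follows exactly the line the paper has in mind; the paper itself gives no argument beyond ``evident from the definitions of the differentials,'' and your expansion of both $\eta\partial$ and $\partial\eta$ together with the bijection $(\alpha,\beta)\leftrightarrow\alpha'$ is precisely the verification that makes this evident. One small wording slip: when you discuss splitting a $3$-valent vertex, the two sides cannot \emph{each} carry two of the original edges (there are only three), but your conclusion that at least one side has forbidden valence $\leq 2$ is correct and is why both differentials vanish there.
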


Now we consider the linear combination
\begin{equation}\label{eq:gamma}
 \vec{\gamma}=\eta(2^{10}\gamma)=\sum_{\alpha_X}\frac{2}{|\mathrm{Aut}\,(X,\alpha_X)|}(X,\alpha_X)
-\sum_{\alpha_Y}\frac{1}{|\mathrm{Aut}\,(Y,\alpha_Y)|}(Y,\alpha_Y), 
\end{equation}
where $|\mathrm{Aut}\,(X,\alpha_X)|=1$ or $5$, and $|\mathrm{Aut}\,(Y,\alpha_Y)|=1$ for all $\alpha_Y$. 
\begin{Lem}\label{lem:gamma-cycle}
The combinatorial chain $\vec{\gamma}$ is a $\partial$-cycle in $\overrightarrow{\mathcal{GC}}$, i.e., $\partial\vec{\gamma}=0$. Moreover, the homology class of $\vec{\gamma}$ is nontrivial in $H_*(\overrightarrow{\mathcal{GC}})$. 
\end{Lem}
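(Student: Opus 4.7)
The proof should be almost immediate given the unnamed preceding lemma (the one asserting that $\eta$ is a chain map splitting the forgetful map $\overrightarrow{\mathcal{GC}}\to\mathcal{GC}$). My plan is therefore two-step: verify the displayed formula for $\vec{\gamma}$ in (\ref{eq:gamma}) against the definition of $\eta$, then deduce both assertions by pushing the already-established cycle and class for $\gamma$ through $\eta$.

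First I would check the bookkeeping that $\vec{\gamma}$ as written in (\ref{eq:gamma}) equals $\eta(2^{10}\gamma)$. Since $X$ and $Y$ both have $10$ edges, the normalising factor in (\ref{eq:sum-graphs}) is $2^{-10}$ for each graph, and the coefficients $\tfrac{X}{5}$ and $\tfrac{Y}{2}$ in $\gamma$ are precisely $\tfrac{2X}{|\mathrm{Aut}\,X|}$ and $\tfrac{Y}{|\mathrm{Aut}\,Y|}$ (since $|\mathrm{Aut}\,X|=10$ and $|\mathrm{Aut}\,Y|=2$, as noted in the text). Multiplying through by $2^{10}$ cancels the $2^{-10}$, and the $|\mathrm{Aut}\,\Gamma|$ in the numerator of (\ref{eq:sum-graphs}) cancels the corresponding factor in $\gamma$, leaving exactly the two sums written in (\ref{eq:gamma}). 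This is a straightforward bookkeeping check and I expect no issues.

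Second, for the cycle property, since the preceding lemma states that $\eta$ is a chain map, and since the text has already asserted $\partial\gamma=0$ in $\mathcal{GC}$ (by Bar-Natan--McKay \cite{BNM}), I immediately get
\[ \partial\vec{\gamma}=\partial\eta(2^{10}\gamma)=\eta(2^{10}\partial\gamma)=0. \]
For the nontriviality statement, the preceding lemma further says that $\eta_*$ identifies $H_*(\mathcal{GC})$ with a direct summand of $H_*(\overrightarrow{\mathcal{GC}})$; in particular $\eta_*$ is injective on homology. Since $[\gamma]\neq 0$ in $H_*(\mathcal{GC})$ by Bar-Natan--McKay, we conclude
\[ [\vec{\gamma}]=2^{10}\,\eta_*[\gamma]\neq 0 \quad\text{in } H_*(\overrightarrow{\mathcal{GC}}). \]

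There is no real obstacle here; the only thing to watch is that the normalisation in (\ref{eq:gamma}) is correctly matched to $\eta(2^{10}\gamma)$ via (\ref{eq:sum-graphs}), since the symmetry factor $|\mathrm{Aut}\,(X,\alpha_X)|\in\{1,5\}$ depends on the direction assignment $\alpha_X$ and needs to be carried through the sum. Once that identification is made, both conclusions are formal consequences of the chain-map and splitting properties of $\eta$ combined with Bar-Natan--McKay's nontriviality of $[\gamma]$.
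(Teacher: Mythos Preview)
Your proof is correct. The first claim (that $\vec{\gamma}$ is a cycle) you prove exactly as the paper does, via $\eta$ being a chain map. For nontriviality, however, you take a slightly different route from the paper: you invoke the injectivity of $\eta_*$ coming from the splitting in the preceding lemma, together with $[\gamma]\neq 0$ in $H_*(\mathcal{GC})$. The paper instead writes down an explicit bilinear pairing $\mathcal{GC}^\vee\otimes\overrightarrow{\mathcal{GC}}\to\Q$ (sending $\Gamma\otimes(\Gamma',\alpha)$ to $\pm|\mathrm{Aut}\,\Gamma|$ if $\Gamma'\cong\pm\Gamma$ and $0$ otherwise), observes it is a cocycle, and then evaluates $\langle\vec{\gamma},X\rangle=2^{10}\cdot 2\neq 0$ using that $X$ is a nontrivial graph \emph{cohomology} class by Bar-Natan--McKay. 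Your argument is cleaner and uses only what has already been stated; the paper's pairing is effectively the forgetful map $\overrightarrow{\mathcal{GC}}\to\mathcal{GC}$ followed by the canonical pairing, so the two approaches are equivalent in content. The explicit pairing does have the minor advantage of making the detecting cocycle visible, which connects to the configuration-space-integral computations later in the paper.

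One small remark: your bookkeeping step verifying the displayed formula for $\vec{\gamma}$ is not strictly needed for the proof, since $\vec{\gamma}$ is \emph{defined} as $\eta(2^{10}\gamma)$; the formula in (\ref{eq:gamma}) is just an expansion of that definition via (\ref{eq:sum-graphs}).
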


\begin{proof}
That $\vec{\gamma}=\eta(2^{10}\gamma)$ is a $\partial$-cycle follows since $\gamma$ is a $\partial$-cycle and $\eta$ is a chain map. There is a well-defined bilinear pairing between the graph cohomology and the directed graph homology, which is induced from the pairing $\mathcal{GC}^\vee\otimes \overrightarrow{\mathcal{GC}}\to \Q$;
\[ \Gamma\otimes (\Gamma',\alpha)\mapsto \left\{\begin{array}{ll}
\pm|\Aut\Gamma| & (\text{if $\Gamma'\cong\pm \Gamma$}),\\
0 & (\text{otherwise}),
\end{array}\right. \]
where we identify the dual complex $\mathcal{GC}^\vee$ with that of \cite{Kon}.
This pairing is indeed a cocycle in $\mathrm{Hom}(\mathcal{GC}^\vee\otimes\overrightarrow{\mathcal{GC}},\Q)$.
According to Bar-Natan--McKay (\cite{BNM}), $X$ represents a nontrivial graph cohomology class. We have
\[ \langle \vec{\gamma},X\rangle=2^{10}\langle \gamma,X\rangle=\frac{2^{10}\cdot 2\,|\mathrm{Aut}\,X|}{|\mathrm{Aut}\,X|}\neq 0. \]
Hence $\vec{\gamma}$ gives a nontrivial homology class. (More generally, $\mathrm{Aut}\,\Gamma$ acts on the set of all $2^{|E(\Gamma)|}$ ways of assignments of directions on a labelled $\Gamma$. It follows from the orbit decomposition formula that
\[ 2^{|E(\Gamma)|}=\sum_{i}\frac{|\mathrm{Aut}\,\Gamma|}{|\mathrm{Aut}\,(\Gamma,\alpha_i)|}, \]
where the sum is taken over different $\mathrm{Aut}\,\Gamma$-orbits.)
\end{proof}

\subsection{From graphs to chains in $B\Diff_\partial(D^d)$}

Here, we assume $d=2k$ and use Hopf links of $S^{k-1}\cup S^k$ to decompose a graph into $\Psi_j$-graphs (Definition~\ref{def:psi-graph}). 
We embed a graph $\Gamma$ in $\mathrm{Int}\,D^{2k}$ and decompose each edge by a Hopf link $S^k\cup S^{k-1}\to \mathrm{Int}\,D^{2k}$, as follows. Namely, suppose that each embedded directed edge $e$ is the union of a pair of half-edges $e_\pm$ with $e_-\cap e_+$ being one point $p_e$, and that the direction of $e$ gives the order $e_-<e_+$. We cut $e$ into shortened half-edges $e_\pm'$ by removing a small open interval neighborhood of $p_e$ in $e$. 
We replace each $e$ by the union of a Hopf link with shortened half-edges attached to each component at the non-vertex one of the two endpoints of each of $e_\pm'$. As a result, we obtain a disjoint union of objects, each of which we call {\it $\Psi$-graphs}. We call the components of the Hopf links in $\Psi$-graphs {\it leaves} (see Definition~\ref{def:psi-graph}).
Figure~\ref{fig:graph_X} below shows schematic pictures of the examples for the following directed graphs.
\begin{equation}\label{eq:graph_X_Y}
 (X,\alpha_X)=\,\,\fig{-12mm}{25mm}{graph_X2.pdf},\qquad
(Y,\alpha_Y)=\,\,\fig{-12mm}{24mm}{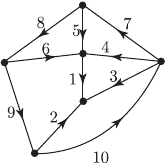}. 
\end{equation}
\begin{figure}[h]
\[  \fig{-20mm}{40mm}{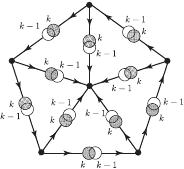}\qquad\qquad
\fig{-19mm}{37mm}{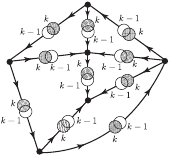}
 \]
\caption{The union of $\Psi$-graphs obtained from an embedded graph.}\label{fig:graph_X}
\end{figure}

We use direction of $e$ to determine the choice of the dimensions $k$ or $k-1$ of the terminal spheres in such a way that $S^k$ is attached to $e_-$, and $S^{k-1}$ is attached to $e_+$. 
We also choose orientations of the components of the Hopf links in such a way that their linking numbers (for two component links) are all $+1$. Note that the choice of orientations of the components of a Hopf link is not essential because two choices can be related to each other by isotopy (the oriented components $S^k$ and $S^{k-1}$ in a Hopf link can be turned inside out in $\R^{2k}$ simultaneously by isotopy).

\begin{proof}[Proof of Theorem~\ref{thm:graph-surgery}]
An embedded directed graph $(\Gamma,\alpha)$ gives a chain
\[ \overline{\omega}_{(\Gamma,\alpha)}\colon \overline{B}_{(\Gamma,\alpha)}\to B\Diff_\partial^\fr(D^{2k}), \]
where $\overline{B}_{(\Gamma,\alpha)}=\prod_{v\in V(\Gamma)}\overline{B}_{T(v)}$, $T(v)$ is $T_{\ell_v}(p_v,q_v)$ for $\ell_v=\deg(v)$ and for some $p_v,q_v$ determined by the direction structure $\alpha$. This is the chain given by the composition
\[ \xymatrix{
  \prod_{v\in V(\Gamma)}\overline{B}_{T(v)} \ar[rr]^-{\prod_v \overline{\rho}_{T(v)} } 
&& \prod_v B\Diff_\partial^\fr(V_v) \ar[r]^-{\simeq} & B\Diff_{Q_{(\Gamma,\alpha)}}^\fr(D^{2k}) \ar[r] 
& B\Diff_\partial^\fr(D^{2k}), 
} \]
where $\overline{\rho}_{T_(v)}\colon \overline{B}_{T(v)}\to B\Diff^\fr_\partial(V_v)$ is the map given in Lemma~\ref{lem:vert-surg}, and $Q_{(\Gamma,\alpha)}=D^{2k}-\mathrm{Int}(V_1\cup\cdots\cup V_r)$. Now the conditions (1)--(4) are obvious from the construction. In particular, the compatibility with lower excess graphs follows by Theorem~\ref{thm:l-valent-general}.
\end{proof}

In the definition of the complement map $c$, we need to fix identifications of the complements of the string links with the handlebodies $V_v$ in $D^{2k}$. This is done by using graph orientation given in \S\ref{ss:ori-chain}.

\section{4-valent vertex}\label{s:4-valent}

In this section, we prove Theorem~\ref{thm:vertex} for $\ell=4$. We let $X=I^{4n-1}-(I^{3n-2})^{\cup 3}$ and shall construct an embedding $\varphi_{\mathrm{IHX}}\in \Emb_\partial^\fr(D^{3n-2},X)$, which is a topological realization of the Jacobi relation:
\[ [[a,b],c]+[[b,c],a]+[[c,a],b]. \]
Then we construct a path $\{\varphi_t\}$ of embeddings 
\[ \varphi_t\in\Emb_\partial^\fr(D^{3n-2},X)\quad (t\in [0,1]) \]
with the following properties (Theorem~\ref{thm:phi-jacobi}).
\begin{enumerate}
\item[(i)] (Null-isotopy) $\varphi_0$ is the standard inclusion, and $\varphi_1$ is $\varphi_{\mathrm{IHX}}$.
\item[(ii)] (Brunnian) $\{\varphi_t\}$ has a Brunnian property in the sense of Definition~\ref{def:brun-null-iso}.
\end{enumerate}
The property (i) suggests that the path $\{\varphi_t\}$ is a topological realization of $[a,b,c]$ in the identity:
\[ [[a,b],c]+[[b,c],a]+[[c,a],b]=\partial[a,b,c]. \]

To find $\{\varphi_t\}$, we give an explicit null-isotopy $\varphi_t$ of $\varphi_{\mathrm{IHX}}$ geometrically in the attaching $(3n-1)$-sphere of a $3n$-handle in a handle decomposition of $S^n\times S^n\times S^n$. Then the Brunnian property can be proved by using this explicit model.

\subsection{Embedded Whitehead product}

\subsubsection{Embedded Whitehead product}\label{ss:emb-Wh}

Let $X=I^N-(I^{N-m-1}\cup I^{N-k-1})$ and let $f\colon D^m\to X$, $g\colon D^k\to X$ be neat embeddings that agree on the boundaries with fixed small spheres $o^{m-1}$, $o^{k-1}$, respectively. Suppose that $o^{m-1}$ and $o^{k-1}$ are in the same face $F=\{0\}\times \mathrm{Int}\,I^{N-1}\subset I^N=I\times \overline{F}$. Let $M,K$ denote the images of $f,g$, respectively. We equip $M$ (resp. $K$) with a normal $k$-framing (resp. $m$-framing) that has a fixed behavior near $\partial M$ (resp. $\partial K$). Roughly, the embedded Whitehead product $[f,g]$ is defined by connect-summing the boundary sphere of the $(m+k)$-skeleton $h^0\cup h^m\cup h^k$ of a handle decomposition $h^0\cup h^m\cup h^k\cup h^{m+k}$ of $S^m\times S^k$ with a fixed $(m+k-1)$-plane, where the $m$-handle is embedded parallel to $f$ and the $k$-handle is embedded parallel to $g$.

To be precise, suppose $m+k<N-1$. We fix a standard framed oriented $(m+k-1)$-disk $L^{m+k-1}_{\mathrm{st}}$ in $I^N$ whose boundary is a small framed sphere $o^{m+k-2}\subset F$.  We denote its image also by $L^{m+k-1}_{\mathrm{st}}$. Let $N_{L_{\mathrm{st}}^{m+k-1}}$ be a small tubular neighborhood of $L^{m+k-1}_{\mathrm{st}}$ in $I^N$. We fix a handle decomposition $h^0\cup h^m\cup h^k\cup h^{m+k}$ of $S^m\times S^k$ with a 0-handle $h^0$ at $*\times *$, 
an $m$-handle $h^m$ embedded along the core $S^m\times *$,
a $k$-handle $h^k$ embedded along the core $*\times S^k$, and
an $(m+k)$-handle $h^{m+k}$.
Let $H=h^0\cup h^m\cup h^k$ and equip this with an orientation induced from the orientation $o(S^m)\wedge o(S^k)$ of $S^m\times S^k$. 
Let $D_\ve^j=\{x\in\R^j\mid |x|\leq \ve\}$. We fix an embedding $\gamma_0\colon H\to \mathrm{Int}\,D^N_\ve$ by the restriction of a standard embedding $S^m\times S^k\to D^{m+1}_{\ve'}\times D^{k+1}_{\ve''}\subset \mathrm{Int}\,D^N_\ve$. Then we construct an embedding $\gamma\colon H\to X$ as follows. (See Figure~\ref{fig:wh_f_g}.)

\begin{enumerate}
\item We slightly thicken $M$ and $K$ to the images $N_M$ and $N_K$ of embeddings from $M\times D^k_\ve$ and $K\times D^m_\ve$, respectively, where $M\times\{0\}$ and $K\times\{0\}$ are mapped to $M$ and $K$, respectively. 
The intersection $N_M\cap \partial X$ (resp. $N_K\cap \partial X$) is $o^{m-1}\times D^k_\ve$ (resp. $o^{k-1}\times D^k_\ve$). 

\item We consider the decomposition $X=(I^{N-1}-(I^{N-m-2}\cup I^{N-k-2}))\times I=X'\cup X''$, where $X'=(I^{N-1}-(I^{N-m-2}\cup I^{N-k-2}))\times [0,\frac{1}{2}]$ and $X''=(I^{N-1}-(I^{N-m-2}\cup I^{N-k-2}))\times [\frac{1}{2},1]$. Let $M'$, $K''$ be obtained from $M,K$ by affine linear rescalings along the last factor $I$ so that $M'\subset X'$ and $K''\subset X''$. Let $N_{M'}, N_{K''}$ be the analogues of $N_{M}, N_{K}$ in $X'\cup X''$, respectively. 

\item We decompose $\overline{F}=\overline{G}\times I$. Let $\varphi\colon I^N\to [0,\frac{1}{2}]\times\overline{F}$ and $\psi\colon X'\cup X''\to [\frac{1}{2},1]\times\overline{F}$ be the affine linear diffeomorphisms such that 
  \begin{itemize}
  \item $\psi(o^{m-1})\subset \{\frac{1}{2}\}\times \overline{G}\times [0,\frac{1}{2}]$, $\psi(o^{k-1})\subset \{\frac{1}{2}\}\times \overline{G}\times [\frac{1}{2},1]$, and
\item $X=\varphi(I^N)\cup \psi(X')\cup \psi(X'')$.
  \end{itemize}
\item We replace $\gamma_0(h^m)$ and $\gamma_0(h^k)$ with their connected sums (of thickened disks) with $N_{M'}$ and $N_{K''}$, respectively, along thin arcs. The resulting embedding $\gamma\colon H\to X$ is such that $\gamma(H)\cap X'=\gamma(h^m)\cap X'=N_{M'}$ and $\gamma(H)\cap X''=\gamma(h^k)\cap X''=N_{K''}$. 
\end{enumerate}
\begin{figure}[h]
\[ \includegraphics[height=35mm]{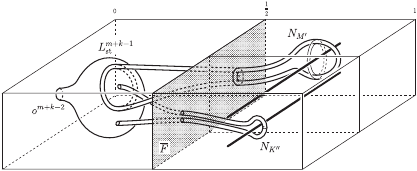} \]
\caption{Embedded Whitehead product $[f,g]=\gamma|_{\partial H}\# L^{m+k-1}_{\mathrm{st}}$.}\label{fig:wh_f_g}
\end{figure}

\begin{Def}[Embedded Whitehead product]\label{def:emb-wh}
Let $[f,g]\colon D^{m+k-1}\to X$ denote the framed embedding obtained by connect summing $\gamma|_{\partial H}$ and $L_{\mathrm{st}}^{m+k-1}$, where we orient $\partial H$ by the ``inward-normal-first'' convention\footnote{This is the same as the ``outward-normal-first'' convention from the handle $h^{m+k}$, and also as its attaching map.} from $o(H)|_{\partial H}=o(S^m)\wedge o(S^k)|_{\partial H}$.
\end{Def}

This is essentially the same as the construction in \cite[Theorem~1.9]{KKV24} by ``graphing'' from a configuration space, and ``the embedded version of the Whitehead product'' in \cite{Kos24b}.

The following lemma is evident from Definition~\ref{def:emb-wh}.

\begin{Lem}\label{lem:emb-conti-Wh}
Let $X=I^N-(I^{N-m-1}\cup I^{N-k-1})$ and let $p\colon X\to S^m\vee S^k$ be a homotopy equivalence which maps $o^{m+k-2}$ to the basepoint. Let $f\colon D^m\to X$ and $g\colon D^k\to X$ be neat embeddings obtained from meridian spheres of $I^{N-m-1}$ and $I^{N-k-1}$ by connect-summing small disks bounded by $o^{m-1}$ and $o^{k-1}$, respectively. Then $p\circ [f,g]$ represents the Whitehead product $[p\circ f,p\circ g]$ in $\pi_{m+k-1}(S^m\vee S^k)=[(D^{m+k-1},\partial D^{m+k-1}),(S^m\vee S^k,*)]$. 
\end{Lem}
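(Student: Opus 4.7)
The strategy is to reduce the assertion to the classical fact that the attaching map of the top cell in the standard CW-structure on $S^m\times S^k$ represents the Whitehead product of the canonical wedge inclusions. First I would dispose of the connect-sum with $L_{\mathrm{st}}^{m+k-1}$: this standard disk sits in a contractible region of $X$ near the face $F$, and is isotopic rel $o^{m+k-2}$ to a disk disjoint from any preassigned neighborhood of the removed axes $I^{N-m-1}\cup I^{N-k-1}$. Hence $p(L_{\mathrm{st}}^{m+k-1})$ is null-homotopic rel $p(o^{m+k-2})=*$ in $S^m\vee S^k$, and the class of $p\circ [f,g]$ in $\pi_{m+k-1}(S^m\vee S^k)$ agrees with that of $p\circ \gamma|_{\partial H}$, regarded as a based map $\partial H\cong S^{m+k-1}\to S^m\vee S^k$ (using a chosen null-homotopy of $p(o^{m+k-2})$ to the basepoint).

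Next I would analyze $p\circ \gamma$ on the whole $(m+k)$-skeleton $H=h^0\cup h^m\cup h^k$. By construction, the $m$-handle $\gamma(h^m)$ is a tubular neighborhood of the rescaled disk $M'$, which is parallel to $M=f(D^m)$, so after the deformation retraction $H\simeq S^m\vee S^k$ the restriction of $p\circ \gamma$ to the $m$-summand is homotopic (as a based map $S^m\to S^m\vee S^k$) to $p\circ f$. Here I use that $p(o^{m-1})$ is null-homotopic in $S^m\vee S^k$ (since the small sphere $o^{m-1}$ is null-homotopic in $X$ for $m\geq 2$), so $p\circ f$ is a well-defined element of $\pi_m(S^m\vee S^k)$. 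The symmetric statement holds for the $k$-summand. Thus $p\circ \gamma|_H$ is homotopic, rel $h^0$, to the wedge map $(p\circ f)\vee (p\circ g)\colon S^m\vee S^k\to S^m\vee S^k$.

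The final step is the classical identification: with the product orientation $o(S^m)\wedge o(S^k)$ inducing the inward-normal-first orientation on $\partial H$, the attaching map $\partial H\to H/h^0\simeq S^m\vee S^k$ of the $(m+k)$-cell of $S^m\times S^k$ represents the Whitehead product $[\iota_m,\iota_k]$ of the canonical inclusions (e.g.\ Whitehead, \emph{Elements of Homotopy Theory}, Ch.\ X.7). Naturality of the Whitehead product under post-composition with $(p\circ f)\vee (p\circ g)$ then gives $[(p\circ f),(p\circ g)]$ as the class of $p\circ \gamma|_{\partial H}$, which by Step~1 equals the class of $p\circ [f,g]$.

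The main obstacle is sign bookkeeping: one must verify that the paper's ``inward-normal-first'' convention, induced from $o(S^m)\wedge o(S^k)$, matches the orientation conventions used both in the classical attaching-map theorem and in the definition of $[\iota_m,\iota_k]$. This amounts to comparing the boundary orientations on the handles $h^m,h^k$ with the sign in the standard Samelson--Whitehead formula, and is a routine but careful check that I would carry out at the end.
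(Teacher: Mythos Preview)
Your proposal is correct and is precisely the unpacking of the statement that the paper omits: the paper records no proof at all, simply declaring the lemma ``evident from Definition~\ref{def:emb-wh}.'' Your three steps (disposing of $L_{\mathrm{st}}^{m+k-1}$, identifying $p\circ\gamma|_H$ with $(p\circ f)\vee(p\circ g)$ up to homotopy, and invoking the classical attaching-map description of the Whitehead product) are exactly what makes the claim evident, so there is nothing to compare.
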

\begin{Lem}
Let $X,f,g$ be as in Lemma~\ref{lem:emb-conti-Wh}. Let $\iota\colon I^{N-m-1}\cup I^{N-k-1}\to I^N$ be the standard inclusion. 
The framed link $[f,g]\cup \iota\colon I^{m+k-1}\cup (I^{N-m-1}\cup I^{N-k-1})\to I^N$ is relatively isotopic to that obtained from the Borromean link of type $(m+k-1,N-m-1,N-k-1;N)$ with the canonical framing by component-wise connect-summing to the standard inclusion.
\end{Lem}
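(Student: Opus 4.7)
The plan is to identify both sides with the same handlebody--boundary construction inside $X = I^N - (I^{N-m-1}\cup I^{N-k-1})$. Recall one standard description of the higher-dimensional Borromean link of type $(m+k-1,N-m-1,N-k-1;N)$: fix the second and third components as the standard inclusion $\iota$, and construct the first component as the boundary sphere of the $(m+k)$-skeleton $h^0 \cup h^m \cup h^k$ of a handle decomposition of $S^m \times S^k$ embedded in $X$, with $h^m$ a tubular neighborhood of a meridian disk of $I^{N-m-1}$ and $h^k$ a tubular neighborhood of a meridian disk of $I^{N-k-1}$, then standardize the boundary by connect-summing with $L^{m+k-1}_{\mathrm{st}}$. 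This generalizes Matveev's three-dimensional description used by Goussarov and Habiro, and is manifestly Brunnian: removing $I^{N-m-1}$ allows $h^m$ to retract onto a trivial disk and unknots the construction, and symmetrically for $I^{N-k-1}$.

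I would then observe that Definition~\ref{def:emb-wh} performs exactly this construction, with $N_{M'}$ playing the role of $h^m$ and $N_{K''}$ the role of $h^k$, and with the final connect-sum producing $[f,g]$ from the attaching sphere of the would-be $(m+k)$-handle. Under the hypothesis of Lemma~\ref{lem:emb-conti-Wh}, $f$ and $g$ are meridian disks of $I^{N-m-1}$ and $I^{N-k-1}$ connect-summed with standard disks, so after an ambient isotopy of $X$ that straightens these connect-sums, the handlebody produced in forming $[f,g]$ coincides with the one used in the Borromean construction. The resulting isotopy between the attaching spheres, together with the tautological identification of the remaining two components with $\iota$, gives the required relative isotopy after component-wise boundary connect-sum of the Borromean link with $\iota$.

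To make this precise, one needs that any two neat meridian disks of $I^{N-m-1}$ in $X$ agreeing on $o^{m-1}$ are ambient isotopic rel boundary, and similarly for $I^{N-k-1}$; this is a high-codimension unknotting statement for embedded disks, valid because the codimensions $N-m$ and $N-k$ are at least $3$ under the hypothesis $m+k<N-1$ of Definition~\ref{def:emb-wh}, and it follows from Haefliger-type results such as Theorem~\ref{thm:goodwillie}. The main technical obstacle will be the bookkeeping of the framings and of the nested connect-sums: both $[f,g]$ and the first component of the Borromean link inherit a canonical framing from the product structure on $h^{m+k}\cong D^m\times D^k$ in the handlebody model, and one must check these agree under the identifying isotopy, while the standard framings on the other two components are transported through the boundary connect-sums unchanged. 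These checks are routine once the identification of handlebodies is in hand; the conceptual content of the lemma is simply the observation that ``embedded Whitehead product'' and ``higher-dimensional Borromean link'' name the same construction.
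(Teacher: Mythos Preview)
Your proposal is correct and follows essentially the same approach as the paper. The paper's proof is terser: it starts from the coordinate presentation of the Borromean link (as in \cite{Ma}) and observes that one component bounds a framed submanifold diffeomorphic to $S^m\times S^k-\mathrm{Int}\,D^{m+k}$ in the complement of the other two, which is precisely the handlebody $H$ in Definition~\ref{def:emb-wh}; you instead posit the handlebody description of the Borromean link directly and match it against the definition, adding the meridian-disk isotopy and framing checks that the paper leaves implicit.
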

\begin{proof}
This can be proved by deforming the well-known coordinate presentation of the Borromean link (see e.g. \cite[\S{4}]{Ma}). Namely, one of the three components, e.g. the first one, in the Borromean link bounds a framed submanifold diffeomorphic to $S^m\times S^k-\mathrm{Int}\,D^{m+k}$ in the complement of other two components in $I^N$. The spanning submanifold gives the plumbing as given in the definition of $[f,g]$. Since two components in the Borromean link form an unlink, it gives the embedded Whitehead product in the complement of the standard inclusion $\iota$.
\end{proof}

\subsubsection{Iterated embedded Whitehead product and the Jacobi relation}

This procedure can be iterated. Let $X=I^N-(I^{N-m-1}\cup I^{N-k-1}\cup I^{N-j-1})$ and $f\colon D^m\to X$, $g\colon D^k\to X$, $h\colon D^j\to X$ be neat embeddings that agree on the boundaries with fixed small spheres $o^{m-1}$, $o^{k-1}$, $o^{j-1}$, respectively. Then $[[f,g],h]$, $[[g,h],f]$, $[[h,f],g]$ are neat embeddings $D^{m+k+j-2}\to X$ bounded by a fixed small sphere $o^{m+k+j-3}\subset F$. 

The sum $a\,\#\, a'$ of two neat embeddings $a,a'\colon D^{m+k+j-2}\to X$ bounded by $o^{m+k+j-3}$ is defined as follows. We consider the decomposition
\[ X=\varphi(I^N)\cup \psi(X')\cup \psi(X'') \]
as in the definition of the embedded Whitehead product. 
We fix a standard disk $L_{\mathrm{st}}^{m+k+j-2}$ in $\varphi(I^N)$ bounded by $o^{m+k+j-3}\subset F$. Let $b_1$ and $b_2$ be small disjoint $(m+k+j-2)$-disks embedded in $\mathrm{Int}\,L_{\mathrm{st}}^{m+k+j-2}$.
Then we may connect-sum $L_{\mathrm{st}}^{m+k+j-2}$ with $a(D^{m+k+j-2})$ and $a'(D^{m+k+j-2})$ by thin cylinders $S^{m+k+j-3}\times I$ at $b_1$ and $b_2$, respectively, where $S^{m+k+j-3}\times \{0\}$ is glued to $\partial b_i$, and $S^{m+k+j-3}\times \{1\}$ is glued to $o^{m+k+j-3}$ in $\psi(X')$ or $\psi(X'')$. The sum $a_1\,\#\, \cdots\,\#\, a_r$ of three or more neat embeddings $a_i\colon  D^{m+k+j-2}\to X$ bounded by $o^{m+k+j-3}$ is defined similarly, with small disjoint small disjoint $(m+k+j-2)$-disks $b_1,\ldots,b_r$ embedded in $\mathrm{Int}\,L_{\mathrm{st}}^{m+k+j-2}$, and a decomposition $X=X^{(1)}\cup \cdots\cup X^{(r)}$. See Figure~\ref{fig:connect_sum}.
\begin{figure}[h]
\[ \includegraphics[height=35mm]{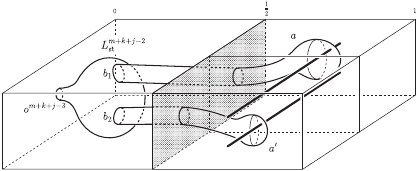} \]
\caption{Connected sum $a\,\#\, a'$.}\label{fig:connect_sum}
\end{figure}

\begin{Def}[Jacobi relation]\label{def:jacobi-rel}
Let $X=I^N-(I^{N-m-1}\cup I^{N-k-1}\cup I^{N-j-1})$. We say that a neat embedding $\varphi\colon D^{m+k+j-2}\to X$ bounded by $o^{m+k+j-3}$ {\it represents the Jacobi relation for $f,g,h$} if it has a decomposition $\varphi=a_1\,\#\, a_2\,\#\, a_3$, where $a_1,a_2,a_3\colon D^{m+k+j-2}\to X$ are neat embeddings bounded by $o^{m+k+j-3}$ such that 
$a_1\,\#\, a_2\,\#\, a_3$ is relatively isotopic to 
\[ [[f,g],h]\,\#\, (-1)^{mk+mj}[[g,h],f]\,\#\, (-1)^{mj+kj}[[h,f],g] \]
relative to $L_{\mathrm{st}}^{m+k+j-2}$ (See Appendix~\ref{s:sign-jacobi}).
\end{Def}

\subsection{Prescribed Brunnian null-isotopy of embedded Whitehead product}\label{ss:pres-brun-2}

Now we see that an embedded Whitehead product has a natural Brunnian null-isotopy (Definition~\ref{def:brun-null-iso}). A system of Brunnian null-isotopy of an embedding $f\in \calE(\emptyset)=\Emb_\partial((I^{2n-1})^{\cup 3},I^{3n})$ is by definition a lift of the constant map $\mathrm{const}_f$ at $f$ in $\Map(\Delta^2,\calE(\emptyset))$ to $\holim{\emptyset\neq S\subset \Omega}{\calB(S)}$ for the following diagram:
\begin{equation}\label{eq:cubical3}
\vcenter{\small
\xymatrix@C=0em{
& 
& & \calB(\{1\}) \ar[dd] \ar[dl]
\\
\calB(\{3\}) \ar[rr]\ar[dd]
& & \calB(\{1,3\}) \ar[dd]
\\
& \calB(\{2\}) \ar'[r][rr] \ar[dl]
& & \calB(\{1,2\}) \ar[dl]
\\
\calB(\{2,3\}) \ar[rr]
& & \calB(\{1,2,3\}) 
}
}
\end{equation}
Recall that the homotopy limit $\holim{\emptyset\neq S\subset \Omega}{\calB(S)}$ in this case is the space of natural transformations from the diagram 
\[ \xymatrix{\small
& 
& & {}_\bullet \ar[dd] \ar[dl]
\\
{}_\bullet \ar[rr]\ar[dd]
& & I \ar[dd]
\\
& {}_\bullet \ar'[r][rr] \ar[dl]
& & I \ar[dl]
\\
I \ar[rr]
& & \Delta^2
}
\]
in $\mathrm{Top}$, where the morphisms are given by the faces of the simplices in $\Delta^2$, to the diagram (\ref{eq:cubical3}) (see e.g. \cite[\S{5.3}]{MV}). A system of Brunnian null-isotopy of $f$ is one such mapping that maps $\Delta^2$ to the point $f$. Thus it consists of paths $\gamma_1,\gamma_2,\gamma_3$ in $\calE(\{1\})$, $\calE(\{2\})$, $\calE(\{3\})$ and homotopies $\gamma_{12},\gamma_{23},\gamma_{13}$ in $\calE(\{1,2\})$, $\calE(\{2,3\})$, $\calE(\{1,3\})$, respectively, such that 
\begin{itemize}
\item $\gamma_i$ ($i=1,2,3$) is a path from $\pi_{\{i\}}(f)$ to the standard inclusion $\iota$,
\item $\gamma_{ij}$ ($i\neq j$) is a homotopy in $\calE(\{i,j\})$ between the paths $\overline{\gamma}_i$ and $\overline{\gamma}_j$ induced from $\gamma_i$ and $\gamma_j$, respectively. 
\end{itemize}

\begin{Lem}\label{lem:brun-borromean}
The Borromean string link $(I^{2n-1})^{\cup 3}\to I^{3n}$ admits a system of Brunnian null-isotopies, which is cyclically symmetric.
\end{Lem}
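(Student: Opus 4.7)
The plan is to construct the system of Brunnian null-isotopies directly from a cyclically symmetric geometric model of the Borromean link, then verify the pairwise compatibilities using high-codimension connectivity of the single-component embedding space.

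First, I would fix a symmetric embedding $f = B_1 \cup B_2 \cup B_3 \colon (I^{2n-1})^{\cup 3} \hookrightarrow I^{3n}$ together with an ambient diffeomorphism $\rho$ cyclically permuting the three components, i.e., $\rho(B_i) = B_{i+1 \bmod 3}$. Such a model is obtained either from the classical coordinate presentation of higher-dimensional Borromean links (see \cite{Ma}) or by symmetrizing the embedded Whitehead construction of \S\ref{ss:emb-Wh}. The Borromean property implies that each two-component restriction of $f$ is a genuine unlink and each single-component restriction coincides with $\iota$; in particular $\pi_{\{i,j\}}(f) = \iota$ for every pair $\{i,j\}$.

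Next, I would construct $\gamma_1 \in \calB(\{1\})$ as an explicit null-isotopy of the unlink $\pi_{\{1\}}(f)$: one of the two remaining strands bounds a spanning $2n$-disk disjoint from the other (genuinely so since the codimension $n+1 \geq 3$), and sliding along this disk retracts it rigidly to its standard position while the second strand is held fixed. Setting $\gamma_i := \rho^{i-1}(\gamma_1)$ for $i=2,3$ makes the three 0-cells of the coherent diagram cyclically symmetric by construction.

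The central step is the construction of the pairwise homotopies $\gamma_{ij}$ in $\calE(\{i,j\}) = \Emb_\partial(I^{2n-1}, I^{3n})$ interpolating between $\overline{\gamma}_i$ and $\overline{\gamma}_j$, which are both loops at $\iota$ since $\pi_{\{i,j\}}(f) = \iota$. By Theorem~\ref{thm:goodwillie}, $\CEmb(I^{2n-1}, I^{3n})$ is $(n-2)$-connected, so the Goodwillie fibration sequence $(\ref{eq:concordance-fibration})$ yields an isomorphism $\pi_1(\Emb_\partial(I^{2n-1}, I^{3n}), \iota) \cong \pi_0(\Emb_\partial(I^{2n}, I^{3n+1}))$. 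By Lemma~\ref{lem:S-I} and Theorem~\ref{thm:CFS} with $s=1$, this group is finite, since the parity condition $4 \mid 2n+1$ never holds; for $n$ sufficiently large it is trivial, and in any case the loops $\overline{\gamma}_i \ast \overline{\gamma}_j^{-1}$ are null-homotopic after absorbing any residual finite ambiguity into a direct geometric null-homotopy built from the explicit unlinking disks. This yields the $\gamma_{ij}$, and a $\Z/3$-equivariant choice over the orbit $\{(1,2),(2,3),(1,3)\}$ secures the cyclic symmetry of the entire coherent diagram.

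The main obstacle is producing the homotopies $\gamma_{ij}$ in a $\rho$-equivariant way while guaranteeing that the concatenated loops are truly null-homotopic; the abstract Goodwillie--Haefliger argument only delivers finiteness of the relevant $\pi_1$, so the technical heart of the proof is an explicit geometric null-homotopy — derived from the coordinate structure of the symmetric Borromean model — that respects the $\Z/3$-action simultaneously on all three pairs.
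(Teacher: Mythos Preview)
Your proposal has a genuine gap in the central step. The Goodwillie--Haefliger--CFS argument you invoke only establishes that $\pi_1(\Emb_\partial(I^{2n-1}, I^{3n}))$ is \emph{finite}; it does not show that the specific loops $\overline{\gamma}_i * \overline{\gamma}_j^{-1}$ are null-homotopic. Finite order is not zero order, so the homotopies $\gamma_{ij}$ are not produced. Your patch --- ``absorbing any residual finite ambiguity into a direct geometric null-homotopy built from the explicit unlinking disks'' --- is not an argument: either you have that explicit geometric null-homotopy (in which case it \emph{is} the proof and the $\pi_1$ computation is superfluous), or you do not. You also need $n\geq 3$ even to apply Corollary~\ref{cor:goodwillie} for $j=1$, whereas the lemma is stated and used for all $n\geq 2$. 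Finally, an abstract existence of $\gamma_{ij}$ would give no control over the $\Z/3$-equivariance you need.

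The paper's proof bypasses all of this and is entirely geometric. From the coordinate presentation of the Borromean link one has, for each $i$, a null-isotopy $\rho_i$ of the $i$-th component along its spanning disk (which may cross the other components). The $\gamma_i$ are chosen so that their images in $\calB(\{i,j\})$ are exactly the $\rho_k$. The homotopy $\gamma_{12}$ is then produced \emph{explicitly}: one slides the spanning disk of the third component along the spanning disk of the second component (Figure~\ref{fig:borromean}, and the plumbing picture in Remark~\ref{rem:pushing}), deforming $\rho_3$ into a path that lifts to $\calE(\{1\})$. The other $\gamma_{ij}$ are obtained by the cyclic symmetry of the coordinate model, so equivariance is automatic. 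No connectivity or classification results are used, and the resulting spanning-disk structure is precisely what is exploited in Lemmas~\ref{lem:pres-brunnian-3} and~\ref{lem:brunnian-3}.
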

\begin{proof}
To give a sequence of paths for an embedded Whitehead product $f$ as above, we consider lifts $(f,\rho_1),(f,\rho_2),(f,\rho_3)$ of $f\in\calE(\emptyset)=\calB(\{1,2,3\})$ in $\calB(\{2,3\})$, $\calB(\{1,3\})$, $\calB(\{1,2\})$, respectively, given as follows. A well-known coordinate presentation of the Borromean link (\cite[\S{4}]{Ma}) provides a null-isotopy $\rho_i$ of the $i$-th component, which is along a spanning disk and may intersect other component. We take the null-isotopies $\gamma_1,\gamma_2,\gamma_3$ of the pairs of components labelled by $\{2,3\},\{1,3\},\{1,2\}$, respectively, so that $\gamma_1$ is mapped to $(f,\rho_2)$ in $\calB(\{1,3\})$, $\gamma_2$ is mapped to $(f,\rho_3)$ in $\calB(\{1,2\})$, and $\gamma_3$ is mapped to $(f,\rho_1)$ in $\calB(\{2,3\})$.
We deform the null-isotopy $\rho_3$ in $\calE(\{1,2\})$ into another path $\overline{\gamma}_1$ that can be lifted to $\gamma_1$ in $\calE(\{1\})$ by sliding along the spanning disk of the second component (Figure~\ref{fig:borromean}). 

Let $\gamma_{12}$ be this homotopy in $\calE(\{1,2\})$ from $\rho_3$ to a path from $\calE(\{1\})$. By symmetry, the homotopies $\gamma_{23}$, $\gamma_{13}$ and paths $\gamma_2$, $\gamma_3$ are defined similarly from $\rho_1$, $\rho_2$, respectively. Hence we obtain a sequence $(\gamma_1,\gamma_2,\gamma_3,\gamma_{12},\gamma_{23},\gamma_{13})$ giving a lift of $\mathrm{const}_f\in\Map(\Delta^2,\calE(\emptyset))$ to $\holim{\emptyset\neq S\subset \Omega}{\calB(S)}$. The cyclic symmetry is obvious from the construction.
\end{proof}
\begin{figure}[h]
\[ \xymatrix{
\fig{-17mm}{35mm}{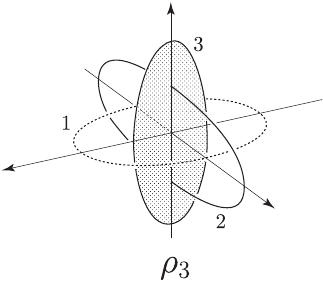}\quad\ar[r]^-{\displaystyle\gamma_{12}}&
\quad\fig{-17mm}{35mm}{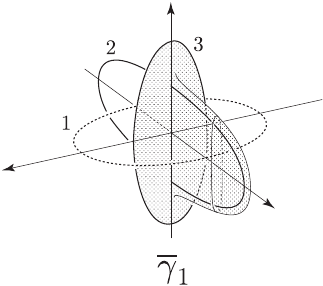} 
}\]
\caption{A homotopy in $\calE(\{1,2\})$.}\label{fig:borromean}
\end{figure}

\begin{Rem}\label{rem:pushing}
The deformation $\gamma_{12}$ of $\rho_3$ into $\overline{\gamma}_1$ can be interpreted in the plumbing model of the embedded Whitehead product, as in Figure~\ref{fig:plumb-disk1}, left (for $n=1$).
This deformation between the spanning disks is given by the disk bounded by the union of the spanning disks, as in Figure~\ref{fig:plumb-disk1}, right, which has been essentially given in \cite[Definition~5.8]{Kos24b}.
\begin{figure}[H]
\[ \fig{-15mm}{30mm}{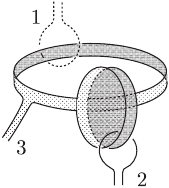}\quad\stackrel{\gamma_{12}}{\longrightarrow}
\quad\fig{-15mm}{30mm}{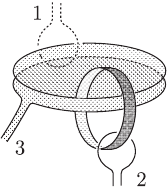}\,\,,\qquad  
\fig{-9mm}{18mm}{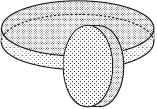}\quad\cong
\quad\fig{-12mm}{24mm}{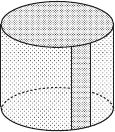} \]
\caption{Left: The path $\gamma_{12}$ in the plumbing model. Right: The deformation between two spanning disks.}\label{fig:plumb-disk1}
\end{figure}
\end{Rem}

\begin{Def}
We define $\beta_{T_3}(T_3)=\omega_{T_3}(T_3)\colon *\to \Emb_\partial^\fr((I^{2n-1})^{\cup 3},I^{3n})$ by the Borromean string link. We define $m_3=1$.
\end{Def}

\begin{proof}[Proof of Theorem~\ref{thm:vertex} for $\ell=3$]
The relevant conditions for $\ell=3$ are (4) Brunnian property, (5) cyclic symmetry, and (6) excess 0, which follow from Lemma~\ref{lem:brun-borromean}.
\end{proof}

\subsection{Explicit null-isotopy of the Jacobi relation}\label{ss:explict-null}

Let $X=I^{4n-1}-(I^{3n-2}\cup I^{3n-2}\cup I^{3n-2})$. Let $c_i$ ($i=1,2,3$) be a small meridian $n$-sphere of the $i$-th component in $I^{3n-2}\cup I^{3n-2}\cup I^{3n-2}$ removed from $I^{4n-1}$. Let 
$f,g,h\colon D^n\to X$ be neat embeddings obtained from the connected sum of $L_{\mathrm{st}}^n$ with $c_1,c_2,c_3$, respectively.

\begin{Thm}\label{thm:phi-jacobi}
Let $n\geq 2$. For the neat embeddings $f,g,h\colon D^n\to X$, we construct 
an explicit neat embedding
\[ \varphi_{\mathrm{IHX}}\colon D^{3n-2}\to I^{4n-1}-(I^{3n-2}\cup I^{3n-2}\cup I^{3n-2}) \]
 bounded by $o^{3n-3}$ that represents the Jacobi relation for $f,g,h$.
Moreover, we construct a path $[f,g,h]$ from $\iota$ to $\varphi_{\mathrm{IHX}}$ that has the Brunnian property relative to the prescribed system of Brunnian null-isotopies of $\varphi_{\mathrm{IHX}}$ induced from those given in \S\ref{ss:pres-brun-2}. In particular, $\partial[f,g,h]$ represents the Jacobi relation for $f,g,h$:
\[ \partial[f,g,h]=\bigl([[f,g],h]\,\#\, [[g,h],f]\,\#\, [[h,f],g]\bigr)-\iota. \]
\end{Thm}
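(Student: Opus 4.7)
The plan is to construct $\varphi_{\mathrm{IHX}}$ and the null-isotopy $[f,g,h]$ simultaneously from a single geometric model: an embedded handle decomposition of $S^n \times S^n \times S^n$ inside $X$. First I would build a plumbing $P \subset X$ realizing the $2n$-skeleton of $S^n \times S^n \times S^n$. Starting from the meridian disks $f, g, h$ thickened to three $n$-handles, I would pairwise plumb them via the embedded Whitehead product of Definition~\ref{def:emb-wh} to produce three $2n$-handles whose attaching spheres realize $[f,g]$, $[g,h]$, $[h,f]$ respectively. Gluing along a common $0$-handle, the resulting handlebody $P$ has boundary $\partial P \cong S^{3n-1}$ which decomposes into three $(3n-2)$-disk sectors on which one reads off, up to the signs of Appendix~\ref{s:sign-jacobi}, the three iterated embedded Whitehead products $[[f,g],h]$, $[[g,h],f]$, $[[h,f],g]$. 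Connect-summing these sectors then yields $\varphi_{\mathrm{IHX}}$ as a single neat $(3n-2)$-disk bounded by $o^{3n-3}$, realizing the Jacobi relation per Definition~\ref{def:jacobi-rel}.

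Next I would extend $P$ to an embedded $3n$-ball $H \subset X$ realizing the top cell of $S^n \times S^n \times S^n$ attached along $\partial P$. Since $X$ is $(n-1)$-connected and the Jacobi identity makes $[\partial P] = 0 \in \pi_{3n-1}(X)$, and since $n \geq 2$ gives codimension $\dim X - 3n = n - 1 \geq 1$, such an $H$ can be produced by standard embedding and general-position arguments (for low $n$ one may first stabilize by a suspension to improve codimension). The path $\{\varphi_t\}_{t \in [0,1]}$ is then defined by radially sweeping a $(3n-2)$-disk across $H$: starting from a small standard disk at the center of $H$ (giving $\varphi_0 = \iota$ after a straightening near $o^{3n-3}$) and pushing outward to $\partial H$ (giving $\varphi_1 = \varphi_{\mathrm{IHX}}$).

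Finally, to verify the Brunnian property I would check compatibility on each face of the cubical diagram of Definition~\ref{def:brun-null-iso}. Removing any one arc of the string link, say the one dual to $h$, trivializes the two plumbings in $P$ that involve $h$'s $n$-handle via the Borromean null-isotopy of Lemma~\ref{lem:brun-borromean} applied locally in each plumbed $S^n \times S^n$; this deforms the two Whitehead sectors $[[g,h],f]$ and $[[h,f],g]$ of $\varphi_{\mathrm{IHX}}$ to standard disks while simultaneously unlinking the $[[f,g],h]$ sector within the residual handlebody. Pairwise compatibility on $\calB(\{i,j\})$ is inherited from the spanning-disk deformation of Remark~\ref{rem:pushing} applied inside the plumbings. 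The main obstacle is triple compatibility in $\calB(\{1,2,3\})$, i.e., showing that the three null-isotopies and three pairwise homotopies assemble into a genuine lift of $\mathrm{const}_{\varphi_{\mathrm{IHX}}}$ in the homotopy limit over the cube. For this I would exploit the cyclic $\mathbb{Z}/3$-symmetry of the construction and the fact that the radial sweep in $H$ is manifestly canonical, so that the higher coherence data is supplied by the explicit radial deformation retract of $H$ itself.
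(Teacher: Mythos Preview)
There is a genuine gap in your construction of $P$. You propose to embed the $2n$-skeleton of $S^n\times S^n\times S^n$ in $X=I^{4n-1}-(I^{3n-2})^{\cup 3}$, but this is impossible: each $2n$-handle has as its core a $2n$-disk whose attaching sphere is one of the Whitehead products $[f,g]$, $[g,h]$, $[h,f]$, and these are \emph{nonzero} in $\pi_{2n-1}(X)\cong\pi_{2n-1}(S^n\vee S^n\vee S^n)$. Hence there is no embedded (indeed no continuous) $2n$-disk in $X$ bounding $[f,g]$, so the $2n$-handles cannot be filled in inside $X$. Your subsequent extension to a $3n$-ball $H\subset X$ and the radial sweep therefore have nothing to stand on. Even setting this aside, invoking the Jacobi identity to conclude $[\partial P]=0$ would be circular, since Corollary~\ref{cor:jacobi-wh} is meant to be a \emph{consequence} of this construction; and upgrading a null-homotopy of a $(3n-1)$-sphere to an embedded $3n$-disk in codimension $n-1$ is not a ``standard general-position argument'' when $n=2$.

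The paper sidesteps the obstruction by embedding only the \emph{$n$-skeleton} $P$ (one $0$-handle and three $n$-handles along $f,g,h$), which meets no homotopical obstruction. The key observation is that the attaching $(3n-1)$-sphere $S=\partial(D^n\times D^n\times D^n)$ of the $3n$-handle already contains everything needed: one finds three disjoint contractible subsets $K_1',K_2',K_3'\subset S$, each containing one of the codimension-$1$ faces $D^n\times D^n\times\partial D^n$ etc.\ (precisely the parts of $S$ that would map to the missing $2n$-handles), and takes disk neighborhoods $N_1,N_2,N_3$ whose boundaries realize the three iterated Whitehead products. The complement $W_{\mathrm{IHX}}=S-\mathrm{Int}(N_1\natural N_2\natural N_3)$ is then itself a $(3n-1)$-disk, and by construction it avoids all the codimension-$1$ faces, so it lands entirely in $\partial P\subset X$. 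The null-isotopy $[f,g,h]$ is simply the sweep across $W_{\mathrm{IHX}}$ --- explicit, with no extension argument and no appeal to Jacobi.
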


The RHS of the identity of Theorem~\ref{thm:phi-jacobi} is considered as an element of $S_0(\calE;\Q)$, in which the operation `$-$' makes sense. If it is considered in $S_0'(\calE;\Q)$, $\#$ is identified with $+$, and summing $\iota$ does not change the homology class. 

\subsubsection{A decomposition of the $(3n-1)$-sphere $S$}

We consider the $(3n-1)$-sphere $S:=\partial(D^n\times D^n\times D^n)$ and the natural continuous map $D^n\times D^n\times D^n\to S^n\times S^n\times S^n$ induced by the map $D^n\to S^n$ collapsing the boundary to $\infty$. The natural cell decomposition of $S^n\times S^n\times S^n$ lifts to a decomposition of $D^n\times D^n\times D^n$. In this, $S=\partial(D^n\times D^n\times D^n)$ is decomposed as the sum of the following faces:
\begin{itemize}
\item Dimension $3n-1$: $D^n\times D^n\times \partial D^n$, $D^n\times \partial D^n\times D^n$, $\partial D^n\times D^n\times D^n$.

\item Dimension $3n-2$: $\partial D^n\times \partial D^n\times D^n$, $\partial D^n\times D^n\times \partial D^n$, $D^n\times \partial D^n\times \partial D^n$.

\item Dimension $3n-3$: $\partial D^n\times \partial D^n\times \partial D^n$.
\end{itemize}

\subsubsection{The $(3n-2)$-sphere $\Sigma_{\mathrm{IHX}}$ in $S$}\label{ss:Sigma_IHX}

Now we assume $n\geq 2$. We consider the following contractible subsets of $S=\partial(D^n\times D^n\times D^n)$:
\[\begin{split}
K_1&=(D^n\times D^n\times \partial D^n)\cup(*\times *\times D^n),\\
K_2&=(D^n\times \partial D^n\times D^n)\cup(*\times D^n\times *),\\
K_3&=(\partial D^n\times D^n\times D^n)\cup(D^n\times *\times *),
\end{split}\]
where $*\in \partial D^n$ is a basepoint.
We shrink the codimension 1 faces $D^n\times D^n\times \partial D^n$ etc. in $K_i$ slightly by replacing the two $D^n$ factors with slightly smaller one $D^n_{1-\ve}$ of radius $1-\ve$. Further, we enlarge the $n$-cell $*\times *\times D^n$ etc. slightly to a larger one $D^n_{1+\ve}\subset S$ so that its boundary is attached to a shrinked codimension 1 face $D^n_{1-\ve}\times D^n_{1-\ve}\times \partial D^n$ etc. and the collar $D^n_{1+\ve}-\mathrm{Int}\,D^n\cong [1,1+\ve]\times \partial D^n$ agrees with the locus of (the basepoint of $\partial (D^n_{s}\times D^n_s)$)$\times \partial D^n$ for $s\in[1,1+\ve]$. Let $K_1',K_2',K_3'$ denote the resulting objects obtained from $K_1,K_2,K_3$, respectively (see Figure~\ref{fig:K1}).
\begin{figure}[H]
\[ \includegraphics[height=30mm]{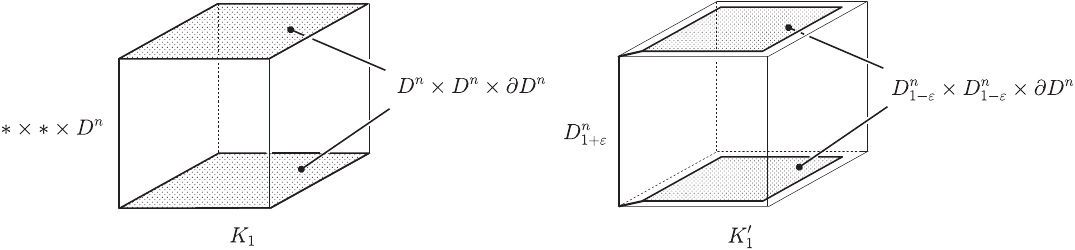} \]
\caption{$K_1$ and $K_1'$.}\label{fig:K1}
\end{figure}

If $n\geq 2$, the three objects $K_1',K_2',K_3'$ in $S$ can be assumed to be mutually disjoint (by perturbing the $n$-cells $*\times *\times D^n_{1+\ve}$ etc. slightly). Note that for $n\geq 2$, a pair of $n$-submanifolds in a small $(3n-1)$-disk can be disjuncted in general position. 
It can be seen that $K_i'$ is contractible. 
We take small neighborhoods $N_1,N_2,N_3$ of $K_1',K_2',K_3'$ in $S$ which are mutually disjoint $(3n-1)$-disks. 
One can see that $\partial N_1,\partial N_2,\partial N_3$ represent the three terms $[[\alpha,\beta],\gamma]$, $[[\beta,\gamma],\alpha]$, $[[\gamma,\alpha],\beta]$ in the Jacobi relation in the $n$-skeleton of $S^n\times S^n\times S^n$ (Figure~\ref{fig:modified-IHX-link}).
\begin{figure}[h]
\[\includegraphics[height=32mm]{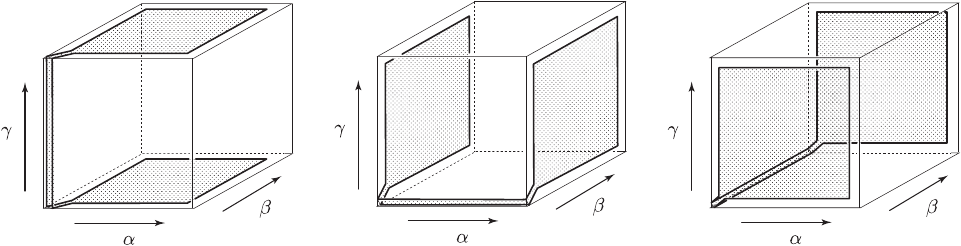} \]
\caption{Iterated Whitehead products $[[\alpha,\beta],\gamma]$, $[[\beta,\gamma],\alpha]$, $[[\gamma,\alpha],\beta]$ with prescribed spanning disks $N_1,N_2,N_3$, respectively.}\label{fig:modified-IHX-link}
\end{figure}

\begin{Lem}\label{lem:null-iso}
Let $n\geq 2$. Let 
$N_{\mathrm{IHX}}=N_1\natural N_2\natural N_3$ and $\Sigma_{\mathrm{IHX}}=\partial(N_1\natural N_2\natural N_3)$,
where the boundary connected sum $\natural$ can be performed within $S$ when $n\geq 2$.
Then the $(3n-2)$-sphere $\Sigma_{\mathrm{IHX}}$ bounds the disk $W_{\mathrm{IHX}}=S-\mathrm{Int}\,N_{\mathrm{IHX}}$, which is disjoint from the union 
\[ O:=(D_{1-\ve}^n\times D_{1-\ve}^n\times \partial D^n)\cup (D_{1-\ve}^n\times \partial D^n\times D_{1-\ve}^n)\cup (\partial D^n\times D_{1-\ve}^n\times D_{1-\ve}^n)\]
of the codimension 1 faces. 
\end{Lem}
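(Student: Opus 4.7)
The strategy is to realize $N_{\mathrm{IHX}}$ as a smoothly embedded $(3n-1)$-disk in the $(3n-1)$-sphere $S$ whose interior contains $O$. Once this is done, the complement $W_{\mathrm{IHX}} = S - \mathrm{Int}\,N_{\mathrm{IHX}}$ is automatically a $(3n-1)$-disk (the complement of an open $(3n-1)$-disk in a $(3n-1)$-sphere is a closed $(3n-1)$-disk by the Schoenflies theorem) with boundary the $(3n-2)$-sphere $\Sigma_{\mathrm{IHX}}$; moreover $O \subset \mathrm{Int}\,N_{\mathrm{IHX}}$ immediately yields $W_{\mathrm{IHX}}\cap O = \emptyset$.

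First, I would make the boundary connected sum precise. The hypothesis $n\geq 2$ gives $\dim S = 3n-1 \geq 5$. Since $N_1,N_2,N_3$ are mutually disjoint $(3n-1)$-disks in $S$, one can find smoothly embedded, mutually disjoint arcs $\alpha_{12},\alpha_{23}$ in $S$ that join pairs of boundary points and lie in the open set $S \setminus \bigcup_i \mathrm{Int}\,N_i$. Thickening these arcs to $1$-handles and attaching them to $N_1 \cup N_2 \cup N_3$ yields $N_{\mathrm{IHX}}$, a $(3n-1)$-disk smoothly embedded in $S$.

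Next I would verify the interior containment $O \subset \mathrm{Int}\,N_{\mathrm{IHX}}$. Write $O = O_1 \cup O_2 \cup O_3$ where $O_i$ denotes the $i$-th shrunk codimension-one face. By the construction in \S\ref{ss:Sigma_IHX}, the face $O_i$ is contained in $K_i'$; furthermore, the enlarged corner $n$-cell $D^n_{1+\ve}$ appearing in $K_i'$ has its collar $[1,1+\ve]\times\partial D^n$ glued along the boundary of $O_i$ in $S$, so that $O_i$ lies in the interior of $K_i'$ as a subset of $S$. Taking each $N_i$ to be a sufficiently small open tubular neighborhood of $K_i'$ in $S$ then gives $O_i\subset \mathrm{Int}\,N_i\subset \mathrm{Int}\,N_{\mathrm{IHX}}$, hence $O\subset \mathrm{Int}\,N_{\mathrm{IHX}}$.

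The main subtle step is the interior-containment claim. One must verify that the enlargement $D^n_{1+\ve}$ of the corner $n$-cell is positioned so that its collar, together with the shrunk codimension-one face, covers a neighborhood of the entire boundary of $O_i$ inside $S$. This is a local check at the corner locus $\partial D^n \times \partial D^n \times \partial D^n$, where three enlarged corner cells and three shrunk codimension-one faces meet and interact. I would carry out this check by an explicit model calculation at a single corner and then appeal to the cyclic symmetry of the construction to conclude the general case.
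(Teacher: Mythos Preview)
Your proposal is correct and follows essentially the same argument as the paper: the complement of a smoothly embedded codimension-0 disk in a sphere is a disk (the paper phrases this as uniqueness of orientation-preserving embeddings $D^{3n-1}\hookrightarrow S^{3n-1}$ up to isotopy rather than Schoenflies), and then $O\subset K_1'\cup K_2'\cup K_3'\subset\mathrm{Int}\,N_{\mathrm{IHX}}$ gives the disjointness. Your final paragraph about the collar is unnecessary: since each $N_i$ is by construction a neighborhood of $K_i'$ in $S$, one has $O_i\subset K_i'\subset\mathrm{Int}\,N_i$ directly, with no local check at the corner locus required.
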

\begin{proof}
The lemma holds since a smooth orientation preserving embedding $D^{3n-1}\to S^{3n-1}$ is unique up to isotopy. Hence the complement of the disk $\mathrm{Int}\,N_{\mathrm{IHX}}$ in $S$ is diffeomorphic to a disk (Figure~\ref{fig:codim0-disk}). The disk $W_{\mathrm{IHX}}$ is disjoint from $O$ since $O\subset K_1'\cup K_2'\cup K_3'\subset \mathrm{Int}\,N_{\mathrm{IHX}}$. 
\end{proof}
\begin{figure}[h]
\[ \includegraphics[height=35mm]{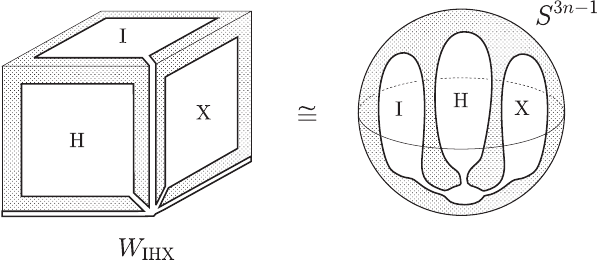} \]
\caption{The $(3n-1)$-disk $W_{\mathrm{IHX}}$ in $S^{3n-1}$ bounded by $\Sigma_{\mathrm{IHX}}\cong S^{3n-2}$.}\label{fig:codim0-disk}
\end{figure}

\begin{Rem}\label{rem:disjoint-2n-cells}
That the disk $W_{\mathrm{IHX}}$ is disjoint from $O$ implies that its image in $S$ can be assumed to be included in the preimage of the $n$-skeleton of a handle decomposition of $S^n\times S^n\times S^n$.
\end{Rem}

\begin{Lem}\label{lem:B-dP}
Let $P$ be the $n$-skeleton of a handle decomposition of $S^n\times S^n\times S^n$ with one 0-handle, three $n$-handles, three $2n$-handles, and one $3n$-handle, which is obtained from the sum $\pr_1^*h_1+\pr_2^*h_2+\pr_3^*h_3$ of the pullbacks of Morse functions $h_i\colon S^n\to \R$ ($i=1,2,3$) with two critical points by the projection $\pr_i\colon S^n\times S^n\times S^n\to S^n$ to the $i$-th factor. The embedding $P\to X$ induced from those of the $n$-handles by $f,g,h$ induces a sequence of embeddings $W_{\mathrm{IHX}}\stackrel{\subset}{\to} S-O\to X$.
\end{Lem}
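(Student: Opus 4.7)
The plan is to assemble the embedding $\Phi\colon P\hookrightarrow X$ handle by handle, then to recognise $W_{\mathrm{IHX}}$ as a locally flat $(3n-1)$-disk inside $\Phi(P)$ by comparing the collapse map $q\colon D^n\times D^n\times D^n\to S^n\times S^n\times S^n$ with the chosen handle structure.

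First I would fix the $0$-handle of $P$ as a small round $3n$-ball $\Phi(D^{3n})$ sitting in the face $F\subset\partial I^{4n-1}$ that already contains $o^{n-1}$, arranged so that $o^{n-1}$ is a standard locally flat $(n-1)$-sphere on $\partial\Phi(D^{3n})$ with its given normal framing; this serves as the common attaching sphere for all three $n$-handles. Next, for each $n$-handle $D^n\times D^{2n}$, I would thicken the core embeddings $f,g,h\colon D^n\to X$ to tubular embeddings using their normal framings. The normal bundle of $f$ in $X$ has rank $3n-1$ and is trivial because $D^n$ is contractible, so I can choose a rank-$2n$ sub-bundle that matches the standard framing near $\partial D^n$ and hence glues correctly to the $0$-handle. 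Since $f,g,h$ have pairwise disjoint images, being built from disjoint connect-sums of the fixed disk $L^n_{\mathrm{st}}$ with the disjoint meridians $c_1,c_2,c_3$, their thickenings can be made pairwise disjoint. Gluing along the common attaching region produces the desired smooth embedding $\Phi\colon P\hookrightarrow X$.

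Next I would compare $W_{\mathrm{IHX}}\subset S$ with a subdisk of $\partial\Phi(P)$. By Remark~\ref{rem:disjoint-2n-cells}, the collapse map $q$ sends $W_{\mathrm{IHX}}\subset S-O$ into the $n$-skeleton $P\subset S^n\times S^n\times S^n$, and inside $S^n\times S^n\times S^n$ the image $\Phi(P)$ is a regular neighbourhood of the wedge $S^n\vee S^n\vee S^n$. After a small ambient isotopy in the complement of the interiors of the $2n$-handles of $S^n\times S^n\times S^n$, one can identify $q(S-O)$ with a neighbourhood of the $n$-skeleton inside $\partial\Phi(P)$, and hence realise $W_{\mathrm{IHX}}$ as a locally flat $(3n-1)$-disk in $\partial\Phi(P)$. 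Composing with $\Phi$ gives the claimed embedding $W_{\mathrm{IHX}}\hookrightarrow X$, together with the intermediate embedding $S-O\to X$ factoring through $\partial\Phi(P)$.

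The main obstacle is that $q$ is highly non-injective on $S$, since it crushes each $\partial D^n$ factor to a point, so one has to verify that a faithful identification of (an isotopic copy of) $W_{\mathrm{IHX}}$ with a disk in $\partial\Phi(P)$ is actually available. Here the hypothesis $W_{\mathrm{IHX}}\cap O=\emptyset$ is crucial: it restricts $W_{\mathrm{IHX}}$ to the region of $S$ near the $1$-skeleton of the cube where the combinatorics of the handle decomposition allow a general-position perturbation inside the ambient $(4n-1)$-dimensional $X$ (using the extra codimension $n-1\geq 1$) so that the image of the composite is locally flat and embedded. The remaining verifications---smoothness at the handle gluings, compatibility with the standard framing near $o^{n-1}$, and disjointness of the three thickenings---are then routine.
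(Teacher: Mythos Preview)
Your construction of $\Phi\colon P\hookrightarrow X$ is fine and more detailed than the paper's treatment. The difficulty is in the second half, where you route the map $S-O\to X$ through the collapse $q\colon D^n\times D^n\times D^n\to S^n\times S^n\times S^n$. This creates an artificial obstacle: $q$ crushes each $\partial D^n$ to a point, so for instance the entire $(3n-3)$-torus $(\partial D^n)^3\subset S-O$ is sent to a single point, and your proposed fix---a general-position perturbation in the ambient $X$---is neither justified (why should the perturbed image remain in $\partial\Phi(P)$, as you require two lines earlier?) nor necessary. There is also a gap in the step ``$q$ sends $S-O$ into $P$'': the image $q(S-O)$ still contains annular portions of the open $2n$-cells of the CW structure, and these lie outside the handle $n$-skeleton $P$, so one cannot immediately post-compose with $\Phi$.

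The paper's one-line argument bypasses $q$ altogether. In the handle decomposition (writing $S^n=D^n_-\cup D^n_+$) the $2n$-skeleton is $(S^n)^3$ with the open top handle $\mathrm{Int}(D^n_+)^3$ removed, so its boundary \emph{is} $S$, and the attaching map of the $3n$-handle is the identity on $S$. That same boundary is obtained from $\partial P$ by the three $2n$-handle surgeries: one deletes three attaching regions $S^{2n-1}\times D^n$ from $\partial P$ and glues in three pieces $D^{2n}\times S^{n-1}$. With the natural parametrisations these glued-in pieces are precisely the shrunk codimension-one faces $D^n_{1-\ve}\times D^n_{1-\ve}\times\partial D^n$ (and cyclic permutations) constituting $O$. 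Hence $S-O$ is identified with $\partial P$ minus the $2n$-handle attaching regions, and in particular sits inside $\partial P$ as a codimension-$0$ submanifold. Composing with $\Phi$ yields $W_{\mathrm{IHX}}\subset S-O\hookrightarrow\partial P\hookrightarrow X$ directly, with no perturbation needed.
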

\begin{proof}
$S-O$ is embedded naturally in $\partial P$ as a part of the attaching map of the $3n$-handle. 
\end{proof}

\begin{Def}[Triple bracket]
Let $n\geq 2$. We define $\varphi_{\mathrm{IHX}}\colon D^{3n-2}\to X$ by a connected sum of the sphere $\Sigma_{\mathrm{IHX}}$ embedded as in Lemma~\ref{lem:B-dP} with the fixed disk $L_{\mathrm{st}}^{3n-2}$. We define the bracket $[f,g,h]$ to be a path in $\Emb_\partial(D^{3n-2},I^{4n-1}-(I^{3n-2})^{\cup 3})$ from the basepoint $\iota$ to $\varphi_{\mathrm{IHX}}$ which gives an isotopy along the spanning disk $W_{\mathrm{IHX}}\to X$.
\end{Def}

\begin{proof}[Proof of Theorem~\ref{thm:phi-jacobi}]
The decomposition $\Sigma_{\mathrm{IHX}}=\partial N_1\# \partial N_2\# \partial N_3$ gives rise to a decomposition $a_1\# a_2\# a_3$ of $\varphi_{\mathrm{IHX}}$ for the Jacobi relation. The extension of the embedding in Lemma~\ref{lem:B-dP} to the disk $W_{\mathrm{IHX}}$ gives the desired null-isotopy of $\varphi_{\mathrm{IHX}}$. The Brunnian property is proved in Lemma~\ref{lem:brunnian-3} below.
\end{proof}

The construction of $W_{\mathrm{IHX}}$ above works for $D^p\times D^q\times D^r$ ($p,q,r\geq 2$), and gives a path of embeddings in $\Emb_\partial(D^{p+q+r-3},I^N-(I^{N-p-1}\cup I^{N-q-1}\cup I^{N-r-1}))$. As a byproduct, we obtain an elementary geometric proof of the following. (See also Lemma~\ref{lem:emb-conti-Wh}.)
\begin{Cor}[Jacobi identity for Whitehead products {(\cite{Hil,NT,UM} etc.)}]\label{cor:jacobi-wh}
Let $M$ be a pointed topological space. If $p,q,r\geq 2$ and $\alpha\in\pi_p(M)$, $\beta\in\pi_q(M)$, $\gamma\in\pi_r(M)$, then the following identity holds in $\pi_{p+q+r-3}(M)$.
\[ [[\alpha,\beta],\gamma]+(-1)^{pq+pr}[[\beta,\gamma],\alpha]+(-1)^{pr+qr}[[\gamma,\alpha],\beta]=0 \]
\end{Cor}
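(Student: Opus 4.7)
The plan is to deduce the classical Jacobi identity from the geometric null-isotopy $[f,g,h]$ constructed in Theorem~\ref{thm:phi-jacobi}, together with universality. First, I reduce to the universal target: given $\alpha\in\pi_p(M), \beta\in\pi_q(M), \gamma\in\pi_r(M)$, choose a pointed map $\Phi\colon S^p\vee S^q\vee S^r\to M$ representing $(\alpha,\beta,\gamma)$. Since Whitehead products are natural with respect to pointed maps and the target identity is a linear relation in $\pi_{p+q+r-3}(M)$, it suffices to prove the identity in $\pi_{p+q+r-3}(S^p\vee S^q\vee S^r)$ for the three canonical inclusions.

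Next, I identify this universal homotopy group with a homotopy group of an embedding space complement. Take $N$ large enough so that $p,q,r\leq N-2$ and consider $X=I^N-(I^{N-p-1}\cup I^{N-q-1}\cup I^{N-r-1})$. There is a homotopy equivalence $X\simeq S^p\vee S^q\vee S^r$ obtained by collapsing meridional directions, and by Lemma~\ref{lem:emb-conti-Wh} the neat embeddings $f\colon D^p\to X$, $g\colon D^q\to X$, $h\colon D^r\to X$ built from meridian spheres connect-summed with the standard disk $L^{p+q+r-3}_{\mathrm{st}}$ map under this equivalence to the wedge summand inclusions, and iterated embedded Whitehead products map to the corresponding iterated Whitehead products in $\pi_*(S^p\vee S^q\vee S^r)$.

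Now the geometric heart of the argument is the construction of \S\ref{ss:explict-null}, which I extend from the symmetric case $p=q=r=n$ to general $p,q,r\geq 2$. The construction goes through verbatim: one decomposes the $(p+q+r-1)$-sphere $S=\partial(D^p\times D^q\times D^r)$ using the three contractible subcomplexes $K_i'$ whose boundaries realize the three iterated Whitehead products $[[\alpha,\beta],\gamma]$, $[[\beta,\gamma],\alpha]$, $[[\gamma,\alpha],\beta]$ as in Figure~\ref{fig:modified-IHX-link}; the hypothesis $p,q,r\geq 2$ ensures the $K_i'$ can be made pairwise disjoint by general position. Forming $N_{\mathrm{IHX}}=N_1\natural N_2\natural N_3$, the complement $W_{\mathrm{IHX}}=S-\mathrm{Int}\,N_{\mathrm{IHX}}$ is a disk (uniqueness of codimension-zero disk embeddings into the sphere) disjoint from the codimension-one faces $O$, hence maps via the analogue of Lemma~\ref{lem:B-dP} into the $(p+q+r-3)$-skeleton realized in $X$ through $f,g,h$. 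Thus $W_{\mathrm{IHX}}$ provides a spanning disk in $X$ for $\Sigma_{\mathrm{IHX}}\# L^{p+q+r-3}_{\mathrm{st}}=a_1\#a_2\#a_3$.

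Finally, in $\pi_{p+q+r-3}(X)$ the boundary $\Sigma_{\mathrm{IHX}}$ is therefore null-homotopic, and unwinding the decomposition $a_1\#a_2\#a_3$ together with the sign conventions of Definition~\ref{def:jacobi-rel} (verified in Appendix~\ref{s:sign-jacobi}) gives
\[ [[f,g],h]+(-1)^{pq+pr}[[g,h],f]+(-1)^{pr+qr}[[h,f],g]=0 \]
in $\pi_{p+q+r-3}(X)\cong\pi_{p+q+r-3}(S^p\vee S^q\vee S^r)$. Pushing forward by $\Phi_*$ yields the stated identity. The main obstacle is purely bookkeeping: getting the three sign factors right from the inward-normal-first orientation convention on $\partial(D^p\times D^q\times D^r)$ and the cyclic reordering of factors, which is precisely what is deferred to Appendix~\ref{s:sign-jacobi}.
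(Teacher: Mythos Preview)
Your proof is correct and follows essentially the same route as the paper: the paper's ``proof'' is just the sentence preceding the corollary (that the $W_{\mathrm{IHX}}$ construction generalizes verbatim to $D^p\times D^q\times D^r$ for $p,q,r\geq 2$) together with the pointer to Lemma~\ref{lem:emb-conti-Wh} and the sign computation in Appendix~\ref{s:sign-jacobi}, and you have fleshed out exactly those steps, including the reduction to the universal wedge via naturality of Whitehead products. One small wording slip: the embeddings $f,g,h$ are each connect-summed with their own standard small disks $L^p_{\mathrm{st}}$, $L^q_{\mathrm{st}}$, $L^r_{\mathrm{st}}$ respectively (as in Lemma~\ref{lem:emb-conti-Wh}), not with $L^{p+q+r-3}_{\mathrm{st}}$; the latter is what $\varphi_{\mathrm{IHX}}$ is connect-summed with.
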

The detail about the sign is discussed in Lemma~\ref{lem:sign-jacobi}. It should be mentioned that there are also geometric proofs of the Jacobi identity for Whitehead products in different settings: \cite{GO,Hab,HW,CST} for 3-manifolds, \cite{CST} for Whitney towers in 4-manifolds.

\subsection{Brunnian property}\label{ss:brunnian-3}

We check the Brunnian property of the path $[f,g,h]$ in
\begin{equation}\label{eq:brunn}\Emb_\partial(D^{3n-2},I^{4n-1}-(I^{3n-2})^{\cup 3}).\end{equation} Namely, we extend the Brunnian property for $\Emb_\partial((I^{3n-2})^{\cup 4},I^{4n-1})_\iota$ to that of (\ref{eq:brunn})
by considering $D^{3n-2}$ as the fourth component, and by restricting the system of Brunnian null-isotopies to those fixing the first three components. Thus, we only need to check the relative Brunnian property of null-isotopies as in Definition~\ref{def:rel-brunnian} for the cubical diagram $\{\calB(S)\}_{S\subset \Omega'=\{1,2,3\}}$, where $\calB(S)$ is the homotopy fiber of $\calE(\emptyset)\to \calE(S)=\Emb_\partial(D^{3n-2},I^{4n-1}-\bigcup_{\lambda\notin S}I^{3n-2}_\lambda)_\iota$. Such a system of Brunnian null-isotopies induces that for the cubical diagram $\{\calB(S)\}_{S\subset\Omega=\{1,2,3,4\}}$ by taking the constant family of the standard inclusion for $S$ with $4\in S$.

\begin{Lem}[Prescribed Brunnian null-isotopies]\label{lem:pres-brunnian-3}
Let $n\geq 2$. The embedding $\varphi_{\mathrm{IHX}}$ has a system of Brunnian null-isotopies induced from those of Lemma~\ref{lem:brun-borromean}.
\end{Lem}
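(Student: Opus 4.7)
The plan is to build a system of Brunnian null-isotopies for $\varphi_{\mathrm{IHX}}$ by assembling, via connect-sum, three copies of the Borromean Brunnian data of Lemma~\ref{lem:brun-borromean}, one for each summand of the decomposition $\varphi_{\mathrm{IHX}} = a_1 \# a_2 \# a_3$ used in Theorem~\ref{thm:phi-jacobi}.

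First, I note that each summand $a_i$ is the sphere $\partial N_i$ (connect-summed into $L^{3n-2}_{\mathrm{st}}$) and represents, under the homotopy equivalence $I^{4n-1} - (I^{3n-2})^{\cup 3} \simeq S^n \vee S^n \vee S^n$, an iterated Whitehead product of the three generators $\alpha, \beta, \gamma$. Removing the $j$-th $I^{3n-2}$ component kills $\alpha_j$, so $a_i$ becomes null-isotopic in $\calE(\{j\})$; a concrete null-isotopy is obtained geometrically by sliding $\partial N_i$ across the spanning disk of the newly-freed meridian inside the handlebody model of Section~\ref{ss:Sigma_IHX}. Because $\partial N_i$ is a Borromean-type spherical link of three meridians, exactly the stabilized form of the Borromean string link treated in Lemma~\ref{lem:brun-borromean}, the null-isotopies $\gamma^{(i)}_j$ and their pairwise and triple compatibility homotopies are supplied by Lemma~\ref{lem:brun-borromean}.

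Next, I form the system of Brunnian null-isotopies for $\varphi_{\mathrm{IHX}}$ by simultaneously connect-summing the data of the three summands along the thin arcs used to form $a_1 \# a_2 \# a_3$ in Definition~\ref{def:jacobi-rel}. For each $j \in \{1,2,3\}$ the three paths $\gamma^{(1)}_j, \gamma^{(2)}_j, \gamma^{(3)}_j$ combine along mutually disjoint connect-sum tubes into a single path $\gamma_j$ in $\calE(\{j\})$ from $\pi_{\{j\}}(\varphi_{\mathrm{IHX}})$ to $\iota$. The same recipe applied to the $1$- and $2$-parameter Borromean homotopies produces the compatibility data $\gamma_{j_1 j_2}$ and $\gamma_{123}$, fitting the whole collection into an element of $\holim{\emptyset \neq S \subset \Omega'}{\calB(S)}$ that lifts the constant map at $\varphi_{\mathrm{IHX}}$, as required by Definition~\ref{def:brun-null-iso}.

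The main obstacle is maintaining general position among the three connect-sum families so that each $\gamma_j$ and each higher homotopy remains a valid family in the respective $\calE(S)$. This is where the hypothesis $n \geq 2$ is essential, exactly as in the disjunction of $K_1', K_2', K_3'$ in Section~\ref{ss:Sigma_IHX}: the codimension is large enough that the three $n$-dimensional families of deformations can be made mutually disjoint by a standard transversality argument. Once general position is arranged, the natural transformation data assembling $(\gamma_j, \gamma_{j_1 j_2}, \gamma_{123})$ into a homotopy-limit element is a formal consequence of the corresponding property of the Borromean data, and the resulting system is cyclically symmetric because the Borromean input is.
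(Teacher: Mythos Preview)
Your overall strategy—handle the three summands $a_1, a_2, a_3$ separately and connect-sum their Brunnian data—is the same as the paper's. The gap is your assertion that each $\partial N_i$ ``is a Borromean-type spherical link of three meridians, exactly the stabilized form of the Borromean string link treated in Lemma~\ref{lem:brun-borromean},'' so that the null-isotopies and compatibility homotopies are directly supplied by that lemma. This identification is not correct: each $a_i$ is an \emph{iterated} embedded Whitehead product such as $[[f,g],h]$, built by composing two Borromean-type constructions (an inner one producing $[f,g]$ and an outer one producing $[[f,g],h]$), and Lemma~\ref{lem:brun-borromean} treats only a single Borromean link. Removing the $j$-th $I^{3n-2}$ component affects $a_i$ differently depending on whether $j$ is the outer input or an inner one: for $[[f,g],h]$, removing the component dual to $h$ kills the outer bracket and the outer spanning data yield a null-isotopy, but removing the component dual to $f$ or $g$ only makes the inner bracket $[f,g]$ null-isotopic, and one must then propagate that through the outer product to obtain a null-isotopy of $a_i$ itself.

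The paper carries this propagation out explicitly via a surgery on the plumbed handlebody $U$ presenting the outer Whitehead product: when one handle $\beta$ becomes null, $U$ can be turned into a spanning disk $U'$; there is a second spanning disk $U''$ obtained symmetrically, and the two are related by a ``pushing'' deformation over a higher-dimensional disk (Figure~\ref{fig:plumb-disk}). For a fixed removed component, two of the three terms of $\varphi_{\mathrm{IHX}}$ receive their null-isotopies directly from the original disks $N_2, N_3$, while the remaining term requires this $U' \leftrightarrow U''$ move (Figure~\ref{fig:collapse-iterated-Wh}); the same move then supplies the compatibility homotopies $\gamma_{ij}$ in $\calE(\{i,j\})$. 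This propagation-and-pushing argument is the substantive content of the lemma and is precisely what ``induced from those of Lemma~\ref{lem:brun-borromean}'' means; your proposal asserts the conclusion rather than constructing it. The transversality remark in your last paragraph is needed for the connect-sum assembly but does not fill this gap.
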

\begin{proof}
We observe the following property of the embedded Whitehead product: Suppose that an embedded Whitehead product in the manifold $X$ is represented by the boundary of the handlebody $U$ obtained by attaching a $p$-handle $\alpha$ and a $q$-handle $\beta$ to a small $(p+q)$-disk in $X$ (Figure~\ref{fig:plumb-disk}, left) and suppose that the isotopy class of $\beta$ in $X$ relative to the attaching sphere is null. Then there is a surgery which turns the $(p+q)$-dimensional handlebody $U$ into another $(p+q)$-manifold $U'$ obtained by removing a $p$-handle $\alpha'$ from and attaching $\beta$ to a $(p+q)$-disk in $I^{4n-1}$, which is not in $X$ since it intersects a component of the link (Figure~\ref{fig:plumb-disk}, left arrow). 
Now $U'$ is diffeomorphic to a $(p+q)$-disk in $I^{4n-1}$, which is not in $X$, and gives an extension of the embedded Whitehead product to an embedding from a disk. There is another disk $U''$ obtained by attaching $\alpha$ to and removing a $q$-handle $\beta'$ from a $(p+1)$-disk in $X$. The disks $U'$ and $U''$ are related by ``pushing'' over a $(p+q+1)$-disk in $I^{4n-1}$ bounded by $U'\cup (-U'')$ as in Remark~\ref{rem:pushing} (Figure~\ref{fig:plumb-disk}, right arrow).
\begin{figure}[h]
\[ \xymatrix{
  \fig{-15mm}{30mm}{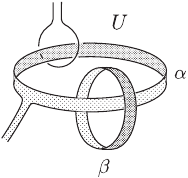} \quad \ar[r]^-{\text{surgery}} &
  \quad \fig{-12mm}{25mm}{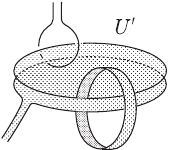} \quad \ar[r]^-{\text{push}} &
  \quad \fig{-12mm}{25mm}{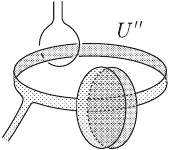} \ar[l]
}\]
\caption{Spanning submanifolds of the embedded Whitehead product.}\label{fig:plumb-disk}
\end{figure}

We use this trick to construct a prescribed system of Brunnian null-isotopies. Recall that each term in $\varphi_{\mathrm{IHX}}$ is given by an embedding from a subset of the $n$-skeleton of a handle decomposition of $S^n\times S^n\times S^n$. Suppose one of the $S^n$, say the last one, is null in $X$. We obtain three $(3n-2)$-spheres in a small neighborhood of $S^n\vee S^n\vee *$, each of which extends to a $(3n-1)$-disk (see Figure~\ref{fig:collapse-iterated-Wh}). 
Two of the extensions are induced from the prescribed Brunnian null-isotopies with respect to removing the last component, and the other one is obtained from the prescribed Brunnian null-isotopy by the deformation $U'\to U''$ of the previous paragraph for $\beta$ being the last $S^n$ factor of $S^n\vee S^n\vee S^n$. The boundary connected-sum of the three disks gives a null-isotopy path in $\calE(\{3\})$. The case of removing the first and second components are similar to this case. The homotopies in $\calE(\{1,2\})$, $\calE(\{2,3\})$, and $\calE(\{1,3\})$ are given by the deformations of the spanning disks as in Figure~\ref{fig:plumb-disk}, right.
\begin{figure}[h]
\[ \includegraphics[height=18mm]{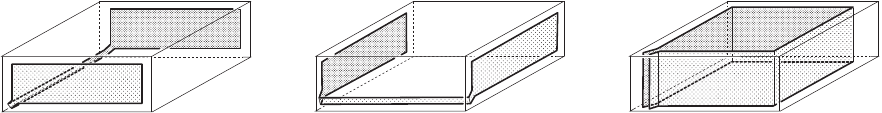} \]
\caption{The null-isotopies of $[[\gamma,\alpha],\beta]$, $[[\beta,\gamma],\alpha]$, and $[[\alpha,\beta],\gamma]$. The first two are given by $N_3$ and $N_2$, respectively.}\label{fig:collapse-iterated-Wh}
\end{figure}
\end{proof}

\begin{Lem}[Relative Brunnian property]\label{lem:brunnian-3}
Let $n\geq 2$. The null-isotopy $[f,g,h]$ has a relative Brunnian property relative to the prescribed system of Brunnian null-isotopies of $\varphi_{\mathrm{IHX}}$ of Lemma~\ref{lem:pres-brunnian-3}.
\end{Lem}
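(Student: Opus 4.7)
The plan is to construct, for each nonempty $S\subset\{1,2,3\}$, a path in $\calB(S)$ from the image of $(\varphi_{\mathrm{IHX}},[f,g,h])$ under the forgetful map $\calB(\emptyset)\to\calB(S)$ to the prescribed null-isotopy $\gamma_S$ of Lemma~\ref{lem:pres-brunnian-3}, and to assemble these paths into a single element of $\mathrm{proj}^{-1}(\mathrm{const}_{\varphi_{\mathrm{IHX}}})$ homotoping $\{\mathrm{const}_{(\varphi_{\mathrm{IHX}},[f,g,h])}\}$ to $\{\gamma_S\}_{\emptyset\neq S\subset\{1,2,3\}}$. Thus the verification reduces to a geometric comparison of two explicit null-isotopies in each $\calE(S)$, plus a check of cubical coherence.

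Both null-isotopies in $\calE(\{j\})$ are isotopies of embeddings sweeping a $(3n-1)$-dimensional spanning disk whose boundary is the connected sum of $\Sigma_{\mathrm{IHX}}$ with $L_{\mathrm{st}}^{3n-2}$, inside the enlarged ambient space $I^{4n-1}-\bigcup_{\lambda\neq j}I^{3n-2}$. For $[f,g,h]$ the spanning disk is $W_{\mathrm{IHX}}=S-\mathrm{Int}\,N_{\mathrm{IHX}}$ pushed forward via the $n$-skeleton embedding $P\to X$ of Lemma~\ref{lem:B-dP}. For $\gamma_j$ it is the boundary connected sum of the three disks depicted in Figure~\ref{fig:collapse-iterated-Wh}, obtained by first observing that the $j$-th $n$-handle becomes null in the enlarged ambient space and then applying the surgery--push procedure of Figure~\ref{fig:plumb-disk} to each of the three pieces $\partial N_1,\partial N_2,\partial N_3$ of $\Sigma_{\mathrm{IHX}}$. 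The central geometric step is to show that these two spanning disks are ambient isotopic rel boundary in $I^{4n-1}-\bigcup_{\lambda\neq j}I^{3n-2}$. I would do this by performing the surgery--push moves directly on $W_{\mathrm{IHX}}$, taking advantage of the disjointness of $N_1,N_2,N_3$ in $S=\partial(D^n\times D^n\times D^n)$ (valid for $n\geq 2$, as established in Section~\ref{ss:Sigma_IHX}), which isolates each move to a separate neighborhood and converts $W_{\mathrm{IHX}}$ piece-by-piece into the prescribed boundary connected sum.

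Cubical compatibility is then largely automatic: the moves used to construct the homotopies at $|S|=2$ and $|S|=3$ are simply the restrictions to further enlarged ambient spaces of the moves used at $|S|=1$, so once a single coherent family of surgery--push moves is fixed inside $W_{\mathrm{IHX}}$, its image in every $\calB(S)$ simultaneously realizes the required homotopies and respects the face maps of the cube. The hard part will be verifying the coherence of the three singleton homotopies at the two-element and three-element faces, which amounts to checking that the three surgery--push deformations (one for each $j$) commute up to isotopy in the appropriate complements. I would reduce this to the disjointness of the $N_i$ together with a path-connectedness statement for spaces of spanning $(3n-1)$-disks with fixed boundary in the relevant ambient spaces; the latter follows in our codimension-$n$ range from Theorem~\ref{thm:goodwillie} and Corollary~\ref{cor:goodwillie}, which imply that the order in which the commuting moves are applied does not affect the resulting isotopy class, thereby extending the homotopies over the higher faces of the cube.
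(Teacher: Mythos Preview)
Your overall framework---compare the two null-isotopies in each $\calB(S)$ and then assemble coherently over the cube---matches the paper's, but both of your concrete steps diverge from the paper and have gaps.

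For $|S|=1$, your plan to convert $W_{\mathrm{IHX}}$ into the prescribed spanning disk by local surgery--push moves near each $N_i$ is not what the paper does, and it is unclear how it could work: the prescribed disk for removing the $j$-th component is (up to the one pushed piece) the boundary connected sum of two of the $N_i$ and a modified third, so it lies on the \emph{complementary} side of $S=\partial(D^n\times D^n\times D^n)$ from $W_{\mathrm{IHX}}=S-\mathrm{Int}\,N_{\mathrm{IHX}}$. Local moves on $W_{\mathrm{IHX}}$ do not cross to the other side. The paper instead glues the two spanning disks along $\Sigma_{\mathrm{IHX}}$ into a $(3n-1)$-sphere sitting near $S$, and observes that this sphere bounds an embedded $3n$-disk inside $D^n\times D^n\times D^n$ (see Figure~\ref{fig:two-null-isotopies}); sweeping through that $3n$-disk is the required homotopy in $\calB(\{j\})$.

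For cubical coherence, your appeal to Theorem~\ref{thm:goodwillie} and Corollary~\ref{cor:goodwillie} does not do the job. Those results concern concordance embeddings and a specific family of string-link spaces; applied to $(3n-1)$-disks in $(4n-1)$-manifolds one gets only $(n-3)$-connectivity of $\CEmb$, which is vacuous for $n=2$ and in any case never provides the $1$-connectivity needed to fill the $2$-cell at $|S|=3$. Nor does the disjointness of the $N_i$ force the three singleton homotopies to commute: for each $j$ the homotopy constructed above involves \emph{all three} $N_i$ (two are filled in, one is pushed), so their supports overlap. The paper's coherence argument is different and elementary: for every nonempty $S$ the homotopy just constructed is realized by an embedded $3n$-disk inside $D^n\times D^n\times D^n$, and since the space of smooth $3n$-disks in $D^n\times D^n\times D^n$ is contractible, any two such fillings are canonically homotopic, which supplies all the higher coherences at once.
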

\begin{proof}
Suppose one of the first three components, say the last one, of the family $[f,g,h]$ of string links is removed.
The union of the three $(3n-1)$-disks of the prescribed Brunnian null-isotopies of $\varphi_{\mathrm{IHX}}$ of Lemma~\ref{lem:pres-brunnian-3} with respect to removing the last component and the null-isotopy $[f,g,h]$ of $\varphi_{\mathrm{IHX}}$ gives the following two null-isotopies of $\varphi_{\mathrm{IHX}}$:
\begin{enumerate}
\item The null-isotopy given by $W_{\mathrm{IHX}}$.
\item The null-isotopy given by the boundary connected sum of the extensions to the $(3n-1)$-disks in the previous paragraph.
\end{enumerate}
These null-isotopies are arranged in a small neighborhood of $\partial(D^n\times D^n\times D^n)$. The two null-isotopies, taken along two $(3n-1)$-disk glued along their boundaries, can be extended inside the $3n$-disk (Figure~\ref{fig:two-null-isotopies}). Namely, the boundary of the $3n$-disk is obtained by filling two of the three disks $N_1,N_2,N_3$ in Lemma~\ref{lem:null-iso} into $W_{\mathrm{IHX}}$, but the other one pushed inside the box by using the trick of the first paragarph in the proof of Lemma~\ref{lem:pres-brunnian-3}. (For the example considered in the proof of Lemma~\ref{lem:pres-brunnian-3}, the two disks $N_2$ and $N_3$ (Figure~\ref{fig:collapse-iterated-Wh}, left two items) are filled, and $N_1$ is pushed inside the box.) The $3n$-disk gives a homotopy between $[f,g,h]$ and the prescribed Brunnian null-isotopy of $\varphi_{\mathrm{IHX}}$ in $\calB(\{3\})$. 

By doing similarly, we see that a homotopy between $[f,g,h]$ and the prescribed Brunnian null-isotopy of $\varphi_{\mathrm{IHX}}$ in $\calB(S)$ for $|S|\geq 1$ is given by an embedded $3n$-disk in $D^n\times D^n\times D^n$. Since the space of smooth $3n$-disks in $D^n\times D^n\times D^n$ is contractible, we can find a system of homotopies between $[f,g,h]$ and $\gamma_S$ in $\calB(S)$ for each $S\subset \{1,2,3\}$. This completes the proof.
\end{proof}
\begin{figure}[h]
\[ \includegraphics[height=35mm]{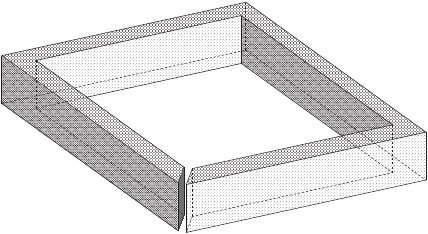} \]
\caption{The union of two null-isotopies, which bounds a $3n$-disk. The outer boundary is the union of $W_{\mathrm{IHX}}$ and the prescribed null-isotopies of $[[\gamma,\alpha],\beta]$ and $[[\beta,\gamma],\alpha]$. The inner boundary is the null-isotopy of $[[\alpha,\beta],\gamma]$ given in (2), which is obtained from the prescribed one by pushing inside the cube.}\label{fig:two-null-isotopies}
\end{figure}

\begin{Rem}
The above proof of Lemma~\ref{lem:brunnian-3} is an analogue of the following homotopy version. Let $f,g,h\colon S^n\to X$ be pointed continuous maps. Recall that if $f$ is null-homotopic, then one has the following homotopy commutative diagram:
\[\xymatrix{
  S^{2n-1} \ar@/_{40pt}/[dd]_{[f,g]} \ar[d] \ar[r]^-{\mathrm{incl.}} & D^{2n} \ar[d] \\
  S^n\vee S^n \ar[d]_-{f\vee g} \ar[r] & {*}\vee S^n \ar[ld]^-{{*}\vee g} \\
  X & 
}\]
In this case, we have the following homotopy commutative diagram:
\[ \xymatrix{
  {*}\vee S^n\vee S^n  \ar@/_{60pt}/[dd]_-{{*}\vee g\vee h} & D^{2n}\vee S^n \ar[l] & D^{3n-1} \ar[l] \\
S^n\vee S^n\vee S^n \ar[u] \ar[d]_-{f\vee g\vee h} & S^{2n-1}\vee S^n \ar[l] \ar[ld]^-{[f,g]\vee h} \ar[u] & S^{3n-2} \ar[l] \ar[u] \ar@/^{20pt}/[lld]^{[[f,g],h]}\\
X & & 
} \]
This shows that $[[f,g],h]\simeq 0$ if $f$ is null-homotopic.
\end{Rem}

\subsection{Symmetry property.}

We fix distinct points $p_0,p_1,\ldots,p_{r-1}$ of $\mathrm{Int}\,I^2$, and consider that the standard pair $(I^N,I^a\cup\cdots\cup I^a)$ for string links of type $(a,\ldots,a;N)$ is given by the direct product
$(I^2\times I^{N-2}, \{p_0,p_1,\ldots,p_{r-1}\}\times I^a\times \{0\})$.
We take a relative diffeomorphism $\rho_0\colon (I^2,\partial I^2)\to (I^2,\partial I^2)$ that induces a cyclic permutation of $\{p_0,p_1,\ldots,p_{r-1}\}$, and extend it to the relative diffeomorphism 
\[ \rho:=\rho_0\times\mathrm{id}_{I^{N-2}}\colon(I^2\times I^{N-2},(\partial I^2)\times I^{N-2})\to (I^2\times I^{N-2},(\partial I^2)\times I^{N-2}). \]
Let $\xi\colon \Emb_\partial(I^a\cup\cdots\cup I^a,I^N)\to \Emb_\partial(I^a\cup\cdots\cup I^a,I^N)$ be defined for $f\in \Emb_\partial(I^a\cup\cdots\cup I^a,I^N)$ by $\xi(f)=(-1)^{(r-1)(N-a-1)}\rho\circ f\circ (\rho|_{I^a\cup\cdots\cup I^a})^{-1}$, where $(-1)\colon \Emb_\partial(I^a\cup\cdots\cup I^a,I^N)\to \Emb_\partial(I^a\cup\cdots\cup I^a,I^N)$ reverses the first coordinate of $I^{N-2}$. The reason for the sign $(-1)^{(r-1)(N-a-1)}$ is that the leaf forms for the symmetrized basic bracket satisfies
\[ \theta_r\wedge\theta_1\wedge\cdots\wedge\theta_{r-1}=(-1)^{(r-1)(N-a-1)}\theta_1\wedge\theta_2\wedge\ldots\wedge\theta_r. \]
(See Definition~\ref{def:ori}. We should replace $k-1$ and $k$ with $N-a-1$.)
\begin{Def}[Cyclic symmetry]\label{def:cyclic}
We say that a string link $f\colon I^{a}\cup\cdots\cup I^{a}\to I^{N}$ has a {\it cyclic symmetry} if $f$ is relatively isotopic to $\xi(f)$. We may also define a cyclic symmetry property of a path $\varphi_t$ in $\Emb_\partial(I^a\cup\cdots\cup I^a,I^N)$ from the stantard inclusion by requiring that for any isotopy from $\varphi_1$ to $\xi(\varphi_1)$, there are isotopies from $\varphi_t$ to $\xi(\varphi_t)$ for all $t$ that extends that of $t=0,1$. 
\end{Def}

The null-isotopy $[f,g,h]$ in $\Emb_\partial(D^{3n-2},I^{4n-1}-(I^{3n-2})^{\cup 3})$ can be modified into a path in
\[ \Emb_\partial((I^{3n-2})^{\cup 4},I^{4n-1}) \]
by attaching the trivial family of a punctured $(3n-2)$-plane in $I^{4n-1}$ to the embeddings of $D^{3n-2}$. 
We assume that the nonstandard component in this family of embeddings of $(I^{3n-2})^{\cup 4}$ is labelled by $0$, and the remaining three standard components are labelled by $1,2,3$, respectively. Taking this into account, we denote the above path of embeddings by $\omega_0^{(4)}$. 

Let $\omega_1^{(4)},\omega_2^{(4)},\omega_3^{(4)}$ denote the paths of embeddings in $\Emb_\partial((I^{3n-2})^{\cup 4},I^{4n-1})$ given by $\xi(\omega_0^{(4)})$, $\xi^2(\omega_0^{(4)})$, $\xi^3(\omega_0^{(4)})$, respectively.
Let
\[ \omega_{T_4}=\omega_0^{(4)}\#\omega_1^{(4)}\#\omega_2^{(4)}\#\omega_3^{(4)}, \]
where the sum $\#$ is formed by component-wise concatenation along $I^{3n-2}$. The following lemma is evident from the definition.

\begin{Lem}[Cyclic symmetry]\label{lem:cyclic-4}
The path $\omega_{T_4}$ has a cyclic symmetry property.
\end{Lem}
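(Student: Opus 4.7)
The plan is to verify directly that the symmetrized concatenation $\omega_{T_4}$ is $\xi$-invariant up to isotopy pointwise in $t\in[0,1]$, and that this family of isotopies can be made continuous in $t$ and compatible with any prescribed boundary isotopy at $t=0,1$. Note that at $t=0$ both $\omega_{T_4}(0)$ and $\xi(\omega_{T_4}(0))$ equal the standard inclusion $\iota$, so the $t=0$ end of the extension problem is handled by the trivial (constant) isotopy, and the only nontrivial prescription occurs at $t=1$.

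The key algebraic computation is that $\xi$ distributes over the concatenation $\#$: since $\xi$ acts on $I^{4n-1}=I^2\times I^{N-2}$ as a product of a diffeomorphism $\rho$ of $I^2$ with a coordinate-reversal on $I^{N-2}$, while $\#$ is performed along the $I^{3n-2}$-direction on which $\rho$ acts trivially, one has
\[
\xi(\omega_{T_4}) \;=\; \xi(\omega_0^{(4)})\#\xi(\omega_1^{(4)})\#\xi(\omega_2^{(4)})\#\xi(\omega_3^{(4)})
\;=\; \omega_1^{(4)}\#\omega_2^{(4)}\#\omega_3^{(4)}\#\xi^4(\omega_0^{(4)}),
\]
using $\omega_i^{(4)}=\xi^i(\omega_0^{(4)})$. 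I would then show $\xi^4(\omega_0^{(4)})\simeq\omega_0^{(4)}$ rel $\partial$: the sign prefactor $(-1)^{(r-1)(N-a-1)}$ accumulates to $(-1)^{4(r-1)(N-a-1)}=+1$ after four iterations, and one can choose $\rho$ to be a standard quarter-rotation of a small disk in $\mathrm{Int}\,I^2$ containing $\{p_0,p_1,p_2,p_3\}$, extended by the identity outside. Then $\rho^4$ is already the identity on a neighborhood of all four marked points, so $\xi^4$ acts on $\omega_0^{(4)}$ by a diffeomorphism of $I^{4n-1}$ which is the identity on a neighborhood of the support of the string link, and therefore produces the same embedding up to a canonical relative isotopy.

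It remains to see that $\omega_1^{(4)}\#\omega_2^{(4)}\#\omega_3^{(4)}\#\omega_0^{(4)}\simeq \omega_0^{(4)}\#\omega_1^{(4)}\#\omega_2^{(4)}\#\omega_3^{(4)}$; this is realized by an ambient isotopy of $I^{4n-1}$ that slides the four stacked blocks along the common concatenation direction in $I^{3n-2}$, which depends only on the concatenation parameter and not on $t$, hence gives a continuous family. Composing with the $\xi^4$-triviality isotopy of the previous step yields the desired family of relative isotopies between $\omega_{T_4}(t)$ and $\xi(\omega_{T_4}(t))$. To match an arbitrary prescribed isotopy at $t=1$, one uses that in the relevant codimension range the space of isotopies of string links between two fixed endpoints is path-connected (an immediate consequence of Corollary~\ref{cor:goodwillie}), so the constructed isotopy can be homotoped to the prescribed one through a family of isotopies, and this homotopy extends over $t\in[0,1]$ to the desired family.

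The main obstacle is the careful orchestration of the $\xi^4$-triviality step so that it is defined uniformly in $t$ and compatibly with $t=0$; this is resolved by the support choice of $\rho$ above, which makes $\xi^4$ act trivially on \emph{every} string link whose components remain in the chosen small disk neighborhoods of $p_0,\dots,p_3$, a property built into $\omega_0^{(4)}$ by construction as a small perturbation of the standard inclusion.
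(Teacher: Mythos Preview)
Your approach is the same as the paper's—which simply says the lemma is ``evident from the definition''—and your explicit unpacking of why the symmetrization makes $\xi$-invariance manifest (cyclic permutation of summands, $\xi^4\simeq\mathrm{id}$, commutativity of $\#$, all uniform in $t$) is exactly the content behind that one-line assertion. Two points, however, deserve correction.

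First, a minor one: the claim that $\xi^4$ is the identity on a neighborhood of the \emph{support} of the string link is not right, because the nonstandard component of $\omega_0^{(4)}(t)$ wanders through all of $I^{4n-1}$ along the disk $W_{\mathrm{IHX}}$, not just near the marked points in $I^2$. The correct argument is that $\rho_0^4$ (a full twist of the disk in $I^2$) is isotopic to the identity rel $\partial I^2$, so $\xi^4$ is isotopic to the identity rel $\partial I^{4n-1}$ by an ambient isotopy independent of $t$; this still gives the $t$-uniform identification you need.

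Second, and more seriously: your appeal to Corollary~\ref{cor:goodwillie} to conclude that the space of isotopies between two fixed string links is path-connected is incorrect. That corollary gives an isomorphism $\pi_1(\Emb_\partial((I^{3n-2})^{\cup 4},I^{4n-1}))\cong\pi_0(\Emb_\partial((I^{3n-1})^{\cup 4},I^{4n}))$ and asserts that conditions (b) and (c) of Theorem~\ref{thm:CFS} fail for the target, but it does \emph{not} assert triviality—condition (a) can still hold (e.g.\ when $4\mid n$), so $\pi_1$ need not vanish. Thus the ``for any isotopy at $t=1$'' clause in Definition~\ref{def:cyclic} is not established by your argument. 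Since the paper treats the lemma as immediate, it is likely that only the existence of one extending family is actually used downstream (and your construction supplies that); but if you want to verify the literal definition, you would need a different argument for the extension problem.
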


\begin{Def}[Triple bracket as a chain]\label{def:triple-chain}
Let $m_4=4$. We denote by $\beta_{T_4}=\,\beta_{T_4}(T_4)$ the chain $\frac{1}{m_4}\omega_{T_4}$ in $\Emb_\partial((I^{3n-2})^{\cup 4},I^{4n-1})$. For a face graph $\sigma$ of $T_4$, let $\beta_{T_4}(\sigma)$ denote the corresponding summand of $\partial\beta_{T_4}$ given by the iterated bracket for $\sigma$. 
\end{Def}

\begin{proof}[Proof of Theorem~\ref{thm:vertex} for $\ell=4$]
Each term $\omega_i^{(4)}$ of $\omega_{T_4}$ satisfies the Jacobi relation (Theorem~\ref{thm:phi-jacobi}), the Brunian property (Lemma~\ref{lem:brunnian-3}), and $\beta_{T_4}(T_4)$ satisfies the cyclic symmetry (Lemma~\ref{lem:cyclic-4}). The property (Boundary) is clear from the definition. We need only to check that the symmetrization does not break the identity of Theorem~\ref{thm:phi-jacobi} and the decomposition structure of its RHS. The boundary of each term $\omega_i^{(4)}$ represents the Jacobi relation and thus has a decomposition into three terms (See Figure~\ref{fig:Y2-cyclic}):
\[ \begin{array}{ll}
\partial\omega_0^{(4)}=\theta_1\# \theta_2\# \theta_3, &
\partial\omega_1^{(4)}=\xi(\theta_1)\# \xi(\theta_2)\# \xi(\theta_3),\\
\partial\omega_2^{(4)}=\xi^2(\theta_1)\# \xi^2(\theta_2)\# \xi^2(\theta_3), &
\partial\omega_3^{(4)}=\xi^3(\theta_1)\# \xi^3(\theta_2)\# \xi^3(\theta_3).
\end{array} \]
\begin{figure}[h]
\[ \includegraphics[height=30mm]{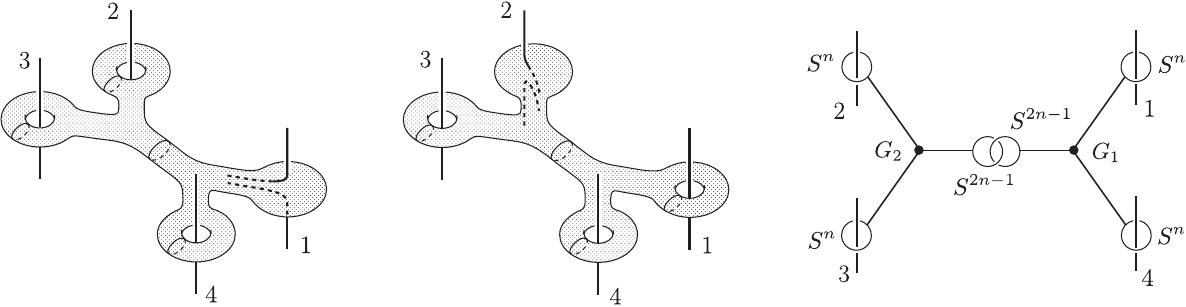} \]
\caption{Left: $\theta_1$. Middle: $\xi(\theta_2)$. Right: $\Psi$-graphs $G_1\cup G_2$ for $\theta_1$. One of the four components crawls inside a handlebody (shadowed), which is identified with the complement of a tubular neighborhood of $(I^{3n-2})^{\cup 3}$ in $I^{4n-1}$.}\label{fig:Y2-cyclic}
\end{figure}

If $\theta_1,\theta_2,\theta_3$ correspond to the three terms in the LHS of Figure~\ref{fig:ihx}, then by the cyclic symmetry of the Borromean string link (Lemma~\ref{lem:brun-borromean}), we have the following relative isotopy relations:
$\theta_1\sim \xi(\theta_2)\sim \xi^2(\theta_1)\sim \xi^3(\theta_2)$, $\theta_2\sim \xi(\theta_1)\sim \xi^2(\theta_2)\sim \xi^3(\theta_1)$, $\theta_3\sim \xi(\theta_3)\sim \xi^2(\theta_3)\sim \xi^3(\theta_3)$. More precisely, 
\begin{itemize}
\item the string link $\theta_1$ is obtained by the composition of two $\Psi_3$-graphs $G_1$ and $G_2$ of type $(n,n,2n-1)$ (Definitions~\ref{def:psi-graph} and \ref{def:iter-surg}), where the leaves of $G_1$ are linked to the first and fourth $I^{3n-2}$ components and to a leaf of $G_2$, and two leaves of $G_2$ are linked to the second and third $I^{3n-2}$ components (see Figure~\ref{fig:Y2-cyclic}, right). 
\item The string link $\xi(\theta_2)$ is obtained by the composition of two $\Psi_3$-graphs $G_1'$ and $G_2'$, where the leaves of $G_1'$ are linked to the second and third $I^{3n-2}$ components and to a leaf of $G_2'$, and two leaves of $G_2'$ are linked to the first and fourth $I^{3n-2}$ components. 
\end{itemize}
The iterated surgery on $G_1\cup G_2$ (Definition~\ref{def:iter-surg}), which gives $\theta_1$, replaces the first component with an embedded Whitehead product in a small regular neighborhood of $G_1$, and then modifies the part parallel to the distinguished $S^{2n-1}$-leaf of $G_1$ linked to a leaf of $G_2$ by surgery on $G_2$. The symmetry property for the $\Psi_3$-surgery shows that the surgeries on $G_1$ and $G_2$ can be deformed into those on $G_2'$ and $G_1'$, respectively. Thus we have $\theta_1\sim \xi(\theta_2)$. The relative isotopy equivalences for other terms are similar to this case. It follows that $\partial\omega_{T_4}$ is relatively isotopic to the connected sum of four copies of $\varphi_\mathrm{IHX}$.
This completes the proof.
\end{proof}

Since we also have cyclic symmetry of the system of Brunnian null-isotopies of the Borromean string link by Lemma~\ref{lem:brun-borromean}, the argument in the proof of Theorem~\ref{thm:vertex} for $\ell=4$ also proves the following.
\begin{Lem}\label{lem:cyclic-brun-4}
The system of Brunnian null-isotopies of $\omega_{T_4}$ from Lemma~\ref{lem:brunnian-3} has a cyclic symmetry property. 
\end{Lem}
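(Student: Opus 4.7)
The plan is to trace through the construction of the Brunnian null-isotopies in Lemmas~\ref{lem:pres-brunnian-3} and \ref{lem:brunnian-3} and verify that every non-canonical input is either the cyclically symmetric Borromean system of Lemma~\ref{lem:brun-borromean} or a choice that can be made $\xi$-equivariantly. Recall that $\omega_{T_4}$ is the component-wise concatenation $\omega_0^{(4)}\#\omega_1^{(4)}\#\omega_2^{(4)}\#\omega_3^{(4)}$ with $\omega_i^{(4)}=\xi^i(\omega_0^{(4)})$, so that $\xi(\omega_{T_4})$ simply cyclically reindexes the summands and hence is relatively isotopic to $\omega_{T_4}$. What must be checked is that the system of Brunnian null-isotopies assigned to $\omega_{T_4}$ shifts along $\xi$ in exactly the same way.

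First I would verify the naturality of the assignment in Lemma~\ref{lem:pres-brunnian-3} applied to a single summand: if one applies $\xi$ to $\omega_i^{(4)}$, the null-isotopies produced by the construction are exactly those assigned by Lemma~\ref{lem:pres-brunnian-3} to $\omega_{i+1}^{(4)}$. This follows because the construction only uses (i) the disks $N_1, N_2, N_3\subset S=\partial(D^n\times D^n\times D^n)$ and their boundary connected sum $N_{\mathrm{IHX}}$, (ii) the ``push'' trick of Figure~\ref{fig:plumb-disk}, and (iii) the prescribed system of Brunnian null-isotopies of the Borromean rings $\partial N_i$ in $S^n\vee S^n\vee S^n$ from Lemma~\ref{lem:brun-borromean}. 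The cyclic permutation $\xi$ acts on the three $D^n$ factors by a cyclic permutation, which carries the triple $\{N_1,N_2,N_3\}$ to itself cyclically; combined with the already known cyclic symmetry of the Borromean Brunnian null-isotopies, this gives the required naturality at the level of each summand.

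Next I would assemble the summand-wise systems into a system of Brunnian null-isotopies for the concatenation $\omega_{T_4}$, using the $3n$-disks in $D^n\times D^n\times D^n$ of Lemma~\ref{lem:brunnian-3} to interpolate between $[f,g,h]$ and the prescribed null-isotopy of $\varphi_{\mathrm{IHX}}$ in each $\calB(S)$. The main obstacle is that the choices of these filling $3n$-disks, and more generally the choices of paths and homotopies made in the cubical diagram (\ref{eq:cubical3}) via contractibility arguments, must be made simultaneously on all four summands in a way compatible with $\xi^i$. Since the relevant spaces of fillings and homotopies are contractible, an equivariant choice exists: fix the data on $\omega_0^{(4)}$ arbitrarily and transport it by $\xi^i$ to $\omega_i^{(4)}$. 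This choice is consistent because applying $\xi^4$ returns to $\omega_0^{(4)}$ and $\xi^4$ acts trivially on the space of the chosen data up to homotopy.

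Combining these two steps, the assembled system of Brunnian null-isotopies for $\omega_{T_4}$ is, by construction, equivariant under the cyclic shift of summands induced by $\xi$, and hence has the required cyclic symmetry property in the sense of Definition~\ref{def:cyclic} (extended to systems of null-isotopies along the lines of Definition~\ref{def:brunnian-framily}). The mild subtlety in the argument is purely bookkeeping about equivariantization of the contractible spaces of fillings; no further geometric input is needed beyond what was already used in proving Lemmas~\ref{lem:brun-borromean}--\ref{lem:brunnian-3}.
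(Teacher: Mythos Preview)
Your proposal is correct and follows essentially the same approach as the paper. The paper's proof is terser---it simply observes that the argument in the proof of Theorem~\ref{thm:vertex} for $\ell=4$ (which uses the cyclic symmetry of the Borromean link to identify the boundary terms $\theta_i\sim\xi(\theta_j)$) goes through verbatim once one also has the cyclic symmetry of the Borromean Brunnian null-isotopies from Lemma~\ref{lem:brun-borromean}---but your more explicit unpacking of the equivariantization via contractible spaces of fillings is exactly what underlies that one-line reduction.
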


\section{Operations for vertex surgery}\label{s:operations}

We shall introduce some operations for vertex surgery to obtain surgeries for more general set of dimensions of links and for more general trees. Since the definitions are technical, the reader may skip this section for the first reading and return when necessary, keeping in mind that there is a natural procedure to turn a basic bracket into the brackets for general dimensions, described in Theorem~\ref{thm:l-valent-general}, and that there is a natural iteration of vertex surgeries for graphs, such as in Figure~\ref{fig:U-4-valent}, also equipped with an induced system of Brunnian null-isotopies.

\begin{Def}[$\Psi_{r+1}$-graph]\label{def:psi-graph}
We define the {\it $\Psi_{r+1}$-graph} of type $(b_1,\ldots,b_{r+1})$ to be the space obtained from the disjoint union of 
\begin{itemize}
\item the spheres $S^{b_1},\ldots, S^{b_{r+1}}$, which we call {\it leaves}, and 
\item one 0-cell $e^0$, which we call the {\it vertex},
\end{itemize}
by attaching a 1-cell $e_i^1$, which we call an {\it edge}, between $S^{a_i}$ and $e^0$ for each $i$:
\[ (S^{b_1}\tcoprod \cdots\tcoprod S^{b_{r+1}})\cup e^0\cup (e_1^1\cup\cdots\cup e_{r+1}^1).\]
A {\it $\Psi$-graph} is a $\Psi_{r+1}$-graph for some $r$ (See Figure~\ref{fig:psi-graph}). 
\end{Def}
\begin{figure}[h]
\includegraphics[height=35mm]{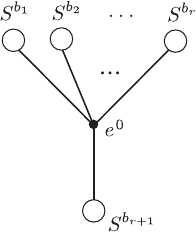}
\caption{$\Psi_{r+1}$-graph of type $(b_1,\ldots,b_{r+1})$.}\label{fig:psi-graph}
\end{figure}

We will define in Definition~\ref{def:Psi-surgery} surgery along a $\Psi_{r+1}$-graph in a $d$-manifold by taking a family in $\Emb_\partial(I^{a_1}\cup \cdots \cup I^{a_{r+1}}, I^d)$, where $a_i=d-b_i-1$. For example, the embedded Whitehead product gives an embedding in $\Emb_\partial(I^{2n-1}\cup I^{2n-1}\cup I^{2n-1}, I^{3n})$, and surgery along a $\Psi_3$-graph, in which the vertex $e^0$ is 3-valent, of type $(n,n,n)$ can be defined by using it. Also, we have constructed a path in $\Emb_\partial(I^{a_1}\cup \cdots \cup I^{a_4}, I^d)$, where $a_1=a_2=a_3=a_4=3n-2$ and $d=4n-1\geq 7$, and surgery along a $\Psi_4$-graph of type $(n,n,n,n$) could be defined by using such a path. Similarly, we will construct a path in the space $\Omega^{\ell-4}\Emb_\partial((I^{(\ell-1)n-(\ell-2)})^{\cup \ell},I^{\ell n-\ell+3})$ to define surgery along a $\Psi_{r+1}$-graph of type $(n,n,\ldots,n)$ in \S\ref{s:5-valent} and \S\ref{s:6-valent}. 
Here we define below the operations, ``suspension'', ``delooping'', and ``iteration'', to generalize $\Psi_{r+1}$-surgery to more general types $(b_1,\ldots,b_{r+1})$ and trees. 

\subsection{Suspension}\label{ss:suspension}

\begin{Def}[Presuspension]\label{def:presuspension}
We assume that the first $j$ components in the string link $f\colon I^{a_1}\cup \cdots \cup I^{a_r}\to I^d$ are standard inclusions. The {\it presuspension} of $f$ with respect to the first $j$ components is another embedding
\[ \Sigma'_{1,2,\ldots,j} f\colon (I^{a_1}\cup \cdots \cup I^{a_{j}})\times I\cup I^{a_{j+1}}\cup \cdots\cup I^{a_r}\to I^d\times I=I^{d+1} \]
defined by 
\[ \begin{split}
&(\Sigma_{1,2,\ldots,j}' f)(x_p,t)=(f(x_p),t)\quad \text{for $p=1,\ldots,j$},\\
&(\Sigma_{1,2,\ldots,j}' f)(x_p)=(f(x_p),\textstyle\frac{1}{2})\quad \text{for $p=j+1,\ldots,r$}.
\end{split} \]
The labels of the standard components may be exchanged, and $\Sigma'_{i_1,\ldots,i_j}f$ may be defined similarly for any $\{i_1,\ldots,i_j\}\subset\{1,\ldots,r\}$ under the assumption that the components labelled by $\{i_1,\ldots,i_j\}$ are standard inclusions.
\end{Def}
The change of the type under a presuspension $\Sigma'_{1,2,\ldots,j}$ is
\[ (a_1,\ldots,a_{j},a_{j+1},\ldots,a_r;d)
\to (a_1+1,\ldots,a_{j}+1,a_{j+1},\ldots,a_r;d+1). \]

\begin{Rem}\label{rem:suspension-generalized}
The assumption that the first $j$ components are standard inclusions in Definition~\ref{def:presuspension} can be weakened. It can be replaced by the condition that it is given a Brunnian null-isotopy $\gamma$ of $f$ with respect to removing the last $r-j$ components. In this case, the presuspension can be defined by first putting $f$ in $I^d\times\{\frac{1}{2}\}$, and then extending it to the two sides $I^d\times [\frac{1}{2},1]$ and $I^d\times[0,\frac{1}{2}]$ by the tracks of the isotopies $\gamma$ (of the first $j$ components) and its reverse. Since the last $r-j$ components are included only in $I^d\times\{\frac{1}{2}\}$, the result is another string link. If we have a system of Brunnian null-isotopies for a string link, all possible presuspensions can be obtained simultaneously in a canonical manner as prescribed in the system of Brunnian null-isotopies.
\end{Rem}

We modify the definition of presuspension to define suspension. We want that all the components in the suspension of a string link have the last coordinate so that it can be perturbed into the graph of a 1-parameter family of embeddings. The following definition is an analogue of \cite[Definition~5.2]{Wa21}.
\begin{Def}[Suspension of a string link]\label{def:suspension}
Let $L=L_1\cup \cdots\cup L_r\colon I^{a_1}\cup \cdots\cup I^{a_r}\to I^d$ (where $0<a_i<d$, $i=1,\dots r$) be a string link in $\fEmb_\partial(I^{a_1}\cup\cdots\cup I^{a_r},I^d)$ equipped with a framed isotopy $H_{1,t}\cup \cdots \cup H_{j,t}\colon I^{a_1}\cup\cdots\cup I^{a_j}\to I^d$ ($t\in [0,1]$) of the first $j$ components fixing a neighborhood of the boundary $\partial I^{a_1}\cup\cdots\cup \partial I^{a_j}$, such that $H_{1,0}\cup \cdots \cup H_{j,0}$ is the standard inclusions of the first $j$ components and $H_{1,1}\cup\cdots\cup H_{j,1}=L_1\cup\cdots\cup L_j$. Suppose that for each $k\geq j+1$, $L_k$ agrees with the standard inclusion $I^{a_k}\to I^d$ outside a $d$-ball about the barycenter $a=(\frac{1}{2},\ldots,\frac{1}{2})\in I^{a_k}\subset I^d$ with small radius $R\ll\frac{1}{2}$. Then the {\it suspension} $L'=L_1'\cup\cdots\cup L_r'\colon I^{a_1+1}\cup\cdots\cup I^{a_j+1}\cup I^{a_{j+1}}\cup\cdots\cup I^{a_r}\to I^{d+1}$ of $L$ is defined by
\[ \begin{split}
  &L_k'(u_k,w)=(H_{k,\chi(w)}(u_k),w),\quad (\text{for $1\leq k\leq j$}),\\
  &L_k'(u_k)=\left\{\begin{array}{ll}
  (L_k(u_k),\frac{1}{2}) & (|u_k-a|\leq R),\\
  (p_k,\mu_d^{-1}\circ\rho_{a_k}\circ\mu_{a_k}(u_k)) & (|u_k-a|\geq R),
  \end{array}\right.\quad (\text{for $j+1\leq k\leq r$}),
\end{split} \]
where $u_k\in I^{a_k}$, $w\in I$, $\chi\colon I\to [0,1]$ is a smooth function supported on a small neighborhood of $\frac{1}{2}$ such that $\chi(\frac{1}{2})=1$, $\mu_n\colon [0,1]^n\to [-1,1]^d$ is the embedding defined by $\mu_n(t_1,\ldots,t_n)=(2t_1-1,\ldots,2t_n-1,0,\ldots,0)$, and $\rho_m\colon [-1,1]^d\to [-1,1]^d$ is the diffeomorphism defined by 
\begin{equation}\label{eq:rho_3}
\begin{split}
&\rho_r(x_1,\ldots,x_d)
=(x_1,\ldots,x_{m-1},x_m',x_{m+1},\ldots,x_{d-1},x_d'),\mbox{ where}\\
&x_m'=x_m\cos\psi(|\bvec{x}|)-x_d\sin\psi(|\bvec{x}|),\quad
x_d'=x_m\sin\psi(|\bvec{x}|)+x_d\cos\psi(|\bvec{x}|)
\end{split}
\end{equation}
(Here $|\bvec{x}|=\sqrt{x_1^2+\cdots+x_d^2}$), and $\psi\colon [0,\sqrt{2}]\to [0,\frac{\pi}{2}]$ is  a smooth function with $\frac{d}{dt}\psi(t)\geq 0$, which takes the value 0 on $[0,2R]$ and the value $\frac{\pi}{2}$ on $[R',\sqrt{2}]$ for some $R'$ with $2R<R'<\frac{\sqrt{2}}{2}$. The diffeomorphism $\rho_m$ rotates the sphere $S^{d-1}_{|\tbvec{x}|}$ of radius $|\bvec{x}|$ by angle $\psi(|\bvec{x}|)$ along the $x_mx_d$-plane. The rotation $\rho_m|_{S^{d-1}_{R'}}$  exchanges the $x_m$-axis and the $x_d$-axis.) The resulting embedding $L'$ has a canonical normal framing induced from the original one since the embedding $\rho_m\circ \mu_m$ can be extended to the diffeomorphism $\rho_m$. 
By permuting the coordinates so that the components agree with $\st$ near $\partial I^{d+1}$, $L'$ with the induced framing can be considered giving an element $\Sigma_{1,2,\ldots,j}L$ of $\fEmb_\partial(I^{a_1+1}\cup\cdots\cup I^{a_j+1}\cup I^{a_{j+1}}\cup I^{a_r},I^{d+1})$ (see Figure~\ref{fig:suspension}, left). Suspensions for other choices of components are defined similarly by symmetry. 
\end{Def}
\begin{Rem}
\begin{enumerate}
\item The rotation $\rho_m$ is needed since the components $L_1',\ldots,L_j'$ have the coordinate $w$, which will correspond to the parameter for the delooping, and we would also like to let $L_{j+1}',\ldots,L_r'$ have the coordinate $w$ near the boundary, too. The permutation of the coordinates can be given by moving the $d$-th factor before the first factor. 
\item Remark~\ref{rem:suspension-generalized} also applies to suspension of string link.
\end{enumerate}
\end{Rem}
\begin{figure}[h]
\[\includegraphics[height=35mm]{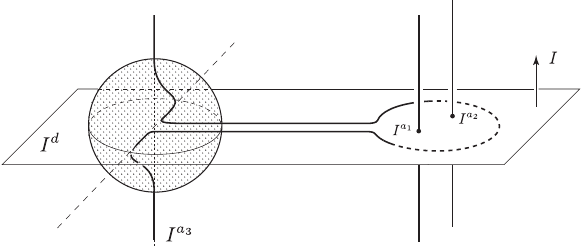}\qquad\qquad 
\includegraphics[height=40mm]{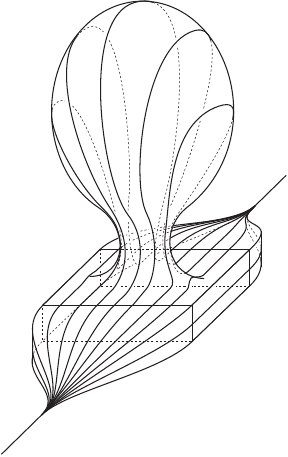}\]
\caption{Suspension with respect to the first two components, and delooping.}\label{fig:suspension}
\end{figure}

\subsection{Delooping}\label{ss:delooping}

We may associate to an embedding $I^{a}\to I^d$ a 1-parameter family of embeddings $I^{a-1}\to I^{d}$ by considering $I^a$ as a family over $I$ of $I^{a-1}$. By modifying (i.e. ``closing'') the family near the boundary $\partial (I^{a-1}\times I)=\partial I^{a-1}\times I\cup I^{a-1}\times \partial I$ as in \cite[\S{5.3}]{Wa21}, we obtain a loop of embeddings $I^{a-1}\to I^{d}$. We call this process a {\it delooping} of an embedding $I^{a}\to I^d$ (see Figure~\ref{fig:suspension}, right).

\begin{Def}[Delooping]\label{def:delooping}
We assume that all but the $j$-th component of a string link $f\colon I^{a_1}\cup \cdots \cup I^{a_r}\to I^d$ are standard inclusions. 
A {\it delooping} of $f$ with respect to the $j$-th component is a loop of embeddings
\[ I^{a_1}\cup \cdots \cup I^{a_j-1}\cup\cdots\cup I^{a_r}\to I^{d}, \]
obtained by applying the above delooping for the $j$-th component. This defines a map 
\[ \Emb_\partial(I^{a_1}\cup\cdots\cup I^{a_r},I^d)\to \Omega\Emb_\partial(I^{a_1}\cup\cdots\cup I^{a_j-1}\cup\cdots\cup I^{a_r},I^d). \]
Iterating the deloopings $p_j$ times for the $j$-th component, we will get a map
\[ \Emb_\partial(I^{a_1}\cup\cdots\cup I^{a_r},I^d)\to \mathrm{Map}_*(S^{p_1}\times\cdots\times S^{p_r},\Emb_\partial(I^{a_1-p_1}\cup\cdots\cup I^{a_r-p_r},I^d)). \]
\end{Def}
\begin{Rem}\label{rem:delooping}
\begin{enumerate}
\item The assumption that all but the $j$-th component of $f$ are standard inclusions in Definition~\ref{def:delooping} can be weakened. It can be replaced by the condition that it is given a Brunnian null-isotopy $\gamma$ of $f$ with respect to removing the $j$-th component. 

\item The iterated delooping does not depend on the choice of the order of components since the closing of a component to deloop is disjoint from other components, and the order of two delooping can be exchanged without changing the result.

\item  A similar construction to turn a disk into a family of embeddings of disks was recently used in \cite{Kos24a,Kos24b,BG}.
\end{enumerate}
\end{Rem}
We symbolize this transformation as
\begin{equation}\label{eq:symbolize-suspension}
 (a_1,\ldots,a_r;d)
\to \Omega^{\tvec{p}}(a_1-p_1,\ldots,a_r-p_r;d), 
\end{equation}
where $\vect{p}=(p_1,\ldots,p_r)$.

\begin{Lem}\label{lem:suspension-delooping}
Let $f\colon I^{a_1}\cup\cdots\cup I^{a_r}\to I^d$ be a string link such that $a_j\geq 1$ for all $j$, equipped with a null-isotopy $H$ of the restriction of $f$ to the first $r-1$ components $I^{a_1}\cup\cdots\cup I^{a_{r-1}}$. The following string links of type $(a_1+1,\ldots,a_{r-1}+1,a_r;d+1)$ are relatively isotopic.
\begin{enumerate}
\item[(a)] The suspension $\Sigma_{1,\ldots,r-1}f$ with respect to $H$. 
\item[(b)] The graph of the delooping 
\[ (a_1,\ldots,a_{r-1},a_r;d)\to \Omega^{\tvec{a}}(a_1,\ldots,a_{r-1},a_r-1;d)\]
of $f$ for $\vect{a}=(0,\ldots,0,1)$. 
\end{enumerate} 
\end{Lem}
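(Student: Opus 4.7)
My first move would be to reduce the claim to the $r$-th component. For the first $r-1$ components, both constructions are built directly from the null-isotopy $H$: in (a), by definition, $L_k'(u_k,w)=(H_{k,\chi(w)}(u_k),w)$, i.e.\ the graph of $H$ (sped up by the bump $\chi$); in (b), since $p_1=\cdots=p_{r-1}=0$ and the loop in $\Emb_\partial(I^{a_1}\cup\cdots\cup I^{a_{r-1}}\cup I^{a_r-1},I^d)$ is constructed by running $H$ forward from $w=0$ to $w=\tfrac12$ and backward from $w=\tfrac12$ to $w=1$, the graph over $w\in I$ of the first $r-1$ components is again the track of $H$ up to a monotone reparametrization of $w$. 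A smooth isotopy of $I=[0,1]$ relative to its endpoints, chosen to match these two reparametrizations, extends trivially in the other components (which, on this portion of the parameter interval, are the constant standard inclusions of $I^{a_r-1}$ away from a ball around the barycenter).

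Next I would analyze the $r$-th component. In (a), $L_r'\colon I^{a_r}\to I^{d+1}$ places $L_r$ in the slice $w=\tfrac12$ inside the ball $\{|u_r-a|\le R\}$, and fills the annulus $R\le|u_r-a|\le R'$ with the track of $\rho_{a_r}\circ\mu_{a_r}$, which rotates the $x_{a_r}$-axis into the ``extra'' axis through angle $\psi(|\bvec{x}|)$; beyond radius $R'$ the $x_{a_r}$-axis has been exchanged with that extra axis, so after permuting coordinates so that the extra axis becomes $w$, $L_r'$ is the standard inclusion of $I^{a_r}$ with $w$ as its last coordinate near $\partial I^{d+1}$. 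In (b), writing $I^{a_r}=I^{a_r-1}\times I_{w'}$ with $w'$ the last coordinate, delooping views $L_r$ as a $w'$-family of embeddings $I^{a_r-1}\to I^d$ and replaces it, on collars near $w'=0,1$ and near $\partial I^{a_r-1}$, with the constant standard embedding, producing a loop $\gamma$; its graph over $w\in I$ gives the $r$-th component as an $I^{a_r-1}\times I$-embedding into $I^{d+1}$ that equals $L_r\times\{\tfrac12\}$ on the central ball and agrees with the standard inclusion near the boundary.

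The key step is then to write down an explicit relative isotopy between these two copies of the $r$-th component. Both images coincide with $L_r(I^{a_r})\times\{\tfrac12\}$ on the central ball and with the flat plane $\{p_r\}\times I^{a_r-1}\times I$ outside a slightly larger ball, and each is parameterised on the intermediate annular region by a monotone angular profile controlling how the $x_{a_r}$-direction is swung into the $w$-direction: in (a) this profile is the function $\psi$ appearing in (\ref{eq:rho_3}), while in (b) the analogous role is played by the closing/bump function used in the delooping. I would choose both profiles to be smooth, nondecreasing maps of $[0,\sqrt2]$ into $[0,\tfrac\pi2]$, equal to $0$ near $0$ and to $\tfrac\pi2$ near $\sqrt2$; the space of such profiles is convex, so linear interpolation yields a $1$-parameter family of embeddings of $I^{a_r}$ into $I^{d+1}$ connecting the two. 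Since throughout this interpolation the embedding is supported in the same ball, fixed on a neighbourhood of $\partial I^{a_r}$, and standard near $\partial I^{d+1}$, the family lies in $\fEmb_\partial(I^{a_r},I^{d+1})$ and does not disturb the other $r-1$ components.

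The main obstacle will be the bookkeeping required to see that the closing-off diffeomorphism used in the delooping really has the same \emph{shape} as the rotation $\rho_{a_r}$, so that the two profiles live on the same space. This is essentially the observation that ``closing'' an $I^{a_r-1}$-family parametrised by $w'\in[0,1]$ into a loop parametrised by $w\in[0,1]$ is, in the collar of the central ball, nothing but a rotation of the $(x_{a_r},w)$-plane interchanging the two axes, together with a matching of framings induced from the extension $\rho_{a_r}$ of $\rho_{a_r}\circ\mu_{a_r}$. Once the collar models are identified, the interpolation above completes the proof, and the relative isotopy assembles with the isotopy of the first $r-1$ components described in the first paragraph to give a relative isotopy of the full string link.
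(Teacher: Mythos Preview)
Your proposal is correct and follows the natural hands-on strategy that the paper itself only gestures at: the paper's entire proof is the single sentence ``Lemma~\ref{lem:suspension-delooping} can be proved in the same way as \cite[Proof of Lemma~5.3 (2)]{Wa21},'' so you have supplied the explicit argument that is deferred there. Your decomposition into the first $r-1$ components (where both constructions are visibly the track of $H$ up to a reparametrization of the $w$-interval) and the $r$-th component (where both constructions agree on the central ball and on the exterior, and differ only by the choice of angular profile in the collar) is exactly the expected shape of the argument, and the convexity of the space of profiles makes the interpolation step clean. One small point worth tightening: when you reparametrize $w$ to match the first $r-1$ components, you should check that this reparametrization can be taken to be the identity on a neighbourhood of $w=\tfrac12$ (which is possible since both $\chi$ and the delooping profile are $1$ there), so that it does not disturb the $r$-th component on the central ball; you allude to this but it deserves to be stated.
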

Lemma~\ref{lem:suspension-delooping} can be proved in the same way as \cite[Proof of Lemma~5.3 (2)]{Wa21}.

\begin{Exa}\label{ex:3-valent-family}
Let $d=2k$, and we consider families of string links associated to the $Y$-graph (or $\Psi_3$-graph) in a $2k$-disk, as depicted in Figure~\ref{fig:3-valent-suspend}, left.
\begin{figure}[h]
\[ \includegraphics[height=35mm]{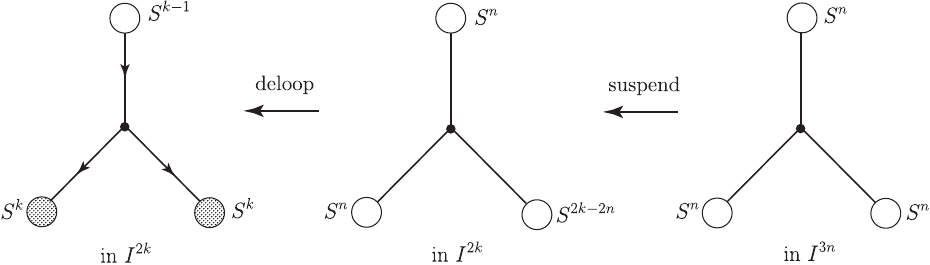} \]
\caption{}\label{fig:3-valent-suspend}
\end{figure}

Let $\Delta=2k-3n$, $a=2n-k-1$, where $n$ is an integer such that $\frac{k+1}{2}\leq n\leq\frac{2k}{3}$ (such an $n$ exists when $k=3$ or $k\geq 5$). We iterate suspensions and deloopings as follows (Figure~\ref{fig:3-valent-suspend}): 
\[ \begin{split}
&(2n-1,2n-1,2n-1;3n)\to (2n-1+\Delta,2n-1+\Delta,2n-1;3n+\Delta)\\
&\to \Omega^{\tvec{a}}(2n-1+\Delta-(a+\Delta),2n-1+\Delta-(a+\Delta+1),2n-1-(a+1);3n+\Delta)\\
&= \, \Omega^{\tvec{a}}(2n-1-a,2n-2-a,2n-2-a;3n+\Delta)\\
&= \, \Omega^{\tvec{a}}(k,k-1,k-1;2k),
\end{split} \]
where $\vect{a}=(a+\Delta,a+\Delta+1,a+1)=(k-n-1,k-n,2n-k)$.
We get a family over $S^{\tvec{a}}:=S^{k-n-1}\times S^{k-n}\times S^{2n-k}$ ($\dim S^{\tvec{a}}=k-1$) of framed embeddings 
\[ I^k\cup I^{k-1}\cup I^{k-1}\to I^{2k}. \]

More generally, if a Y-graph has $j$ outgoing edge(s) and $3-j$ incoming edge(s), then we iterate suspensions and deloopings as follows:
\[ \begin{split}
&(2n-1,2n-1,2n-1;3n)\to (2n-1+\Delta,2n-1+\Delta,2n-1;3n+\Delta)\\
&\to \, \left\{\begin{array}{ll}
\Omega^{\tvec{a}}(\underbrace{2n-1-a,\ldots,2n-1-a}_{3-j},\underbrace{2n-2-a,\ldots,2n-2-a}_{j};3n+\Delta)& (j\geq 1),\\
\Omega^{\tvec{a}}(2n-1-a,2n-1-a,2n-1-a;3n+\Delta)& (j=0),
\end{array}\right.\\
&= \, \Omega^{\tvec{a}}(\underbrace{k,\ldots,k}_{3-j},\underbrace{k-1,\ldots,k-1}_{j};2k),\text{ where}
\end{split} \]
\[
\vect{a}=\left\{\begin{array}{ll}
(\underbrace{a+\Delta,\ldots,a+\Delta}_{3-j},\underbrace{a+\Delta+1,\ldots,a+\Delta+1}_{j-1},a+1) & (j\geq 1),\\
(a+\Delta,a+\Delta,a) & (j=0).
\end{array}\right. \]
We get a family over $S^{\tvec{a}}$ ($\dim S^{\tvec{a}}=(k-3)+j$) of framed embeddings 
\[ (I^k)^{\cup 3-j}\cup (I^{k-1})^{\cup j}\to I^{2k}. \]
\end{Exa}

\begin{Exa}\label{ex:4-valent-family}
Let $d=2k$, and we consider a family of string links associated to the $\Psi_4$-graph in a $2k$-disk, as depicted in Figure~\ref{fig:4-valent-suspend}, left.
\begin{figure}[H]
\[ \includegraphics[height=40mm]{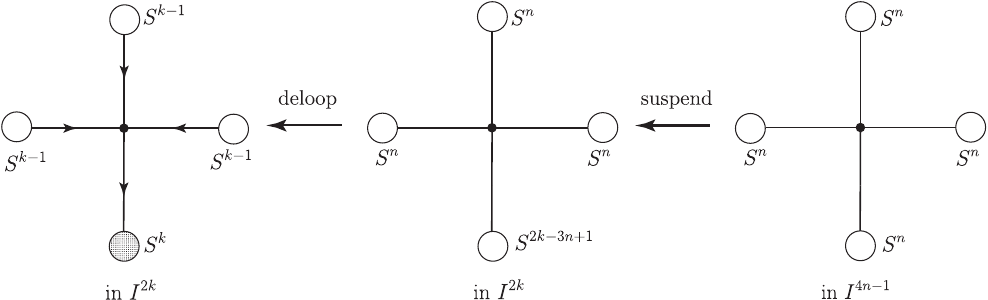} \]
\caption{}\label{fig:4-valent-suspend}
\end{figure}

We start with the null-isotopy path of the IHX link $\varphi_{\mathrm{IHX}}$ in $\Emb_\partial^\fr((I^{3n-2})^{\cup 4},I^{4n-1})$ we have discussed in \S\ref{s:4-valent}. Let $\Delta=2k-4n+1$, $a=3n-k-2$, where $n$ is an integer such that $\frac{k+2}{3}\leq n\leq\frac{2k+1}{4}$ (such an $n$ exists when $k=4$ or $k\geq 6$). 
We iterate suspensions and deloopings as follows (Figure~\ref{fig:4-valent-suspend}): 
\[ \begin{split}
&(3n-2,3n-2,3n-2,3n-2;4n-1)\\
&\to\,(3n-2+\Delta,3n-2+\Delta,3n-2+\Delta,3n-2;4n-1+\Delta)\\
&\to\, \Omega^{\tvec{a}}(3n-2-a,3n-2-a,3n-2-a,3n-2-(a+1);4n-1+\Delta)\\
&= \,\Omega^{\tvec{a}}(k,k,k,k-1;2k),
\end{split} \]
where $\vect{a}=(\Delta+a,\Delta+a,\Delta+a,a+1)=(k-n-1,k-n-1,k-n-1,3n-k-1)$.
This sequence of operations can be done simultaneously over the path, and we get a path of families over $S^{\tvec{a}}:=S^{k-n-1}\times S^{k-n-1}\times S^{k-n-1}\times S^{3n-k-1}$ ($\dim S^{\tvec{a}}=2k-4$) of framed embeddings 
\[ I^k\cup I^k\cup I^k\cup I^{k-1}\to I^{2k}. \]

More generally, if an $\Psi_4$-graph has $j$ outgoing edge(s) and $4-j$ incoming edge(s), then we iterate suspensions and deloopings as follows:
\[ \begin{split}
&(3n-2,3n-2,3n-2,3n-2;4n-1)\\
&\to\, (3n-2+\Delta,3n-2+\Delta,3n-2+\Delta,3n-2;4n-1+\Delta)\\
&\to\, \left\{\begin{array}{ll}
\Omega^{\tvec{a}}(\underbrace{3n-2-a,\ldots,3n-2-a}_{4-j},\underbrace{3n-3-a,\ldots,3n-3-a}_{j};4n-1+\Delta)& (j\geq 1),\\
\Omega^{\tvec{a}}(\underbrace{3n-2-a,\ldots,3n-2-a}_{4};4n-1+\Delta)& (j=0),
\end{array}\right.\\
&= \, \Omega^{\tvec{a}}(\underbrace{k,\ldots,k}_{4-j},\underbrace{k-1,\ldots,k-1}_{j};2k),\text{ where}
\end{split} \]
\[ \vect{a}=\left\{\begin{array}{ll}
(\underbrace{a+\Delta,\ldots,a+\Delta}_{4-j},
\underbrace{a+\Delta+1,\ldots,a+\Delta+1}_{j-1},
a+1) & (j\geq 1),\\
(a+\Delta,a+\Delta,a+\Delta,a) & (j=0).
\end{array}\right. \]
We get a path of families over $S^{\tvec{a}}$ ($\dim S^{\tvec{a}}=(2k-5)+j$) of framed embeddings 
\[ (I^k)^{\cup 4-j}\cup (I^{k-1})^{\cup j}\to I^{2k}. \]
\end{Exa}

\subsection{Iterated surgery}\label{ss:iter-surg}

\subsubsection{Surgery on a $\Psi_{r+1}$-graph}\label{ss:surg-psi}

Suppose we are given a pair $(G,\sigma)$ of a $\Psi_{r+1}$-graph $G$ of type $(b_1,\ldots,b_{r+1})$ in a $N$-manifold $X$, and a chain $\sigma\colon B_\sigma\to \Emb_\partial^\fr(I^{a_{r+1}},I^N-(I^{N-b_1-1}\cup\cdots\cup I^{N-b_r-1}))$, where $a_{r+1}+b_{r+1}=N-1$, from a compact manifold $B_\sigma$. 
Let $V$ be a small thickening of $G$ in $X$.  
Let $c_p$ ($p=1,\ldots,r+1$) be the $p$-th leaf, and let $\delta_{r+1}$ be a spanning disk of $c_{r+1}$. 
Let $U$ be a small closed regular neighborhood of $V\cup \delta_{r+1}$ in $X$ that deformation retracts onto $V\cup \delta_{r+1}$ (Figure~\ref{fig:U-4-valent}, left). Suppose further that an $a_{r+1}$-disk $L$ intersects $\mathrm{Int}\,\delta_{r+1}-V$ transversally in one point and that $L_{U}:=(\mathrm{Int}\,L)\cap U$ is a closed $a_{r+1}$-disk. 
\begin{figure}[h]
\[ \includegraphics[height=35mm]{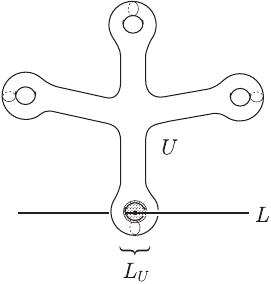} \hspace{20mm}
\includegraphics[height=50mm]{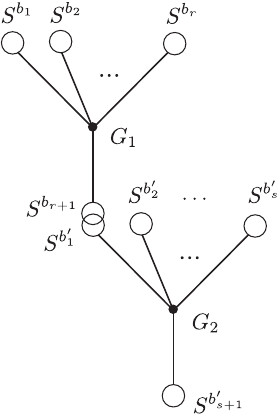}
\]
\caption{Left: The neighborhood $U$ of $V\cup \delta_{r+1}$ in $X$. Right: A pair of spheres $S^{b_{r+1}}$ and $S^{b_1'}$ from $G_1$ and $G_2$, respectively, forms a Hopf link in $X$.}\label{fig:U-4-valent}
\end{figure}

\begin{Def}[$\Psi_{r+1}$-surgery]\label{def:Psi-surgery}
Let $\varphi^{(G,\sigma)}_{U,t}\colon L_{U}\to U$ ($t\in B_\sigma$) be the family of embeddings obtained from the family of embeddings $\sigma\colon B_\sigma\to \Emb_\partial^\fr(I^{a_{r+1}},I^N-(I^{N-b_1-1}\cup\cdots\cup I^{N-b_r-1}))$
by identifying the component $I^{a_{r+1}}$ with $L_U$. 
Let $\varphi^{(G,\sigma)}=\{\varphi^{(G,\sigma)}_t\}\colon B_\sigma\to \Emb_\partial^\fr(L,X)$ be the family of framed embeddings obtained from the inclusion $L\to X$ by replacing the inclusion of $L_{U}$ with $\varphi^{G}_{U,t}$. We say that $\varphi^{(G,\sigma)}$ is obtained from the inclusion $L\to X$ by {\it $\Psi_{r+1}$-surgery on $(G,\sigma)$}.
\end{Def}

\subsubsection{Composition of $\Psi_{r+1}$-graphs}\label{ss:compos-psi}

Let $a_i=N-b_i-1$ ($1\leq i\leq r$), $a_j'=N-b_j'-1$ ($2\leq j\leq s+1$), and 
\[ X=I^N-(I^{a_1}\cup\cdots\cup I^{a_{r}}\cup I^{a_2'}\cup\cdots\cup I^{a_{s+1}'}). \]
We consider the disjoint union $G_1\tcoprod G_2\subset X$ of a $\Psi_{r+1}$-graph $G_1$ of type $(b_1,\ldots,b_{r+1})$ and a $\Psi_{s+1}$-graph $G_2$ of type $(b_1',\ldots,b_{s+1}')$, where 
\begin{itemize}
\item $r+s=\ell$,
\item $b_{r+1}+b_1'=N-1$,
\item the $(r+1)$-th sphere $S^{b_{r+1}}$ in $G_1$ and the first sphere $S^{b_1'}$ in $G_2$ forms a Hopf link in a small ball,
\item $S^{b_i}$ ($i\neq r+1$) links with $I^{a_i}$ by a small meridian, $S^{b_j'}$ ($j\neq 1$) links with $I^{b_j'}$ by a small meridian.
\end{itemize}
(See Figure~\ref{fig:U-4-valent}, right.) 
We equip $G_i$ with a framing, namely, choices of normal framings of the leaves, edges and the vertex. The framings of $G_i$ are used to trivialize (parametrize) the thickenings. Let $V_1\tcoprod V_2\subset X$ be small disjoint thickenings of $G_1\tcoprod G_2$. 

\subsubsection{Composition of surgeries on $G_1$ and $G_2$}

Let $G_1\tcoprod G_2$ be two composable graphs as above. Let $(G_1,\sigma_1)$, $(G_2,\sigma_2)$ be pairs as in \S\ref{ss:compos-psi}. Let $U_1,U_2$ be the $U$ for $G=G_1,G_2$, respectively. Suppose that a disk $L'=I^{a_{s+1}'}$ intersects the spanning disk $\delta_{s+1}'$ of the $(s+1)$-th leaf $c_{s+1}'$ of $G_2$ transversally in one point. We consider a $a_{r+1}$-disk $L$ is a part of the first leaf $S^{a_1'}$ of $G_2$ so that the family $\varphi^{(G_1,\sigma_1)}_t\colon L_{U_1}\to U_1$ is extended to a family of embeddings $U_2\to X$. We also denote this extension by $\varphi^{(G_1,\sigma_1)}_t$, abusing the notation.

\begin{Def}[Iterated surgery]\label{def:iter-surg}
We define the family $\varphi_{(t,u)}^{G_1\circ_1 G_2}\colon L'\to X$ ($t\in B_{\sigma_1}$, $u\in B_{\sigma_2}$) of embeddings by 
\[ \varphi_{(t,u)}^{G_1\circ_1 G_2}:=\left\{
\begin{array}{ll}
  \varphi^{(G_1,\sigma_1)}_t\circ \varphi^{(G_2,\sigma_2)}_{U_2,u} & \text{on $L_{U_2}':=L'\cap U_2$},\\
  \mathrm{incl} & \text{on $L'-U_2$}.
\end{array}\right.\]
The family $\{\varphi_{(t,u)}^{G_1\circ_1 G_2}\}$ defines a chain $B_{\sigma_1}\times B_{\sigma_2}\to\Emb_\partial^\fr(I^{a_{s+1}'},X)$. We denote this chain by 
\[ (L')^{(G_1,\sigma_1)\circ_1(G_2,\sigma_2)}\colon B_{\sigma_1}\times B_{\sigma_2}\to\Emb_\partial^\fr(L',X). \]
When $L'$ can be considered trivial, we write this chain as $\sigma_1\circ\sigma_2$ for short.
\end{Def}

\subsubsection{$\Psi_{r+1}$-surgery via fiber bundles}

By postcomposing the map $c\colon \Emb_\partial^\fr(I^{a_1}\cup\cdots\cup I_{a_{r+1}},I^N)\to B\Diff_\partial(V)$ to the family $\sigma\colon B_\sigma\to \Emb_\partial(I^{a_{r+1}},I^N-(I^{a_1}\cup\cdots\cup I^{a_r}))$; $t\mapsto \varphi_t^{(G,\sigma)}$, we get a map $B_\sigma\to B\Diff_\partial(V)$, which gives a $(V,\partial)$-bundle over $B_\sigma$. We also say that this bundle is obtained by {\it $\Psi_{r+1}$-surgery on $(G,\sigma)$}. One can recover the family of string links $\varphi^{(G,\sigma)}$ by postcomposing the map $\lambda\colon B\Diff_\partial(V)\to \Emb_\partial^\fr(I^{a_1}\cup\cdots\cup I_{a_{r+1}},I^N)$ as in Remark~\ref{rem:complement-map}.

Then an alternative definition of the iterated surgery can be given as follows. The chains $\sigma_1\colon B_{\sigma_1}\to B\Diff_\partial(V_1)$ and $\sigma_2\colon B_{\sigma_2}\to B\Diff_\partial(V_2)$ give maps
\begin{equation}\label{eq:BB-BDiff(V)}
 B_{\sigma_1}\times B_{\sigma_2}\longrightarrow B\Diff_\partial(V_1\tcoprod V_2)\longrightarrow B\Diff_\partial(V_{12}),
\end{equation}
where $V_{12}$ is a handlebody obtained from $I^N$ by removing tubular neighborhoods of the string link components $I^{a_1}\cup \cdots\cup I^{a_r}\cup I^{a_2'}\cup\cdots\cup I^{a_{s+1}'}$. By postcomposing the map $\lambda\colon B\Diff_\partial(V_{12})\to \Emb_\partial^\fr(I^{a_1}\cup \cdots\cup I^{a_r}\cup I^{a_2'}\cup\cdots\cup I^{a_{s+1}'},I^N)$ as in Remark~\ref{rem:complement-map} to (\ref{eq:BB-BDiff(V)}), we obtain a map
\[ B_{\sigma_1}\times B_{\sigma_2}\to \Emb_\partial^\fr(I^{a_1}\cup \cdots\cup I^{a_r}\cup I^{a_2'}\cup\cdots\cup I^{a_{s+1}'},I^N). \]

\subsection{Composition of the systems of Brunnian null-isotopies}\label{ss:compos-brunnian}

Let $G_1$ be a $\Psi_{r+1}$-graph, and $G_2$ be a $\Psi_{s+1}$-graph that are composable as in \S\ref{ss:compos-psi}. 
We see that the surgery on $G_1\tcoprod G_2$ induces canonical composition of the systems of Brunnian null-isotopies. The composition of the surgeries on $G_1$ and $G_2$ gives a family of embeddings in $\Emb_\partial^\fr(I^{a_{s+1}'},I^N-\tbigcup_\lambda I^{a_\lambda})$, where $\lambda$ runs over $\{1,\ldots,r\}\cup \{r+2,\ldots,r+s\}$. We suppose the following:
\begin{itemize}
\item The chains $\omega_{T_{\ell}}$ of Theorem~\ref{thm:vertex} for $\ell\leq \max\{r+1,s+1\}$ have been constructed.
\item $G_1$ and $G_2$ are of types that can be obtained from the basic cases of $T_{r+1}$ and $T_{s+1}$ (with only $S^n$-leaves) by iterated suspensions at the $(r+1)$-th leaf of $T_{r+1}$ and the first leaf of $T_{s+1}$. Let $\omega_{G_1}$ and $\omega_{G_2}$ be the corresponding families of string links obtained in this way.
\item We have systems of Brunnian null-isotopies $\gamma_{G_1}=\{\gamma_{G_1}(S_1)\}_{\emptyset\neq S_1\subset \{1,\ldots,r\}}$ and $\gamma_{G_2}=$\\ $\{\gamma_{G_2}(S_2)\}_{\emptyset\neq S_2\subset \{1,\ldots,s\}}$ for $\omega_{G_1}$ and $\omega_{G_2}$ in the cubical diagrams $\{\calE_{G_1}(S_1)\}_{S_1}$ and $\{\calE_{G_2}(S_2)\}_{S_2}$, respectively, where
\[ \begin{split}
&\calE_{G_1}(S_1)=\Emb_\partial(I^{b_1'},I^N-\tbigcup_{\lambda\notin S_1}I^{a_\lambda}),\\
&\calE_{G_2}(S_2)=\Emb_\partial(I^{a_{s+1}'},I^N-\tbigcup_{\lambda\notin S_2}I^{a_\lambda'}),
\end{split} \]
for $b_1'=a_{r+1}$ and $a_\mu'=a_{r+\mu}$ for $\mu\in\{2,\ldots,s\}$.
\end{itemize}
For $S\subset \{1,\ldots,r\}\cup \{r+2,\ldots,r+s\}$, we define
\[ \calE_{G_1\circ_1 G_2}(S)=\Emb_\partial(I^{a_{s+1}'},I^N-\tbigcup_{\lambda\notin S}I^{a_\lambda}). \]

\begin{Def}[Composition of Brunnian null-isotopies]\label{def:compos-paths}
We define the composition 
\[ \begin{split}
&\gamma_{G_1}\circ \gamma_{G_2}=\{(\gamma_{G_1}\circ \gamma_{G_2})(S)\}_S,\\
&S=S'\tcoprod S'',\quad S'\subset \{1,\ldots,r\},\quad S''\subset \{r+2,\ldots,r+s\},
\end{split}\]
where $(\gamma_{G_1}\circ \gamma_{G_2})(S)$ is a family in $\calE_{G_1\circ_1 G_2}(S)$, as follows.
\begin{enumerate}
\item When $S'=\emptyset$, $S''\neq \emptyset$, we define
\[ (\gamma_{G_1}\circ \gamma_{G_2})(S)=\omega_{G_1}\circ \gamma_{G_2}(S''_{-r}) \]
in $\calE_{G_1\circ_1 G_2}(S)$, where $S''_{-r}$ is obtained from $S''\subset \{r+2,\ldots,r+s\}$ by replacing each element $x$ with $x-r$. 
We consider $\gamma_{G_1}(S')=\omega_{G_1}$ as giving a family in $\calE_{G_1}(S')=\calE_{G_1}(\emptyset)$, and $\gamma_{G_2}(S''_{-r})$ as giving a family in $\calE_{G_2}(S''_{-r})$. 

\item When $S'\neq \emptyset$, $S''= \emptyset$, we define
\[ (\gamma_{G_1}\circ \gamma_{G_2})(S)=(\gamma_{G_1}(S')\circ \omega_{G_2})*\gamma_{G_2}(\{1\}), \]
where $(-)*\gamma_{G_2}(\{1\})$ extends each path in $\gamma_{G_1}(S')\circ \omega_{G_2}$ by the Brunnian null-isotopies in $\gamma_{G_2}(\{1\})$ so that it gives a family of null-isotopies of $\omega_{G_1}\circ\omega_{G_2}$.

\item When $S'\neq \emptyset$, $S''\neq\emptyset$, we define
\[ (\gamma_{G_1}\circ \gamma_{G_2})(S)=(\gamma_{G_1}(S')\circ \gamma_{G_2}(S''_{-r}))*\gamma_{G_2}(\{1\}\cup S''_{-r}), \]
where $(-)*\gamma_{G_2}(\{1\}\cup S''_{-r})$ attaches a homotopy in $\gamma_{G_2}(\{1\}\cup S''_{-r})$ from a path in $\iota\circ \gamma_{G_2}(S''_{-r})$ to a path in $\gamma_{G_2}(\{1\})$ fixing the endpoints (Figure~\ref{fig:compos-paths}, gray disk).
We impose a structure of a family of paths in the family $(\gamma_{G_1}\circ \gamma_{G_2})(S)$ as follows. For each pair $\gamma_1,\gamma_2$ of paths in $\gamma_{G_1}(S'),\gamma_{G_2}(S''_{-r})$, respectively, the composition $\gamma_1(t_1)\circ \gamma_2(t_2)$ gives a 2-parameter family of string knots in $\Emb_\partial^\fr(I^{a_{s+1}'},I^N-\tbigcup_{\lambda\notin S}I^{a_\lambda})$ parametrized by $(t_1,t_2)\in I^2$, on which $(1,1)$ corresponds to $\omega_{G_1}\circ\omega_{G_2}$, $(t_1,0)$ corresponds to $\iota$. We give $I^2$ a structure of a 1-parameter family of gradient flow-lines of the Morse function $t_1^2(2-t_1)^2+t_2^2(2-t_2)^2$ (See \cite[Example~3.3]{Hut}). The side $\{0\}\times I$ is connected in $\calE_{G_2}(\{1\}\cup S''_{-r})$ to homotopies to paths in $\gamma_{G_2}(\{1\})$. (See Figure~\ref{fig:compos-paths}).
\end{enumerate}
\end{Def}
\begin{figure}[h]
\[ \includegraphics[height=40mm]{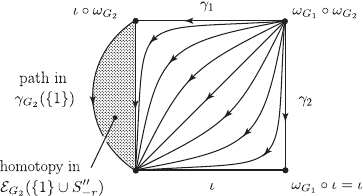} \]
\caption{A structure of a 1-parameter family of null-isotopies in $\calE_{G_1\circ_1 G_2}(S)$ from $\omega_{G_1}\circ\omega_{G_2}$ (when $S'\neq\emptyset$, $S''\neq\emptyset$).}\label{fig:compos-paths}
\end{figure}

\begin{Lem}\label{lem:compos-brun}
The composition $\gamma_{G_1}\circ\gamma_{G_2}$ defines a system of Brunnian null-isotopies for the family 
\[ \omega_{G_1}\circ \omega_{G_2}\colon B_{T_{r+1}}\times B_{T_{s+1}}\to \calE_{G_1\circ_1 G_2}(\emptyset)\]
relative to those for lower excess faces.
\end{Lem}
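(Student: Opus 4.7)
The plan is to verify, case by case following the tripartite definition of $\gamma_{G_1}\circ\gamma_{G_2}$, that for every nonempty $S=S'\tcoprod S''\subset \{1,\ldots,r\}\cup\{r+2,\ldots,r+s\}$ the element $(\gamma_{G_1}\circ\gamma_{G_2})(S)$ is a genuine family of null-isotopies of $\pi_S(\omega_{G_1}\circ\omega_{G_2})$ in $\calE_{G_1\circ_1 G_2}(S)$, and then to assemble these data into a single lift to $\holim{\emptyset\neq S}\calB(S)$ by checking compatibility with the face maps.

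First I would handle case (1), $S'=\emptyset$ and $S''\neq\emptyset$. Here the surgery on $G_1$ is undisturbed, and removing the components in $S''_{-r}$ reduces the composition to $\omega_{G_1}\circ\gamma_{G_2}(S''_{-r})$, whose endpoint is $\omega_{G_1}\circ\iota$; since the chosen leaf of $G_1$ on which $G_2$ lives is standard in $\calE_{G_1\circ_1 G_2}(S)$, this is isotopic to the standard inclusion, and the null-isotopy lives legitimately in $\calE_{G_1\circ_1 G_2}(S)$. Case (2), $S'\neq\emptyset$ and $S''=\emptyset$, is handled by first running $\gamma_{G_1}(S')$ to contract $\omega_{G_1}$ to $\iota$, then invoking $\gamma_{G_2}(\{1\})$ — which exists because $\omega_{G_2}$ has a Brunnian null-isotopy with respect to removing its first component — to contract the residual $\omega_{G_2}$ now sitting on a standard leaf. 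This is precisely the concatenation $*\,\gamma_{G_2}(\{1\})$ in the definition.

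Case (3), with $S'$ and $S''$ both nonempty, is the technical heart: the 2-parameter family $\gamma_1(t_1)\circ\gamma_2(t_2)$ on $I^2$ has its four corners at $\omega_{G_1}\circ\omega_{G_2}$, $\omega_{G_1}\circ\iota$, $\iota\circ\omega_{G_2}$ and $\iota\circ\iota$, and we need a coherent 1-parameter foliation producing genuine null-isotopies from $\omega_{G_1}\circ\omega_{G_2}$ to $\iota$. Using the gradient flow-lines of the Morse function $t_1^2(2-t_1)^2+t_2^2(2-t_2)^2$ provides such a foliation, but the flow-lines that exit through the face $\{0\}\times I$ land at $\iota\circ\gamma_{G_2}(S''_{-r})$ rather than at $\iota$ itself. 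This is where $\gamma_{G_2}(\{1\}\cup S''_{-r})$ enters: since the system for $G_2$ satisfies the cubical compatibility between $\{1\}$ and $\{1\}\cup S''_{-r}$, one obtains a canonical homotopy (rel endpoints) from $\iota\circ\gamma_{G_2}(S''_{-r})$ to $\gamma_{G_2}(\{1\})$, and attaching it extends the gradient-flow foliation to a bona fide family of null-isotopies.

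Finally I would verify that for $S_1\subset S_2$ the forgetful map $\calE_{G_1\circ_1 G_2}(S_1)\to\calE_{G_1\circ_1 G_2}(S_2)$ sends $(\gamma_{G_1}\circ\gamma_{G_2})(S_1)$ to $(\gamma_{G_1}\circ\gamma_{G_2})(S_2)$ up to a specified homotopy; this follows because (i) composition $\circ$ and the projections $\pi_S$ commute in the evident way, (ii) $\gamma_{G_1}$ and $\gamma_{G_2}$ are themselves systems (hence come equipped with the requisite face homotopies), and (iii) the attaching homotopies supplied by $\gamma_{G_2}(\{1\}\cup S''_{-r})$ are, by the cubical compatibility of $\gamma_{G_2}$, compatible under further forgetting. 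Packaging these natural homotopies produces the natural transformation from the face diagram of $\Delta^{\ell-1}$ into $\{\calB(S)\}_{\emptyset\neq S\subset\Omega}$ that defines a system of Brunnian null-isotopies; its agreement on lower-excess faces of $\partial(B_{T_{r+1}}\times B_{T_{s+1}})$ follows from the inductive hypothesis that $\gamma_{G_1}$ and $\gamma_{G_2}$ are themselves relative to lower-excess data. The main obstacle I expect is precisely this last coherence step — ensuring that the ad hoc Morse-flow parametrization in case (3) glues to the face-wise data from cases (1) and (2) without introducing new choices — and the key lever there is the strong (full cubical) Brunnian property of $\gamma_{G_2}$ emphasized in Remark~\ref{rem:brun-stronger}.
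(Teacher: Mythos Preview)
Your approach is essentially the paper's: a case analysis along the tripartite definition of $(\gamma_{G_1}\circ\gamma_{G_2})(S)$, followed by a face-compatibility check. Cases (1) and (2) are handled correctly and match the paper. However, your treatment of case (3) and of the final coherence step leaves out the one structural observation that the paper's proof hinges on, and this is not a detail you can defer.

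The missing point is the \emph{join identification}. In case (3), $\gamma_{G_1}(S')(u)$ is already a $\Delta^{|S'|-1}$-family of null-isotopy paths and $\gamma_{G_2}(S''_{-r})(v)$ is a $\Delta^{|S''|-1}$-family; composing and running the 2-disk (Morse-flow) model over each pair $(z_1,z_2)\in\Delta^{|S'|-1}\times\Delta^{|S''|-1}$ yields something parametrized by $\Delta^{|S'|-1}\times\Delta^{|S''|-1}\times I$. Your write-up stops here: you produce, for each fixed $(z_1,z_2)$, a 1-parameter family of null-isotopies, but you never explain why the total package is a map $\Delta^{|S|-1}\to\calB(S)$, which is what a point of the homotopy limit requires. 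The paper observes that at one endpoint of $I$ the $z_2$-parameter degenerates (you land in $\gamma_{G_1}(S')\circ\omega_{G_2}$, extended by the single path $\gamma_{G_2}(\{1\})$ independent of $z_2$), at the other endpoint the $z_1$-parameter degenerates, so $\Delta^{|S'|-1}\times\Delta^{|S''|-1}\times I/{\sim}$ is precisely the join $\Delta^{|S'|-1}*\Delta^{|S''|-1}=\Delta^{|S|-1}$. This one sentence simultaneously (i) gives the correct simplicial domain in case (3), and (ii) makes the gluing to cases (1) and (2) automatic, since the two boundary paths of the 2-disk are literally the images from the $S'=\emptyset$ and $S''=\emptyset$ cases under the face inclusions $\Delta^{|S''|-1},\Delta^{|S'|-1}\hookrightarrow\Delta^{|S|-1}$. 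In other words, the ``main obstacle'' you flag at the end is resolved not by invoking the strong Brunnian property abstractly, but by recognizing this join structure; once you see it, the coherence is tautological rather than ad hoc.
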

\begin{proof}
Let $(f_{u,v},\gamma_{u,v})$, $(u,v)\in B_{T_{r+1}}\times B_{T_{s+1}}$, be the pair of the embedding $f_{u,v}=(\omega_{G_1}\circ\omega_{G_2})(u,v)$ and the restriction $\gamma_{u,v}=\{\gamma_{u,v}(S)\}_S$ of $\gamma_{G_1}\circ\gamma_{G_2}$ over $f_{u,v}$. By Definition~\ref{def:brun-null-iso}, it suffices to prove that $(f_{u,v},\gamma_{u,v})$ satisfies the following conditions (a) and (b) at each $(u,v)$.
\begin{enumerate}
\item[(a)] $\gamma_{u,v}(S)$ defines a map $\Delta^{|S|-1}\to \calB_{G_1\circ_1 G_2}(S)$, where $\calB_{G_1\circ_1 G_2}(S)$ is the homotopy fiber over $\iota$ of the map $\pi_S\colon \calE_{G_1\circ_1 G_2}(\emptyset)\to \calE_{G_1\circ_1 G_2}(S)$.
\item[(b)] $\{\gamma_{u,v}(S)\}_{S\neq \emptyset}$ is compatible with respect to morphisms $S\stackrel{\subset}{\to} S\cup \{i\}$. Namely, the following diagram is commutative.
\[ \xymatrix{
  \Delta^{|S|-1} \ar[rr]^-{\gamma_{u,v}(S)} \ar[d] && \calB_{G_1\circ_1 G_2}(S) \ar[d] \\
  \Delta^{|S|} \ar[rr]^-{\gamma_{u,v}(S\cup \{i\})} && \calB_{G_1\circ_1 G_2}(S\cup\{i\})
}\]
\end{enumerate}

When $S'=\emptyset$, $S''\neq\emptyset$, we have $\gamma_{u,v}(S)=\omega_{G_1}(u)\circ \gamma_{G_2}(S''_{-r})(v)$, which is a family over $\Delta^{|S''|-1}$ in $\calB_{G_1\circ_1 G_2}(S)$ since $\omega_{G_1}(u)$ is a single embedding and $\gamma_{G_2}(S''_{-r})(v)$ is a family over $\Delta^{|S''|-1}$ in $\calB_{G_2}(S''_{-r})$. This proves (a) for $S'=\emptyset$. The compatibility of (b) for $S'=\emptyset$ follows from that of $\{\gamma_{G_2}(S''_{-r})\}_{S''}$.

When $S'\neq\emptyset$, $S''=\emptyset$, we have $\gamma_{u,v}(S)=(\gamma_{G_1}(S')(u)\circ \omega_{G_2}(v))*\gamma_{G_2}(\{1\})(v)$, which is a family over $\Delta^{|S'|-1}$ in $\calB_{G_1\circ G_2}(S)$ since $\gamma_{G_1}(S')(u)$ is a family over $\Delta^{|S'|-1}$ in $\calB_{G_1}(S')$, $\omega_{G_2}(v)$ is a single embedding, and $\gamma_{G_2}(\{1\})(v)$ is a single path. This proves (a) for $S''=\emptyset$. The compatibility of (b) for $S''=\emptyset$ follows from that of $\{\gamma_{G_1}(S')\}_{S'}$.

When $S'\neq\emptyset$, $S''\neq\emptyset$, we have $\gamma_{u,v}(S)=(\gamma_{G_1}(S')(u)\circ \gamma_{G_2}(S''_{-r})(v))*\gamma_{G_2}(\{1\}\cup S''_{-r})(v)$, where
\begin{itemize}
\item $\gamma_{G_1}(S')(u)$ is a family over $\Delta^{|S'|-1}$ in $\calB_{G_1}(S')$,
\item $\gamma_{G_2}(S''_{-r})(v)$ is a family over $\Delta^{|S''|-1}$ in $\calB_{G_2}(S''_{-r})$.
\end{itemize}
Thus $\gamma_{G_1}(S')(u)\circ \gamma_{G_2}(S''_{-r})(v)$ is parametrized by $\Delta^{|S'|-1}\times\Delta^{|S''|-1}\times I$, where $I$ parametrizes paths in the 2-disk model as in Definition~\ref{def:compos-paths} (3) and Figure~\ref{fig:compos-paths}. We see that after attaching $\gamma_{G_2}(\{1\}\cup S''_{-r})(v)(t)$, it can be parametrized by the join $\Delta^{|S'|-1}*\Delta^{|S''|-1}=\Delta^{|S'|-1}\times\Delta^{|S''|-1}\times I/{\sim}=\Delta^{|S'|+|S''|-1}$, where $\Delta^{|S'|-1}\times\Delta^{|S''|-1}\times \{0\}$ is collapsed to $\Delta^{|S'|-1}$, $\Delta^{|S'|-1}\times\Delta^{|S''|-1}\times \{1\}$ is collapsed to $\Delta^{|S''|-1}$. Indeed, for each point $(z_1,z_2)\in \Delta^{|S'|-1}\times\Delta^{|S''|-1}$, the 1-parameter family $\gamma_{u,v}(S)(z_1,z_2)$ in $\calB_{G_1\circ_1 G_2}(S)$ can be given by a map from the 2-disk model as in Figure~\ref{fig:compos-paths} to $\calE_{G_1\circ_1 G_2}(S)$, as explained in Definition~\ref{def:compos-paths} (3). Note that the path from $\gamma_{G_2}(\{1\})(v)$ is unique and does not depend on $(z_1,z_2)$. 
At the endpoints of the 1-parameter family $\gamma_{u,v}(S)(z_1,z_2)$, which correspond to the two boundary paths from the upper right corner to the lower left corner in the 2-disk of Figure~\ref{fig:compos-paths}, the parametrization by $(z_1,z_2)$ of paths in $\gamma_{G_1}(S')(u)$ and $\gamma_{G_2}(S''_{-r})(v)$ degenerates to families parametrized by $\Delta^{|S'|-1}$ and $\Delta^{|S''|-1}$, respectively. This proves (a) for $S'\neq\emptyset$, $S''\neq\emptyset$. 

The compatibility of (b) for $S'\neq\emptyset$, $S''\neq\emptyset$ is induced from those of $\{\gamma_{G_1}(S')\}_{S'}$ and $\{\gamma_{G_2}(S''_{-r})\}_{S''}$. Note that the compatibility of (b) with respect to $S'\tcoprod S''\to (S'\tcoprod S'')\cup \{i\}$ when $S'$ or $S''$ is empty but nonempty after adding $\{i\}$ is apparent from the 2-disk model: the boundary path passing through the upper left corner is the image from the $S''=\emptyset$ case, and that through the lower right corner is the image from the $S'=\emptyset$ case.
\end{proof}
\begin{Lem}
If $G_1$ and $G_2$ are both $\Psi_3$-graphs, then the composition $\gamma_{G_1}\circ \gamma_{G_2}$ of Definition~\ref{def:compos-paths} of the systems of Brunnian null-isotopies $\gamma_{G_i}$ agrees with that of Lemma~\ref{lem:pres-brunnian-3}.
\end{Lem}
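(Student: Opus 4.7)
The plan is to verify the agreement of the two systems of Brunnian null-isotopies stratum by stratum of the cubical diagram $\{\calB(S)\}_{\emptyset\neq S\subset\{1,2,3\}}$ associated to the composed link $\omega_{G_1}\circ\omega_{G_2}$. When both $G_i$ are $\Psi_3$-graphs, this composition is an iterated Whitehead bracket $[[f_1,f_2],f_3]$ and corresponds to exactly one of the three connect-summands $\partial N_j$ of $\varphi_{\mathrm{IHX}}$ from \S\ref{ss:Sigma_IHX}, so the system of Lemma~\ref{lem:pres-brunnian-3} restricts on this summand to three spanning $(3n-1)$-disks (one per component removal) together with their higher compatibility data inside $D^n\times D^n\times D^n$.

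For single component removals I would unpack Definition~\ref{def:compos-paths} directly. Removing a leaf of $G_1$ places us in clause (b), giving the null-isotopy $(\gamma_{G_1}(\{i\})\circ\omega_{G_2})*\gamma_{G_2}(\{1\})$: one runs the Borromean null-isotopy of $G_1$ along a spanning disk for that $G_1$-leaf, then drags the secondary $G_2$-surgery back to the standard inclusion via $\gamma_{G_2}(\{1\})$. This is exactly the disk produced in Lemma~\ref{lem:pres-brunnian-3} by the ``surgery then push'' trick, under which the handle attached through the $G_2$-Hopf pair is first surgered into a spanning disk (model $U'$) and then pushed across an ambient $(3n-1)$-disk to one lying in the complement of the removed leaf (model $U''$). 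Removing the external leaf of $G_2$ falls under clause (a), and $\omega_{G_1}\circ\gamma_{G_2}(\{i\})$ is literally the disk obtained by keeping the $G_1$-bracket intact and pasting a spanning disk of the Borromean surgery on $G_2$. The cyclic symmetry of the Borromean system from Lemma~\ref{lem:brun-borromean}, together with Lemma~\ref{lem:cyclic-brun-4}, reduces the three cases to a single prototype.

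For subsets of size $\geq 2$ the homotopies supplied by clause (c) --- the 2-disk model of Figure~\ref{fig:compos-paths} with the filler $\gamma_{G_2}(\{1\}\cup S''_{-r})$ --- assemble into a $\Delta^{|S|-1}$-parameter family of null-isotopies whose boundary matches the disks from the size-$(|S|-1)$ faces just identified. The corresponding higher data in Lemma~\ref{lem:pres-brunnian-3} come from filling two prescribed null-isotopies by a $3n$-disk in $D^n\times D^n\times D^n$ (see Figure~\ref{fig:two-null-isotopies}). To conclude agreement I would compare these two homotopies in $\calB(S)$ with the same boundary: by Corollary~\ref{cor:goodwillie}, the embedding spaces in the relevant codimensions are highly connected for $n\geq 2$, and in particular the space of smooth $3n$-disks in $D^n\times D^n\times D^n$ filling a given $(3n-1)$-sphere is contractible --- the same contractibility already invoked at the end of the proof of Lemma~\ref{lem:pres-brunnian-3} --- so any two higher homotopies with identical lower-face data are themselves homotopic, and both systems define the same lift into $\holim{\emptyset\neq S\subset\{1,2,3\}}{\calB(S)}$.

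The principal obstacle is the careful bookkeeping of the join parametrization $\Delta^{|S'|-1}*\Delta^{|S''|-1}$ and matching it to the piecewise-disk structure of Lemma~\ref{lem:pres-brunnian-3}: one must check that the particular order in which the paths $\gamma_{G_1}(S')$, $\gamma_{G_2}(S''_{-r})$, and the filler $\gamma_{G_2}(\{1\}\cup S''_{-r})$ are concatenated in the 2-disk model of Definition~\ref{def:compos-paths} is compatible with the order of surgeries and pushes used geometrically to assemble the three-disk system in Lemma~\ref{lem:pres-brunnian-3}. Once the vertex (dimension-$0$) and edge (dimension-$1$) faces of the cubical diagram are matched, the remaining higher coherences are automatic from the connectivity argument above.
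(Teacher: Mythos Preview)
Your overall strategy---unpack Definition~\ref{def:compos-paths} case by case and match each clause with the explicit spanning disks of Lemma~\ref{lem:pres-brunnian-3}---is exactly the paper's approach, and your treatment of the $|S|=1$ strata is correct. The paper makes this identification more concretely: removing a $G_1$-leaf (clause~(b)) yields the disk of type $U''$ with the thin handle running along the $G_2$-leaf, and removing the external $G_2$-leaf (clause~(a)) yields a $U''$-type disk with the handle along the embedded Whitehead product of the $G_1$-leaves; the latter is obtained from the former by exactly the $U'\leftrightarrow U''$ push of Lemma~\ref{lem:pres-brunnian-3}. Because this deformation \emph{is} the homotopy $\gamma_{ij}$ in $\calB(\{i,j\})$, the $|S|=2$ strata are identified directly, and no further uniqueness argument is needed.

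There is one genuine misstep: your appeal to Corollary~\ref{cor:goodwillie} is misplaced. That corollary gives an isomorphism between $\pi_j$ of a link space and $\pi_0$ of a higher-dimensional one via graphing; it says nothing about high connectivity of the embedding spaces themselves, and it certainly does not imply the contractibility of the space of $3n$-disks in $D^n\times D^n\times D^n$. The contractibility statement you want is elementary (uniqueness of collars in a top-dimensional disk) and is what is used at the end of Lemma~\ref{lem:brunnian-3}, not Lemma~\ref{lem:pres-brunnian-3}. In any case the paper does not need this argument here: once the $|S|=1$ disks and the $|S|=2$ pushes are literally identified with the $U',U''$ data, the two systems coincide on the nose rather than merely up to homotopy. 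Your invocation of Lemma~\ref{lem:cyclic-brun-4} is also unnecessary---the paper handles the three single-component removals directly without reducing by symmetry.
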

\begin{proof}
We consider the composition $\gamma_{G_1}\circ \gamma_{G_2}$ of the Brunnian null-isotopies with respect to removing one of the components of $\{1,2\}\cup \{4\}$. The null-isotopies $(\gamma_{G_1}\circ \gamma_{G_2})(\{1\})$ and $(\gamma_{G_1}\circ \gamma_{G_2})(\{2\})$ are defined as the case of $S'\neq \emptyset$ and $S''=\emptyset$, in which case $(\gamma_{G_1}\circ \gamma_{G_2})(S)=(\gamma_{G_1}(S')\circ \omega_{G_2})*\gamma_{G_2}(\{1\})$. The null-isotopy $\gamma_{G_1}(S')\circ \omega_{G_2}$ is induced by that of a spanning disk of the core of the $(2n-1)$-handle for the first leaf of $G_2$, which is taken along the disk of the form $U''$ as in the proof of Lemma~\ref{lem:pres-brunnian-3}, where the thin $n$-handle of $U''$ goes along the fourth leaf. The null-isotopy $(\gamma_{G_1}\circ \gamma_{G_2})(\{4\})$ is defined as the case of $S'=\emptyset$ and $S''\neq \emptyset$, in which case $(\gamma_{G_1}\circ \gamma_{G_2})(S)=\omega_{G_1}\circ \gamma_{G_2}(S''_{-2})$. This is again given by the disk of the form $U''$ but the thin $(2n-1)$-handle goes along the embedded Whitehead product of the first and second leaves. The resulting spanning disk is that obtained from the previous cases, which corresponds to $U'$, by the deformation $U'\leftrightarrow U''$ as in the proof of Lemma~\ref{lem:pres-brunnian-3}. 
\end{proof}

Now we consider what happens if the roles of $G_1$ and $G_2$ are switched. By the symmetries of $\omega_{T_{r+1}}$ and $\omega_{T_{s+1}}$, which follow by the assumption that Theorem~\ref{thm:vertex} hold for $\ell\leq\max\{r+1,s+1\}$, we know that the chain $\omega_{G_1}\circ\omega_{G_2}$ can be deformed in $\Emb_\partial(I^{a_{s+1}'}\cup\bigcup_\lambda I^{a_\lambda},I^N)$ into the reversed composition $\omega_{G_2}\circ \omega_{G_1}$ by using the Brunnian null-isotopy with respect to removing the $(s+1)$-st component in $\omega_{G_2}$. 
\begin{Lem}\label{lem:switch-G1-G2} 
This deformation can be extended to that between the systems of Brunnian null-isotopies for the families $\omega_{G_1}\circ\omega_{G_2}$ and $\omega_{G_2}\circ\omega_{G_1}$.
\end{Lem}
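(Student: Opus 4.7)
The plan is to build the extended deformation stratum by stratum on the cubical diagram used in Definition~\ref{def:brun-null-iso} and then verify coherence with face maps. The base-level deformation between $\omega_{G_1}\circ\omega_{G_2}$ and $\omega_{G_2}\circ\omega_{G_1}$ arises from two Brunnian null-isotopies in the prescribed systems: the path $\rho_1 := \gamma_{G_2}(\{1\})$ nulling the hub leaf of $G_2$ (the leaf linked to $S^{b_{r+1}}$ of $G_1$), and the symmetric path $\rho_2 := \gamma_{G_1}(\{r+1\})$ nulling the hub leaf of $G_1$. Concretely, sliding $G_1$-surgery along $\rho_1$ while simultaneously pulling $G_2$-surgery back along $\rho_2^{-1}$ traces out a 1-parameter family in $\calE_{G_1\circ_1 G_2}(\emptyset)$ that interpolates between the two orders of composition. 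This is precisely the deformation used immediately before the statement of the lemma.

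Next, for each subset $S = S' \sqcup S''$ with $S' \subset \{1,\ldots,r\}$ and $S'' \subset \{r+2,\ldots,r+s\}$, I would apply the same switch in parallel over the simplex $\Delta^{|S|-1}$ parametrizing $(\gamma_{G_1}\circ\gamma_{G_2})(S)$. Because we are assuming (by the induction on $\ell$) that Theorem~\ref{thm:vertex} already holds for $\ell \leq \max\{r+1, s+1\}$, the families $\gamma_{G_i}$ satisfy the \emph{strong} Brunnian property, which means that the null-isotopies nulling the hub components extend to families of null-isotopies over $\Delta^{|S_i|-1}$ indexed by any further subset of components. Hence the base deformation can be performed uniformly across all paths appearing in $\gamma_{G_1}(S')(u)\circ\gamma_{G_2}(S''_{-r})(v)$, producing a homotopy in $\calB_{G_1\circ_1 G_2}(S)$ from $(\gamma_{G_1}\circ\gamma_{G_2})(S)$ to the corresponding piece of $\gamma_{G_2}\circ\gamma_{G_1}$ (after the relabelling that moves components of $G_1$ past components of $G_2$).

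Finally, I would check coherence with the face maps of the diagram $\{\calB_{G_1\circ_1 G_2}(S)\}$. This amounts to showing that the homotopy constructed on $\Delta^{|S|-1}$ is the restriction of the one constructed on $\Delta^{|S|}$ for any enlargement $S \subset S \cup \{i\}$, equivalently that the three cases of Definition~\ref{def:compos-paths} (only $S'$ nonempty, only $S''$ nonempty, both nonempty) fit together as predicted by the proof of Lemma~\ref{lem:compos-brun}. Since the switching deformation is built entirely out of the prescribed null-isotopies already living in the cubical diagrams of $G_1$ and $G_2$, this compatibility is inherited from the coherence of $\gamma_{G_1}$ and $\gamma_{G_2}$ themselves; concretely, on the join $\Delta^{|S'|-1} * \Delta^{|S''|-1}$ parametrization described in the proof of Lemma~\ref{lem:compos-brun}, the homotopy can be taken constant on the two boundary paths in the 2-disk model (Figure~\ref{fig:compos-paths}), which forces agreement under face maps.

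The main obstacle I expect is precisely this coherence across strata: although the strong Brunnian property hands us the pieces, they are families of null-isotopies of \emph{different arities}, and one must verify that the switching homotopy, when applied fiberwise, still lands in the homotopy-fiber description of $\calB_{G_1\circ_1 G_2}(S)$ over a \emph{constant} embedding (so that it defines a point of the homotopy limit, in the sense of Definition~\ref{def:brunnian-framily}, rather than just a fibered homotopy). This will require using the fact that the hub-removal null-isotopies $\rho_1, \rho_2$ commute (up to higher homotopies also supplied by the strong Brunnian property) with the restriction maps $\pi_S$, which is exactly the content that the strong Brunnian hypothesis (Remark~\ref{rem:brun-stronger}(3)) was designed to supply.
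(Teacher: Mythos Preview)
Your approach is plausible but takes a different route from the paper's. The paper's proof does not build the switch stratum-by-stratum from the hub-removal null-isotopies; instead it invokes the \emph{cyclic symmetry} of the systems of Brunnian null-isotopies for $\omega_{T_{r+1}}$ and $\omega_{T_{s+1}}$ (the analogues of Lemma~\ref{lem:cyclic-brun-4}, forward-referenced as Lemma~\ref{lem:cyclic-brun-5}), and then argues as in the proof of Theorem~\ref{thm:vertex} for $\ell=4$. The point is that cyclic symmetry lets you relabel which leaf of each $\Psi$-graph plays the role of the ``output'' component without altering the prescribed null-isotopy system, so the switch $\omega_{G_1}\circ\omega_{G_2}\leadsto\omega_{G_2}\circ\omega_{G_1}$ is essentially a relabelling rather than a new construction. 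This is shorter than your explicit interpolation, and it makes the inductive structure transparent: cyclic symmetry for valences $\leq\max\{r+1,s+1\}$ feeds into the switch, which in turn feeds into cyclic symmetry for the next valence.

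The paper also records a concrete geometric observation about the 2-disk model of Figure~\ref{fig:compos-paths} that your sketch does not quite match: during the deformation the gray disk (the attached homotopy in $\calE_{G_2}(\{1\}\cup S''_{-r})$) shrinks to the constant path at $\iota$, while the bottom edge (previously constant at $\iota$) expands into a homotopy in $\calE_{G_1}(S'\cup\{r+1\})$. In other words the 2-disk model does not stay constant on its boundary paths as you suggest; rather, the attached region migrates from one side to the other, reflecting that the roles of $(S',S'')$ and of the two hub leaves are exchanged. Your direct construction would ultimately have to reproduce this picture, and making it explicit is what resolves the coherence obstacle you flag at the end.
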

\begin{proof}
We will also prove cyclic symmetry properties for the system of Brunnian null-isotopies (see Lemma~\ref{lem:cyclic-brun-5}). Then by using it, the lemma follows, as in the proof of Theorem~\ref{thm:vertex} for $\ell=4$. Note that during the deformation, the gray disk in Figure~\ref{fig:compos-paths} is shrinked into the constant path at $\iota$, and the bottom path at $\iota$ is expanded into a homotopy in $\calE_{G_1}(S'\cup\{r+1\})$.
\end{proof}

\subsection{Suspension of a system of Brunnian null-isotopies}\label{ss:suspend-brunnian}
The suspension of a string link in Definition~\ref{def:suspension} induces that of a system of Brunnian null-isotopies of the string link. 
\begin{Def}[Suspension of Brunnian null-isotopies]\label{def:suspend-brunnian}
Let $L\colon I^{a_1}\cup \cdots\cup I^{a_r}\to I^d$ ($0<a_i<d$) be a string link in $\Emb_\partial^\fr(I^{a_1}\cup\cdots\cup I^{a_r},I^d)$ equipped with a system $\{\gamma_L(S)\}_S$ of Brunnian null-isotopies. Let $L'$ be the suspension of $L$ with respect to a subset $K\subset \Omega=\{1,\ldots,r\}$ of components of $L$ and the Brunnian null-isotopy $\gamma_L(\Omega\setminus K)$.
We define the induced system of Brunnian null-isotopies $\{\gamma_{L'}(S)\}_S$ of $L'$ as follows. 
\begin{enumerate}
\item For $S\subset \Omega$, let $J=\Omega\setminus S$. We deform the null-isotopy $\gamma_L(S)$ into another one that can be extended to a null-isotopy of the components in $K\cup J$. This is possible since $\gamma_L(S)\in \calB(S)$ can be deformed to one in the image of $\calB(\Omega\setminus(K\cup J))\to \calB(\Omega\setminus J)=\calB(S)$. The resulting null-isotopy extends to the null-isotopy $\gamma_L(\Omega\setminus(K\cup J))\in \calB(\Omega\setminus(K\cup J))$ and it induces a null-isotopy of the components in $K$ by the map $\calB(\Omega\setminus(K\cup J))\to \calB(\Omega\setminus J)$. 
\item We use the null-isotopy of the components in the previous item to suspend $L$. Let $L''$ be the resulting string link. Then the null-isotopy of the components in $K\cap J$ can be extended to that of $L''$. Thus we have a Brunnian null-isotopy of $L''$ for the components in $J$.
\item By construction, $L'$ and $L''$ are relatively isotopic as 1-parameter families of string links. Thus the isotopy extension theorem gives a Brunnian null-isotopy $\gamma_{L'}(S)$ of $L'$ for the components in $J$. 
\end{enumerate}
\end{Def}

\section{5-valent vertex}\label{s:5-valent}

In this section, we prove Theorem~\ref{thm:vertex} for $\ell=5$. The construction in this paper for the case of the $\ell$-valent vertex for $\ell\geq 5$ is less explicit than the 4-valent vertex case in \S\ref{s:4-valent}. Since the general $\ell$-valent vertex case will be quite complicated, we separately explain about the case $\ell=5$ first to see what is needed to construct higher brackets in detail.
The relation in $L_\infty$-algebra $(L,\partial)$ involving brackets with four $\partial$-closed inputs is formally given by
\[ \begin{split}
&[[c,d],a,b]+[[a,b],c,d]+[[a,d],b,c]+[[b,c],a,d]+(-1)^n[[a,c],b,d]+(-1)^n[[b,d],a,c]\\
&-[[a,c,d],b]-[[a,b,c],d]-[[b,c,d],a]-[[a,b,d],c]=\partial[a,b,c,d]
\end{split}\]
for $\partial$-closed formal variables $a,b,c,d$ of degree $n$. We call this relation the {\it 10 term relation} (or the {\it 10T relation} for short) in $L_\infty$-algebra\footnote{The signs of the terms in the LHS of the 10T relation are uniquely determined up to overall reversal and symmetry in triple brackets. This can be proved by applying Lemmas~\ref{lem:sign-jacobi} and \ref{lem:sym-relation}.}. We shall construct a string link model (``10T-link'') for the LHS of this identity (Definition~\ref{def:10T-link}), and a path $[f,g,h,k]$ in the space $\Omega\Emb_\partial^\fr(D^{4n-3},X)$ (Definitions~\ref{def:quad-bracket} and \ref{def:quadruple}). We will not explicitly determine the orientations of the terms at this stage. It is enough to see that the construction works for {\it some} orientations. We will suspend and deloop it several times, and it is enough for our purpose to define explicit orientations to the result as in Definition~\ref{def:ori}. 

\subsection{10T-link}\label{ss:10T-link}

We need to define a suitable notion of the ``sum'' of terms in the 10T relation to obtain a string link (10T-link) that represents the LHS of the above identity. The resulting string link, which is an embedding in $\Emb_\partial(I^{4n-2},I^{5n-1}-(I^{4n-2})^{\cup 4})$, will be the graph of a pointed loop in $\Emb_\partial(I^{4n-3},I^{5n-2}-(I^{4n-3})^{\cup 4})$. 

We first give each term in the LHS of the 10T relation as a concordance string link obtained from an embedding $I^{4n-2}\to I^{5n-1}-(I^{4n-2})^{\cup 4}$.
The boundary of each term in the 10T relation, which is the sum of three iterated Whitehead brackets of type $[[[*,*],*],*]$, gives a string link $(I^{4n-3})^{\cup 5}\to I^{5n-2}$. We assume that the first four components of this string link are standard inclusions. 
\begin{itemize}
\item Iterated brackets $[[[*,*],*],*]$: Let $\alpha_i\colon I^{4n-3}\to I^{5n-2}-(I^{4n-3})^{\cup 4}$ ($i=1,2,\ldots,10$) denote the one distinguished component in the 5 component string link of the boundary of the $i$-th term in the 10T relation. It is given by the connected sum of three links each of which is given by the iterated composition of three $\Psi_3$-graphs of types $(n,n,3n-2)$, $(2n-1,n,2n-1)$, $(3n-2,n,n)$, which are obtained from $(n,n,n;3n)$ by suspensions (Definition~\ref{def:suspension}), in the sense of Definition~\ref{def:iter-surg}. (See Figure~\ref{fig:alpha_i}, left.)
\item Let $m_4\alpha_i=4\alpha_i\colon I^{4n-3}\to I^{5n-2}-(I^{4n-3})^{\cup 4}$ ($i=1,2,\ldots,10$) denote the embedding obtained by taking the sum of 4 copies of $\alpha_i$.
\item Iterated brackets $4[[*,*,*],*]$: Let $4\alpha_i'\colon I^{4n-2}\to I^{5n-1}-(I^{4n-2})^{\cup 4}$ $(i=1,2,\ldots,10$) be the embedding obtained from $4\alpha_i\colon I^{4n-3}\to I^{5n-2}-(I^{4n-3})^{\cup 4}$ equipped with the null-isotopies induced from $\omega_{T_4}$ for the 4-valent vertices, by taking the track of the null-isotopy of Definition~\ref{def:iter-surg}. It is given by the composition $\omega_{T_4}'\circ \omega_{T_3}'$ of one $\Psi_4$-graph surgery $\omega_{T_4}'$ of type $(n,n,n,2n-1)$ and one $\Psi_3$-graph surgery $\omega_{T_3}'$ of type $(3n-2,n,n)$ in the sense of Definition~\ref{def:iter-surg}, which are obtained from $\omega_{T_4}$ and $\omega_{T_3}$ by suspensions (Definition~\ref{def:suspension}). (See Figure~\ref{fig:alpha_i}, right.)
\end{itemize}
\begin{figure}[h]
\[ \includegraphics[height=40mm]{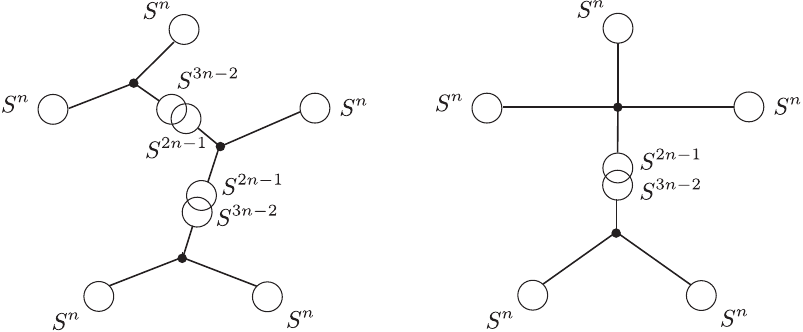} \]
\caption{The compositions of the $\Psi$-graphs in $I^{5n-2}-(I^{4n-3})^{\cup 5}$ for a term in $4\alpha_i$ and for $4\alpha_i'$.}\label{fig:alpha_i}
\end{figure}

\begin{Lem}\label{lem:Sigma_10T}
There is a decomposition of $I^{4n-2}$:
\[ I^{4n-2}=A'\cup \bigcup_{i=1}^{10}C_i' \]
into compact submanifolds $A',C_i'$ of $I^{4n-2}$ with corners, and an embedding 
\[ \Sigma_{\mathrm{10T}}\colon I^{4n-2}\to I^{5n-1}-(I^{4n-2})^{\cup 4} \]
such that
\begin{enumerate}
\item $C_i'$ is diffeomorphic to $I^{4n-2}$ as a smooth manifold with boundary,
\item the intersection $C_i'\cap C_j'$ is either empty or a common codimension 1 face which is diffeomorphic to $I^{4n-3}$,
\item $A'$ is the closure of $I^{4n-2}-\bigcup_{i=1}^{10}C_i'$, 
\item the restriction of $\Sigma_{\mathrm{10T}}$ to $\partial C_i'$ is given by $4\alpha_i$, and
\item the restriction of $\Sigma_{\mathrm{10T}}$ to $C_i'$ is given by $4\alpha_i'$, 
\item the restriction of $\Sigma_{\mathrm{10T}}$ to $A'$ agrees with the restriction of the standard inclusion.
\end{enumerate}
\end{Lem}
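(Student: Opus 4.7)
The plan is to realize $\Sigma_{\mathrm{10T}}$ as a side-by-side gluing of the ten concordances $4\alpha_i'$, each occupying a small sub-cube of the concordance domain $I^{4n-2}=I^{4n-3}\times I$, with the complement filled in by the standard inclusion $\iota$. This is a direct higher-valence analogue of the connected-sum construction of $\varphi_{\mathrm{IHX}}$ in Section~\ref{s:4-valent}.

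To set up, I would regard $I^{4n-2}=I^{4n-3}\times I$ as a concordance cylinder with the last factor as the ``time'' direction. By construction, each $4\alpha_i'$ is a null-concordance: an embedding $I^{4n-3}\times I\to I^{5n-2}\times I=I^{5n-1}$ that equals $\iota$ on $I^{4n-3}\times\{0\}$ and on $(\partial I^{4n-3})\times I$, and that equals $4\alpha_i$ on $I^{4n-3}\times\{1\}$. I would then choose ten pairwise disjoint small closed $(4n-3)$-subdisks $D_1,\dots,D_{10}\subset\mathrm{Int}\,I^{4n-3}$, set $C_i':=D_i\times I$, and let $A':=\overline{I^{4n-2}-\bigcup_iC_i'}$. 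Conditions~(1) and (3) are immediate, and condition~(2) holds vacuously because the $C_i'$ are chosen disjoint (the phrasing of (2) merely permits the non-disjoint case, which is not needed here).

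Next, I would transport each $4\alpha_i'$ onto $C_i'$ via the fixed diffeomorphism $C_i'\cong I^{4n-3}\times I$, rescaling so that the non-standard region of $4\alpha_i'$ lies in a small neighborhood of the center of $C_i'$. On $A'$, define $\Sigma_{\mathrm{10T}}$ to be the restriction of $\iota$. The compatibility at the interface $\partial C_i'\cap A'=((\partial D_i)\times I)\cup(D_i\times\{0\})$ holds because on this portion $4\alpha_i'$ coincides with $\iota$. On the remaining face $D_i\times\{1\}\subset\partial I^{4n-2}$ the embedding is $4\alpha_i$, giving condition~(4); the full $C_i'$ carries $4\alpha_i'$, giving condition~(5); and $A'$ carries $\iota$, giving condition~(6).

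The main (mild) technical point is arranging the target structure coherently: the copies of $(I^{4n-3})^{\cup 4}$ appearing in the codomain of each individual $4\alpha_i'$ must be identified with a single fixed $(I^{4n-2})^{\cup 4}\subset I^{5n-1}$ used in the statement. This is achieved by suspending all ten embeddings simultaneously with respect to a single standard copy of $(I^{4n-3})^{\cup 4}$, which is possible because each $4\alpha_i'$ already agrees with $\iota$ on a standard collar of its boundary; see Definition~\ref{def:suspension} and Example~\ref{ex:4-valent-family}. The resulting $\Sigma_{\mathrm{10T}}$ is an embedding because the non-standard portions are supported in disjoint neighborhoods of the disjoint $C_i'$ in the domain, and, via the common suspension, in disjoint regions of the target as well.
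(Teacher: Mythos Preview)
Your construction does satisfy conditions (1)--(6) as literally stated, but takes a route quite different from the paper's, and the difference matters for everything that follows.

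The paper does \emph{not} take the $C_i'$ disjoint. Instead it embeds the Lie-hedron $L_4$ (the graph with 10 vertices and 15 edges of Figure~\ref{fig:Lie-hedron}) into $\mathrm{Int}\,I^{4n-2}$ and thickens it to a handlebody $N_{L_4}$; each 1-handle is then cut at its cocore, producing ten enlarged boxes $\widetilde{A}_i'\cong I^{4n-2}$, and these are the $C_i'$. Two boxes $C_i'$ and $C_j'$ share a codimension-1 face precisely when $i$ and $j$ are joined by an edge of $L_4$, and on that shared face the embedding carries the iterated Whitehead product for the corresponding excess-0 tree --- the same term appearing in both $4\alpha_i$ and $4\alpha_j$. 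So condition~(2) is non-vacuous and encodes how the thirty boundary terms of the ten $4\alpha_i$ pair off along the fifteen edges.

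What this buys is that $N_{L_4}$, and hence the whole nontrivial support of $\Sigma_{\mathrm{10T}}$, lies in $\mathrm{Int}\,I^{4n-2}$: the resulting $\Sigma_{\mathrm{10T}}$ agrees with the standard inclusion near $\partial I^{4n-2}$. This is exactly what makes $\varphi_{\mathrm{10T}}$ a genuine string link in Definition~\ref{def:10T-link}, and what allows Lemma~\ref{lem:deform-to-loop} to present it as the graph of a pointed \emph{loop} in $\Emb_\partial((I^{4n-3})^{\cup 5},I^{5n-2})$. By contrast, your disjoint arrangement places the ten $4\alpha_i$ on the face $I^{4n-3}\times\{1\}\subset\partial I^{4n-2}$, so your $\Sigma_{\mathrm{10T}}$ is not standard near the boundary: it is the track of a path, not a loop. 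The null-homotopy argument of Lemma~\ref{lem:null-iso-10T}, the box decompositions of Lemmas~\ref{lem:5-cube-decomp} and \ref{lem:Sigma-triple-loop2}, and ultimately the thickening in \S\ref{s:thicken} all rely on the Lie-hedron gluing pattern, so your simpler model cannot be substituted without reworking the rest of \S\ref{s:5-valent}.
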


From now on, we construct $\Sigma_{\mathrm{10T}}$ of Lemma~\ref{lem:Sigma_10T}.
We use the ``Lie-hedron'' $L_4$ (consisting of 10 vertices and 15 edges, Figure~\ref{fig:Lie-hedron}) to inductively extend the 10 string links $4\alpha_i\colon I^{4n-3}\to I^{5n-2}-(I^{4n-3})^{\cup 4}$, each of which correspond to the union of the three incident edges of at a vertex in $L_4$, to a string link $I^{4n-2}\to I^{5n-1}-(I^{4n-2})^{\cup 4}$ (10T-link) by pasting together the extensions from $4\alpha_i$ to $4\alpha_i'$. We will see that the resulting string link will be isotopically trivial (Lemma~\ref{lem:null-iso-10T}), but the decomposition structure into 10 terms will be important to glue between families for different graphs. 
We will need to identify two boundary terms from different terms in the 10T relation, and the decomposition structure of the family of embeddings to those of the three 3-valent graphs should not be broken. From now on, we construct the 10T-link in 6 steps.
\begin{figure}[h]
\includegraphics[height=60mm]{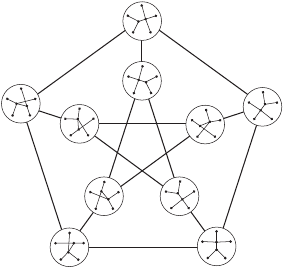}
\caption{Lie-hedron $L_4$ (\cite{MS,MSS}): The Lie analogue of the associahedron $K_4$.}\label{fig:Lie-hedron}
\end{figure}

\subsubsection{Step 1: Embed the Lie-hedron in $I^{4n-2}$}\label{ss:5-step1}

We take a thickened ribbon graph in $I^3$ as in Figure \ref{fig:lie-hedron2}, which consists of 3-cubes $A_1,A_2,\ldots,A_{10}\cong I^3$ and rods $B_1,B_2,\ldots,B_{15}\cong D^2\times I$.
\begin{figure}[h]
\[ \includegraphics[height=55mm]{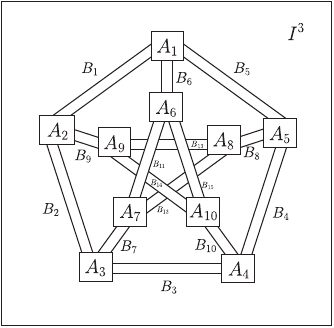}\qquad 
\includegraphics[height=45mm]{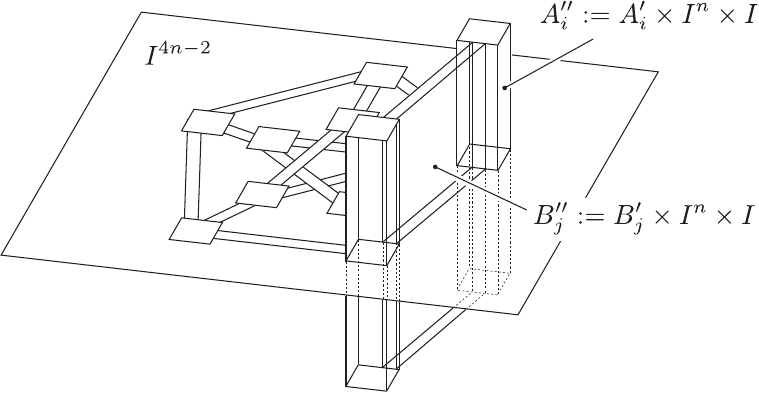} \]
\caption{Step 1 (left) and Step 2 (right).}\label{fig:lie-hedron2}
\end{figure}

We set $a=\frac{1}{2}$ and take a small positive real number $\ve$ such that $[a-2\ve,a+2\ve]\subset (0,1)$. Let $A_i'=A_i\times [a-2\ve,a+2\ve]^{4n-5}$ and $B_j'=B_j\times[a-\ve,a+\ve]^{4n-5}$, which are $(4n-2)$-disks, and consider these as subspaces of $I^3\times I^{4n-5}=I^{4n-2}$. The union 
\begin{equation}\label{eq:NL4}
 N_{L_4}:=\bigcup_i A_i'\cup\bigcup_j B_j' 
\end{equation}
is a thickened ribbon graph in $I^{4n-2}$.

\subsubsection{Step 2: Arrange $N_{L_4}$ in $I^{5n-1}=I^{4n-2}\times I^n\times I$}

Let $A_i''=A_i'\times I^n\times I$ and $B_j''=B_j'\times I^n\times I$, which are $(5n-1)$-disks, and consider these as subspaces of $I^{5n-1}$. We fix distinct values $b_1,b_2,b_3,b_4\in (0,1)\setminus \{a\}$ and let 
\[ \sigma_j:=I^{4n-2}\times (\underbrace{a,\ldots,a}_n,b_j). \]
We consider the four component standard string link $\sigma_1\cup \sigma_2\cup \sigma_3\cup \sigma_4$ in $I^{5n-1}$.
Each box $A_i''$ intersects the disk $\sigma_j$ by $A_i'\times (\underbrace{a,\ldots,a}_n,b_j)\cong I^{4n-2}$.

\subsubsection{Step 3: Decompose $N_{L_4}$ into 10 boxes}

We decompose $B_j\cong D^2\times I$ into two parts $D^2\times [0,\frac{1}{2}]$ and $D^2\times [\frac{1}{2},1]$. Then decompositions of $B_j'$ and $B_j''$ are induced, and those decompositions induce a decomposition of $N_{L_4}$ into 10 boxes, each of which is an enlargement of $A_i'$ by attaching three thickened half-edges. Let $\widetilde{A}'$, $\widetilde{A}_i''$ denote the enlarged box including $A_i'$, $A_i''$, respectively. Let $\widetilde{C}_j$ denote the middle face $D^2\times\{\frac{1}{2}\}\times [a-\ve,a+\ve]^{4n-5}\times I^n\times I$ in $B_j''$.
\begin{figure}[h]
\[ \includegraphics[height=50mm]{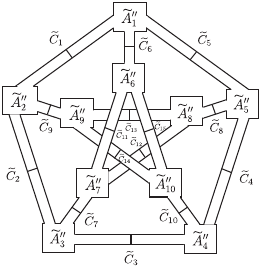} \]
\caption{Step 3.}
\end{figure}

\subsubsection{Step 4: Taking string links $(I^{4n-3})^{\cup 5}\to \widetilde{C}_j$}

In this and the next step, we replace the $N_{L_4}$ in $I^{5n-1}$ with a nonstandard link. The face $\widetilde{C}_j$ is a $(5n-2)$-disk. We choose a 5-component string link inside each $\widetilde{C}_j$, in which the first four components are $(\sigma_1\cup\sigma_2\cup\sigma_3\cup\sigma_4)\cap \widetilde{C}_j$. The fifth component is given by 4 times the iterated Whitehead products for the tree with three 3-valent vertices corresponding to the $j$-th edge of $L_4$. This is one of the three terms in $4\alpha_i$ for a vertex $i$ of $L_4$. The result is a string link $(I^{4n-3})^{\cup 5}\to \widetilde{C}_j$. 

\subsubsection{Step 5: Taking concordance string links $(I^{4n-2})^{\cup 5}\to \widetilde{A}_i''$ extending those in $\widetilde{C}_j$}

Suppose that $\widetilde{C}_{j_1},\widetilde{C}_{j_2},\widetilde{C}_{j_3}$ are the three faces incident to $\widetilde{A}_i''$. The string links in $\widetilde{C}_{j_1},\widetilde{C}_{j_2},\widetilde{C}_{j_3}$ can be extended to that in $B_{j_1}''\tcoprod B_{j_2}''\tcoprod B_{j_3}''$ by taking the direct product with $I$ in the product structure $B_{j_i}''=\widetilde{C}_{j_i}\times I$. The restriction of the resulting family of string links to $\widetilde{C}_{j_i}\times \partial I$ can be extended over the whole of $\partial A_i'\times I^n\times I$ by the restriction of the standard inclusion $(I^{4n-2})^{\cup 5}\to I^{5n-1}$. This gives a link in $\partial A_i'\times I^n\times I$ isotopic to 4 times the IHX-link, which is obtained by suspending $\varphi_{\mathrm{IHX}}$ of Theorem~\ref{thm:phi-jacobi} as the definition of $\alpha_i$ in the beginning of \S\ref{ss:10T-link}, and can be extended to a concordance string link $(I^{4n-2})^{\cup 5}\to \widetilde{A}_i''$ by the iterated surgery path $4\alpha_i'$ (from $\varphi_t^{G_1\circ_1 G_2}$ etc. in Definition~\ref{def:iter-surg}) for the $i$-th vertex of $L_4$.

\subsubsection{Step 6: Extension to whole of $I^{4n-2}$}

We extend the resulting embedding $N_{L_4}\to I^{5n-1}-(\sigma_1\cup \sigma_2\cup \sigma_3\cup \sigma_4)$, where $N_{L_4}$ was defined in (\ref{eq:NL4}), to an embedding from $I^{4n-2}$ by the standard inclusion $I^{4n-2}\to I^{5n-1}-(\sigma_1\cup \sigma_2\cup \sigma_3\cup \sigma_4)$ on $I^{4n-2}-\mathrm{Int}\,N_{L_4}$. The following definition completes the proof of Lemma~\ref{lem:Sigma_10T}.

\begin{Def}[10T-link]\label{def:10T-link}
Let 
\[ \varphi_{\mathrm{10T}}=(\sigma_1\cup \sigma_2\cup \sigma_3\cup \sigma_4)\cup\Sigma_{\mathrm{10T}}\colon (I^{4n-2})^{\cup 5}\to I^{5n-1} \]
be the resulting 5-component string link. Here, the first four components are standard, we also denote by $\sigma_i$ the inclusion of $\sigma_i$, and the first component is standard near the boundary. We call this string link $\varphi_{\mathrm{10T}}$ the {\it 10T-link}.
\end{Def}
\begin{proof}[Proof of Lemma~\ref{lem:Sigma_10T}]
Let $C_i'=\iota(I^{4n-2})\cap \widetilde{C}_i$, where $\iota\colon I^{4n-2}\to I^{5n-1}$ is the standard inclusion (of the fifth component), and $\widetilde{C}_i$ is the $(5n-2)$-disk of Step 3 above. Let $A'$ be the closure of $I^{4n-2}-\bigcup_{i=1}^{10}C_i'$. Then the properties (1)--(6) follows from the construction of $\Sigma_{\mathrm{10T}}$.  
\end{proof}

\begin{Rem}\label{rem:deform-within-box}
By Lemma~\ref{lem:pres-brunnian-10T} about the Brunnian property of the 10T-link, the 10T-link is isotopic to a string link such that all the components but the $p$-th one are standard inclusions for any $p\in\{1,2,3,4\}$. 
Since the restriction of the 10T-link to each box $\widetilde{A}_i''\cong I^{5n-1}$ is a string link $(I^{4n-2})^{\cup 5}\to \widetilde{A}_i''$, and the Brunnian property of Lemma~\ref{lem:pres-brunnian-10T} is constructed box-wise, we may assume that the deformation preserves the decomposition into the boxes $\widetilde{A}_i''$. This property will be used later in the proof of the compatibility of the cyclic symmetry property of $\Psi_5$-surgery with respect to the box decomposition in the proof of Theorem~\ref{thm:vertex} for $\ell=5$.
\end{Rem}

\subsection{10T-link as a loop and existence of null-isotopy of the 10T-link}\label{ss:null-iso-10T}

The following lemma shows that the 10T-link can be deformed to a loop of embeddings in $\Emb_\partial((I^{4n-3})^{\cup 5},I^{5n-2})$.
\begin{Lem}\label{lem:deform-to-loop}
Let $n\geq 2$. The string link $\varphi_{\mathrm{10T}}$ can be chosen to be the graph of a loop of embeddings
\[ (I^{4n-3})^{\cup 5}\to I^{5n-2}. \]
\end{Lem}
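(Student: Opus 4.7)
The strategy is to modify the construction of $\varphi_{\mathrm{10T}}$ in Steps~1--6, up to relative isotopy, so that it becomes the graph of a one-parameter family of embeddings $(I^{4n-3})^{\cup 5}\to I^{5n-2}$ that is the standard inclusion at both endpoints. The essential technical input is Lemma~\ref{lem:suspension-delooping}, which identifies the suspension of a string link equipped with a null-isotopy with the graph of the corresponding delooping.

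First, I would fix the loop coordinate. The natural choice is the last coordinate $t$ of the $I^{4n-2}$-factor shared by the domain and by the ambient $I^{5n-1}=I^{4n-2}\times I^n\times I$. Under this identification, the four standard components $\sigma_j=I^{4n-2}\times(a,\ldots,a,b_j)$ are already graphs of the constant loop at the standard inclusion $\iota\colon I^{4n-3}\to I^{5n-2}$, so it remains to arrange the fifth component $\Sigma_{\mathrm{10T}}$ into a graph over $t$. Within each box $\widetilde{A}_i''$ of the Lie-hedron decomposition, the piece of $\Sigma_{\mathrm{10T}}$ is given by the iterated surgery $4\alpha_i'$, which is built by suspending the basic brackets $\omega_{T_4}$ and $\omega_{T_3}$ equipped with the systems of Brunnian null-isotopies established in Lemmas~\ref{lem:brun-borromean} and~\ref{lem:brunnian-3}. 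By Lemma~\ref{lem:suspension-delooping}, each such suspension is relatively isotopic to the graph of the corresponding delooping, and performing these local isotopies in all 10 boxes simultaneously produces, within each box, a piece of the embedding that is the graph of a loop over~$t$.

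The main obstacle is verifying the global compatibility of these local graph structures along the 15 shared faces $\widetilde{C}_j$ of the Lie-hedron. Two adjacent boxes must contribute pieces whose restrictions to the common face already agree in the original construction (both equal the summand of the iterated Whitehead product carried by the corresponding edge of $L_4$), so the issue is only whether the deformed graph versions continue to agree. Because the isotopy from Lemma~\ref{lem:suspension-delooping} is natural in the underlying string link, and the Brunnian null-isotopies used to perform the composition $\omega_{T_4}'\circ\omega_{T_3}'$ are canonical as in Lemma~\ref{lem:compos-brun}, the deformed graph structures match on each common face up to an isotopy supported in a collar neighborhood. This small boundary mismatch can be absorbed by a further relative isotopy using the collar direction within each box, and assembling all 10 boxes then yields a single graph-of-loop embedding that is relatively isotopic to $\varphi_{\mathrm{10T}}$ and, by construction, is trivial at $t\in\partial I$.
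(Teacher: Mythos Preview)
Your approach is genuinely different from the paper's, and in its present form has a gap. You take the loop coordinate $t$ to be one of the thickening directions in $[a-2\ve,a+2\ve]^{4n-5}$, orthogonal to the embedded Lie-hedron $L_4\subset I^3$, and argue box-by-box via Lemma~\ref{lem:suspension-delooping}. But the piece $4\alpha_i'$ in each box is not merely a suspension: it is the \emph{track} of the null-isotopy induced from $\omega_{T_4}$, and that track parameter runs radially inward from $\partial A_i'$ rather than along any coordinate direction. Lemma~\ref{lem:suspension-delooping} converts a single suspension into the graph of a delooping; it says nothing directly about why the entire null-isotopy path, after being carried up to the $(5n-2)$-dimensional target, remains a family of graphs over your fixed $t$. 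That is exactly the substantive point, and your appeal to ``naturality of the isotopy from Lemma~\ref{lem:suspension-delooping}'' and ``canonical Brunnian null-isotopies as in Lemma~\ref{lem:compos-brun}'' does not supply it.

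The paper organizes the argument quite differently. Rather than keeping $L_4$ orthogonal to the loop direction, it first re-embeds $N_{L_4}$ in general position with respect to the projection $q\colon I^{4n-2}=I^{4n-3}\times I\to I$, so that $q|_{L_4}$ has only isolated singularities: critical points interior to edges (type (a)) and vertices of $L_4$ (type (b)). At each regular level $q^{-1}(c_i)$ one sees only finitely many edge-slices, and the restriction of $\Sigma_{\mathrm{10T}}$ there is the graph of a pointed loop in $\Emb_\partial((I^{4n-4})^{\cup 5},I^{5n-3})$ by the suspension argument for iterated Whitehead products. The cobordisms between consecutive regular levels are then handled case by case: a type (a) singularity is an explicit null-homotopy of $\gamma*\gamma^{-1}$, realized as the pullback of a fixed loop $\gamma_\tau$ by a smooth height function $h_\tau\colon I\times I\to I$; a type (b) singularity uses that the null-isotopy for the $4$-valent vertex was itself \emph{constructed} by suspensions, and hence lifts to a path in $\Omega\Emb_\partial((I^{4n-4})^{\cup 5},I^{5n-3})$. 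This Morse-theoretic slicing replaces your gluing step entirely and makes the compatibility across the Lie-hedron automatic, whereas in your setup the matching of graph structures along the $15$ faces $\widetilde{C}_j$ and the interaction of the null-isotopy direction with $t$ are left unjustified.
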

\begin{proof}
The desired string link can be constructed by considering $\Sigma_{\mathrm{10T}}\colon I^{4n-2}\to I^{5n-1}$ as the track of a loop of embeddings $I^{4n-3}\to I^{5n-2}$, as follows. First, we observe that the string link $(I^{4n-3})^{\cup 5}\to \widetilde{C}_j$ in Step 4 of the previous subsection can be represented as the graph of a loop of embeddings $(I^{4n-4})^{\cup 5}\to I^{5n-3}$. This is because the Borromean string links used at the 3-valent vertices of excess 0 trees are suspensions of those of lower dimensions. Indeed, the string link $(I^{4n-3})^{\cup 5}\to \widetilde{C}_j\cong I^{5n-2}$ in Step 4 can be obtained by suspensions of the following type:
\[ (3n-2,3n-2,3n-2,3n-2,4n-3;4n-1)\to (4n-3,4n-3,4n-3,4n-3,4n-3;5n-2).\]
(The $\Psi_4$-graph of type $(n,n,n,n)$ gives a string link of type $(3n-2,3n-2,3n-2,3n-2;4n-1)$ and the $\Psi_3$-graph of type $(n,n,n)$ gives a string link of type $(2n-1,2n-1,2n-1;3n)$. The $(n-1)$-fold suspension of $(2n-1,2n-1,2n-1;3n)$ for the first two components gives $(3n-2,3n-2,2n-1;4n-1)$. The composition of the $\Psi_4$-graph and the $(n-1)$-fold suspension of the $\Psi_3$-graph gives the left hand side $(3n-2,3n-2,3n-2,3n-2,4n-3;4n-1)$.)
The fifth component of dimension $4n-3$ in the result of the suspensions can be deformed into the graph of $(n-1)$-fold delooping, where $n-1\geq 1$.
By Lemma~\ref{lem:suspension-delooping}, a suspension can be exchanged with the graph of delooping, and we obtain the graph of a loop of string links of type $(4n-4,4n-4,4n-4,4n-4,4n-4;5n-3)$.

Next, we arrange the handlebody $N_{L_4}$ embedded in $I^{4n-2}$ to a general position. Namely, if we let $q\colon I^{4n-2}=I^{4n-3}\times I\to I$ be the projection onto the last factor, there is a sequence of values $0=c_0<c_1<c_2<\cdots<c_r=1$ of $q|_{L_4}$ such that 
\begin{itemize}
\item $q^{-1}(c_i)$ intersects edges of $L_4$ transversally,
\item the singularity included in $q^{-1}([c_i,c_{i+1}])$ is either 
\begin{enumerate}
\item[(a)] a single critical point of $q$ in the interior of an edge of $L_4$, or
\item[(b)] a single vertex of $L_4$. 
\end{enumerate}
\end{itemize}
(See Figure~\ref{fig:L4-critical}.) We may further assume that the restriction of $\Sigma_{\mathrm{10T}}$ to $q^{-1}(c_i)$ is the graph of a loop of embeddings $I^{4n-4}\to I^{5n-3}$ supported on $q^{-1}(c_i)\cap N_{L_4}$.
\begin{figure}[h]
\[ \includegraphics[height=20mm]{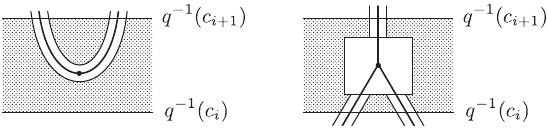} \]
\caption{Examples of singularities of type (a) (left) and (b) (right).}\label{fig:L4-critical}
\end{figure}

Now we have a sequence $\{\rho_i\}$ of pointed loops in $\Emb_\partial((I^{4n-4})^{\cup 5},I^{5n-3})$ for $\varphi_{\mathrm{10T}}|_{q^{-1}(c_i)}$, $i=0,1,\ldots,r$. We then see that the restriction of $\varphi_{\mathrm{10T}}$ to $q^{-1}([c_i,c_{i+1}])$ is given by a homotopy between the loops $\rho_i$ and $\rho_{i+1}$. The cobordism of type (a) is just an explicit null-homotopy of a composed loop of the form $\gamma*\gamma^{-1}$ in $\Emb_\partial((I^{4n-4})^{\cup 5},I^{5n-3})$, which is a path of loops. Namely, the loops $\rho_i$ and $\rho_{i+1}$ can be considered as a map $I\times \{0,1\}\to \Emb_\partial((I^{4n-4})^{\cup 5},I^{5n-3})$ that can be extended to a map from $I\times I$. On the other hand, each excess 0 tree $\tau$ gives a loop $\gamma_\tau\colon I\to \Emb_\partial((I^{4n-4})^{\cup 5},I^{5n-3})$, and the embedding over the corresponding 1-handle of $N_{L_4}$ is given by the pullback of $\gamma_\tau$ by a smooth map $h_\tau\colon I\times I\to I$, for which the 1-handle is thickened from the preimage of a regular value in $I$ under $h_\tau$. The explicit null-homotopy of $\gamma*\gamma^{-1}$ can be given by the pullback $h_\tau^*\gamma_\tau\colon I\times I\to \Emb_\partial((I^{4n-4})^{\cup 5},I^{5n-3})$. The cobordism of type (b) can be chosen as a path of loops because the null-isotopy in $\Emb_\partial((I^{4n-3})^{\cup 5},I^{5n-2})$ for a tree with a 4-valent vertex was constructed by suspensions and can be lifted to that in $\Omega\Emb_\partial((I^{4n-4})^{\cup 5},I^{5n-3})$ (see Lemma~\ref{lem:suspension-delooping}). 
\end{proof}

\begin{Lem}\label{lem:null-iso-10T}
Let $n\geq 3$. The pointed loop of embeddings in $\Emb_\partial((I^{4n-3})^{\cup 5},I^{5n-2})$ associated to the string link $\varphi_{\mathrm{10T}}$ is null-homotopic in $\Omega\Emb_\partial((I^{4n-3})^{\cup 5},I^{5n-2})$ up to taking connected sums of finite copies. 
\end{Lem}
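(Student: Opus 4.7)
The plan is to reduce the claim to a finiteness statement for the relevant embedding group, using the graphing map together with the machinery of Theorems~\ref{thm:CFS} and \ref{thm:goodwillie}. By Lemma~\ref{lem:deform-to-loop} the 10T-link $\varphi_{\mathrm{10T}}$ represents a class $[\varphi_{\mathrm{10T}}]\in\pi_1(\Emb_\partial((I^{4n-3})^{\cup 5},I^{5n-2}))$, and connected sum along the distinguished (non-standard) component corresponds, under the graphing identification $\Emb_\partial((I^{4n-2})^{\cup 5},I^{5n-1})\simeq \Omega\Emb_\partial((I^{4n-3})^{\cup 5},I^{5n-2})$, to the group operation in $\pi_1$. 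Hence it suffices to prove that this class has finite order.

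First I would apply Corollary~\ref{cor:goodwillie} with $\ell=5$ and $j=1$, whose hypothesis $n\geq j+2=3$ is exactly the assumption of the lemma. This yields an isomorphism
\[
\pi_1(\Emb_\partial((I^{4n-3})^{\cup 5},I^{5n-2}))\cong \pi_0(\Emb_\partial((I^{4n-2})^{\cup 5},I^{5n-1})),
\]
and, via Lemma~\ref{lem:S-I} (codimension $5n-1-(4n-2)=n+1\geq 4$), the latter is isomorphic to $\pi_0(\Emb((S^{4n-2})^{\cup 5},S^{5n-1}))$.

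Next I would invoke Theorem~\ref{thm:CFS} with $p_i=4n-2$, $m=5n-1$ and check that each of (a), (b), (c) fails:
\begin{itemize}
\item Condition (a) requires $4\mid p_{k_1}+1=4n-1$, which is impossible since $4n-1\equiv 3\pmod 4$.
\item Conditions (b) and (c) reduce to the Diophantine equation $(n-1)(x_1+\cdots+x_s)=5n-4$. Since $5n-4=5(n-1)+1$ and $n-1\geq 2$, the equation has no solution in positive integers. (The same conclusion is given directly by the second assertion of Corollary~\ref{cor:goodwillie}.)
\end{itemize}
Therefore $\pi_0(\Emb((S^{4n-2})^{\cup 5},S^{5n-1}))$ is finite, and consequently so is $\pi_1(\Emb_\partial((I^{4n-3})^{\cup 5},I^{5n-2}))$.

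Since the ambient group is finite, the class $[\varphi_{\mathrm{10T}}]$ has finite order, which translates to the statement that some finite iterated connected sum of copies of $\varphi_{\mathrm{10T}}$ is null-homotopic as a pointed loop in $\Omega\Emb_\partial((I^{4n-3})^{\cup 5},I^{5n-2})$. I do not anticipate a serious obstacle: the only arithmetic check to verify is the failure of case (a) of Theorem~\ref{thm:CFS}, which rests on the mod $4$ residue of $4n-1$; everything else is a direct application of the results of \S\ref{s:preliminaries}.
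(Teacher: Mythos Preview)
Your proposal is correct and follows essentially the same route as the paper: reduce to $\pi_0$ via Corollary~\ref{cor:goodwillie}, pass to spherical links by Lemma~\ref{lem:S-I}, and verify that none of the conditions (a)--(c) of Theorem~\ref{thm:CFS} hold for $p_i=4n-2$, $m=5n-1$. The only difference is that you spell out the arithmetic for (a) and (b)/(c) in slightly more detail than the paper does.
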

\begin{proof}
The associated loop of embeddings gives an element of $\pi_1(\Emb_\partial((I^{4n-3})^{\cup 5},I^{5n-2}))$.
By Corollary~\ref{cor:goodwillie}, the natural map
\[ \pi_1(\Emb_\partial((I^{4n-3})^{\cup 5},I^{5n-2}))
\to\pi_0(\Emb_\partial((I^{4n-2})^{\cup 5},I^{5n-1})) \]
is an isomorphism for $n\geq 3$. According to Theorem~\ref{thm:CFS} and Lemma~\ref{lem:S-I}, the latter group is finite. Namely, the three conditions of Theorem~\ref{thm:CFS} for $p_i=4n-2$ and $m=5n-1$ are not satisfied. The first condition fails since $p_i=4n-2$. The second and third condition fail by Corollary~\ref{cor:goodwillie}.
\end{proof}

\begin{Rem}
That the Jacobi relation is null-isotopic (Theorem~\ref{thm:phi-jacobi}) cannot be proved by a similar argument as Lemma~\ref{lem:null-iso-10T} because $\pi_0(\Emb_\partial((I^{3n-2})^{\cup 4},I^{4n-1}))$ is infinite. Nevertheless, there is another proof of Theorem~\ref{thm:phi-jacobi} based on the rational classification of links and assuming the Jacobi identity for Whitehead products. With the proof given in \S\ref{s:4-valent}, the proofs of the properties of $\omega_{T_4}$ and the motivation for the technical arguments in higher valence cases are more transparent.
\end{Rem}

From this, we get a family over $I^2$ of embeddings in $\Emb_\partial((I^{4n-3})^{\cup 5},I^{5n-2})$.

\begin{Def}[Quadruple bracket]\label{def:quad-bracket}
We fix a positive integer $q_5$ such that the component-wise connect-sum of $q_5$ copies of $\varphi_{\mathrm{10T}}$ is null-homotopic in $\Omega\Emb_\partial((I^{4n-3})^{\cup 5},I^{5n-2})$ as in Lemma~\ref{lem:null-iso-10T}. Let $\varphi^{(5)}=\{\varphi^{(5)}_s\}$, $\varphi^{(5)}_s\in \Omega\Emb_\partial((I^{4n-3})^{\cup 5},I^{5n-2})$ denote a null-homotopy of the loop $(\varphi_{\mathrm{10T}})^{\# q_5}$ such that $\varphi^{(5)}_0=\mathrm{const}_\iota$ and $\varphi^{(5)}_1=(\varphi_{\mathrm{10T}})^{\# q_5}$.
\end{Def}

We will fix a relative homotopy class of $\varphi^{(5)}$ in Assumption~\ref{assum:single-comp}.

\subsection{10T-link as a sum of chains}\label{ss:10T-chain}

To realize the LHS of the 10T relation as a sum of chains, we need to work with 3-chains instead of 1-chains for the loop since we need 3 dimensions to embed $L_4$. Namely, we want that a sum of ten 3-chains in the delooping $B^2\Emb_\partial(I^{4n-3},I^{5n-2}-(I^{4n-3})^{\cup 4})$ gives rise to a pointed 3-loop. Here, instead of doing so, we consider the graphing map
\[ \mathrm{graph}\colon \Omega^3\Emb_\partial(I^{4n-5},I^{5n-4}-(I^{4n-5})^{\cup 4}))
\to \Omega\Emb_\partial(I^{4n-3},I^{5n-2}-(I^{4n-3})^{\cup 4}))\]
and see that the 10T-link has an explicit lift in $\Omega^3\Emb_\partial(I^{4n-5},I^{5n-4}-(I^{4n-5})^{\cup 4}))$. This is related to the deloopings of a component of $(\varphi_{\mathrm{10T}})^{\# q_5}$ by the following homotopy commutative diagram (see Lemma~\ref{lem:suspension-delooping}):
\[ \xymatrix{
  \Omega^3\Emb_\partial(I^{4n-5},I^{5n-4}-(I^{4n-5})^{\cup 4})) \ar[d]_-{\mathrm{graph}} \ar[rrd]^-{\Sigma_{1,2,3,4}} & & \\
  \Omega\Emb_\partial(I^{4n-3},I^{5n-2}-(I^{4n-3})^{\cup 4})) \ar[rr]_-{\mathrm{deloopings}} & & 
  \Omega^3\Emb_\partial(I^{4n-5},I^{5n-2}-(I^{4n-3})^{\cup 4}))
}\]
Since we will deloop components to construct general brackets, 3-chains in $\Emb_\partial(I^{4n-5},I^{5n-2}-(I^{4n-3})^{\cup 4}))$ is enough for our purpose. The following is a consequence of the proof of Lemma~\ref{lem:deform-to-loop}.
\begin{Lem}\label{lem:Sigma-triple-loop}
Let $n\geq 3$. The string link $(\varphi_{\mathrm{10T}})^{\# q_5}$, considered as an element of $\Omega\Emb_\partial(I^{4n-3},I^{5n-2}-(I^{4n-3})^{\cup 4}))$, admits a lift to $\Omega^3\Emb_\partial(I^{4n-5},I^{5n-4}-(I^{4n-5})^{\cup 4}))$. 
\end{Lem}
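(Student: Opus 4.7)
The plan is to extract the required $3$-fold loop directly from the Lie-hedron architecture already built into $\Sigma_{\mathrm{10T}}$. Recall that in Step 1 we embedded $L_4$ in the interior of the first factor of the decomposition $I^{4n-2}=I^3\times I^{4n-5}$, and that the ribbon thickening $N_{L_4}$ therefore lies in $\mathrm{Int}(I^3)\times I^{4n-5}$ with $\Sigma_{\mathrm{10T}}$ agreeing with the standard inclusion outside of $N_{L_4}$. Correspondingly I would write the ambient cube as $I^{5n-1}=I^3\times I^{5n-4}$, where $I^{5n-4}=I^{4n-5}\times I^{n+1}$, so that each standard component $\sigma_j=I^{4n-2}\times\{(a,\ldots,a,b_j)\}$ projects trivially to $I^3$ and cuts out a standard $I^{4n-5}$-slice in every fiber $\{t\}\times I^{5n-4}$.

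First I would, at the cost of a small ambient isotopy preserving Steps 1--6, arrange that the projection $q\colon I^{4n-2}\to I^3$ onto the first three coordinates is transverse to $\Sigma_{\mathrm{10T}}$ on every fiber. Fiberwise slicing then yields, for each $t\in I^3$, a smooth embedding
\[ \widetilde{\varphi}(t)\colon I^{4n-5}\longrightarrow I^{5n-4}-(I^{4n-5})^{\cup 4} \]
depending continuously on $t$; since $N_{L_4}\cap(\partial I^3\times I^{4n-5})=\emptyset$, this family is constant at the standard inclusion on $\partial I^3$ and hence defines a pointed map $S^3\to\Emb_\partial(I^{4n-5},I^{5n-4}-(I^{4n-5})^{\cup 4})$, i.e.\ an element of $\Omega^3\Emb_\partial(I^{4n-5},I^{5n-4}-(I^{4n-5})^{\cup 4})$; component-wise connect-summing $q_5$ copies provides the candidate lift for $(\varphi_{\mathrm{10T}})^{\# q_5}$. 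To see that its image under the graphing map is homotopic, inside $\Omega\Emb_\partial(I^{4n-3},I^{5n-2}-(I^{4n-3})^{\cup 4})$, to the pointed loop constructed in Lemma~\ref{lem:deform-to-loop}, I would trace through the homotopy-commutative diagram immediately preceding the present lemma: every constituent $\Psi$-graph surgery used in $4\alpha_i$ and $4\alpha_i'$ was assembled via iterated suspensions of the basic $\omega_{T_3}$ and $\omega_{T_4}$ brackets, and Lemma~\ref{lem:suspension-delooping} exchanges each such suspension with the graph of a delooping up to relative isotopy. Performing this exchange along each of the three $I^3$-directions identifies $\mathrm{graph}\,\widetilde{\varphi}$ with the loop of Lemma~\ref{lem:deform-to-loop}.

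The main obstacle is verifying that the fiberwise slices glue continuously across the codimension-one interfaces $\widetilde{C}_j$ between adjacent boxes $\widetilde{A}_i''$: Steps 4 and 5 of \S\ref{ss:10T-link} supply only the five-component link on each $\widetilde{C}_j$ (through $4\alpha_i$) together with concordances to themselves inside each adjoining box (through $4\alpha_i'$), so one must check that the slice families $\widetilde{\varphi}(t)$ obtained from the two sides actually match along $\widetilde{C}_j$ and that, near every such face, $\widetilde{\varphi}(t)$ lies in the image of the graphing map. This is handled by exploiting the product structure $B_j''=\widetilde{C}_j\times I$ fixed in Step 5 together with an explicit choice of collar for each $\widetilde{A}_i''$ compatible with Step 1, and by invoking the already-established compatibility of $\omega_{T_3}$ and $\omega_{T_4}$ under the suspension--delooping exchange of Lemma~\ref{lem:suspension-delooping}; the cyclic/Brunnian coherences of $\omega_{T_3}$ and $\omega_{T_4}$ established in \S\ref{s:4-valent} then ensure that the assembled $\widetilde{\varphi}$ is the required pointed map $S^3\to\Emb_\partial(I^{4n-5},I^{5n-4}-(I^{4n-5})^{\cup 4})$.
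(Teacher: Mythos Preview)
Your approach---slicing $\Sigma_{\mathrm{10T}}$ directly over the $I^3$ in which $L_4$ was embedded---shares the paper's basic idea, but there is a genuine gap. The claim in your first paragraph that ``at the cost of a small ambient isotopy'' the projection to $I^3$ can be made fiberwise transverse, so that slicing immediately produces the family $\widetilde{\varphi}$, is not a preliminary normalization: it is the whole content of the lemma. Arranging $\Sigma_{\mathrm{10T}}$ to be the graph of a family over $I^3$ is not a generic condition achievable by perturbation; it requires that every piece of the construction carries enough suspension structure to deloop two further times. You later appeal to suspensions via Lemma~\ref{lem:suspension-delooping}, but only to compare $\mathrm{graph}\,\widetilde{\varphi}$ with the loop of Lemma~\ref{lem:deform-to-loop} \emph{after} $\widetilde{\varphi}$ is already assumed to exist. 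The suspension argument is needed for the existence step itself, and you never count the available suspensions or use the hypothesis $n\geq 3$.

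The paper's proof is explicit on exactly this point: the null-isotopy filling each box $\widetilde{A}_i''$ is obtained from the basic $\omega_{T_4}$ (in type $(3n-2;4n-1)$) by $(4n-3)-(3n-2)=n-1$ suspensions, and one needs $n-1\geq 2$ so that the cobordism of type~(b) in the proof of Lemma~\ref{lem:deform-to-loop} (passage through a vertex of $L_4$) lifts to a homotopy in $\Omega^2\Emb_\partial(I^{4n-5},I^{5n-4}-(I^{4n-5})^{\cup 4})$. Your ``main obstacle''---matching along the interfaces $\widetilde{C}_j$---is in fact the easier part, since over the rods $B_j$ the embedding is built from iterated $\Psi_3$-surgeries with many suspensions to spare; the tight constraint sits over the interiors of the boxes $\widetilde{A}_i''$, where only $n-1$ suspension directions are available and the hypothesis $n\geq 3$ is consumed.
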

\begin{proof}
We saw in the proof of Lemma~\ref{lem:deform-to-loop} that the embedding $\Sigma_{\mathrm{10T}}\colon I^{4n-2}\to I^{5n-1}-(I^{4n-2})^{\cup 4}$ can be arranged into a double loop $I\times I\to \Emb_\partial(I^{4n-4},I^{5n-3}-(I^{4n-4})^{\cup 4})$. For a similar reason regarding the suspension structure of the iterated binary brackets, it can be further arranged into a triple loop $I^3\to \Emb_\partial(I^{4n-5},I^{5n-4}-(I^{4n-5})^{\cup 4})$. Here, we need the fact that the null-isotopy in $\Emb_\partial(I^{4n-3},I^{5n-2}-(I^{4n-3})^{\cup 4})$ for a tree with a 4-valent vertex was constructed by at least $(4n-3)-(3n-2)=n-1$ suspensions, which is at least 2 if $n\geq 3$. In particular, the cobordism of type (b) in the proof of Lemma~\ref{lem:deform-to-loop} is a homotopy in $\Omega^2\Emb_\partial(I^{4n-5},I^{5n-4}-(I^{4n-5})^{\cup 4})$.
\end{proof}

\begin{Lem}\label{lem:5-cube-decomp}
Let $n\geq 3$. The deloopings of $(\varphi_{\mathrm{10T}})^{\# q_5}$ of the following type
\[ \begin{split}
&\Omega(4n-3,4n-3,4n-3,4n-3,4n-3;5n-2)\\
&\to \Omega^3(4n-3,4n-3,4n-3,4n-3,4n-5;5n-2) 
\end{split}\]
can be parametrized as follows. There is a decomposition
\[ I^3=A\cup \bigcup_{i=1}^{10}K_i \]
into closed subsets $A,K_i$ of $I^3$ satisfying the following.
\begin{enumerate}
\item $K_i$ is a closed 3-cell.
\item $\bigcup_{i=1}^{10}K_i$ is a cell decomposition of $\bigcup_{i=1}^{10}A_i\cup\bigcup_{j=1}^{15}B_j$ in \S\ref{ss:5-step1} obtained by cutting the 1-handles $B_j\cong D^2\times I$ along its cocore $D^2\times\{\frac{1}{2}\}$.
\item $A$ is the closure of $I^3-\bigcup_{i=1}^{10}K_i$.
\item The restriction of $\varphi^{(5)}$ to $A$ is the constant family at the standard inclusion
\[ (I^{4n-3})^{\cup 4}\cup I^{4n-5}\hookrightarrow I^{5n-2}. \]
\item The restriction of $\varphi^{(5)}$ to $K_i$ gives the $i$-th term in the 10T-link.
\end{enumerate}
\end{Lem}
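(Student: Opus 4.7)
My plan is to extract the parametrization directly from the construction of $\Sigma_{\mathrm{10T}}$ in \S\ref{ss:10T-link}, which was already built as an embedded ribbon graph sitting inside a $3$-dimensional slice of $I^{4n-2}$. The key observation is that the handlebody $N_{L_4}\subset I^{4n-2}$ defined in (\ref{eq:NL4}) is of the form $X\times[a-2\ve,a+2\ve]^{4n-5}$ (after slightly widening the rods to have the same cross-section as the boxes), where $X=\bigcup A_i\cup\bigcup B_j\subset I^3$ is the thickened Lie-hedron; outside $N_{L_4}$ the embedding $\Sigma_{\mathrm{10T}}$ agrees with the standard inclusion. Thus when we pass to the triple loop representative of Lemma~\ref{lem:Sigma-triple-loop} by iteratively absorbing coordinates of $I^{4n-2}$ into the delooping parameter, we can choose the three delooping directions to be precisely the three coordinates of the ambient $I^3$ in which $N_{L_4}$ is drawn. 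With this choice, the triple loop is supported, as a family, exactly on $X\subset I^3$, and is constant at the standard inclusion on the complement $A=I^3-X$.

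Next I would define the decomposition of $X$ into cells $K_i$. Each rod $B_j\cong D^2\times I$ is cut at its cocore $D^2\times\{\frac12\}$ into two halves, and each half is grouped with the box $A_i$ at whichever end it meets. Since every vertex of $L_4$ has valence three, this produces, for each $i$, a piece $K_i$ consisting of the cube $A_i$ with three half-rods attached along three disjoint faces. Such a piece is a compact, contractible 3-manifold with boundary a $2$-sphere (the boundary being obtained from $\partial A_i\cong S^2$ by removing three disks and gluing in three disks $D^2\times\{\frac12\}$), hence is a closed 3-cell; the $K_i$ meet pairwise only along common cocore faces $D^2\times\{\frac12\}$ or are disjoint. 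This gives the cell decomposition $\bigcup K_i$ of $X$ required in (1)--(3).

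To verify properties (4) and (5), I would unpack Steps 3--5 of \S\ref{ss:10T-link}. By construction the part of $\Sigma_{\mathrm{10T}}$ living over $\widetilde{A}_i''$ realizes exactly the $i$-th iterated surgery path $4\alpha_i'$, i.e.\ the $i$-th term of the 10T-link, and each cocore face $\widetilde C_j$ carries the (suspended, delooped) Borromean-type boundary link identifying the adjacent terms. After the arrangement above, $\widetilde A_i''$ projects to $K_i$ and $\widetilde C_j$ projects to $K_i\cap K_{i'}$, so the restriction of the triple loop $\varphi^{(5)}$ (up to the connect-sum factor $q_5$, which is absorbed into the labeling of terms along a chosen arc) to $K_i$ is precisely the $i$-th term, while its restriction to $A$ is the constant family at $\iota$.

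The main technical obstacle I foresee is matching up the delooping parameters with the geometric directions of the Lie-hedron embedding in a way compatible with Lemma~\ref{lem:suspension-delooping}; this is where the hypothesis $n\geq 3$ enters, ensuring enough suspension directions are available at each $3$-valent and $4$-valent vertex so that the null-homotopy of Lemma~\ref{lem:null-iso-10T} can itself be chosen supported on the prescribed $K_i$-cells. A secondary point is the compatibility across cocores $D^2\times\{\frac12\}$: one must check that the two descriptions of $\varphi^{(5)}$ on $K_i\cap K_{i'}$, coming from terms $i$ and $i'$, agree as sub-families, which follows from Step 4 of \S\ref{ss:10T-link} where the string link on $\widetilde C_j$ was fixed once and independently of which adjacent vertex it is viewed from.
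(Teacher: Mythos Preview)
Your proposal is correct and takes essentially the same approach as the paper's own proof, which is very brief (three sentences): view $I^{4n-2}$ as $I^3\times I^{4n-5}$, isotope $N_{L_4}$ into a small neighborhood of $I^3\times\{(\tfrac12,\dots,\tfrac12)\}$, and rerun the six steps of \S\ref{ss:10T-link}. Your version is more explicit about why each $K_i$ is a $3$-cell and about the cocore compatibility, and you observe (correctly) that after a harmless rod-widening $N_{L_4}$ already has the product form, so no isotopy is really needed; otherwise the arguments coincide.
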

\begin{proof}
The lemma is almost straightforward from the construction of $\Sigma_{\mathrm{10T}}$. We consider $I^{4n-2}$ as $I^3\times I^{4n-5}$. The thickened Lie-hedron $N_{L_4}$ in $I^{4n-2}$ can be isotoped so that it is in a small neighborhood of $I^3\times \{(\frac{1}{2},\ldots,\frac{1}{2})\}$. Then we do the 6 steps in \S\ref{ss:10T-link} to obtain the desired family.
\end{proof}
\begin{Rem}
We will apply suspensions and deloopings to $\varphi^{(5)}$ several times to define sugery on $\Psi_5$-graphs with $k$ or $k-1$ dimensional leaves in $I^{2k}$ in Example~\ref{ex:5-valent-family}. 
If $2k$ is sufficiently high, the total number $3k-7+j$ of deloopings needed is at least 2, and Lemma~\ref{lem:5-cube-decomp} can be applied.
\end{Rem}

If we replace the ``desuspension'' process in the proof of Lemma~\ref{lem:Sigma-triple-loop} to turn $\Sigma_{\mathrm{10T}}$ into a triple loop in $\Emb_\partial(I^{4n-5},I^{5n-4}-(I^{4n-5})^{\cup 4})$ with the natural map 
\[ \Omega\Emb_\partial(I^{4n-3},I^{5n-2}-(I^{4n-3})^{\cup 4})\to \Omega^3B^2\Emb_\partial(I^{4n-3},I^{5n-2}-(I^{4n-3})^{\cup 4}), \]
we obtain the following. 
\begin{Lem}\label{lem:Sigma-triple-loop2}
Let $n\geq 3$. The string link $(\varphi_{\mathrm{10T}})^{\# q_5}$, considered as an element of 
\[\Omega^3B^2\Emb_\partial(I^{4n-3},I^{5n-2}-(I^{4n-3})^{\cup 4}),\]
can be parametrized as follows.
There is a decomposition
\[ I^3=A\cup \bigcup_{i=1}^{10}K_i \]
into closed subsets $A,K_i$ of $I^3$ satisfying the following.
\begin{enumerate}
\item $K_i$ is a closed 3-cell.
\item $\bigcup_{i=1}^{10}K_i$ is a cell decomposition of $\bigcup_{i=1}^{10}A_i\cup\bigcup_{j=1}^{15}B_j$ in \S\ref{ss:5-step1} obtained by cutting the 1-handles $B_j\cong D^2\times I$ along its cocore $D^2\times\{\frac{1}{2}\}$.
\item $A$ is the closure of $I^3-\bigcup_{i=1}^{10}K_i$.
\item The restriction of $(\varphi_{\mathrm{10T}})^{\# q_5}$ to $A$ is the constant loop at the base point.
\item The restriction of $(\varphi_{\mathrm{10T}})^{\# q_5}$ to $K_i$ gives the $i$-th term in the 10T-link.
\end{enumerate}\end{Lem}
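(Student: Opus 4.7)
The plan is to parallel the proof of Lemma~\ref{lem:5-cube-decomp}, but to replace the dimension-reducing desuspension (which traded loop parameters for lower-dimensional components) with the natural adjunction map
\[ \Omega\calE \longrightarrow \Omega^3 B^2\calE, \]
where $\calE=\Emb_\partial(I^{4n-3},I^{5n-2}-(I^{4n-3})^{\cup 4})$. Since $\calE$ carries a natural action of the little $(4n-3)$-cubes operad (by stacking string links along cube coordinates), and since $4n-3\geq 9\geq 3$ for $n\geq 3$, the recognition principle of May supplies a delooping of $\calE$ of order at least three, and in particular a natural map into $\Omega^3 B^2\calE$. This avoids the desuspension step of Lemma~\ref{lem:Sigma-triple-loop} while preserving exactly the cubical parametrization needed for the cell decomposition.

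First I would reuse the geometric data already produced in \S\ref{ss:10T-link}: the thickened Lie-hedron $N_{L_4}\subset I^3\times I^{4n-5}=I^{4n-2}$ organizes the ten iterated-bracket terms of $\Sigma_{\mathrm{10T}}$ into cells which project onto $I^3$ in a combinatorial pattern reading off the vertices and edges of $L_4$, and outside $N_{L_4}$ the embedding $\Sigma_{\mathrm{10T}}$ coincides with the standard inclusion. Projecting to the $I^3$-factor yields precisely the decomposition $I^3=A\cup\bigcup_{i=1}^{10}K_i$ stated in the lemma, with $K_i$ the closure of the cell sitting over the $i$-th vertex of $L_4$ (fused with its three half-edges as in Step~3 of \S\ref{ss:10T-link}). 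I would then interpret this parametrization through the adjunction: the composition of the string link in the interior of $N_{L_4}$ along each 1-handle $B_j\cong D^2\times I$ is, tautologically, an instance of the $E_{4n-3}$-algebra composition on $\calE$ in the direction of the corresponding coordinate of $I^3$, so the map $I^3\to \calE$ supported on $N_{L_4}$ assembles into a pointed map $(I^3,\partial I^3)\to (B^2\calE,*)$. The constant region $A$ is sent to the basepoint of $B^2\calE$, and each $K_i$ carries the $i$-th iterated-bracket contribution, verifying properties (4) and (5).

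The main technical point — and the only real obstacle — is to check that the cobordisms of types~(a) and~(b) introduced in the proof of Lemma~\ref{lem:deform-to-loop} (which were used there to splice the cross-sectional loops into a coherent triple loop) remain compatible with the $E_3$-substructure of $\calE$. Type~(a) cobordisms are pullbacks of elementary trees by smooth maps $I\times I\to I$ and so land automatically in the image of the little-cubes operad; type~(b) cobordisms correspond to trees with a single $4$-valent vertex and were constructed via at least $n-1\geq 2$ suspensions, hence may be homotoped into the $E_3$-structure exactly as in Lemma~\ref{lem:Sigma-triple-loop}. Combining these compatibilities with the triple-loop lift produced there furnishes the required element of $\Omega^3 B^2\calE$ whose underlying map $I^3\to B^2\calE$ realizes the stated cell decomposition, completing the proof.
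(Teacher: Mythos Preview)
Your proposal is correct and follows essentially the same approach as the paper: the paper states Lemma~\ref{lem:Sigma-triple-loop2} as the result of replacing the desuspension step in the proof of Lemma~\ref{lem:Sigma-triple-loop} by the natural map $\Omega\calE\to\Omega^3B^2\calE$, and your argument carries this out with the same use of the little-cubes structure on $\calE$ and the same appeal to the type~(a) and~(b) cobordism analysis from Lemma~\ref{lem:deform-to-loop}.
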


\subsection{Prescribing the class of the null-isotopy of the 10T-link.}\label{ss:prescribe-null-iso-10T}

Now we discuss about the rational uniqueness of the null-isotopy of Lemma~\ref{lem:null-iso-10T}. 

\begin{Lem}\label{lem:non-unique-null-iso-psi4}
For $n\geq 4$, the null-isotopy path $\varphi^{(5)}$ in $\Omega\Emb_\partial((I^{4n-3})^{\cup 5},I^{5n-2})$ is unique modulo torsion and connected sum of a knot class from the rank 1 group $\pi_0(\Emb_\partial(I^{4n-1},I^{5n}))$ inside a small ball. 
\end{Lem}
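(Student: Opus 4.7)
The plan is to compute the rational indeterminacy in choosing $\varphi^{(5)}$ by a standard obstruction-theoretic argument combined with the connectivity results in Section~\ref{s:preliminaries}. Write $\calE_5 := \Emb_\partial((I^{4n-3})^{\cup 5},I^{5n-2})$. The null-homotopy $\varphi^{(5)}$ is an element of $\Omega^2\calE_5$ with prescribed endpoints $\mathrm{const}_\iota$ and $(\varphi_{\mathrm{10T}})^{\#q_5}$; the set of such null-homotopies is nonempty by Lemma~\ref{lem:null-iso-10T}, hence forms a torsor over $\pi_2(\calE_5,\iota)$. So uniqueness of $\varphi^{(5)}$ modulo a subgroup $G$ is equivalent to the identification of $\pi_2(\calE_5)\otimes\Q$ with $G\otimes\Q$.

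First I would apply Corollary~\ref{cor:goodwillie} with $\ell=5$ and $j=2$; the hypothesis $n\geq j+2=4$ matches that of the lemma exactly. This yields the graphing isomorphism
\[
 \pi_2(\calE_5)\;\cong\;\pi_0\bigl(\Emb_\partial((I^{4n-1})^{\cup 5},I^{5n})\bigr),
\]
together with the information that conditions (b) and (c) of Theorem~\ref{thm:CFS} fail for the right-hand side (after passage to spherical links via Lemma~\ref{lem:S-I}). Next I would apply Theorem~\ref{thm:CFS} to that $\pi_0$: since (b) and (c) are excluded, only condition (a) can produce infinite-order classes. Checking it with $p_{k_1}=4n-1$ and $m=5n$, the divisibility $4\mid p_{k_1}+1=4n$ is automatic, and the inequality $5n<\tfrac{3(4n-1)}{2}+2=6n+\tfrac{1}{2}$ holds for all $n\geq 1$. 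So condition (a) is satisfied, and the last sentence of Theorem~\ref{thm:CFS} identifies the infinite-order classes with connect-sums of knots from $\pi_0(\Emb(S^{4n-1},S^{5n}))\cong\pi_0(\Emb_\partial(I^{4n-1},I^{5n}))$ into a single component. Because such a local knot can be placed in a small ball disjoint from the rest of the link and then slid against any chosen component to form the connect-sum, this ambiguity is of precisely the ``knot inside a small ball'' form claimed in the lemma.

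The main obstacle in executing the plan is the final rank-one statement for the single-knot group $\pi_0(\Emb_\partial(I^{4n-1},I^{5n}))$. Since $2m=10n<12n-1=3p+2$ with $p=4n-1$, the dimensions lie outside Haefliger's metastable range, so one cannot read off the rank from his direct formula. I would instead trace the Haefliger-type exact sequence underlying Theorem~\ref{thm:CFS} in these specific dimensions and use the fact that (a) is the only surviving source of infinite order to pin the rank to one. A secondary point to verify is that the five choices $k_1\in\{1,\dots,5\}$ in condition~(a) produce, at the rational level, a single rank-one summand in the link $\pi_0$: concretely, sliding the small ball past the components in high codimension should identify the five component-wise connect-sums up to torsion, so that any discrepancy is absorbed into the ``modulo torsion'' clause of the lemma.
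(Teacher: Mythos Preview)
Your approach is essentially the paper's: identify the indeterminacy with $\pi_2(\calE_5)$, apply Corollary~\ref{cor:goodwillie} with $\ell=5$, $j=2$ (whence the hypothesis $n\geq 4$) to reduce to $\pi_0(\Emb_\partial((I^{4n-1})^{\cup 5},I^{5n}))$, and use Theorem~\ref{thm:CFS} together with the second clause of Corollary~\ref{cor:goodwillie} to see that only condition~(a) contributes rationally. The paper likewise simply asserts that $\pi_0(\Emb_\partial(I^{4n-1},I^{5n}))$ has rank~1, so your ``main obstacle'' is not something the paper addresses either.

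Your secondary concern, however, rests on a misreading. The five choices of $k_1$ in condition~(a) yield five \emph{independent} rank-one summands in the link group: projecting to the $i$-th single-component knot space $\pi_0(\Emb_\partial(I^{4n-1},I^{5n}))$ distinguishes a knot summed onto component~$i$ from one summed onto component~$j\neq i$, so no sliding argument can identify them. The lemma does not claim a single rank-one ambiguity; the phrase ``connected sum of a knot class \dots\ inside a small ball'' is to be read as permitting such an operation at each component, and the paper's proof says exactly this (``added at each component''). Drop that paragraph and the argument is complete.
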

\begin{proof}
Studying $\pi_0$ of the space of paths in $\Omega\Emb_\partial((I^{4n-3})^{\cup 5},I^{5n-2})$ from the basepoint to the 10T-link is equivalent to studying $\pi_2(\Emb((I^{4n-3})^{\cup 5},I^{5n-2}))$.
It follows from Corollary~\ref{cor:goodwillie} that 
\[ \pi_2(\Emb((I^{4n-3})^{\cup 5},I^{5n-2}))\to \pi_1(\Emb_\partial((I^{4n-2})^{\cup 5},I^{5n-1}))\to 
\pi_0(\Emb_\partial((I^{4n-1})^{\cup 5},I^{5n}))
 \]
are isomorphisms for $n\geq 4$. 
By Theorem~\ref{thm:CFS}, the group $\pi_0(\Emb_\partial((I^{4n-1})^{\cup 5},I^{5n}))$ is infinite, but by Corollary~\ref{cor:goodwillie} the free part is generated by elements given by the rank 1 group $\pi_0(\Emb_\partial(I^{4n-1},I^{5n}))$ added at each component.
\end{proof}

The ambiguity in Lemma~\ref{lem:non-unique-null-iso-psi4} may obstruct the relative Brunnian property of the $\Psi_5$-surgery. 
Below, we remove the ambiguity of the choice of the null-isotopy by making one more assumption (Assumption~\ref{assum:single-comp} below). To fix the relative homotopy class of the null-homotopy $\varphi^{(5)}$, we take a Brunnian null-isotopy of (the family) $\Sigma_{\mathrm{10T}}$ in the cubical diagram $\{\calB(S)\}_{S\subset \{1,2,3,4\}}$, where $\calB(S)$ is the homotopy fiber of $\calE(\emptyset)\to \calE(S)=\Emb_\partial^\fr(I^{4n-3},I^{5n-2}-\bigcup_{\lambda\notin S}\sigma_\lambda)$. Suppose we have a prescribed Brunnian null-isotopy of an iterated embedded Whitehead product ($\Psi_3$-surgery): We saw in the proof of Lemma~\ref{lem:brunnian-3} that each term in the Jacobi relation has a prescribed Brunnian null-isotopy. 

\begin{Lem}[Prescribed Brunnian null-isotopies]\label{lem:pres-brunnian-10T}
The 10T-link $\varphi_{\mathrm{10T}}$, considered as an element of $\Omega\Emb_\partial((I^{4n-3})^{\cup 5},I^{5n-2})$, has a system of Brunnian null-isotopies that extends that of the IHX-links, which are obtained by suspending $\varphi_{\mathrm{IHX}}$ of Theorem~\ref{thm:phi-jacobi}. In particular, the component $\Sigma_{\mathrm{10T}}$ is unknotted as a loop of embeddings in $\Omega\calE(\{1,2,3,4\})=\Omega\Emb_\partial(I^{4n-3}, I^{5n-2})$. 
\end{Lem}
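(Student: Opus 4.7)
The plan is to build the system of Brunnian null-isotopies box by box using the decomposition $I^{4n-2}=A'\cup\bigcup_{i=1}^{10}C_i'$ from Lemma~\ref{lem:Sigma_10T}, and then to verify that the local systems agree on the shared $(4n-3)$-faces of the boxes. Let $\Omega=\{1,2,3,4\}$ label the standard components. For each nonempty $S\subset\Omega$ we must produce a family of null-isotopies $\gamma(S)\colon\Delta^{|S|-1}\to\calB(S)$ that assemble into a natural transformation to the cubical diagram $\{\calB(S)\}_{\emptyset\neq S\subset\Omega}$, and whose restriction to each boundary piece $\partial C_i'$ recovers the prescribed data for the iterated-Whitehead terms $4\alpha_i$ inherited from Lemma~\ref{lem:pres-brunnian-3} via suspension.

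On each box $C_i'$, the restriction of $\Sigma_{\mathrm{10T}}$ equals $4\alpha_i'$, which by construction is the iterated surgery $\omega_{T_4}'\circ\omega_{T_3}'$ on a pair $(\Psi_4,\Psi_3)$-graph. I would invoke the inductive hypothesis: Theorem~\ref{thm:vertex} is already established for $\ell=3,4$, so $\omega_{T_3}$ carries a cyclically symmetric system of Brunnian null-isotopies by Lemma~\ref{lem:brun-borromean} and $\omega_{T_4}$ carries one by Lemma~\ref{lem:cyclic-brun-4}. Definition~\ref{def:suspend-brunnian} suspends these into systems for the suspended $\omega_{T_3}'$ and $\omega_{T_4}'$ with the required dimensions, and then Lemma~\ref{lem:compos-brun} composes them into a system of Brunnian null-isotopies for $4\alpha_i'$ on each $C_i'$. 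This step also automatically realizes the ``extends that of the IHX-links'' condition, because restriction of the composed system to the Brunnian null-isotopies of the final $\omega_{T_4}'$ factor gives back the prescribed IHX null-isotopies.

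The main obstacle is the compatibility of these local systems across the shared codimension-$1$ faces of the box decomposition. Two adjacent boxes $C_i'$ and $C_j'$ meet along a face carrying a single iterated Whitehead term of the form $[[[*,*],*],*]$, which is a common boundary summand of $4\alpha_i$ and $4\alpha_j$; this is precisely the combinatorial content of the fact that the edges of the Lie-hedron $L_4$ correspond to ``Jacobi moves'' between two parenthesizations. I would verify that the restrictions from each side agree by observing that, by Lemma~\ref{lem:compos-brun}, the composed Brunnian null-isotopies on this shared face depend only on the underlying (rooted) binary tree decomposition into two $\Psi_3$-graphs together with the prescribed Brunnian data on $\omega_{T_3}$, and not on which of the two adjacent boxes one approaches from; the independence is essentially the associativity expressed by the switching lemma (Lemma~\ref{lem:switch-G1-G2}) applied to the two $\Psi_3$-factors.

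Finally, the system is extended over $A'$ by the constant family at the standard inclusion (since $\Sigma_{\mathrm{10T}}|_{A'}=\iota$), and the requisite homotopy-coherent gluing on the intersections $A'\cap C_i'$ and on the lower-dimensional strata of the Lie-hedron is carried out inductively on the dimension of the stratum, using contractibility of appropriate spaces of null-isotopies within a prescribed homotopy class. The ``in particular'' assertion that $\Sigma_{\mathrm{10T}}$ is unknotted in $\Omega\calE(\Omega)$ is the special case $S=\Omega$, where the null-isotopy produced gives a null-homotopy of the underlying single-component loop as in Lemma~\ref{lem:null-iso-10T}; the novelty here is not the existence of such a null-homotopy but its compatibility with all the Brunnian data on proper subsets $S\subsetneq\Omega$, which is what the box-by-box construction guarantees.
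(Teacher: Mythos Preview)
Your proposal follows essentially the same strategy as the paper: build the system box by box using Lemma~\ref{lem:compos-brun} applied to the $\omega_{T_4}'\circ\omega_{T_3}'$ decomposition on each $C_i'$, then glue. Two minor imprecisions are worth flagging. First, the shared face between adjacent boxes carries an excess-$0$ tree with five leaves, hence three internal $3$-valent vertices; the iterated bracket $[[[*,*],*],*]$ is a composition of \emph{three} $\Psi_3$-graphs, not two. Second, the gluing mechanism the paper invokes is not Lemma~\ref{lem:switch-G1-G2} but rather the relative Brunnian property of $\omega_{T_4}$ with respect to its IHX boundary (Theorem~\ref{thm:vertex}(4) for $\ell=4$, established via Lemma~\ref{lem:brunnian-3}): the system of Brunnian null-isotopies for $\omega_{T_4}'$ restricts on each IHX face to the prescribed system for that face, so after composing with $\omega_{T_3}'$ via Lemma~\ref{lem:compos-brun} the two boxes induce the same system on their common face. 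The switching lemma is about exchanging the order of a pair and is not quite the statement needed here; what is actually required is compatibility with lower excess, which is the content of the relative Brunnian property.
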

\begin{proof}
We construct a system of Brunnian null-isotopies $\{\gamma_{\mathrm{10T}}(S)\}_S$ of $\Sigma_{\mathrm{10T}}$ extending the previous ones as follows.

A Brunnian null-isotopy of $\Psi_4$-surgery: As in the proof of Lemma~\ref{lem:brunnian-3}, the prescribed Brunnian null-isotopies of the terms of the Jacobi relation and the $\Psi_4$-surgery give a ``lens'', which extends to a family over $(D^n)^{\times 3}$. The extension gives a prescribed Brunnian null-isotopy of the $\Psi_4$-surgery relative to that of the Jacobi relation in the previous item.

A Brunnian null-isotopy of a composition of a $\Psi_4$-surgery and a $\Psi_3$-surgery: Lemma~\ref{lem:compos-brun} gives Brunnian null-isotopies for the terms like $[[[a,b],c],d]$ or $[[a,b,c],d]$. This gives rise to a Brunnian null-isotopy of each term of the 10T relation. The compatibility of the $\Psi_4$-surgery with lower excess case implies that the Brunnian null-isotopies for the 10 terms are glued together to form a Brunnian null-isotopy of $\Sigma_{\mathrm{10T}}$.
\end{proof}

\begin{Assum}\label{assum:single-comp}
We assume that the null-homotopy $\varphi^{(5)}$ of the 10T-link as a path in the loop space $\Omega\Emb_\partial(I^{4n-3},I^{5n-2})$ is relatively homotopic to that of the prescribed one given in Lemma~\ref{lem:pres-brunnian-10T}.
\end{Assum}

\subsection{Brunnian property for $\varphi^{(5)}$}\label{ss:brunnian-4}

As in \S\ref{ss:brunnian-3}, we need only to check the relative Brunnian property of null-isotopies for the cubical diagram $\{\calB(S)\}_{S\subset \Omega'=\{1,2,3,4\}}$ under the assumption that the first four components of string links are the standard inclusions.

\begin{Lem}[Relative Brunnian property]\label{lem:brunnian-4}
Suppose $n\geq 7$. There is a positive integer $r_5$ such that the $r$-fold connected sum $(\varphi^{(5)})^{\# r_5}$ of the null-isotopy $\varphi^{(5)}$ of $(\varphi_{\mathrm{10T}})^{\# q_5}$ has a Brunnian property relative to the prescribed system of Brunnian null-isotopies for $(\varphi_{\mathrm{10T}})^{\# q_5r_5}$. 
\end{Lem}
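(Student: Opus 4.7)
The plan is to mimic the strategy of Lemma~\ref{lem:brunnian-3} but replace the explicit geometric cancellation with abstract obstruction theory, since $\varphi^{(5)}$ was produced non-constructively by Lemma~\ref{lem:null-iso-10T}. First, for each non-empty $S\subset\Omega'=\{1,2,3,4\}$, projecting $\varphi^{(5)}$ to $\calE(S)$ gives a $2$-parameter family of null-isotopies of $(\varphi_{\mathrm{10T}})^{\# q_5}|_{\calE(S)}$, while the prescribed system $\{\gamma_{\mathrm{10T}}(S)\}$ of Lemma~\ref{lem:pres-brunnian-10T} gives another. Their difference is an obstruction class in a relative homotopy group of $\calB(S)$; assembled over the cubical diagram, the full Brunnian obstruction lives in the relevant homotopy groups of $\holim{\emptyset\neq S\subset\Omega'}{\calB(S)}$ based at $\varphi^{(5)}$.

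Second, these obstruction groups will be identified via Corollary~\ref{cor:goodwillie}. For $n\geq 7$, all degrees that appear (at most $3$) satisfy $n\geq j+2$, so graphing identifies each piece with $\pi_0$ of a higher-dimensional string-link space of the same codimension $n$. By the second assertion of Corollary~\ref{cor:goodwillie} and the proof of Lemma~\ref{lem:null-iso-10T}, conditions (b) and (c) of Theorem~\ref{thm:CFS} fail in every such $\pi_0$-group, so the only infinite contributions come from condition (a): knot classes concentrated on a single component. Since the first four components of $\varphi^{(5)}$ remain the standard inclusion throughout the family, knot-class obstructions on them vanish automatically, and only the distinguished fifth component can contribute a nonzero knot class.

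Third, Assumption~\ref{assum:single-comp} kills precisely this remaining knot class: it forces $\varphi^{(5)}$ to be relatively homotopic, as a loop in $\Omega\Emb_\partial(I^{4n-3},I^{5n-2})$ on the distinguished component, to the prescribed null-isotopy, which by the inductive construction of Lemma~\ref{lem:pres-brunnian-10T} is compatible with the systems $\gamma_{\mathrm{10T}}(S)$ for all $S$. After removing this contribution, every remaining piece of the obstruction lies in a finite group. Taking $r_5$ to be a common multiple of the orders of these (finitely many) torsion classes, the $r_5$-fold connect-sum $(\varphi^{(5)})^{\#r_5}$ multiplies each obstruction by $r_5$ and hence makes it vanish.

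Finally, this vertexwise vanishing must be promoted to a coherent lift in $\holim{\emptyset\neq S\subset\Omega'}{\calB(S)}$. We do this by downward induction on $|S|$ starting from $|S|=4$, at each stage extending the chosen homotopy over the next face and killing the resulting extension obstruction by the same Goodwillie--CFS computation, enlarging $r_5$ as necessary and then choosing a common value. The main obstacle is precisely this coherence step: obstructions live not only at $0$-cells of the cube but in all higher-order compatibility data of the homotopy limit, and a single $r_5$ must simultaneously kill all of them. It succeeds because every group in sight is of the same qualitative type---finite, once Assumption~\ref{assum:single-comp} has neutralized the unique potentially infinite knot-class on the distinguished component.
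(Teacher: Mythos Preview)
Your overall strategy---obstruction theory combined with Corollary~\ref{cor:goodwillie}, Theorem~\ref{thm:CFS}, and Assumption~\ref{assum:single-comp}---matches the paper's, but there is a genuine gap in the coherence step, and also a reversed induction.

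First the minor point: a lift to $\holim{\emptyset\neq S\subset\Omega'}{\Omega\calB(S)}$ is built by specifying maps $\Delta^{|S|-1}\to\Omega\calB(S)$ compatibly, so the natural induction runs \emph{upward} in $|S|$ starting from $|S|=1$, not downward from $|S|=4$ as you state. This is how the paper proceeds.

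The real issue is your claim that ``every group in sight is finite once Assumption~\ref{assum:single-comp} has neutralized the knot class.'' Assumption~\ref{assum:single-comp} kills the knot class of the \emph{primary} obstruction at $|S|=1$. But once homotopies $h(\{i\})$ are chosen, the secondary obstruction $\lambda_{ij}\in\pi_1(\Omega\calB(\{i,j\}))$ at $|S|=2$ has its own knot-class component in $\pi_0(\Emb_\partial(I^{4n},I^{5n+1}))\otimes\Q$, and this component depends on the \emph{choices} of $h(\{i\})$ and $h(\{j\})$, not only on $\varphi^{(5)}$. It is not automatically zero. Enlarging $r_5$ only kills torsion; it cannot touch this free part. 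To make the knot class of each $\lambda_{ij}$ vanish you must modify the $h(\{k\})$ by connect-summing suitable knot classes, and the problem is to do this \emph{compatibly} for all six pairs $(i,j)$ simultaneously. The paper handles this by projecting everything to the contractible space $\Omega P\Emb_\partial(I^{4n-3},I^{5n-2})$, observing that the prescribed system $\gamma_{\mathrm{10T}}$ together with $\varphi^{(5)}$ extends over a cone $\Delta^4$ there, and then altering each $h(\{k\})$ so that its projection matches the corresponding edge of this cone; after that, every projected triangle $\overline{\lambda}_{ij}$ bounds a face of $\Delta^4$, hence the knot class of each $\lambda_{ij}$ vanishes and only torsion remains. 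Without this cocycle-type argument (or an equivalent), your claim that the higher obstructions are automatically finite is unjustified.
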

\begin{proof}
We only consider the Brunnian null-isotopy of $\Sigma_{\mathrm{10T}}$ in the 10T-link $(\varphi_{\mathrm{10T}})^{\# q_5}$, under the assumption that $\sigma_1\cup\sigma_2\cup\sigma_3\cup \sigma_4$ is standard. Other cases are similar to this case. According to Definition~\ref{def:rel-brunnian}, it suffices to prove that the following data are homotopic in $\Omega\holim{\emptyset\neq S\subset\Omega'}{\calB(S)}\simeq \holim{\emptyset\neq S\subset\Omega'}{\Omega\calB(S)}$ ($\Omega'=\{1,2,3,4\}$) up to taking connected-sums of finite copies:
\begin{enumerate}
\item The point of $\holim{\emptyset\neq S\subset\Omega'}{\Omega\calB(S)}$ induced from $\varphi^{(5)}\in\Omega\calB(\emptyset)$ by the natural map $\Omega\calB(\emptyset)\to \holim{\emptyset\neq S\subset\Omega'}{\Omega\calB(S)}$. (Recall that $\calB(S)$ is the space of pairs $(f,\gamma)$ of $f\in\calE(\emptyset)$ and a null-isotopy (path) $\gamma$ in $\calE(S)=\Emb_\partial(I^{4n-3},I^{5n-2}-\bigcup_{\lambda\not\in S}I^{4n-3}_\lambda)$ from $f$.)
\item The prescribed system $\{\gamma_{\mathrm{10T}}(S)\colon \Delta^{|S|-1}\to \Omega\calB(S)\}_{\emptyset\neq S\subset \Omega'}$ of Brunnian null-isotopies for the 10T-link given in Lemma~\ref{lem:pres-brunnian-10T}. This gives an element of $\holim{\emptyset\neq S\subset\Omega'}{\Omega\calB(S)}$.
\end{enumerate}
We construct a homotopy $\{h(S)\}_{\emptyset\neq S\subset \Omega'}$ between (1) and (2) inductively on $|S|$. 

Suppose $S=\{i\}$, $1\leq i\leq 4$. We construct a homotopy $h(\{i\})$ in $\Omega\calB(\{i\})$ between $\varphi^{(5)}$ and the prescribed null-isotopy $\gamma_{\mathrm{10T}}(\{i\})$. 
Note that the homotopy classes of paths in $\Omega\calE(S)$ relative to fixed endpoints are in one-to-one correspondence with $\pi_1(\Omega\calE(S))=\pi_2(\calE(S))$, which is isomorphic to $\pi_0(\Emb_\partial(I^{4n-1},I^{5n}-\bigcup_{\lambda\notin S}I^{4n-1}_\lambda))$ by Corollary~\ref{cor:goodwillie}. By Theorem~\ref{thm:CFS} and the proof of Lemma~\ref{lem:non-unique-null-iso-psi4}, two classes in the group $\pi_0(\Emb_\partial(I^{4n-1},I^{5n}-\bigcup_{\lambda\notin S}I^{4n-1}_\lambda))$, which is isomorphic to the subgroup of $\pi_0(\Emb_\partial((I^{4n-1})^{\cup 5},I^{5n}))$ of classes of embeddings with the first four components standard, are the same if their images in $\pi_0(\Emb_\partial(I^{4n-1},I^{5n}))$ for the distinguished component are the same. According to Assumption~\ref{assum:single-comp}, the latter condition is satisfied. Thus we can find a homotopy $h(\{i\})$ between the two classes.

Suppose $S=\{i,j\}$, $1\leq i< j\leq 4$. We construct a homotopy in $\Omega\calB(\{i,j\})$ between the constant map $\const_{\varphi^{(5)}}$ at the loop $\varphi^{(5)}$ (of null-isotopies) and the prescribed 1-parameter family $\gamma_{\10T}(\{i,j\})$ in $\Omega\calB(\{i,j\})$. The choices of homotopies in $\Omega\calB(\{i\})$ and $\Omega\calB(\{j\})$, and $\gamma_{\10T}(\{i,j\})$ give a triangular loop $\partial\Delta^2\to \Omega\calB(\{i,j\})$, which we denote by $\lambda_{ij}$ (Figure~\ref{fig:fill-cone}, left). This is a family over $\partial\Delta^2$ of paths in $\Omega\calE(\{i,j\})$ from the 10T-link to the standard inclusion. To arrange that it is null-homotopic, we consider the homotopy classes of paths in $\Omega(\Omega\calE(\{i,j\}))$ relative to the fixed endpoints that are in one-to-one correspondence with $\pi_1(\Omega^2\calE(\{i,j\}))=\pi_3(\calE(\{i,j\}))$, which is isomorphic to $\pi_0(\Emb_\partial(I^{4n},I^{5n+1}-\bigcup_{\lambda\notin\{i,j\}}I^{4n}_\lambda))$ by Corollary~\ref{cor:goodwillie}. By Theorem~\ref{thm:CFS}, two classes in the group $\pi_0(\Emb_\partial(I^{4n},I^{5n+1}-\bigcup_{\lambda\notin \{i,j\}}I^{4n}_\lambda))$, which is isomorphic to the subgroup of $\pi_0(\Emb_\partial((I^{4n})^{\cup 5},I^{5n+1}))$ of classes of embeddings with the first four components standard, are rationally the same if their images in $\pi_0(\Emb_\partial(I^{4n},I^{5n+1}))\otimes\Q$ obtained by extracting the distinguished component are the same. Hence the triangular loop $\lambda_{ij}\colon\partial\Delta^2\to \Omega\calB(\{i,j\})$ is rationally null-homotopic if and only if the corresponding element of $\pi_0(\Emb_\partial(I^{4n},I^{5n+1}))$ is rationally trivial.
This condition can be satisfied for $(i,j)$ if we alter $h(\{i\})$ and/or $h(\{j\})$ by connect-summing loops in small balls if necessary. Then we can find a homotopy $h(\{i,j\})$ after possibly taking connected-sums of finite copies. 
\begin{figure}[h]
\[ \includegraphics[height=27mm]{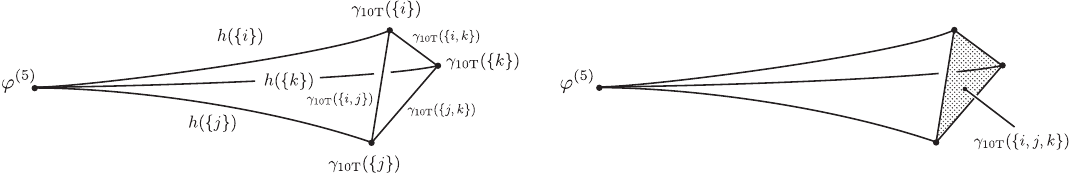} \]
\caption{Left: The homotopies among paths with fixed endpoints, where $\gamma_{\mathrm{10T}}(S)$ is in $\Omega\calB(S)$. Right: The induced homotopies in $\Omega P\Emb_\partial(I^{4n-3},I^{5n-2})$. The faces of the cone are filled if the homotopies $h(\{i\})$, $h(\{j\})$, $h(\{k\})$ are altered suitably.}\label{fig:fill-cone}
\end{figure}

We need to show that the modifications of $h(\{k\})$ can be done compatibly for all the triangles $\lambda_{ij}\colon \partial\Delta^2\to \Omega\calB(\{i,j\})$. Let $P\Emb_\partial(I^{4n-3},I^{5n-2})$ denote the space of null-isotopy paths in $\Emb_\partial(I^{4n-3},I^{5n-2})$, which end at the standard inclusion. Let $\iota_{ij}\colon \calB(\{i,j\})\to P\Emb_\partial(I^{4n-3},I^{5n-2})$ be the map induced by the inclusion $I^{5n-2}-\bigcup_{\lambda\notin\{i,j\}}I^{4n-3}_\lambda\to I^{5n-2}$. It suffices to see that the triangles $\overline{\lambda}_{ij}:=\iota_{ij}\circ\lambda_{ij}\colon \partial\Delta^2\to \Omega P\Emb_\partial(I^{4n-3},I^{5n-2})$ can be made null-homotopic for all $(i,j)$ simultaneously. Since $\gamma_{\mathrm{10T}}$ gives an element of $\holim{\emptyset\neq S\subset \Omega'}{\Omega\calB(S)}$, the homotopies $\gamma_{\mathrm{10T}}(\{i,j\})\colon \Delta^1\to \Omega\calB(\{i,j\})$ for $1\leq i<j\leq 4$ induce those in $\Omega\calB(\Omega')=\Omega P\Emb_\partial(I^{4n-3},I^{5n-2})$, which can be extended to the family $\gamma_{\mathrm{10T}}(\Omega')\colon \Delta^3\to \Omega P\Emb_\partial(I^{4n-3},I^{5n-2})$. We know by Lemma~\ref{lem:null-iso-10T} that $\gamma_{\mathrm{10T}}(\Omega')$ can be further extended to the unreduced cone $C\Delta^3=\Delta^4$ whose cone point is mapped to $\varphi^{(5)}$ (Figure~\ref{fig:fill-cone}, right). Indeed, by Lemma~\ref{lem:null-iso-10T}, there is a path in $\Omega P\Emb_\partial(I^{4n-3},I^{5n-2})$ from $\varphi^{(5)}$ to the barycenter of the 3-simplex. The union of this path and the 3-simplex gives the extension to a family over $\Delta^4$.
We may alter the relative homotopy classes of $h(\{k\})$ so that the induced triangles $\overline{\lambda}_{ij}$ are homotopic to the restrictions of the corresponding triangles in the family over $\Delta^4$. Then $\overline{\lambda}_{ij}$ are null-homotopic, and the homotopies $h(\{i,j\})$ are constructed.

Next cases for $|S|=3,4$ are almost the same as the previous case. We can alter the homotopies chosen in the previous step so that we can find a homotopy in the current step. Note that to apply Corollary~\ref{cor:goodwillie}, we need that $|S|+1\leq n-2$, which is satisfied if $n\geq 7$. 
\end{proof}

\subsection{Symmetry property for $\varphi^{(5)}$}\label{ss:cyclic-psi5}

The path $\varphi^{(5)}$ in $\Omega\Emb_\partial(I^{4n-3},I^{5n-2}-(I^{4n-3})^{\cup 4})$ can be modified into a path in
\[ \Omega\Emb_\partial((I^{4n-3})^{\cup 5},I^{5n-2}) \] 
by turning the four removed components into the trivial family of the standard inclusions.
We assume that the nonstandard component, which is relevant to $\Sigma_{\mathrm{10T}}$, in this family of embeddings of $(I^{4n-3})^{\cup 5}$ is labelled by $0$, and the remaining four standard components are labelled by $1,2,3,4$, respectively. Taking this into account, we denote the above path of embeddings by $\omega_0^{(5)}$. 


Let $\omega_1^{(5)},\omega_2^{(5)},\omega_3^{(5)},\omega_4^{(5)}$ denote the paths in $\Omega\Emb_\partial((I^{4n-3})^{\cup 5},I^{5n-2})$ given by $\xi(\omega_0^{(5)})$, $\xi^2(\omega_0^{(5)})$, $\xi^3(\omega_0^{(5)})$, $\xi^4(\omega_0^{(5)})$, respectively. Let
\[ \omega_{T_5}=\omega_0^{(5)}\#\omega_1^{(5)}\#\omega_2^{(5)}\#\omega_3^{(5)}\#\omega_4^{(5)}, \]
where the sum $\#$ is formed by component-wise concatenation along $I^{4n-3}$. The following lemma is evident from the definition.

\begin{Lem}[Cyclic symmetry]\label{lem:cyclic-5}
The path $\omega_{T_5}$ has a cyclic symmetry property (Definition~\ref{def:cyclic}).
\end{Lem}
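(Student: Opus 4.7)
The plan is to reduce the cyclic symmetry of $\omega_{T_5}$ to three elementary facts about the operations involved: the compatibility of $\xi$ with the concatenation $\#$, the fact that $\xi^5$ is relatively isotopic to the identity, and the commutativity of the concatenation $\#$ along $I^{4n-3}$ up to relative isotopy in the relevant codimension range. Since $\omega_{T_5}$ is defined as $\omega_0^{(5)}\#\omega_1^{(5)}\#\omega_2^{(5)}\#\omega_3^{(5)}\#\omega_4^{(5)}$ with $\omega_i^{(5)}=\xi^i(\omega_0^{(5)})$, the statement is close to tautological once these three facts are verified with parameters.

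First I would verify at the endpoint $t=1$. The map $\xi$ is the conjugation $f\mapsto (-1)^{(r-1)(N-a-1)}\rho\circ f\circ(\rho|_{I^a\cup\cdots\cup I^a})^{-1}$, and for $r=5$ the sign factor $(-1)^{4(N-a-1)}$ is $+1$. Since $\rho$ is the product of the cyclic rotation $\rho_0$ (on the points $p_0,\ldots,p_4$) and $\mathrm{id}_{I^{N-2}}$, and since concatenation is performed componentwise along each $I^a$ factor, $\xi$ distributes over $\#$ up to relative isotopy: $\xi(f\#g)\simeq \xi(f)\#\xi(g)$. Applying this to the five factors gives
\[
  \xi(\omega_{T_5})\;\simeq\;\xi(\omega_0^{(5)})\#\xi(\omega_1^{(5)})\#\xi(\omega_2^{(5)})\#\xi(\omega_3^{(5)})\#\xi(\omega_4^{(5)})\;=\;\omega_1^{(5)}\#\omega_2^{(5)}\#\omega_3^{(5)}\#\omega_4^{(5)}\#\omega_5^{(5)}.
\]
Then I use $\xi^5\simeq\mathrm{id}$: $\rho_0^5$ is the identity on $\{p_0,\ldots,p_4\}$ and $(\rho_0|_{\partial I^2})^5$ is isotopic to the identity relative to the boundary, so $\omega_5^{(5)}\simeq \omega_0^{(5)}$. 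Therefore $\xi(\omega_{T_5})$ is relatively isotopic to the cyclic shift $\omega_1^{(5)}\#\omega_2^{(5)}\#\omega_3^{(5)}\#\omega_4^{(5)}\#\omega_0^{(5)}$, which in turn is relatively isotopic to $\omega_{T_5}$ by commutativity of $\#$ (the concatenation is performed along separated intervals on $I^{4n-3}$, so adjacent summands may be swapped by an ambient isotopy that slides one past the other, and iterating gives the desired cyclic shift).

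Next I extend this to the full path. By construction, $\omega_{T_5}$ is a path (parametrized by $t\in I$) of framed string links starting from the standard inclusion $\iota$. At each time $t$, the same argument as above applies verbatim: $\xi$ distributes over $\#$ on the $t$-slice, $\xi^5\simeq\mathrm{id}$ is witnessed by the same ambient isotopy independent of $t$, and the cyclic-shift isotopy $\omega_1^{(5)}(t)\#\cdots\#\omega_0^{(5)}(t)\simeq \omega_0^{(5)}(t)\#\cdots\#\omega_4^{(5)}(t)$ can be performed uniformly in $t$ because the ambient isotopy of $I^N$ realizing the commutativity does not depend on the fibers. In the language of Definition~\ref{def:cyclic}, this yields an isotopy from $\omega_{T_5,t}$ to $\xi(\omega_{T_5,t})$ for every $t$, which restricts to the identity at $t=0$ (since $\iota$ is $\xi$-fixed) and extends the given isotopy at $t=1$.

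The only step that requires slight care, and which I view as the main (but mild) obstacle, is the uniformity of the commutativity isotopy for $\#$ along the parameter $t$. I would address this by choosing once and for all an ambient isotopy of $I^N$ supported in a neighborhood of the concatenation region that swaps two adjacent $I^{4n-3}$-blocks along the first factor of $I^N$; this isotopy is independent of the string link in the blocks and acts by diffeomorphisms of $I^N$, so it operates simultaneously on every time slice of the path $\omega_{T_5}$. Composing five such swaps produces the cyclic shift globally over the path, finishing the proof.
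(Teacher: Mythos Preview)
Your proof is correct and takes essentially the same approach as the paper, which simply states that the lemma is ``evident from the definition.'' You have carefully unpacked what that evidence consists of: distributivity of $\xi$ over $\#$, the relation $\xi^5\simeq\mathrm{id}$ via contractibility of $\Diff_\partial(I^2)$, and commutativity of concatenation up to ambient isotopy, all performed uniformly in the path parameter.

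Two minor remarks. First, $\omega_{T_5}$ is a path in $\Omega\Emb_\partial((I^{4n-3})^{\cup 5},I^{5n-2})$, not in $\Emb_\partial$ itself, so each $\omega_{T_5,t}$ is a loop rather than a single embedding; your argument goes through unchanged since the ambient isotopies you use act on the target $I^N$ independently of both $t$ and the loop parameter. Second, Definition~\ref{def:cyclic} is phrased with a universal quantifier (``for any isotopy from $\varphi_1$ to $\xi(\varphi_1)$''), whereas you construct one specific extension; the paper does not address this either, and in practice the symmetrization is used only with the canonical isotopy you build, so this is not a genuine gap.
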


\begin{Def}[Quadruple bracket as a chain]\label{def:quadruple}
Let $m_5=5\cdot q_5\cdot r_5\cdot m_4$ (see Definition~\ref{def:quad-bracket} for $q_5$, and Lemma~\ref{lem:brunnian-4} for $r_5$). We denote by $\beta_{T_5}=\,\beta_{T_5}(T_5)$ the chain $\frac{1}{m_5}\omega_{T_5}$ in $\Omega\Emb_\partial((I^{4n-3})^{\cup 5},I^{5n-2})$. For a face graph $\sigma$ of $T_5$, let $\beta_{T_5}(\sigma)$ denote the corresponding summand of $\partial\beta_{T_5}$ given by the iterated bracket for $\sigma$, which is of the form $\frac{1}{m_5}\cdot\frac{m_5}{m_4m_3}\omega_{T_4}'\circ \omega_{T_3}'=\frac{1}{m_4m_3}\omega_{T_4}'\circ \omega_{T_3}'$. 
\end{Def}

\begin{Rem}
The multiplicity of $\omega_{T_5}$ is that of each excess 0 tree in the boundary of $\omega_{T_5}$, which is $m_5=5\cdot q_5\cdot r_5\cdot m_4$.
\end{Rem}

\begin{proof}[Proof of Theorem~\ref{thm:vertex} for $\ell=5$]
It follows from Lemma~\ref{lem:null-iso-10T} and the definition of the 10T-link that the conditions (1) (Compatibility), (2) (Boundary), and (3) ($L_\infty$-relation) are satisfied by $\varphi^{(5)}$. The conditions (4) (Brunnian) and (5) (Cyclic symmetry) follow from Lemmas~\ref{lem:brunnian-4} and \ref{lem:cyclic-5}.  We need only to check that the symmetrization does not break the 10T relation and the decomposition structure of its LHS. The proof is analogous to that of the case $\ell=4$. Namely, The boundary of each term $\omega_i^{(5)}$ represents the 10T relation:
\[ \partial \omega_i^{(5)}=\xi^i(\theta_1)+\cdots+\xi^i(\theta_{10}). \]
In this case, by the symmetry of the $\omega_{T_4}$, there is a term-wise relative isotopy of $\partial \omega_i^{(5)}$ which induce a graph isomorphism from the 10T graph for $\partial \omega_i^{(5)}$ to that for $\partial \omega_0^{(5)}$ (see Remark~\ref{rem:deform-within-box}).

Hence $\partial \omega_{T_5}$ is relatively isotopic to the sum of 5 copies of $\varphi_{\mathrm{10T}}$. Thus the decomposition structure into 10 terms is preserved. This completes the proof.
\end{proof}

The following lemma will be used in the next cases $\ell\geq 6$, especially in the proof of cyclic symmetry property.
\begin{Lem}\label{lem:cyclic-brun-5}
The system of Brunnian null-isotopies of $\omega_{T_5}$ from Lemma~\ref{lem:brunnian-4} has a cyclic symmetry property. 
\end{Lem}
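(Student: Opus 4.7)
The proof will proceed in direct analogy with Lemma~\ref{lem:cyclic-brun-4}, using the cyclic symmetries already established at lower valence. First I would recall that by construction $\omega_{T_5}=\omega_0^{(5)}\#\omega_1^{(5)}\#\cdots\#\omega_4^{(5)}$ with $\omega_i^{(5)}=\xi^i(\omega_0^{(5)})$, so the underlying family is tautologically $\xi$-equivariant (Lemma~\ref{lem:cyclic-5}); the content of the lemma is that the system of Brunnian null-isotopies constructed in Lemma~\ref{lem:brunnian-4} respects the same symmetry.

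The plan is to verify $\xi$-equivariance at each stage of the construction in \S\ref{ss:brunnian-4}. Step one is the prescribed system of Lemma~\ref{lem:pres-brunnian-10T} for the $(\varphi_{\mathrm{10T}})^{\#q_5}$: this was assembled, via Definition~\ref{def:compos-paths} and Lemma~\ref{lem:compos-brun}, from the Brunnian null-isotopy systems of $\omega_{T_4}$ and $\omega_{T_3}$ sitting at the two internal vertices of each of the ten trees labelling the vertices of the Lie-hedron $L_4$. By Lemma~\ref{lem:cyclic-brun-4} and Lemma~\ref{lem:brun-borromean} these lower-valence systems are cyclically symmetric, and the composition operation of Definition~\ref{def:compos-paths} is natural with respect to relabelings of the external legs. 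Consequently, the prescribed system on $\omega_i^{(5)}$ is the image of that on $\omega_0^{(5)}$ under $\xi^i$ on the nose, and the box decomposition of $\Sigma_{\mathrm{10T}}$ given in Lemma~\ref{lem:Sigma_10T} is preserved (cf.\ Remark~\ref{rem:deform-within-box}).

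Step two is the modifications made in the proof of Lemma~\ref{lem:brunnian-4}, which inductively alter the homotopies $h(S)$ by connect-summing rank-one knotting classes from $\pi_0(\Emb_\partial(I^{4n-1+|S|},I^{5n+|S|}))$ in order to kill the obstruction to filling the triangular loops $\overline{\lambda}_{ij}$ and their higher-dimensional analogues. The cyclic action $\xi$ permutes the index set $\Omega'=\{1,2,3,4\}$ and acts on these obstruction groups through a permutation of the rank-one summands supplied by Corollary~\ref{cor:goodwillie}. Hence one may first perform the modifications on $\omega_0^{(5)}$ as in the original proof, and then define the system on $\omega_i^{(5)}$ to be the $\xi^i$-transport of that on $\omega_0^{(5)}$; since $\xi$ preserves the classifying obstruction groups, the transported data solve the analogous lifting problems for $\omega_i^{(5)}$. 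Finally, since the total chain $\omega_{T_5}$ is the component-wise connected sum of the $\omega_i^{(5)}$ and the system of Brunnian null-isotopies on a connected sum is inherited term-by-term, the entire system is $\xi$-equivariant.

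The main obstacle is the compatibility at step two: a priori, the inductive choices that render the prescribed system homotopic to $\varphi^{(5)}$ (in $\Omega\holim{S}\calB(S)$) depend on auxiliary connect-summing data, and one must check that these data may be chosen coherently on all five summands simultaneously. This is resolved by the $\xi$-averaging implicit in the definition $\omega_{T_5}=\sum_{i=0}^{4}\omega_i^{(5)}$: rather than try to match the five independently-constructed systems, one transports a single chosen system on $\omega_0^{(5)}$ by $\xi^i$ to define the system on $\omega_i^{(5)}$. The only remaining point is that this transported system still solves the relative lifting problem of Definition~\ref{def:brunnian-framily} for $\omega_i^{(5)}$, which holds because $\xi$ intertwines the cubical diagrams $\{\calB(S)\}_S$ and $\{\calB(\xi S)\}_S$ for $\omega_0^{(5)}$ and $\omega_i^{(5)}$ respectively, as follows from the equivariance of the complement construction in the ambient cube.
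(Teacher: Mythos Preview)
Your overall strategy matches the paper's: reduce to the cyclic symmetry of the lower-valence Brunnian systems and run the argument parallel to the proof of Theorem~\ref{thm:vertex} for $\ell=5$. However, there is a genuine gap in your Step~1. You assert that ``the composition operation of Definition~\ref{def:compos-paths} is natural with respect to relabelings of the external legs'' and conclude that the prescribed system on $\omega_i^{(5)}$ is the $\xi^i$-image of that on $\omega_0^{(5)}$ ``on the nose''. This is not automatic. The composition $\gamma_{G_1}\circ\gamma_{G_2}$ in Definition~\ref{def:compos-paths} is explicitly asymmetric in $G_1$ and $G_2$: the distinguished output component is the $(s{+}1)$-st leaf of $G_2$, and the three cases $(S'=\emptyset,\,S''\neq\emptyset)$, $(S'\neq\emptyset,\,S''=\emptyset)$, $(S'\neq\emptyset,\,S''\neq\emptyset)$ are treated differently. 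When the cyclic shift $\xi$ moves the distinguished leg from the $3$-valent subtree to the $4$-valent subtree of some $\tau_j$ (which does happen among the ten trees), the roles of $G_1$ and $G_2$ must be interchanged, and the two composed systems $\gamma_{G_1}\circ\gamma_{G_2}$ and $\gamma_{G_2}\circ\gamma_{G_1}$ are not equal by definition.

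This is exactly the content of Lemma~\ref{lem:switch-G1-G2}, which the paper invokes: the deformation from $\omega_{G_1}\circ\omega_{G_2}$ to $\omega_{G_2}\circ\omega_{G_1}$ (already used at the level of links in the proof of Theorem~\ref{thm:vertex} for $\ell=4,5$) extends to a deformation between the associated systems of Brunnian null-isotopies. Once you add this, together with Lemma~\ref{lem:cyclic-brun-4} for the $4$-valent piece, your Step~1 goes through and the rest of your argument is fine. Your Step~2 (transporting the modifications of Lemma~\ref{lem:brunnian-4} by $\xi^i$) is a reasonable way to make explicit what the paper leaves to ``apply them along the argument in the proof of Theorem~\ref{thm:vertex} for $\ell=5$'', but note that even there the transported system must match the prescribed boundary system on each $\omega_i^{(5)}$, and that matching again rests on Lemma~\ref{lem:switch-G1-G2}.
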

\begin{proof}
This follows from the cyclic symmetry of the system of Brunnian null-isotopies of $\omega_{T_4}$ by Lemma~\ref{lem:brun-borromean}, and from Lemma~\ref{lem:switch-G1-G2}. We apply them along the argument in the proof of Theorem~\ref{thm:vertex} for $\ell=5$.
\end{proof}

\subsection{Suspensions and deloopings of $\omega_{T_5}$}

\begin{Exa}\label{ex:5-valent-family}
Let $d=2k$, and we consider a family of string links associated to the $\Psi_5$-graph in a $2k$-disk, as depicted in Figure~\ref{fig:5-valent-suspend}, left.
\begin{figure}[H]
\[ \includegraphics[height=40mm]{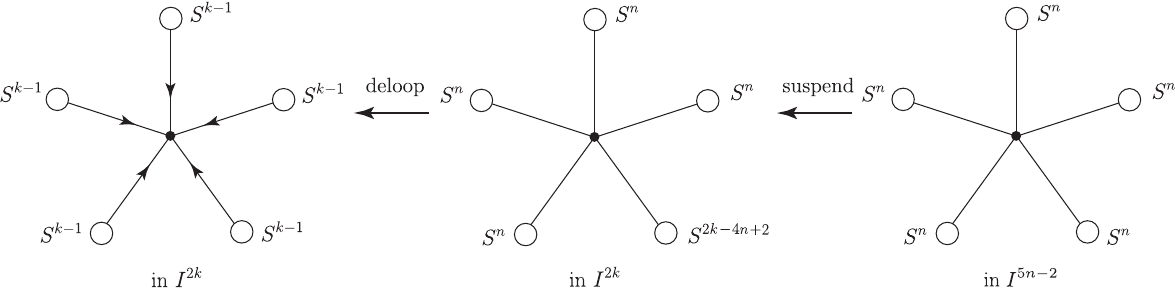} \]
\caption{}\label{fig:5-valent-suspend}
\end{figure}

We start with the null-homotopy of the loop in $\Emb_\partial^\fr((I^{4n-3})^{\cup 5},I^{5n-2})$ we have constructed in \S\ref{ss:10T-link}. Let $\Delta=2k-5n+2$, $a=4n-k-3$, where $n$ is an integer such that $\frac{k+3}{4}\leq n\leq\frac{2k+2}{5}$ (such an $n$ exists when $k=4,5$ or $k\geq 7$). 
We iterate suspensions and deloopings as follows (Figure~\ref{fig:5-valent-suspend}):
\[ \begin{split}
&\Omega(4n-3,4n-3,4n-3,4n-3,4n-3;5n-2)\\
&\to \,\Omega(4n-3+\Delta,4n-3+\Delta,4n-3+\Delta,4n-3+\Delta,4n-3;5n-2+\Delta)\\
&\to \,\Omega\,\Omega^{\tvec{a}}(4n-3-a,4n-3-a,4n-3-a,4n-3-a,4n-3-a;5n-2+\Delta)\\
&= \,\Omega\,\Omega^{\tvec{a}}(k,k,k,k,k;2k),\\
\end{split} \]
where $\vect{a}=(a+\Delta,a+\Delta,a+\Delta,a+\Delta,a)=(k-n-1,k-n-1,k-n-1,k-n-1,4n-k-3)$. 
This sequence of operations can be done simultaneously over the path, and we get a path of families over $S^1\times S^{\tvec{a}}:=S^1\times S^{k-n-1}\times S^{k-n-1}\times S^{k-n-1}\times S^{k-n-1}\times S^{4n-k-3}$ ($\dim S^1\times S^{\tvec{a}}=3k-6$) of framed embeddings 
\[ I^k\cup I^k\cup I^k\cup I^k\cup I^k\to I^{2k}. \]

More generally, if an $\Psi_5$-graph has $j$ outgoing edge(s) and $5-j$ incoming edge(s), then we iterate suspensions and deloopings as follows ($n, \Delta, a$ are the same as above):
\[ \begin{split}
&\Omega(4n-3,4n-3,4n-3,4n-3,4n-3;5n-2)\\
&\to\, \Omega(4n-3+\Delta,4n-3+\Delta,4n-3+\Delta,4n-3+\Delta,4n-3;5n-2+\Delta)\\
&\to \, \left\{\begin{array}{ll}
\Omega\,\Omega^{\tvec{a}}(\underbrace{4n-3-a,\ldots,4n-3-a}_{5-j},\underbrace{4n-4-a,\ldots,4n-4-a}_{j};5n-2+\Delta)& (j\geq 1),\\
\Omega\,\Omega^{\tvec{a}}(\underbrace{4n-3-a,\ldots,4n-3-a}_{5};5n-2+\Delta)& (j=0),
\end{array}\right.\\
&= \, \Omega\,\Omega^{\tvec{a}}(\underbrace{k,\ldots,k}_{5-j},\underbrace{k-1,\ldots,k-1}_{j};2k),\text{ where}
\end{split} \]
\[ \vect{a}=\left\{\begin{array}{ll}
(\underbrace{a+\Delta,\ldots,a+\Delta}_{5-j},\underbrace{a+\Delta+1,\ldots,a+\Delta+1}_{j-1},a+1) & (j\geq 1),\\
(\underbrace{a+\Delta,\ldots,a+\Delta}_{4},a) & (j=0).
\end{array}\right.
\]
We get a path of families over $S^1\times S^{\tvec{a}}$ ($\dim S^1\times S^{\tvec{a}}=3k-6+j$) of framed embeddings 
\[ (I^k)^{\cup 5-j}\cup (I^{k-1})^{\cup j}\to I^{2k}. \]
\end{Exa}

\begin{Rem}\label{rem:n-l-valent}
The $\Psi_\ell$-surgery for $\ell\geq 6$ can be obtained similarly. Namely, we will construct the basic bracket for an $\ell$-valent vertex in \S\ref{s:6-valent} which gives a path in $\Omega^{\ell-4}\Emb_\partial((I^{(\ell-1)n-(\ell-2)})^{\cup \ell},I^{\ell n-\ell+3})$. Starting from this, we iterate suspensions and deloopings as follows. Let $\Delta=2k-\ell n+\ell-3$, and $a=(\ell-1)n-k-(\ell-2)$, where $n$ is an integer such that 
\begin{equation}\label{eq:n-l-valent}
 \frac{k+(\ell-2)}{\ell-1}\leq n\leq \frac{2k+(\ell-3)}{\ell}. 
\end{equation}
Such an $n$ exists if $2k$ is sufficiently large. If the number of outgoing edges $j=0$, 
\[ \begin{split}
&\Omega^{\ell-4}((\ell-1)n-(\ell-2),\ldots,(\ell-1)n-(\ell-2);\ell n-\ell+3)\\
&\to \,\Omega^{\ell-4}((\ell-1)n-(\ell-2)+\Delta,\ldots,(\ell-1)n-(\ell-2)+\Delta,(\ell-1)n-(\ell-2);\ell n-\ell+3+\Delta)\\
&\to \,\Omega^{\ell-4}\,\Omega^{\tvec{a}}((\ell-1)n-(\ell-2)-a,\ldots,(\ell-1)n-(\ell-2)-a;\ell n-\ell+3+\Delta)\\
&= \,\Omega^{\ell-4}\,\Omega^{\tvec{a}}(k,\ldots,k;2k),\\
\end{split} \]
where $\bvec{a}=(a+\Delta,\ldots,a+\Delta,a)$, 
and similarly for other $j$ in $1\leq j\leq \ell$, for which
\[ \vect{a}=(\underbrace{a+\Delta,\ldots,a+\Delta}_{\ell-j},\underbrace{a+\Delta+1,\ldots,a+\Delta+1}_{j-1},a+1), \]
and $\dim{S^{\ell-4}\times S^{\tvec{a}}}=(\ell-2)k-\ell-1+j$. 
The inequality (\ref{eq:n-l-valent}) has an integer solution $n$ if $\frac{2k+(\ell-3)}{\ell}-\frac{k+(\ell-2)}{\ell-1}\geq 1$, or equivalently, $2k\geq \frac{2\ell^2+2\ell-6}{\ell-2}$. On the other hand, we need to assume $n\geq 2\ell-3$ to obtain Theorem~\ref{thm:vertex}, in which we apply Theorem~\ref{thm:CFS} and Corollary~\ref{cor:goodwillie}. It gives the restriction $2\ell-3\leq \frac{2k+(\ell-3)}{\ell}$, which is equivalent to 
\[ 2k\geq 2\ell^2-4\ell+3. \]
This implies the condition $2k\geq \frac{2\ell^2+2\ell-6}{\ell-2}$ above if $\ell\geq 4$. Since an excess $m$ graph may have $(m+3)$-valent vertices, the above constructions work if $2k\geq 2m^2+8m+9$.
\end{Rem}

\section{6-valent or higher vertex}\label{s:6-valent}

We prove Theorem~\ref{thm:vertex} for $\ell\geq 6$. The constructions of the basic brackets for vertices of valence 6 or higher are almost parallel to the case of 5-valent vertex. The existence of a system of Brunnian null-isotopies clarifies the necessary underlying structure of the inductive construction. In this section, we will only describe what is different for higher vertices from the previous cases.

\subsection{Lie-hedron}\label{ss:lie-hedron}

In \S\ref{s:5-valent}, we used the Lie-hedron $L_4$, which is a graph with 10 vertices. We use similar complexes naturally defined for the poset of trees with more leaves. 
\begin{Def}[Lie-hedron\footnote{This is the ``link of the origin'' of the ``tree space'' studied by Billera, Holmes, Vogtmann in \cite{BHV}.}]
For $T\in \calP_{T_\ell}\setminus\{T_\ell\}$, let $E^{\mathrm{int}}(T)$ be the set of internal edges of $T$, which are disjoint from the leaves, and we define
\[ \calM_T=\{(\mu_e)_e\in [0,\infty)^{E^{\mathrm{int}}(T)}\mid \textstyle\sum_e\mu_e=1\}\cong \Delta^{|E^{\mathrm{int}}(T)|-1}. \]
This is the (closure of the) moduli space of metric trees with a normalization condition. The codimension one faces of $\calM_T$ are canonically diffeomorphic to $\calM_{T/e}$ for face trees $T/e$. 
For $\ell\geq 4$, we define the $(\ell-4)$-dimensional geometric simplicial complex $L_{\ell-1}$ by
\[ L_{\ell-1}=\coprod_{T\in\calP_{T_\ell}\setminus\{T_\ell\}}\calM_T\Big/{\sim}, \]
where we identify the face of $\calM_T$ for the face tree $T'$ of codimension $\geq 1$ with $\calM_{T'}$. In other words, $L_{\ell-1}$ is the geometric realization of the full subcategory $\calP_{T_\ell}\setminus\{T_\ell\}$ of $\calP_{T_\ell}$.
\end{Def}
For example, $L_3$ is the disjoint union of three points, and $L_4$ is the graph shown in Figure~\ref{fig:Lie-hedron}.
\subsection{Surgery on 6-valent vertex}\label{ss:surg-6-valent}

We construct a string link model for the 25 term relation, and a path $\omega_{T_6}$ in the space $\Omega^2\Emb_\partial(D^{5n-4},I^{6n-3}-(I^{5n-4})^{\cup 5})$. 

\subsubsection{Step 1: Construct the 25T-link (analogue of Lemma~\ref{lem:Sigma_10T})}\label{ss:25T}

The Lie-hedron $L_5$ for a 6-valent vertex is a 2-dimensional simplicial complex consisting of 
\begin{itemize}
\item 25 vertices corresponding to the excess 2 trees (with one 5-valent and one 3-valent, or with two 4-valent vertices),
\item 105 edges corresponding to the excess 1 trees (with one 4-valent and two 3-valent vertices), and 
\item 105 triangle faces corresponding to the excess 0 trees (with four 3-valent vertices).
\end{itemize}
We take a $(5n-2)$-dimensional handlebody $N_{L_5}$ embedded in $I^{5n-2}$ obtained by thickening the Lie-hedron $L_5$. (Figure~\ref{fig:L5}.)
\begin{figure}[h]
\[ \includegraphics[height=45mm]{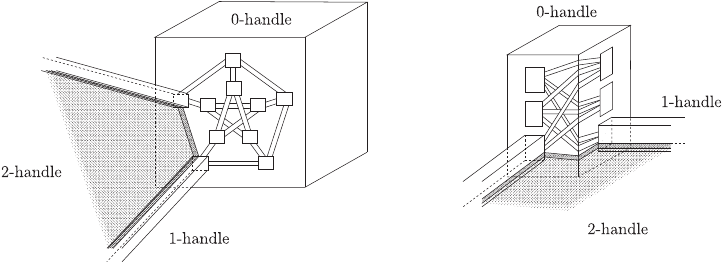} \]
\caption{Parts of the handlebody structure of $N_{L_5}$ near 0-handles. Left: vertex for a tree with one 5-valent and one 3-valent vertex. Right: vertex for a tree with two 4-valent vertices.}\label{fig:L5}
\end{figure}

We construct a ``25T-link'', which is a 2-parameter family of string links $(I^{5n-4})^{\cup 6}\to I^{6n-3}$, by inductively extending from lower-excess families as follows (see Figure~\ref{fig:cocores}). 
\begin{enumerate}
\item We first do some preliminary calculations related to multiplicity. We observe that terms in the 25T-link are of the forms $\omega_{T_5}'\circ\omega_{T_3}'$ or $\omega_{T_4}'\circ\omega_{T_4}'$, where $\omega_{T_i}'$ is obtained from $\omega_{T_i}$ by suspensions. Their multiplicities (of excess 0 faces) are $m_5m_3$ or $m_4^2$, respectively. We define
\[ \mu_6^\partial=\mathrm{lcm}(m_5m_3,m_4^2). \]
This will be the multiplicity of excess 0 faces in the 25T-link.
\item We take an embedding of each 2-handle $D^2\times D^{5n-4}$ of $N_{L_5}$ into $I^{6n-1}-(I^{5n-2})^{\cup 5}$, which is a trivial family over $D^2$ of embeddings $D^{5n-4}\to I^{6n-1}-(I^{5n-2})^{\cup 5}$ representing $\mu_6^\partial$ times an embedded iterated Whitehead product of type $[[[[*,*],*],*],*]$ or $[[[*,*],*],[*,*]]$ at the cocore $(5n-4)$-disk of the 2-handle. 
\item We extend it over the cocore $(5n-3)$-disk transverse to each 1-handle, which is given by $\frac{\mu_6^\partial}{m_4}$ times an excess 1 tree (with one 4-valent vertex and two 3-valent vertices).
\item We extend it over the $(5n-2)$-disk at each 0-handle, which is given by an excess 2 tree ($\omega_{T_5}'\circ\omega_{T_3}'$ or $\omega_{T_4}'\circ\omega_{T_4}'$) multiplied by $\frac{\mu_6^\partial}{m_5m_3}$ or $\frac{\mu_6^\partial}{m_4m_4}$.
\item We extend it further over the rest of $I^{5n-2}$ by the standard inclusions to obtain an embedding $\Sigma_{\mathrm{25T}}\colon I^{5n-2}\to I^{6n-1}-(I^{5n-2})^{\cup 5}$.
\end{enumerate}
\begin{figure}[h]
\[ \includegraphics[height=45mm]{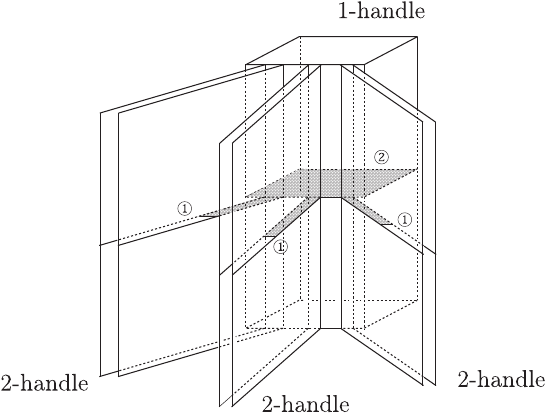} \]
\caption{Cocores of the 2-handles and then those of the 1-handles.}\label{fig:cocores}
\end{figure}

As in the previous case (Lemma~\ref{lem:deform-to-loop}), the 25T-link
\[\varphi_{\mathrm{25T}}:=(\sigma_1\cup\sigma_2\cup\sigma_3\cup\sigma_4\cup\sigma_5)\cup \Sigma_{\mathrm{25T}}\colon(I^{5n-2})^{\cup 6}\to I^{6n-1}\] can be chosen to be the graph of a 2-parameter family of embeddings $(I^{5n-4})^{\cup 6}\to I^{6n-3}$. Namely, the Lie-hedron $L_5$ can be embedded in $I^{5n-2}$ so that for the projection $q\colon I^{5n-2}=I^{5n-3}\times I\to I$, there is a sequence of values $0=c_0<c_1<\cdots<c_r=1$ of $q|_{L_5}$ such that
\begin{itemize}
\item $q^{-1}(c_i)$ intersects edges and faces of $L_5$ transversally,
\item the singularity included in $q^{-1}([c_i,c_{i+1}])$ is either
\begin{enumerate}
\item[(a)] a single vertex of $L_5$, or
\item[(b)] a single critical point of $q$ in the interior of an edge of $L_5$, or
\item[(c)] a single critical point of $q$ in the interior of a face of $L_5$.
\end{enumerate}
\end{itemize}
The intersection of $q^{-1}(c_i)$ and $L_5$ is a graph embedded in $I^{5n-3}$. By the same argument as in the proof of Lemma~\ref{lem:null-iso-10T}, the restriction of $\Sigma_{\mathrm{25T}}$ to $q^{-1}(c_i)$ gives a loop of embeddings of $(I^{5n-4})^{\cup 6}\to I^{6n-3}$. Then the cobordisms for the cases (b) and (c) are given by homotopies in the loop space $\Omega\Emb_\partial((I^{5n-4})^{\cup 6},I^{6n-3})$. Namely, we have a sequence $\{\rho_i\}$ of pointed 2-loops $I^2\to \Emb_\partial((I^{5n-5})^{\cup 6},I^{6n-4})$ for $\varphi_{\mathrm{25T}}|_{q^{-1}(c_i)}$, $i=0,1,\ldots,r$. We then see that the restriction of $\varphi_{\mathrm{25T}}$ to $q^{-1}([c_i,c_{i+1}])$ is given by a homotopy between the 2-loops $\rho_i$ and $\rho_{i+1}$. The 2-loops $\rho_i$ and $\rho_{i+1}$ can be considered as a map $I^2\times \{0,1\}\to \Emb_\partial((I^{5n-5})^{\cup 6},I^{6n-4})$ that can be extended to a map from $I^2\times I$. On the other hand, each excess 0 tree $\tau$ gives a loop $\gamma_\tau\colon I\to \Emb_\partial((I^{5n-5})^{\cup 6},I^{6n-4})$, each excess 1 tree $\tau'$ gives a 2-loop $\gamma_{\tau'}\colon I^2\to \Emb_\partial((I^{5n-5})^{\cup 6},I^{6n-4})$, and the embedding over the corresponding handle of $N_{L_5}$ is given by the pullback of $\gamma_\tau$ or $\gamma_{\tau'}$ by a smooth map $h_\tau\colon I^2\times I\to I$ or $h_{\tau'}\colon I^2\times I\to I^2$, for which the handle is thickened from the preimage of a regular value in $I$ or $I^2$ under $h_\tau$ or $h_{\tau'}$, respectively. The explicit homotopy between $\rho_i$ and $\rho_{i+1}$  can be given by the pullback $h_\tau^*\gamma_\tau$ or $h_{\tau'}^*\gamma_{\tau'}$. The cobordism of type (a) can be chosen as a path of 2-loops because the null-isotopy in $\Emb_\partial((I^{5n-2})^{\cup 6}, I^{6n-1})$ for a tree of excess 2 was constructed by suspensions and can be lifted to that in $\Omega^2\Emb_\partial((I^{5n-4})^{\cup 6},I^{6n-3})$ (see Lemma~\ref{lem:suspension-delooping}). 

To get analogues of Lemmas~\ref{lem:Sigma-triple-loop} and \ref{lem:5-cube-decomp}, we need $5$-fold delooping of the 25T-link. Namely, by the suspension structure of the iterated brackets, the 25T-link, considered as an element of $\Omega^2\Emb_\partial(I^{5n-4},I^{6n-3}-(I^{5n-4})^{\cup 5})$, admits a lift to $\Omega^5\Emb_\partial(I^{5n-7},I^{6n-6}-(I^{5n-7})^{\cup 5})$. Since the total number $4k-9+j$ of deloopings needed ($\dim{S^2\times S^{\tvec{a}}}=4k-7+j$. See Remark~\ref{rem:n-l-valent}) is at least $5-2=3$ when $k\geq 3$, which results in at least 5-fold delooping, we may apply the analogues of Lemmas~\ref{lem:Sigma-triple-loop} and \ref{lem:5-cube-decomp}, and we can embed $L_5$ into the parameter space of the delooped family.

If we replace the ``desuspension'' process in the analogue of Lemma~\ref{lem:Sigma-triple-loop} to turn $\Sigma_{\mathrm{25T}}$ into a 5-fold loop in $\Emb_\partial((I^{5n-7})^{\cup 6},I^{6n-6})$ with the natural map 
\[ \Omega^2\Emb_\partial(I^{5n-4},I^{6n-3}-(I^{5n-4})^{\cup 5})\to \Omega^5B^3\Emb_\partial(I^{5n-4},I^{6n-3}-(I^{5n-4})^{\cup 5}), \]
we obtain the 5-loop in $B^3\Emb_\partial(I^{5n-4},I^{6n-3}-(I^{5n-4})^{\cup 5})$ that is the sum of 25 5-chains. 

\subsubsection{Step 2: Find a null-isotopy $\varphi^{(6)}$ of the 25T-link (analogue of Lemma~\ref{lem:null-iso-10T})}

We use Corollary~\ref{cor:goodwillie} again to see that the natural map
\[ \pi_2(\Emb_\partial((I^{5n-4})^{\cup 6},I^{6n-3}))\to \pi_0(\Emb_\partial((I^{5n-2})^{\cup 6},I^{6n-1})) \]
is an isomorphism for $n\geq 4$. According to Theorem~\ref{thm:CFS} and Corollary~\ref{cor:goodwillie}, the latter group is finite if $n\geq 4$ and $5n-2\not\equiv 3$ (mod 4). Moreover, even if $5n-2\equiv 3$ (mod 4), each single component of the 25T-link has a prescribed null-isotopy, and it is trivial as an embedding $I^{5n-2}\to I^{6n-1}$. Thus the 25T-link is rationally null-isotopic. Let $\varphi^{(6)}$ be a null-isotopy of a multiple of the 25T-link $(\varphi_{\mathrm{25T}})^{\#  q_6}$.

\subsubsection{Step 3: Brunnian and cyclic symmetry property (analogues of Lemmas~\ref{lem:brunnian-4} and \ref{lem:cyclic-5})}\label{ss:brunnian-l=6}

As in the previous case (Lemmas~\ref{lem:brunnian-4} and \ref{lem:cyclic-5}), we may assume the rational Brunnian property and the rational cyclic symmetry for the null-isotopy $\varphi^{(6)}$, if $n$ is sufficiently large and if we make an assumption analogous to Assumption~\ref{assum:single-comp} that the null-isotopy is relatively isotopic to the prescribed null-isotopy for each component. Thus there is a positive integer $r_6$ such that $(\varphi^{(6)})^{\# r_6}$ has a Brunnian property relative to the prescribed system of Brunnian null-isotopies for $(\varphi_{\mathrm{25T}})^{\# q_6}$, and its symmetrization satisfies the cyclic symmetry property. Moreover, we have an analogue of Lemma~\ref{lem:cyclic-brun-5} (cyclic symmetry of the system of Brunnian null-isotopies) proved by using Lemma~\ref{lem:compos-brun}. So we have the symmetrization $\omega_{T_6}=\#_{i=1}^6 \omega_i^{(6)}$ of the null-isotopy path $(\varphi^{(6)})^{\# r_6}$.

Here, the condition for $n$ for the relative Brunnian property is given by $|S|+2\leq n-2$ as in the last part of the proof of Lemma~\ref{lem:brunnian-4}, and $|S|\leq 5$ gives the condition $n\geq \max\{|S|+4\}=9$. 

\begin{Def}
Let $m_6=6\cdot q_6\cdot r_6\cdot \mu_6^\partial$ ($\mu_6^\partial$ from Step 1, $q_6$ from Step 2, and $r_6$ from Step 3 above). We denote by $\beta_{T_6}=\beta_{T_6}(T_6)$ the chain $\frac{1}{m_6}\omega_{T_6}$ in the loop space $\Omega^2\Emb_\partial((I^{5n-4})^{\cup 6},I^{6n-3})$. For a face graph $\sigma$ of $T_6$, let $\beta_{T_6}(\sigma)$ denote the corresponding summand of $\partial\beta_{T_6}$ given by the iterated bracket for $\sigma$, which is of the form $\frac{1}{m_6}\cdot\frac{m_6}{m_{\ell_1}m_{\ell_2}}\omega_{T_{\ell_1}}'\circ \omega_{T_{\ell_2}}'=\frac{1}{m_{\ell_1}m_{\ell_2}}\omega_{T_{\ell_1}}'\circ \omega_{T_{\ell_2}}'$. We put $\omega_\sigma=\omega_{T_{\ell_1}}'\circ \omega_{T_{\ell_2}}'$ and $m_\sigma=m_{\ell_1}m_{\ell_2}$ in this term.
\end{Def}

\begin{proof}[Proof of Theorem~\ref{thm:vertex} for $\ell=6$]
It follows from the definition of the 25T-link that the conditions (1) (Compatibility), (2) (Boundary), and (3) ($L_\infty$-relation) are satisfied by $\varphi^{(6)}$. The conditions (4) (Brunnian) and (5) (Cyclic symmetry) follow from Step 3 above. This completes the proof.
\end{proof}

\par\bigskip

\subsection{Surgery on $\ell$-valent vertex}

Surgery for an $\ell$-valent vertex for $\ell\geq 6$ can be summarized as follows, which is parallel to the case $\ell=6$. We construct a string link model for the $(2^{\ell-1}-\ell-1)$T-relation, and a path $\omega_{T_\ell}$ in the space $\Omega^{\ell-4}\Emb_\partial(D^{(\ell-1)n-(\ell-2)},I^{\ell n-\ell+3}-(I^{(\ell-1)n-(\ell-2)})^{\cup (\ell-1)})$ as follows.

\subsubsection{Step 1: Construct the $(2^{\ell-1}-\ell-1)$T-link (analogue of Lemma~\ref{lem:Sigma_10T})}

The Lie-hedron $L_{\ell-1}$ for an $\ell$-valent vertex is a $(\ell-4)$-dimensional simplicial complex consisting of $2^{\ell-1}-\ell-1$ vertices and cells of dimensions $\leq \ell-4$ parametrized by connected trees with $\ell$ labelled legs. It follows from the Whitney embedding theorem that a $k$-dimensional simplicial complex can be piecewise smoothly embedded in $\R^N$ if $N\geq 2k+1$. Since $(\ell-1)n-2\geq 2(\ell-4)+1=2\ell-7$, we may take a $((\ell-1)n-2)$-dimensional handlebody $N_{L_{\ell-1}}$ embedded in $I^{(\ell-1)n-2}$ obtained by thickening the Lie-hedron $L_{\ell-1}$.  

We construct a ``$(2^{\ell-1}-\ell-1)$T-link'', which is an $(\ell-4)$-parameter family of string links $(I^{(\ell-1)n-(\ell-2)})^{\cup \ell}\to I^{\ell n-\ell+3}$, by inductively extending from lower-excess families.

\begin{enumerate}
\item We first do some preliminary calculations related to multiplicity. We observe that terms in the $(2^{\ell-1}-\ell-1)$T-link are of the forms $\omega_{T_{p+1}}'\circ\omega_{T_{q+1}}'$ ($p+q=\ell$, $p\geq 2$, $q\geq 2$), where $\omega_{T_{p+1}}'$ and $\omega_{T_{q+1}}'$ are obtained by suspensions. Their multiplicities (of excess 0 faces) are $m_{p+1}m_{q+1}$. We define
\[ \mu_\ell^\partial=\mathrm{lcm}(\{m_{p+1}m_{q+1}\mid p+q=\ell, p\geq 2, q\geq 2\}). \]
This will be the multiplicity of excess 0 faces in the $(2^{\ell-1}-\ell-1)$T-link. Clearly, we have $m_{p+1}m_{q+1}\mid \mu_\ell^\partial$ for $p+q=\ell$, $p,q\geq 2$, and moreover, we will define $m_\ell$ as an integer multiple of $\mu_\ell^\partial$, so we will have $m_{p+1}m_{q+1}\mid m_\ell$.
\item The families of string links for the excess 0 trees, which are compositions of several suspended $\omega_{T_3}$s, are arranged along $(\ell-4)$-handle of $N_{L_{\ell-1}}$ with coefficient $\mu_\ell^\partial$. We then extend inductively the families along the $(\ell-5)$-, $(\ell-6)$-,$\cdots$, 0-handles of $N_{L_{\ell-1}}$. The coefficient of the compositions of $\omega_{T_{\ell_1}}',\omega_{T_{\ell_2}}',\ldots,\omega_{T_{\ell_i}}'$ (excess $\ell-2-i$), which is arranged over an $(i-2)$-handle, is the integer
\[ \frac{\mu_\ell^\partial}{m_{\ell_1}m_{\ell_2}\cdots m_{\ell_i}}. \]
\item We extend it further over the rest of $I^{(\ell-1)n-2}$ by the standard inclusions to obtain an embedding $\Sigma_{\mathrm{(2^{\ell-1}-\ell-1)T}}\colon I^{(\ell-1)n-2}\to I^{\ell n-1}-(I^{(\ell-1)n-2})^{\cup (\ell-1)}$.
\end{enumerate}

As in the previous cases, the $(2^{\ell-1}-\ell-1)$T-link can be sliced into elementary paths of $(\ell-4)$-loops in $(I^{(\ell-1)n-(\ell-1)})^{\cup \ell}\to I^{\ell n-\ell+2}$.

To get analogues of Lemmas~\ref{lem:Sigma-triple-loop} and \ref{lem:5-cube-decomp}, we need $2(\ell-4)+1=(2\ell-7)$-fold delooping of the $(2^{\ell-1}-\ell-1)$T-link. Namely, by the suspension structure of the iterated brackets, the $(2^{\ell-1}-\ell-1)$T-link, considered as an element of $\Omega^{\ell-4}\Emb_\partial(I^{(\ell-1)n-(\ell-2)},I^{\ell n-\ell+3}-(I^{(\ell-1)n-(\ell-2)})^{\cup (\ell-1)})$, admits a lift to $\Omega^{2\ell-7}\Emb_\partial(I^{(\ell-1)n-2\ell+5},I^{\ell n-2\ell+6}-(I^{(\ell-1)n-2\ell+5})^{\cup (\ell-1)})$.  Since the total number $(\ell-2)k-\ell-1+j-(\ell-4)$ of deloopings needed ($\dim{S^{\ell-4}\times S^{\tvec{a}}}=(\ell-2)k-\ell-1+j$. See Remark~\ref{rem:n-l-valent}) is at least $2\ell-7-(\ell-4)=\ell-3$ when $k\geq 3$, which results in at least $(2\ell-7)$-fold delooping, we may apply the analogues of Lemmas~\ref{lem:Sigma-triple-loop} and \ref{lem:5-cube-decomp}, and we can embed $L_{\ell-1}$ into the parameter space of the delooped family.

If we replace the ``desuspension'' process in the analogue of Lemma~\ref{lem:Sigma-triple-loop} to turn $\Sigma_{(2^{\ell-1}-\ell-1)\mathrm{T}}$ into a $(2\ell-7)$-fold loop in $\Emb_\partial(I^{(\ell-1)n-2\ell+5},I^{\ell n-2\ell+6}-(I^{(\ell-1)n-2\ell+5})^{\cup (\ell-1)})$ with the natural map 
\[ \begin{split}
&\Omega^{\ell-4}\Emb_\partial(I^{(\ell-1)n-(\ell-2)},I^{\ell n-\ell+3}-(I^{(\ell-1)n-(\ell-2)})^{\cup (\ell-1)})\\
&\to \Omega^{2\ell-7}B^{\ell-3}\Emb_\partial(I^{(\ell-1)n-(\ell-2)},I^{\ell n-\ell+3}-(I^{(\ell-1)n-(\ell-2)})^{\cup (\ell-1)}), 
\end{split}\]
we obtain the $(2\ell-7)$-loop in $B^{\ell-3}\Emb_\partial(I^{(\ell-1)n-(\ell-2)},I^{\ell n-\ell+3}-(I^{(\ell-1)n-(\ell-2)})^{\cup (\ell-1)})$ that is the sum of $2^{\ell-1}-\ell-1$ $(2\ell-7)$-chains.

\subsubsection{Step 2: Find a null-isotopy $\varphi^{(\ell)}$ of the $(2^{\ell-1}-\ell-1)$T-link (analogue of Lemma~\ref{lem:null-iso-10T})}

We use Corollary~\ref{cor:goodwillie} again to see that the natural map
\[ \pi_{\ell-4}(\Emb_\partial((I^{(\ell-1)n-(\ell-2)})^{\cup \ell},I^{\ell n-\ell+3}))\to \pi_0(\Emb_\partial((I^{(\ell-1)n-2})^{\cup \ell},I^{\ell n-1})) \]
is an isomorphism for $n\geq \ell-2$. According to Theorem~\ref{thm:CFS} and Corollary~\ref{cor:goodwillie}, the latter group is finite if $n\geq \ell-2$ and $(\ell-1)n-2\not\equiv 3$ (mod 4). Moreover, even if $(\ell-1)n-2\equiv 3$ (mod 4), each single component of the $(2^{\ell-1}-\ell-1)$T-link has a prescribed null-isotopy, and it is trivial as an embedding $I^{(\ell-1)n-2}\to I^{\ell n-1}$. Thus the $(2^{\ell-1}-\ell-1)$T-link is rationally null-isotopic. Let $\varphi^{(\ell)}$ be a null-isotopy of a multiple $(\varphi_{\mathrm{(2^{\ell-1}-\ell-1)T}})^{\#  q_{\ell}}$ of the $(2^{\ell-1}-\ell-1)$T-link.

\subsubsection{Step 3: Brunnian and cyclic symmetry property (analogues of Lemmas~\ref{lem:brunnian-4} and \ref{lem:cyclic-5})}

As in the previous cases, we may assume the rational Brunnian property and the rational cyclic symmetry for the null-isotopy $\varphi^{(\ell)}$, if $n$ is sufficiently large and if we make an assumption analogous to Assumption~\ref{assum:single-comp} that the null-isotopy is relatively isotopic to the prescribed null-isotopy for each component. Thus there is a positive integer $r_\ell$ such that $(\varphi^{(\ell)})^{\# r_\ell}$ has a Brunnian property relative to the prescribed system of Brunnian null-isotopies for $(\varphi_{\mathrm{(2^{\ell-1}-\ell-1)T}})^{\# q_\ell}$, and its symmetrization satisfies the cyclic symmetry property. Moreover, we have an analogue of Lemma~\ref{lem:cyclic-brun-5} (cyclic symmetry of the system of Brunnian null-isotopies) proved by using Lemma~\ref{lem:compos-brun}. So we have the symmetrization $\omega_{T_\ell}=\#_{i=1}^\ell\omega_i^{(\ell)}$ of the null-isotopy path $(\varphi^{(\ell)})^{\# r_\ell}$.

Here, the condition for $n$ for the relative Brunnian property is given by $|S|+\ell-4\leq n-2$ as in the last part of the proof of Lemma~\ref{lem:brunnian-4}, and $|S|\leq \ell-1$ gives the condition $n\geq \max\{|S|+\ell-2\}=2\ell-3$. 

\begin{Def}\label{def:beta_ell}
Let $m_\ell=\ell\cdot q_\ell\cdot r_\ell\cdot \mu_\ell^\partial$  ($\mu_\ell^\partial$ from Step 1, $q_\ell$ from Step 2, and $r_\ell$ from Step 3 above). We denote by $\beta_{T_\ell}=\beta_{T_\ell}(T_\ell)$ the chain $\frac{1}{m_\ell}\omega_{T_\ell}$ in the loop space $\Omega^{\ell-4}\Emb_\partial((I^{(\ell-1)n-(\ell-2)})^{\cup \ell},I^{\ell n-\ell+3})$. For a face graph $\sigma$ of $T_\ell$, let $\beta_{T_\ell}(\sigma)$ denote the corresponding summand of $\partial\beta_{T_\ell}$ given by the iterated bracket for $\sigma$, which is of the form $\frac{1}{m_\ell}\cdot\frac{m_\ell}{m_{\ell_1}m_{\ell_2}}\omega_{T_{\ell_1}}'\circ \omega_{T_{\ell_2}}'=\frac{1}{m_{\ell_1}m_{\ell_2}}\omega_{T_{\ell_1}}'\circ \omega_{T_{\ell_2}}'$. We put $\omega_\sigma=\omega_{T_{\ell_1}}'\circ \omega_{T_{\ell_2}}'$ and $m_\sigma=m_{\ell_1}m_{\ell_2}$ in this term.
\end{Def}

\begin{proof}[Proof of Theorem~\ref{thm:vertex} for $\ell\geq 6$]
It follows from the definition of the $(2^{\ell-1}-\ell-1)$T-link that the conditions (1) (Compatibility), (2) (Boundary), and (3) ($L_\infty$-relation) are satisfied by $\varphi^{(\ell)}$. The conditions (4) (Brunnian) and (5) (Cyclic symmetry) follow from Step 3 above. This completes the proof.
\end{proof}

\section{Thickening of a vertex surgery: Proof of Theorem~\ref{thm:l-valent-general}}\label{s:thicken}

\subsection{Outline of the thickening}

We have a chain 
\[ \omega_{T_\ell(p,q)}\colon B_{T_\ell(p,q)}\to \Emb_\partial^\fr((I^k)^{\cup p}\cup (I^{k-1})^{\cup q},I^{2k}), \] 
where $B_{T_\ell(p,q)}=I^{\ell-3}\times S^{\tvec{a}}$ for some compact manifold $S^{\tvec{a}}$ of dimension $(\ell-2)k-2\ell+q+3$, obtained from the basic bracket $\omega_{T_\ell}$ for $T_\ell$ by suspensions and deloopings (see \S\ref{ss:suspension}, \S\ref{ss:delooping}, Examples~\ref{ex:3-valent-family}, \ref{ex:4-valent-family}, and \ref{ex:5-valent-family}).
We define a map $\overline{\omega}_{T_\ell(p,q)}\colon \overline{B}_{T_\ell(p,q)}\to \Emb_\partial^\fr((I^k)^{\cup p}\cup (I^{k-1})^{\cup q},I^{2k})$ of Theorem~\ref{thm:l-valent-general} by gradually updating $\omega_{T_\ell(p,q)}$ as follows. Then we will define $\overline{\beta}_{T_\ell(p,q)}=\frac{1}{m_\ell}\overline{\omega}_{T_\ell(p,q)}$.
\subsubsection{Simplification of the domain of $\omega_{T_\ell(p,q)}$}
First we simplify the domain of $\omega_{T_\ell(p,q)}$. We define
\[ 
   \widehat{B}_{T_\ell(p,q)}=I^{\ell-3}\times S^{|\tvec{a}|},\\
\] 
where $|\vect{a}|=(\ell-2)k-2\ell+q+3$. The definition of $\widehat{B}_{T_\ell(p,q)}$ given later in Definition~\ref{def:hat_B} is slightly different, although it is homotopy equivalent to that given here. We will see in Lemma~\ref{lem:brun-cone} that there is a canonical reduction 
\[ \widehat{\omega}_{T_\ell(p,q)}\colon \widehat{B}_{T_\ell(p,q)}\to \Emb_\partial^\fr((I^k)^{\cup p}\cup (I^{k-1})^{\cup q},I^{2k}) \]
of $\omega_{T_\ell(p,q)}$. 
The boundary $\partial I^{\ell-3}\times S^{|\tvec{a}|}$ of $\widehat{B}_{T_\ell(p,q)}$ has a decomposition
\[ \begin{split}
&\partial I^{\ell-3}\times S^{|\tvec{a}|}=A_{T_\ell(p,q)}\cup\bigcup_{\sigma\in\calP_{T_\ell}}K(\sigma)
\end{split}\]
induced from that for the boundary of $\omega_{T_\ell}$ (Lemma~\ref{lem:5-cube-decomp}, \S\ref{ss:25T}).
On the other hand, the iterated brackets for the directed tree $(\sigma,\alpha)\in\overrightarrow{\calP}_{T_\ell(p,q)}$ of excess $\ell-4$, for which $K(\sigma)\cong I^{\ell-4+|\tvec{a}|}$ is codimension 1 in $\widehat{B}_{T_\ell(p,q)}$, is parametrized by the space like $(I^{\ell_1-3}\times S^{|\tvec{a}_1|})\times (I^{\ell_2-3}\times S^{|\tvec{a}_2|})$ etc., which may not be homeomorphic to $K(\sigma)/{A_{T_\ell(p,q)}}$. Thus the faces of $\widehat{\omega}_{T_\ell(p,q)}$ may not agree with the chains obtained from lower degree vertices. To adjust this defect, we will further thicken $\widehat{B}_{T_\ell(p,q)}$ and extend $\widehat{\omega}_{T_\ell(p,q)}$ to the thickening so that the boundary is given by the sum of terms obtained by iterating bracket operations of lower degrees. 

\subsubsection{Multiple chains}
Also, to take multiplicities of terms in chains into account, we define for a compact manifold with corners $W$ its multiplex
\[ nW=\colim{}{\Bigl(\textstyle\coprod^n W\longleftarrow \textstyle\coprod^n\partial W\longrightarrow \partial W\Bigr)}. \]
This can also be defined when the space $W$ may not be a manifold but its subspace $\partial W$ is specified.
If $(\sigma,\alpha)\in \overrightarrow{\calP}_{T_\ell(p,q)}$ is of excess $\lambda$, we put $n_\sigma=2^\lambda$. We define
\[ 
  n_\sigma\widehat{B}_{T_\ell(p,q)}(\sigma,\alpha)=\left\{\begin{array}{ll}
  \prod_{v\in V^{\mathrm{int}}(\sigma)}n_{T(v)}\widehat{B}_{T(v)} & ((\sigma,\alpha)\neq T_\ell(p,q)),\\
  n_{T_\ell}\widehat{B}_{T_\ell(p,q)}/{A_{T_\ell(p,q)}} & ((\sigma,\alpha)=T_\ell(p,q)),
  \end{array}\right.
 \]
where $V^{\mathrm{int}}(\sigma)$ is the set of the internal vertices of $\sigma$. This is to make (\ref{eq:eta}) or the RHS of (\ref{eq:doubling}) an integral chain. The space $n_\sigma\widehat{B}_{T_\ell(p,q)}(\sigma,\alpha)$ parametrizes a chain
\begin{equation}\label{eq:beta(sigma)}
 n_\sigma\widehat{\omega}_{T_\ell(p,q)}(\sigma,\alpha)\colon n_\sigma\widehat{B}_{T_\ell(p,q)}(\sigma,\alpha)\to 
\Emb_\partial^\fr((I^k)^{\cup p}\cup (I^{k-1})^{\cup q},I^{2k}) 
\end{equation}
obtained by the product of $n_{T(v)}\widehat{\omega}_{T(v)}$ for internal vertices $v$ in $\sigma$ with lower degrees than $T_\ell$.

\subsubsection{Thickening of the domain of $n_\sigma\widehat{\omega}_{T_\ell(p,q)}(\sigma,\alpha)$}
We glue the chains $n_\sigma\widehat{\omega}_{T_\ell(p,q)}(\sigma,\alpha)$ together to thicken them as follows. 
The assignment $(\sigma,\alpha)\mapsto n_\sigma\widehat{B}_{T_\ell(p,q)}(\sigma,\alpha)$ defines a functor $\overrightarrow{\calP}_{T_\ell(p,q)}\to \mathrm{Top}$, where the map
$\widehat{B}_{T_\ell(p,q)}(f)\colon n_\sigma\widehat{B}_{T_\ell(p,q)}(\sigma,\alpha)\to n_{\sigma'}\widehat{B}_{T_\ell(p,q)}(\sigma',\alpha')$ ($n_{\sigma'}=2n_\sigma$) for a morphism $f\colon (\sigma,\alpha)\to (\sigma',\alpha')$ in $\overrightarrow{\calP}_{T_\ell(p,q)}$ is induced by collapsing lower skeleton. Note that $\widehat{B}_{T_\ell(p,q)}(f)$ maps all the seats in $n_\sigma\widehat{B}_{T_\ell(p,q)}(\sigma,\alpha)$ onto a single face of the multiplex $n_{\sigma'}\widehat{B}_{T_\ell(p,q)}(\sigma',\alpha')$. There are also maps $n_\sigma\widehat{B}_{T_\ell(p,q)}(\sigma,\alpha)\to K(\sigma)/{A_{T_\ell(p,q)}}\subset \widehat{B}_{T_\ell(p,q)}/{A_{T_\ell(p,q)}}$, which all together define a natural transformation $n_\sigma\widehat{B}_{T_\ell(p,q)}(-)\to K(-)/{A_{T_\ell(p,q)}}$. This gives a commutative diagram:
\[ \xymatrix{
  n_\sigma\widehat{B}_{T_\ell(p,q)}(\sigma,\alpha) \ar[r] \ar[d] & n_{\sigma'}\widehat{B}_{T_\ell(p,q)}(\sigma',\alpha') \ar[d]&\\
  K(\sigma)/{A_{T_\ell(p,q)}} \ar[r]^-{\subset} & K(\sigma')/{A_{T_\ell(p,q)}} \ar[r]^-{\subset}& \widehat{B}_{T_\ell(p,q)}/{A_{T_\ell(p,q)}}
} \]
Also, this has a canonical lift $n_\sigma\widehat{B}_{T_\ell(p,q)}(\sigma,\alpha)\to K^{(n_{T_\ell})}(\sigma)/{A_{T_\ell(p,q)}}$, where 
\[ K^{(n_{T_\ell})}(\sigma)=\left\{\begin{array}{ll}
K(\sigma) & ((\sigma,\alpha)\neq T_\ell(p,q)),\\
n_{T_\ell}\widehat{B}_{T_\ell(p,q)} & ((\sigma,\alpha)=T_\ell(p,q)).
\end{array}\right.\]
\begin{Def}\label{def:barB}
We define $\overline{B}_{T_\ell(p,q)}:=\underset{(\sigma,\alpha)\in\overrightarrow{\calP}_{T_\ell(p,q)}}{\mathrm{hocolim}}\, n_\sigma\widehat{B}_{T_\ell(p,q)}(\sigma,\alpha)$.
Then we obtain a natural map
\[
\overline{B}_{T_\ell(p,q)}\longrightarrow \,\underset{(\sigma,\alpha)\in\overrightarrow{\calP}_{T_\ell(p,q)}}{\mathrm{colim}}\, K^{(n_{T_\ell})}(\sigma)/{A_{T_\ell(p,q)}}
= n_{T_\ell}\widehat{B}_{T_\ell(p,q)}/{A_{T_\ell(p,q)}}.
\]
(See \cite[Definition~4.5]{Du} for a definition of the homotopy colimit.)
\end{Def}

\begin{Rem} Since the space $\overline{B}_{T_\ell(p,q)}$ is constructed by gluing and collapsing several smooth manifolds with corners along boundaries, we may consider $\overline{B}_{T_\ell(p,q)}$ having a structure of a finite CW-complex with a well-defined fundamental class, where the gluing maps are cellular.
\end{Rem}

\subsubsection{Thickening of the chain $n_\sigma\widehat{\omega}_{T_\ell(p,q)}(\sigma,\alpha)$}
We will see in \S\ref{ss:merge}, \ref{ss:compress-codim1}, and \ref{ss:compatible-higher} that the collection $\{n_\sigma\widehat{\omega}_{T_\ell(p,q)}(\sigma,\alpha)\}$ of chains can be extended and glued together to a map
\[ \overline{\omega}_{T_\ell(p,q)}\colon \overline{B}_{T_\ell(p,q)}\to 
\Emb_\partial^\fr((I^k)^{\cup p}\cup (I^{k-1})^{\cup q},I^{2k})\]
such that the diagram
\[ \xymatrix{
  \overline{B}_{T_\ell(p,q)} \ar[rrd]^-{\overline{\omega}_{T_\ell(p,q)}} \ar[d] & &\\
  n_{T_\ell}\widehat{B}_{T_\ell(p,q)}/{A_{T_\ell(p,q)}} \ar[rr]^-{n_{T_\ell}\widehat{\omega}_{T_\ell(p,q)}} & &\Emb_\partial^\fr((I^k)^{\cup p}\cup (I^{k-1})^{\cup q},I^{2k}) & 
} 
\]
is commutative up to a compressible bordisms (see below) near the boundary. Moreover, the compatibility and the Brunnian property for $\overline\omega_{T_\ell(p,q)}$ analogous to Theorem~\ref{thm:vertex} hold. Here, 
\begin{itemize}
\item $\overline{\beta}_{(\sigma,\alpha)}$ is the restriction of $\overline{\beta}_{T_\ell(p,q)}$ to the homotopy colimit taken for the full subcategory of $\overrightarrow{\calP}_{T_\ell(p,q)}$ of directed trees that can be contracted to $(\sigma,\alpha)$,
\item we say that the two chains $\overline{\omega}_{T_\ell(p,q)}$ and $n_{T_\ell}\widehat{\omega}_{T_\ell(p,q)}$ are related by a {\it compressible bordism} if there is a continuous map $\sigma\colon \Sigma\to \Emb_\partial^\fr((I^k)^{\cup p}\cup (I^{k-1})^{\cup q},I^{2k})$ from a finite CW-complex $\Sigma$ with oriented cells such that $\partial\sigma=\overline{\omega}_{T_\ell(p,q)}-n_{T_\ell}\widehat{\omega}_{T_\ell(p,q)}$ as a chain, and the following diagram is commutative.
\[ \xymatrix{
  \Sigma \ar[rd]^-{\sigma} \ar[d] & \\
  \mathrm{Cyl}(\overline{B}_{T_\ell(p,q)}\to n_{T_\ell}\widehat{B}_{T_\ell(p,q)}/A_{T_\ell(p,q)}) \ar[r] & \Emb_\partial^\fr((I^k)^{\cup p}\cup (I^{k-1})^{\cup q},I^{2k})
}\] 
where $\mathrm{Cyl}(\overline{B}_{T_\ell(p,q)}\to n_{T_\ell}\widehat{B}_{T_\ell(p,q)}/{A_{T_\ell(p,q)}})$ is the mapping cylinder of the natural map $\overline{B}_{T_\ell(p,q)}\to n_{T_\ell}\widehat{B}_{T_\ell(p,q)}/{A_{T_\ell(p,q)}}$. 
\end{itemize}

\subsection{Merging the spheres in $B_{T_\ell(p,q)}$ into a single sphere.}\label{ss:merge}

Recall that $B_{T_\ell(p,q)}$ is of the form $I^{\ell-3}\times S^{\tvec{a}}$, where $S^{\tvec{a}}=\prod_{j=1}^\ell S^{\lambda_j}$ for $\vect{a}=(\lambda_1,\ldots,\lambda_\ell)$.
For a subset $J$ of $\Omega=\{1,\ldots,\ell\}$, we define the subset $S^{\tvec{a}}[J]=\prod_{j=1}^\ell L_j$ of $S^{\tvec{a}}$, where 
\[ L_j=\left\{\begin{array}{ll}
S^{\lambda_j} & (j\in J),\\
\{t_0^j\} & (j\notin J).
\end{array}\right. 
\]
The \emph{fat wedge} $R^{\tvec{a}}$ of $S^{\tvec{a}}$ is the subset $\bigcup_{i=1}^\ell S^{\tvec{a}}[\Omega\setminus\{i\}]$.
For a subset $Q$ of $S^{\tvec{a}}$, let $CQ$ denote the cone over $Q$. Then there are homotopy equivalences 
\[ S^{\tvec{a}}\cup CR^{\tvec{a}}\simeq S^{\tvec{a}}/R^{\tvec{a}}\simeq S^{|\tvec{a}|}, \]
where $|\vect{a}|=\lambda_1+\cdots+\lambda_\ell$.
Now we consider a slightly thickened alternative of the cone defined by a homotopy colimit. To define it, we consider the cone $CR^{\tvec{a}}$ as the colimit of the sub cones $C(S^{\tvec{a}}[J])$ on $S^{\tvec{a}}[J]$:
\[ CR^{\tvec{a}}=\colim{{J\subset \Omega}\atop{J\neq \Omega}}{C(S^{\tvec{a}}[J])}. \] 
We then define the ``thickened cone'' of $R^{\tvec{a}}$ by
\[ \widehat{C}R^{\tvec{a}}=\hocolim{{J\subset \Omega}\atop{J\neq \Omega}}{C(S^{\tvec{a}}[J])}. \]
We glue $\widehat{C}R^{\tvec{a}}$ to $S^{\tvec{a}}$ by gluing the subset $\hocolim{J}{S^{\tvec{a}}[J]}$ to $R^{\tvec{a}}=\colim{J}{S^{\tvec{a}}[J]}$ by the natural map from $\mathrm{hocolim}$ to $\mathrm{colim}$, and denote the result by
$S^{\tvec{a}}\cup \widehat{C}R^{\tvec{a}}$. The natural map gives a homotopy equivalence $S^{\tvec{a}}\cup \widehat{C}R^{\tvec{a}}\simeq S^{\tvec{a}}\cup CR^{\tvec{a}}$, since the diagram $\{C(S^{\tvec{a}}[J])\}_J$ is cofibrant.

\begin{Lem}\label{lem:brun-cone}
Let $\ell\geq 3$. Suppose that $k$ is sufficiently large. The system of Brunnian null-isotopies for the $\ell$-valent vertex surgery gives a canonical extension 
\[ \widehat{\omega}_{T_\ell(p,q)}\colon I^{\ell-3}\times (S^{\tvec{a}}\cup \widehat{C}R^{\tvec{a}})\to \Emb_\partial^\fr((I^k)^{\cup p}\cup (I^{k-1})^{\cup q},I^{2k}) \]
of the family $\omega_{T_\ell(p,q)}\colon B_{T_\ell(p,q)}=I^{\ell-3}\times S^{\tvec{a}}\to \Emb_\partial^\fr((I^k)^{\cup p}\cup (I^{k-1})^{\cup q},I^{2k})$. Thus we have the following homotopy commutative diagram:
\[ \xymatrix{
  I^{\ell-3}\times S^{\tvec{a}} \ar[rrd]^-{\omega_{T_\ell(p,q)}} \ar[d]_-{\simeq} & & \\
  I^{\ell-3}\times S^{|\tvec{a}|} \ar[rr] & & \Emb_\partial^\fr((I^k)^{\cup p}\cup (I^{k-1})^{\cup q},I^{2k})
} \]
\end{Lem}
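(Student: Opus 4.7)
My plan is to identify $\widehat{C}R^{\tvec{a}}$ as the homotopy colimit model of the cone on the fat wedge, and to build the extension stratum by stratum using the system of Brunnian null-isotopies supplied by Theorem~\ref{thm:vertex}(4) (via its analogue for the $(p,q)$-family obtained by suspensions and deloopings).

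First I would match the product structure $S^{\tvec{a}}=\prod_{j=1}^{\ell}S^{\lambda_{j}}$ with the delooping structure coming from Examples~\ref{ex:3-valent-family}--\ref{ex:5-valent-family}: each factor $S^{\lambda_{j}}$ parametrizes the iterated deloopings performed on the $j$-th component, and its basepoint $t_{0}^{j}$ corresponds to the closure of that component to the standard inclusion $\iota$. Consequently, on the face
\[
   S^{\tvec{a}}[J]=\prod_{j\in J}S^{\lambda_{j}}\times\prod_{j\notin J}\{t_{0}^{j}\},
\]
every component indexed by $\Omega\setminus J$ is a standard inclusion. This is precisely the hypothesis that activates the Brunnian property of Definition~\ref{def:brun-null-iso}: one is allowed to forget the components in $\Omega\setminus J$ and ask for a null-isotopy.

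Second, for each $J\subsetneq \Omega$, I would use the system of Brunnian null-isotopies of $\omega_{T_{\ell}}$ given by Theorem~\ref{thm:vertex}(4) (in its relative form from Remark~\ref{rem:brun-stronger}(3)), transported through suspension and delooping via Definition~\ref{def:suspend-brunnian} and Remarks~\ref{rem:suspension-generalized}, \ref{rem:delooping}, to produce a canonical Brunnian null-isotopy of $\omega_{T_{\ell}(p,q)}|_{I^{\ell-3}\times S^{\tvec{a}}[J]}$ with respect to removing the components in $\Omega\setminus J$. Such a null-isotopy is exactly the data of an extension
\[
   \widehat{\omega}_{J}\colon I^{\ell-3}\times C(S^{\tvec{a}}[J])\longrightarrow \Emb_{\partial}^{\fr}((I^{k})^{\cup p}\cup(I^{k-1})^{\cup q},I^{2k}),
\]
where the cone coordinate parametrizes the isotopy of the remaining components back to $\iota$. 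The condition that $k$ is sufficiently large enters here through Remark~\ref{rem:n-l-valent}, which guarantees that the Brunnian property continues to hold after the required number of suspensions and deloopings.

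Third, I would assemble the $\widehat{\omega}_{J}$'s into a single map. The naturality requirement---that the null-isotopy for $J$ refines the one for $J'\supset J$ under the forgetful map $\calB(\Omega\setminus J)\to\calB(\Omega\setminus J')$---is built into the very definition of a system of Brunnian null-isotopies (it is what the homotopy limit $\holim_{\emptyset\neq S\subset\Omega}\calB(S)$ encodes). Thus $\{\widehat{\omega}_{J}\}_{J\subsetneq\Omega}$ defines a natural transformation from the diagram $J\mapsto I^{\ell-3}\times C(S^{\tvec{a}}[J])$ to the constant diagram with value $\Emb_{\partial}^{\fr}((I^{k})^{\cup p}\cup(I^{k-1})^{\cup q},I^{2k})$, and passing to the homotopy colimit gives the desired extension to $I^{\ell-3}\times(S^{\tvec{a}}\cup\widehat{C}R^{\tvec{a}})$. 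The homotopy commutative triangle then follows from the standard fact that the natural map $S^{\tvec{a}}\cup\widehat{C}R^{\tvec{a}}\to S^{\tvec{a}}\cup CR^{\tvec{a}}\simeq S^{|\tvec{a}|}$ is a homotopy equivalence, since the diagram $\{C(S^{\tvec{a}}[J])\}_{J}$ is cofibrant with contractible entries.

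The main technical obstacle is the compatibility in the third step: a priori, one needs the Brunnian null-isotopies across all $J$'s to form an honest natural transformation of diagrams of cones, not merely to exist individually. This is exactly why $\widehat{C}R^{\tvec{a}}$ is defined as a homotopy colimit rather than an ordinary colimit---the higher homotopies supplied by the full system $\{\gamma(S)\}_{S}$ of Brunnian null-isotopies fit the hocolim model perfectly, whereas a strict colimit would require rigid compatibilities that are not available. Verifying that the induction producing the system (as carried out in Lemmas~\ref{lem:pres-brunnian-10T} and \ref{lem:brunnian-4}) indeed yields a coherent natural transformation into the $\widehat{C}$-diagram is the crux of the argument.
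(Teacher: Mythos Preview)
Your proposal is correct and follows essentially the same approach as the paper's proof: for each $J\subsetneq\Omega$, use the Brunnian null-isotopy with respect to removing the components labelled by $\Omega\setminus J$ to extend $\omega_{T_\ell(p,q)}$ over $C(S^{\tvec{a}}[J])$, and then pass to the homotopy colimit to obtain the extension over $\widehat{C}R^{\tvec{a}}$. The paper's proof is much terser (two sentences), but your additional explanation of why the basepoint of $S^{\lambda_j}$ corresponds to a standard $j$-th component, why the hocolim is needed rather than the strict colimit, and how the system of Brunnian null-isotopies survives suspension/delooping, fills in exactly the details the paper leaves implicit.
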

\begin{proof}
For each subset $J\subset \Omega$ such that $J\neq \Omega$, an extension of $\omega_{T_\ell(p,q)}$ over $C(S^{\tvec{a}}[J])$ is constructed by the Brunnian null-isotopy with respect to removing the components labelled by $\Omega\setminus J$. The extensions over the various cones $C(S^{\tvec{a}}[J])$ can be further extended to $\hocolim{J}{C(S^{\tvec{a}}[J])}$ by the definition of the homotopy colimit. 
\end{proof}
\subsection{Compressibility at codimension one faces}\label{ss:compress-codim1}

To prove Theorem~\ref{thm:l-valent-general}, we check the homotopy coherence of the following diagram:
\[ \xymatrix{
  (-)\widehat{B}_{T_\ell(p,q)}(-) \ar[rrrd]^-{(-)\widehat{\omega}_{T_\ell(p,q)}(-)} \ar[d] & & & \\
   K(-)/{A_{T_\ell(p,q)}} \ar[rrr]_-{\widehat{\omega}_{T_\ell(p,q)}|_{K(-)/{A_{T_\ell(p,q)}}}} & & & \Emb_\partial^\fr((I^k)^{\cup p}\cup (I^{k-1})^{\cup q},I^{2k})\\
 } \]
which is parametrized by $\overrightarrow{\calP}_{T_\ell(p,q)}$. For simplicity, we check the homotopy coherence of the corresponding diagram without multiplicity, which is sufficient since it is a single seat in the multiplex. 
\begin{Def}\label{def:hat_B}
We define $\widehat{B}_{T_\ell(p,q)}$ by the pushout of the diagram 
\[ I^{\ell-3}\times (S^{\tvec{a}}\cup \widehat{C}R^{\tvec{a}})\longleftarrow \partial I^{\ell-3}\times \widehat{C}R^{\tvec{a}}\longrightarrow \partial I^{\ell-3}\times *, \]
where the left map is the inclusion and the right map is the projection. The boundary $\partial \widehat{B}_{T_\ell(p,q)}$ is defined by the subspace $\partial I^{\ell-3}\times (S^{\tvec{a}}/\widehat{C}R^{\tvec{a}})$ of $\widehat{B}_{T_\ell(p,q)}$.
\end{Def}
 This is the space obtained by collapsing the subspace $\partial I^{\ell-3}\times \widehat{C}R^{\tvec{a}}$ of $I^{\ell-3}\times (S^{\tvec{a}}\cup \widehat{C}R^{\tvec{a}})$ onto $\partial I^{\ell-3}\times *$. Since the collapsing of $\partial I^{\ell-3}\times \widehat{C}R^{\tvec{a}}$ onto $\partial I^{\ell-3}\times *$ does not affect the homotopy type, we have that $\widehat{B}_{T_\ell(p,q)}$ has the homotopy type of $I^{\ell-3}\times S^{|\tvec{a}|}$. 
For $(\sigma,\alpha)\in \overrightarrow{\calP}_{T_\ell(p,q)}$, we define
\[ \begin{split}
  & \widehat{B}_{T_\ell(p,q)}(\sigma,\alpha)=\left\{\begin{array}{ll}
  \prod_{v\in V^{\mathrm{int}}(\sigma)}\widehat{B}_{T(v)} & ((\sigma,\alpha)\neq T_\ell(p,q)),\\
  \widehat{B}_{T_\ell(p,q)}/{A_{T_\ell(p,q)}} & ((\sigma,\alpha)=T_\ell(p,q)).
  \end{array}\right.
\end{split} \]
as in \S\ref{ss:gen-bracket}. We also denote by $\widehat{\omega}_{T_\ell(p,q)}\colon \widehat{B}_{T_\ell(p,q)}\to \Emb_\partial^\fr((I^k)^{\cup p}\cup (I^{k-1})^{\cup q},I^{2k})$ the map induced by the map $\widehat{\omega}_{T_\ell(p,q)}$ of Lemma~\ref{lem:brun-cone}. Let $\widehat{\omega}_{T_\ell(p,q)}(\sigma,\alpha)\colon \widehat{B}_{T_\ell(p,q)}(\sigma,\alpha)\to \Emb_\partial^\fr((I^k)^{\cup p}\cup (I^{k-1})^{\cup q},I^{2k})$ be the map obtained by the product of $\widehat{\omega}_{T(v)}$ for internal vertices $v$ in $\sigma$ with lower dgrees than $T_\ell$.
\begin{Lem}\label{lem:codim1-coherence}
Let $\ell\geq 3$. Suppose that $k$ is sufficiently large. Let $(\sigma,\alpha)\in\overrightarrow{\calP}_{T_\ell(p,q)}$ be a face of $T_\ell(p,q)$ of excess $\ell-4$, i.e. a face of codimension 1. Then there is a natural map $\widehat{B}_{T_\ell(p,q)}(\sigma,\alpha)\to K(\sigma)/{A_{T_\ell(p,q)}}$ which makes the following diagram is homotopy commutative.
\begin{equation}\label{eq:codim1-coherence}
\vcenter{
\xymatrix{
  \widehat{B}_{T_\ell(p,q)}(\sigma,\alpha) \ar[rrrd]^-{\widehat{\omega}_{T_\ell(p,q)}(\sigma,\alpha)} \ar[d] & & & \\
  K(\sigma)/{A_{T_\ell(p,q)}} \ar[rrr]_-{\widehat{\omega}_{T_\ell(p,q)}|_{K(\sigma)/{A_{T_\ell(p,q)}}}} & & & \Emb_\partial^\fr((I^k)^{\cup p}\cup (I^{k-1})^{\cup q},I^{2k})\\
 } 
}
\end{equation}
\end{Lem}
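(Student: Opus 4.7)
The plan is to identify both routes of the diagram (\ref{eq:codim1-coherence}) with the same iterated composition chain, using property (1) (Compatibility) of Theorem~\ref{thm:vertex} together with the Brunnian cone construction of Lemma~\ref{lem:brun-cone}. Since $(\sigma,\alpha)$ has excess $\ell-4$, the tree $\sigma$ has exactly two internal vertices $v_1,v_2$ of valences $\ell_1,\ell_2$ with $\ell_1+\ell_2=\ell+2$, joined by a single directed internal edge. The cell $K(\sigma)\subset\partial I^{\ell-3}\times S^{|\tvec{a}|}$ corresponds to the codimension one cell of the Lie-hedron $L_{\ell-1}$ indexed by $\sigma$, thickened inside the handlebody $N_{L_{\ell-1}}$ and propagated through the suspensions and deloopings of Remark~\ref{rem:n-l-valent}. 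After collapsing $A_{T_\ell(p,q)}$, the quotient $K(\sigma)/A_{T_\ell(p,q)}$ is canonically identified with a product of an $(\ell-4)$-cube face of $I^{\ell-3}$ with a single sphere of dimension $|\vect{a}|$, via the natural smash identification of sphere factors associated to the two sub-trees hanging off $v_1$ and $v_2$.

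First I would construct the natural map $\widehat{B}_{T_\ell(p,q)}(\sigma,\alpha)\to K(\sigma)/A_{T_\ell(p,q)}$. Each factor $\widehat{B}_{T(v_i)}$ of the product $\widehat{B}_{T(v_1)}\times\widehat{B}_{T(v_2)}$ admits a canonical collapse $\widehat{B}_{T(v_i)}\to I^{\ell_i-3}\times S^{|\tvec{a}_i|}$ by Definition~\ref{def:hat_B} (the thickened cone $\widehat{C}R^{\tvec{a}_i}$ collapses onto a point of the cube boundary). Composing these two collapses with the smash map $S^{|\tvec{a}_1|}\times S^{|\tvec{a}_2|}\to S^{|\tvec{a}_1|}\wedge S^{|\tvec{a}_2|}\cong S^{|\tvec{a}|}$ on the sphere factors and with the canonical inclusion of $I^{\ell_1-3}\times I^{\ell_2-3}$ into a codimension one face of $I^{\ell-3}$ realized by the Lie-hedron cell for $\sigma$, one obtains the required map.

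Next I would verify the homotopy commutativity. By property (1) (Compatibility) of Theorem~\ref{thm:vertex}, the restriction $\widehat\omega_{T_\ell(p,q)}|_{K(\sigma)/A_{T_\ell(p,q)}}$ is, \emph{by construction} from the proof of Theorem~\ref{thm:vertex} and the suspension/delooping procedure, equal to the composition of iterated suspensions of the basic brackets $\omega_{T(v_i)}$ glued by the Brunnian cone extension of Lemma~\ref{lem:brun-cone}. On the other hand, $\widehat\omega_{T_\ell(p,q)}(\sigma,\alpha)$ is, by definition (\ref{eq:beta(sigma)}), the product of the chains $\widehat\omega_{T(v_i)}$ followed by the iterated surgery composition $\circ$ of Definition~\ref{def:iter-surg}. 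The two chains agree as families of string links up to reparametrization of the sphere factors (the smash vs.\ the product model), and their Brunnian null-isotopy extensions are identified via Lemma~\ref{lem:compos-brun}, which states precisely that the composition of systems of Brunnian null-isotopies of the two vertex surgeries produces the system of Brunnian null-isotopies used in the inductive construction of $\omega_{T_\ell}$. Since for $k$ sufficiently large, Corollary~\ref{cor:goodwillie} together with Theorem~\ref{thm:CFS} rules out torsion obstructions, the two parametrizations are homotopic, so the diagram commutes up to a compressible bordism in the sense defined in the outline.

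I expect the main obstacle to be the careful matching of the two parametrizations of the sphere direction: the product model $S^{|\tvec{a}_1|}\times S^{|\tvec{a}_2|}$ arising from the independent suspensions and deloopings at $v_1$ and $v_2$, and the single delooped sphere $S^{|\tvec{a}|}$ arising from Remark~\ref{rem:n-l-valent} applied to the whole tree $T_\ell(p,q)$. The exchange of the order of successive deloopings is homotopically harmless by Remark~\ref{rem:delooping}(2), and Lemma~\ref{lem:suspension-delooping} handles the interchange of a suspension and a delooping; but verifying that the signs and the collar identifications match up to a homotopy landing inside the relevant embedding space (rather than the ambient one) requires invoking the strong Brunnian property of Remark~\ref{rem:brun-stronger}(3) so that the null-isotopies used in the suspensions at $v_1$ and $v_2$ are coherent. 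Once this coherence is established, the compressible bordism is produced by the mapping cylinder of the reparametrization, completing the proof.
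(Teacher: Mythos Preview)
Your construction of the vertical map (via the smash collapse of the two sphere factors) matches the paper. The gap is in the homotopy commutativity step. You assert that the two routes ``agree as families of string links up to reparametrization of the sphere factors,'' but this is precisely what must be proved: the domain $\widehat{B}_{T(v_1)}\times\widehat{B}_{T(v_2)}\simeq (I^{\ell_1-3}\times S^{|\tvec{a}_1|})\times(I^{\ell_2-3}\times S^{|\tvec{a}_2|})$ maps to $K(\sigma)/A_{T_\ell(p,q)}$ only after collapsing the wedge $(I^{\ell_1-3}\times S^{|\tvec{a}_1|})\vee(I^{\ell_2-3}\times S^{|\tvec{a}_2|})$, so for $\widehat\omega_{T_\ell(p,q)}(\sigma,\alpha)$ to factor through the quotient you must show that its restriction to this wedge is null-homotopic. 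Your appeal to Lemma~\ref{lem:compos-brun} does not address this: that lemma assembles a system of Brunnian null-isotopies for the \emph{composed} family $\omega_{G_1}\circ\omega_{G_2}$, but says nothing about triviality of the composition over the wedge of the parameter spheres. Likewise the appeal to Corollary~\ref{cor:goodwillie} and Theorem~\ref{thm:CFS} to ``rule out torsion obstructions'' is off-target; the paper's argument is purely geometric and uses no homotopy-group computation here.

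The paper supplies the missing step directly. On the summand $I^{\ell_i-3}\times S^{|\tvec{a}_i|}$ (with the other sphere factor held at the basepoint), the other vertex's surgery is trivial, so one wants the Brunnian property of $\widehat\omega_{T(v_i)}$ with respect to the component at the internal edge of $\sigma$ to null-homotope the restriction. The subtlety is that the delooping parameters in $S^{|\tvec{a}_i|}$ need not be attached to a component of $G_i$ that is disjoint from the internal edge; the paper resolves this with Lemma~\ref{lem:slide-para}, which shows one may exchange which external component of a basic bracket carries a given delooping, up to homotopy. After this exchange, removing the internal-edge component makes the restriction to each wedge summand null-homotopic by the Brunnian property, and the factorization through the smash follows immediately.
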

\begin{proof}
Lemma~\ref{lem:codim1-coherence} can be proved by extending $\widehat{\omega}_{T_\ell(p,q)}(\sigma,\alpha)$ and $\widehat{\omega}_{T_\ell(p,q)}|_{K(\sigma)/{A_{T_\ell(p,q)}}}$ to the mapping cylinder of the projection $\widehat{B}_{T_\ell(p,q)}(\sigma,\alpha)\to K(\sigma)/{A_{T_\ell(p,q)}}$. Namely, the face tree $(\sigma,\alpha)$ has two internal vertices $v_1,v_2$ connected by a middle edge. 
The left vertical map in (\ref{eq:codim1-coherence}) can be given as follows. We have $B_{T_\ell(p,q)}(\sigma,\alpha)=B_{T(v_1)}\times B_{T(v_2)}=(I^{\ell_1-3}\times S^{\tvec{a}_1})\times (I^{\ell_2-3}\times S^{\tvec{a}_2})$ and 
$\widehat{B}_{T_\ell(p,q)}(\sigma,\alpha)=\widehat{B}_{T(v_1)}\times \widehat{B}_{T(v_2)}\simeq (I^{\ell_1-3}\times S^{|\tvec{a}_1|})\times (I^{\ell_2-3}\times S^{|\tvec{a}_2|})$, and $K(\sigma)/{A_{T_\ell(p,q)}}$ is homeomorphic to a quotient of $I^{\ell_1-3}\times I^{\ell_2-3}\times S^{|\tvec{a}_1|+|\tvec{a}_2|}$. The projection $\widehat{B}_{T_\ell(p,q)}(\sigma,\alpha)\to K(\sigma)/{A_{T_\ell(p,q)}}$ is induced by the quotient $(I^{\ell_1-3}\times S^{|\tvec{a}_1|})\times (I^{\ell_2-3}\times S^{|\tvec{a}_2|})\to I^{\ell_1-3}\times I^{\ell_2-3}\times S^{|\tvec{a}_1|+|\tvec{a}_2|}$. We need to prove that the restriction of $\widehat{\omega}_{T_\ell(p,q)}(\sigma,\alpha)$ to $(I^{\ell_1-3}\times S^{|\tvec{a}_1|})\vee (I^{\ell_2-3}\times S^{|\tvec{a}_2|})$ is null-homotopic. Recall that the iterated surgery of two $\Psi$-graphs in Definition~\ref{def:iter-surg} is defined by choosing one component corresponding to an external vertex. By Lemma~\ref{lem:slide-para}, we may change the choices of the leaves of $G_1$ and $G_2$ in Definition~\ref{def:iter-surg} in such a way that for each $i$ one of the 1-valent vertex of $G_i$ that is disjoint from the internal edge is chosen. Then the restriction of $\widehat{\omega}_{T_\ell(p,q)}(\sigma,\alpha)$ to $I^{\ell_i-3}\times S^{|\tvec{a}_i|}$ is null-homotopic by the Brunnian property with respect to removing the component corresponding to the leaf at the middle of the internal edge of $\sigma$. Hence $\widehat{\omega}_{T_\ell(p,q)}(\sigma,\alpha)$ factors up to homotopy through $K(\sigma)/A_{T_\ell(p,q)}$.
\end{proof}

\subsection{Compatibility at higher codimensional strata}\label{ss:compatible-higher}

We require that the faces of the codimension 1 strata are consistent along the codimension 2 strata, and so on for higher codimension strata. For example, over $B_{\partial_j\partial_i T_5(p,q)}$, where $\partial_j\partial_i T_5(p,q)$ is a term (of a 3-valent graph) in the IHX relation from the 4-valent vertex in $\partial_i T_5(p,q)$. For the consistency along the codimension 2 strata, we glue $B_{\partial_j\partial_i T_5(p,q)}\times \Delta^2$ to each codimension 2 stratum. 

We need to prove that the chains $\widehat{\omega}_{T_\ell(p,q)}(\sigma,\alpha)$ are compatibly glued together after thickenings. According to \cite[Remark~9.6 and Proposition~9.7]{Du}, giving a map $\hocolim{}{\calX(-)}\to Z$ is the same as giving a homotopy coherent diagram $\calX(-)\to Z$, the following lemma is sufficient.

\begin{Lem}\label{lem:hcoherence}
Let $\ell\geq 3$. Suppose that $k$ is sufficiently large.
The diagrams $\widehat{\omega}_{T_\ell(p,q)}(-)\colon \widehat{B}_{T_\ell(p,q)}(-) \to \Emb_\partial^\fr((I^k)^{\cup p}\cup (I^{k-1})^{\cup q},I^{2k})$ and $\widehat{\omega}_{T_\ell(p,q)}|_{K(-)/{A_{T_\ell(p,q)}}}\colon K(-)/{A_{T_\ell(p,q)}} \to \Emb_\partial^\fr((I^k)^{\cup p}\cup (I^{k-1})^{\cup q},I^{2k})$, both parametrized by the poset $\overrightarrow{\calP}_{T_\ell(p,q)}$, are compatible with respect to the natural transformation $\widehat{B}_{T_\ell(p,q)}(-)\to  K(-)/{A_{T_\ell(p,q)}}$. Namely, the following diagram is homotopy coherent.
\[ \xymatrix{
  \widehat{B}_{T_\ell(p,q)}(-) \ar[rrrd]^-{\widehat{\omega}_{T_\ell(p,q)}(-)} \ar[d] & & &\\
   K(-)/{A_{T_\ell(p,q)}} \ar[rrr]_-{\widehat{\omega}_{T_\ell(p,q)}|_{ K(-)/{A_{T_\ell(p,q)}}}} & & & \Emb_\partial^\fr((I^k)^{\cup p}\cup (I^{k-1})^{\cup q},I^{2k})\\
 } \]
\end{Lem}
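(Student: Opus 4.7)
The plan is to establish homotopy coherence by induction on the excess of the face $(\sigma,\alpha)$ (equivalently, on the codimension of the stratum $K(\sigma)$ in $\widehat{B}_{T_\ell(p,q)}$), using the already established Brunnian property at each internal vertex to produce the required fillers. Concretely, a homotopy coherent natural transformation between two diagrams indexed by $\overrightarrow{\calP}_{T_\ell(p,q)}$ amounts to producing, for each chain
\[ (\sigma_0,\alpha_0)\to (\sigma_1,\alpha_1)\to\cdots\to (\sigma_r,\alpha_r) \]
of morphisms in $\overrightarrow{\calP}_{T_\ell(p,q)}$, a map
\[ \Delta^r\times \widehat{B}_{T_\ell(p,q)}(\sigma_0,\alpha_0)\longrightarrow \Emb_\partial^\fr((I^k)^{\cup p}\cup (I^{k-1})^{\cup q},I^{2k}) \]
whose two ``corners'' recover $\widehat{\omega}_{T_\ell(p,q)}(\sigma_0,\alpha_0)$ and the composite through $K(\sigma_r)/A_{T_\ell(p,q)}$, and whose faces match the fillers for the subchains. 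Thus the goal is to build this system of fillers compatibly.

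First, I would handle the base cases. For $r=0$ and $(\sigma_0,\alpha_0)\neq T_\ell(p,q)$ the required 0-simplex is Lemma~\ref{lem:codim1-coherence} (applied to lower-excess faces via the inductive hypothesis on $\ell$ built into the iterated construction); for $(\sigma_0,\alpha_0)=T_\ell(p,q)$ the 0-simplex is tautological because $K(T_\ell(p,q))/A_{T_\ell(p,q)}=\widehat{B}_{T_\ell(p,q)}/A_{T_\ell(p,q)}$. Next I would do the inductive step on $r$. The key geometric input at each step is the following: a morphism $(\sigma,\alpha)\to (\sigma',\alpha')$ contracts a set of internal edges, each of which carries an external label on its ``middle leaf'' in the decomposition $\widehat{B}_{T_\ell(p,q)}(\sigma,\alpha)=\prod_v \widehat{B}_{T(v)}$. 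By Lemma~\ref{lem:slide-para} one may arrange the iterated surgery in Definition~\ref{def:iter-surg} so that, for each contracted internal edge, the corresponding component in the string link is the one to which the Brunnian null-isotopy of the system of Lemma~\ref{lem:compos-brun} applies. The prescribed Brunnian null-isotopy then provides the \emph{canonical} homotopy pushing $\widehat{\omega}_{T_\ell(p,q)}(\sigma,\alpha)$ into the image of the map through $K(\sigma')/A_{T_\ell(p,q)}$, exactly as in the codimension 1 argument.

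The inductive filler for the $r$-simplex is constructed as the concatenation of these Brunnian homotopies along the edges of the chain, combined with the already-given fillers on its boundary simplices. Coherence between the fillers on different faces of the simplex follows because the systems of Brunnian null-isotopies at distinct internal vertices are mutually compatible by Lemma~\ref{lem:compos-brun} (composition) and Lemma~\ref{lem:switch-G1-G2} (switching composition order), and because by assumption we are in the stable range where the connectivity estimates of Corollary~\ref{cor:goodwillie} make the relevant obstruction groups vanish rationally. Thus the necessary higher homotopies exist and can be chosen inductively; more formally, one fills the boundary $\partial\Delta^r$-data into a map from $\Delta^r$ using that the relevant homotopy groups of the embedding space (below the rational range detected) vanish when $k$ is sufficiently large, exactly as in the last paragraph of the proof of Lemma~\ref{lem:brunnian-4}.

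The main obstacle I anticipate is the bookkeeping of the simultaneous compatibility of all the fillers when several contractions happen in a chain, together with the subtle point that the $\prod_v \widehat{B}_{T(v)}$ decomposition of $\widehat{B}_{T_\ell(p,q)}(\sigma,\alpha)$ restricts in two \emph{a priori} distinct ways when one contracts different subsets of internal edges. To resolve this, I would show (using the compatibility clause~(1) of Theorem~\ref{thm:vertex} together with the construction in \S\ref{ss:compos-brunnian}) that the two resulting restrictions agree up to a canonical homotopy induced by the Brunnian null-isotopies; this canonical identification then makes the entire cubical diagram of restrictions strictly commute after passing to a suitable rectification, so the homotopy coherent diagram extends uniquely (up to contractible choice) from the codimension 1 data supplied by Lemma~\ref{lem:codim1-coherence}. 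Together this provides the required homotopy coherent diagram and completes the proof.
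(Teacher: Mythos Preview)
Your inductive setup and the use of Brunnian null-isotopies to build the length-1 homotopies are in the right spirit, but the step where you fill the higher simplices has a genuine gap. You write that ``the relevant homotopy groups of the embedding space (below the rational range detected) vanish when $k$ is sufficiently large, exactly as in the last paragraph of the proof of Lemma~\ref{lem:brunnian-4}.'' That is not what happens in Lemma~\ref{lem:brunnian-4}: there one is comparing null-isotopies \emph{after removing components}, so the obstruction lives in $\pi_*(\calE(S))$ for $S\neq\emptyset$, which by Theorem~\ref{thm:CFS} is controlled modulo torsion by the single-component knot group. In the present lemma no components are removed; the obstruction to filling $\partial\Delta^r\times\widehat{B}_{T_\ell(p,q)}(\sigma_0,\alpha_0)\to\calE(\emptyset)$ into $\Delta^r$ lives in the homotopy of the full embedding space in the very degrees where the bracket classes themselves sit, and there is no reason for these groups to vanish---indeed they are what the whole construction is meant to detect. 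So the obstruction-theoretic shortcut does not go through.

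The paper's proof avoids this entirely by a direct construction. The key mechanism is the thickened cone $\widehat{C}R^{\tvec{a}}$ (written $\widehat{C}Q$ in the proof): since $\widehat{\omega}_{T_\ell(p,q)}$ was already extended to $S^{\tvec{a}}\cup\widehat{C}R^{\tvec{a}}$ using the \emph{entire} system of Brunnian null-isotopies (Lemma~\ref{lem:brun-cone}), any two homotopies coming from two different contraction sequences $T\leftarrow(\sigma,\alpha)\leftarrow(\sigma',\alpha')$ and $T\leftarrow(\rho,\gamma)\leftarrow(\rho',\gamma')$ with $(\sigma',\alpha')=(\rho',\gamma')$ are both restrictions of a single already-built map $X\cup\widehat{C}Q\to\calE(\emptyset)$, and that map \emph{is} the required 2-filler. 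For a single chain $T\leftarrow(\sigma,\alpha)\leftarrow(\sigma',\alpha')$ the argument is even simpler: the codimension-1 homotopy for $(\sigma,\alpha)$ restricts to that for $(\sigma',\alpha')$ because the latter face sits inside the former. Higher-codimension strata are handled by iterating the same thickened-cone argument on larger cubes. Your appeals to Lemmas~\ref{lem:compos-brun} and~\ref{lem:switch-G1-G2} are relevant ingredients, but the coherence is carried by the hocolim/thickened-cone structure already present in $\widehat{B}_{T_\ell(p,q)}$, not by a connectivity estimate on the target.
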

\begin{proof}
We put $T=T_{\ell}(p,q)$ and denote $ K(\sigma)/{A_{T_\ell(p,q)}}$ by $ K_{T}(\sigma)$ for simplicity. 

Let us first check the homotopy coherence at codimension 2 strata. 
Let $(\sigma,\alpha)\in\overrightarrow{\calP}_{T}$ be a face of $T$ of excess $\ell-4$, and let $(\sigma',\alpha')\in\overrightarrow{\calP}_{T}$ be a face of $(\sigma,\alpha)$ of excess $\ell-5$. In other words, the sequence $T\leftarrow (\sigma,\alpha)\leftarrow (\sigma',\alpha')$ is obtained by contracting two edges.
Suppose that $(\sigma,\alpha)$ is the union of $T_{\ell_1}(p_1,q_1)$ and $T_{\ell_2}(p_2,q_2)$ meeting at an internal edge. We have $\widehat{B}_{T}(\sigma,\alpha)=\widehat{B}_{T_{\ell_1}(p_1,q_1)}\times \widehat{B}_{T_{\ell_2}(p_2,q_2)}$. 
Suppose that $(\sigma',\alpha')$ is induced from a face of $T_{\ell_1}(p_1,q_1)$, and is the union of $T_{\ell_3}(p_3,q_3)$, $T_{\ell_4}(p_4,q_4)$, and $T_{\ell_2}(p_2,q_2)$. Then we have $\widehat{B}_{T}(\sigma',\alpha')=\widehat{B}_{T_{\ell_3}(p_3,q_3)}\times \widehat{B}_{T_{\ell_4}(p_4,q_4)}\times \widehat{B}_{T_{\ell_2}(p_2,q_2)}$. Let $ K_{\sigma}(\sigma'):= K_{T_{\ell_1}(p_1,q_1)}(\sigma')\times \widehat{B}_{T_{\ell_2}(p_2,q_2)}$.
We consider the following commutative diagram:
\[ 
\xymatrix@C=1em{
  \widehat{B}_{T}(\sigma',\alpha') \ar[d] \ar[drr] & & & &\\
   K_{\sigma}(\sigma') \ar[rr]_-{\subset} \ar[d] 
  & & \widehat{B}_{T}(\sigma,\alpha) \ar[d] \ar[drr] & &\\
   K_{T}(\sigma') \ar[rr]_-{\subset} 
  & &  K_{T}(\sigma) \ar[rr]_-{\subset} & & \widehat{B}_{T}
}
 \]
where each vertical arrow corresponds to the projection map of Lemma~\ref{lem:codim1-coherence}. The homotopy coherence of the diagram from this triangle diagram to $\Emb_\partial^\fr((I^k)^{\cup p}\cup (I^{k-1})^{\cup q},I^{2k})$ follows from that restricted to the lower left square, which holds since the homotopy between $\widehat{\omega}_{T}(\sigma,\alpha)|_{ K_{\sigma}(\sigma')}$ and $\widehat{\omega}_{T}|_{ K_{T}(\sigma')}$ is the restriction of that between $\widehat{\omega}_{T}(\sigma,\alpha)$ and $\widehat{\omega}_{T}|_{ K_{T}(\sigma)}$. 

Next, we consider another sequence $T\leftarrow (\rho,\gamma)\leftarrow (\rho',\gamma')$ of contractions of two edges such that $(\sigma',\alpha')=(\rho',\gamma')$. In this case, we consider the following commutative diagram:
\begin{equation}\label{eq:hcoherent-diag}
\vcenter{
\xymatrix{
  & \widehat{B}_T(\rho',\gamma') \ar[d] \ar@{=}[r]& \widehat{B}_T(\sigma',\alpha') \ar[d]  \\
 & \llap{$\widehat{B}_T(\rho,\gamma)\supset{}$} K_\rho(\rho')\ar[d] & K_\sigma(\sigma')\rlap{${}\subset \widehat{B}_T(\sigma,\alpha)$} \ar[d] \\
  & \llap{$\widehat{B}_T\supset{}$} K_T(\rho') \ar@{=}[r]& K_T(\sigma') \rlap{${}\subset \widehat{B}_T$}
}
}\end{equation}
We check the homotopy coherence of the diagram from (\ref{eq:hcoherent-diag}) to $\Emb_\partial^\fr((I^k)^{\cup p}\cup (I^{k-1})^{\cup q},I^{2k})$.
Let $X=\widehat{B}_T(\sigma',\alpha')$, $Y=\Emb_\partial^\fr((I^k)^{\cup p}\cup (I^{k-1})^{\cup q},I^{2k})$, and let $Q$ be the fat wedge of the spheres (see \S\ref{ss:merge}) collapsed by $\widehat{B}_T(\sigma',\alpha')\to  K_T(\sigma')$. Then the homotopies for the sequences $\widehat{B}_{T}(\sigma',\alpha')\to  K_{\sigma}(\sigma')\to  K_T(\sigma')$ and $\widehat{B}_{T}(\rho',\gamma')\to  K_{\rho}(\rho')\to  K_T(\sigma')$ give two extensions $X\cup CQ\to Y$ of $\widehat{\omega}_T(\sigma',\alpha')$. The extension to the homotopy colimit $X\cup \widehat{C}Q\to Y$ gives a unified extension of the two extensions.

The case of higher codimension stratum is similar. For example, we consider a sequence $T\leftarrow (\sigma,\alpha)\leftarrow (\sigma',\alpha')\leftarrow (\sigma'',\alpha'')$ of taking boundary faces. 
Since the sequence $T\leftarrow (\sigma,\alpha)\leftarrow (\sigma',\alpha')\leftarrow (\sigma'',\alpha'')$ corresponds to collapsing three independent factors in $\widehat{B}_T$, we have the following commutative 3-cube diagram.
\[ \small\xymatrix@C=0em{
& \widehat{B}_T(\sigma'',\alpha'') \ar[rr] \ar[dl] \ar'[d][dd] & & \widehat{B}_T(\rho',\gamma') \ar[dd] \ar[dl]\\
\widehat{B}_T(\theta',\delta') \ar[rr]\ar[dd] & & \widehat{B}_T(\theta,\delta) \ar[dd]\\
& \widehat{B}_T(\sigma',\alpha') \ar'[r][rr] \ar[dl] & & \widehat{B}_T(\rho,\gamma) \ar[dl]\\
\widehat{B}_T(\sigma,\alpha) \ar[rr] & & \widehat{B}_T
}
\]
The homotopy coherence at the faces of this diagram follows from the previous case (of codimension 2). The homotopy coherence for the 3-cube follows by taking an extension to the homotopy colimit $X\cup \widehat{C}Q\to Y$ as in the previous case.
\end{proof}

\begin{Cor}\label{cor:hocolim-hcomm}
The diagram of induced maps
\[ \xymatrix{
  \overline{B}_{T_\ell(p,q)}=\underset{(\sigma,\alpha)}{\mathrm{hocolim}}\,n_\sigma\widehat{B}_{T_\ell(p,q)}(\sigma,\alpha) \ar[d] \ar[rrd]^-{\overline{\omega}_{T_\ell(p,q)}} & & \\
  n_{T_\ell}\widehat{B}_{T_\ell(p,q)}/{A_{T_\ell(p,q)}}= \underset{\sigma}{\mathrm{colim}}\,K_{T_\ell(p,q)}^{(n_{T_\ell})}(\sigma) \ar[rr]_-{n_{T_\ell}\widehat{\omega}_{T_\ell(p,q)}} & & \Emb_\partial^\fr((I^k)^{\cup p}\cup (I^{k-1})^{\cup q},I^{2k})
}\]
is homotopy commutative.
\end{Cor}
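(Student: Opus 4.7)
The plan is to obtain the corollary as a formal consequence of Lemma~\ref{lem:hcoherence}, together with the universal properties of homotopy colimits and the bookkeeping for the multiplex constructions $n_\sigma(-)$. The key observation is that the defining data for $\overline{\omega}_{T_\ell(p,q)}$ already records the homotopies needed to compare it with the map $n_{T_\ell}\widehat{\omega}_{T_\ell(p,q)}$ out of the strict colimit.

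First I would upgrade Lemma~\ref{lem:hcoherence} to the multiplex setting. Each seat of $n_\sigma\widehat{B}_{T_\ell(p,q)}(\sigma,\alpha)$ is by construction a copy of $\widehat{B}_{T_\ell(p,q)}(\sigma,\alpha)$ glued to the others along a common boundary, and both maps $(-)\widehat{\omega}_{T_\ell(p,q)}(-)$ and $\widehat{\omega}_{T_\ell(p,q)}|_{K^{(n_{T_\ell})}(-)/A_{T_\ell(p,q)}}$ are defined seat-wise by the same formula on each copy. Hence the homotopy coherent data supplied by Lemma~\ref{lem:hcoherence} for the non-multiplex diagram extends verbatim (seat-by-seat, compatibly on the shared boundaries) to a homotopy coherent commutative triangle of diagrams indexed by $\overrightarrow{\calP}_{T_\ell(p,q)}$:
\[ n_\sigma\widehat{B}_{T_\ell(p,q)}(-)\longrightarrow K^{(n_{T_\ell})}(-)/A_{T_\ell(p,q)}\longrightarrow \Emb_\partial^\fr((I^k)^{\cup p}\cup (I^{k-1})^{\cup q},I^{2k}), \]
together with a specified homotopy from the composite to the direct map $n_\sigma\widehat{\omega}_{T_\ell(p,q)}(-)$.

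Next I would invoke the universal property of hocolim (in the form \cite[Remark~9.6, Proposition~9.7]{Du}) to convert this homotopy coherent diagram into genuine maps and a homotopy on the level of hocolims. On the one hand, the homotopy coherent diagram $n_\sigma\widehat{\omega}_{T_\ell(p,q)}(-)$ assembles into the map $\overline{\omega}_{T_\ell(p,q)}\colon \overline{B}_{T_\ell(p,q)}\to \Emb_\partial^\fr((I^k)^{\cup p}\cup (I^{k-1})^{\cup q},I^{2k})$, which is the defining construction of $\overline{\omega}_{T_\ell(p,q)}$ (Definition~\ref{def:barB} and the discussion following it). On the other hand, the factorizations through $K^{(n_{T_\ell})}(\sigma)/A_{T_\ell(p,q)}$ are strictly compatible along morphisms of $\overrightarrow{\calP}_{T_\ell(p,q)}$ (they form an honest natural transformation into a constant-valued diagram of inclusions into $n_{T_\ell}\widehat{B}_{T_\ell(p,q)}/A_{T_\ell(p,q)}$), so they assemble into the composite
\[ \overline{B}_{T_\ell(p,q)}\longrightarrow \colim_\sigma K^{(n_{T_\ell})}(\sigma)/A_{T_\ell(p,q)}=n_{T_\ell}\widehat{B}_{T_\ell(p,q)}/A_{T_\ell(p,q)}\xrightarrow{\,n_{T_\ell}\widehat{\omega}_{T_\ell(p,q)}\,} \Emb_\partial^\fr(\cdots), \]
where the first arrow is the natural map from hocolim to colim, namely the vertical arrow of the corollary. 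The homotopies between the two maps supplied level-wise by Lemma~\ref{lem:hcoherence} then assemble, again by the same universal property, into a single homotopy on $\overline{B}_{T_\ell(p,q)}$ between $\overline{\omega}_{T_\ell(p,q)}$ and the composite through $n_{T_\ell}\widehat{B}_{T_\ell(p,q)}/A_{T_\ell(p,q)}$. This is the desired homotopy commutativity.

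The only genuine subtlety, which I expect to be the main point to check carefully, is the consistency of the multiplex structure with the homotopy coherence: one must verify that the seat-wise homotopies on the multiple copies of $\widehat{B}_{T_\ell(p,q)}(\sigma,\alpha)$ agree on the shared boundaries that are identified inside $n_\sigma\widehat{B}_{T_\ell(p,q)}(\sigma,\alpha)$. This is automatic from the fact that the homotopies produced in the proof of Lemma~\ref{lem:hcoherence} are constructed from extensions over homotopy cones $X\cup\widehat{C}Q$ that depend only on the underlying directed tree and on the prescribed system of Brunnian null-isotopies, not on any seat label; hence they descend to the multiplex. Once this consistency is checked, everything else is formal, and no further work on the embedding spaces is required.
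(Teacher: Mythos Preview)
Your proposal is correct and follows essentially the same approach as the paper: the corollary is stated without a separate proof, as it is meant to follow immediately from Lemma~\ref{lem:hcoherence} via the universal property of homotopy colimits cited just before that lemma (\cite[Remark~9.6 and Proposition~9.7]{Du}). Your write-up spells out the details (including the seat-by-seat extension to the multiplex) more explicitly than the paper does, but the underlying argument is the same.
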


To prove Theorem~\ref{thm:l-valent-general}, we recall the interpretation of the Brunnian property in Definition~\ref{def:brunnian-framily} in terms of a lifting problem. With that interpretation, we have the following.
\begin{Lem}\label{lem:omega-hat-brunnian}
The chain $\widehat{\omega}_{T_\ell(p,q)}\colon I^{\ell-3}\times (S^{\tvec{a}}\cup \widehat{C}R^{\tvec{a}})\to \calE(\emptyset)$ has a natural system of Brunnian null-isotopies that is compatible with that of $\omega_{T_\ell(p,q)}\colon I^{\ell-3}\times S^{\tvec{a}}\to \calE(\emptyset)$.
\end{Lem}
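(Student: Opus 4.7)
The plan is to build the extended system of Brunnian null-isotopies cell by cell in the homotopy colimit defining $\widehat{C}R^{\tvec{a}}$, using the fact that the extension of $\omega_{T_\ell(p,q)}$ over each cone $C(S^{\tvec{a}}[J])$ was itself defined in Lemma~\ref{lem:brun-cone} by a Brunnian null-isotopy with respect to removing the components indexed by $\Omega\setminus J$, so that on this cone one automatically has a canonical family of null-isotopies for every $S\supset \Omega\setminus J$.

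First, I would observe that $\omega_{T_\ell(p,q)}$ itself carries a system of Brunnian null-isotopies $\{\gamma_S\}_{\emptyset\neq S\subset\Omega}$: the basic bracket $\omega_{T_\ell}$ does so by the property (4) of Theorem~\ref{thm:vertex} (in the strong sense of Remark~\ref{rem:brun-stronger} (3) and Definition~\ref{def:brun-null-iso}), and these systems are preserved under suspensions (Definition~\ref{def:suspend-brunnian}) and deloopings (cf. Remark~\ref{rem:delooping}), which are exactly the operations producing $\omega_{T_\ell(p,q)}$ from $\omega_{T_\ell}$ in Examples~\ref{ex:3-valent-family}--\ref{ex:5-valent-family} and Remark~\ref{rem:n-l-valent}. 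Lifting the constant map at $\omega_{T_\ell(p,q)}$ to the homotopy limit of $\{\calB(S)\}$ gives the system $\{\gamma_S\}$ we must extend.

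Second, for each $J\subsetneq\Omega$, I would extend $\{\gamma_S\}$ over $I^{\ell-3}\times C(S^{\tvec{a}}[J])$ as follows. By construction in Lemma~\ref{lem:brun-cone}, $\widehat{\omega}_{T_\ell(p,q)}$ on this cone is built from the Brunnian null-isotopy $\gamma_{\Omega\setminus J}$; hence on this piece all components indexed by $\Omega\setminus J$ are deformed to the standard inclusion. For any $S$ with $S\supset \Omega\setminus J$, the constant family of null-isotopies in $\calB(S)$ is canonical. For an arbitrary $S$, write the null-isotopy in $\calB(S)$ as the composite of $\gamma_{S\cap J}$ on the original spherical boundary of the cone, extended radially into the cone by the track of $\gamma_{\Omega\setminus J}$, which lies in $\calB(S\cup(\Omega\setminus J))$ and restricts to $\gamma_S$ at the cone base. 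This gives a lift of the constant map at $\widehat{\omega}_{T_\ell(p,q)}|_{I^{\ell-3}\times C(S^{\tvec{a}}[J])}$ to $\calB(S)$ extending $\gamma_S$ on $I^{\ell-3}\times S^{\tvec{a}}[J]$. Compatibility of this lift across different $S\subset\Omega$ is built into the definition because both factors (the radial coordinate and the spherical base) come from elements of the homotopy limit.

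Third, I would glue these cone-wise extensions together using the functoriality of $\{\calB(S)\}_S$ under component-forgetting maps and the definition of $\widehat{C}R^{\tvec{a}}=\hocolim{J}C(S^{\tvec{a}}[J])$. For each inclusion $J\subset J'$, the inclusion $C(S^{\tvec{a}}[J])\hookrightarrow C(S^{\tvec{a}}[J'])$ sends our cone-wise data to data of the same shape, because $\gamma_{\Omega\setminus J}$ is obtained from $\gamma_{\Omega\setminus J'}$ by forgetting the components labelled by $J'\setminus J$, and the cubical compatibility built into $\{\gamma_S\}$ as an element of $\holim{S}\calB(S)$ ensures the higher coherences required by the homotopy colimit.

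The main obstacle will be the last step: verifying that the cone-wise Brunnian null-isotopies assemble into a single lift to $\holim{S}\calB(S)$ over all of $\widehat{C}R^{\tvec{a}}$, i.e., that the homotopies among them indexed by chains $J_0\subsetneq J_1\subsetneq\cdots\subsetneq J_r\subsetneq\Omega$ are mutually compatible. This is the same type of homotopy-coherence check carried out in Lemma~\ref{lem:hcoherence}, and I would handle it by the same inductive argument on the dimension of the stratum: use that each face corresponds to collapsing independent spherical factors, arrange the various null-isotopies as maps from a pair $(X,Q)$ with $Q$ a fat wedge of spheres, and fill in using the universal property of $\widehat{C}Q$ as the homotopy cofiber. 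Combined with the fact established in \S\ref{ss:compos-brunnian} that the composition structure of Brunnian null-isotopies is itself natural, this yields the required $\holim{S}\calB(S)$-valued lift extending $\{\gamma_S\}$.
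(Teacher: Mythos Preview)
Your approach is correct in spirit and reaches the same conclusion, but it is considerably more elaborate than what the paper does. The paper's proof is a one-line reparametrization trick: since the extension $\widehat{\omega}_{T_\ell(p,q)}$ over each cone $C(S^{\tvec{a}}[J])$ was \emph{defined} (in Lemma~\ref{lem:brun-cone}) by running the Brunnian null-isotopy $\gamma(S)$ for $S=\Omega\setminus J$ along the cone coordinate, every radial path in the thickened cone is already a null-isotopy path $u\mapsto\gamma(S)(u)$ in $\calE(\emptyset)$; hence at cone parameter $u$ the tail $t\mapsto\gamma(S)(u+(1-u)t)$ is tautologically a null-isotopy of $\widehat{\omega}_{T_\ell(p,q)}$ at that point, giving the lift to $\calB(S)$. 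Because the sets $S=\Omega\setminus J$ with $J\subsetneq\Omega$ exhaust all nonempty $S\subset\Omega$, and the simplices in the homotopy colimit $\widehat{C}R^{\tvec{a}}$ are indexed by exactly the same chains that index the homotopy limit $\holim{S}\calB(S)$, the coherences are built in and the lift to $P$ is canonical.

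By contrast, you construct the lift to each $\calB(S)$ by hand over each cone (your second paragraph, composing $\gamma_{S\cap J}$ with the radial track of $\gamma_{\Omega\setminus J}$), and then devote your third and fourth paragraphs to gluing these and checking coherence along chains $J_0\subsetneq\cdots\subsetneq J_r$. This is not wrong, but it rederives what the paper gets for free from the tautological reparametrization; in particular your ``main obstacle'' in the fourth paragraph, and the appeal to a Lemma~\ref{lem:hcoherence}-style argument, are unnecessary here.
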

\begin{proof}
The chain $\widehat{\omega}_{T_3(p,q)}\colon I^{\ell-3}\times (S^{\tvec{a}}\cup \widehat{C}R^{\tvec{a}})\to \calE(\emptyset)$ naturally induces a lift $I^{\ell-3}\times (S^{\tvec{a}}\cup \widehat{C}R^{\tvec{a}})\to P$ as follows. Each null-isotopy in $\calB(S)$ gives a path $\gamma(S)\colon I\to \calE(\emptyset)$, which is also a restriction of $\widehat{\omega}_{T_3(p,q)}$ in the thickened cone $I^{\ell-3}\times (R^{\tvec{a}}\cup \widehat{C}R^{\tvec{a}})\to \calE(\emptyset)$. The path $\gamma(S)=\{\gamma(S)(u)\}_{u\in I}$ is naturally equipped with the null-isotopy $\{\gamma(S)(u+(1-u)t)\}_{u\in I}$, which gives the lift of $\gamma(S)$ to $\calB(S)$. Doing this lift over all the paths from $I^{\ell-3}\times R^{\tvec{a}}$ in $I^{\ell-3}\times (R^{\tvec{a}}\cup \widehat{C}R^{\tvec{a}})$, we obtain a lift $I^{\ell-3}\times (S^{\tvec{a}}\cup \widehat{C}R^{\tvec{a}})\to P$ of $\widehat{\omega}_{T_3(p,q)}$.
\end{proof}

\begin{proof}[Proof of Theorem~\ref{thm:l-valent-general}]
The compatibility with respect to lower excess trees and the condition for excess 0 trees are obvious from the construction. The Brunnian property follows from Lemma~\ref{lem:omega-hat-brunnian}.

It remains to prove the $L_\infty$-relation (\ref{eq:rel-general}). We already have the boundary structures of $\omega_{T_\ell(p,q)}$ and of $\widehat{\omega}_{T_\ell(p,q)}$ induced from Theorem~\ref{thm:vertex}(2). It is the sum of the chains from the boundary faces $K_{T_\ell(p,q)}(\sigma)/{A_{T_\ell(p,q)}}$ of $\widehat{B}_{T_\ell(p,q)}/{A_{T_\ell(p,q)}}$.
By the thickening process above, the boundary face $K_{T_\ell(p,q)}(\sigma)/{A_{T_\ell(p,q)}}$ of $\widehat{B}_{T_\ell(p,q)}/{A_{T_\ell(p,q)}}$ is replaced with a thickening of $\widehat{B}_{T_\ell(p,q)}(\sigma,\alpha)$, which gives a face of $\partial \overline{\omega}_{T_\ell(p,q)}$ corresponding to $(\sigma,\alpha)$. This completes the proof.
\end{proof}

\begin{Lem}\label{lem:slide-para}
Let $\alpha\colon I^{\ell'-3}\to \Emb_\partial^\fr(I^{p_1}\cup I^{p_2}\cup I^{p_3-\lambda}\cup\cdots\cup I^{p_{\ell'}-\lambda},I^{N-\lambda})_\iota$ be a family of string links of type $(p_1,p_2,p_3-\lambda,\ldots,p_{\ell'}-\lambda;N-\lambda)$ obtained by suspending a $\ell'$-valent basic bracket operation of Theorem~\ref{thm:vertex}. Let $\alpha_1,\alpha_2\colon I^{\ell'-3}\to \Omega^\lambda\Emb_\partial^\fr(I^{p_1}\cup I^{p_2}\cup I^{p_3}\cup\cdots\cup I^{p_{\ell'}},I^{N})_\iota$ be the families of embeddings obtained from $\alpha$ as follows (see Figure~\ref{fig:move-deloop}).
\begin{enumerate}
\item The family $\alpha_1$ is obtained by suspending $\alpha$ at the $2,3,4,\ldots,\ell'$-th components along the Brunnian null-isotopy with respect to removing the first component (Remark~\ref{rem:suspension-generalized}), and then by delooping with respect to the second component.
\[ \begin{split}
  (p_1,p_2,p_3-\lambda,\ldots,p_{\ell'}-\lambda;N-\lambda)
&\to (p_1,p_2+\lambda,p_3,\ldots,p_{\ell'};N)\\
&\to \Omega^\lambda(p_1,p_2,p_3,\ldots,p_{\ell'};N)
\end{split} \]

\item The family $\alpha_2$ is obtained by suspending $\alpha$ at the $1,3,4,\ldots,\ell'$-th components along the Brunnian null-isotopy with respect to removing the second component, and then by delooping with respect to the first component.
\[ \begin{split}
  (p_1,p_2,p_3-\lambda,\ldots,p_{\ell'}-\lambda;N-\lambda)
&\to (p_1+\lambda,p_2,p_3,\ldots,p_{\ell'};N)\\
&\to \Omega^\lambda(p_1,p_2,p_3,\ldots,p_{\ell'};N)
\end{split} \]
\end{enumerate}
Then there is a homotopy between $\alpha_1$ and $\alpha_2$ that is compatible with those of the lower excess graphs. The same holds also for other choice of a pair of components. 
\end{Lem}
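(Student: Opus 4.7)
The plan is to interpret both $\alpha_1$ and $\alpha_2$ as families in $\Omega^\lambda\calE(\emptyset)$ obtained from the system of Brunnian null-isotopies $\{\gamma_\alpha(S)\}_{\emptyset\neq S\subset\{1,\ldots,\ell'\}}$ of $\alpha$ guaranteed by the constructions of Sections~\ref{s:4-valent}--\ref{s:6-valent}, and then to construct the interpolating homotopy from the next layer of that cubical diagram, namely $\gamma_\alpha(\{1,2\})$.

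First I would normalize both constructions using Lemma~\ref{lem:suspension-delooping}, together with its extension via Remark~\ref{rem:suspension-generalized}. That lemma says that suspending a string link along a null-isotopy of a subset of components and then inspecting the resulting extra coordinate is relatively isotopic to the graph of the delooping in that component. Applied iteratively, the family $\alpha_1$ can be rewritten as the family obtained from the null-isotopy $\gamma_\alpha(\{1\})$ (a path in $\calE(\{1\})$ giving a null-isotopy of components $2,\ldots,\ell'$) by reading off its ``graph'' in $\lambda$ rotational directions perpendicular to the first component. Similarly $\alpha_2$ is obtained from $\gamma_\alpha(\{2\})$. Here one uses that suspension at a component and delooping of a different component commute up to isotopy, because the operations are supported in disjoint coordinate directions; this is a direct unfolding of Definitions~\ref{def:suspension} and \ref{def:delooping}.

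Second, I would produce the homotopy $\alpha_1\simeq \alpha_2$ from $\gamma_\alpha(\{1,2\})$. By Definition~\ref{def:brun-null-iso}, the $1$-simplex $\gamma_\alpha(\{1,2\})\colon \Delta^1\to \calB(\{1,2\})$ interpolates the images of $\gamma_\alpha(\{1\})$ and $\gamma_\alpha(\{2\})$ under the morphisms $\calB(\{1\})\to\calB(\{1,2\})$ and $\calB(\{2\})\to\calB(\{1,2\})$. Suspending at components $3,\ldots,\ell'$ along this 1-parameter family of null-isotopies and then realizing the $\lambda$-many rotational directions as deloopings produces a 1-parameter family of elements of $\Omega^\lambda\Emb_\partial^\fr(I^{p_1}\cup\cdots\cup I^{p_{\ell'}},I^N)_\iota$ whose endpoints are (homotopic to) $\alpha_1$ and $\alpha_2$ after the reorganizations above. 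This is the required homotopy, and the argument for any other pair of components is the same by the symmetry of the Brunnian system.

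Third, for compatibility with lower excess graphs, note that by Lemma~\ref{lem:compos-brun} the system $\gamma_\alpha$ for an excess $e$ basic bracket was built compositionally from the systems for excess $e{-}1$ faces, in the sense that on each face of $B_{T_{\ell'}(p,q)}$ the restriction of $\gamma_\alpha(\{1,2\})$ agrees with the $\{1,2\}$-indexed Brunnian homotopy for the product of the smaller bracket operations at that face. Hence the interpolating homotopy above, being built solely out of $\gamma_\alpha(\{1,2\})$, restricts on each face to the homotopy of the same type for the corresponding lower excess graph. The main obstacle is bookkeeping the exchange of suspensions and deloopings when they involve overlapping coordinate directions used to realize the $\Omega^\lambda$ structure: one must check that the natural coordinate-swap diffeomorphisms introduced in Definition~\ref{def:suspension} (via the rotations $\rho_m$) act trivially up to isotopy on the result. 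This is routine but needs to be verified once carefully; given that, everything else assembles formally from the cubical structure of $\{\gamma_\alpha(S)\}$.
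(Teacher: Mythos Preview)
Your approach via the Brunnian cubical system is conceptually appealing, but the second step has a genuine gap. You claim that ``suspending at components $3,\ldots,\ell'$ along this 1-parameter family of null-isotopies and then realizing the $\lambda$-many rotational directions as deloopings'' produces a family in $\Omega^\lambda\Emb_\partial^\fr(I^{p_1}\cup\cdots\cup I^{p_{\ell'}},I^N)$ interpolating $\alpha_1$ and $\alpha_2$. But suspending only at components $3,\ldots,\ell'$ sends type $(p_1,p_2,p_3-\lambda,\ldots,p_{\ell'}-\lambda;N-\lambda)$ directly to $(p_1,p_2,p_3,\ldots,p_{\ell'};N)$, leaving no extra dimensions to deloop; the result lies in $\Emb$, not in $\Omega^\lambda\Emb$. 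More fundamentally, $\gamma_\alpha(\{1,2\})$ is a homotopy in $\calE(\{1,2\})$, where components $1$ and $2$ have been \emph{forgotten}; it only controls components $3,\ldots,\ell'$. The families $\alpha_1$ and $\alpha_2$, by contrast, involve all $\ell'$ components, and the essential difference between them is precisely in how components $1$ and $2$ are treated (which one is suspended and which is delooped). The depth-two Brunnian datum $\gamma_\alpha(\{1,2\})$ does not carry enough information to reconstruct this, so it cannot by itself furnish the homotopy.

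The paper's proof proceeds differently and does not invoke $\gamma_\alpha(\{1,2\})$ at all. Working in the model case $\ell'=4$, $\lambda=1$, it applies Lemma~\ref{lem:suspension-delooping} twice to show that the \emph{graph} of the loop $\alpha_1(t)$ is a string link $M$ of type $(p_1+1,p_2+1,p_3+1,p_4+1;N+1)$, and that $M$ can equally be viewed as the graph of a $2$-parameter family of type $\Omega^{(1,1,0,0)}(p_1-1,p_2-1,p_3-1,p_4-1;N-1)$. Both $\alpha_1(t)$ and $\alpha_2(t)$ then arise from this common $2$-parameter family by choosing different $I$-directions in $I^2$ for the first partial graphing; the required homotopy is obtained simply by rotating the slicing axis in $I^2$. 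The compatibility with lower excess faces is inherited because the rotation is performed uniformly on the parameter cube. So the mechanism is a direct geometric identification of a common lift, not an appeal to the higher Brunnian coherences.
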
 
\begin{figure}[h]
\[ \includegraphics[height=45mm]{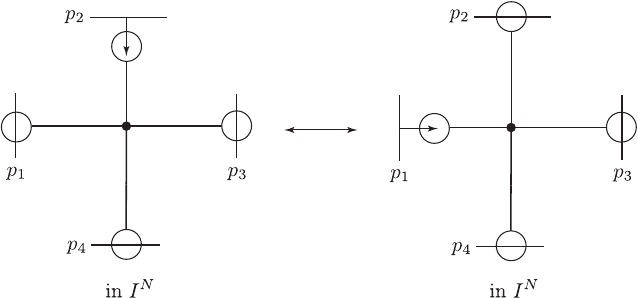} \]
\caption{Exchanging the components for the delooping.}\label{fig:move-deloop}
\end{figure}
\begin{proof}
We only prove the lemma for $\ell'=4$ and $\lambda=1$ for simplicity since the proofs for other cases are the same. In this case, $\alpha$ is a path of string links of type $(p_1,p_2,p_3-1,p_4-1;N-1)$. We find a homotopy between $\alpha_1(t)$ and $\alpha_2(t)$ for each $t\in [0,1]$ in such a way that it depends smoothly on $t$.

By Lemma~\ref{lem:brunnian-3}, $\alpha$ has a Brunnian null-isotopy with respect to removing the first component. The suspension $\Sigma_{2,3,4}\alpha(t)$ of $\alpha(t)$ with respect to the last three components is defined by using this Brunnian null-isotopy and yields a string link of type $(p_1,p_2+1,p_3,p_4;N)$. Then the path $\alpha_1(t)$ is the delooping of the second component in this string link. Its graph is a string link $M$ of type $(p_1+1,p_2+1,p_3+1,p_4+1;N+1)$.

Here, one can see that the graph of delooping of the second component and the suspension with respect to the $1,3,4$-th components can be mutually transformed by a small isotopy (Lemma~\ref{lem:suspension-delooping}). Similarly, the suspension $\Sigma_{2,3,4}\alpha(t)$ can be transformed into the graph of the delooping of the first component by a small isotopy. 
Thus, $M$ can be considered as the graph of the 2-parameter family of type $\Omega^{(1,1,0,0)}(p_1-1,p_2-1,p_3-1,p_4-1;N-1)$, and the difference between $\alpha_1(t)$ and $\alpha_2(t)$ is the choice of the $I$-direction of the parameter space $I^2$ to take the partial graph. More explicitly, $\alpha_1(t)$ (for each fixed $t$) can be deformed to the sequence
\[ \begin{split}
  (p_1,p_2,p_3-1,p_4-1;N-1) &\to\Omega^{(1,0,0,0)}(p_1-1,p_2,p_3-1,p_4-1;N-1)\\
&\to  \Omega^{(1,1,0,0)}(p_1-1,p_2-1,p_3-1,p_4-1;N-1)
\end{split}\]
of taking graphs, where $\Omega^{(1,0,0,0)}$ etc. are the notations of (\ref{eq:symbolize-suspension}), and $\alpha_2(t)$ (for each fixed $t$) can be deformed to the sequence
\[ \begin{split}
  (p_1,p_2,p_3-1,p_4-1;N-1) &\to\Omega^{(0,1,0,0)}(p_1,p_2-1,p_3-1,p_4-1;N-1)\\
&\to  \Omega^{(1,1,0,0)}(p_1-1,p_2-1,p_3-1,p_4-1;N-1)
\end{split}\]
of taking graphs. After deforming to families over $I^2$, any $I$-direction in $I^2$ gives a graph, which is a string link of type $(p_1,p_2,p_3,p_4;N)$. 
The two choices of the $I$-direction for the first graphing can be mutually deformed (see Figure~\ref{fig:rotate-axis}).
\begin{figure}[h]
\[\kern-10mm\includegraphics[height=45mm]{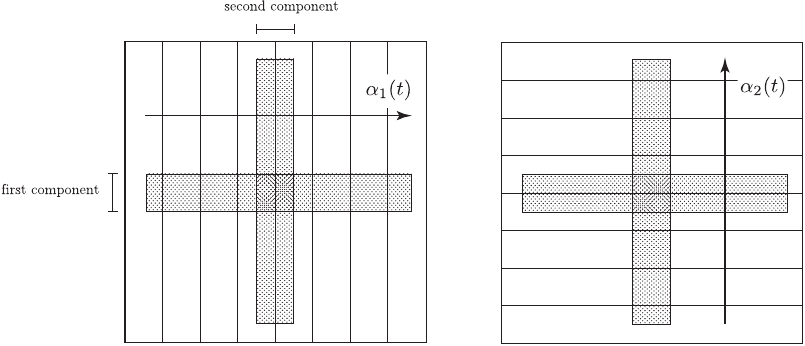}\]
\caption{The 1-parameter families $\alpha_1(t)$ and $\alpha_2(t)$ for each fixed $t$. The vertical (resp. horizontal) gray rectangle includes the parameters on which the second (resp. first) component may not be standard.}\label{fig:rotate-axis}
\end{figure}
\end{proof}

\subsection{Lift of $\overline{\omega}_{T_\ell(p,q)}$ to $\bEmb$}\label{ss:lift-bEmb}

Now we prove Lemma~\ref{lem:vert-surg} postponed in \S\ref{ss:v-surgery}.

\begin{Lem}[Lemma~\ref{lem:vert-surg}]\label{lem:vert-surg-2}
For $\ell\geq 3$, the chain $\overline{\omega}_{T_\ell(p,q)}\colon \overline{B}_{T_\ell(p,q)}\to \Emb_\partial^\fr((I^k)^{\cup p}\cup (I^{k-1})^{\cup q},I^{2k})$ of Theorem~\ref{thm:l-valent-general} admits a lift to the space $\bEmb_\partial(N_{I^k}^{\cup p}\cup N_{I^{k-1}}^{\cup q},I^{2k})_{\widetilde{\iota}}$, which is compatible with those for the lower excess trees $\sigma\in\overrightarrow{\calP}_{T_\ell(p,q)}\setminus\{T_\ell(p,q)\}$. 
\end{Lem}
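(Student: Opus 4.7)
The plan is to exploit the system of Brunnian null-isotopies that $\overline{\omega}_{T_\ell(p,q)}$ carries (by Theorem~\ref{thm:l-valent-general}(3), building on Lemma~\ref{lem:omega-hat-brunnian} and the basic brackets of Theorem~\ref{thm:vertex}) to produce the required null regular homotopies in the space of immersions. First I would observe that, by the diagram~(\ref{eq:2x3-diag}), the horizontal map $\bEmb_\partial(N_{I^k}^{\cup p}\cup N_{I^{k-1}}^{\cup q},I^{2k})_{\widetilde{\iota}} \to \bEmb_\partial((I^k)^{\cup p}\cup (I^{k-1})^{\cup q},I^{2k})_{\iota}$ is a homotopy equivalence, so it suffices to construct the lift to the latter target, and then transport it back through the inverse homotopy equivalence. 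The point of working with $\bEmb_\partial$ rather than $\Emb_\partial^\fr$ is that the leftmost vertical map in~(\ref{eq:2x3-diag}) factors through the fiber of $\Emb_\partial\to\mathrm{Imm}_\partial$, and producing a lift amounts to giving a coherent null-homotopy in $\mathrm{Imm}_\partial$ of the underlying immersion family.

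The second step is to use that the components of the string link are disjoint, so that
\[
\mathrm{Imm}_\partial\bigl(\tbigcup_i I^{a_i},I^{2k}\bigr)\cong \prod_i \mathrm{Imm}_\partial(I^{a_i},I^{2k}),
\]
and a null-homotopy in the product is the same as compatible null-homotopies in each factor. For each individual component labelled by $\lambda$, the system of Brunnian null-isotopies parametrized by $\overline{B}_{T_\ell(p,q)}$ gives, with respect to removing all other components, a family of null-isotopies of that component in the embedding space $\calE(\Omega\setminus\{\lambda\})$. Postcomposing with the natural map $\calB(\Omega\setminus\{\lambda\})\to \bEmb_\partial(I^{a_\lambda},I^{2k})$ used in the proof of Lemma~\ref{lem:framing-brunnian} converts each such null-isotopy into a null regular homotopy in $\mathrm{Imm}_\partial(I^{a_\lambda},I^{2k})$, hence a lift of the $\lambda$-th component of the immersion family to the fiber $\bEmb_\partial$.

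The third step assembles these componentwise lifts into a single lift to $\bEmb_\partial((I^k)^{\cup p}\cup (I^{k-1})^{\cup q},I^{2k})_{\iota}$. Here the full cubical structure $\{\calB(S)\}_S$ of a system of Brunnian null-isotopies enters: the higher simplices of the homotopy limit $\holim{\emptyset\neq S\subset \Omega}\calB(S)$ provide precisely the compatibility data needed to glue the component-wise regular null-homotopies into a map $\overline{B}_{T_\ell(p,q)}\to \bEmb_\partial$ lifting $\overline{\omega}_{T_\ell(p,q)}$ under the product decomposition of $\mathrm{Imm}_\partial$. The compatibility statement for lower excess faces $\sigma\in\overrightarrow{\calP}_{T_\ell(p,q)}\setminus\{T_\ell(p,q)\}$ follows because the Brunnian null-isotopies in Theorem~\ref{thm:l-valent-general}(3) and the hocolim construction of $\overline{B}_{T_\ell(p,q)}$ in Definition~\ref{def:barB} are, by construction, compatible with the iterated brackets $\overline{\omega}_{(\sigma,\alpha)}$ and their own Brunnian null-isotopy systems inherited via the composition rules of \S\ref{ss:compos-brunnian}. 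Once the lift is constructed, composing with $\widetilde{c}$ yields the desired map $\overline{\rho}_{T_\ell(p,q)}\colon \overline{B}_{T_\ell(p,q)}\to B\Diff^\fr_\partial(V;\mathrm{st})$.

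The main obstacle I expect is the homotopy-coherent assembly of the componentwise null regular homotopies over the entire hocolim $\overline{B}_{T_\ell(p,q)}$: one must ensure the data on the various strata $K^{(n_{T_\ell})}(\sigma)/A_{T_\ell(p,q)}$ glue into a single map, rather than merely agreeing strata-by-strata up to homotopy. This is handled inductively on excess by mimicking the strategy of Lemma~\ref{lem:hcoherence}, extending lifts stratum-by-stratum over the thickened cones $\widehat{C}R^{\tvec{a}}$ and using the lifting structure of the thickened chain $\widehat{\omega}_{T_\ell(p,q)}$ supplied by Lemma~\ref{lem:omega-hat-brunnian}, so that the induction step parallels the one already carried out for the embedding chain itself.
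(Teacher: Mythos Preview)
Your proposal is correct and follows essentially the same route as the paper's proof: both reduce to lifting along $\bEmb_\partial((I^k)^{\cup p}\cup (I^{k-1})^{\cup q},I^{2k})_{\iota}\to \Emb_\partial^\fr$, use the system of Brunnian null-isotopies (via Lemma~\ref{lem:omega-hat-brunnian} and Theorem~\ref{thm:l-valent-general}(3)) to produce componentwise null regular homotopies through the maps $\calB(\Omega\setminus\{\lambda\})\to \bEmb_\partial(I^{a_\lambda},I^{2k})$, and invoke compatibility with lower excess faces. The paper packages your steps~2--3 via two auxiliary pullback spaces $P$ and $Q$ (Lemmas~\ref{lem:Q-lift} and~\ref{lem:P-Q-lift}), which makes precise that only the data $\prod_i\calB(\Omega\setminus\{i\})$ is needed for the lift while the full cubical structure of $P$ is the natural input supplied by the Brunnian property; otherwise the arguments coincide.
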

By (\ref{eq:2x3-diag}), it suffices to give a lift of $\overline{\omega}_{T_\ell(p,q)}$ to $\bEmb_\partial((I^k)^{\cup p}\cup (I^{k-1})^{\cup q},I^{2k})_{\iota}$. Roughly speaking, the system of Brunnian null-isotopies for $\overline{\omega}_{T_\ell(p,q)}$ induces a lift to $\bEmb_\partial((I^k)^{\cup p}\cup (I^{k-1})^{\cup q},I^{2k})_{\iota}$. More precisely, let $\calE(\emptyset)=\Emb_\partial(I^{a_1}\cup\cdots\cup I^{a_\ell},I^N)$ and $\overline{\calE}(\emptyset)=\bEmb_\partial(I^{a_1}\cup\cdots\cup I^{a_\ell},I^N)_\iota$. We consider the following commutative diagram:
\[ \xymatrix{
 & & Q \ar[r] \ar[d] \ar[lld] & \prod_{1\leq i\leq \ell}\calB(\Omega\setminus\{i\}) \ar[d]\\
  \overline{\calE}(\emptyset) \ar[rr] & & \calE(\emptyset) \ar[r]^-{\Delta} & \calE(\emptyset)^{\times \ell}
} \]
where the right square is the pullback square, the rightmost vertical map is the product of the projections $\calB(\Omega\setminus\{i\})\to \calE(\emptyset)$, and the left skew arrow is induced by considering null-isotopies as null regular homotopies. This gives the following lemma.
\begin{Lem}\label{lem:Q-lift}
If a family $\sigma\colon B\to \calE(\emptyset)$ admits a lift to $Q$, then $\sigma$ has a lift to $\overline{\calE}(\emptyset)$.
\end{Lem}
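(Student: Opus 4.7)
The plan is to exhibit a canonical continuous lift $\Phi \colon Q \to \overline{\calE}(\emptyset)$ of the projection $Q \to \calE(\emptyset)$; composing $\Phi$ with a given lift $B \to Q$ of $\sigma$ then yields the desired lift $B \to \overline{\calE}(\emptyset)$.

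To construct $\Phi$, I would first unpack the pullback: a point of $Q$ is a tuple $(f, \gamma_1, \ldots, \gamma_\ell)$, where $f \in \calE(\emptyset)$ and, for each $i \in \Omega$, $\gamma_i$ is a path in $\calE(\Omega \setminus \{i\}) = \Emb_\partial(I^{a_i}, I^N)$ from the $i$-th component $\pi_{\Omega \setminus \{i\}}(f)$ of $f$ to the standard inclusion (matching the model of the homotopy fiber $\calB(\Omega \setminus \{i\})$). For $0 \leq i \leq \ell$, let $\widetilde{f}_i \colon \tbigcup_j I^{a_j} \to I^N$ be the map whose $j$-th component is the standard inclusion for $j \leq i$ and the $j$-th component of $f$ for $j > i$; then $\widetilde{f}_i$ is an immersion (each component is an embedded disk, and intersections between components are permitted in $\Imm_\partial$), with $\widetilde{f}_0 = f$ and $\widetilde{f}_\ell = \iota$. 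Applying $\gamma_i$ to the $i$-th component while holding the other components fixed defines a continuous path $H_i$ in $\Imm_\partial(\tbigcup_j I^{a_j}, I^N)$ from $\widetilde{f}_{i-1}$ to $\widetilde{f}_i$; this path does not land in $\Emb_\partial$ in general, since the moving $i$-th component may pass through components $j > i$, but it lands in $\Imm_\partial$ since each individual component remains an embedding throughout.

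Concatenating these paths (with the standard reparametrization), I would set
\[
\Phi(f, \gamma_1, \ldots, \gamma_\ell) := \bigl(f,\, H_1 \ast H_2 \ast \cdots \ast H_\ell \bigr) \in \overline{\calE}(\emptyset),
\]
which is a point in the homotopy fiber because $H_1 \ast \cdots \ast H_\ell$ is a path in $\Imm_\partial$ from $f$ to $\iota$. Continuity of $\Phi$ follows from continuity of concatenation of paths and of the assignment $(f, \gamma_i) \mapsto H_i$; the composition $Q \xrightarrow{\Phi} \overline{\calE}(\emptyset) \to \calE(\emptyset)$ is the projection $(f, \gamma_1, \ldots, \gamma_\ell) \mapsto f$, as required for the triangle in the diagram to commute on the nose.

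There is no substantive obstacle to this construction; once one reads off what a point of the pullback $Q$ records, the lift is essentially forced. The one genuine choice is the order of concatenation used to assemble the $H_i$'s, but the statement of Lemma~\ref{lem:Q-lift} only asserts the existence of some lift $B \to \overline{\calE}(\emptyset)$, so any fixed order (or, more symmetrically, any continuous choice of path from $(0, \ldots, 0)$ to $(1, \ldots, 1)$ in the $\ell$-cube parametrizing the per-component moves) suffices.
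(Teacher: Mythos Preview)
Your proposal is correct and takes essentially the same approach as the paper, which treats the lemma as immediate from the existence of the skew arrow $Q \to \overline{\calE}(\emptyset)$ in the diagram just before it (``induced by considering null-isotopies as null regular homotopies''); you have simply spelled out that arrow. The only cosmetic difference is that you concatenate the $H_i$ sequentially, whereas the more symmetric choice---running all $\gamma_i$ simultaneously, $t \mapsto (\gamma_1(t),\ldots,\gamma_\ell(t))$, which is already a path in $\Imm_\partial$ since a disjoint union of embeddings is an immersion---avoids the ordering choice altogether.
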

A lift to $Q$ is induced from a lift to $P$ of Definition~\ref{def:brunnian-framily}, as follows. The restrictions to the barycenter of $\Delta^{\ell-2}$ gives the restriction map
\[ \prod_{1\leq i\leq \ell}\Map(\Delta^{\ell-2},\calB(\Omega\setminus\{i\}))\to \prod_{1\leq i\leq \ell}\calB(\Omega\setminus\{i\}). \]
The postcompositions of this map on the factor of $\prod_{1\leq i\leq \ell}\Map(\Delta^{\ell-2},\calB(\Omega\setminus\{i\}))$ gives the following commutative diagram:
\[ \xymatrix{
&&& P \ar[rr] \ar[dlll] \ar'[d][dd] & & \holim{\emptyset\neq S\subset\Omega}\calB(S) \ar[dd] \ar[dl]\\
\calE(\emptyset) \ar[rrrr] \ar@{=}[dd] & & & & \Map(\Delta^{\ell-1},\calE(\emptyset)) \ar[dd]\\
&&& Q \ar'[r][rr] \ar[dlll] & & \prod_{1\leq i\leq \ell}\calB(\Omega\setminus\{i\}) \ar[dl]\\
\calE(\emptyset) \ar[rrrr] & & & & \calE(\emptyset)^{\times \ell}
}\]  
where the vertical map $\Map(\Delta^{\ell-1},\calE(\emptyset))\to \calE(\emptyset)^{\times \ell}$ is the restriction to the barycenters of the codimension 1 faces of $\Delta^{\ell-1}$.
Thus we have the following.
\begin{Lem}\label{lem:P-Q-lift}
If a family $\sigma\colon B\to \calE(\emptyset)$ admits a lift to $P$, then $\sigma$ has a lift to $Q$.
\end{Lem}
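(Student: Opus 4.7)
The plan is to reduce the statement to the existence and compatibility of a canonical map $P\to Q$ covering the identity on $\calE(\emptyset)$, which is essentially read off from the three-dimensional diagram displayed just before the lemma. Once such a map is produced, given a lift $\widetilde{\sigma}\colon B\to P$ of $\sigma$, the composition $B\xrightarrow{\widetilde{\sigma}} P\to Q$ is automatically a lift of $\sigma$ to $Q$ because the two squares $P\to \calE(\emptyset)$ and $Q\to \calE(\emptyset)$ are compatible in that diagram.

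First, I would describe the map $P\to Q$ explicitly. Recall that a point of $P$ is a pair $(f,\widetilde{\gamma})$ where $f\in\calE(\emptyset)$ and $\widetilde{\gamma}=\{\gamma_S\}_{\emptyset\neq S\subset\Omega}\in \holim{\emptyset\neq S\subset\Omega}\calB(S)$ is a natural transformation mapping $\Delta^{\ell-1}$ to the constant family $\mathrm{const}_f$. Meanwhile, a point of $Q$ is a pair $(f,(\eta_1,\ldots,\eta_\ell))$ with $f\in\calE(\emptyset)$ and $\eta_i\in\calB(\Omega\setminus\{i\})$ lifting $f$ under $\calB(\Omega\setminus\{i\})\to\calE(\emptyset)$. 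The required map sends $(f,\widetilde{\gamma})\mapsto (f,(\eta_1,\ldots,\eta_\ell))$, where $\eta_i$ is the value of the component $\gamma_{\Omega\setminus\{i\}}\colon\Delta^{\ell-2}\to\calB(\Omega\setminus\{i\})$ at the barycenter of $\Delta^{\ell-2}$. Continuity and the fact that this map covers the identity on $\calE(\emptyset)$ are immediate.

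Next, I would verify commutativity of the relevant faces of the 3-cube. The only nontrivial piece is that evaluation at barycenters of codimension-one faces gives a map $\Map(\Delta^{\ell-1},\calE(\emptyset))\to\calE(\emptyset)^{\times\ell}$ which fits into the diagram, i.e.\ for every natural transformation in $\holim{\emptyset\neq S\subset\Omega}\calB(S)$ the images under the two projection routes (directly via $\holim\to\prod_i\calB(\Omega\setminus\{i\})$ by forgetting all but the top-dimensional simplex evaluated at the barycenter, and via $P\to\Map(\Delta^{\ell-1},\calE(\emptyset))\to\calE(\emptyset)^{\times\ell}$) agree. This follows from the universal property of the pullback defining $Q$, so the induced map $P\to Q$ exists and is unique.

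Finally, with $P\to Q$ in hand, the composition $B\xrightarrow{\widetilde{\sigma}} P\to Q$ is a continuous map whose projection to $\calE(\emptyset)$ is $\sigma$, which is the desired $Q$-lift. The only potentially subtle point I anticipate is checking the naturality of the barycenter-evaluation construction when passing between strict and homotopy limits; but since the comparison is made at the level of actual pullback squares, no fibrancy or cofibrancy condition is needed and this is purely formal.
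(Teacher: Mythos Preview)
Your proposal is correct and follows exactly the paper's approach: the paper simply draws the commutative three-dimensional diagram (with the barycenter-evaluation maps as vertical arrows) and states the lemma as an immediate consequence, while you spell out explicitly the induced map $P\to Q$ via evaluation of $\gamma_{\Omega\setminus\{i\}}$ at the barycenter of $\Delta^{\ell-2}$ and the universal property of the pullback defining $Q$. Your added detail is welcome but does not deviate from the paper's argument.
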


\begin{proof}[Proof of Lemma~\ref{lem:vert-surg-2}]
For $\ell=3$, we take the lift of $\omega_{T_3}\in\Emb_\partial((I^{2n-1})^{\cup 3},I^{3n})$ to $\holim{\emptyset\neq S\subset\Omega}\calB(S)$ as in the proof of Lemma~\ref{lem:brun-borromean}. Then suspensions and deloopings naturally induce those for the null regular homotopies as in \S\ref{ss:suspend-brunnian}. Thus we obtain a lift $S^{\tvec{a}}\to P$ of $\omega_{T_3(p,q)}\colon S^{\tvec{a}}\to \calE(\emptyset):=\Emb_\partial((I^k)^{\cup p}\cup (I^{k-1})^{\cup q},I^{2k})$. By Lemmas~\ref{lem:P-Q-lift} and \ref{lem:Q-lift}, we obtain a lift $S^{\tvec{a}}\to \overline{\calE}(\emptyset)$.
Also, the extension $\widehat{\omega}_{T_3(p,q)}\colon S^{\tvec{a}}\cup \widehat{C}R^{\tvec{a}}\to \calE(\emptyset)$ naturally induces a lift $S^{\tvec{a}}\cup \widehat{C}R^{\tvec{a}}\to P$ by Lemma~\ref{lem:omega-hat-brunnian}. Then by Lemmas~\ref{lem:P-Q-lift} and \ref{lem:Q-lift} again, we obtain a lift $S^{\tvec{a}}\cup \widehat{C}R^{\tvec{a}}\to \overline{\calE}(\emptyset)$ of $\widehat{\omega}_{T_3(p,q)}$.

In the rest of the proof, we assume $\ell\geq 4$. First we see that $\omega_{T_\ell}\colon I\to \Omega^{\ell-4}\Emb_\partial^\fr((I^{(\ell-1)n-(\ell-2)})^{\cup \ell},I^{\ell n-\ell+3})_{\iota}$ admits a lift to $\Omega^{\ell-4}\bEmb_\partial((I^{(\ell-1)n-(\ell-2)})^{\cup \ell},I^{\ell n-\ell+3})_{\iota}$. We put $a=(\ell-1)n-(\ell-2)$ and $N=\ell n-\ell+3$ for simplicity. We redefine $\calE(\emptyset):=\Emb_\partial((I^a)^{\cup \ell},I^N)_{\iota}$ and $\overline{\calE}(\emptyset):=\bEmb_\partial((I^a)^{\cup \ell},I^N)_{\iota}$.
To give a lift of $\omega_{T_\ell}\colon I\to \Omega^{\ell-4}\calE(\emptyset)$ to $\Omega^{\ell-4}Q$, it suffices to find a path
\[ I\to \prod_{1\leq i\leq \ell}\Omega^{\ell-4}\calB(\Omega\setminus\{i\}) \]
to the point given by $(\omega_{T_\ell})^{\times \ell}$, considered as an element of $\prod_{i=1}^{\ell}\Omega^{\ell-4}\calB(\Omega\setminus\{i\})$, lifting $(\omega_{T_\ell})^{\times \ell}\colon I\to (\Omega^{\ell-4}\calE(\emptyset))^{\times \ell}=\Omega^{\ell-4}(\calE(\emptyset))^{\times \ell}$.
Note that $\omega_{T_\ell}$ can be considered as both a path in $\Omega^{\ell-4}\calE(\emptyset)$ and an element of $\Omega^{\ell-4}\calB(\Omega\setminus\{i\})$.
Moreover, we require that it is compatible with those choices for the face trees of $T_\ell$. 
Such a path in $\Omega^{\ell-4}Q$ is induced from the lift $I\to \Omega^{\ell-4}P$ of $\omega_{T_\ell}\colon I\to \Omega^{\ell-4}\calE(\emptyset)$ given by the Brunnian property of $\omega_{T_\ell}$ (Definition~\ref{def:brunnian-framily} and Lemma~\ref{lem:P-Q-lift}), and then Lemma~\ref{lem:Q-lift} implies that it induces a lift $I\to \Omega^{\ell-4}\overline{\calE}(\emptyset)$.

Finally, we remark that the suspensions and deloopings to obtain $\omega_{T_\ell(p,q)}$ from $\omega_{T_\ell}$ naturally induce those for the path of null regular homotopies constructed as above. Further, since the extension from $\omega_{T_\ell(p,q)}$ to $\widehat{\omega}_{T_\ell(p,q)}$ was defined by the system of Brunnian null-isotopies, we obtain a natural extension of the family of null regular homotopies to that of $\widehat{\omega}_{T_\ell(p,q)}$ by Lemma~\ref{lem:omega-hat-brunnian}. The extension to that of $\overline{\omega}_{T_\ell(p,q)}$ is similar. 
\end{proof}


\section{Deformation to the rational homotopy group}\label{s:homotopy}

In this section, we prove Theorem~\ref{thm:pi}.
The combinatorial cycle $5\vec{\gamma}$ in $\overrightarrow{\mathcal{GC}}$ is a $\Z$-linear combination of directed graphs $(X,\alpha_X)$ and $(Y,\alpha_Y)$. To prove that $[\overline{\phi}_\gamma]$ is in the image of the rational Hurewicz map, we will see that $\overline{\phi}_{5\vec{\gamma}}$ can be represented by a map
\[ \kappa_{5\vec{\gamma}}\colon K_{5\vec{\gamma}}\to B\Diff_\partial(D^{2k}) \]
from some finite $(8k-10)$-dimensional complex $K_{5\vec{\gamma}}$, which is obtained by gluing finitely many compact oriented manifold pieces together along their boundaries (Lemma~\ref{lem:K}).
Then we will further see that the cycle $\kappa_{5\vec{\gamma}}$ is rationally homologous to a spherical one.

\subsection{The complex {$K_{5\vec{\gamma}}$}}

We define the poset $\calI_{5\vec{\gamma}}$ consisting of the following directed graphs of excess at most 2. 
\begin{itemize}
\item The excess 2 directed graphs in $\calI_{5\vec{\gamma}}$ are those corresponding to the terms in the $\Z$-linear combination $5\vec{\gamma}$. If a directed graph $(\Gamma,\alpha)$ has an integer coefficient $\lambda$ in $5\vec{\gamma}$, then we take $|\lambda|$ copies of $(\Gamma,\alpha)$ in $\calI_{5\vec{\gamma}}$. For example, if $5\vec{\gamma}$ has the term $2(X,\alpha_X)$, then the number of copies of $(X,\alpha_X)$ in $\calI_{5\vec{\gamma}}$ is 2, and we consider the two copies as distinct elements. 
\item The excess 1 directed graphs in $\calI_{5\vec{\gamma}}$ are those that can be obtained from $(X,\alpha_X)$ or $(Y,\alpha_Y)$ by expanding one vertex of valence $\geq 4$. We do not take copies in this case. 
\item The excess 0 directed graphs in $\calI_{5\vec{\gamma}}$ are those that can be obtained from those in the previous item. Again, we do not take copies in this case. 
\end{itemize}
A morphism $(\Gamma,\alpha)\to (\Gamma',\alpha')$ or an order $(\Gamma,\alpha)\leq (\Gamma',\alpha')$ in $\calI_{5\vec{\gamma}}$ is defined if $(\Gamma',\alpha')$ is obtained from $(\Gamma,\alpha)$ by contracting some edges.

\subsubsection{Gluing the pieces $\widehat{B}_{(\Gamma,\alpha)}$ together}\label{ss:gluing-B}

The complex $K_{5\vec{\gamma}}$ will be defined in Lemma~\ref{lem:K} as the homotopy colimit of some diagram $\calI_{5\vec{\gamma}}\to \mathrm{Top}$. For a directed graph $(\Gamma,\alpha)$, we define
\[ \widehat{B}_{(\Gamma,\alpha)}:=\prod_{v\in V(\Gamma)}\widehat{B}_{T(v)}, \]
where $T(v)=T_\ell(p,q)$ for $\ell,p,q$ determined by $\alpha$, and $\widehat{B}_{T(v)}$ was defined in \S\ref{ss:compress-codim1}. This space $\widehat{B}_{(\Gamma,\alpha)}$ has the homotopy type of a product of spheres. 
We define 
\[ \widehat{\omega}_{(\Gamma,\alpha)}=\mathrm{ext}\circ c\circ\prod_{v\in V(\Gamma)}\widehat{\omega}_{T(v)}\colon \widehat{B}_{(\Gamma,\alpha)}\to B\Diff_\partial(D^{2k}), \]
where $c\colon\prod_v\Emb^\fr_\partial((I^k)^{\cup p_v}\cup (I^{k-1})^{\cup q_v},I^{2k})\to \prod_v B\Diff_\partial(V_v)$ is the map defined in \S\ref{ss:v-surgery}, and $\mathrm{ext}\colon \prod_v B\Diff_\partial(V_v)\to B\Diff_\partial(D^{2k})$ is the extension by the identity on $D^{2k}-\mathrm{Int}(V_1\cup\cdots\cup V_{|V(\Gamma)|})$. 
We define the {\it multiplicity} $m_\Gamma$ of $\Gamma$ by
\[ m_\Gamma=\prod_{v\in V(\Gamma)}m_{\ell_v},\]
where $\ell_v$ is the valence of $v$. This is the multiplicity of excess 0 (3-valent) graphs in $\widehat{\omega}_{(\Gamma,\alpha)}$. Let
\[ \mu_{5\vec{\gamma}}=\mathrm{lcm}(\{m_\Gamma\mid (\Gamma,\alpha)\in\calI_{5\vec{\gamma}}\}). \]
Then the cycle $\overline{\phi}_{5\vec{\gamma}}$ can be represented by $\frac{1}{\mu_{5\vec{\gamma}}}$ times the map 
\[ \overline{\omega}_{5\vec{\gamma}}:=\hocolim{{(\Gamma,\alpha)}\in \calI_{5\vec{\gamma}}}{\frac{\mu_{5\vec{\gamma}}}{m_\Gamma}n_\Gamma\widehat{\omega}_{(\Gamma,\alpha)}}\colon \hocolim{{(\Gamma,\alpha)}\in \calI_{5\vec{\gamma}}}{\frac{\mu_{5\vec{\gamma}}}{m_\Gamma}n_\Gamma\widehat{B}_{(\Gamma,\alpha)}}=\colim{{(\Gamma,\alpha)}\in \calI_{5\vec{\gamma}}}{\frac{\mu_{5\vec{\gamma}}}{m_\Gamma}\overline{B}_{(\Gamma,\alpha)}}\to B\Diff_\partial(D^{2k}), \]
where $n_\Gamma=2^{\text{excess}(\Gamma)}$ and $\colim{{(\Gamma,\alpha)}\in \calI_{5\vec{\gamma}}}{\frac{\mu_{5\vec{\gamma}}}{m_\Gamma}\overline{B}_{(\Gamma,\alpha)}}$ includes a single $\frac{\mu_{5\vec{\gamma}}}{m_\Gamma}\overline{B}_{(\Gamma,\alpha)}$ for the multiple copies of $(\Gamma,\alpha)$ in $\calI_{5\vec{\gamma}}$.
Namely, the class $[\overline{\omega}_{5\vec{\gamma}}]\in H_{8k-10}(B\Diff_\partial(D^{2k});\Q)$ is represented by the $(8k-10)$-chain given by the sum of the restrictions to the (oriented) top cells of the pieces $\frac{\mu_{5\vec{\gamma}}}{m_\Gamma}\overline{\omega}_{(\Gamma,\alpha)}\colon \overline{B}_{(\Gamma,\alpha)}\to B\Diff_\partial(D^{2k})$. The possibility of extending the collection of the chains $\frac{\mu_{5\vec{\gamma}}}{m_\Gamma}n_\Gamma\widehat{\omega}_{(\Gamma,\alpha)}$ to $\hocolim{}{\frac{\mu_{5\vec{\gamma}}}{m_\Gamma}n_\Gamma\widehat{\omega}_{(\Gamma,\alpha)}}$ follows from the results in \S\ref{s:thicken}. 

\subsubsection{Gluing the pieces $\widehat{M}_{(\Gamma,\alpha)}$ together}

We modify this cycle to simplify the homology of the domains. Recall from \S\ref{ss:compress-codim1} that each $\widehat{B}_{T(v)}$ is homotopy equivalent to $I^{\ell(v)-3}\times S^{|\tvec{a}(v)|}$ for some $\ell(v)$ and $\vect{a}(v)$, and $\widehat{B}_{(\Gamma,\alpha)}$ is homotopy equivalent to $I^e\times \prod_v S^{|\tvec{a}(v)|}$, where $e$ is the excess of $\Gamma$. We may replace each $\widehat{B}_{T(v)}$ with $I^{\ell(v)-3}\times S^{|\tvec{a}(v)|}$ without changing the resulting homology class of the cycle $\overline{\omega}_{5\vec{\gamma}}$. So we let anew
\[  \widehat{B}_{T(v)}=I^{\ell(v)-3}\times S^{|\tvec{a}(v)|}, \]
and $\widehat{B}_{(\Gamma,\alpha)}=I^e\times \prod_v S^{|\tvec{a}(v)|}$.
\begin{Lem}\label{lem:E}
Let $R_{(\Gamma,\alpha)}$ be the fat wedge in the product $\prod_v S^{|\tvec{a}(v)|}\subset \widehat{B}_{(\Gamma,\alpha)}$ (see \S\ref{ss:merge}) and let $\partial_\sqcup I^e=(I\times \partial I^{e-1})\cup (\{0\}\times I^{e-1})$. We define the space $M_{(\Gamma,\alpha)}$ by the homotopy pushout of the following diagram.
\begin{equation}\label{eq:hpushout}
 \widehat{B}_{(\Gamma,\alpha)}\cup (I^e\times \widehat{C}R_{(\Gamma,\alpha)})\longleftarrow
\partial_\sqcup I^e\times (\textstyle\prod_v S^{|\tvec{a}(v)|}\cup \widehat{C}R_{(\Gamma,\alpha)})\longrightarrow
C(\partial_\sqcup I^e\times \textstyle\prod_v S^{|\tvec{a}(v)|}). 
\end{equation}
\begin{enumerate}
\item We have 
$ M_{(\Gamma,\alpha)}\simeq \left\{\begin{array}{ll}
S^{8k-12} & (e=0),\\
D^{8k-12+e} & (e\geq 1).
\end{array}\right.$
\item There is an extension of $\widehat{\omega}_{(\Gamma,\alpha)}\colon \widehat{B}_{(\Gamma,\alpha)}\to B\Diff_\partial(D^{2k})$ to a map $\ve_{(\Gamma,\alpha)}\colon M_{(\Gamma,\alpha)}\to B\Diff_\partial(D^{2k})$.
\end{enumerate}
\end{Lem}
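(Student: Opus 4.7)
The plan is to prove Lemma~\ref{lem:E} by first computing the homotopy type of $M_{(\Gamma,\alpha)}$ from the pushout~(\ref{eq:hpushout}), and then constructing the extension $\ve_{(\Gamma,\alpha)}$ in two stages using Lemma~\ref{lem:brun-cone} together with the fact that each basic bracket is trivialized at the origin of its parameter cube.

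For part~(1) the first step is a numerical one: for every $(\Gamma,\alpha)\in\calI_{5\vec{\gamma}}$ one has $\sum_{v\in V(\Gamma)}|\tvec{a}(v)|=8k-12$, because the chain $\overline{\phi}_{(\Gamma,\alpha)}$ has degree $(2k-3)n+m=8k-12+e$ while $\dim\widehat{B}_{(\Gamma,\alpha)}=e+\sum_v|\tvec{a}(v)|$. Using the homotopy equivalence $\prod_v S^{|\tvec{a}(v)|}\cup\widehat{C}R_{(\Gamma,\alpha)}\simeq S^{8k-12}$ from \S\ref{ss:merge}, together with the contractibility of the cone $C(\partial_{\sqcup}I^e\times\prod_v S^{|\tvec{a}(v)|})$, I would reduce the pushout~(\ref{eq:hpushout}) up to homotopy to
\[
  I^e\times S^{8k-12}\ \longleftarrow\ \partial_{\sqcup}I^e\times S^{8k-12}\ \longrightarrow\ \text{pt},
\]
so that $M_{(\Gamma,\alpha)}$ is equivalent to the mapping cone $(I^e\times S^{8k-12})/(\partial_{\sqcup}I^e\times S^{8k-12})$. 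For $e=0$, with the convention $\partial_{\sqcup}I^0=\emptyset$, this is $S^{8k-12}$. For $e\geq 1$, $\partial_{\sqcup}I^e$ is a contractible PL $(e-1)$-disc consisting of all codimension-one faces of $I^e$ other than $\{1\}\times I^{e-1}$; the inclusion $\partial_{\sqcup}I^e\times S^{8k-12}\hookrightarrow I^e\times S^{8k-12}$ is then a homotopy equivalence and its cofiber is contractible. Since $M_{(\Gamma,\alpha)}$ is a finite CW-complex of dimension $8k-12+e$, this gives $M_{(\Gamma,\alpha)}\simeq D^{8k-12+e}$.

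For part~(2) I would extend $\widehat{\omega}_{(\Gamma,\alpha)}$ in two stages. Stage~1 extends over $\widehat{B}_{(\Gamma,\alpha)}\cup (I^e\times\widehat{C}R_{(\Gamma,\alpha)})$: Lemma~\ref{lem:brun-cone} applied at each vertex factor uses the system of Brunnian null-isotopies of $\omega_{T(v)}$ to extend $\widehat{\omega}_{T(v)}$ over $I^{\ell(v)-3}\times\widehat{C}R^{\tvec{a}(v)}$, and these vertex-wise extensions assemble coherently by the homotopy-coherent gluing of Lemma~\ref{lem:hcoherence} and Corollary~\ref{cor:hocolim-hcomm}. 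Stage~2 extends over the cone $C(\partial_{\sqcup}I^e\times\prod_v S^{|\tvec{a}(v)|})$. In the cofibrant replacement of the pushout this amounts to producing a null-homotopy of the Stage-1 map restricted to $\partial_{\sqcup}I^e\times(\prod_v S^{|\tvec{a}(v)|}\cup\widehat{C}R_{(\Gamma,\alpha)})$. Because $\partial_{\sqcup}I^e$ deformation retracts onto the origin $(0,\ldots,0)\in I^e$, it suffices that the Stage-1 map at that origin be null-homotopic. Each basic bracket $\omega_{T(v)}$ is normalized so that at the origin of $I^{\ell(v)-3}$ it equals the constant family at the standard inclusion (the starting point of the null-isotopy produced in \S\ref{ss:explict-null} for $\ell=4$, in Definition~\ref{def:quad-bracket} for $\ell=5$, and in Step~2 of \S\ref{ss:25T} and \S\ref{ss:surg-6-valent} for general $\ell$), and the Brunnian extension over $\widehat{C}R$ inherits this triviality. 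Hence the product over vertices at the origin is the constant family at the trivial $(D^{2k},\partial)$-bundle, and the deformation retraction of $\partial_{\sqcup}I^e$ to the origin supplies the required null-homotopy, giving the cone extension and thus $\ve_{(\Gamma,\alpha)}$.

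The main obstacle I foresee is technical rather than conceptual: the choice of $\partial_{\sqcup}I^e$ distinguishes one coordinate in each factor $I^{\ell(v)-3}$, and I must arrange the Stage-1 extensions supplied by Lemma~\ref{lem:brun-cone} to be compatible with the Stage-2 null-homotopy in a way that is functorial in $(\Gamma,\alpha)\in\calI_{5\vec{\gamma}}$. This functoriality is essential for the subsequent assembly in Lemma~\ref{lem:K}, and I would control it by performing the induction on excess in parallel with a coherent choice of Brunnian null-isotopies, relying on the homotopy coherence already established in Section~\ref{s:thicken}.
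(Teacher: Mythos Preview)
Your argument for part~(1) is correct and essentially matches the paper's: both reduce the pushout to $I^e\times S^{8k-12}\leftarrow \partial_\sqcup I^e\times S^{8k-12}\to *$ and identify the result as a disc for $e\geq 1$.

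Part~(2), however, has a genuine gap in Stage~1. After the simplification preceding the lemma, each $\widehat{B}_{T(v)}$ has \emph{already} been replaced by $I^{\ell(v)-3}\times S^{|\tvec{a}(v)|}$; the per-vertex fat-wedge cone of Lemma~\ref{lem:brun-cone} has been absorbed. The fat wedge $R_{(\Gamma,\alpha)}$ now at stake is taken across the \emph{vertex-indexed} spheres $S^{|\tvec{a}(v)|}$, not across the components of a single string link. On a stratum where the sphere for some vertex $v$ sits at its basepoint, the surgery at $v$ is trivial, but the map to $B\Diff_\partial(D^{2k})$ still carries nontrivial surgeries at all other vertices, so it is not a priori null-homotopic. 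Invoking Lemma~\ref{lem:brun-cone} vertex by vertex does nothing further, and Lemma~\ref{lem:hcoherence}/Corollary~\ref{cor:hocolim-hcomm} concern the poset $\overrightarrow{\calP}_{T_\ell(p,q)}$ of face trees of a \emph{single} vertex, not coherence across the vertices of $\Gamma$. The paper fills this gap by choosing a spanning tree $\Gamma_0\subset\Gamma$, merging all the vertex surgeries into one iterated surgery along $\Gamma_0$, and then building a global system of Brunnian null-isotopies for the resulting big string link by repeatedly applying Lemma~\ref{lem:compos-brun} along an exhaustion $I_1\subset\cdots\subset I_r=\Gamma_0$. This is the missing idea: triviality at one vertex must be \emph{propagated} through the graph via compositions of Brunnian null-isotopies, and the spanning tree organizes that propagation.

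The same issue infects your Stage~2. You claim that at the origin of $I^e$ ``the product over vertices is the constant family at the trivial bundle'', but for every $3$-valent vertex $v$ one has $I^{\ell(v)-3}=I^0$, so $\widehat{\omega}_{T(v)}$ has no cube coordinate to trivialize and remains the nontrivial Borromean family over $S^{|\tvec{a}(v)|}$. Thus the map at the origin is not constant; only the high-valence vertex factors become standard there. The paper's Stage~2 argument again appeals to the Brunnian property (once one vertex of valence $\geq 4$ is at the standard inclusion, its leaves are unlinked and the adjacent vertices' families can be null-isotoped), which is precisely the propagation mechanism missing from your outline.
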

The space $M_{(\Gamma,\alpha)}$ is obtained by attaching a mapping cylinder of the natural map $\partial_\sqcup I^e\times (\textstyle\prod_v S^{|\tvec{a}(v)|}\cup \widehat{C}R_{(\Gamma,\alpha)})\to C(\partial_\sqcup I^e\times \textstyle\prod_v S^{|\tvec{a}(v)|})$ to $\widehat{B}_{(\Gamma,\alpha)}\cup (I^e\times \widehat{C}R_{(\Gamma,\alpha)})$. The motivation for the definition of $M_{(\Gamma,\alpha)}$ would be clear from the assertion (1). The assertion (2) is equivalent to that 
\begin{itemize}
\item there is an extension $\widehat{B}_{(\Gamma,\alpha)}\cup (I^e\times \widehat{C}R_{(\Gamma,\alpha)})\to B\Diff_\partial(D^{2k})$ of $\widehat{\omega}_{(\Gamma,\alpha)}$,
\item whose restriction to the subspace $\partial_\sqcup I^e\times (\textstyle\prod_v S^{|\tvec{a}(v)|}\cup \widehat{C}R_{(\Gamma,\alpha)})$ is homotopic to the restriction of a null-homotopy of $\widehat{\omega}_{(\Gamma,\alpha)}|_{\partial_\sqcup I^e\times \prod_v S^{|\tvec{a}(v)|}}$.
\end{itemize}
\begin{proof}
(1) Since the right two terms in (\ref{eq:hpushout}) are empty when $e=0$, it suffices to consider only the case $e\geq 1$. Since $(\prod_v S^{|\tvec{a}(v)|})\cup \widehat{C}R_{(\Gamma,\alpha)}\simeq S^{8k-12}$, we have the following homotopy equivalences 
\[\begin{split}
 &\widehat{B}_{(\Gamma,\alpha)}\cup (I^e\times \widehat{C}R_{(\Gamma,\alpha)})
=(I^e\times\textstyle\prod_v S^{|\tvec{a}(v)|})\cup (I^e\times \widehat{C}R_{(\Gamma,\alpha)})\simeq I^e\times S^{8k-12},\\
 &\partial_\sqcup I^e\times (\textstyle\prod_v S^{|\tvec{a}(v)|}\cup \widehat{C}R_{(\Gamma,\alpha)})\simeq \partial_\sqcup I^e\times S^{8k-12},\qquad C(\partial_\sqcup I^e\times \textstyle\prod_v S^{|\tvec{a}(v)|})\simeq *,
\end{split} \]
and we have the following homotopy commutative diagram.
\[ \xymatrix{
  \widehat{B}_{(\Gamma,\alpha)}\cup (I^e\times \widehat{C}R_{(\Gamma,\alpha)}) \ar[d]_{\simeq} &
  \partial_\sqcup I^e\times (\textstyle\prod_v S^{|\tvec{a}(v)|}\cup \widehat{C}R_{(\Gamma,\alpha)}) \ar[d]_{\simeq} \ar[r] \ar[l]&
  C(\partial_\sqcup I^e\times \textstyle\prod_v S^{|\tvec{a}(v)|}) \ar[d]^{\simeq} \\
  I^e\times S^{8k-12} & \partial_\sqcup I^e\times S^{8k-12} \ar[r] \ar[l] & {*}\\
}\]
Taking the homotopy pushouts of the two rows, we get the homotopy equivalence \[M_{(\Gamma,\alpha)}\simeq C_*\Sigma^{e-1} S^{8k-12}\simeq D^{8k-12+e},\] where $C_*$ is the reduced cone.

(2) We prove this in two steps. The first step is to prove that $\widehat{\omega}_{(\Gamma,\alpha)}$ extends over $I^e\times \widehat{C}R_{(\Gamma,\alpha)}$. 
We choose a spanning tree $\Gamma_0$ of $\Gamma$. There is an increasing sequence $I_1\subset I_2\subset\cdots\subset I_r=\Gamma_0$, $r=|V(\Gamma)|$, of connected subtrees of $\Gamma_0$ such that 
\begin{itemize}
\item $I_1$ consists of one vertex.
\item For each $i\leq r-1$, $I_{i+1}$ is a connected tree obtained from $I_i$ by adding an edge. 
\end{itemize}
Let $\calV=\{V_1,V_2,\ldots,V_r\}$ be the set of the disjoint handlebodies in $D^{2k}$ used to define $\widehat{\omega}_{(\Gamma,\alpha)}$ (in \S\ref{ss:gluing-B}).
Let $J_i$ be the subset of $\calV$ corresponding to the set of vertices of $I_i$. Then the surgeries on the handlebodies in $J_i$ give a family of string links of several cubes in $I^{2k}$. Namely, the iterated surgery for $J_i$ can be merged into a surgery on a single handlebody, some leaf of which may be linked each other. The surgery on the bigger handlebody corresponds to a family of string link. We have a system of Brunnian null-isotopies for the family of string links for $J_r$ that is inductively constructed by iterating Lemma~\ref{lem:compos-brun} for the extensions $J_1\to J_2\to \cdots\to J_r$. The existence of a system of Brunnian null-isotopies for the family over $\widehat{B}_{(\Gamma,\alpha)}$ of string links for $J_r$ implies that the family extends over $I^e\times \widehat{C}R_{(\Gamma,\alpha)}$ as in Lemma~\ref{lem:brun-cone}. 
Thus we have an extension $\widehat{B}_{(\Gamma,\alpha)}\cup (I^e\times\widehat{C}R_{(\Gamma,\alpha)})\to B\Diff_\partial(D^{2k})$ of $\widehat{\omega}_{(\Gamma,\alpha)}$.

The next step is to extend the map obtained in the previous paragraph over the rest of $M_{(\Gamma,\alpha)}$, namely, over the mapping cylinder of $\partial_\sqcup I^e\times (\textstyle\prod_v S^{|\tvec{a}(v)|}\cup \widehat{C}R_{(\Gamma,\alpha)})\to C(\partial_\sqcup I^e\times \textstyle\prod_v S^{|\tvec{a}(v)|})$. The extension is obtained since we may assume that the restriction of the maps $\widehat{\omega}_{T(v)}\colon \widehat{B}_{T(v)}\to \Emb^\fr_\partial((I^k)^{\cup p_v}\cup (I^{k-1})^{\cup q_v},I^{2k})$ for the vertices $v$ in $\Gamma$ of valence $\geq 4$ to $\partial_\sqcup I^{\ell(v)-3}\times S^{|\tvec{a}|}$ is a constant map. Then by the Brunnian property of the brackets for $T(v)$, the map $\widehat{\omega}_{(\Gamma,\alpha)}$ can be deformed by a homotopy to that such that the restriction of $\widehat{\omega}_{(\Gamma,\alpha)}$ to $\partial_\sqcup I^e\times \prod_v S^{|\tvec{a}(v)|}$ is a constant map. 
\end{proof}

We would like to glue the maps $\ve_{(\Gamma,\alpha)}\colon M_{(\Gamma,\alpha)}\to B\Diff_\partial(D^{2k})$ together, instead of the maps $\widehat{\omega}_{(\Gamma,\alpha)}\colon \widehat{B}_{(\Gamma,\alpha)}\to B\Diff_\partial(D^{2k})$, to get a cycle in $B\Diff_\partial(D^{2k})$ homologous to $\overline{\omega}_{5\vec{\gamma}}$. To consider the resulting map $K_{5\vec{\gamma}}\to B\Diff_\partial(D^{2k})$ as a cycle, we will need a fundamental class on $K_{5\vec{\gamma}}$ defined by the sum of all the (oriented) $(8k-10)$-cells in $K_{5\vec{\gamma}}$ oriented by the rule in \S\ref{ss:ori-chain}. In order that the sum of the $(8k-10)$-cells gives a cycle on $K_{5\vec{\gamma}}$, we need to modify $M_{(\Gamma,\alpha)}$ further since there may be subsets of the boundary of each $(8k-10)$-dimensional piece $M_{(\Gamma,\alpha)}$ that is not cancelled by other pieces from different graphs. For convenience, we define $\partial M_{(\Gamma,\alpha)}$ by the homotopy pushout of the following diagram.
\[ \xymatrix{
  (\{1\}\times \partial I^{e-1})\times (\textstyle\prod_v S^{|\tvec{a}(v)|}\cup \widehat{C}R_{(\Gamma,\alpha)}) \ar[r] \ar[d] & 
  C((\{1\}\times \partial I^{e-1})\times\textstyle\prod_v S^{|\tvec{a}(v)|})\\
  (\{1\}\times I^{e-1})\times (\textstyle\prod_v S^{|\tvec{a}(v)|}\cup \widehat{C}R_{(\Gamma,\alpha)}) & 
}
\]
The restriction of $\ve_{(\Gamma,\alpha)}$ to $\partial M_{(\Gamma,\alpha)}$ represents the boundary of $\ve_{(\Gamma,\alpha)}$ as a chain.

The bare subset of the boundary of $M_{(\Gamma,\alpha)}$ for an excess 2 graph $(\Gamma,\alpha)$ is the complement of a regular neighborhood of the following subset. 
\begin{itemize}
\item For the excess 2 graph $(X,\alpha_X)$, the pieces of the lower excess graphs are glued to disks in a regular neighborhood of an embedded Lie-hedron $L_4$ in $\partial M_{(X,\alpha_X)}$. 
\item For the excess 2 graph $(Y,\alpha_Y)$, the pieces of the lower excess graphs are glued to disks in a regular neighborhood of an embedded complete bipartite graph $K_{3,3}$ in $\partial M_{(Y,\alpha_Y)}$ (as in Figure~\ref{fig:L5}.)
\end{itemize}
In any case, there is an embedded graph $G$ in $\partial M_{(\Gamma,\alpha)}$ such that the restriction of $\ve_{(\Gamma,\alpha)}$ to the complement $\partial M_{(\Gamma,\alpha)}-N_G$ of an open regular neighborhood $N_G$ of $G$ in $\partial M_{(\Gamma,\alpha)}$ is null-homotopic. Thus we get the following.
\begin{Lem}\label{lem:N_G}
Let $(\Gamma,\alpha)=(X,\alpha_X)$ or $(Y,\alpha_Y)$. Let $G$ be the embedded graph in $\partial M_{(\Gamma,\alpha)}$ as above.
The map $\ve_{(\Gamma,\alpha)}\colon M_{(\Gamma,\alpha)}\to B\Diff_\partial(D^{2k})$ has an extension $\widehat{\ve}_{(\Gamma,\alpha)}\colon \widehat{M}_{(\Gamma,\alpha)}\to B\Diff_\partial(D^{2k})$,
where 
\[ \widehat{M}_{(\Gamma,\alpha)}=M_{(\Gamma,\alpha)}\cup C(\partial M_{(\Gamma,\alpha)}-N_G)\simeq M_{(\Gamma,\alpha)}/(\partial M_{(\Gamma,\alpha)}-N_G). \]
\end{Lem}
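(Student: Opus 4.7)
The plan is to exhibit an embedded $1$-complex $G \subset \partial M_{(\Gamma,\alpha)}$ built from the poset of directed-tree expansions at the high-valent vertex (or vertices) of $(\Gamma,\alpha)$, then to prove that $\ve_{(\Gamma,\alpha)}|_{\partial M_{(\Gamma,\alpha)} - N_G}$ is null-homotopic in $B\Diff_\partial(D^{2k})$, and finally to extend $\ve_{(\Gamma,\alpha)}$ by coning off this null-homotopy.

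First I would identify $G$ explicitly. For $(X,\alpha_X)$ the unique vertex of valence $\geq 4$ is the $5$-valent hub, and the codimension $1,2,3$ strata of $\overline{B}_{T_5(p,q)}$ in the boundary of the excess-$2$ chain $\overline{\beta}_{T_5(p,q)}$ are indexed respectively by the $10$ vertices, $15$ edges, and $10$ triangular $2$-cells of the Lie-hedron $L_4$ of \S\ref{ss:lie-hedron}. Piecewise linearly embedding $L_4$ into the parameter boundary gives a graph $G \subset \partial M_{(X,\alpha_X)}$ whose regular neighborhood $N_G$ contains, by construction (cf.\ Step~1 of \S\ref{ss:25T} and \S\ref{ss:gluing-B}), every disk along which a lower-excess face $\overline{\omega}_{(\Gamma',\alpha')}$ of $\widehat{\omega}_{(X,\alpha_X)}$ is glued to neighboring graphs in $\calI_{5\vec{\gamma}}$. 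For $(Y,\alpha_Y)$, which has two vertices of valence $4$, the product of the two IHX-expansions yields a bipartite gluing pattern whose $6$ vertices correspond to the six excess-$1$ neighbors and whose $9$ edges correspond to the nine excess-$0$ neighbors, realized by an embedded $K_{3,3}$.

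Next I would prove that $\ve_{(\Gamma,\alpha)}$ restricted to $\partial M_{(\Gamma,\alpha)} - N_G$ is null-homotopic. Unpacking the pushout defining $\partial M_{(\Gamma,\alpha)}$, the part of $\partial M_{(\Gamma,\alpha)}$ lying outside $N_G$ is assembled from (a) the restriction of $\widehat{\omega}_{(\Gamma,\alpha)}$ to regions of $\{1\} \times I^{e-1} \times \prod_v S^{|\tvec{a}(v)|}$ disjoint from the codimension $\geq 1$ boundary faces associated with lower-excess trees, (b) pieces of the thickened cone $(\{1\} \times I^{e-1}) \times \widehat{C}R_{(\Gamma,\alpha)}$ coming from Brunnian null-isotopies, and (c) the mapping cylinder piece $C(\{1\} \times \partial I^{e-1} \times \prod_v S^{|\tvec{a}(v)|})$, which is already contractible. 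Away from $N_G$, the excess $\ell-3$ cells of the boundary cannot be filled by lower excess chains; instead, I would use Lemmas~\ref{lem:brun-cone} and~\ref{lem:E}(2), together with the composition of Brunnian null-isotopies of Lemma~\ref{lem:compos-brun}, to show that the family of string links underlying the restriction is Brunnian with respect to removing at least one component everywhere on $\partial M_{(\Gamma,\alpha)} - N_G$, hence null-homotopic after passing to $B\Diff_\partial(D^{2k})$ via the complement map $c$ of \S\ref{ss:v-surgery}.

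Finally, given such a null-homotopy $H \colon C(\partial M_{(\Gamma,\alpha)} - N_G) \to B\Diff_\partial(D^{2k})$, I would define
\[
\widehat{\ve}_{(\Gamma,\alpha)} = \ve_{(\Gamma,\alpha)} \cup H \colon \widehat{M}_{(\Gamma,\alpha)} = M_{(\Gamma,\alpha)} \cup C(\partial M_{(\Gamma,\alpha)} - N_G) \longrightarrow B\Diff_\partial(D^{2k}),
\]
which agrees on the common subspace $\partial M_{(\Gamma,\alpha)} - N_G$ and hence extends $\ve_{(\Gamma,\alpha)}$. The main obstacle will be the second paragraph: while each individual component-removal is Brunnian-null by Theorems~\ref{thm:vertex} and~\ref{thm:l-valent-general}, stitching the resulting local null-homotopies into a single coherent null-homotopy over all of $\partial M_{(\Gamma,\alpha)} - N_G$ requires invoking the homotopy-coherence machinery of \S\ref{ss:compatible-higher}, in particular Lemma~\ref{lem:hcoherence} and Corollary~\ref{cor:hocolim-hcomm}, to patch over higher-codimension strata without introducing new obstructions.
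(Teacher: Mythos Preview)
Your identification of $G$ is essentially correct—$L_4$ for $(X,\alpha_X)$ and $K_{3,3}$ for $(Y,\alpha_Y)$—although note that $L_4$ is one-dimensional: it has $10$ vertices and $15$ edges and no $2$-cells (the $15$ edges already index the excess-$0$ trees with five leaves). Your final coning step is also exactly what the paper does.

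The gap is in your second step. You argue that on $\partial M_{(\Gamma,\alpha)}-N_G$ the underlying family of string links ``is Brunnian with respect to removing at least one component \ldots\ hence null-homotopic after passing to $B\Diff_\partial(D^{2k})$.'' But the Brunnian property does not imply null-homotopy: the Borromean string link is Brunnian, and its image under graph surgery is precisely how one produces \emph{nontrivial} elements of $B\Diff_\partial(D^{2k})$. So the implication fails as stated, and invoking the coherence machinery of \S\ref{ss:compatible-higher} does not repair it.

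The paper's argument is shorter and avoids this. By the cube decomposition of the boundary of the basic bracket (Lemma~\ref{lem:5-cube-decomp} and its analogue for the two $4$-valent vertices of $Y$), the high-valent vertex surgery is literally the constant family at the standard inclusion on the complement of the thickened Lie-hedron (respectively $K_{3,3}$). On that region the factor $\widehat{\omega}_{T(v)}$ for each vertex $v$ of valence $\geq 4$ is the basepoint, and the same mechanism already used in the second paragraph of the proof of Lemma~\ref{lem:E}(2)—namely, that once one vertex surgery is trivial the Brunnian null-isotopies of the \emph{adjacent} $\Psi$-surgeries collapse the entire product—gives the null-homotopy of $\ve_{(\Gamma,\alpha)}|_{\partial M_{(\Gamma,\alpha)}-N_G}$ directly. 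The point is not that the family is Brunnian, but that one vertex factor is already constant; the Brunnian property is then applied to the neighbouring vertices, not to the full link.
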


For a morphism $(\Gamma,\alpha)\to (\Gamma',\alpha')$ in $\calI_{5\vec{\gamma}}$ that corresponds to contracting an edge $(v_1,v_2)$ of $\Gamma$ to a vertex $v$ of $\Gamma'$, a natural map $\widehat{M}_{(\Gamma,\alpha)}\to \widehat{M}_{(\Gamma',\alpha')}$ is induced from the collapsing maps $\widehat{B}_{T(v_1)}\times \widehat{B}_{T(v_2)}\to \widehat{B}_{T(v)}$ and their induced map $R_{(\Gamma,\alpha)}\to R_{(\Gamma',\alpha')}$. Note that the extension $\widehat{\ve}_{(\Gamma,\alpha)}$ depends on the choice of $\Gamma_0$ and the sequence $\{I_i\}$ in the proof of Lemma~\ref{lem:E}(2). We see that different choices give homotopically compatible extensions.

\begin{Lem}\label{lem:K}
The diagram
$\widehat{\ve}_\bullet\colon \widehat{M}_\bullet \to B\Diff_\partial(D^{2k})$
parametrized by $\calI_{5\vec{\gamma}}$ is homotopy coherent. Hence we have a natural extension
\[ \hocolim{(\Gamma,\alpha)\in\calI_{5\vec{\gamma}}}{\frac{\mu_{5\vec{\gamma}}}{m_\Gamma}n_\Gamma\widehat{\ve}_{(\Gamma,\alpha)}}\colon \hocolim{(\Gamma,\alpha)\in\calI_{5\vec{\gamma}}}{\frac{\mu_{5\vec{\gamma}}}{m_\Gamma}n_\Gamma\widehat{M}_{(\Gamma,\alpha)}}\to B\Diff_\partial(D^{2k}), \]
where $n_\Gamma=2^{\text{excess}(\Gamma)}$. 
We define $K_{5\vec{\gamma}}:=\hocolim{(\Gamma,\alpha)\in\calI_{5\vec{\gamma}}}{\frac{\mu_{5\vec{\gamma}}}{m_\Gamma}n_\Gamma\widehat{M}_{(\Gamma,\alpha)}}$ and $\kappa_{5\vec{\gamma}}:=\hocolim{(\Gamma,\alpha)\in\calI_{5\vec{\gamma}}}{\frac{\mu_{5\vec{\gamma}}}{m_\Gamma}n_\Gamma\widehat{\ve}_{(\Gamma,\alpha)}}$.
\end{Lem}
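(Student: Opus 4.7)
The plan is to prove homotopy coherence by induction on the codimension of morphisms in $\calI_{5\vec{\gamma}}$, i.e., on the number of edges contracted. The strategy is an extension of the argument in Lemma~\ref{lem:hcoherence} from the vertex-wise setting to the graph-wise setting, using that the whole construction has been made compatible with face-graphs at every previous stage (Theorem~\ref{thm:l-valent-general}, Theorem~\ref{thm:graph-surgery}, and the inductive composition of Brunnian null-isotopies in Lemma~\ref{lem:compos-brun}).

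First I would establish coherence along codimension $1$ morphisms. Let $(\Gamma,\alpha)\to (\Gamma',\alpha')$ be a contraction of a single internal edge $e=(v_1,v_2)$ to a new vertex $v$ of $\Gamma'$. The building block is Lemma~\ref{lem:codim1-coherence}, which supplies a homotopy-commutative triangle relating $\widehat{B}_{T(v_1)}\times \widehat{B}_{T(v_2)}$, $\widehat{B}_{T(v)}$, and $\Emb^{\fr}_\partial((I^k)^{\cup p}\cup (I^{k-1})^{\cup q},I^{2k})$. Taking the product with the identity on the remaining $\widehat{B}_{T(w)}$, $w\notin\{v_1,v_2\}$, and postcomposing with $c$ and $\mathrm{ext}$, one gets homotopy commutativity at the level of $\widehat{\omega}_{(\Gamma,\alpha)}$ and $\widehat{\omega}_{(\Gamma',\alpha')}$. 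Extending this commutativity over $I^e\times\widehat{C}R_{(\Gamma,\alpha)}$ uses the fact that the system of Brunnian null-isotopies defining the extension is built up via the spanning-tree sequence $\{I_i\}$ by iterating Lemma~\ref{lem:compos-brun}; by Lemma~\ref{lem:switch-G1-G2} and the compatibility of compositions under edge contraction, any two such choices (for $(\Gamma,\alpha)$ and $(\Gamma',\alpha')$) are related by a canonical homotopy, giving the codimension $1$ compatibility at the level of $M_{(\Gamma,\alpha)}\to M_{(\Gamma',\alpha')}$. The further extension to $\widehat{M}_{(\Gamma,\alpha)}$, which cones off the bare subset $\partial M_{(\Gamma,\alpha)}-N_G$, is compatible because an edge contraction corresponds geometrically to collapsing a top cell of the embedded Lie-hedron $L_4$ (for $(X,\alpha_X)$) or of the embedded $K_{3,3}$ (for $(Y,\alpha_Y)$) onto a lower face, and the null-homotopies of the bare subset are governed by the same cubical system of Brunnian null-isotopies.

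For morphisms of codimension $\geq 2$ in $\calI_{5\vec{\gamma}}$, the argument proceeds exactly as in the codimension $\geq 2$ part of Lemma~\ref{lem:hcoherence}: a sequence of successive contractions produces a cubical subdiagram of $\widehat{M}_{(-)}$, and the higher homotopies required for coherence are obtained by extending the two candidate null-homotopies from the boundary of a disk to the interior via a thickened cone $X\cup \widehat{C}Q\to B\Diff_\partial(D^{2k})$, where $Q$ is the relevant fat wedge. The thickened cone structure is already built into $M_{(\Gamma,\alpha)}$ through the homotopy pushout defining it, so no further input is needed beyond what is encoded in the system of Brunnian null-isotopies. Once homotopy coherence is established, the homotopy colimit interpretation (e.g.\ \cite[Remark~9.6, Proposition~9.7]{Du}) yields the canonical extension $\kappa_{5\vec{\gamma}}\colon K_{5\vec{\gamma}}\to B\Diff_\partial(D^{2k})$.

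The main obstacle will be the last paragraph of the codimension $1$ step: the extension $\widehat{\ve}_{(\Gamma,\alpha)}$ provided by Lemma~\ref{lem:N_G} depends on a choice of spanning tree $\Gamma_0$, the filtration $\{I_i\}$, and a null-homotopy of the restriction to $\partial M_{(\Gamma,\alpha)}-N_G$; to run the inductive step one must argue that all such choices can be made simultaneously coherent as $(\Gamma,\alpha)$ ranges over $\calI_{5\vec{\gamma}}$. The key observation making this work is that all these null-homotopies factor through the same cubical diagram $\{\calB(S)\}_S$ of Brunnian null-isotopies assembled from the vertex brackets, so any two choices differ by a null-homotopy parametrized by a contractible parameter space, and this higher homotopy is precisely what is absorbed by the thickened cones in $\widehat{M}_{(\Gamma,\alpha)}$ and by the homotopy colimit construction.
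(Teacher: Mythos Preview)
Your overall structure is right, and the reduction to the codimension~1 step via Lemma~\ref{lem:codim1-coherence} followed by an inductive higher-coherence argument is exactly the shape of the paper's proof. But the mechanism you invoke to resolve the ``main obstacle'' you correctly identify --- the dependence of $\widehat{\ve}_{(\Gamma,\alpha)}$ on the spanning tree $\Gamma_0$ and the filtration $\{I_i\}$ --- is different from the paper's, and your version of it is not fully justified.

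The paper does \emph{not} argue that the different choices of Brunnian null-isotopy systems are related by a homotopy ``parametrized by a contractible parameter space.'' Instead it invokes a purely homotopy-theoretic fact: if $P$ is a product of spheres and $Q\subset P$ is the fat wedge, then the inclusion $\Sigma Q\to \Sigma P$ splits with cofiber $\Sigma(P/Q)$ (this is the stable splitting of \cite{BBCG}). Hence in the exact sequence $[\Sigma Q,Z]\to [P/Q,Z]\to [P,Z]$ the first map is trivial, so any extension of a fixed map $P\to Z$ to $P\cup CQ\to Z$ is unique up to homotopy. Applied with $P=\widehat{B}_{(\Gamma,\alpha)}$ and $Q=R_{(\Gamma,\alpha)}$, this immediately reconciles the extensions built from different spanning trees, without tracking the Brunnian data at all. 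The paper in fact remarks in a footnote that your route --- relating the two spanning-tree choices by a sequence of local moves and following the Brunnian null-isotopies through them --- should also work, but it does not carry this out; your assertion that the resulting discrepancy lives over a contractible space would need a real argument.

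Similarly, for the codimension~$\geq 2$ step the paper does not simply repeat the thickened-cone argument of Lemma~\ref{lem:hcoherence}. It uses the fibration sequence
\[
\Map_*(\Sigma P,Z)\to \Map_*(\Sigma Q,Z)\to \Map_*(P/Q,Z)\to \Map_*(P,Z)
\]
and the injectivity of $\pi_1(\Map_*(P/Q,Z))\to \pi_1(\Map_*(P,Z))$ (again from the splitting) to conclude that any $1$-parameter family of extensions $P\cup CQ\to Z$ lying over a single map $P\to Z$ is null-homotopic in $\Map_*(P/Q,Z)$. This is what fills the square of homotopies, and it is a genuinely new input beyond Lemma~\ref{lem:hcoherence}. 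Your proposal should incorporate this splitting argument (or else make precise and prove the contractibility claim you are relying on).
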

\begin{proof}
First, we consider the homotopy commutativity of the following diagram for a morphism $(\Gamma,\alpha)\to (\Gamma',\alpha')$ in $\calI_{5\vec{\gamma}}$.
\begin{equation}\label{eq:hcommutative-E}
\vcenter{
 \xymatrix{
  \widehat{M}_{(\Gamma,\alpha)} \ar[rrd]^-{\widehat{\ve}_{(\Gamma,\alpha)}} \ar[d] & &\\
  \widehat{M}_{(\Gamma',\alpha')} \ar[rr]_-{\widehat{\ve}_{(\Gamma',\alpha')}} & & B\Diff_\partial(D^{2k})
} 
}
\end{equation}
We know that there is a homotopy between the restrictions of the chains to $\widehat{B}_{(\Gamma,\alpha)}$ and $\widehat{B}_{(\Gamma',\alpha')}$ by Lemma~\ref{lem:codim1-coherence}. We also know that for a choice of the spanning tree $\Gamma_0$ of $\Gamma$ and of the extension $\widehat{\ve}_{(\Gamma,\alpha)}$ over the thickened cone $\widehat{C}R_{(\Gamma,\alpha)}$, the diagram (\ref{eq:hcommutative-E}) is homotopy commutative. Namely, if we choose the spanning trees $\Gamma_0$ and $\Gamma_0'$ of $\Gamma$ and $\Gamma'$, respectively, compatible with respect to the contraction of an edge in $\Gamma\to \Gamma'$, and if we choose the systems of Brunnian null-isotopies for $\Gamma_0$ and $\Gamma_0'$ compatibly, then the extensions $\widehat{\ve}_{(\Gamma,\alpha)}$ and $\widehat{\ve}_{(\Gamma',\alpha')}$ over the thickened cones $\widehat{C}R_{(\Gamma,\alpha)}$ and $\widehat{C}R_{(\Gamma',\alpha')}$ are homotopically compatible. For the general choices of the spanning trees $\Gamma_0$ and $\Gamma_0'$ of $\Gamma$ and $\Gamma'$, respectively, which may not be compatible with the edge contraction, we recall the following general fact: if $P$ is a product of spheres of dimensions $\geq 1$, and if $Q\subset P$ is the fat wedge of the spheres, the map $\Sigma Q\to \Sigma P$ induced by the inclusion splits with cofiber $\Sigma(P/Q)$ (\cite[p.1662]{BBCG}). It follows that the first map in the exact sequence $[\Sigma Q,Z]\to [P/Q,Z]\to [P,Z]$ is trivial. Hence an extension of a map $P\to Z$ to $P\cup CQ\to Z$ is unique up to homotopy\footnote{The homotopy between two choices of extensions could be constructed by using the Brunnian property. Namely, the null-homotopy over $Q$ along the spanning tree $\Gamma_0$ is constructed by the thickened cone which is obtained from the compositions of the systems of Brunnian null-isotopies of intermediate $\Psi$-graphs. Two spanning trees can be related by a sequence of local modifications in the graph $\Gamma$, each of which gives rise to a homotopy between thickened cones induced by the system of Brunnian null-isotopies.}. We apply this fact to $P=\widehat{B}_{(\Gamma,\alpha)}$ and $Q=R_{(\Gamma,\alpha)}$ to obtain the homotopy commutativity of the diagram (\ref{eq:hcommutative-E}). 

We next consider the homotopy coherence of the following diagram.
\[ \xymatrix{
  \widehat{M}_{(\Gamma,\alpha)} \ar[r] \ar[dd] \ar[rrd]|(.465)\hole  & \widehat{M}_{(\Gamma_1,\alpha_1)} \ar[dd] \ar[rd] & \\
  & & B\Diff_\partial(D^{2k})\\
  \widehat{M}_{(\Gamma_2,\alpha_2)} \ar[r] \ar[rru]|(.465)\hole& \widehat{M}_{(\Gamma_3,\alpha_3)} \ar[ru] & 
} \]
We know that at each arrow in this square the homotopy commutativity of (\ref{eq:hcommutative-E}) holds. We consider whether there is a homotopy between the two coherences along the two paths in the above square from $\widehat{M}_{(\Gamma,\alpha)}$ to $\widehat{M}_{(\Gamma_3,\alpha_3)}$. In this case we consider the fibration sequence
\[ \Map_*(\Sigma P,Z)\to \Map_*(\Sigma Q,Z)\to \Map_*(P/Q,Z)\to \Map_*(P,Z) \]
of pointed mapping spaces for the spaces $P,Q$ in the previous paragraph. Since the homomorphism $\pi_1(\Map_*(P/Q,Z))\to \pi_1(\Map_*(P,Z))$ is injective, the 1-parameter family of extensions $P/Q\simeq P\cup CQ\to Z$ of a single map $P\to Z$ is null-homotopic in the space $\Map_*(P/Q,Z)$. We apply this fact to $P=\widehat{B}_{(\Gamma,\alpha)}$ and $Q=R_{(\Gamma,\alpha)}$ to obtain the desired homotopy coherence. 
\end{proof}

\subsection{Homology of $\widehat{M}_{(\Gamma,\alpha)}$}

\begin{Lem}\label{lem:H(E)}
Let $(\Gamma,\alpha)=(X,\alpha_X)$ or $(Y,\alpha_Y)$. Let $G$ be as in Lemma~\ref{lem:N_G}. Then we have
\[ H_*(m\widehat{M}_{(\Gamma,\alpha)};\Q)\cong\left\{\begin{array}{ll}
\Q^{m-1} & (*=8k-10),\\
\Q^{b_1(G)} & (*=8k-12),\\
0 & (1\leq *\leq 8k-13,*=8k-11),\\
\Q & (*=0).
\end{array}\right. \] 
\end{Lem}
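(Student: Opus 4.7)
The plan is to reduce the homology of $m\widehat{M}_{(\Gamma,\alpha)}$ to two inputs: the homotopy type of a single $\widehat{M}_{(\Gamma,\alpha)}$ (obtained via Alexander duality) and a long exact sequence that turns the multiplex gluing into a quotient by a wedge of $(8k-10)$-spheres.

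First I would handle the case $m=1$. By Lemma~\ref{lem:N_G} we have $\widehat{M}_{(\Gamma,\alpha)}\simeq M_{(\Gamma,\alpha)}/(\partial M_{(\Gamma,\alpha)}-N_G)$, and by Lemma~\ref{lem:E}(1) with excess $e=2$ we have $M_{(\Gamma,\alpha)}\simeq D^{8k-10}$. Since $M_{(\Gamma,\alpha)}$ is contractible the cofibration $\partial M-N_G\hookrightarrow M\to \widehat{M}$ gives $\widehat{M}\simeq \Sigma(\partial M-N_G)\simeq \Sigma(S^{8k-11}-G)$. For $(X,\alpha_X)$ the graph $G=L_4$ has $b_1(G)=6$ and for $(Y,\alpha_Y)$ the graph $G=K_{3,3}$ has $b_1(G)=4$; in either case $G$ is a connected $1$-complex sitting in $S^{8k-11}$ in codimension at least three. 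Alexander duality then gives $\widetilde{H}_i(S^{8k-11}-G)\cong \widetilde{H}^{8k-12-i}(G)$, nonzero only at $i=8k-13$ where it equals $\Q^{b_1(G)}$. Because the complement is simply connected (codimension $\geq 3$), it is homotopy equivalent to $\bigvee^{b_1(G)} S^{8k-13}$, so $\widehat{M}_{(\Gamma,\alpha)}\simeq \bigvee^{b_1(G)} S^{8k-12}$. This verifies the $m=1$ case of the lemma directly.

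Next I would compute $H_*(\partial\widehat{M}_{(\Gamma,\alpha)})$, treating $\partial\widehat{M}$ as the codimension-zero ``collar'' of $\widehat{M}$ that is actually identified across the $m$ copies in the multiplex; under this identification $\widehat{M}/\partial\widehat{M}\simeq S^{8k-10}$, representing the top relative cell inherited from $M/\partial M$. Applying the long exact sequence of the pair $(\widehat{M},\partial\widehat{M})$ and using the computation of $H_*(\widehat{M})$ above together with $H_*(\widehat{M},\partial\widehat{M})=H_*(S^{8k-10})$, one reads off $H_0(\partial\widehat{M})=\Q$, $H_{8k-11}(\partial\widehat{M})=\Q$ (generated by the image of the fundamental class of $S^{8k-10}$ under the connecting map), $H_{8k-12}(\partial\widehat{M})=\Q^{b_1(G)}$ (mapping isomorphically to $H_{8k-12}(\widehat{M})$), and trivial homology in all other positive degrees.

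With this in hand the multiplex computation is routine. I would apply the cofibration $\partial\widehat{M}\hookrightarrow m\widehat{M}\to m\widehat{M}/\partial\widehat{M}=\bigvee^{m} S^{8k-10}$. In degree $8k-10$ the LES reads
\[
0\longrightarrow H_{8k-10}(m\widehat{M})\longrightarrow \Q^{m}\stackrel{\delta}{\longrightarrow}\Q\longrightarrow H_{8k-11}(m\widehat{M})\longrightarrow 0,
\]
and the connecting map $\delta$ sends each summand to $[\partial M]\in H_{8k-11}(\partial\widehat{M})$, hence is the fold map; its kernel $\Q^{m-1}$ gives $H_{8k-10}(m\widehat{M})=\Q^{m-1}$ and $H_{8k-11}(m\widehat{M})=0$. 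In degree $8k-12$ the neighbouring terms $H_{8k-12}(\bigvee^{m}S^{8k-10})$ and $H_{8k-11}(\bigvee^{m}S^{8k-10})$ both vanish, so $H_{8k-12}(\partial\widehat{M})=\Q^{b_1(G)}\to H_{8k-12}(m\widehat{M})$ is an isomorphism. All intermediate degrees vanish because both $\partial\widehat{M}$ and $\widehat{M}/\partial\widehat{M}$ have trivial reduced homology there.

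The main obstacle I anticipate is the bookkeeping for $\partial\widehat{M}$: while $\widehat{M}/\partial\widehat{M}\simeq S^{8k-10}$ is dictated by the ``top-cell'' structure inherited from $M$, verifying that $\partial\widehat{M}$ carries all of $H_{8k-12}(\widehat{M})$ (so that these classes are identified across copies in the multiplex and do not multiply) requires tracking the Alexander-dual cycles through the suspension $\widehat{M}\simeq\Sigma(S^{8k-11}-G)$ and checking that they already live on the common ``boundary'' used in the colimit defining $m\widehat{M}$. Once this compatibility is established, everything else is a mechanical LES argument.
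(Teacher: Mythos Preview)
Your proof is correct and rests on the same two facts the paper uses: Alexander duality for the graph complement in $\partial M_{(\Gamma,\alpha)}\simeq S^{8k-11}$, and $H_*(mM,\partial M;\Q)\cong H_*(D^{8k-10},\partial D^{8k-10};\Q)^{\oplus m}$. The paper organises these via a single long exact sequence of the triple $(mM_{(\Gamma,\alpha)},\partial M_{(\Gamma,\alpha)},\partial M_{(\Gamma,\alpha)}-N_G)$, after first computing $H_*(\partial M,\partial M-N_G)$ from the pair $(\partial M,\partial M-N_G)$. Your route is a repackaging: you first settle $m=1$, then compute $H_*(\partial\widehat{M})$ from the pair $(\widehat{M},\partial\widehat{M})$, and finally run the pair sequence for $(m\widehat{M},\partial\widehat{M})$. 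Since $\partial\widehat{M}=\partial M/(\partial M-N_G)$ and $m\widehat{M}/\partial\widehat{M}=mM/\partial M\simeq\bigvee^m S^{8k-10}$, your final long exact sequence \emph{is} the paper's triple sequence in disguise. Your anticipated obstacle also dissolves once you have set things up: the isomorphism $H_{8k-12}(\partial\widehat{M})\to H_{8k-12}(\widehat{M})$ is forced by $H_j(\widehat{M},\partial\widehat{M})=H_j(S^{8k-10})=0$ for $j=8k-11,8k-12$, so no separate tracking of the Alexander-dual cycles through the suspension description is needed.
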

\begin{proof}
To compute the homology of $m\widehat{M}_{(\Gamma,\alpha)}\simeq mM_{(\Gamma,\alpha)}/(\partial M_{(\Gamma,\alpha)}-N_G)$, we consider the homology exact sequence for the triple $(mM_{(\Gamma,\alpha)},\partial M_{(\Gamma,\alpha)}, \partial M_{(\Gamma,\alpha)}-N_G)$:
\begin{equation}\label{eq:triple}
 \to H_*(\partial M_{(\Gamma,\alpha)},\partial M_{(\Gamma,\alpha)}-N_G;\Q)
\to H_*(mM_{(\Gamma,\alpha)},\partial M_{(\Gamma,\alpha)}-N_G;\Q)
\to H_*(mM_{(\Gamma,\alpha)},\partial M_{(\Gamma,\alpha)};\Q)
\to 
\end{equation}
By Lemma~\ref{lem:E}(1), we have $H_*(mM_{(\Gamma,\alpha)},\partial M_{(\Gamma,\alpha)};\Q)\cong H_*(D^{8k-10},\partial D^{8k-10};\Q)^{\oplus m}$. Also, by the homotopy equivalence $\partial M_{(\Gamma,\alpha)}\simeq S^{8k-11}$ and by Alexander duality, we have
\[ \widetilde{H}_*(\partial M_{(\Gamma,\alpha)}-N_G;\Q)\cong \widetilde{H}^{8k-12-*}(N_G;\Q)\cong \left\{\begin{array}{ll}
\Q^{b_1(G)} & (*=8k-13),\\
0 & (\text{otherwise}).
\end{array}\right. \]
Then it follows from the homology exact sequence for the pair $(\partial M_{(\Gamma,\alpha)}, \partial M_{(\Gamma,\alpha)}-N_G)$ that
\[ 
  H_*(\partial M_{(\Gamma,\alpha)}, \partial M_{(\Gamma,\alpha)}-N_G;\Q)\cong \left\{\begin{array}{ll}
  \Q & (*=8k-11),\\
  \Q^{b_1(G)} & (*=8k-12),\\
  0 & (\text{otherwise}).
  \end{array}\right.
\] 
Then the long exact sequence (\ref{eq:triple}) is as follows:
\[ \begin{array}{rclclcl}
  H_{8k-10}: & \qquad\longrightarrow & 0 &\longrightarrow & ? &\longrightarrow &\Q^m \\
  H_{8k-11}: & \qquad\longrightarrow &\Q &\longrightarrow & ? &\longrightarrow &0 \\
  H_{8k-12}: & \qquad\longrightarrow &\Q^{b_1(G)} &\stackrel{\cong}{\longrightarrow} & ? &\longrightarrow &0 \\
  H_{8k-13}: & \qquad\longrightarrow &0 &\stackrel{\cong}{\longrightarrow} & ? &\longrightarrow &0 \\
  \vdots\ \  & & \vdots & & \vdots & & \vdots\\
  H_0: & \qquad\longrightarrow &0 &\stackrel{\cong}{\longrightarrow} & ? &\longrightarrow &0 \\
\end{array}
\]
Since the connecting homomorphism 
\[ H_{8k-10}(mM_{(\Gamma,\alpha)},\partial M_{(\Gamma,\alpha)};\Q)\to H_{8k-11}(\partial M_{(\Gamma,\alpha)},\partial M_{(\Gamma,\alpha)}-N_G;\Q) \] is surjective, we have
\[ H_*(mM_{(\Gamma,\alpha)},\partial M_{(\Gamma,\alpha)}-N_G;\Q)\cong\left\{\begin{array}{ll}
\Q^{m-1} & (*=8k-10),\\
\Q^{b_1(G)} & (*=8k-12),\\
0 & (\text{otherwise}).
\end{array}\right.\]
\end{proof}

\begin{Lem}\label{lem:H(E)2}
Let $(\Gamma,\alpha)$ be an excess 1 graph obtained from $(X,\alpha_X)$ or $(Y,\alpha_Y)$ by one expansion. Let $G$ be a set of three distinct points in $\partial M_{(\Gamma,\alpha)}$. Then we have
\[ H_*(m\widehat{M}_{(\Gamma,\alpha)};\Q)\cong\left\{\begin{array}{ll}
\Q^{m-1} & (*=8k-11),\\
\Q^2 & (*=8k-12),\\
0 & (1\leq *\leq 8k-13,*=8k-10),\\
\Q & (*=0).
\end{array}\right. \]
\end{Lem}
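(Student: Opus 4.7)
The strategy is to run the same argument as in Lemma~\ref{lem:H(E)}, shifting all indices by one to reflect that excess-one graphs give $e=1$ in Lemma~\ref{lem:E}. First I would invoke Lemma~\ref{lem:E}(1) with $e=1$ to identify $M_{(\Gamma,\alpha)}\simeq D^{8k-11}$ and therefore $\partial M_{(\Gamma,\alpha)}\simeq S^{8k-12}$. This immediately gives $H_*(mM_{(\Gamma,\alpha)},\partial M_{(\Gamma,\alpha)};\Q)\cong H_*(D^{8k-11},S^{8k-12};\Q)^{\oplus m}$, which is $\Q^m$ in degree $8k-11$ and zero otherwise (except $\Q$ in degree $0$).

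Next, since $G$ consists of three distinct points, excision and the contractibility of each ball in $N_G$ give
\[
H_*(\partial M_{(\Gamma,\alpha)},\partial M_{(\Gamma,\alpha)}-N_G;\Q)\cong H_*(N_G,\partial N_G;\Q)\cong H_*\bigl(\tbigsqcup_3 D^{8k-12},\tbigsqcup_3 S^{8k-13};\Q\bigr),
\]
so this group is $\Q^3$ in degree $8k-12$ and zero elsewhere. Equivalently one may use Alexander duality to compute $\widetilde{H}_{8k-13}(\partial M_{(\Gamma,\alpha)}-N_G;\Q)\cong \widetilde{H}^0(G;\Q)=\Q^2$ and feed this into the long exact sequence of the pair $(\partial M_{(\Gamma,\alpha)},\partial M_{(\Gamma,\alpha)}-N_G)$; the two agree.

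Now I would plug these into the long exact sequence of the triple $(mM_{(\Gamma,\alpha)},\partial M_{(\Gamma,\alpha)},\partial M_{(\Gamma,\alpha)}-N_G)$. All terms vanish outside the segment
\[
0\longrightarrow H_{8k-11}(mM_{(\Gamma,\alpha)},\partial M_{(\Gamma,\alpha)}-N_G;\Q)\longrightarrow \Q^m\xrightarrow{\;\partial\;} \Q^3\longrightarrow H_{8k-12}(mM_{(\Gamma,\alpha)},\partial M_{(\Gamma,\alpha)}-N_G;\Q)\longrightarrow 0,
\]
so everything reduces to computing the rank of the connecting homomorphism $\partial$. The $m$ generators of $H_{8k-11}(mM_{(\Gamma,\alpha)},\partial M_{(\Gamma,\alpha)};\Q)$ are the relative fundamental classes of the $m$ copies of $M_{(\Gamma,\alpha)}$ glued along the common sphere $\partial M_{(\Gamma,\alpha)}\simeq S^{8k-12}$. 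Each such class maps under $\partial$ to $\pm$ the image of $[\partial M_{(\Gamma,\alpha)}]\in H_{8k-12}(\partial M_{(\Gamma,\alpha)};\Q)=\Q$ under $H_{8k-12}(\partial M_{(\Gamma,\alpha)};\Q)\to H_{8k-12}(\partial M_{(\Gamma,\alpha)},\partial M_{(\Gamma,\alpha)}-N_G;\Q)=\Q^3$, which is the single vector $\pm(e_1+e_2+e_3)$ (up to the orientation choices on the three balls of $N_G$). Consequently all $m$ images lie in a common rank-$1$ subspace, so $\partial$ has rank exactly $1$; taking kernel and cokernel produces $\Q^{m-1}$ in degree $8k-11$ and $\Q^2$ in degree $8k-12$, as claimed, and all other degrees are zero.

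The only nontrivial step is the rank-$1$ claim for $\partial$; once that is nailed down the rest is formal. Everything else — the identification $M_{(\Gamma,\alpha)}\simeq D^{8k-11}$, the excision calculation of $H_*(\partial M_{(\Gamma,\alpha)},\partial M_{(\Gamma,\alpha)}-N_G)$, and the triple long exact sequence — proceeds in parallel with Lemma~\ref{lem:H(E)} with no new ideas required.
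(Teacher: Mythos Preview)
Your proposal is correct and follows essentially the same route as the paper: both arguments run the long exact sequence of the triple $(mM_{(\Gamma,\alpha)},\partial M_{(\Gamma,\alpha)},\partial M_{(\Gamma,\alpha)}-N_G)$ after identifying $M_{(\Gamma,\alpha)}\simeq D^{8k-11}$ via Lemma~\ref{lem:E}(1) and computing $H_*(\partial M_{(\Gamma,\alpha)},\partial M_{(\Gamma,\alpha)}-N_G;\Q)$ (the paper via Alexander duality, you via excision, which agree). Your explicit identification of the connecting homomorphism's image as the line spanned by $(e_1+e_2+e_3)$ is a welcome elaboration of what the paper simply asserts.
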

\begin{proof}
The proof is parallel to Lemma~\ref{lem:H(E)}. Namely, $N_G$ is homotopy equivalent to a three point set, and we have $H_*(mM_{(\Gamma,\alpha)},\partial M_{(\Gamma,\alpha)};\Q) \cong H_*(D^{8k-11},\partial D^{8k-11};\Q)^{\oplus m}$, and
\[ \begin{split}
  & \widetilde{H}_*(\partial M_{(\Gamma,\alpha)}-N_G;\Q)\cong \widetilde{H}^{8k-13-*}(N_G;\Q)\cong\left\{\begin{array}{ll}
  \Q^{b_0(G)-1} & (*=8k-13),\\
  0 & (\text{otherwise}),
  \end{array}\right.\\
  & H_*(\partial M_{(\Gamma,\alpha)},\partial M_{(\Gamma,\alpha)}-N_G;\Q)\cong\left\{\begin{array}{ll}
  \Q^{b_0(G)} & (*=8k-12),\\
  0 & (\text{otherwise}).
  \end{array}\right.
\end{split} \]
Then the long exact sequence (\ref{eq:triple}) for the triple $(mM_{(\Gamma,\alpha)},\partial M_{(\Gamma,\alpha)}, \partial M_{(\Gamma,\alpha)}-N_G)$ in this setting is as follows:
\[ \begin{array}{rclclcl}
  H_{8k-11}: & \qquad\longrightarrow & 0 &\longrightarrow & ? &\longrightarrow &\Q^m \\
  H_{8k-12}: & \qquad\longrightarrow &\Q^{b_0(G)} &\longrightarrow & ? &\longrightarrow &0 \\
  H_{8k-13}: & \qquad\longrightarrow &0 &\stackrel{\cong}{\longrightarrow} & ? &\longrightarrow &0 \\
  \vdots\ \  & & \vdots & & \vdots & & \vdots\\
  H_0: & \qquad\longrightarrow &0 &\stackrel{\cong}{\longrightarrow} & ? &\longrightarrow &0 \\
\end{array}
\]
Since the connecting homomorphism $H_{8k-11}(mM_{(\Gamma,\alpha)},\partial M_{(\Gamma,\alpha)};\Q)\to H_{8k-12}(\partial M_{(\Gamma,\alpha)},\partial M_{(\Gamma,\alpha)}-N_G;\Q)$ takes $\Q^m$ onto a 1-dimensional subspace of $\Q^{b_0(G)}$, we have
\[ H_*(mM_{(\Gamma,\alpha)},\partial M_{(\Gamma,\alpha)}-N_G;\Q)\cong\left\{\begin{array}{ll}
\Q^{m-1} & (*=8k-11),\\
\Q^{b_0(G)-1} & (*=8k-12),\\
0 & (\text{otherwise}).
\end{array}\right.\]
\end{proof}

\subsection{Homology of the homotopy colimit}

We study the rational homology of $K_{5\vec{\gamma}}$ by using the spectral sequence for the homology of homotopy colimit (see e.g. \cite[Proposition~18.17]{Du}).
\begin{Thm}[{e.g. \cite[Theorem~18.3(a)]{Du}}]
Let $D\colon \calI\to \mathrm{Top}$ be a diagram of spaces.
There is a spectral sequence 
\[ E_{p,q}^2=H_p(\calI;H_q(D;\Q))\Rightarrow H_{p+q}(\hocolim{}{D};\Q).\]
The differentials have the form $d^r\colon E_{p,q}^r\to E_{p-r,q+r-1}^r$.
\end{Thm}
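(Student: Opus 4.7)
The plan is to construct the spectral sequence from the Bousfield--Kan simplicial replacement of the diagram $D$ together with its skeletal filtration. First I would realize $\hocolim{\calI}{D}$ as the geometric realization $|\mathrm{srep}(D)_\bullet|$ of the simplicial space whose $n$-th term is
\[ \mathrm{srep}(D)_n=\coprod_{i_0\to i_1\to\cdots\to i_n}D(i_0), \]
where the coproduct runs over chains of $n$ composable morphisms in $\calI$, with face operators induced by composition of arrows (and by $D(i_0\to i_1)$ on $d_0$) and degeneracies by inserting identity arrows. That this model has the correct homotopy type is standard (Bousfield--Kan, Vogt, Dugger).

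Feeding the skeletal filtration $F_p=|\mathrm{srep}(D)_\bullet^{(\leq p)}|$ into the usual homological spectral sequence of a filtered space, the cofiber sequences $F_{p-1}\hookrightarrow F_p\to F_p/F_{p-1}$ together with the homeomorphism $F_p/F_{p-1}\cong (\Delta^p/\partial\Delta^p)\wedge (\mathrm{srep}(D)_p)_+$ and a suspension isomorphism identify
\[ E^1_{p,q}\cong \widetilde{H}_{p+q}(F_p/F_{p-1};\Q)\cong \bigoplus_{i_0\to\cdots\to i_p}H_q(D(i_0);\Q), \]
with $d^1$ equal to the alternating sum of the maps on $H_q(D(-))$ induced by the face operators. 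The bidegree $d^r\colon E^r_{p,q}\to E^r_{p-r,q+r-1}$ of higher differentials is then automatic from the skeletal shape of the filtration.

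The key recognition is that this $E^1$-chain complex is precisely the standard bar complex computing the small-category homology $H_*(\calI;F)=\mathrm{Tor}_*^{\Q[\calI]}(\Q,F)$ of the functor $F\colon i\mapsto H_q(D(i);\Q)$ with values in $\Q$-modules: a basis of $E^1_{p,q}$ is indexed by chains of $p$ composable morphisms tensored with $F(i_0)$, and the $d^1$-differential is the usual bar differential. Consequently $E^2_{p,q}=H_p(\calI;H_q(D;\Q))$, as required.

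The one technical point---and the main obstacle---is strong convergence of this spectral sequence to $H_{p+q}(\hocolim{\calI}{D};\Q)$. In the paper's application $\calI=\calI_{5\vec\gamma}$ is a finite poset, the diagram $D$ takes values in finite CW-complexes, and the filtration $\{F_p\}$ is therefore bounded, so $E^\infty=E^r$ for $r$ sufficiently large and convergence is immediate. In the full generality stated one must instead combine conditional convergence with a Milnor $\lim^1$ argument for the inverse system of truncations of $|\mathrm{srep}(D)|$; this is where I would invoke \cite[\S{18}]{Du} rather than reproduce the (routine but fiddly) conditional-convergence bookkeeping.
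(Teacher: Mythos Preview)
The paper does not prove this statement; it is quoted as a known result with a citation to \cite[Theorem~18.3(a)]{Du} and used as a black box to obtain Lemma~\ref{lem:vanish-H(K)}. Your sketch is the standard Bousfield--Kan/skeletal-filtration argument one would find in Dugger's reference, and it is correct, including your observation that in the paper's application $\calI_{5\vec\gamma}$ is a finite poset with values in finite CW-complexes, so the filtration is bounded and convergence is unproblematic---indeed this is the only regime the paper needs.
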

We apply this to $K_{5\vec{\gamma}}=\hocolim{(\Gamma,\alpha)\in\calI_{5\vec{\gamma}}}{\frac{\mu_{5\vec{\gamma}}}{m_\Gamma}n_\Gamma\widehat{M}_{(\Gamma,\alpha)}}$ of Lemma~\ref{lem:K}. Since $H_*(\frac{\mu_{5\vec{\gamma}}}{m_\Gamma}n_\Gamma\widehat{M}_{(\Gamma,\alpha)};\Q)=0$ for $1\leq *\leq 8k-13$ by Lemmas~\ref{lem:H(E)} and \ref{lem:H(E)2}, and the geometric realization of $\calI_{5\vec{\gamma}}$ is 2-dimensional, we have $E_{p,q}^2=0$ for $3\leq p+q\leq 8k-13$. Hence we have the following.  
\begin{Lem}\label{lem:vanish-H(K)}
\begin{enumerate}
\item $H_i(K_{5\vec{\gamma}};\Q)=0$ for $3\leq i\leq 8k-13$. 
\item $H_{8k-10}(K_{5\vec{\gamma}};\Q)$ has a nontrivial fundamental class.
\end{enumerate}
\end{Lem}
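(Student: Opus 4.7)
The plan is to use the Bousfield--Kan type spectral sequence for the homology of a homotopy colimit,
\[
E^2_{p,q} \;=\; H_p(\calI_{5\vec{\gamma}};\, \calH_q) \;\Longrightarrow\; H_{p+q}(K_{5\vec{\gamma}};\Q),
\]
where $\calH_q$ is the local coefficient system on the nerve of $\calI_{5\vec{\gamma}}$ sending $(\Gamma,\alpha)\mapsto H_q\bigl(\tfrac{\mu_{5\vec{\gamma}}}{m_\Gamma} n_\Gamma \widehat{M}_{(\Gamma,\alpha)};\Q\bigr)$. Since every object of $\calI_{5\vec{\gamma}}$ has excess in $\{0,1,2\}$, the longest strict chain in the poset has length $3$; consequently $|\calI_{5\vec{\gamma}}|$ has dimension at most $2$ and $E^2_{p,q}=0$ for every $p\geq 3$, so the entire computation reduces to understanding $\calH_q$ in the relevant range.

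For part (1) I would verify that $\calH_q$ is identically zero for $1\leq q\leq 8k-13$. On excess $2$ and excess $1$ graphs this is exactly the middle vanishing range in Lemmas~\ref{lem:H(E)} and \ref{lem:H(E)2}. On an excess $0$ graph the parameter $e$ of Lemma~\ref{lem:E} vanishes, so $\partial M_{(\Gamma,\alpha)}=\emptyset$, the cone in the definition of $\widehat{M}_{(\Gamma,\alpha)}$ in Lemma~\ref{lem:N_G} is trivial, and $\widehat{M}_{(\Gamma,\alpha)} = M_{(\Gamma,\alpha)} \simeq S^{8k-12}$; passing to the multiplex just adds disjoint copies, so the rational homology stays concentrated in degrees $0$ and $8k-12$. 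Combining this vanishing with the dimension bound on $|\calI_{5\vec{\gamma}}|$: for $3\leq i\leq 8k-13$ any nonzero contribution $E^2_{p,q}$ with $p+q=i$ satisfies $p\leq 2$, hence $1\leq q\leq 8k-13$, forcing the term to vanish. Therefore $H_i(K_{5\vec{\gamma}};\Q)=0$ in this range.

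For part (2) I would produce the fundamental class directly on the cellular level. In the natural CW-structure on $K_{5\vec{\gamma}}$, the $(8k-10)$-dimensional cells come exactly from the top disks of the excess $2$ pieces $\tfrac{\mu_{5\vec{\gamma}}}{m_\Gamma} n_\Gamma \widehat{M}_{(\Gamma,\alpha)}$: by Lemma~\ref{lem:E}(1) each seat of the multiplex contributes a single such cell modelled on $M_{(\Gamma,\alpha)}\simeq D^{8k-10}$. Oriented according to the convention of Section~\ref{s:ind-ori}, the weighted sum of these cells gives a cellular $(8k-10)$-chain $[K_{5\vec{\gamma}}]$ whose image under $\kappa_{5\vec{\gamma}}$ is $\mu_{5\vec{\gamma}}\cdot\overline{\phi}_\gamma$. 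By the compatibility built into $\overline{\omega}_{(\Gamma,\alpha)}$ (Theorem~\ref{thm:graph-surgery}) together with the definition of the gluing maps in the homotopy colimit, the cellular boundary of $[K_{5\vec{\gamma}}]$ precisely realizes the differential on $\overrightarrow{\mathcal{GC}}$ applied to $5\vec{\gamma}$; since $\vec{\gamma}$ is a $\partial$-cycle (Lemma~\ref{lem:gamma-cycle}), these contributions cancel and $[K_{5\vec{\gamma}}]$ is a cellular cycle. Its nontriviality is then read off from the target: $\kappa_{5\vec{\gamma}\,*}[K_{5\vec{\gamma}}]$ is a nonzero rational multiple of $[\overline{\phi}_\gamma]$, and the latter is nontrivial in $H_{8k-10}(B\Diff_\partial^\fr(D^{2k});\Q)$ by Corollary~\ref{cor:nontriv}, so the source class is nontrivial as well. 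The hard part will be the bookkeeping in the boundary computation: one must carefully track how the multiplicity factors $\tfrac{\mu_{5\vec{\gamma}}}{m_\Gamma} n_\Gamma$, the collapsing maps onto the thickened cones $\widehat{C}R_{(\Gamma,\alpha)}$, and the multiplex identifications interact under the edge contractions $(\Gamma,\alpha)\to (\Gamma',\alpha')$, in order to match the cellular boundary precisely with the combinatorial differential.
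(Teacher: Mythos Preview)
Your treatment of part (1) is exactly the paper's argument: the Bousfield--Kan spectral sequence for the homotopy colimit, the bound $\dim|\calI_{5\vec{\gamma}}|\leq 2$, and the vanishing of $H_q$ of each piece for $1\leq q\leq 8k-13$. Your explicit handling of the excess~$0$ case (where $\widehat{M}_{(\Gamma,\alpha)}\simeq S^{8k-12}$) is a useful addition; the paper cites only Lemmas~\ref{lem:H(E)} and~\ref{lem:H(E)2} and leaves this case implicit.

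For part (2) the paper gives no argument beyond ``hence we have the following'', so you are doing real work, but two points need correction. First, your inventory of $(8k-10)$-cells is incomplete. In the homotopy colimit the excess~$1$ pieces contribute cylinders $\widehat{M}_{(\Gamma',\alpha')}\times\Delta^1$ of dimension $(8k-11)+1=8k-10$, and length-two chains through excess~$0$ contribute $\widehat{M}_{(\Gamma'',\alpha'')}\times\Delta^2$ of dimension $(8k-12)+2=8k-10$. The fundamental chain is the oriented sum of \emph{all} these top cells; the cylinder cells are precisely what mediate the cancellation between the boundary of an excess~$2$ seat and the excess~$1$ piece at the other end, so omitting them leaves a nonzero cellular boundary and your cycle argument does not close. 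Second, invoking Corollary~\ref{cor:nontriv} for nontriviality is an unnecessary detour through Part~II. Once the full oriented sum of $(8k-10)$-cells is seen to be a nonzero cycle, nontriviality is immediate: $K_{5\vec{\gamma}}$ has no cells in dimension $8k-9$, so nothing can bound it.
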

We also use the following result (see also Theorem~\ref{thm:KRW}).
\begin{Thm}[Kupers--Randal-Williams (\cite{KRW})]\label{thm:BDiff-connectivity}
$B\Diff_\partial(D^{2k})$ is rationally $(2k-2)$-connected.
\end{Thm}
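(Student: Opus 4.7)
The theorem is stated as a citation to \cite{KRW} and is, in fact, an immediate formal consequence of Theorem~\ref{thm:KRW} displayed earlier. The first step I would carry out is therefore a bookkeeping check: the bands $\bigcup_{n\geq 2}[2kn-4n-1,\,2kn-1]$ all sit in degrees $\geq 4k-9$, which exceeds $2k-2$ as soon as $2k\geq 8$; and the only nonzero rational homotopy groups outside the bands occur in degrees $i\equiv 2k-1\pmod{4}$ with $i\geq 2k-1$. Combining these two statements, $\pi_i(B\Diff_\partial(D^{2k}))\otimes\Q=0$ for $1\leq i\leq 2k-2$, which is precisely the rational $(2k-2)$-connectivity claim. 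For the intended application in the proof of Theorem~\ref{thm:pi} we have $k\geq 17$, so this range is comfortably satisfied.

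If instead one wished to give a standalone proof not invoking the full Kupers--Randal-Williams computation as a black box, the natural strategy proceeds in three stages. First, I would use Morlet's equivalence (or equivalently the Hatcher--Waldhausen--Weiss fibration sequence relating diffeomorphisms of the disc to stable pseudoisotopy) to identify $B\Diff_\partial(D^{2k})$, rationally and in a suitable range, with $\Omega^{2k+1}_0(\mathrm{Top}(2k)/O(2k))$. Second, I would combine Waldhausen's identification of stable pseudoisotopy with algebraic $K$-theory, together with Borel's computation of $K_*(\Z)\otimes\Q$ (concentrated in degrees $4i-1$, $i\geq 1$), to compute the stable rational homotopy. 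Third, the unstable correction would be controlled using the Galatius--Randal-Williams moduli spaces of even-dimensional manifolds and Weiss's embedding calculus to compare $B\Diff_\partial(D^{2k})$ with its stable analogue $B\Diff_\partial(D^{\infty})$.

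The main obstacle in any such direct approach is precisely the transition from the stable to the unstable range. Farrell--Hsiang's classical computation handles the stable range but only identifies the rational homotopy groups of $B\Diff_\partial(D^\infty)$; the refinement to the finite-dimensional $B\Diff_\partial(D^{2k})$ in degrees $\leq 2k-2$ is the genuinely hard step carried out in \cite{KRW} via the Weiss fiber sequence, the stable homology of Torelli groups of $W_g^{2k}=\#^g S^k\times S^k$, and careful tracking of Pontryagin--Weiss characteristic classes to rule out contributions below degree $2k-1$. Fortunately, for the purposes of this paper, it suffices to invoke Theorem~\ref{thm:KRW} as input and perform the elementary bookkeeping above.
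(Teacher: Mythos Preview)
Your proposal is correct and matches the paper's treatment: the paper does not give a proof but simply cites \cite{KRW} with the parenthetical ``see also Theorem~\ref{thm:KRW}'', which is exactly the bookkeeping reduction you carry out. Your observation that the band argument only covers $2k\geq 8$ is accurate, and as you note, the application (Theorem~\ref{thm:Q-Hurewicz}) assumes $k\geq 17$, so this is harmless.
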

\begin{Thm}\label{thm:Q-Hurewicz}
The image of the map $\kappa_{5\vec{\gamma}*}\colon H_{8k-10}(K_{5\vec{\gamma}};\Q)\to H_{8k-10}(B\Diff_\partial(D^{2k});\Q)$ induced by\\ $\kappa_{5\vec{\gamma}}\colon K_{5\vec{\gamma}}\to B\Diff_\partial(D^{2k})$ is included in the image of the rational Hurewicz map 
\[ \pi_{8k-10}(B\Diff_\partial(D^{2k}))\otimes\Q\to H_{8k-10}(B\Diff_\partial(D^{2k});\Q). \]
\end{Thm}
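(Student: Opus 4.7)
The plan is to show that every class in the image of $\kappa_{5\vec{\gamma}*}$ pairs trivially with decomposable cohomology classes in $H^{8k-10}(B\Diff_\partial(D^{2k});\Q)$; by rational homotopy theory, this is equivalent to lying in the rational Hurewicz image. Set $X=B\Diff_\partial(D^{2k})$. Since $\pi_1(X)=\pi_0(\Diff_\partial(D^{2k}))$ is finite for $2k\geq 6$, the rationalization $X_\Q$ is simply connected, and Sullivan's minimal model $(\Lambda V,d)$ of $X_\Q$ yields canonical identifications $V^n\cong\mathrm{Hom}(\pi_n(X)\otimes\Q,\Q)$ under which the dual of the linear projection $H^n(X_\Q;\Q)\to V^n$ is the rational Hurewicz map $\pi_n(X)\otimes\Q\to H_n(X;\Q)$. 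The kernel of this projection is spanned by decomposable cohomology classes, so the Hurewicz image in $H_n(X;\Q)$ is precisely the annihilator of the decomposables in $H^n(X;\Q)$ under the Kronecker pairing.

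Accepting this characterization, the proof reduces to verifying that $\kappa_{5\vec{\gamma}}^*$ annihilates every decomposable $\xi=\alpha\cup\beta\in H^{8k-10}(X;\Q)$ with $\deg\alpha,\deg\beta>0$; then $\langle\xi,\kappa_{5\vec{\gamma}*}[c]\rangle=\langle\kappa_{5\vec{\gamma}}^*\xi,[c]\rangle=0$ for every $[c]\in H_{8k-10}(K_{5\vec{\gamma}};\Q)$. By Theorem~\ref{thm:BDiff-connectivity}, $H^i(X;\Q)=0$ for $1\leq i\leq 2k-2$, so each factor of a decomposable of total degree $8k-10$ has degree in the interval $[2k-1,6k-9]$. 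By Lemma~\ref{lem:vanish-H(K)}(1) and the universal coefficient theorem, $H^i(K_{5\vec{\gamma}};\Q)=0$ for $3\leq i\leq 8k-13$; for $k\geq 17$ the interval $[2k-1,6k-9]$ lies inside $[3,8k-13]$, so $\kappa_{5\vec{\gamma}}^*\alpha=0=\kappa_{5\vec{\gamma}}^*\beta$ and consequently $\kappa_{5\vec{\gamma}}^*\xi=0$ by naturality of the cup product. This shows that the image of $\kappa_{5\vec{\gamma}*}$ lies in the annihilator of decomposables, hence in the image of the rational Hurewicz map.

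The main obstacle is the rigorous deployment of the Sullivan model characterization for $X$, which is not a priori known to be simply connected or nilpotent. This is resolved by passing to the simply connected rationalization $X_\Q$, relying on the finiteness of $\pi_1(X)$ so that $H_*(X;\Q)\cong H_*(X_\Q;\Q)$ and $\pi_*(X)\otimes\Q\cong\pi_*(X_\Q)$. Alternatively one can proceed by a direct CW-theoretic argument, constructing from $K_{5\vec{\gamma}}$ a rationally $(8k-11)$-connected complex $K'$ with an extension $\kappa'\colon K'\to X$ of $\kappa_{5\vec{\gamma}}$ such that $H_{8k-10}(K_{5\vec{\gamma}};\Q)\to H_{8k-10}(K';\Q)$ is surjective, and then applying the classical rational Hurewicz theorem to $K'$. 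Either approach hinges on the middle-range vanishing in Lemma~\ref{lem:vanish-H(K)}(1), which forces decomposable cohomology to concentrate in a range where $\kappa_{5\vec{\gamma}}^*$ vanishes automatically.
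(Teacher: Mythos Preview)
Your proof is correct and follows the same strategy as the paper: use the $(2k-2)$-connectivity of $B\Diff_\partial(D^{2k})$ together with the middle-range vanishing of Lemma~\ref{lem:vanish-H(K)} to show that $\kappa_{5\vec{\gamma}}^*$ kills all decomposable cohomology in degree $8k-10$. The paper streamlines the step identifying the Hurewicz image with the annihilator of decomposables by observing that $B\Diff_\partial(D^{2k})$ is a homotopy-associative $H$-space and invoking Milnor--Moore, which makes $H_*(B\Diff_\partial(D^{2k});\Q)$ a polynomial algebra on the rational homotopy groups and sidesteps your discussion of $\pi_1$ and Sullivan models entirely.
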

\begin{proof}
Since it is easy to see that $B\Diff_\partial(D^{2k})$ is a path-connected homotopy associative $H$-space with trivial Whitehead products $[\alpha,\beta]=[\alpha,0]+[0,\beta]=0$, we have that $H_*(B\Diff_\partial(D^{2k});\Q)$ is a graded polynomial algebra generated by its rational homotopy groups (\cite[Appendix]{MM}). 
By Theorem~\ref{thm:BDiff-connectivity}, we have that $H^{8k-10}(B\Diff_\partial(D^{2k});\Q)$ is generated by indecomposable classes and classes of the form $p\cup q$, where $p\in H^i$, $q\in H^j$, $i+j=8k-10$, and $2k-1\leq i,j\leq 6k-9$. It follows from Lemma~\ref{lem:vanish-H(K)} and $3\leq 2k-1$, $6k-9\leq 8k-13$ that $\kappa_{5\vec{\gamma}}^*(p\cup q)=\kappa_{5\vec{\gamma}}^*p\cup \kappa_{5\vec{\gamma}}^*q=0$. Hence the image of $\kappa_{5\vec{\gamma}*}$ in $H_{8k-10}(B\Diff_\partial(D^{2k});\Q)$ is detected only by indecomposable classes. This implies that the image of $\kappa_{5\vec{\gamma}*}$ is included in the image of the rational Hurewicz map. 
\end{proof}

\section{Leaf forms and orientations}\label{s:ind-ori}

We shall introduce leaf forms in a vertex surgery to prove a nondegeneracy property of the surgery and to fix an orientation convention for it. This uncovers a hidden structure in a vertex surgery that fits the graph complex.

\subsection{Half-edge orientations of the chains}\label{ss:ori-chain}

We split each edge $e$ of a directed graph into a pair of half-edges $e_{\pm}$, which are arranged so that $e$ is directed from $e_-$ to $e_+$. We consider that $\deg\,{e_+}=k-1$ and $\deg\,{e_-}=k$. Then a labelling $E(\Gamma)\to \{1,2,\ldots,|E(\Gamma)|\}$ determines the orientation
\begin{equation}\label{eq:ori-o(v)}
 \bigwedge_{e\in E(\Gamma)}(e_+\wedge e_-)=\bigwedge_{v\in V(\Gamma)}o(v) 
\end{equation}
of the $\R$-vector space spanned by the set of half-edges, 
where $o(v)$ is a wedge product of the half-edges incident to the vertex $v$ possibly with a sign $-1$ multiplied. We call $o(v)$ a {\it vertex orientation} of $v$. This gives a choice of orientations of vertex surgeries.

\begin{Exa}
Suppose $d=2k\geq 6$ and $k$ is odd. Then $k-1$ and $k$ are even and odd, respectively. We consider the decomposition of the edges into half-edges as in Figure~\ref{fig:graph_X-half-edges} for the directed graph $(X,\alpha_X)$ of (\ref{eq:graph_X_Y}).
\begin{figure}[h]
\[\fig{-15mm}{30mm}{graph_X2.pdf} \quad \longrightarrow\quad
 \fig{-15mm}{30mm}{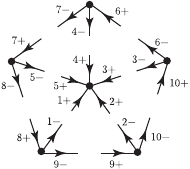}
 \]
\caption{Half-edge decomposition of $(X,\alpha_X)$.}\label{fig:graph_X-half-edges}
\end{figure}

The edge labelling from 1 to 10 gives the orientation
\[
\begin{split}
 \bigwedge_{i=1}^{10}(e_{i+}\wedge e_{i-})=&\,(e_{1+} e_{2+} e_{3+} e_{4+} e_{5+})\wedge (e_{6+}e_{4-}e_{7-})\wedge (-e_{7+}e_{5-}e_{8-})\\
& \wedge (-e_{8+}e_{1-}e_{9-})\wedge (-e_{9+}e_{2-}e_{10-})\wedge (-e_{10+}e_{3-}e_{6-}),
\end{split}
\]
where $\deg\,e_{i+}=k-1$ (even), $\deg\,{e_{i-}}=k$ (odd), and some $\wedge$ are omitted. We arrange each factor for a vertex so that the $+$ (incoming) elements are arranged before the $-$ (outgoing) elements.
\end{Exa}

\subsubsection{Normal Thom class ($\eta$-form)}\label{ss:thom}
For a topologically closed oriented smooth submanifold $A$ of an oriented manifold $N$, we denote by $\eta_A$ a closed form representative of the Thom class of the normal bundle $\nu_A$ of $A$. We identify the total space of $\nu_A$ with a small tubular neighborhood $N_A$ of $A\subset N$ and assume that $\eta_A$ has support in $N_A$. It has the useful property that the cohomology class $[\eta_A]$ is the Poincar\'{e}--Lefschetz dual of the fundamental class $[A]\in H_*(N,\partial N)$, when both $N$ and $A$ are compact. For another oriented submanifold $B$ of $N$ with $\dim\,B=\mathrm{codim}\,A$, the integral $\int_N\eta_A\wedge\eta_B=\int_B\eta_A$ gives the intersection pairing $A\cdot B$ in $N$.

\subsubsection{Orientation of the $V_i$-bundle.}\label{ss:ori-V-bundle}
A vertex orientation $o(v)$ defines an orientation of the $V_v$-bundle $\pi_{T_\ell(p,q)}\colon \widehat{E}^{T_\ell(p,q)}\to \widehat{B}_{T_\ell(p,q)}$ associated to the classifying map $\widehat{\omega}_{T_\ell(p,q)}\colon \widehat{B}_{T_\ell(p,q)}\to B\Diff_\partial^\fr(V_v)$ as follows.

We define the orientation $o(\widehat{E}^{T_\ell(p,q)})$ of $\widehat{E}^{T_\ell(p,q)}$ by the product of leaf forms defined as follows.
\begin{Def}[Leaf form]\label{def:leaf-form}
Let $b_1,\ldots,b_\ell\subset \mathrm{Int}\,V_v$, where $\ell$ is the valence of the vertex $v$, be the spheres given by the oriented leaves of the $\Psi_\ell$-graph (see Definition \ref{def:psi-graph}). Let $a_1,\ldots,a_\ell\subset \partial V_v$ be the meridian spheres of the leaves of the $\Psi_\ell$-graph oriented by the condition $\Lk(b_i,a_j)=\delta_{ij}$.
Let $\widetilde{a}_j=\widehat{B}_{T_\ell(p,q)}\times a_j$. Let $\lambda_j=\dim{b_j}$ and we consider $V_v$ as a fiber of $\widehat{E}^{T_\ell(p,q)}$. 
A {\it leaf form} $\theta_{a_j}$ for $a_j$ is a closed $\lambda_j$-form on $\widehat{E}^{T_\ell(p,q)}$ such that
\begin{enumerate}
\item $\int_{b_j}\theta_{a_m}=(-1)^{\dim{a_m}-1}\delta_{jm}$ for $(j,m)$ such that $\dim{b_j}+\dim{a_m}=2k-1$. 
\item The restriction of $\theta_{a_j}$ to $\widehat{B}_{T_\ell(p,q)}\times \partial V_v$ is $\pm\eta_{\widetilde{a}_j}$ for a choice of coorientation of $\widetilde{a}_j$.
\end{enumerate}
\end{Def}
The reason for the sign $(-1)^{\dim{a_m}-1}$ is that we have $\int_{b_j}\eta_{S(a_m)}=(-1)^{\dim{a_m}-1}\delta_{jm}$ (\cite[Lemma~4.1]{Wa21}).
It would help to recall that the Borromean string link to define a $\Psi_3$-surgery has spanning manifolds $S(a_1),S(a_2),S(a_3)\subset V_v$ that intersect in a general position with the triple intersection at one point. So we have $\int_{V_v}\eta_{S(a_1)}\wedge \eta_{S(a_2)}\wedge \eta_{S(a_3)}=\pm 1$. We will choose leaf forms that generalize the $\eta$-forms in this situation.

We choose leaf forms in $\widehat{E}^{T_\ell(p,q)}$ for all possible $\ell,p,q$ with $p+q=\ell$ in a compatible way.
\begin{Lem}\label{lem:leaf-form}
There exist leaf forms $\theta_{a_j}$ in $\widehat{E}^{T_\ell(p,q)}$ that satisfy the following conditions.
\begin{enumerate}
\item {\rm (Compatibility)} The system $\{\theta_{a_j}\}_j$ is compatible with those for the lower excess trees in the following sense. Let $(\sigma,\alpha)$ be a codimension one face tree of $T_\ell(p,q)$ that is decomposed into two trees $T'=T_{\ell_1}(p_1,q_1)$ and $T''=T_{\ell_2}(p_2,q_2)$, by cutting the internal edge at the middle point. Let $V_{T'}\tcoprod V_{T''}\subset V_v$ be the union of the handlebodies for the $\Psi$-graphs for $T'$ and $T''$, respectively. Let $\theta_{a_1}',\ldots,\theta_{a_{\ell_1}}'$ (resp. $\theta_{a_{\ell_1+1}}'',\ldots,\theta_{a_{\ell_1+\ell_2}}''$) be the leaf forms over $\widehat{B}_{T_\ell(p,q)}(\sigma,\alpha)=\widehat{B}_{T'}\times \widehat{B}_{T''}$ for the leaves of $T'$ (resp. $T''$) at the external half-edges corresponding to those of $T_\ell(p,q)$. Then $\theta_{a_j}'$ (resp. $\theta_{a_j}''$) is induced from $\theta_{a_j}$ by the natural bundle map over the map $\widehat{B}_{T_\ell(p,q)}((\sigma,\alpha)\to T_\ell(p,q))\colon \widehat{B}_{T_\ell(p,q)}(\sigma,\alpha)\to \widehat{B}_{T_\ell(p,q)}/{A_{T_\ell(p,q)}}$, which embeds a fiber $V_{T'}\tcoprod V_{T''}$ into $V_v$.
\item {\rm (Nondegeneracy)} The product $\theta_{a_1}\wedge\cdots\wedge \theta_{a_\ell}$ is a compact support volume form on $\mathrm{Int}\,\widehat{E}^{T_\ell(p,q)}$ giving nontrivial cohomology class in $H^{p(k-1)+qk}(\widehat{E}^{T_\ell(p,q)},\partial \widehat{E}^{T_\ell(p,q)};\R)$ such that
\[ \int_{\widehat{E}^{T_\ell(p,q)}}\theta_{a_1}\wedge\cdots\wedge \theta_{a_\ell}=\pm m_\ell. \]
Note that $p(k-1)+qk=\ell k-p=2k+\dim{\widehat{B}_{T_\ell(p,q)}}$.
\end{enumerate}
\end{Lem}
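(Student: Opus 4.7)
The proof proceeds by induction on the valence $\ell$, which fits neatly with the inductive construction of $\omega_{T_\ell}$ in Theorems~\ref{thm:vertex} and \ref{thm:l-valent-general}. The base case $\ell=3$ corresponds to the Borromean string link: the three meridians $a_1,a_2,a_3$ bound spanning submanifolds $S(a_1),S(a_2),S(a_3)\subset V_v$ whose triple intersection is a single transverse point, and one sets $\theta_{a_j}:=\eta_{S(a_j)}$ in the notation of \S\ref{ss:thom}. Property (1) holds by \cite[Lemma~4.1]{Wa21}, and the triple intersection gives $\int_{V_v}\theta_{a_1}\wedge\theta_{a_2}\wedge\theta_{a_3}=\pm 1=\pm m_3$, which is the nondegeneracy statement for $\ell=3$. (Explicit forms for $\ell=4$ are given in Appendix~\ref{s:leaf-form-mfd}.)

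For the inductive step, suppose leaf forms have been constructed satisfying (1) and (2) for all $T_{\ell'}(p',q')$ with $\ell'<\ell$. The compatibility condition (1) of the lemma rigidly prescribes $\theta_{a_j}$ on each codimension-one piece $\widehat{E}^{T_\ell(p,q)}|_{K(\sigma,\alpha)/A}$ for a face tree $(\sigma,\alpha)$ of excess $\ell-4$: pull back the already-constructed leaf forms along the fiberwise embedding $V_{T'}\tcoprod V_{T''}\hookrightarrow V_v$ associated to the decomposition $(\sigma,\alpha)=T'\cup T''$. Homotopy coherence of these prescriptions across higher-codimension strata is exactly the compatibility proved in Lemma~\ref{lem:hcoherence}, so the prescribed forms assemble to a well-defined closed form on a neighborhood of $\partial\widehat{E}^{T_\ell(p,q)}$, and the normalization on $\widehat{B}_{T_\ell(p,q)}\times\partial V_v$ forced by condition (2) agrees by construction with the restriction of $\pm\eta_{\widetilde{a}_j}$.

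The first main obstacle is extending these boundary forms to closed forms on the total space $\widehat{E}^{T_\ell(p,q)}$. We apply the criterion of Appendix~\ref{s:ext-closed}: the obstruction is the relative cohomology class represented by the boundary form in $H^{\lambda_j+1}(\widehat{E}^{T_\ell(p,q)},\partial\widehat{E}^{T_\ell(p,q)};\R)$. This obstruction vanishes thanks to two structural facts from Theorem~\ref{thm:vertex}: the Brunnian property (4) trivializes the restriction of each $\theta_{a_j}$ along strata obtained by removing the $j$-th component, and the $L_\infty$-relation (3), which can be read as $\partial\beta_{T_\ell}$ being a well-defined cycle, forces the sum of boundary contributions to bound in the total space. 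Choosing a primitive compatible with the framing normalization of (2) then produces the desired closed extension.

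The nondegeneracy (2) is reduced to a boundary count. Since $\theta_{a_1}\wedge\cdots\wedge\theta_{a_\ell}$ is closed and, near $\partial V_v$, is pulled back from $\prod_j\eta_{\widetilde{a}_j}$, the integral localizes on the excess $0$ strata of $\partial\widehat{E}^{T_\ell(p,q)}$ indexed by 3-valent face trees, and on each stratum the contribution equals $\pm 1$ by the inductive base case together with the compatibility (1). The count of such strata, weighted by the multiplicities arising from the iterated composition of $\omega_{T_3}$'s in Definitions~\ref{def:triple-chain}, \ref{def:quadruple}, and \ref{def:beta_ell}, is precisely $m_\ell=\ell\cdot q_\ell\cdot r_\ell\cdot\mu_\ell^\partial$. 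The key bookkeeping here is sign control: each half-edge orientation in \S\ref{ss:ori-chain} contributes a factor matching the $(-1)^{\dim a_m-1}$ in condition (1), and the symmetrization $\omega_{T_\ell}=\#_{i=1}^\ell\omega_i^{(\ell)}$ aligns all $\ell$ copies coherently; this is the most delicate part of the argument and will be verified by the same half-edge bookkeeping used in Appendix~\ref{s:sign-jacobi}.
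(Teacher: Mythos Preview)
Your inductive framework and the base case $\ell=3$ match the paper's approach. The extension step is over-engineered but salvageable: the paper's Lemma~\ref{lem:leaf-form-extension} shows that the extension of a leaf form from $\partial(I^b\times V_v)$ to $I^b\times V_v$ follows from a direct Poincar\'{e}--Lefschetz argument, since $I^b\times V_v\simeq (S^{k-1})^{\vee p}\vee(S^k)^{\vee q}$ forces $H^k(I^b\times V_v,\partial(I^b\times V_v);\R)=H^{k+1}(I^b\times V_v,\partial(I^b\times V_v);\R)=0$ for $b\geq 1$. No Brunnian or $L_\infty$ input is needed here.

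The nondegeneracy argument, however, has a genuine gap. The form $\theta_{a_1}\wedge\cdots\wedge\theta_{a_\ell}$ is a top-degree form on $\widehat{E}^{T_\ell(p,q)}$, so the integral $\int_{\widehat{E}^{T_\ell(p,q)}}\theta_{a_1}\wedge\cdots\wedge\theta_{a_\ell}$ is simply a volume; there is no mechanism by which it ``localizes on the excess~$0$ strata''. Your claimed count of 3-valent face trees weighted by multiplicities would require first expressing the volume as a boundary integral, and you have not produced a primitive for the full wedge product. The paper's route (Lemmas~\ref{lem:theta_T}--\ref{lem:int-Theta}) is quite different: fix a \emph{single} codimension-one face tree $(\sigma,\alpha)$ splitting $T_\ell(p,q)$ into $T'\cup T''$, and observe that the partial product $\theta_{a_1}'\wedge\cdots\wedge\theta_{a_{\ell_1}}'$ becomes exact once extended from the $V_{T'}$-bundle to the $V_v$-bundle (this is where the Brunnian property for the internal leaf enters, Lemma~\ref{lem:theta_T}). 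Writing $P_{T'}=dQ_{T'}$ and applying Stokes gives
\[
\int_{\widehat{E}^{T_\ell(p,q)}}P_{T'}\wedge P_{T''}=\int_{\partial\widehat{E}^{T_\ell(p,q)}}Q_{T'}\wedge(\overline\theta_{a_{\ell_1+1}}''\wedge\cdots\wedge\overline\theta_{a_{\ell_1+\ell_2}}'').
\]
A careful support analysis (Lemmas~\ref{lem:wedge-theta-support} and \ref{lem:supp-H}) shows this boundary integral is supported only on the chosen face $K^\circ(\sigma,\alpha)$, where it unwinds via Lemma~\ref{lem:fiber-int} to the product of the inductive values $\frac{m_\ell}{m_{\ell_2}}\cdot m_{\ell_2}=m_\ell$ coming from the \emph{two} smaller trees $T',T''$ (not from excess-$0$ trees). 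The inductive step thus descends by one in excess, not all the way to $3$-valent trees in a single stroke.
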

We postpone the proof of Lemma~\ref{lem:leaf-form}, which is technical and lengthy, to \S\ref{ss:comp-leaf-forms} and assume it now. 

\begin{Rem}\label{rem:crucial-nontriviality}
This is a crucial property of the $\Psi$-surgery. In fact, the nondegeneracy property of leaf forms is the only point in the computation of the integral invariant in \cite{Wa21} where the properties of the Borromean link is used and due to that the trivalence was essential. 
In Part I, this is used to handle the orientations of chains. Although it is used in Part I just to determine the orientation, to do so is a subtle task like distinguishing left-handed and right-handed Borromean links. 
In Part II, this is used to compute the relevant invariant, which can be expressed as a sum of products of the integrals of the forms as in Lemma~\ref{lem:int-wedge-theta}. In particular, the nondegeneracy property (2) is an analogue of the nontriviality of the Borromean string link, and is a crucial ingredient for the nontriviality of our extension of the construction to higher vertices.
\end{Rem}
 
\begin{Def}[Orientation of $\Psi$-surgery]\label{def:ori}
We define $o(\widehat{B}_{T_\ell(p,q)})$ so that the orientation $o(\widehat{E}^{T_\ell(p,q)})=o(\widehat{B}_{T_\ell(p,q)})\wedge o(D^{2k})$ agrees with that given by the nontrivial de Rham cohomology class 
\[ [\theta_{a_1}\wedge\cdots\wedge \theta_{a_\ell}]\in H^{p(k-1)+qk}(\widehat{E}^{T_\ell(p,q)},\partial \widehat{E}^{T_\ell(p,q)};\R). \]
\end{Def}
The following lemma is obvious from the above convention.
\begin{Lem}\label{lem:int-wedge-theta}
Under the orientation convention for $\widehat{E}^{T_\ell(p
,q)}$ above with respect to the leaf forms as in Lemma~\ref{lem:leaf-form}, we have
\[ \int_{\widehat{E}^{T_\ell(p,q)}}\theta_{a_1}\wedge\cdots\wedge \theta_{a_\ell}=m_\ell. \]
\end{Lem}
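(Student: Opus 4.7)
The plan is essentially to observe that this lemma is a bookkeeping consequence of the two statements immediately preceding it, once one unwinds the orientation convention. I would proceed in three short steps.

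First, I would invoke Lemma~\ref{lem:leaf-form}(2) to conclude that $\theta_{a_1}\wedge\cdots\wedge \theta_{a_\ell}$ is a compactly supported volume form on $\mathrm{Int}\,\widehat{E}^{T_\ell(p,q)}$ whose cohomology class is nontrivial in $H^{p(k-1)+qk}(\widehat{E}^{T_\ell(p,q)},\partial \widehat{E}^{T_\ell(p,q)};\R)$, and whose integral with respect to \emph{some} choice of orientation equals $\pm m_\ell$. Since $p(k-1)+qk=2k+\dim\widehat{B}_{T_\ell(p,q)}$ is precisely the top degree of the total space $\widehat{E}^{T_\ell(p,q)}$, this form is a de Rham representative of a generator of the top relative cohomology, so the integral is nonzero and the sign is well-defined up to the choice of orientation.

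Second, I would recall that the total space $\widehat{E}^{T_\ell(p,q)}$ is the total space of a $V_v$-bundle over $\widehat{B}_{T_\ell(p,q)}$, and its orientation is built from the product $o(\widehat{E}^{T_\ell(p,q)})=o(\widehat{B}_{T_\ell(p,q)})\wedge o(D^{2k})$, where the $D^{2k}$-factor carries its standard Euclidean orientation (inherited through the fiber $V_v\subset D^{2k}$ via the trivialization $\tau(\Gamma,\alpha)$ in Theorem~\ref{thm:graph-surgery}). The only remaining freedom is the orientation of the parameter space $\widehat{B}_{T_\ell(p,q)}$, which Definition~\ref{def:ori} fixes precisely so that the class $[\theta_{a_1}\wedge\cdots\wedge\theta_{a_\ell}]$ pairs positively with the fundamental class of $(\widehat{E}^{T_\ell(p,q)},\partial\widehat{E}^{T_\ell(p,q)})$.

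Third, combining these two observations immediately yields $\int_{\widehat{E}^{T_\ell(p,q)}}\theta_{a_1}\wedge\cdots\wedge \theta_{a_\ell}=+m_\ell$, as the sign $\pm$ from Lemma~\ref{lem:leaf-form}(2) has been fixed to $+$ by construction.

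There is no real obstacle in this proof, since all the content has already been done: the nondegeneracy and the absolute value $m_\ell$ are part of Lemma~\ref{lem:leaf-form}(2) (whose proof is postponed to \S\ref{ss:comp-leaf-forms}), and the sign is fixed by Definition~\ref{def:ori}. The only thing to be a little careful about in writing it up is to make sure that Definition~\ref{def:ori} is well-posed, i.e.\ that $\theta_{a_1}\wedge\cdots\wedge\theta_{a_\ell}$ does indeed represent a nontrivial top-degree relative cohomology class on $\widehat{E}^{T_\ell(p,q)}$ (so that ``the orientation that makes it positive'' makes sense); this is exactly the nondegeneracy clause in Lemma~\ref{lem:leaf-form}(2), and thus the definition and the lemma are consistent.
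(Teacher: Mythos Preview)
Your proposal is correct and is essentially the paper's own proof, just written out in more detail: the paper merely states that the lemma ``is obvious from the above convention,'' and your three steps are precisely the unpacking of that remark---Lemma~\ref{lem:leaf-form}(2) gives the value $\pm m_\ell$, and Definition~\ref{def:ori} fixes the sign to $+$.
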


\begin{Rem}
Definition~\ref{def:leaf-form} of leaf forms can be generalized naturally to any $(V_v,\partial)$-bundles. In particular, they can be defined for the $(V_v,\partial)$-bundle $\overline{\pi}^{T_\ell(p,q)}\colon \overline{E}^{T_\ell(p,q)}\to \overline{B}_{T_\ell(p,q)}$, where $\overline{B}_{T_\ell(p,q)}=\underset{(\sigma,\alpha)\in\overrightarrow{\calP}_{T_\ell(p,q)}}{\mathrm{hocolim}}\, n_\sigma\widehat{B}_{T_\ell(p,q)}(\sigma,\alpha)$ (see Definition~\ref{def:barB}).
\begin{Lem}\label{lem:leaf-form-barE}
There exist leaf forms $\theta_{a_j}$ in $\overline{E}^{T_\ell(p,q)}$ that satisfy the following conditions.
\begin{enumerate}
\item The form $\theta_{a_j}$ extends that on $\widehat{E}^{T_\ell(p,q)}$.
\item The system $\{\theta_{a_j}\}_j$ is compatible with those on $\overline{E}^{\sigma}$ for the lower excess trees $\sigma$ in $\overrightarrow{\calP}_{T_\ell(p,q)}$.
\item The product $\theta_{a_1}\wedge\cdots\wedge \theta_{a_\ell}$ satisfies
\[ \int_{\overline{E}^{T_\ell(p,q)}-(2^{\ell-3}\widehat{E}^{T_\ell(p,q)})}\theta_{a_1}\wedge\cdots\wedge \theta_{a_\ell}=0. \]
In other words, the thickening of $2^{\ell-3}\widehat{E}^{T_\ell(p,q)}$ to $\overline{E}^{T_\ell(p,q)}$ does not change the integral of $\theta_{a_1}\wedge\cdots\wedge \theta_{a_\ell}$.
\end{enumerate}
\end{Lem}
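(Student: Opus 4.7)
My plan is to argue by induction on $\ell$, building the leaf forms stratum by stratum on the homotopy colimit $\overline{B}_{T_\ell(p,q)}$, and then verifying the cancellation (3) on the added thickening regions. The inductive hypothesis will provide, on each $\overline{E}^{(\sigma,\alpha)}$ with $(\sigma,\alpha)$ a proper face of $T_\ell(p,q)$, leaf forms satisfying (1)--(3) for the smaller tree, compatibly with Lemma~\ref{lem:leaf-form}(1). On the top stratum $\widehat{E}^{T_\ell(p,q)}$ I would take the forms constructed in Lemma~\ref{lem:leaf-form}. The content of the lemma is then the coherent gluing of these forms across the mapping-cylinder regions that were added in \S\ref{s:thicken} via the natural maps $n_\sigma\widehat{B}(\sigma,\alpha)\to K^{(n_{T_\ell})}(\sigma)/A_{T_\ell(p,q)}$ and the cone pieces $I^{\ell-3}\times \widehat{C}R^{\tvec{a}}$ of Lemma~\ref{lem:brun-cone}, together with the vanishing of the wedge on those regions.

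For parts (1) and (2), I would extend each closed form $\theta_{a_j}$ across a single mapping cylinder piece by applying the criterion of Appendix~\ref{s:ext-closed}. The required obstruction class lies in a relative de Rham cohomology group of the cylinder, whose vanishing I would verify by noting that both ends already carry compatible representatives of the same Thom class $\eta_{\widetilde{a}_j}$ near $\partial V_v$, and that the Brunnian null-isotopies used to build the cone region produce an explicit track of the leaf $b_j$ along which a primitive for the difference can be defined. Doing this inductively on the cell structure of $\overline{B}_{T_\ell(p,q)}-\widehat{B}_{T_\ell(p,q)}$ gives forms on all of $\overline{E}^{T_\ell(p,q)}$ that agree with $\theta_{a_j}$ on $\widehat{E}^{T_\ell(p,q)}$ and with the previously constructed forms on each $\overline{E}^{(\sigma,\alpha)}$.

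For the vanishing (3), the key observation is that the complement $\overline{B}_{T_\ell(p,q)}-2^{\ell-3}\widehat{B}_{T_\ell(p,q)}$ is covered by pieces $U_\alpha$ each of which factors through a cone $C(S^{\tvec{a}}[J])$ with $J\subsetneq \Omega=\{1,\ldots,\ell\}$ in the homotopy-colimit decomposition of $\widehat{C}R^{\tvec{a}}$, possibly crossed with lower-excess factors. Over $U_\alpha$ the family of $\Psi$-surgeries is built from the system of Brunnian null-isotopies with respect to removing the components labelled by $\Omega\setminus J$. Choosing any $j\notin J$, the $j$-th leaf $b_j$ is isotopic in $U_\alpha$ to a fixed standard sphere $b_j^{\mathrm{st}}\subset V_v$ which can be taken disjoint from the supports of the other forms $\theta_{a_i}$, $i\neq j$. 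Using the isotopy to produce a compactly supported primitive, I would modify $\theta_{a_j}$ by an exact form on $U_\alpha$ so that its support becomes disjoint from $\bigcap_{i\neq j}\mathrm{supp}(\theta_{a_i})$. This modification preserves both defining conditions of a leaf form and kills $\theta_{a_1}\wedge\cdots\wedge \theta_{a_\ell}$ on $U_\alpha$, giving the vanishing of the integral.

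The main obstacle will be coordinating these pointwise choices of primitives across the overlaps of the $U_\alpha$ so that the resulting global forms remain compatible with the face strata $\overline{E}^{(\sigma,\alpha)}$ for lower-excess $(\sigma,\alpha)$, and so that condition (1) of Lemma~\ref{lem:leaf-form-barE} (agreement with the forms on $\widehat{E}^{T_\ell(p,q)}$) is not destroyed. The natural way to organize this is to mimic the inductive homotopy-coherent construction used in \S\ref{ss:compatible-higher} for $\overline{\omega}_{T_\ell(p,q)}$ itself: at each step one uses the Brunnian null-isotopy that already underlies the geometric construction to produce the corresponding primitive, so that the form-level choices automatically inherit the homotopy coherence established in Lemma~\ref{lem:hcoherence}. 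The nondegeneracy of Lemma~\ref{lem:leaf-form}(2) on the top stratum, together with (3), then ensures that the resulting chain-level integral $\int_{\overline{E}^{T_\ell(p,q)}}\theta_{a_1}\wedge\cdots\wedge \theta_{a_\ell}$ equals $2^{\ell-3}m_\ell$, consistent with the seat count in the multiplex.
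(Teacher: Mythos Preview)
Your plan for (1) and (2) is more elaborate than necessary. The paper simply invokes Lemma~\ref{lem:leaf-form-compatibility}: the extension of leaf forms across the mapping-cylinder pieces of the hocolim is the same obstruction computation already carried out there (via Lemma~\ref{lem:leaf-form-extension}), applied now to the trivial $V_v$-bundle over each cylinder. No new ingredient is needed.

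For (3) your argument has a genuine gap, stemming from a misidentification of the thickening region. The cone pieces $C(S^{\tvec{a}}[J])$ with $J\subsetneq\Omega$ that you describe belong to the construction of $\widehat{B}_{T_\ell(p,q)}$ from $B_{T_\ell(p,q)}$ (Lemma~\ref{lem:brun-cone}); they lie \emph{inside} $\widehat{B}_{T_\ell(p,q)}$, not in $\overline{B}_{T_\ell(p,q)}-2^{\ell-3}\widehat{B}_{T_\ell(p,q)}$. The latter complement consists of the mapping cylinders of $F_{(\sigma,\alpha)}\colon \widehat{B}_{T_\ell(p,q)}(\sigma,\alpha)\to K(\sigma)/A_{T_\ell(p,q)}$, together with the lower-excess pieces themselves. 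Over these regions no string-link component is being removed, so your Brunnian argument does not apply; and your proposed modification of $\theta_{a_j}$ by exact forms would in any case destroy condition~(1).

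The paper's argument for (3) (Lemma~\ref{lem:integral-thicken}) is a direct pointwise support observation with no modification of forms. Over $\widehat{B}_{T_\ell(p,q)}(\sigma,\alpha)=\widehat{B}_{T'}\times\widehat{B}_{T''}$ the family is the product of two independent surgeries on \emph{disjoint} sub-handlebodies $V_{T'}\sqcup V_{T''}\subset V_v$. By Lemma~\ref{lem:theta2=0}(iv), on the open set $O(\sigma,\alpha)$ the leaf forms for leaves in $T'$ are supported in $V_{T'}$ and those for leaves in $T''$ in $V_{T''}$; since $V_{T'}\cap V_{T''}=\emptyset$ the full wedge $\theta_{a_1}\wedge\cdots\wedge\theta_{a_\ell}$ vanishes identically there. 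By Assumption~\ref{assum:theta-U} and Lemma~\ref{lem:wedge-theta-support} the support of this wedge on the thickening projects into the cylinders over the $O(\sigma,\alpha)$, where the forms are pullbacks; hence the wedge is zero on the entire complement, and the integral vanishes.
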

Lemma~\ref{lem:leaf-form-barE} will be proved at the end of \S\ref{ss:comp-leaf-forms}.
\end{Rem}

\subsubsection{Orientation of the $(D^{2k},\partial)$-bundle.}
The product (\ref{eq:ori-o(v)}) of $o(v)$ defines an orientation of the $(D^{2k},\partial)$-bundle 
\[ \pi^{(\Gamma,\alpha)}\colon \widehat{E}^{(\Gamma,\alpha)}\to \widehat{B}_{(\Gamma,\alpha)}:=\prod_v \widehat{B}_{T(v)} \]
corresponding to the classifying map $\omega_{(\Gamma,\alpha)}=\prod_v\widehat{\omega}_{T(v)}\colon \widehat{B}_{(\Gamma,\alpha)}\to B\Diff_\partial^\fr(D^{2k})$, as follows.
\begin{enumerate}
\item[(i)] The order of the product of the half-edges at $v$ in $o(v)$ determines an identification of $T(v)$ with $T_\ell(p,q)$, and hence an identification of $V_v$ with the complement of the thickened string link $(N_{I^{k}})^{\cup p}\cup (N_{I^{k-1}})^{\cup q}\to I^{2k}$. If $o(v)$ has $-$ sign, then we define $o(\widehat{B}_{T(v)})$ to be the reverse of the orientation on $\widehat{B}_{T(v)}$ induced from $o(\widehat{B}_{T_\ell(p,q)})$.
\item[(ii)] The order of the factors $o(v)$ in (\ref{eq:ori-o(v)}) defines an orientation $o(\widehat{B}_{(\Gamma,\alpha)})=\bigwedge_v o(\widehat{B}_{T(v)})$ of $\widehat{B}_{(\Gamma,\alpha)}$, where we assume that the order of the product $\bigwedge_v o(\widehat{B}_{T(v)})$ agrees with that of the RHS of (\ref{eq:ori-o(v)}).
Then the total space $\widehat{E}^{(\Gamma,\alpha)}$ of the $(D^{2k},\partial)$-bundle is oriented by $\bigwedge_v o(\widehat{B}_{T(v)})\wedge o(D^{2k})$. We call the resulting orientation of $\widehat{E}^{(\Gamma,\alpha)}$ the {\it half-edge orientation}.
\end{enumerate}
The orientation of $\widehat{B}_{(\Gamma,\alpha)}$ induces that of the manifold pieces in $\overline{B}_{(\Gamma,\alpha)}$.

\begin{Thm}\label{thm:chainmap}
Suppose that $k$ is sufficiently large. The linear map $\overline{\phi}\colon \mathcal{GC}^{(\leq \ell)}\to S_*(B\Diff_\partial^\fr(D^{2k});\Q)$ satisfies the following identity (modulo degenerate chains)
\[ \partial \overline{\phi}_{(\Gamma,\alpha)}=\overline{\phi}_{\partial(\Gamma,\alpha)}. \]
\end{Thm}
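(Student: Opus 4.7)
The plan is to deduce the chain map identity essentially by applying the Leibniz rule to the product structure $\overline{B}_{(\Gamma,\alpha)}=\prod_{v\in V(\Gamma)}\overline{B}_{T(v)}$ and substituting the local $L_\infty$-relation from Theorem~\ref{thm:l-valent-general}(2) at each vertex; the real content is in matching orientations and multiplicities. First I would fix, for each directed graph $(\Gamma,\alpha)$ of excess $p$, the half-edge orientation of $\widehat{E}^{(\Gamma,\alpha)}$ described in \S\ref{ss:ori-V-bundle} and note that by Definition~\ref{def:ori} together with Lemma~\ref{lem:leaf-form} the induced orientation on $\overline{B}_{(\Gamma,\alpha)}$ is precisely the one for which $\int_{\overline{E}^{T(v)}}\theta_{a_1}\wedge\cdots\wedge\theta_{a_{\ell(v)}}=+m_{\ell(v)}$ at every vertex. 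In particular, with these conventions the map $\overline{\omega}_{(\Gamma,\alpha)}\colon \overline{B}_{(\Gamma,\alpha)}\to B\Diff^{\fr}_{\partial}(D^{2k})$ becomes an oriented chain, and the factor $\tfrac{1}{m_{(\Gamma,\alpha)}}=\prod_v \tfrac{1}{m_{\ell(v)}}$ defining $\overline{\phi}_{(\Gamma,\alpha)}$ is consistent with the vertex-wise normalization of the brackets.

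Next, I would compute $\partial\overline{\phi}_{(\Gamma,\alpha)}$ using the product structure. By the standard Leibniz rule for products of singular chains,
\[
\partial\overline{\phi}_{(\Gamma,\alpha)}\;=\;\frac{1}{m_{(\Gamma,\alpha)}}\sum_{v\in V(\Gamma)}\epsilon(v)\,\Bigl(\prod_{w\neq v}\overline{\omega}_{T(w)}\Bigr)\times \partial\overline{\omega}_{T(v)},
\]
where $\epsilon(v)=\pm 1$ is the Koszul sign coming from the position of $v$ in the ordering fixed by (\ref{eq:ori-o(v)}). For each $v$, Theorem~\ref{thm:l-valent-general}(2) expresses $\partial\overline{\beta}_{T(v)}$ as a sum of iterated brackets indexed by codimension one objects $(\sigma,\alpha_v)\in\overrightarrow{\calP}_{T(v)}$, each corresponding geometrically to expanding the vertex $v$ into an internal edge with a prescribed direction. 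Under the correspondence between $\Psi$-graph surgeries and local models of $\overline{\omega}_{(\Gamma,\alpha)}$ (Lemma~\ref{lem:vert-surg} and \S\ref{ss:surg-psi}), each such summand corresponds exactly to a directed graph $(\Gamma',\alpha')\in\overrightarrow{\mathcal{GC}}$ obtained from $(\Gamma,\alpha)$ by expanding $v$, and the vertex-wise bracket product equals $\overline{\omega}_{(\Gamma',\alpha')}$ up to a rational factor that reconciles $m_{(\Gamma,\alpha)}/m_{\ell_1}m_{\ell_2}$ with $m_{(\Gamma',\alpha')}$ (this is the factor $\tfrac{1}{2}$ multiplied by $\tfrac{1}{2^{|E(\Gamma)|}}$ of \S\ref{ss:directed-graph} arising from the symmetrization over directions in $\eta$).

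The main task, and the anticipated obstacle, is matching signs. There are three sources of signs to reconcile: the Koszul sign $\epsilon(v)$ from the Leibniz rule, the internal sign $\ve^n_{T(v)}(\sigma)$ appearing in the $L_\infty$-relation of Theorem~\ref{thm:vertex}(3)/Theorem~\ref{thm:l-valent-general}(2), and the combinatorial sign in the differential on $\overrightarrow{\mathcal{GC}}$ determined by the half-edge orientation (\ref{eq:ori-o(v)}). I would compare these locally at a single vertex by using Lemma~\ref{lem:leaf-form-barE}: since the nondegenerate product $\bigwedge\theta_{a_j}$ is preserved under the compatibility morphisms in $\overrightarrow{\calP}_{T_\ell(p,q)}$, the orientations of the codimension one face chains $\overline{\beta}_{(\sigma,\alpha_v)}$ are forced to be the ones induced from the half-edge orientation of the expanded graph. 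This is the analogue for general $\ell$ of the sign check performed for the Jacobi identity in Appendix~\ref{s:sign-jacobi}, and the verification reduces essentially to tracking the exchange of two adjacent half-edges at the internal edge where the expansion takes place.

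Finally, I would sum the contributions vertex by vertex. After the sign match, the right-hand side becomes
\[
\partial\overline{\phi}_{(\Gamma,\alpha)}\;=\;\sum_{v\in V(\Gamma)}\sum_{(\sigma,\alpha_v)}\overline{\phi}_{(\Gamma',\alpha')},
\]
which by (\ref{eq:dGamma}) and the definition of the differential in $\overrightarrow{\mathcal{GC}}$ (which halves the sum over direction choices on the new edge) is exactly $\overline{\phi}_{\partial(\Gamma,\alpha)}$, modulo degenerate chains. Degenerate chains arise from the terms in the thickening construction of \S\ref{s:thicken} where the compressible bordisms near the boundary of $\overline{B}_{(\Gamma,\alpha)}$ identify collapsed faces with lower dimensional strata; by Corollary~\ref{cor:hocolim-hcomm} and Lemma~\ref{lem:hcoherence} these appear as degenerate singular simplices and so vanish in $S_*(B\Diff_\partial^{\fr}(D^{2k});\Q)$. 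This would conclude the proof.
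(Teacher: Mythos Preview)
Your overall strategy is the same as the paper's: the identity $\partial\overline\phi_{(\Gamma,\alpha)}=\overline\phi_{\partial(\Gamma,\alpha)}$ holds up to signs by Theorems~\ref{thm:l-valent-general} and \ref{thm:graph-surgery}, and the real content is a local comparison of orientations at each vertex of $\Gamma$. You correctly isolate the three sources of signs to reconcile.

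However, there is a gap at precisely the step you flag as the ``main task''. You propose to match the orientation of the codimension one face $\overline\beta_{(\sigma,\alpha_v)}$ with the induced boundary orientation by invoking Lemma~\ref{lem:leaf-form-barE} and the phrase ``the nondegenerate product $\bigwedge\theta_{a_j}$ is preserved under the compatibility morphisms''. This does not do the job. The two orientations you must compare live on bundles with \emph{different} fibers: the orientation $o(\widehat B_T)$ is fixed by the top form $\bigwedge_{j=1}^\ell\theta_j$ on the $V_v$-bundle $\widehat E^{T}$, whereas the face orientation $o(\widehat B_{T'})\wedge o(\widehat B_{T''})$ is fixed by $(\eta_1\wedge\eta_2)\wedge\bigwedge_j\theta_j'$ on the $V_{T'}\times V_{T''}$-bundle, which involves the two \emph{extra} internal leaf forms $\eta_1,\eta_2$ at the new edge. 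Lemma~\ref{lem:leaf-form-barE} only tells you the integral over the thickened collar vanishes; it says nothing about how $\eta_1\wedge\eta_2$ interacts with the boundary restriction of $\bigwedge\theta_j$.

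The paper closes this gap with Lemma~\ref{lem:o(BB)}, whose proof is the substantive part of the argument. The idea is to pass to the associated $V\times V$-bundle over $\widehat B_T$, so that both orientation forms can be compared in the same ambient object: on the one hand $\eta_{\Delta_V}\wedge\bigwedge\theta_j$ represents $o(\widehat B_T)\wedge o(D^{2k})^{\wedge 2}$ near the diagonal, while on the other hand Claim~\ref{cla:eta-extend} extends $\eta_1\wedge\eta_2$ to a form $\widetilde\eta$ with $[d\widetilde\eta\wedge\bigwedge\theta_j']=[\upsilon_T\wedge\bigwedge\theta_j']$ for a universal diagonal volume form $\upsilon_T$. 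The identification $[\eta_{\Delta_V}\wedge\bigwedge\theta_j]=[\upsilon_T\wedge\bigwedge\theta_j']$ then yields, via the connecting homomorphism, that the induced orientation on $\widehat B_{T'}\times\widehat B_{T''}$ is exactly $o(\widehat B_{T'})\wedge o(\widehat B_{T''})$. Neither the Jacobi sign computation of Appendix~\ref{s:sign-jacobi} nor the compatibility of leaf forms alone supplies this; the passage through the diagonal of $V\times V$ is the missing idea in your proposal.
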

It follows from Theorems~\ref{thm:l-valent-general} and \ref{thm:graph-surgery} that the identity holds up to the signs of the terms of $\partial(\Gamma,\alpha)$. Thus we need only to check the induced orientation on the boundary of $\overline{B}_{(\Gamma,\alpha)}$.

\subsection{Compatible leaf forms: Proof of Lemma~\ref{lem:leaf-form}}\label{ss:comp-leaf-forms}

We prove Lemma~\ref{lem:leaf-form} by induction on $\ell$. The proof is an analogue of that for intersection among spanning submanifolds in 4-valent vertex surgery in Appendix~\ref{s:leaf-form-mfd}. It is similar to Hain's construction of a formal connection (or a Maurer--Cartan element) on the complement of the Borromean rings in $S^3$ (\cite[Example~3]{Hai}, see also Remark~\ref{rem:MC-eq}). In the following, we consider the natural map 
\[ F_{(\sigma,\alpha)}:=\widehat{B}_{T_\ell(p,q)}((\sigma,\alpha)\to T_\ell(p,q))\colon\widehat{B}_{T_\ell(p,q)}(\sigma,\alpha)\to \widehat{B}_{T_\ell(p,q)}/{A_{T_\ell(p,q)}}.
\]
 We say that several smooth submanifolds $S_1,\ldots,S_r$ in a smooth manifold $M$ intersect transversally if at each intersection point $p\in \bigcap_i S_i$ the sum of the orthogonal complements $(T_pS_i)^{\perp}$ (as a subspace of $T_pM$) is the direct sum with respect to any Riemannian metric on $M$.

For $\ell=3$, the following lemma is sufficient.
\begin{Lem}
For $\ell=3$, there exist leaf forms in the $\Psi_3$-surgeries for $T_3(p,q)$ that satisfies the nondegeneracy condition of Lemma~\ref{lem:leaf-form}.
\end{Lem}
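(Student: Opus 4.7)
The plan is to use the explicit geometric structure of the Borromean link, whose defining feature is that its three components bound spanning submanifolds meeting transversally in a single point of the handlebody $V_v$. Concretely, in the basic case (before any suspension or delooping) we have spanning submanifolds $S(a_1), S(a_2), S(a_3) \subset V_v$ with $\partial S(a_j) = a_j \subset \partial V_v$, where $S(a_j)$ is obtained from the standard plumbing model of the embedded Whitehead product, and these three submanifolds may be arranged to meet transversally in exactly one interior point with a prescribed sign.

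First, I would promote these spanning submanifolds to parametrized submanifolds $\widetilde{S}(a_j) \subset \widehat{E}^{T_3(p,q)}$ over $\widehat{B}_{T_3(p,q)}$. Since $\widehat{\omega}_{T_3(p,q)}$ is obtained from the basic $\Psi_3$-surgery by iterated suspensions and deloopings (Example~\ref{ex:3-valent-family}), and both operations are local thickening/graphing constructions that leave the Borromean core of each fiber intact, the spanning submanifolds extend naturally over $\widehat{B}_{T_3(p,q)}$ to fiberwise submanifolds of the correct codimension, with boundaries given by the fiberwise meridians $\widetilde{a}_j \subset \widehat{B}_{T_3(p,q)} \times \partial V_v$.

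Second, I would set $\theta_{a_j} := \eta_{\widetilde{S}(a_j)}$, the Thom form of the normal bundle of $\widetilde{S}(a_j)$ supported in a small tubular neighborhood in $\widehat{E}^{T_3(p,q)}$. Each $\theta_{a_j}$ is then closed of the required degree $\dim b_j$; the pairing $\int_{b_j} \theta_{a_m}$ equals the intersection number $b_j \cdot \widetilde{S}(a_m)$ in a single fiber $V_v$, which is $\pm\delta_{jm}$ (with sign $(-1)^{\dim a_m - 1}$ as in the standard convention for meridian/longitude pairing recalled just after Definition~\ref{def:leaf-form}), and the boundary restriction of $\eta_{\widetilde{S}(a_j)}$ agrees with $\pm\eta_{\widetilde{a}_j}$ because $\widetilde{a}_j = \partial \widetilde{S}(a_j) \cap (\widehat{B}_{T_3(p,q)} \times \partial V_v)$.

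Third, for the nondegeneracy statement, by the standard fact that integrating a wedge of Thom forms computes the algebraic intersection number of the Poincaré-dual submanifolds, one has
\[
\int_{\widehat{E}^{T_3(p,q)}} \theta_{a_1}\wedge \theta_{a_2}\wedge \theta_{a_3}
= \pm\,\bigl(\widetilde{S}(a_1)\cdot \widetilde{S}(a_2)\cdot \widetilde{S}(a_3)\bigr).
\]
The suspension/delooping parameters span precisely the base $\widehat{B}_{T_3(p,q)}$, and the three parametrized submanifolds intersect transversally exactly along the section traced by the unique Borromean triple intersection point in the basic fiber. This intersection locus is a single point of $\widehat{E}^{T_3(p,q)}$, giving $\pm 1 = \pm m_3$.

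The main obstacle is orientation bookkeeping: I must verify that the parametrized transverse triple intersection in $\widehat{E}^{T_3(p,q)}$ remains a single point with a consistent sign after all suspensions and deloopings, and that the resulting form is compatible with the boundary convention $\theta_{a_j}|_{\partial} = \pm \eta_{\widetilde{a}_j}$ needed for the inductive step $\ell \geq 4$. This is handled by observing that suspension is a product construction with $I$ and delooping reparametrizes an existing coordinate as a loop, so each operation either stabilizes the transverse intersection trivially or traces it exactly once over the new sphere factor of $\widehat{B}_{T_3(p,q)}$; a local model near the Borromean intersection point then shows the total degree of the intersection stays $\pm 1$.
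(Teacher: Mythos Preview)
Your proposal is correct and follows essentially the same route as the paper: take the Thom $\eta$-forms of spanning submanifolds of the Borromean link components as leaf forms, and compute the integral of their wedge as the algebraic triple intersection number $\pm 1=\pm m_3$.

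The one place where the paper is more efficient is your ``main obstacle'', namely tracking the spanning submanifolds and their triple intersection through each suspension and delooping. The paper avoids this by passing to the \emph{graph} of the family $\widehat{\omega}_{T_3(p,q)}$ in one step: by Lemma~\ref{lem:suspension-delooping} the graph of a delooping is isotopic to a suspension, so the graph of $\widehat{\omega}_{T_3(p,q)}$ is isotopic to a string link obtained from the basic Borromean link of type $(2n-1,2n-1,2n-1;3n)$ by iterated suspensions alone, and by \cite[Definition~3.4 and Lemma~3.7]{Wa21} any such iterated suspension is again a Borromean string link. One then has directly, in the total space, three framed spanning submanifolds meeting transversally at a single point, with no parameter-by-parameter bookkeeping required. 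Your fiberwise description arrives at the same objects, but the graph reformulation is what makes the transversality and single-point intersection immediate.
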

\begin{proof}
The $\Psi_3$-surgery for $T_3(p,q)$ is obtained from the Borromean string link of type $(2n-1,2n-1,2n-1;3n)$ by iterating suspensions and deloopings as in Example~\ref{ex:3-valent-family}. Since the graph of a delooping is isotopic to a suspension by Lemma~\ref{lem:suspension-delooping}, the graph of the family $\widehat{\omega}_{T_3(p,q)}\colon \widehat{B}_{T_3(p,q)}\to \Emb_\partial^\fr((I^k)^{\cup p}\cup (I^{k-1})^{\cup q},I^{2k})$ is isotopic to that obtained from $(2n-1,2n-1,2n-1;3n)$ by iterated suspensions. It follows from \cite[Definition~3.4 and Lemma~3.7]{Wa21} that the graph of $\widehat{\omega}_{T_3(p,q)}$ is a Borromean string link. Thus its components have framed spanning submanifolds that intersect transversally at one point. The $\eta$ forms for the spanning submanifolds satisfy the desired properties.
\end{proof}
\begin{Lem}\label{lem:leaf-form-extension}
Suppose $b\geq 1$ and that we have leaf forms on the restriction to 
$\partial(I^b\times V_v)$ of the trivial $V_v$-bundle $I^b\times V_v\to I^b$, where $V_v\simeq (S^{k-1})^{\vee p}\vee (S^k)^{\vee q}$. Then they can be extended over $I^b\times V_v$.
\end{Lem}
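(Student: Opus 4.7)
My plan is to apply the criterion from Appendix~\ref{s:ext-closed} for extending a closed differential form from a boundary to the full manifold. That criterion reduces the problem to checking that the de Rham cohomology class of the given boundary form lies in the image of the restriction map $H^*(I^b\times V_v;\R)\to H^*(\partial(I^b\times V_v);\R)$, or equivalently that its image under the connecting homomorphism into $H^{*+1}(I^b\times V_v,\partial(I^b\times V_v);\R)$ vanishes. So the first step is to compute this obstruction group and verify the vanishing in each relevant degree.

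The key reduction comes from Lefschetz duality together with the homotopy equivalence $I^b\times V_v\simeq V_v\simeq (S^{k-1})^{\vee p}\vee (S^k)^{\vee q}$, which gives
\[ H^{j+1}(I^b\times V_v,\partial(I^b\times V_v);\R)\cong H_{b+2k-j-1}(V_v;\R), \]
and the right hand side vanishes outside of degrees $0$, $k-1$, and $k$. Since the leaf forms have degree $j\in\{k-1,k\}$, a straightforward case analysis then shows that the obstruction group is trivial whenever $j=k-1$ and $b\geq 1$, and whenever $j=k$ and $b\geq 2$; in these cases the extension exists automatically from the Appendix~\ref{s:ext-closed} criterion.

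The remaining case is $b=1$ together with a degree $k$ leaf form, where the obstruction group is $H_k(V_v;\R)\cong\R^q$. Here the plan is to produce an a priori global closed form with the correct restriction: the meridian sphere $a_j\subset\partial V_v$ bounds a cocore disk $S(a_j)\subset V_v$ of the corresponding handle, and the Thom form $\eta_{I\times S(a_j)}$ defines a closed form on $I\times V_v$ whose restriction to $I\times\partial V_v$ equals $\pm\eta_{\widetilde{a}_j}$. One then writes the given boundary form as this global model plus an exact correction on $\partial(I\times V_v)$, and extends the correction's primitive by the Whitney extension theorem before differentiating; this yields a closed extension on $I\times V_v$ in the remaining case as well.

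The main technical obstacle I anticipate is the bookkeeping on $\partial(I^b\times V_v)=(\partial I^b\times V_v)\cup(I^b\times\partial V_v)$, where the two smooth pieces meet along the corner $\partial I^b\times\partial V_v$: the compatibility of the boundary data across this corner has to be respected throughout the identification of cohomology classes with restrictions of global ones, and smoothing of the extension near the corner has to be handled carefully. Once these compatibilities are verified and combined with the vanishing of the obstruction groups above, the criterion of Appendix~\ref{s:ext-closed} delivers the desired closed extension and completes the proof.
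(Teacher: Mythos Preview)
Your approach coincides with the paper's: the long exact sequence of the pair plus Poincar\'e--Lefschetz duality, then the extension criterion of Lemma~\ref{lem:ext-closed}.

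You are in fact more careful than the paper on the duality indices. The correct identification is $H^{j+1}(M,\partial M)\cong H_{b+2k-j-1}(V_v)$, so for $j=k$ and $b=1$ the obstruction group is $H_k(V_v)\cong\R^q$, not zero; the paper's displayed isomorphisms have an index slip that makes this group appear to vanish. In the paper's only application of the lemma (the proof of Lemma~\ref{lem:leaf-form-compatibility}) the base cube has dimension $b=\ell-3+\lambda\geq 2$ for $\ell\geq 4$, so the slip is harmless there, but for the lemma as stated your separate treatment of $b=1$ is a genuine addition.

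That said, your $b=1$ sketch has a gap at the key step. The difference $\psi=\theta-\eta_{I\times S(a_j)}\big|_{\partial M}$ does vanish on $I\times\partial V_v$, and the integral normalisations give $[\psi|_{\{0\}\times V_v}]=[\psi|_{\{1\}\times V_v}]=0$ in $H^k(V_v)$; but these facts only place $[\psi]$ in the kernel of the Mayer--Vietoris restriction $H^k(\partial M)\to H^k(V_v)\oplus H^k(V_v)$, which is a $q$-dimensional subspace \emph{complementary} to the image of $H^k(M)\to H^k(\partial M)$. So ``globally exact on $\partial(I\times V_v)$'' does not follow, and the Whitney-extension step cannot start. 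To close the argument you must use the boundary condition $\theta|_{I\times\partial V_v}=\pm\eta_{\widetilde{a}_j}$ more sharply---for instance by pairing $\delta[\theta]\in H^{k+1}(M,\partial M)$ directly against the basis $[I\times S(a_m)]$ of $H_{k+1}(M,\partial M)$ via Stokes and checking that the resulting boundary integrals cancel.
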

\begin{proof}
We consider the following exact sequence.
\begin{equation}\label{eq:extend-leaf}
\begin{split}
 & H^{k-1}(I^b\times V_v;\R)\to
  H^{k-1}(\partial(I^b\times V_v);\R)\to
  H^k(I^b\times V_v,\partial(I^b\times V_v);\R)\to \\
 & H^k(I^b\times V_v;\R)\to
  H^k(\partial(I^b\times V_v);\R)\to
  H^{k+1}(I^b\times V_v,\partial(I^b\times V_v);\R) \to \phantom{\displaystyle\sum}
\end{split}
\end{equation}
By Poincar\'{e}--Lefschetz duality, we have
\[ \left\{\begin{array}{l}
H^{k+1}(I^b\times V_v,\partial(I^b\times V_v);\R)\cong H^{k+b}(I^b\times V_v;\R),\\
  H^k(I^b\times V_v,\partial(I^b\times V_v);\R)\cong H^{k+1+b}(I^b\times V_v;\R). \phantom{\displaystyle\sum}
\end{array}\right. \]
Since $b\geq 1$, and $I^b\times V_v\simeq (S^{k-1})^{\vee p}\vee (S^k)^{\vee q}$, we have that the right hand sides are zero. Hence the left maps of (\ref{eq:extend-leaf}) are surjective and we obtain an extension over $I^b$. 
\end{proof}

\begin{Lem}[Compatibility]\label{lem:leaf-form-compatibility}
For $\ell>3$, suppose that Lemma~\ref{lem:leaf-form} holds for $\widehat{\omega}_{T_{\ell'}(p',q')}$ whenever $3\leq\ell'<\ell$ and $p'+q'=\ell'$. There exist leaf forms in the associated $V_v$-bundle for $\widehat{\omega}_{T_\ell(p,q)}$ that satisfies the compatibility condition (1) of Lemma~\ref{lem:leaf-form}.
\end{Lem}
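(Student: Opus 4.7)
The plan is to construct the leaf forms by prescribing them on the codimension-one strata of $\partial \widehat{B}_{T_\ell(p,q)}$ using the inductive hypothesis, and then extending them to the interior by a cohomological vanishing argument in the spirit of Lemma~\ref{lem:leaf-form-extension}. Throughout, I would keep the leaf forms standard near $\partial V_v$ (so that the restriction to $\widehat{B}_{T_\ell(p,q)}\times \partial V_v$ is always a fixed $\eta$-form for a meridian sphere), which localizes the extension problem to the interior of the bundle.

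First, fix a codimension-one face $(\sigma,\alpha)\in\overrightarrow{\calP}_{T_\ell(p,q)}$ in which $\sigma$ splits into two subtrees $T'=T_{\ell_1}(p_1,q_1)$ and $T''=T_{\ell_2}(p_2,q_2)$ across a middle edge. Over $\widehat{B}_{T_\ell(p,q)}(\sigma,\alpha)=\widehat{B}_{T'}\times \widehat{B}_{T''}$, the pullback bundle $F_{(\sigma,\alpha)}^*\widehat{E}^{T_\ell(p,q)}$ contains $V_{T'}\tcoprod V_{T''}$ as a sub-handlebody bundle. By the inductive hypothesis, $\widehat{E}^{T'}$ and $\widehat{E}^{T''}$ carry compatible leaf forms $\{\theta'_{a_j}\}$ and $\{\theta''_{a_m}\}$ for the external legs that match those of $T_\ell(p,q)$. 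Pulled back along the projections and extended by the standard $\eta$-form of the appropriate meridian sphere on $V_v-(V_{T'}\tcoprod V_{T''})$, these yield candidate boundary leaf forms on $F_{(\sigma,\alpha)}^*\widehat{E}^{T_\ell(p,q)}$.

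Next, I would check that the candidate boundary forms prescribed on different codimension-one faces agree on their overlaps. On a codimension-two face the tree degenerates further to three subtrees $T_1,T_2,T_3$, and the two incident codimension-one faces both restrict, by the inductive compatibility applied to each subtree, to the leaf form determined by the triple decomposition $V_{T_1}\tcoprod V_{T_2}\tcoprod V_{T_3}$. Iterating on the stratification of $\partial \widehat{B}_{T_\ell(p,q)}$, one obtains a single closed form on the restriction of $\widehat{E}^{T_\ell(p,q)}$ to the boundary, with the required Thom-representative behavior near $\partial V_v$.

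Finally, I would extend this closed boundary form to a closed form on all of $\widehat{E}^{T_\ell(p,q)}$. Since $\widehat{B}_{T_\ell(p,q)}$ has the homotopy type of a product of spheres and the fiber $V_v\simeq (S^{k-1})^{\vee p}\vee (S^k)^{\vee q}$ has cohomology concentrated in degrees $0,k-1,k$, the Leray--Serre spectral sequence together with Poincar\'e--Lefschetz duality forces the obstruction groups $H^{k}(\widehat{E},\partial\widehat{E};\R)$ and $H^{k+1}(\widehat{E},\partial\widehat{E};\R)$ to vanish in the relevant range once $k$ is sufficiently large, exactly as in Lemma~\ref{lem:leaf-form-extension}. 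The passage from ``the cohomology class extends'' to ``the closed form extends'' is then supplied by Appendix~\ref{s:ext-closed}. The main obstacle will be the coherent handling of the stratification: one must arrange the inductive extension so that the boundary leaf form agrees not merely cohomologically but on the nose with the forms constructed on lower strata, which is why the extension is performed in order of increasing dimension of the stratum and only then the top-stratum obstruction argument is invoked.
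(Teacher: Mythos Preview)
Your outline is essentially the paper's two-step argument (prescribe on boundary strata, then extend to the interior), and your inductive use of the lower-valence leaf forms plus a cohomological extension is exactly the mechanism used. Two points are worth flagging.

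First, you construct the candidate forms on the \emph{pullback} bundle $F_{(\sigma,\alpha)}^*\widehat{E}^{T_\ell(p,q)}$ over $\widehat{B}_{T'}\times\widehat{B}_{T''}$, but then pass without comment to a closed form on $\widehat{E}^{T_\ell(p,q)}|_{\partial\widehat{B}_{T_\ell(p,q)}}$. The map $F_{(\sigma,\alpha)}$ collapses the wedge $(I^{\ell_1-3}\times S^{\lambda_1})\vee(I^{\ell_2-3}\times S^{\lambda_2})$, so pullback forms do not automatically descend. The paper closes this by first using Lemma~\ref{lem:codim1-coherence} to deform $\widehat{\omega}_{T_\ell(p,q)}(\sigma,\alpha)$ so that it is constant near that wedge, then deforming the leaf forms there to the standard $\eta$-forms of the spanning disks; after this normalization the forms push down, and the gluing along higher-codimension faces is organized via the homotopy coherence of Lemma~\ref{lem:hcoherence} rather than by a direct ``three-subtree'' check.

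Second, for the final extension step the paper avoids the Leray--Serre/spectral-sequence computation you propose: it pulls $\widehat{\omega}_{T_\ell(p,q)}$ back along the quotient $I^{\ell-3}\times I^\lambda\to I^{\ell-3}\times S^\lambda$ to obtain a \emph{trivial} $V_v$-bundle $I^{\ell-3+\lambda}\times V_v$, and then Lemma~\ref{lem:leaf-form-extension} applies verbatim with $b=\ell-3+\lambda\geq 1$. Your approach would also work (the relevant relative groups do vanish), but the cube trick sidesteps any analysis of the possibly nontrivial bundle over $S^\lambda$. (Minor correction: $\widehat{B}_{T_\ell(p,q)}\simeq I^{\ell-3}\times S^{|\tvec{a}|}$ is a single sphere factor, not a product of spheres.)
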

\begin{proof}
First, we see that the families $\widehat{\omega}_{T_\ell(p,q)}(\sigma,\alpha)$ for face trees $(\sigma,\alpha)$ induce leaf forms on the restriction of $\widehat{\omega}_{T_\ell(p,q)}$ to $\partial\widehat{B}_{T_\ell(p,q)}$. Recall that $\widehat{B}_{T_\ell(p,q)}=I^{\ell-3}\times S^\lambda$ for some $\lambda>0$ and that $\widehat{B}_{T_\ell(p,q)}(\sigma,\alpha)=\widehat{B}_{T'}\times\widehat{B}_{T''}=(I^{\ell_1-3}\times S^{\lambda_1})\times (I^{\ell_2-3}\times S^{\lambda_2})=I^{\ell-4}\times S^{\lambda_1}\times S^{\lambda_2}$ for some $\ell_1,\ell_2\geq 3$, $\lambda_1,\lambda_2>0$, $\lambda_1+\lambda_2=\lambda$. By Lemma~\ref{lem:codim1-coherence}, we may deform the family 
\[\widehat{\omega}_{T_\ell(p,q)}(\sigma,\alpha)\colon \widehat{B}_{T_\ell(p,q)}(\sigma,\alpha)\to \Emb_\partial^\fr((I^k)^{\cup p}\cup (I^{k-1})^{\cup q},I^{2k})\]
so that its restriction to a small neighborhood of $(I^{\ell_1-3}\times S^{\lambda_1})\vee (I^{\ell_2-3}\times S^{\lambda_2})$ is constant. Also, the leaf forms over $\widehat{B}_{T_\ell(p,q)}(\sigma,\alpha)$ can be deformed near $(I^{\ell_1-3}\times S^{\lambda_1})\vee (I^{\ell_2-3}\times S^{\lambda_2})$ to the $\eta$-forms of the standard spanning disks. By Lemma~\ref{lem:leaf-form-extension}, we obtain leaf forms in the associated $V_v$-bundle over the image of $F_{(\sigma,\alpha)}$ after the deformation. Doing this for all codimension one face graphs $(\sigma,\alpha)$ of $T_\ell(p,q)$, we obtain leaf forms in the family $\widehat{\omega}_{T_\ell(p,q)}|_{\partial\widehat{B}_{T_\ell(p,q)}}$ over $\partial\widehat{B}_{T_\ell(p,q)}=\partial I^{\ell-3}\times S^\lambda$ induced from those in the families $\widehat{\omega}_{T_\ell(p,q)}(\sigma,\alpha)$. Note that the induced leaf forms on the codimension one faces of $\partial \widehat{B}_{T_\ell(p,q)}$ are compatible along the faces of higher codimensions by extending the homotopies over the codimension one faces by the families of homotopies as in Lemma~\ref{lem:hcoherence}.

Next, we prove that the leaf forms obtained over $\partial\widehat{B}_{T_\ell(p,q)}$ can be extended over $\widehat{B}_{T_\ell(p,q)}$.
For simplicity, we consider the pullback $\widehat{\omega}_{T_\ell(p,q)}^\square\colon I^{\ell-3}\times I^\lambda\to \Emb_\partial^\fr((I^k)^{\cup p}\cup (I^{k-1})^{\cup q},I^{2k})$ of $\widehat{\omega}_{T_\ell(p,q)}$. We suppose that we have leaf forms in the restriction of $\widehat{\omega}_{T_\ell(p,q)}^\square$ to $\partial (I^{\ell-3}\times I^\lambda)=\partial I^{\ell-3+\lambda}$. Then by Lemma~\ref{lem:leaf-form-extension}, we obtain an extension over $I^{\ell-3+\lambda}\times V_v$. 
\end{proof}

\begin{Lem}[Nondegeneracy]\label{lem:leaf-form-nondeg}
For $\ell>3$, suppose that Lemma~\ref{lem:leaf-form} holds for $\widehat{\omega}_{T_{\ell'}(p',q')}$ whenever $3\leq\ell'<\ell$ and $p'+q'=\ell'$. The leaf forms in the family $\widehat{\omega}_{T_\ell(p,q)}$ obtained in Lemma~\ref{lem:leaf-form-compatibility} satisfies the nondegeneracy condition (2) of Lemma~\ref{lem:leaf-form}.
\end{Lem}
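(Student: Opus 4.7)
My plan is to prove the nondegeneracy by induction on the valence $\ell$, mirroring the inductive construction of $\omega_{T_\ell(p,q)}$ itself. The base case $\ell=3$ is already settled by the classical fact that three transverse spanning $2n$-disks of the Borromean string link meet in exactly one point, so the integral of the product of the corresponding $\eta$-forms is $\pm 1=\pm m_3$.

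For the inductive step, I would first reduce to the basic case $\widehat{\omega}_{T_\ell}$ with the universal dimensions of Theorem~\ref{thm:vertex}. Since $\widehat{\omega}_{T_\ell(p,q)}$ is obtained from $\widehat{\omega}_{T_\ell}$ by iterated suspensions and deloopings (see Examples~\ref{ex:3-valent-family}--\ref{ex:5-valent-family}), and the leaf forms are set up so that each suspension introduces an angle form on the new sphere factor (integrating to $\pm 1$) and each delooping trades such an angle form for a loop-parameter form (again integrating to $\pm 1$), the integral $\int_{\widehat{E}^{T_\ell(p,q)}}\theta_{a_1}\wedge\cdots\wedge\theta_{a_\ell}$ equals, up to sign, the corresponding integral over $\widehat{E}^{T_\ell}$. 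Thus it suffices to verify the nondegeneracy for the basic chain $\omega_{T_\ell}$.

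Next, I would compute $\int_{\widehat{E}^{T_\ell}}\theta_{a_1}\wedge\cdots\wedge\theta_{a_\ell}$ from the construction of $\omega_{T_\ell}$. Recall that $\omega_{T_\ell}$ was assembled as the cyclic symmetrization $\#_{i=1}^{\ell}\xi^i\bigl((\varphi^{(\ell)})^{\#r_\ell}\bigr)$ of an $r_\ell$-fold concatenation of the null-isotopy $\varphi^{(\ell)}$ of the $q_\ell$-multiple of the $(2^{\ell-1}-\ell-1)$T-link. The $(2^{\ell-1}-\ell-1)$T-link, in turn, is glued from excess-$(\ell-4)$ pieces $\omega'_{T_{\ell_1}}\circ \omega'_{T_{\ell_2}}$ attached at the vertices of the Lie-hedron $L_{\ell-1}$, each with attaching multiplicity $\mu_\ell^\partial/(m_{\ell_1}m_{\ell_2})$. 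By the compatibility part of Lemma~\ref{lem:leaf-form} and the inductive hypothesis applied to these lower-valence constituents, together with a Fubini-type splitting of the product form over the composed family, each such gluing contributes $\pm m_{\ell_1}m_{\ell_2}\cdot\mu_\ell^\partial/(m_{\ell_1}m_{\ell_2})=\pm\mu_\ell^\partial$ to the integral. Summing over the cyclic, concatenation, and null-homotopy multiplicities, the total becomes $\pm\ell\cdot q_\ell\cdot r_\ell\cdot \mu_\ell^\partial=\pm m_\ell$.

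The hard part will be making rigorous the Fubini-type localization that turns the above heuristic count into an equality of integrals. Concretely, I would construct a relative primitive $\psi$ with
\[
d\psi=\theta_{a_1}\wedge\cdots\wedge\theta_{a_\ell}-\sum_{(\sigma,\alpha)}\Theta_{(\sigma,\alpha)},
\]
where $\Theta_{(\sigma,\alpha)}$ is a representative of the same relative cohomology class localized in a tubular neighborhood of the face bundle $\widehat{E}^{T_\ell}(\sigma,\alpha)$. The existence of $\psi$ follows from the same cohomology-vanishing argument used in Lemma~\ref{lem:leaf-form-extension}, since for $\ell\geq 4$ the obstruction group $H^{p(k-1)+qk-1}(\widehat{E}^{T_\ell},\partial \widehat{E}^{T_\ell};\R)$ vanishes relative to the face contributions by the Poincar\'e--Lefschetz argument given there. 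Once this localization is established, the remaining difficulty is a careful sign bookkeeping, comparing the orientation convention of Definition~\ref{def:ori} with the signs $\ve_{T_\ell}^n(\sigma)$ in the $L_\infty$-relation; this will be handled by invoking Lemma~\ref{lem:sign-jacobi} and the cyclic-symmetry property of $\omega_{T_\ell}$.
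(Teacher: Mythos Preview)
Your proposal has a genuine gap in the localization step, and the heuristic count in your second paragraph does not match what the integral actually measures.

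The form $\theta_{a_1}\wedge\cdots\wedge\theta_{a_\ell}$ is a \emph{top-degree} form on $\widehat{E}^{T_\ell(p,q)}$, so its restriction to any codimension-one face (the pieces $\omega'_{T_{\ell_1}}\circ\omega'_{T_{\ell_2}}$ attached at vertices of $L_{\ell-1}$) is identically zero. Hence the ``Fubini-type splitting'' cannot simply read off a contribution $\pm m_{\ell_1}m_{\ell_2}$ from each vertex piece; those pieces live one dimension too low. Your primitive $\psi$ would have to mediate between the interior integral and the boundary, but as written the form $\theta_{a_1}\wedge\cdots\wedge\theta_{a_\ell}$ does \emph{not} vanish on $\partial\widehat{E}^{T_\ell(p,q)}$ (its cohomology class there encodes the $L_\infty$-relation), so a primitive relative to the boundary does not exist, and the vanishing of $H^{p(k-1)+qk-1}(\widehat{E}^{T_\ell},\partial\widehat{E}^{T_\ell};\R)$ does not help. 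Even granting some localization, summing $\pm\mu_\ell^\partial$ over all $2^{\ell-1}-\ell-1$ vertices of $L_{\ell-1}$, with signs you have not controlled, does not yield $\mu_\ell^\partial$; your arithmetic silently drops this factor.

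The paper's proof avoids all of this by a different mechanism. It fixes a \emph{single} codimension-one face $(\sigma,\alpha)$, splitting the leaves as $\{1,\ldots,\ell_1\}\sqcup\{\ell_1+1,\ldots,\ell\}$, and factors the top form as $P_{T'}\wedge P_{T''}$ with $P_{T'}=\theta_{a_1}\wedge\cdots\wedge\theta_{a_{\ell_1}}$ and $P_{T''}=\theta_{a_{\ell_1+1}}\wedge\cdots\wedge\theta_{a_\ell}$. Neither factor is top-degree; the Brunnian property (forgetting the leaf on the internal edge trivializes the family) forces $P_{T'}=dQ_{T'}$ on the $V_v$-bundle (Lemma~\ref{lem:theta_T}). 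One Stokes application converts the integral to $\int_{\partial}Q_{T'}\wedge P_{T''}$, and the key support argument (Lemma~\ref{lem:wedge-theta-support}) shows that on every \emph{other} face $(\sigma',\alpha')$ the partition $\{1,\ldots,\ell_1\}$ straddles both subtrees of $\sigma'$, so some $\overline\theta'_{a_i}\wedge\overline\theta'_{a_j}$ vanishes for disjoint-support reasons; hence only the single face $(\sigma,\alpha)$ contributes. A fiber integration along $\widehat{B}_{T'}$ (Lemma~\ref{lem:fiber-int}) then identifies $\int_{\widehat{B}_{T'}}\theta_{T'}$ with $\pm\frac{m_\ell}{m_{\ell_2}}\xi''$, and the induction hypothesis for $T''$ gives the remaining factor $m_{\ell_2}$, yielding $\pm m_\ell$ with no summation over faces and no sign cancellation to manage. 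The factorization $P_{T'}\wedge P_{T''}$ and the single-face localization via Lemma~\ref{lem:wedge-theta-support} are the ideas missing from your outline.
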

We prove Lemma~\ref{lem:leaf-form-nondeg} in the rest of this subsection. A motivating example for $\ell=4$ which is more explicit is given in Appendix~\ref{s:leaf-form-mfd}. The proof given here is a cohomological analogue of that of Appendix~\ref{s:leaf-form-mfd}. 
The outline of the proof of Lemma~\ref{lem:leaf-form-nondeg} is as follows.
\begin{enumerate}
\item[(a)] There exist leaf forms $\theta_{a_1}',\ldots,\theta_{a_{\ell_1}}',\theta_{a_{\ell_1+1}}'',\ldots,\theta_{a_{\ell_1+\ell_2}}''$ on the $V_{T'}\tcoprod V_{T''}$-bundle over the base $\widehat{B}_{T_\ell(p,q)}(\sigma,\alpha)=\widehat{B}_{T'}\times \widehat{B}_{T''}$, where $(\sigma,\alpha)$ is a codimension one face tree of $T_\ell(p,q)$.
\item[(b)] The extensions of the leaf forms $\theta_{a_1}',\ldots,\theta_{a_{\ell_1}}',\theta_{a_{\ell_1+1}}'',\ldots,\theta_{a_{\ell_1+\ell_2}}''$ to those on the $V_v$-bundle over $\widehat{B}_{T_\ell(p,q)}(\sigma,\alpha)$ (recall that $V_{T'}\tcoprod V_{T''}\subset V_v$) satisfy some homological condition.
\item[(c)] The leaf forms $\overline\theta_{a_1}',\ldots,\overline\theta_{a_{\ell_1}}',\overline\theta_{a_{\ell_1+1}}'',\ldots,\overline\theta_{a_{\ell_1+\ell_2}}''$ on $\pi_{T_\ell(p,q)}^{-1}(\mathrm{Im}\,F_{(\sigma,\alpha)})$, where $\pi_{T_\ell(p,q)}\colon \widehat{E}^{T_\ell(p,q)}\to \widehat{B}_{T_\ell(p,q)}$ is the associated $V_v$-bundle to the map $\widehat{\omega}_{T_\ell(p,q)}\colon\widehat{B}_{T_\ell(p,q)}\to B\Diff_\partial^\fr(V_v)$ (\S\ref{ss:ori-V-bundle}), induced from $\theta_{a_1}',\ldots,\theta_{a_{\ell_1}}'$, $\theta_{a_{\ell_1+1}}'',\ldots,\theta_{a_{\ell_1+\ell_2}}''$ on the $V_v$-bundle over $\widehat{B}_{T_\ell(p,q)}(\sigma,\alpha)$ satisfy some conditions on their supports.
\item[(d)] The induced leaf forms $\overline\theta_{a_1}',\ldots,\overline\theta_{a_{\ell_1}}',\overline\theta_{a_{\ell_1+1}}'',\ldots,\overline\theta_{a_{\ell_1+\ell_2}}''$ on $\pi_{T_\ell(p,q)}^{-1}(\mathrm{Im}\,F_{(\sigma,\alpha)})$ can be glued together and extended to those on $\widehat{E}^{T_\ell(p,q)}$.
\item[(e)] The extended leaf forms on $\widehat{E}^{T_\ell(p,q)}$ satisfy some conditions on their supports.
\item[(f)] Computation of the integral of the wedge product of the extended leaf forms on $\widehat{E}^{T_\ell(p,q)}$.
\end{enumerate}

\subsubsection{{\rm (a)} The leaf forms on the $V_{T'}\tcoprod V_{T''}$-bundle over $\widehat{B}_{T_\ell(p,q)}(\sigma,\alpha)$.}

Let 
\[ \pi_{T_\ell(p,q)}(\sigma,\alpha)\colon \widehat{E}^{T_\ell(p,q)}(\sigma,\alpha)\to \widehat{B}_{T_\ell(p,q)}(\sigma,\alpha) \]
be the $V_{T'}\tcoprod V_{T''}$-bundle associated to $\widehat{\omega}_{T_\ell(p,q)}(\sigma,\alpha)\colon \widehat{B}_{T_\ell(p,q)}(\sigma,\alpha)\to \Emb_\partial^\fr((I^k)^{\cup p}\cup (I^{k-1})^{\cup q},I^{2k})$.
Let $\theta_{a_1}',\ldots,\theta_{a_{\ell_1}}'$ (resp. $\theta_{a_{\ell_1+1}}'',\ldots,\theta_{a_{\ell_1+\ell_2}}''$) be the leaf forms on the $V_{T'}\tcoprod V_{T''}$-bundle $\widehat{E}^{T_\ell(p,q)}(\sigma,\alpha)$ for the leaves of $T'=T_{\ell_1}(p_1,q_1)$ (resp. $T''=T_{\ell_2}(p_2,q_2)$) that correspond to the leaves of $T_\ell(p,q)$. Let $\xi'$ (resp. $\xi''$) be the leaf form on $\widehat{E}^{T_\ell(p,q)}(\sigma,\alpha)$ for the leaf of $T'$ (resp. $T''$) of the internal edge of $(\sigma,\alpha)$ that does not correspond to leaves of $T_\ell(p,q)$. All of these leaf forms are of degrees $k$ or $k-1$, and satisfy the conditions of Lemma~\ref{lem:leaf-form} along the factor $\widehat{B}_{T'}$ (resp. $\widehat{B}_{T''}$) by the assumption of Lemma~\ref{lem:leaf-form-nondeg}.

\subsubsection{{\rm (b)} Homological properties of the extensions of the leaf forms to the $V_v$-bundle over $\widehat{B}_{T_\ell(p,q)}(\sigma,\alpha)$.}

Suppose that the edge direction $\alpha$ on the internal edge of $\sigma$ is from the internal vertex of $T'$ to that of $T''$. Then the products of these forms give cohomology classes
\begin{equation}\label{eq:wedge-theta}
 \begin{split}
  & [\theta_{a_1}'\wedge\cdots\wedge \theta_{a_{\ell_1}}']\in H^{p_1(k-1)+(q_1-1)k}(\widehat{E}^{T'},\partial \widehat{E}^{T'};\R)\cong H_k(\widehat{E}^{T'};\R),\\
  & [\theta_{a_{\ell_1+1}}''\wedge\cdots\wedge\theta_{a_{\ell_1+\ell_2}}'']\in H^{(p_2-1)(k-1)+q_2k}(\widehat{E}^{T''},\partial \widehat{E}^{T''};\R)\cong H_{k-1}(\widehat{E}^{T''};\R).
\end{split} 
\end{equation}
Note that by the assumption of Lemma~\ref{lem:leaf-form-nondeg}, we have that the classes $[\theta_{a_1}'\wedge\cdots\wedge \theta_{a_{\ell_1}}'\wedge\xi']$ and $[\theta_{a_{\ell_1+1}}''\wedge\cdots\wedge\theta_{a_{\ell_1+\ell_2}}''\wedge\xi'']$ are nontrivial top degree cohomology classes in $H^{p_1(k-1)+q_1k}(\widehat{E}^{T'},\partial \widehat{E}^{T'};\R)$ and $H^{p_2(k-1)+q_2k}(\widehat{E}^{T''},\partial \widehat{E}^{T''};\R)$, respectively. 

Let us consider the precise values of the integrals of these volume forms. Recall that we took finitely many copies of the terms of the forms $\omega_{T'}'\circ\omega_{T''}'$ in the definition of the $(2^{\ell-1}-\ell-1)$T-link, where the coefficient of $\omega_{T'}'\circ\omega_{T''}'$ is the integer $\frac{\mu_\ell^\partial}{m_{\ell_1}m_{\ell_2}}$. We took more copies of the $(2^{\ell-1}-\ell-1)$T-link to define $\omega_{T_\ell}$ (Definition~\ref{def:beta_ell}), and the final coefficient of $\omega_{T'}'\circ\omega_{T''}'$ in $\partial\omega_{T_\ell}$ is the integer $\frac{m_\ell}{m_{\ell_1}m_{\ell_2}}$. 
So $\widehat{\omega}_{T_\ell(p,q)}(\sigma,\alpha)$ can be given by the composition $(\frac{m_\ell}{m_{\ell_1}m_{\ell_2}}\widehat{\omega}_{T'})\circ \widehat{\omega}_{T''}$, which gives a $V_v$-bundle, or the product of $\frac{m_\ell}{m_{\ell_1}m_{\ell_2}}\widehat{\omega}_{T'}$ and $\widehat{\omega}_{T''}$, which gives a $V_{T'}\tcoprod V_{T''}$-bundle. Note that the product of $\widehat{\omega}_{T'}$ and $\frac{m_\ell}{m_{\ell_1}m_{\ell_2}}\widehat{\omega}_{T''}$ gives the same result. Now assuming the former product, we have
\begin{equation}\label{eq:volumes}
\begin{split}
&\int_{\widehat{E}^{T'}}\theta_{a_1}'\wedge\cdots\wedge \theta_{a_{\ell_1}}'\wedge\xi'=\frac{m_\ell}{m_{\ell_1}m_{\ell_2}}\cdot m_{\ell_1}=\frac{m_\ell}{m_{\ell_2}},\\
&\int_{\widehat{E}^{T''}}\theta_{a_{\ell_1+1}}''\wedge\cdots\wedge\theta_{a_{\ell_1+\ell_2}}''\wedge\xi''=m_{\ell_2}
\end{split}
\end{equation}
according to the induction hypothesis of Lemma~\ref{lem:leaf-form-nondeg} and Lemma~\ref{lem:int-wedge-theta}.

We now see how the leaf forms in $\widehat{E}^{T_\ell(p,q)}(\sigma,\alpha)$ induce those on the corresponding face of $\widehat{E}^{T_\ell(p,q)}$, which is a $V_v$-bundle. Recall that $V_{T'}\tcoprod V_{T''}\subset V_v$. For $T=T'$ or $T''$, let $\widehat{E}^T(V_v)\to \widehat{B}_T$ be the $(V_v,\partial)$-bundle obtained from $\pi_T\colon\widehat{E}^T\to \widehat{B}_T$ by extending the fiber $V_T$ to $V_v$ by the trivial bundle. The leaf forms on the $V_{T'}\tcoprod V_{T''}$-bundle $\widehat{E}^{T_\ell(p,q)}(\sigma,\alpha)$ obtained by pullbacks from $\widehat{E}^{T'}$ and $\widehat{E}^{T''}$ can be extended to those on the $V_v$-bundle by extending in $V_v-\mathrm{Int}\,(V_{T'}\tcoprod V_{T''})$ by $\eta$-forms of (the meridian spheres of the leaves)$\times I$.
\begin{Lem}\label{lem:theta_T}
The classes (\ref{eq:wedge-theta}) are in the kernels of the natural maps 
\[ H^*(\widehat{E}^T,\partial \widehat{E}^T;\R)\to H^*(\widehat{E}^T(V_v),\partial \widehat{E}^T(V_v);\R),\quad T=T',T'',  \]
which extend the forms by zero. Hence there is a form $\theta_T$ on $\widehat{E}^T(V_v)$ with compact support in $\mathrm{Int}\,\widehat{E}^T(V_v)$ such that
\[ 
   d\theta_{T'}=\theta_{a_1}'\wedge\cdots\wedge \theta_{a_{\ell_1}}'\text{ and }\,\,\, d\theta_{T''}=\theta_{a_{\ell_1+1}}''\wedge\cdots\wedge\theta_{a_{\ell_1+\ell_2}}''.
 \]
\end{Lem}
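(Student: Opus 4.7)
The plan is to use Poincaré--Lefschetz duality to turn the cohomology vanishing into a statement about a family of cycles, and then kill that family geometrically by exploiting the ``smallness'' of the special leaf sphere in $V_v$.

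First, I would verify that $\theta_{a_1}'\wedge\cdots\wedge\theta_{a_{\ell_1}}'$ represents a class in the relative group $H^D(\widehat{E}^{T'}(V_v),\partial \widehat{E}^{T'}(V_v);\R)$ with $D=p_1(k-1)+(q_1-1)k$. By condition~(2) of Definition~\ref{def:leaf-form}, the restriction of each $\theta_{a_i}'$ to $\widehat{B}_{T'}\times\partial V_v$ is $\pm\eta_{\widetilde{a}_i}$, and the submanifolds $\widetilde{a}_i$ lie in pairwise different components of $\partial V_v$. Choosing $\eta$-representatives with pairwise disjoint supports makes the wedge vanish identically on $\partial \widehat{E}^{T'}(V_v)$, so the class is well-defined relatively. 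Poincaré--Lefschetz duality then identifies it with an element of $H_k(\widehat{E}^{T'}(V_v);\R)$.

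Next I would identify this dual class. The Poincaré dual of each $\theta_{a_i}'$ is represented by a spanning submanifold $S(a_i)$ of the meridian $a_i$ pushed into the fiber interior, and the wedge is dual to the transverse intersection $\bigcap_i S(a_i)$. Inside $V_{T'}$, the inductive nondegeneracy hypothesis for $T'$, together with the complementary role of $\xi'$ in the top-dimensional integral~(\ref{eq:volumes}), forces this intersection to be a single oriented $k$-sphere in each fiber: precisely the special leaf sphere $b_{\xi'}$ of the $\Psi$-graph for $T'$ at the internal edge of $\sigma$. Over the trivial slab $\widehat{B}_{T'}\times(V_v-\mathrm{Int}\,V_{T'})$ of the extended bundle, the spanning submanifolds extend as products across $\partial V_{T'}$ and their pairwise intersections remain empty. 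Hence the dual homology class is represented by the parametrised family $b_{\xi'}^{\mathrm{fam}}\subset\widehat{E}^{T'}(V_v)$ of special leaf spheres.

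The main step, and the likely source of technical difficulty, is to show that $[b_{\xi'}^{\mathrm{fam}}]=0$ in $H_k(\widehat{E}^{T'}(V_v);\R)$. The key geometric input is the composition setup of \S\ref{ss:compos-psi}: the special sphere $b_{\xi'}$ was arranged so as to form a Hopf link with the corresponding sphere of $G_2$ inside a small ball, but $G_2$ is absent from the data of $\widehat{E}^{T'}(V_v)$. Hence in each fiber $b_{\xi'}$ is an unknotted $k$-sphere sitting in a small ball of $V_v$ and bounds a small $(k+1)$-disk there. To promote this fibrewise bounding to a family, I would first isotope $b_{\xi'}$ across the collar of $\partial V_{T'}$ into the trivial slab $\widehat{B}_{T'}\times(V_v-\mathrm{Int}\,V_{T'})$, where the bundle is literally a product and the push-off of the sphere is a constant family. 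There the family of bounding disks is the obvious product family, and concatenating with the mapping cylinder of the push-off isotopy gives a coherent family of $(k+1)$-chains in $\widehat{E}^{T'}(V_v)$ bounding $b_{\xi'}^{\mathrm{fam}}$. The delicate point is that the monodromy of the $T'$-surgery must move the special leaf only within the collar of $\partial V_{T'}$, so that the push-off admits a continuous parametrised description; this is essentially the content of the construction of $\widehat{\omega}_{T'}$, in which the surgery is concentrated away from the special leaf.

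Having shown the relative class vanishes, the isomorphism $H^D(\widehat{E}^{T'}(V_v),\partial \widehat{E}^{T'}(V_v);\R)\cong H^D_c(\mathrm{Int}\,\widehat{E}^{T'}(V_v);\R)$ coming from the collar structure of the boundary produces a compactly supported primitive $\theta_{T'}$ with $d\theta_{T'}=\theta_{a_1}'\wedge\cdots\wedge\theta_{a_{\ell_1}}'$. The argument for $\theta_{T''}$ is the same with $k$ replaced by $k-1$: the relevant intersection cycle is the special $(k-1)$-sphere of $T''$ associated with $\xi''$, which is likewise unknotted in a small ball of $V_v$ once $T'$ is dropped.
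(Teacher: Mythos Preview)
Your approach via Poincar\'e--Lefschetz duality is a reasonable alternative strategy, but it diverges from the paper's argument and has two genuine gaps.

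The paper's proof is much shorter and does not pass to the dual side at all. It simply observes that enlarging the fiber from $V_T$ to $V_v$ amounts to filling in the tube of the string-link component corresponding to the internal leaf of $T$; by the Brunnian property of $\widehat{\omega}_T$ with respect to that component, the resulting $(V_v,\partial)$-bundle $\widehat{E}^T(V_v)$ is induced by a null-homotopic family, and along this family null-isotopy the external leaf forms $\theta_{a_i}'$ can be deformed (rel $\partial\widehat{E}^T(V_v)$) to the $\eta$-forms of the standard spanning disks. Those standard disks are pairwise disjoint, so the wedge vanishes identically.

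Compared with this, your argument has two weak points.

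\textbf{Identification of the dual class.} You assert that the Poincar\'e dual of $[\theta_{a_1}'\wedge\cdots\wedge\theta_{a_{\ell_1}}']$ in $H_k(\widehat{E}^{T'};\R)$ is precisely the family $b_{\xi'}^{\mathrm{fam}}$ of internal leaf spheres. But the leaf forms for $\ell_1>3$ are built abstractly by the extension argument of Lemma~\ref{lem:leaf-form-extension}, not as $\eta$-forms of explicit spanning submanifolds, so the ``transverse intersection $\bigcap_i S(a_i)$'' picture is not available. The nondegeneracy hypothesis only computes the single pairing $\int(\theta_{a_1}'\wedge\cdots\wedge\theta_{a_{\ell_1}}')\wedge\xi'$; it does not determine the pairings with the other degree-$k$ leaf forms $\theta_{a_m}'$ for external outgoing legs $m$, so the dual class is not pinned down.

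\textbf{Family bounding.} Even granting the identification, your justification for promoting the fiberwise bounding disk to a family---that ``the surgery is concentrated away from the special leaf''---is not correct for the symmetrized bracket $\omega_{T_{\ell_1}}$ constructed in \S\ref{s:4-valent}--\S\ref{s:6-valent}: the symmetrization moves every component, including the one corresponding to $b_{\xi'}$. The internal leaf sphere lies in the nontrivially bundled part $V_{T'}$, and there is no reason for its image under the family to sit near $\partial V_{T'}$; indeed $b_{\xi'}$ is homologically nontrivial in $V_{T'}$ and cannot be isotoped into a collar of $\partial V_{T'}$ inside $V_{T'}$. What is actually needed to produce a coherent family of bounding disks in $\widehat{E}^{T'}(V_v)$ is a trivialization of this bundle, and that is exactly what the Brunnian null-isotopy with respect to the internal component supplies. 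Once you invoke it, you have arrived at the paper's argument, and the duality detour becomes unnecessary.
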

\begin{proof}
This is because the $\Psi$-surgeries have the Brunnian property with respect to the components corresponding to the leaves at the internal edge, where the extension of the fiber from $V_T$ to $V_v$ corresponds to cancelling the leaf at the internal edge. The leaf forms of the external leaves of $T$ can be deformed in $\widehat{E}^T(V_v)$ without changing its restriction to a neighborhood of $\partial \widehat{E}^T(V_v)$ into the $\eta$-forms of the standard spanning disks. 
\end{proof}

\begin{Obs} Let $U_1\subset \mathrm{Int}\,\widehat{B}_{T'}$ (resp. $U_2\subset \mathrm{Int}\,\widehat{B}_{T''}$) be a small open ball such that the support of $\theta_{a_1}'\wedge\cdots\wedge \theta_{a_{\ell_1}}'$ (resp. $\theta_{a_{\ell_1+1}}''\wedge\cdots\wedge\theta_{a_{\ell_1+\ell_2}}''$) is included in $\pi_{T'}^{-1}(U_1)$ (resp. $\pi_{T''}^{-1}(U_2)$). We may assume that the support of $\theta_{a_1}'\wedge\cdots\wedge \theta_{a_{\ell_1}}'$ (resp. $\theta_{a_{\ell_1+1}}''\wedge\cdots\wedge\theta_{a_{\ell_1+\ell_2}}''$) in $\widehat{E}^{T_\ell(p,q)}(\sigma,\alpha)$ is included in $\pi_{T_\ell(p,q)}(\sigma,\alpha)^{-1}(U_1\times\widehat{B}_{T''})$ (resp. in $\pi_{T_\ell(p,q)}(\sigma,\alpha)^{-1}(\widehat{B}_{T'}\times U_2)$) (see Figure~\ref{fig:supp-theta}, left). 
\end{Obs}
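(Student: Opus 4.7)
The plan is to establish the existence of small open balls $U_1 \subset \mathrm{Int}\,\widehat{B}_{T'}$ and $U_2 \subset \mathrm{Int}\,\widehat{B}_{T''}$ as in the statement; the support claim on the larger bundle $\widehat{E}^{T_\ell(p,q)}(\sigma,\alpha)$ will then follow formally by pullback under a product projection. By symmetry it suffices to treat $T'$.

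First, I would perform a preliminary isotopy of the family $\widehat{\omega}_{T'}$, rel $\partial\widehat{B}_{T'}$ and through maps satisfying the compatibility requirement (1) of Lemma~\ref{lem:leaf-form} on face strata, after which $\widehat{\omega}_{T'}$ coincides with the constant family at the standard inclusion $\iota$ outside a small open ball $U_1 \subset \mathrm{Int}\,\widehat{B}_{T'}$. This is possible because, by Definition~\ref{def:hat_B}, $\widehat{B}_{T'}$ is homotopy equivalent to $I^{\ell_1-3} \times S^{|\vec{a}_1|}$, so its classifying map (based at $\iota$) may be homotoped rel boundary so as to factor through the collapse map to a small open ball around a distinguished interior point. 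Once $\widehat{\omega}_{T'}$ equals $\iota$ on $\widehat{B}_{T'}\setminus U_1$, the $V_{T'}$-bundle $\widehat{E}^{T'}$ trivializes canonically over this locus as $(\widehat{B}_{T'}\setminus U_1)\times V_{T'}$, and $\iota$ provides pairwise disjoint standard spanning disks in $V_{T'}$ for the meridian spheres $a_j$. Applying Lemma~\ref{lem:leaf-form-extension} relative to this region, I would then modify each leaf form $\theta_{a_j}'$ by a closed form supported over $U_1$ so that, on $\pi_{T'}^{-1}(\widehat{B}_{T'}\setminus U_1)$, $\theta_{a_j}'$ equals the $\eta$-form of the standard spanning disk of the $j$-th meridian. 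Since these disks are pairwise disjoint in $V_{T'}$, their $\eta$-forms have pairwise disjoint supports, and hence $\theta_{a_1}'\wedge\cdots\wedge\theta_{a_{\ell_1}}'$ vanishes identically on $\pi_{T'}^{-1}(\widehat{B}_{T'}\setminus U_1)$, giving the stated support containment.

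For the second part, the $V_{T'}\tcoprod V_{T''}$-bundle $\widehat{E}^{T_\ell(p,q)}(\sigma,\alpha)\to \widehat{B}_{T'}\times \widehat{B}_{T''}$ is, by construction, the fiberwise disjoint union of the pullbacks of $\widehat{E}^{T'}$ and $\widehat{E}^{T''}$ along the two coordinate projections $\mathrm{pr}_1,\mathrm{pr}_2$; and the forms $\theta_{a_1}',\ldots,\theta_{a_{\ell_1}}'$ on the total space are pullbacks of their counterparts on $\widehat{E}^{T'}$ along $\mathrm{pr}_1$. Hence the support of $\theta_{a_1}'\wedge\cdots\wedge\theta_{a_{\ell_1}}'$ on $\widehat{E}^{T_\ell(p,q)}(\sigma,\alpha)$ is contained in $\pi_{T_\ell(p,q)}(\sigma,\alpha)^{-1}(U_1\times \widehat{B}_{T''})$, and similarly for $U_2$. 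The main obstacle is the preliminary concentration isotopy: it must be carried out compatibly with the leaf forms on all face strata of $\partial\widehat{B}_{T'}$ prescribed by Lemma~\ref{lem:leaf-form-compatibility}, which requires a careful relative cone-filling argument in the spirit of Lemma~\ref{lem:brun-cone} to propagate the trivialization inward from the already fixed boundary data without disturbing it.
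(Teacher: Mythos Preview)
The paper records this as an Observation without proof; it is treated as something that can be arranged in the inductive construction of leaf forms, so there is no ``paper's proof'' to compare against. Your pullback argument for the second sentence is correct and is exactly what is intended: since the $V_{T'}\tcoprod V_{T''}$-bundle over $\widehat{B}_{T'}\times\widehat{B}_{T''}$ is the fiberwise disjoint union of the two pullback bundles and the leaf forms $\theta_{a_j}'$ are pulled back along $\mathrm{pr}_1$, the support statement on $\widehat{E}^{T_\ell(p,q)}(\sigma,\alpha)$ follows immediately.

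Your argument for the existence of $U_1$, however, has a genuine gap when $T'$ has valence $\ell_1\geq 4$. You propose to homotope $\widehat{\omega}_{T'}$ rel $\partial\widehat{B}_{T'}$ so that it equals the standard inclusion $\iota$ outside a small ball $U_1\subset\mathrm{Int}\,\widehat{B}_{T'}$. But $\partial\widehat{B}_{T'}$ is nonempty for $\ell_1\geq 4$, and on it $\widehat{\omega}_{T'}$ is prescribed by the face brackets (the $L_\infty$-relation of Theorem~\ref{thm:vertex}(3) applied to $T'$), which are \emph{not} equal to $\iota$. Hence no rel-boundary homotopy can make $\widehat{\omega}_{T'}$ equal to $\iota$ on the complement of an interior ball: the boundary itself lies outside any such $U_1$ and already carries a nontrivial family. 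Your closing sentence acknowledges this obstacle, but the phrase ``propagate the trivialization inward from the already fixed boundary data'' presupposes that the boundary data is trivialized, which is false.

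A route closer in spirit to the surrounding arguments (compare Lemma~\ref{lem:wedge-theta-support}) is to argue directly about the \emph{wedge product} rather than the family. On each face $K(\tau)\subset\partial\widehat{B}_{T'}$, the tree $T'$ splits as $T'_1\cup T'_2$ with $V_{T'_1}\cap V_{T'_2}=\emptyset$; since both subtrees necessarily contain at least one external leaf of $T'$ (each $T'_i$ receives at least two of $T'$'s leaves, and only one of $T'$'s leaves is the internal half-edge toward $T''$), the external leaf forms split between the two disjoint sub-handlebodies and their wedge vanishes there. On $A_{T'}$ the family is $\iota$ and the standard $\eta$-forms apply, so the wedge again vanishes. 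Thus the wedge product already vanishes over all of $\partial\widehat{B}_{T'}$ and hence has compact support over $\mathrm{Int}\,\widehat{B}_{T'}$; one then uses the freedom in Lemma~\ref{lem:leaf-form-extension} to concentrate this support into a ball.
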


\subsubsection{{\rm (c)} On the supports of the leaf forms on $\pi_{T_\ell(p,q)}^{-1}(\mathrm{Im}\,F_{(\sigma,\alpha)})$ induced from those on the $V_v$-bundle over $\widehat{B}_{T_\ell(p,q)}(\sigma,\alpha)$.}

Let $\overline\theta_{a_1}',\ldots,\overline\theta_{a_{\ell_1}}'$ (resp. $\overline\theta_{a_{\ell_1+1}}'',\ldots,\overline\theta_{a_{\ell_1+\ell_2}}''$) be the leaf forms over the image of $F_{(\sigma,\alpha)}\colon \widehat{B}_{T_\ell(p,q)}(\sigma,\alpha)\to \widehat{B}_{T_\ell(p,q)}/{A_{T_\ell(p,q)}}$ for the leaves of $T'$ (resp. $T''$) that are induced from $\theta_{a_1}',\ldots,\theta_{a_{\ell_1}}'$ (resp. $\theta_{a_{\ell_1+1}}'',\ldots,\theta_{a_{\ell_1+\ell_2}}''$) on $\widehat{E}^{T_\ell(p,q)}(\sigma,\alpha)$, as obtained in Lemma~\ref{lem:leaf-form-compatibility}. 

To understand the supports of $\overline{\theta}_{a_1}'\wedge\cdots\wedge \overline{\theta}_{a_{\ell_1}}'$ and $\overline{\theta}_{a_{\ell_1+1}}''\wedge\cdots\wedge\overline{\theta}_{a_{\ell_1+\ell_2}}''$, we consider for simplicity the pullback 
$\widehat{\omega}_{T_\ell(p,q)}^\square\colon I^{\ell-3+\lambda}\to \Emb_\partial^\fr((I^k)^{\cup p}\cup (I^{k-1})^{\cup q},I^{2k})$ of $\widehat{\omega}_{T_\ell(p,q)}$, as in the proof of Lemma~\ref{lem:leaf-form-compatibility}. Let $K^\square(\sigma,\alpha)\subset \partial I^{\ell-3+\lambda}$ be the subset that is projected onto the image of $F_{(\sigma,\alpha)}$. 
Abusing notation, we also denote by $\overline\theta_{a_1}',\ldots,\overline\theta_{a_{\ell_1}}'$ (resp. $\overline\theta_{a_{\ell_1+1}}'',\ldots,\overline\theta_{a_{\ell_1+\ell_2}}''$) the pullback leaf forms on $K^\square(\sigma,\alpha)\times V_v$. Let $K^\circ(\sigma,\alpha)=\mathrm{Im}\,F_{(\sigma,\alpha)}-A_{T_\ell(p,q)}$, which is considered as a subset of $\widehat{B}_{T_\ell(p,q)}$, and we identify it with its diffeomorphic preimage under the natural map $K^\square(\sigma,\alpha)\to K(\sigma,\alpha)/{A_{T_\ell(p,q)}}$.

\begin{Assum}\label{assum:theta-U}
The image of the support of $\overline{\theta}_{a_1}'\wedge\cdots\wedge \overline{\theta}_{a_{\ell_1}}'$ (resp. $\overline{\theta}_{a_{\ell_1+1}}''\wedge\cdots\wedge\overline{\theta}_{a_{\ell_1+\ell_2}}''$) in $K^\circ(\sigma,\alpha)\times V_v$ under the projection $K^\circ(\sigma,\alpha)\times V_v\to K^\circ(\sigma,\alpha)$ is included in an open set $\overline{U}_1$ (resp. $\overline{U}_2$), which is either an open ball in $\mathrm{Int}\,K^\circ(\sigma,\alpha)$ or a small open neighborhood of an arc in $K^\circ(\sigma,\alpha)$ from the boundary to the interior. Moreover, there is an open neighborhood $O(\sigma,\alpha)$ of the closure of $U_1\times U_2$ in $\widehat{B}_{T_\ell(p,q)}(\sigma,\alpha)$ such that 
\begin{itemize}
\item the restriction of $F_{(\sigma,\alpha)}$ to $O(\sigma,\alpha)$ is an embedding, and
\item the preimages $W_i=F_{(\sigma,\alpha)}^{-1}(\overline{U}_i)$ of $\overline{U}_i$ in $\widehat{B}_{T_\ell(p,q)}(\sigma,\alpha)$ satisfy
\[ W_1\cap O(\sigma,\alpha)=(U_1\times\widehat{B}_{T''})\cap O(\sigma,\alpha),\quad\text{and}\quad W_2\cap O(\sigma,\alpha)=(\widehat{B}_{T'}\times U_2)\cap O(\sigma,\alpha). \]
\end{itemize}
See Figure~\ref{fig:supp-theta}.
\end{Assum}
\begin{figure}[h]
\includegraphics[height=60mm]{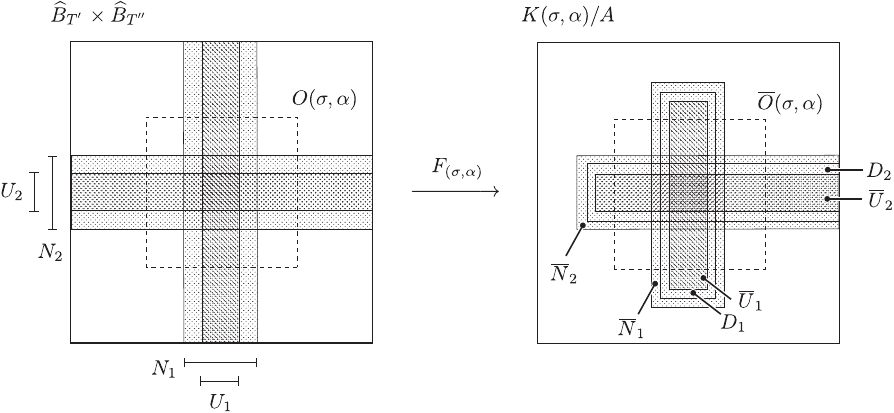} 
\caption{Left: where $\theta_{a_1}',\ldots, \theta_{a_{\ell_1}}'$ or $\theta_{a_{\ell_1+1}}'',\ldots, \theta_{a_{\ell_1+\ell_2}}''$ may be nonzero.\\ Right: where $\overline\theta_{a_1}',\ldots, \overline\theta_{a_{\ell_1}}'$ or $\overline\theta_{a_{\ell_1+1}}'',\ldots, \overline\theta_{a_{\ell_1+\ell_2}}''$ may be nonzero.}\label{fig:supp-theta}
\end{figure}

We may make the supports of the leaf forms more precise, and may assume more direct coincidences between the leaf forms when restricted to the subset $O(\sigma,\alpha)$.
\begin{Lem}\label{lem:theta2=0}
Let $\overline{O}(\sigma,\alpha)=F_{(\sigma,\alpha)}(O(\sigma,\alpha))$. We may assume that there are open subsets $D_1$ and $D_2$ of $K^\circ(\sigma,\alpha)$ such that
\begin{enumerate}
\item[(i)] $\overline{U}_i\subset D_i$ for $i=1,2$. $D_1\cap D_2\subset \overline{O}(\sigma,\alpha)$.
\item[(ii)] The leaf forms $\overline\theta_{a_1}',\ldots, \overline\theta_{a_{\ell_1}}'$ agrees with the $\eta$-forms of the standard spanning disks of $\widetilde{a}_1,\ldots,\widetilde{a}_{\ell_1}$ (see Definition~\ref{def:leaf-form}) over $K^\circ(\sigma,\alpha)-D_1$ up to signs. Moreover, we have $\overline\theta_{a_i}'\wedge \overline\theta_{a_{\ell_1+j}}''=0$ over $K^\circ(\sigma,\alpha)-D_1$ for all $1\leq i\leq \ell_1$, $1\leq j\leq \ell_2$.
\item[(iii)] The leaf forms $\overline\theta_{a_{\ell_1+1}}'',\ldots, \overline\theta_{a_{\ell_1+\ell_2}}''$ agrees with the $\eta$-forms of the standard spanning disks of $\widetilde{a}_{\ell_1+1},\ldots,\widetilde{a}_{\ell_1+\ell_2}$ over $K^\circ(\sigma,\alpha)-D_2$ up to signs. Moreover, we have $\overline\theta_{a_i}'\wedge \overline\theta_{a_{\ell_1+j}}''=0$ over $K^\circ(\sigma,\alpha)-D_2$ for all $1\leq i\leq \ell_1$, $1\leq j\leq \ell_2$.
\item[(iv)] The pullbacks of the restrictions of $\overline\theta_{a_1}',\ldots, \overline\theta_{a_{\ell_1}}'$ (resp. $\overline\theta_{a_{\ell_1+1}}'',\ldots, \overline\theta_{a_{\ell_1+\ell_2}}''$) to $\overline{O}(\sigma,\alpha)$ under the map $F_{(\sigma,\alpha)}$ agree with the restrictions of $\theta_{a_1}',\ldots, \theta_{a_{\ell_1}}'$ (resp. $\theta_{a_{\ell_1+1}}'',\ldots, \theta_{a_{\ell_1+\ell_2}}''$) to $O(\sigma,\alpha)$. 
\end{enumerate}
\end{Lem}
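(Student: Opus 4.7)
The plan is to refine the leaf forms obtained in step~(b) so that they have localized supports, and then show that outside those supports they can be replaced by the $\eta$-forms of standard spanning disks while preserving closedness. First I would shrink the neighborhoods $\overline{U}_1, \overline{U}_2$ of Assumption~\ref{assum:theta-U} if necessary, and choose $D_1, D_2$ to be slightly larger open neighborhoods whose closures still satisfy $D_1\cap D_2 \subset \overline{O}(\sigma,\alpha)$. This is possible because by Assumption~\ref{assum:theta-U} the $\overline{U}_i$ are open balls or thin neighborhoods of arcs, and their closures already lie in $\overline{O}(\sigma,\alpha)$ (outside the respective factor cylinders).

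Next, for (ii) and (iii), I would modify $\overline\theta_{a_i}'$ (resp.\ $\overline\theta_{a_{\ell_1+j}}''$) on $K^\circ(\sigma,\alpha)-\mathrm{cl}(\overline{U}_i)$ to agree with the $\eta$-forms of standard spanning disks of $\widetilde{a}_i$ (resp.\ $\widetilde{a}_{\ell_1+j}$) in the fiber $V_v$, interpolating smoothly over $D_i - \mathrm{cl}(\overline{U}_i)$. Over the complement of $U_1 \times \widehat{B}_{T''}$ in $\widehat{B}_{T_\ell(p,q)}(\sigma,\alpha)$, the Brunnian property of $\widehat{\omega}_{T_\ell(p,q)}(\sigma,\alpha)$ with respect to removing the components of $T'$ implies that the family of embeddings in $V_v$ can be deformed to the standard inclusion for those components; this allows us to take $\overline\theta_{a_i}'$ to be the pullback of the $\eta$-form of a standard spanning disk, which is well-defined globally on $K^\circ(\sigma,\alpha)\times V_v$. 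The interpolation on $D_1 - \mathrm{cl}(\overline{U}_1)$ between the pushforward of $\theta_{a_i}'$ and the standard $\eta$-form is possible because both forms represent the same cohomology class of $\widetilde{a}_i$ (by property (1) of Definition~\ref{def:leaf-form}), so their difference is exact, and one can add the $d$ of a bump-function multiple of a primitive to effect the transition while preserving closedness and the normalization.

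The wedge-vanishing condition $\overline\theta_{a_i}'\wedge \overline\theta_{a_{\ell_1+j}}''=0$ outside $D_1$ (or outside $D_2$) will follow from the standard-form structure: in $V_v$, the standard spanning disks of the meridian spheres coming from $T'$-leaves and those from $T''$-leaves can be arranged to be pairwise disjoint (since $V_v$ is a higher-dimensional handlebody and the leaves have codimension at least three in $I^{2k}$), and by choosing thin tubular neighborhoods of those disks as supports of the corresponding $\eta$-forms we ensure that $\overline\theta_{a_i}'$ and $\overline\theta_{a_{\ell_1+j}}''$ have disjoint supports outside $D_1$ (resp.\ $D_2$). Property (iv) is arranged by choosing all modifications to be supported outside $\overline{O}(\sigma,\alpha)$, so the restrictions to $\overline{O}(\sigma,\alpha)$ coincide with the original pushforwards.

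The main obstacle will be the interpolation step: the difference between the pushforward of $\theta_{a_i}'$ and the standard $\eta$-form is closed with a primitive, but this primitive must be chosen compatibly with the compatibility condition of Lemma~\ref{lem:leaf-form-compatibility}, i.e.\ it must vanish on the corners where several faces of $\partial \widehat{B}_{T_\ell(p,q)}$ meet. This compatibility has to be checked inductively along the stratification of $\partial \widehat{B}_{T_\ell(p,q)}$ using the same homotopy-coherence argument as in the proof of Lemma~\ref{lem:hcoherence}, namely by extending over thickened cones using the system of Brunnian null-isotopies; the acyclicity of the relevant cohomology groups in the range involved (as in the proof of Lemma~\ref{lem:leaf-form-extension}) ensures the existence of the extensions at each stratum.
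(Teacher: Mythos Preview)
Your proposal is essentially correct and aligns with what the paper leaves implicit. The paper does not supply a separate proof of this lemma; it treats the statement as a refinement of the construction already carried out in Lemma~\ref{lem:leaf-form-compatibility} together with Assumption~\ref{assum:theta-U}, so your task was really to spell out why those constructions yield (i)--(iv). Your argument does this: the Brunnian null-isotopy lets you standardize the leaf forms outside the support regions, the disjointness of standard spanning disks in $V_v$ gives the wedge vanishing, and the interpolation via an exact correction is the right mechanism.

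One small point to tighten: your justification of (iv) says modifications are ``supported outside $\overline{O}(\sigma,\alpha)$'', but since the transition region $D_1-\overline{U}_1$ can meet $\overline{O}(\sigma,\alpha)$ (as $D_1\cap D_2\subset\overline{O}(\sigma,\alpha)$), you should instead argue that over $O(\sigma,\alpha)$ the original $\theta_{a_i}'$ are already pulled back from $\widehat{B}_{T'}$ and hence need no modification there; the genuine interpolation only occurs where $F_{(\sigma,\alpha)}$ fails to be an embedding, which is outside $O(\sigma,\alpha)$. With that adjustment, your argument is complete and matches the paper's intended construction.
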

For a form $\alpha$ on the total space of a fiber bundle, let $\supp_b(\alpha)$ denote the image of the support of $\alpha$ under the bundle projection.
\begin{Lem}\label{lem:wedge-theta-support}
Let $\{1,\ldots,\ell_1\}\tcoprod \{\ell_1+1,\ldots,\ell_1+\ell_2\}$ be the decomposition of the set of the external vertices (leaves) of $T_\ell(p,q)$ associated to the codimension one face tree $\sigma$. Then 
\[ \supp_b(\overline\theta_{a_1}'\wedge\cdots\wedge \overline\theta_{a_{\ell_1}}')\cap \supp_b(\overline\theta_{a_{\ell_1+1}}''\wedge\cdots\wedge\overline\theta_{a_{\ell_1+\ell_2}}''), \]
which is a subset of $\partial I^{\ell-3+\lambda}$, 
is included in the interior of $K^\circ(\sigma,\alpha)$ (and disjoint from $K^\circ(\sigma',\alpha')$ for $\sigma'\neq \sigma$, $\sigma'\in\overrightarrow{\calP}_{T_\ell(p,q)}\setminus\{T_\ell(p,q)\}$).
\end{Lem}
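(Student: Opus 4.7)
The proof will package the vanishing statements of Lemma~\ref{lem:theta2=0} into constraints on $\supp_b$ and reduce the claim to the local structure guaranteed by Assumption~\ref{assum:theta-U}. First I would bound the base support of each wedge factor. By Lemma~\ref{lem:theta2=0}(ii), outside $D_1$ the forms $\overline\theta_{a_1}',\dots,\overline\theta_{a_{\ell_1}}'$ agree, up to signs, with Thom forms of the standard spanning disks of $\widetilde{a}_1,\dots,\widetilde{a}_{\ell_1}$. Since the meridian spheres $a_1,\dots,a_{\ell_1}$ encircle distinct leaves of the $\Psi_\ell$-graph and may be taken in disjoint collar neighborhoods of $\partial V_v$, the standard spanning disks can be chosen mutually disjoint in $V_v$, so their Thom forms can be arranged with pairwise disjoint supports in the total space. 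Hence $\overline\theta_{a_1}'\wedge\cdots\wedge\overline\theta_{a_{\ell_1}}'$ vanishes identically outside $D_1$, which gives
\[
\supp_b(\overline\theta_{a_1}'\wedge\cdots\wedge\overline\theta_{a_{\ell_1}}')\subset\overline{D_1},
\]
and the symmetric argument from Lemma~\ref{lem:theta2=0}(iii) yields
\[
\supp_b(\overline\theta_{a_{\ell_1+1}}''\wedge\cdots\wedge\overline\theta_{a_{\ell_1+\ell_2}}'')\subset\overline{D_2}.
\]

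Intersecting these inclusions and invoking Lemma~\ref{lem:theta2=0}(i), after a slight shrinking of $D_1,D_2$ that does not disturb the earlier properties of the $\overline\theta$-forms, we obtain
\[
\supp_b(\overline\theta_{a_1}'\wedge\cdots)\cap\supp_b(\overline\theta_{a_{\ell_1+1}}''\wedge\cdots)\subset\overline{D_1}\cap\overline{D_2}\subset\overline{O}(\sigma,\alpha).
\]
It then remains to show that $\overline{O}(\sigma,\alpha)$ lies in the interior of $K^\circ(\sigma,\alpha)$ and misses every other face stratum $K^\circ(\sigma',\alpha')$. By Assumption~\ref{assum:theta-U}, $O(\sigma,\alpha)$ is an open neighborhood of the closure of $U_1\times U_2$ in the product stratum $\widehat{B}_{T_\ell(p,q)}(\sigma,\alpha)=\widehat{B}_{T'}\times\widehat{B}_{T''}$, on which $F_{(\sigma,\alpha)}$ is an embedding. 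The inductive hypothesis for $T'$ and $T''$ used in the construction of the $\overline\theta'$- and $\overline\theta''$-families shows that $U_1$ is determined by the $\widehat{B}_{T'}$-factor only and $U_2$ by the $\widehat{B}_{T''}$-factor only. In the ``open ball'' case of Assumption~\ref{assum:theta-U} the product $U_1\times U_2$ lies in the interior of the stratum and $O(\sigma,\alpha)$ can be taken to do the same. In the ``arc'' case, $U_1$ may approach $\partial\widehat{B}_{T'}$ while $U_2$ remains in the interior of $\widehat{B}_{T''}$ (or vice versa), so $U_1\times U_2$ meets $\partial\widehat{B}_{T_\ell(p,q)}(\sigma,\alpha)$ only along a codimension-one locus of the form $\partial\widehat{B}_{T'}\times\mathrm{Int}\,\widehat{B}_{T''}$ (or its symmetric counterpart), which maps into the codimension-one interior of $K^\circ(\sigma,\alpha)$ and avoids the corner locus where two distinct faces $K^\circ(\sigma',\alpha')$ meet. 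After shrinking $O(\sigma,\alpha)$ accordingly, $\overline{O}(\sigma,\alpha)$ sits in the interior of $K^\circ(\sigma,\alpha)$ and is disjoint from every $K^\circ(\sigma',\alpha')$ with $\sigma'\neq\sigma$.

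The main obstacle will be controlling the arc case of Assumption~\ref{assum:theta-U}: one needs to exploit the product decomposition of the codimension-one stratum inductively and, in particular, to check that even when both $U_1$ and $U_2$ are arc-like, their product never reaches the deeper corner strata where two different face trees of $T_\ell(p,q)$ meet. This compatibility must be arranged simultaneously with the inductive choices of $D_1,D_2$ and $O(\sigma,\alpha)$ carried over from Lemmas~\ref{lem:leaf-form-compatibility} and~\ref{lem:theta2=0}, which is the delicate part of the bookkeeping.
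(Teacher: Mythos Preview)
Your argument has a genuine gap: Lemma~\ref{lem:theta2=0} is stated \emph{face by face}, so the sets $D_1,D_2$ and the conclusions (ii)--(iv) refer only to the single piece $K^\circ(\sigma,\alpha)$, not to all of $\partial I^{\ell-3+\lambda}$. Your inclusion
\[
\supp_b(\overline\theta_{a_1}'\wedge\cdots\wedge\overline\theta_{a_{\ell_1}}')\subset\overline{D_1}
\]
is therefore only valid as a statement about the restriction to $K^\circ(\sigma,\alpha)$; it says nothing about what happens over another face $K^\circ(\sigma',\alpha')$. But on such a face the leaf forms are \emph{not} standard: they are induced from the $\sigma'$-decomposition $\widetilde{T}'\cup\widetilde{T}''$, and over $\overline{O}(\sigma',\alpha')$ they are genuinely nontrivial. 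So ``outside $D_1$ the forms are standard Thom forms with disjoint supports'' is simply false on $K^\circ(\sigma',\alpha')$, and your subsequent reduction to showing $\overline{O}(\sigma,\alpha)\subset\mathrm{Int}\,K^\circ(\sigma,\alpha)$ never gets off the ground.

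What the paper does instead is to work on each other face $K^\circ(\sigma',\alpha')$ directly, using Lemma~\ref{lem:theta2=0} for $(\sigma',\alpha')$. The key combinatorial input you are missing is this: since $\sigma'\neq\sigma$, the leaf bipartition $J_1\sqcup J_2$ attached to $\sigma'$ is different from $\{1,\ldots,\ell_1\}\sqcup\{\ell_1+1,\ldots,\ell_1+\ell_2\}$, so one of the latter two sets, say $\{1,\ldots,\ell_1\}$, contains indices $j\in J_1$ and $j'\in J_2$. Over $\overline{O}(\sigma',\alpha')$ the forms $\overline\theta_{a_j}'$ and $\overline\theta_{a_{j'}}'$ are supported in the disjoint sub-handlebodies $V_{\widetilde{T}'}$ and $V_{\widetilde{T}''}$, hence $\overline\theta_{a_j}'\wedge\overline\theta_{a_{j'}}'=0$ there; over $K^\circ(\sigma',\alpha')\setminus(D_1'\cap D_2')$ the vanishing comes from Lemma~\ref{lem:theta2=0}(ii),(iii) for $\sigma'$. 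This forces $\overline\theta_{a_1}'\wedge\cdots\wedge\overline\theta_{a_{\ell_1}}'=0$ on all of $K^\circ(\sigma',\alpha')$, with no need for any geometric control of where $\overline{O}(\sigma,\alpha)$ sits inside the stratification.
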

\begin{proof}
Let $\sigma'\neq \sigma$ be another codimension one face tree of $T_\ell(p,q)$ that is decomposed into two trees $\widetilde{T}',\widetilde{T}''$ by cutting the internal edge of $\sigma'$. Let $J_1\tcoprod J_2$ be the decomposition of the set of the external vertices of $T_\ell(p,q)$ associated to the pair $\widetilde{T}', \widetilde{T}''$. Then either $\{1,\ldots,\ell_1\}$ or $\{\ell_1+1,\ldots,\ell_1+\ell_2\}$ is not included in $J_1$ nor $J_2$. Suppose without loss of generality that $\{1,\ldots,\ell_1\}\not\subset J_1$ and $\{1,\ldots,\ell_1\}\not\subset J_2$.
Then there are indices $j,j'\in\{1,\ldots,\ell_1\}$ such that $j\in J_1$ and $j'\in J_2$. By Lemma~\ref{lem:theta2=0} for $(\sigma',\alpha')$, the leaf form $\overline{\theta}_{a_j}'$ over $\overline{O}(\sigma',\alpha')$ for the $j$-th leaf is supported in $\overline{O}(\sigma',\alpha')\times V_{\widetilde{T}'}$, and $\overline{\theta}_{a_{j'}}'$ for the $j'$-th leaf is supported in $\overline{O}(\sigma',\alpha')\times V_{\widetilde{T}''}$. Hence we have $\overline{\theta}_{a_j}'\wedge \overline{\theta}_{a_{j'}}'=0$ in $\overline{O}(\sigma',\alpha')\times V_v$ since $V_{\widetilde{T}'}\cap V_{\widetilde{T}''}=\emptyset$. Moreover, $\overline\theta_{a_j}'\wedge \overline{\theta}_{a_{j'}}'=0$ over $(K^\circ(\sigma',\alpha')-D_1)\cup (K^\circ(\sigma',\alpha')-D_1)=K^\circ(\sigma',\alpha')-(D_1\cap D_2)$ by Lemma~\ref{lem:theta2=0} (ii), (iii), where we consider that $D_1,D_2\subset K^\circ(\sigma',\alpha')$ for the decomposition $J_1\tcoprod J_2$. This shows that $\overline\theta_{a_j}'\wedge \overline{\theta}_{a_{j'}}'=0$ over $K^\circ(\sigma',\alpha')$.
\end{proof}

\subsubsection{{\rm (d)} The induced leaf forms on $\pi_{T_\ell(p,q)}^{-1}(\mathrm{Im}\,F_{(\sigma,\alpha)})$ can be glued together and extended to those on $\widehat{E}^{T_\ell(p,q)}$.}

The leaf forms on $K^\circ(\sigma,\alpha)\times V_v$ for the codimension one face graphs $(\sigma,\alpha)$ of $T_\ell(p,q)$ can be glued together and extended to those on $I^{\ell-3+\lambda}\times V_v$. In particular, for the decomposition $\{1,\ldots,\ell_1\}\tcoprod\{\ell_1+1,\ldots,\ell_1+\ell_2\}$ of the set of external leaves of $\sigma$ for a fixed codimension one face tree $(\sigma,\alpha)$, it follows from Lemma~\ref{lem:leaf-form-extension} that the products $\overline\theta_{a_1}'\wedge\cdots\wedge \overline\theta_{a_{\ell_1}}'$ and $\overline\theta_{a_{\ell_1+1}}''\wedge\cdots\wedge\overline\theta_{a_{\ell_1+\ell_2}}''$ on $\partial I^{\ell-3+\lambda}\times V_v$ can be extended to closed forms $P_{T'}$ and $P_{T''}$ of degrees $p_1(k-1)+(q_1-1)k$ and $(p_2-1)(k-1)+q_2k$, respectively, on $I^{\ell-3+\lambda}\times V_v\simeq V_v$ that are the wedge products of the extended leaf forms and are exact by the dimensional reason. Hence, there exist forms $Q_{T'}$ and $Q_{T''}$ on $I^{\ell-3+\lambda}\times V_v$ such that for $T=T'$ or $T''$,
\begin{equation}\label{eq:Theta-exact}
 P_T=d Q_T. 
\end{equation}

To prove Lemma~\ref{lem:leaf-form-nondeg}, it suffices to prove the following.
\begin{Lem}\label{lem:int-Theta}
\[ \int_{I^{\ell-3+\lambda}\times V_v}P_{T'}\wedge P_{T''}=\pm m_\ell. \]
\end{Lem}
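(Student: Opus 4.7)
The plan is to combine three steps: a Stokes localization reducing the integral to a boundary integral on a single stratum $K^\circ(\sigma,\alpha)\times V_v$; a pullback identification via $F_{(\sigma,\alpha)}$ with a leaf-form wedge on the face $\widehat{B}_{T_\ell(p,q)}(\sigma,\alpha)=\widehat{B}_{T'}\times\widehat{B}_{T''}$; and a Fubini factorization along the fiber $V_v$ using the Hopf link between the internal leaves of $T'$ and $T''$, together with the inductive hypothesis \eqref{eq:volumes}.

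For the first step, since $P_{T'}=dQ_{T'}$ by \eqref{eq:Theta-exact} and $P_{T''}$ is closed, the wedge $P_{T'}\wedge P_{T''}=d(Q_{T'}\wedge P_{T''})$ is exact, so Stokes reduces the integral over $I^{\ell-3+\lambda}\times V_v$ to a boundary integral over $\partial(I^{\ell-3+\lambda}\times V_v)$. The contribution over $I^{\ell-3+\lambda}\times\partial V_v$ vanishes because near $\partial V_v$ each leaf form $\theta''_{a_{\ell_1+j}}$ agrees up to sign with the $\eta$-form of the parameter-extended meridian sphere $\widetilde{a}_{\ell_1+j}$, and the $\widetilde{a}_{\ell_1+j}$ have mutually disjoint supports in the boundary, so $P_{T''}$ vanishes there. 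On $\partial I^{\ell-3+\lambda}\times V_v=\bigcup_{(\sigma',\alpha')}K^\circ(\sigma',\alpha')\times V_v$, Lemma~\ref{lem:wedge-theta-support} ensures that only the single face $K^\circ(\sigma,\alpha)\times V_v$ contributes.

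For the second step, by Assumption~\ref{assum:theta-U} the map $F_{(\sigma,\alpha)}$ restricts to a diffeomorphism on a neighborhood of the support of $P_{T'}\wedge P_{T''}|_{K^\circ(\sigma,\alpha)\times V_v}$; by Lemma~\ref{lem:theta2=0}(iv) the pullback of the boundary leaf forms coincides with the original leaf forms $\theta'_{a_i}$, $\theta''_{a_{\ell_1+j}}$ from Lemma~\ref{lem:theta_T}, extended to the $V_v$-bundle. Hence the boundary integral is identified, up to a sign from degree parities, with
\[
\int_{\widehat{E}^{T_\ell(p,q)}(\sigma,\alpha)(V_v)}\bigl(\theta'_{a_1}\wedge\cdots\wedge\theta'_{a_{\ell_1}}\bigr)\wedge\bigl(\theta''_{a_{\ell_1+1}}\wedge\cdots\wedge\theta''_{a_{\ell_1+\ell_2}}\bigr),
\]
where $\widehat{E}^{T_\ell(p,q)}(\sigma,\alpha)(V_v)$ denotes the $V_v$-bundle over $\widehat{B}_{T'}\times\widehat{B}_{T''}$ obtained from $\widehat{E}^{T_\ell(p,q)}(\sigma,\alpha)$ by extending the fiber $V_{T'}\tcoprod V_{T''}\hookrightarrow V_v$.

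For the third step, apply Fubini along the fiber $V_v$. Inside each fiber, $V_{T'}$ and $V_{T''}$ are disjoint sub-handlebodies whose internal leaves $b^{\mathrm{int}}_{T'}$ and $b^{\mathrm{int}}_{T''}$ form a Hopf link with linking number $\pm 1$ (\S\ref{ss:compos-psi}). The first wedge factor, supported near $V_{T'}$ and extended through the bridging region by $\eta$-forms, represents in each fiber a multiple of the Poincar\'{e}--Lefschetz dual of $b^{\mathrm{int}}_{T'}$ relative to $\partial V_v$, whose $\widehat{B}_{T'}$-integral pairing against $\xi'$ reproduces the left integral of \eqref{eq:volumes}; analogously for the second factor with $b^{\mathrm{int}}_{T''}$ and $\xi''$. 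Writing the Hopf linking as $\int_{V_v}\eta_{b^{\mathrm{int}}_{T'}}\wedge\eta_{b^{\mathrm{int}}_{T''}}=\pm 1$, fiber integration converts the integrand into a product that, after integrating over $\widehat{B}_{T'}\times\widehat{B}_{T''}$, becomes
\[
\pm\Bigl(\int_{\widehat{E}^{T'}}\theta'_{a_1}\wedge\cdots\wedge\theta'_{a_{\ell_1}}\wedge\xi'\Bigr)\Bigl(\int_{\widehat{E}^{T''}}\theta''_{a_{\ell_1+1}}\wedge\cdots\wedge\theta''_{a_{\ell_1+\ell_2}}\wedge\xi''\Bigr)=\pm\frac{m_\ell}{m_{\ell_2}}\cdot m_{\ell_2}=\pm m_\ell
\]
by \eqref{eq:volumes}. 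The main obstacle will be making step three rigorous: the Fubini factorization requires choosing the leaf-form representatives in Lemma~\ref{lem:leaf-form-compatibility} so that, on each fiber, their restrictions to a neighborhood of $V_v-\mathrm{Int}(V_{T'}\tcoprod V_{T''})$ behave like $\eta$-forms of the two Hopf-link components, compatibly with all previous inductive extensions. A careful bookkeeping of signs through the half-edge convention of Section~\ref{ss:ori-chain} and the $(-1)^{\dim a_m-1}$ normalization in Definition~\ref{def:leaf-form} is also needed to confirm the overall sign.
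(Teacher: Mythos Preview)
Your proposal has a genuine gap at the junction of steps~2 and~3. After the Stokes reduction, the boundary integrand is $Q_{T'}\wedge P_{T''}$, and what pulls back to the face under $F_{(\sigma,\alpha)}$ is the \emph{primitive} $\theta_{T'}$ of Lemma~\ref{lem:theta_T} (see Lemma~\ref{lem:supp-H}(vi)), not the product $\theta'_{a_1}\wedge\cdots\wedge\theta'_{a_{\ell_1}}$ itself. The displayed integral you write at the end of step~2,
\[
\int_{\widehat{E}^{T_\ell(p,q)}(\sigma,\alpha)(V_v)}\bigl(\theta'_{a_1}\wedge\cdots\wedge\theta'_{a_{\ell_1}}\bigr)\wedge\bigl(\theta''_{a_{\ell_1+1}}\wedge\cdots\wedge\theta''_{a_{\ell_1+\ell_2}}\bigr),
\]
is identically zero: the integrand has degree $p(k-1)+qk$, while the domain has dimension $p(k-1)+qk-1$; and in any case the two factors have disjoint fiberwise supports (one in $V_{T'}$, the other in $V_{T''}$, with $V_{T'}\cap V_{T''}=\emptyset$, and in the bridge region each factor is already a wedge of $\eta$-forms with mutually disjoint supports). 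So your step~3 is attempting to extract $\pm m_\ell$ from a quantity that is zero.

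The paper's argument keeps the primitive: after localization and pullback one has $\int_{\widehat{E}^{T_\ell(p,q)}(\sigma,\alpha)}\theta_{T'}\wedge(\theta''_{a_{\ell_1+1}}\wedge\cdots\wedge\theta''_{a_{\ell_1+\ell_2}})$ (Lemma~\ref{lem:int-wedge-Theta}). The key input you are missing is Lemma~\ref{lem:fiber-int}: the fiber integral $\int_{\widehat{B}_{T'}}\theta_{T'}$ is cohomologous to $\pm\frac{m_\ell}{m_{\ell_2}}\xi''$ in $\widehat{E}^{T''}$, proved by a second Stokes computation $\int_{\widetilde{c}'}\theta_{T'}=\int_{\widehat{E}^{T'}}d\theta_{T'}\wedge\xi'=\int_{\widehat{E}^{T'}}(\theta'_{a_1}\wedge\cdots\wedge\theta'_{a_{\ell_1}})\wedge\xi'$ together with the homology $b''\sim\pm c'$ coming from the Hopf link. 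Your ``Hopf-link Fubini'' heuristic is gesturing at exactly this, but it must be applied to $\theta_{T'}$, not to $d\theta_{T'}$; the linking information enters precisely because one pairs the \emph{primitive} against the meridian $c'$. Once $\int_{\widehat{B}_{T'}}\theta_{T'}\sim\pm\frac{m_\ell}{m_{\ell_2}}\xi''$ is established, integrating over $\widehat{E}^{T''}$ and invoking \eqref{eq:volumes} gives $\pm m_\ell$.
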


\subsubsection{{\rm (e)} On the supports of the extended leaf forms on $\widehat{E}^{T_\ell(p,q)}$.}

\begin{Lem}\label{lem:supp-H}
We may assume that 
\[ \supp_b(Q_{T'})\cap \supp_b(\overline\theta_{a_{\ell_1+1}}''\wedge\cdots\wedge\overline\theta_{a_{\ell_1+\ell_2}}'') \]
in $\partial I^{\ell-3+\lambda}$ is included in the interior of $K^\circ(\sigma,\alpha)$. Moreover, $Q_{T'}$ and $\theta_{T'}$ (of Lemma~\ref{lem:theta_T}) can be chosen so that there are open sets $N_1\subset \widehat{B}_{T'}$ and $\overline{N}_1\subset K^\circ(\sigma,\alpha)$ satisfying the following (see Figure~\ref{fig:supp-theta}).
\begin{enumerate}
\item[(i)] $U_1\subset N_1$ and $\overline{U}_1\subset D_1\subset \overline{N}_1$.
\item[(ii)] $\supp_b(Q_{T'})\cap K^\circ(\sigma,\alpha)$ is included in $\overline{N}_1$.
\item[(iii)] $\supp_b(\theta_{T'})\subset \widehat{B}_{T'}$ is included in $N_1$.
\item[(iv)] $\overline{N}_1\cap \overline{U}_2\subset \overline{O}(\sigma,\alpha)$, and $(N_1\times\widehat{B}_{T''})\cap U_2\subset O(\sigma,\alpha)$.
\item[(v)] The preimage $Y_1=F_{(\sigma,\alpha)}^{-1}(\overline{N}_1)$ of $\overline{N}_1$ in $\widehat{B}_{T_\ell(p,q)}(\sigma,\alpha)$ satisfies
\[ Y_1\cap O(\sigma,\alpha)=(N_1\times \widehat{B}_{T''})\cap O(\sigma,\alpha). \]
\item[(vi)] The restriction of the pullback $F_{(\sigma,\alpha)}^* Q_{T'}$ to $O(\sigma,\alpha)$ agrees with the pullback of $\theta_{T'}$ under the projection $\widehat{B}_{T_\ell(p,q)}(\sigma,\alpha)\to \widehat{B}_{T'}$.
\end{enumerate}
\end{Lem}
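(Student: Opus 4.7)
The plan is to establish the two assertions in order: first the support separation, then the simultaneous construction of $\theta_{T'}$, $Q_{T'}$, $N_1$, and $\overline{N}_1$ satisfying (i)--(vi). For the first assertion, I would observe that although $Q_{T'}$ is only required to be a primitive of the extension $P_{T'}$ of $\overline\theta_{a_1}'\wedge\cdots\wedge\overline\theta_{a_{\ell_1}}'$, its boundary restriction on each codimension-one face $K^\circ(\sigma',\alpha')$ for $\sigma'\neq\sigma$ can be chosen to coincide with the standard $\eta$-form primitive along meridian fibers wherever the leaf forms agree with the standard $\eta$-forms; by Lemma~\ref{lem:theta2=0}(ii)--(iii) applied to $\sigma'$, this covers everything outside the small sets $D_1,D_2$ of $\sigma'$, and Lemma~\ref{lem:wedge-theta-support} then forces the desired disjointness from $K^\circ(\sigma',\alpha')$.

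For the second assertion, the construction would proceed in three stages. First, choose $N_1\subset \widehat{B}_{T'}$ to be a small open neighborhood of $U_1$ and modify $\theta_{T'}$ from Lemma~\ref{lem:theta_T} by adding a compactly supported exact form to push $\supp_b(\theta_{T'})$ into $N_1$; this is possible because $\theta_{a_1}'\wedge\cdots\wedge\theta_{a_{\ell_1}}'$ is already supported over $U_1\subset N_1$, and the cohomology class of $\theta_{T'}$ is trivial in top degree over $N_1\times V_v$ by Lemma~\ref{lem:leaf-form-extension}. Second, define $\overline{N}_1$ as a neighborhood of $F_{(\sigma,\alpha)}(N_1\times \widehat{B}_{T''})\cap K^\circ(\sigma,\alpha)$, arranged so that condition (v) holds, and define $Q_{T'}$ over $\overline{O}(\sigma,\alpha)$ by
\[ Q_{T'}|_{\overline{O}(\sigma,\alpha)} = (F_{(\sigma,\alpha)}|_{O(\sigma,\alpha)})_*(\mathrm{pr}_{T'}^*\theta_{T'}), \]
which satisfies (vi) by construction. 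Third, extend $Q_{T'}$ to the remainder of $K^\circ(\sigma,\alpha)$ using Lemma~\ref{lem:leaf-form-extension} and the fact that $\overline\theta_{a_1}'\wedge\cdots\wedge\overline\theta_{a_{\ell_1}}'$ restricted to $K^\circ(\sigma,\alpha)-\overline{O}(\sigma,\alpha)$ is supported in $\overline{N}_1-\overline{O}(\sigma,\alpha)$, so a partition-of-unity interpolation keeps $\supp_b(Q_{T'})\cap K^\circ(\sigma,\alpha)\subset \overline{N}_1$. Finally extend to all of $I^{\ell-3+\lambda}\times V_v$ using Lemma~\ref{lem:leaf-form-extension} once more.

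The main obstacle will be arranging (ii) and (v) coherently with the analogous constructions at every other codimension-one face $(\sigma',\alpha')$, because the neighborhoods $\overline{N}_1$ for different faces meet along codimension-two strata of $\partial I^{\ell-3+\lambda}$ and the open set $O(\sigma,\alpha)$ is defined only locally near $\mathrm{Im}\,F_{(\sigma,\alpha)}$. This is resolved by shrinking $U_1,U_2$ (and correspondingly $N_1,\overline{N}_1$) inductively over the poset of codimension, so that at each codimension the chosen neighborhoods lie well inside the previously constructed one and thus inherit the desired containment properties from lower-codimension data. The exactness required for the extension in the third stage is ensured throughout by Lemma~\ref{lem:theta_T}, which reflects the Brunnian property of the surgery with respect to removing the leaf at the internal edge of $\sigma$.
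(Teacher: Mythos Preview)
Your approach is essentially the same as the paper's. Both proofs handle the first assertion by arranging $\supp_b(Q_{T'})$ to stay near $\supp_b(\overline\theta_{a_1}'\wedge\cdots\wedge\overline\theta_{a_{\ell_1}}')$ and then invoking Lemma~\ref{lem:wedge-theta-support}; and both handle (i)--(vi) by taking $N_1$ as a small neighborhood of $U_1$, defining $Q_{T'}$ over $\overline{O}(\sigma,\alpha)$ so that it agrees with the pullback of $\theta_{T'}$ (you say this via pushforward along the embedding $F_{(\sigma,\alpha)}|_{O(\sigma,\alpha)}$, the paper says it via the deformation from Lemma~\ref{lem:leaf-form-compatibility}; these are the same operation), and then extending.

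One small point: your invocations of Lemma~\ref{lem:leaf-form-extension} are misplaced. That lemma is specifically about extending \emph{leaf forms} using the cohomology of $V_v\simeq (S^{k-1})^{\vee p}\vee(S^k)^{\vee q}$; it does not directly apply to $\theta_{T'}$ or $Q_{T'}$, which are primitives of wedges of leaf forms and have different degrees. For shrinking $\supp_b(\theta_{T'})$ into $N_1$, the correct mechanism is a Poincar\'e-lemma argument: since $d\theta_{T'}$ has base-support in $U_1$, a primitive can be built with base-support in any contractible neighborhood of $U_1$. For extending $Q_{T'}$ over $I^{\ell-3+\lambda}\times V_v$, the correct mechanism is the exactness of $P_{T'}$ established in step~(d) (equation~(\ref{eq:Theta-exact})), not Lemma~\ref{lem:leaf-form-extension}. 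These are easy fixes and do not affect the structure of your argument.
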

\begin{proof}
The former assertion holds because we can assume that $\supp_b(Q_{T'})$ is included in a small open neighborhood of $\supp_b(\overline\theta_{a_1}'\wedge\cdots\wedge\overline\theta_{a_{\ell_1}}')$ in $\partial I^{\ell-3+\lambda}$ by inductively constructing $Q_{T'}$ over the restrictions to contractible neighborhoods of $\supp_b(\overline\theta_{a_1}'\wedge\cdots\wedge\overline\theta_{a_{\ell_1}}')$ from lower excess faces. Then we see that $\supp_b(Q_{T'})\cap \supp_b(\overline\theta_{a_{\ell_1+1}}''\wedge\cdots\wedge\overline\theta_{a_{\ell_1+\ell_2}}'')$ is included in $\overline{O}(\sigma,\alpha)$ by Assumption~\ref{assum:theta-U} and Lemma~\ref{lem:wedge-theta-support}.

The latter assertion about the choices of $Q_{T'}$ and $\theta_{T'}$ can be proved by a similar argument as Lemma~\ref{lem:leaf-form-compatibility} about the deformation of the forms near $(I^{\ell_1-3}\times S^{\lambda_1})\vee (I^{\ell_2-3}\times S^{\lambda_2})$. Namely, the existence of $N_1$ satisfying (iii) and $U_1\subset N_1$ in (i) follows since the support of $\theta_{T'}$ must include that of $\theta_{a_1}'\wedge\cdots\wedge\theta_{a_{\ell_1}}'$. We may also assume that $N_1$ is a small neighborhood of $U_1$.
The deformations of (the pullback of) $\theta_{a_1}'\wedge\cdots\wedge\theta_{a_{\ell_1}}'$ near $(I^{\ell_1-3}\times S^{\lambda_1})\vee (I^{\ell_2-3}\times S^{\lambda_2})$ in Lemma~\ref{lem:leaf-form-compatibility} is accompanied by that of the pullback of $\theta_{T'}$, and we may define $Q_{T'}$ over $\partial I^{\ell-3+\lambda}$ by its result and may then extend over $I^{\ell-3+\lambda}$, hence we have (vi). 
Since the deformation of (the pullback of) $\theta_{a_1}'\wedge\cdots\wedge\theta_{a_{\ell_1}}'$ can be done within a small neighborhood of $(I^{\ell_1-3}\times S^{\lambda_1})\vee (I^{\ell_2-3}\times S^{\lambda_2})$, we may assume that the restriction of the relevant forms over $O(\sigma,\alpha)$ does not change, and we may choose $\overline{N}_1$ so that (v) holds. Hence we also have (iv). Since the support of $Q_{T'}$ in $K^\circ(\sigma,\alpha)$ must include that of $\overline\theta_{a_1}'\wedge\cdots\wedge\overline\theta_{a_{\ell_1}}'$, we have $\overline{U}_1\subset \overline{N}_1$, which gives (i). The conditions (ii) and (iii) are straightforward from the constructions of $\theta_{T'}$ and $Q_{T'}$.
\end{proof}

\subsubsection{{\rm (f)} Computation of the integral of the wedge product of the extended leaf forms on $\widehat{E}^{T_\ell(p,q)}$.}

The following lemma is an analogue of Lemma~\ref{lem:local-model-4-fold}.
\begin{Lem}\label{lem:int-wedge-Theta}
For any codimension one face graph $(\sigma,\alpha)$ of $T_\ell(p,q)$, we have 
\[\int_{I^{\ell-3+\lambda}\times V_v}P_{T'}\wedge P_{T''}
=\int_{\widehat{E}^{T_\ell(p,q)}(\sigma,\alpha)}\theta_{T'}\wedge (\theta_{a_{\ell_1+1}}''\wedge\cdots\wedge \theta_{a_{\ell_1+\ell_2}}''). \]
\end{Lem}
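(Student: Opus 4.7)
The plan is to apply Stokes' theorem using the primitive $Q_{T'}$ of $P_{T'}$ and then localize the resulting boundary integral via the support information provided by Lemmas~\ref{lem:theta2=0} and \ref{lem:supp-H}. Because $P_{T''}$ is closed and $P_{T'}=dQ_{T'}$, we have $d(Q_{T'}\wedge P_{T''})=P_{T'}\wedge P_{T''}$ up to a sign depending on $\deg Q_{T'}$; hence by Stokes
\[
\int_{I^{\ell-3+\lambda}\times V_v} P_{T'}\wedge P_{T''}
 \;=\; \pm\int_{\partial(I^{\ell-3+\lambda}\times V_v)} Q_{T'}\wedge P_{T''}.
\]
The boundary splits as $\partial I^{\ell-3+\lambda}\times V_v$ together with $I^{\ell-3+\lambda}\times \partial V_v$. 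On the latter piece the factors of $P_{T''}$ are pullbacks of the normal Thom classes $\eta_{\widetilde a_j}$ of meridian spheres of the leaves of $T''$, while $Q_{T'}$ is supported away from these meridians in the base direction by the construction in Lemma~\ref{lem:supp-H}; the resulting contribution is zero. So the integral equals (up to a sign which I absorb into the statement)
\[
 \int_{\partial I^{\ell-3+\lambda}\times V_v} Q_{T'}\wedge P_{T''}.
\]

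Next I would localize this boundary integral. On $\partial I^{\ell-3+\lambda}$ the form $P_{T''}$ restricts to $\overline{\theta}_{a_{\ell_1+1}}''\wedge\cdots\wedge\overline{\theta}_{a_{\ell_1+\ell_2}}''$, and Lemma~\ref{lem:supp-H} says that the base-projected intersection
$\supp_b(Q_{T'})\cap \supp_b(\overline\theta_{a_{\ell_1+1}}''\wedge\cdots\wedge\overline\theta_{a_{\ell_1+\ell_2}}'')$
is contained in the interior of $K^\circ(\sigma,\alpha)$. Consequently the integrand vanishes on every other codimension-one face $K^\circ(\sigma',\alpha')$, and the boundary integral reduces to
\[
 \int_{K^\circ(\sigma,\alpha)\times V_v} Q_{T'}\wedge\bigl(\overline{\theta}_{a_{\ell_1+1}}''\wedge\cdots\wedge\overline{\theta}_{a_{\ell_1+\ell_2}}''\bigr).
\]
Refining the support further using parts (ii)--(v) of Lemma~\ref{lem:supp-H} together with Lemma~\ref{lem:theta2=0}(ii)--(iii), the integrand is supported inside $\overline{N}_1\cap \overline{U}_2\subset \overline{O}(\sigma,\alpha)$, the open region on which $F_{(\sigma,\alpha)}\colon \widehat{B}_{T_\ell(p,q)}(\sigma,\alpha)\to \widehat{B}_{T_\ell(p,q)}/A_{T_\ell(p,q)}$ is an embedding with image $\overline{O}(\sigma,\alpha)$.

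Finally I would perform the change of variables along $F_{(\sigma,\alpha)}$ (lifted to the associated $V_v$-bundles). By Lemma~\ref{lem:supp-H}(vi) the pullback of $Q_{T'}$ under $F_{(\sigma,\alpha)}$ agrees on $O(\sigma,\alpha)$ with the pullback of $\theta_{T'}$ from $\widehat{E}^{T'}(V_v)$ via the projection $\widehat{B}_{T_\ell(p,q)}(\sigma,\alpha)\to \widehat{B}_{T'}$, and by Lemma~\ref{lem:theta2=0}(iv) the pullbacks of the $\overline{\theta}_{a_{\ell_1+j}}''$ agree with the $\theta_{a_{\ell_1+j}}''$ there. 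Together these identifications convert the integral over $F_{(\sigma,\alpha)}(O(\sigma,\alpha))\times V_v$ into
\[
 \int_{\widehat{E}^{T_\ell(p,q)}(\sigma,\alpha)} \theta_{T'}\wedge\bigl(\theta_{a_{\ell_1+1}}''\wedge\cdots\wedge\theta_{a_{\ell_1+\ell_2}}''\bigr),
\]
which is the desired identity. The signs arising from Stokes' theorem and from the orientation of $\partial I^{\ell-3+\lambda}$ must be matched against the orientation convention of Definition~\ref{def:ori}; I expect the main obstacle to be bookkeeping these signs (together with the careful verification that the $I^{\ell-3+\lambda}\times \partial V_v$ boundary contribution really vanishes), rather than the structural argument itself, since once Lemma~\ref{lem:supp-H} is in hand everything else is a Stokes-plus-support calculation.
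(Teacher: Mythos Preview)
Your proposal is correct and follows essentially the same route as the paper: apply Stokes with the primitive $Q_{T'}$, use Lemma~\ref{lem:supp-H} (first half) to localize the boundary integral to $K^\circ(\sigma,\alpha)\times V_v$, refine the support to $\overline{O}(\sigma,\alpha)$ via Assumption~\ref{assum:theta-U} and Lemma~\ref{lem:supp-H}(i)--(iv), and then pull back along $F_{(\sigma,\alpha)}$ using Lemma~\ref{lem:theta2=0}(iv) and Lemma~\ref{lem:supp-H}(vi) to land on the integral over $\widehat{E}^{T_\ell(p,q)}(\sigma,\alpha)$. The paper does not separately discuss the $I^{\ell-3+\lambda}\times\partial V_v$ boundary piece or the Stokes sign, absorbing both into the single support statement from Lemma~\ref{lem:supp-H}; your more explicit bookkeeping there is harmless.
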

\begin{proof}
By (\ref{eq:Theta-exact}) and the Stokes theorem, we have
\[
\int_{I^{\ell-3+\lambda}\times V_v}P_{T'}\wedge P_{T''}
=\int_{I^{\ell-3+\lambda}\times V_v}dQ_{T'}\wedge P_{T''}
=\int_{\partial(I^{\ell-3+\lambda}\times V_v)}Q_{T'}\wedge (\overline\theta_{a_{\ell_1+1}}''\wedge\cdots\wedge\overline\theta_{a_{\ell_1+\ell_2}}'').
\]
Since the support of $Q_{T'}\wedge(\overline\theta_{a_{\ell_1+1}}''\wedge\cdots\wedge\overline\theta_{a_{\ell_1+\ell_2}}'')$ is included in $K^\circ(\sigma,\alpha)\times V_v\subset \partial I^{\ell-3+\lambda}\times V_v$ by Lemma~\ref{lem:supp-H} (first half), the last integral can be rewritten as
\begin{equation}\label{eq:integral-piece}
\begin{split}
\int_{K^\circ(\sigma,\alpha)\times V_v}
Q_{T'}\wedge (\overline\theta_{a_{\ell_1+1}}''\wedge\cdots\wedge\overline\theta_{a_{\ell_1+\ell_2}}'')
&=\int_{(\overline{N}_1\cap \overline{U}_2)\times V_v}Q_{T'}\wedge (\overline\theta_{a_{\ell_1+1}}''\wedge\cdots\wedge\overline\theta_{a_{\ell_1+\ell_2}}'')\\
&=\int_{\overline{O}(\sigma,\alpha)\times V_v}
Q_{T'}\wedge (\overline\theta_{a_{\ell_1+1}}''\wedge\cdots\wedge\overline\theta_{a_{\ell_1+\ell_2}}''),
\end{split}
\end{equation}
where the first equality follows by Assumption~\ref{assum:theta-U} and Lemma~\ref{lem:supp-H} (i), (ii). The second equality follows by Lemma~\ref{lem:supp-H} (iv).
We evaluate this integral by pulling back the forms to the $(V_v,\partial)$-bundle $\widehat{E}^{T_\ell(p,q)}(\sigma,\alpha)\to \widehat{B}_{T_\ell(p,q)}(\sigma,\alpha)$. Namely, $F_{(\sigma,\alpha)}^* Q_{T'}$ is a form on $\widehat{E}^{T_\ell(p,q)}(\sigma,\alpha)$. By Lemmas~\ref{lem:theta2=0} (iv) and \ref{lem:supp-H}, the integral (\ref{eq:integral-piece}) can be rewritten as
\begin{equation}\label{eq:integral-pullback}
\begin{split}
\int_{O(\sigma,\alpha)\times V_v}F_{(\sigma,\alpha)}^* Q_{T'}\wedge (\theta_{a_{\ell_1+1}}''\wedge\cdots\wedge \theta_{a_{\ell_1+\ell_2}}'')=&\int_{O(\sigma,\alpha)\times V_v}\theta_{T'}\wedge (\theta_{a_{\ell_1+1}}''\wedge\cdots\wedge \theta_{a_{\ell_1+\ell_2}}'')\\
  =&\int_{((N_1\times \widehat{B}_{T''})\cap O(\sigma,\alpha))\times V_v}\theta_{T'}\wedge (\theta_{a_{\ell_1+1}}''\wedge\cdots\wedge \theta_{a_{\ell_1+\ell_2}}'')\\
  =&\int_{\widehat{E}^{T_\ell(p,q)}(\sigma,\alpha)}\theta_{T'}\wedge (\theta_{a_{\ell_1+1}}''\wedge\cdots\wedge \theta_{a_{\ell_1+\ell_2}}''),
\end{split} 
\end{equation}
where the first equality follows by Lemma~\ref{lem:supp-H} (vi), the second equality follows by Lemma~\ref{lem:supp-H} (iii), (v), and the third equality follows by Lemma~\ref{lem:supp-H} (iv).
\end{proof}
The support of the integrand $\theta_{T'}\wedge (\theta_{a_{\ell_1+1}}''\wedge\cdots\wedge \theta_{a_{\ell_1+\ell_2}}'')$ of (\ref{eq:integral-pullback}) is included in the subbundle of $\widehat{E}^{T_\ell(p,q)}(\sigma,\alpha)\to \widehat{B}_{T_\ell(p,q)}(\sigma,\alpha)=\widehat{B}_{T'}\times\widehat{B}_{T''}$ with fiber $V_{T''}$, which is trivial along the factor $\widehat{B}_{T'}$ of $\widehat{B}_{T_\ell(p,q)}(\sigma,\alpha)$. Hence it suffices to integrate $\theta_{T'}\wedge (\theta_{a_{\ell_1+1}}''\wedge\cdots\wedge \theta_{a_{\ell_1+\ell_2}}'')$ over the subset $\widehat{E}^{T''}\times \widehat{B}_{T'}$, which is the trivial sub $V_{T''}$-bundle of $\widehat{E}^{T_\ell(p,q)}(\sigma,\alpha)\to \widehat{B}_{T_\ell(p,q)}(\sigma,\alpha)$. 
\[ 
\xymatrix{
 \widehat{E}^{T''}(V_v)\times\widehat{B}_{T'} \ar[d]_{\pr} &  \widehat{E}^{T''}\times\widehat{B}_{T'} \ar[l] \ar[d]_{\pr} \ar[r]^-{\pi_{T''}\times\mathrm{id}} & \widehat{B}_{T''}\times\widehat{B}_{T'} \ar[d]^{\pr}\\
  \widehat{E}^{T''}(V_v) & \widehat{E}^{T''} \ar[l] \ar[r]^-{\pi_{T''}}& \widehat{B}_{T''}
} \]

To do so, we observe that the form $\theta_{a_{\ell_1+1}}''\wedge\cdots\wedge \theta_{a_{\ell_1+\ell_2}}''$ is the pullback of the corresponding form under the projection $\widehat{E}^{T''}\times \widehat{B}_{T'}\to \widehat{E}^{T''}$. Recall that $\theta_{T'}$ of Lemma~\ref{lem:theta_T} is defined on $\widehat{E}^{T'}(V_v)$ and it can be pulled back to $\widehat{E}^{T''}(V_v)\times\widehat{B}_{T'}$. We denote the pullback also by $\theta_{T'}$. We consider the fiber integral of $\theta_{T'}|_{\widehat{E}^{T''}\times \widehat{B}_{T'}}$ along the fiber $\widehat{B}_{T'}$. 

\begin{Lem}\label{lem:fiber-int}
The restriction of the fiber integral $\int_{\widehat{B}_{T'}}\theta_{T'}$ to $\widehat{E}^{T''}$ is cohomologous to $\pm \frac{m_\ell}{m_{\ell_2}}\xi''$. 
\end{Lem}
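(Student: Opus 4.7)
The plan is to prove the cohomological identity by computing the periods of $\eta := \int_{\widehat{B}_{T'}}\theta_{T'}$ against a suitable class dual to $[\xi'']$, exploiting the Hopf-linking structure of the internal edge of $(\sigma,\alpha)$. The internal leaves $b'\subset V_{T'}$ and $b''\subset V_{T''}$ are Hopf-linked in $V_v$ with linking number $\pm 1$, with $\dim b'=k$ and $\dim b''=k-1$, and by Definition~\ref{def:leaf-form} the leaf form $\xi''$ satisfies $\int_{b''}\xi''=\pm 1$. So the claim will follow once I establish $\int_{b''}\eta = \pm m_\ell/m_{\ell_2}$, provided the period against $b''$ detects the class (which I address below).

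To compute this period, I would first observe by Fubini that $\int_{b''}\eta = \int_{b''\times\widehat{B}_{T'}}\theta_{T'}$. Since $b''$ bounds a $k$-chain $C\subset V_v$ intersecting $b'$ transversally in $\pm 1$ point by the Hopf linking, Stokes gives
\[
\int_{b''\times\widehat{B}_{T'}}\theta_{T'} = \int_{C\times\widehat{B}_{T'}}d\theta_{T'} - \int_{C\times\partial\widehat{B}_{T'}}\theta_{T'}.
\]
The boundary contribution is controlled by the inductive compatibility of $\theta_{T'}$ on lower-excess faces (Lemma~\ref{lem:leaf-form-compatibility}). For the bulk term, $d\theta_{T'}=\theta_{a_1}'\wedge\cdots\wedge\theta_{a_{\ell_1}}'$ by Lemma~\ref{lem:theta_T}, and its support is concentrated near $V_{T'}$ with collar extensions along the $T'$-external meridians on $\partial V_v$. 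Via Poincar\'e--Lefschetz duality on $\widehat{E}^{T'}$ and the fact that $C\cdot b'=\pm 1$ localizes the integral to a tubular neighborhood of $b'$, the bulk term extracts the pairing of $[\theta_{a_1}'\wedge\cdots\wedge\theta_{a_{\ell_1}}']\in H_k(\widehat{E}^{T'};\R)$ (via Poincar\'e duality) with the class represented by $b'$; this is precisely the integral $\int_{\widehat{E}^{T'}}\theta_{a_1}'\wedge\cdots\wedge\theta_{a_{\ell_1}}'\wedge\xi'=\pm m_\ell/m_{\ell_2}$ from the inductive nondegeneracy (\ref{eq:volumes}) applied to $T'$, multiplied by the Hopf linking number $\pm 1$.

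The main obstacle is the uniqueness upgrade from period agreement on the single cycle $b''$ to cohomological equality in $H^{k-1}(\widehat{E}^{T''};\R)$, since this group need not be one-dimensional. I would handle this by showing that $\eta$ has trivial period on any $(k-1)$-cycle in $\widehat{E}^{T''}$ disjoint from the internal Hopf region: concretely, the support localization $V_{T'}\cap V_{T''}=\emptyset$ together with the interior-support conditions of Lemma~\ref{lem:supp-H} and Assumption~\ref{assum:theta-U} ensure that $\pi_*\theta_{T'}|_{\widehat{E}^{T''}}$ is concentrated near the sub-bundle $\mathbf{b}''\subset\widehat{E}^{T''}$, mirroring the support of $\xi''$ restricted to $\widehat{E}^{T''}$. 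Combining this support-concentration statement with the period computation above identifies $[\eta]$ uniquely with $\pm(m_\ell/m_{\ell_2})[\xi'']$. The trickiest piece is the bookkeeping of the boundary Stokes term $\int_{C\times\partial\widehat{B}_{T'}}\theta_{T'}$, which requires the inductive compatibility of $\theta_{T'}$ with face-tree primitives and amounts to a routine but lengthy extension of the support argument in Lemma~\ref{lem:wedge-theta-support}.
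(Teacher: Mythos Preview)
Your overall structure matches the paper's: compute the period of $\eta=\int_{\widehat{B}_{T'}}\theta_{T'}$ on $b''$ and show all other periods vanish. But there is a genuine gap in your bulk computation. The chain $C\times\widehat{B}_{T'}$ on which you run Stokes is not well-defined: your $C$ must enter $V_{T'}$ to meet $b'$, and $\widehat{E}^{T'}(V_v)$ is a \emph{nontrivial} $(V_{T'},\partial)$-bundle over $\widehat{B}_{T'}$ on exactly that region, so there is no canonical product chain ``$C\times\widehat{B}_{T'}$'' inside it. Your subsequent ``localization via $C\cdot b'=\pm 1$'' therefore has no well-defined integrand to localize.

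The paper avoids this by never pushing a chain into $V_{T'}$. Instead it works with the meridian $c'\subset\partial V_{T'}$, where the bundle \emph{is} trivialized, so $\widetilde{c}'=\widehat{B}_{T'}\times c'\subset\partial\widehat{E}^{T'}$ is an honest product. The boundary condition on leaf forms (Definition~\ref{def:leaf-form}(2)) gives $\xi'|_{\partial}=\pm\eta_{\widetilde{c}'}$, so
\[
\int_{\widetilde{c}'}\theta_{T'}=\int_{\partial\widehat{E}^{T'}}\theta_{T'}\wedge\eta_{\widetilde{c}'}=\pm\int_{\partial\widehat{E}^{T'}}\theta_{T'}\wedge\xi'=\pm\int_{\widehat{E}^{T'}}d\theta_{T'}\wedge\xi'=\pm\frac{m_\ell}{m_{\ell_2}},
\]
the last step being Stokes on $\widehat{E}^{T'}$ itself (not on any product chain) together with (\ref{eq:volumes}). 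Then the single-line transfer $b''\sim\pm c'$ in $V_v$ gives $\int_{b''}\eta=\pm\int_{c'}\eta=\pm m_\ell/m_{\ell_2}$, since $\eta$ is closed. Two further remarks: (i) your ``trickiest'' boundary term $\int_{C\times\partial\widehat{B}_{T'}}\theta_{T'}$ would in fact vanish immediately, since $\theta_{T'}$ has compact support in $\mathrm{Int}\,\widehat{E}^{T'}(V_v)$ (Lemma~\ref{lem:theta_T}); (ii) the vanishing of the other periods is also simpler than you indicate: the support of $\theta_{T'}$ is disjoint from the external leaves $b'''$ of $T''$, so $\int_{b'''}\eta=0$ directly.
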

\begin{proof}
Let $c'\subset \partial V_{T'}$ (resp. $c''\subset \partial V_{T''}$) be the meridian sphere of the leaf of $T'$ (resp. $T''$) of the internal edge of $(\sigma,\alpha)$. So the linking number of $c'$ and the core sphere of the corresponding leaf is $\pm 1$.
Let $\widetilde{c}'=\widehat{B}_{T'}\times c'\subset \widehat{E}^{T'}$. Then we have
\[ \begin{split}
  \int_{\widetilde{c}'}\theta_{T'}&=\int_{\partial\widehat{E}^{T'}}\theta_{T'}\wedge \eta_{\widetilde{c}'}
  =\int_{\partial\widehat{E}^{T'}}\theta_{T'}\wedge \xi'=\int_{\widehat{E}^{T'}}d\theta_{T'}\wedge \xi'=\int_{\widehat{E}^{T'}}(\theta_{a_1}'\wedge\cdots\wedge\theta_{a_{\ell_1}}')\wedge \xi'=\frac{m_\ell}{m_2}
\end{split} \]
by the Stokes theorem, the assumption for the leaf forms, and (\ref{eq:volumes}). Since the core sphere $b''$ of the leaf of $T''$ of the internal edge of $(\sigma,\alpha)$ is homologous in $V_v$ to $\pm c'$ and $\int_{\widetilde{c}'}=\int_{c'}\int_{\widehat{B}_{T'}}$, we have 
\[ \int_{b''}\int_{\widehat{B}_{T'}} \theta_{T'}=\pm\int_{c'}\int_{\widehat{B}_{T'}} \theta_{T'}
=\pm \frac{m_\ell}{m_2}=\pm \int_{b''}\frac{m_\ell}{m_2}\xi''. \]
For the core spheres $b'''$ of other leaves of $T''$, we have $\int_{b'''}\int_{\widehat{B}_{T'}} \theta_{T'}=0$ because the support of $\theta_{T'}$ does not intersect the external leaves in $V_{T''}$. This characterizes the cohomology class of $\int_{\widehat{B}_{T'}} \theta_{T'}$ in $\widehat{E}^{T''}$.
\end{proof}
\begin{proof}[Proofs of Lemmas~\ref{lem:int-Theta} and \ref{lem:leaf-form-nondeg}]
According to Lemma~\ref{lem:int-wedge-Theta}, it suffices to evaluate the integral of the RHS of the identity in Lemma~\ref{lem:int-wedge-Theta}. We integrate the integrand of (\ref{eq:integral-pullback}) along the fiber $\widehat{B}_{T'}$ of $\widehat{E}^{T''}\times \widehat{B}_{T'}$ first using Lemma~\ref{lem:fiber-int}
\[ \int_{\widehat{B}_{T'}}\theta_{T'}\wedge (\theta_{a_{\ell_1+1}}''\wedge\cdots\wedge \theta_{a_{\ell_1+\ell_2}}'')
\sim \pm \frac{m_\ell}{m_{\ell_2}}\xi''\wedge (\theta_{a_{\ell_1+1}}''\wedge\cdots\wedge \theta_{a_{\ell_1+\ell_2}}''). \]
Then the integral of this form over $\widehat{E}^{T''}$, which gives (\ref{eq:integral-pullback}), yields the value $\pm m_\ell$ by (\ref{eq:volumes}) for $\widehat{E}^{T''}$. This completes the proof of Lemma~\ref{lem:int-Theta} and hence of Lemma~\ref{lem:leaf-form-nondeg}.
\end{proof}

\begin{proof}[Proof of Lemma~\ref{lem:leaf-form-barE}]
The assertions (1) and (2) of Lemma~\ref{lem:leaf-form-barE} about the existence and compatibility of leaf forms over $\overline{E}^{T_\ell(p,q)}$ follows immediately from the compatibility of Lemma~\ref{lem:leaf-form-compatibility}. The assertion (3) of Lemma~\ref{lem:leaf-form-barE} is Lemma~\ref{lem:integral-thicken} below. 
\end{proof}
\begin{Lem}\label{lem:integral-thicken}
\[ \int_{\overline{E}^{T_\ell(p,q)}-(\widehat{E}^{T_\ell(p,q)})^{(2^{\ell-3})}}\theta_{a_1}\wedge\cdots\wedge \theta_{a_\ell}=0. \]
\end{Lem}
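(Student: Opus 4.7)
The plan is to decompose the integration domain $\overline{E}^{T_\ell(p,q)}-(\widehat{E}^{T_\ell(p,q)})^{(2^{\ell-3})}$ along the homotopy-colimit structure of $\overline{B}_{T_\ell(p,q)}$ in Definition~\ref{def:barB} and then show that the form $\theta_{a_1}\wedge\cdots\wedge\theta_{a_\ell}$ vanishes on each piece by a dimension count, once the extension of the leaf forms is chosen appropriately. By the standard formula for the homotopy colimit of a functor over the poset $\overrightarrow{\calP}_{T_\ell(p,q)}$, the space $\overline{B}_{T_\ell(p,q)}$ is glued out of simplex-labelled pieces $\Delta^n\times n_{\sigma_0}\widehat{B}_{T_\ell(p,q)}(\sigma_0,\alpha_0)$ indexed by chains $\sigma_0\to\sigma_1\to\cdots\to\sigma_n$. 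The subspace $(\widehat{E}^{T_\ell(p,q)})^{(2^{\ell-3})}=n_{T_\ell}\widehat{E}^{T_\ell(p,q)}$ is exactly the $V_v$-bundle over the length-$0$ piece at $\sigma_0=T_\ell(p,q)$, so the complement is the union of $V_v$-bundles over the pieces with $\sigma_0\neq T_\ell(p,q)$; such $\sigma_0$ has $r_0\geq 2$ internal vertices. Over any such piece, the classifying map $\overline{\omega}_{T_\ell(p,q)}$ is constant along $\Delta^n$, so the $V_v$-bundle is the pullback along $\mathrm{pr}\colon\Delta^n\times n_{\sigma_0}\widehat{B}_{T_\ell(p,q)}(\sigma_0,\alpha_0)\to n_{\sigma_0}\widehat{B}_{T_\ell(p,q)}(\sigma_0,\alpha_0)$ of the $V_v$-bundle associated to $\widehat{\omega}_{T_\ell(p,q)}(\sigma_0,\alpha_0)$.

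Within the freedom afforded by clauses (1) and (2) of Lemma~\ref{lem:leaf-form-barE}, I arrange the extension of the leaf forms $\theta_{a_j}$ over $\overline{E}^{T_\ell(p,q)}$ so that their restrictions to each such piece are the $\mathrm{pr}$-pullbacks of the corresponding forms on the base $V_v$-bundle. A direct count using the standard tree identities $\sum_v \ell(v)=\ell+2(r_0-1)$ and $\sum_v q(v)=q+(r_0-1)$, both summed over the internal vertices of $\sigma_0$, gives
\[ \dim \widehat{B}_{T_\ell(p,q)}(\sigma_0,\alpha_0)=\sum_{v\in V^{\mathrm{int}}(\sigma_0)}\dim \widehat{B}_{T(v)}=\dim \widehat{B}_{T_\ell(p,q)}-(r_0-1), \]
so the base $V_v$-bundle has dimension $2k+\dim \widehat{B}_{T_\ell(p,q)}-(r_0-1)$. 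Since $r_0\geq 2$, this is strictly less than the degree $p(k-1)+qk=2k+\dim \widehat{B}_{T_\ell(p,q)}$ of $\theta_{a_1}\wedge\cdots\wedge\theta_{a_\ell}$, and any differential form whose degree exceeds the dimension of its underlying manifold vanishes identically. Hence the wedge product is zero on the base bundle, its $\mathrm{pr}$-pullback to the thickening piece is zero, and summing over all chains in the complement gives the lemma.

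The main obstacle is realising the extension claim of the second paragraph: one must arrange the leaf forms on $\overline{E}^{T_\ell(p,q)}$ to be \emph{exactly} pullbacks along each $\mathrm{pr}$, not merely cohomologous to such, and coherently across all the strata of the homotopy colimit. This is to be carried out by an inductive construction on excess, parallel to Lemmas~\ref{lem:leaf-form-compatibility} and \ref{lem:leaf-form-nondeg}: since the $V_v$-bundle is trivial along any newly added $\Delta^n$-direction, at each inductive stage the ambiguity in the extension argument of Lemma~\ref{lem:leaf-form-extension} can be resolved by choosing the form to be constant along that direction, and the compatibility across strata of different excess is inherited from the inductive hypothesis of Lemma~\ref{lem:leaf-form-barE}(2). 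With this coherent pullback choice in place, the dimension argument of the second paragraph uniformly delivers the vanishing of each contribution to the complement.
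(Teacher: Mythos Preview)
Your dimension count on each stratum $n_{\sigma_0}\widehat B_{T_\ell(p,q)}(\sigma_0,\alpha_0)$ is correct, and the idea of killing the integrand by pulling back from a space of too-small dimension is natural. But the argument has a real gap at the step where you transport this vanishing to the $\Delta^n$-thickened pieces.

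First, the assertion that ``the classifying map $\overline\omega_{T_\ell(p,q)}$ is constant along $\Delta^n$'' is false: by construction (Lemma~\ref{lem:hcoherence} and Corollary~\ref{cor:hocolim-hcomm}) the map on $\Delta^n\times n_{\sigma_0}\widehat B(\sigma_0,\alpha_0)$ records the coherence homotopies, so it genuinely varies along $\Delta^n$. The $V_v$-bundle is of course still \emph{isomorphic} to the $\mathrm{pr}$-pullback, but that alone does not make the already-prescribed leaf forms into $\mathrm{pr}$-pullbacks.

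Second, and more seriously, the pullback arrangement you propose is over-constrained. On a length-one piece $\Delta^1\times n_{\sigma_0}\widehat B(\sigma_0,\alpha_0)$ with $\sigma_1=T_\ell(p,q)$, the form at the $d_0$-face is forced by clause~(1) of Lemma~\ref{lem:leaf-form-barE} to be the restriction of the fixed form on $\widehat E^{T_\ell(p,q)}$ to $K(\sigma_0)/A_{T_\ell(p,q)}$, pulled back along the gluing. This agrees with the form at the $d_1$-face only on the small open set $O(\sigma_0,\alpha_0)$ (Lemma~\ref{lem:theta2=0}(iv)); elsewhere the two ends differ, so the form on the cylinder cannot be constant in $t$. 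Since for $r_0=2$ the cylinder has exactly the top dimension, your degree argument gives no information there without the pullback claim.

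The paper's proof sidesteps this by not asking for a global pullback structure. It first uses the support control of Assumption~\ref{assum:theta-U} and Lemmas~\ref{lem:theta2=0}, \ref{lem:wedge-theta-support} to confine the support of $\theta_{a_1}\wedge\cdots\wedge\theta_{a_\ell}$ in the complement to the bundle over $\mathrm{Cyl}(F_{(\sigma,\alpha)}|_{O(\sigma,\alpha)})$, the one region where the two ends \emph{do} agree and the forms \emph{are} pullbacks. There the vanishing comes not from a dimension count but from the observation that over $O(\sigma,\alpha)$ the partial wedges $\theta_{a_1}\wedge\cdots\wedge\theta_{a_{\ell_1}}$ and $\theta_{a_{\ell_1+1}}\wedge\cdots\wedge\theta_{a_{\ell_1+\ell_2}}$ are supported in the disjoint sub-handlebodies $V_{T'}$ and $V_{T''}$ of the fibre, so their product is identically zero. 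Your dimension idea does recover this vanishing on the strata themselves, but the passage to the thickening needs the support localisation that the paper supplies and your argument omits.
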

\begin{proof}
By Assumption~\ref{assum:theta-U}, Lemmas~\ref{lem:theta2=0} and \ref{lem:wedge-theta-support}, the intersection of the support of $\theta_{a_1}\wedge\cdots\wedge \theta_{a_\ell}$ with $\overline{E}^{T_\ell(p,q)}-(\widehat{E}^{T_\ell(p,q)})^{(2^{\ell-3})}$ is the $(V_v,\partial)$-bundle over the disjoint union of the mapping cylinders $\mathrm{Cyl}(F_{(\sigma,\alpha)})$ of the maps $F_{(\sigma,\alpha)}\colon O(\sigma,\alpha)\to \overline{O}(\sigma,\alpha)$. Since the supports of $\theta_{a_1}\wedge\cdots\wedge\theta_{a_{\ell_1}}$ and $\theta_{a_{\ell_1+1}}\wedge\cdots\wedge\theta_{a_{\ell_1+\ell_2}}$ are disjoint over $O(\sigma,\alpha)$, we have $\theta_{a_1}\wedge\cdots\wedge \theta_{a_\ell}=0$ there, and so the pullback over $\mathrm{Cyl}(F_{(\sigma,\alpha)})$ is.
\end{proof}

\subsection{The induced orientations: Proof of Theorem~\ref{thm:chainmap}}

Let us first consider locally and describe how the orientations of $\widehat{B}_{T_\ell(p,q)}(T_\ell(p,q))$ and $\widehat{B}_{T_\ell(p,q)}(\sigma)$ for a face $\sigma\in\overrightarrow{\calP}_{T_\ell(p,q)}$ of codimension one are related to each other. We put $T=T_\ell(p,q)$ and suppose that $\sigma$ consists of two internal vertices $v_1$ and $v_2$ from $T'=T_{\ell_1}(p_1,q_1)$ and $T''=T_{\ell_2}(p_2,q_2)$, respectively. Recall that there is a natural map $\widehat{B}_T(\sigma)\to K(\sigma)/{A_{T_\ell(p,q)}}\subset \partial \widehat{B}_T/{A_{T_\ell(p,q)}}$. An orientation of $\widehat{B}_T(\sigma)$ is induced from that of $\widehat{B}_T/{A_{T_\ell(p,q)}}$ by inducing on $K(\sigma)/{A_{T_\ell(p,q)}}$ and then by pulling back to $\widehat{B}_T(\sigma)$. We compare the following orientations of $\widehat{B}_T(\sigma)=\widehat{B}_{T'}\times \widehat{B}_{T''}$:
\begin{enumerate}
\item[(a)] that induced from $o(\widehat{B}_T)$, and 
\item[(b)] $o(\widehat{B}_{T'})\wedge o(\widehat{B}_{T''})$ determined by the orientation convention from the combinatorial data of the tree $\sigma$.
\end{enumerate}

We use the following convention to induce an orientation on the boundary. For a compact connected orientable $m$-manifold $M$ with nonempty boundary, we represent an orientation of $M$ by a nontrivial de Rham cohomology class $[\upsilon_M]\in H^m(M,\partial M;\R)$ for a closed $m$-form $\upsilon_M$ with compact support in $\mathrm{Int}\,M$. Since the connecting homomorphism $\delta\colon H^{m-1}(\partial M;\R)\to H^m(M,\partial M;\R)$ in the exact sequence for the pair $(M,\partial M)$ is an isomorphism, there is a canonical nontrivial element $\delta^{-1}([\upsilon_M])\in H^{m-1}(\partial M;\R)$. More explicitly, since $\upsilon_M$ is exact in $M$, there is an $(m-1)$-form $\zeta$ on $M$ such that $\upsilon=d\zeta$. By the Stokes theorem, we have $\int_M \upsilon_M=\pm\int_{\partial M}\zeta$, where the sign depends on the choice of orientation on $\partial M$. This shows that the restriction of $\zeta$ to $\partial M$ represents a nontrivial class in $H^{m-1}(\partial M;\R)$, and we have $\delta([\zeta|_{\partial M}])=[\upsilon_M]$.

The following is a key lemma to prove Theorem~\ref{thm:chainmap}.

\begin{Lem}\label{lem:o(BB)}
The orientation (a) of $\widehat{B}_T(\sigma)=\widehat{B}_{T'}\times \widehat{B}_{T''}$ is $o(\widehat{B}_{T'})\wedge o(\widehat{B}_{T''})$.
\end{Lem}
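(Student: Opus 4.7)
My plan is to reduce the lemma to the integration formula derived in \S\ref{ss:comp-leaf-forms}, thereby tying the two orientation conventions together through leaf forms. By Definition~\ref{def:ori}, the orientation $o(\widehat{B}_T)$ is characterized by the condition that $o(\widehat{B}_T)\wedge o(D^{2k})$ agrees with the nontrivial class $[\theta_{a_1}\wedge\cdots\wedge\theta_{a_\ell}]\in H^{p(k-1)+qk}(\widehat{E}^T,\partial\widehat{E}^T;\R)$, and Lemma~\ref{lem:int-wedge-theta} pins down the sign by the positivity $\int_{\widehat{E}^T}\theta_{a_1}\wedge\cdots\wedge\theta_{a_\ell}=+m_\ell$. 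To induce $o(\widehat{B}_T)$ on the boundary face $K(\sigma)/A_T$ via the ``outward-normal-first'' convention, I will choose an antiderivative $\zeta$ with $d\zeta=\theta_{a_1}\wedge\cdots\wedge\theta_{a_\ell}$ along the $\widehat{B}_T$-direction, so that the restriction of $\zeta$ to $\partial\widehat{B}_T/A_T$ represents the induced orientation class, with the sign fixed by $\int_{\partial\widehat{B}_T/A_T}\zeta=+m_\ell$ via Stokes.

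The second step is to identify this restriction, after pullback under $F_{(\sigma,\alpha)}\colon\widehat{B}_T(\sigma)\to K(\sigma)/A_T$, with a specific closed top-form on $\widehat{B}_{T'}\times\widehat{B}_{T''}$ whose sign is computable. Here I will take $\zeta$ to be (the relevant part of) $Q_{T'}\wedge P_{T''}$ from (\ref{eq:Theta-exact}): since $dQ_{T'}=P_{T'}$ and $P_{T''}$ is closed, we get $d(Q_{T'}\wedge P_{T''})=P_{T'}\wedge P_{T''}$, and Lemma~\ref{lem:int-Theta} together with the unravelling through Lemmas~\ref{lem:int-wedge-Theta} and \ref{lem:fiber-int} gives precisely $\int F_{(\sigma,\alpha)}^*(Q_{T'}\wedge P_{T''})=+m_\ell$. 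In particular, after pullback the induced boundary orientation is represented by the class of $\theta_{T'}\wedge(\theta''_{a_{\ell_1+1}}\wedge\cdots\wedge\theta''_{a_{\ell_1+\ell_2}})$ over $\widehat{E}^{T'}(V_v)\times\widehat{E}^{T''}$, equivalently (after fibre integration along $\widehat{B}_{T'}$) by $\tfrac{m_\ell}{m_{\ell_2}}\xi''\wedge(\theta''_{a_{\ell_1+1}}\wedge\cdots\wedge\theta''_{a_{\ell_1+\ell_2}})$ on $\widehat{E}^{T''}$.

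Finally, I will compare this with the product orientation $o(\widehat{B}_{T'})\wedge o(\widehat{B}_{T''})$. By the very definition of each factor, $o(\widehat{B}_{T'})\wedge o(D^{2k})$ is the orientation of $\widehat{E}^{T'}(V_v)$ whose top class is represented by $\theta'_{a_1}\wedge\cdots\wedge\theta'_{a_{\ell_1}}\wedge\xi'$ with positive integral $+m_{\ell_1}$, and analogously for $T''$; moreover, by Lemma~\ref{lem:theta_T}, $d\theta_{T'}=\theta'_{a_1}\wedge\cdots\wedge\theta'_{a_{\ell_1}}$, so that $\theta_{T'}$ restricted to $\partial\widehat{E}^{T'}(V_v)$ represents the same boundary orientation up to the positive factor $\tfrac{m_\ell}{m_{\ell_2}}/m_{\ell_1}$ dictated by (\ref{eq:volumes}), and the fibre-integrated form $\tfrac{m_\ell}{m_{\ell_2}}\xi''\wedge(\theta''_{a_{\ell_1+1}}\wedge\cdots\wedge\theta''_{a_{\ell_1+\ell_2}})$ has positive integral $+m_\ell$ as well. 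Since both classes have positive integrals, their orientations agree, proving the lemma.

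The main obstacle I expect is not the structure of the argument, which is essentially a repackaging of Lemmas~\ref{lem:theta_T}--\ref{lem:int-Theta}, but the careful sign bookkeeping: each Stokes application, each collapse in $\widehat{B}_T/A_T$, and the identification $\widehat{B}_T(\sigma)\cong\widehat{B}_{T'}\times\widehat{B}_{T''}$ must be shown to introduce only positive factors under the chosen conventions. I intend to handle this by working throughout with the explicit local model $I^{\ell-3+\lambda}$ used in the proof of Lemma~\ref{lem:leaf-form-compatibility}, where the outward-normal-first rule on $\partial I^{\ell-3+\lambda}$ becomes a direct coordinate statement, and matching this with the product-of-cubes orientation on $I^{\ell_1-3+\lambda_1}\times I^{\ell_2-3+\lambda_2}$ coming from convention (b). Once the signs are verified on this local model, the global statement follows because $F_{(\sigma,\alpha)}$ is an orientation-preserving identification of the relevant open subsets by Lemma~\ref{lem:supp-H}(vi) and Assumption~\ref{assum:theta-U}.
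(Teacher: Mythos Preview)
Your approach is genuinely different from the paper's and is conceptually sound. You stay entirely within the $V_v$-bundle and recycle the Stokes/fibre-integration machinery already set up in \S\ref{ss:comp-leaf-forms}: the antiderivative $Q_{T'}\wedge P_{T''}$ plays the role of $\zeta$, its boundary restriction localises to the face for $(\sigma,\alpha)$ by Lemma~\ref{lem:supp-H}, and the resulting integral is evaluated via $\theta_{T'}$ and fibre integration along $\widehat{B}_{T'}$ (Lemmas~\ref{lem:int-wedge-Theta} and~\ref{lem:fiber-int}). The paper, by contrast, passes to the associated $V\times V$-bundle over $\widehat{B}_T$: it writes orientation~(b) as $(\eta_1\wedge\eta_2)\wedge\bigwedge_j\theta_j'$ on the product (equation~(\ref{eq:o(BB)}), involving $o(D^{2k})^{\wedge 2}$), extends $\eta_1\wedge\eta_2$ to a form $\widetilde\eta$ across the $V\times V$-bundle (Claim~\ref{cla:eta-extend}), and compares with orientation~(a) by identifying $d\widetilde\eta\wedge\bigwedge_j\theta_j'$ with the diagonal form $\eta_{\Delta_V}\wedge\bigwedge_j\theta_j$ near $\Delta_V\subset V\times V$. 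The paper's route buys a single cohomological identification---linking form versus diagonal form---that absorbs all the stray signs at once, including the $\pm$ in Lemma~\ref{lem:fiber-int} coming from $b''=\pm c'$; your route, being more computational, must instead chase those signs through each Stokes application, each Fubini reordering, and the half-edge/leaf-orientation conventions of (\ref{eq:o(v)}) and Definition~\ref{def:ori}. You correctly flag this as the obstacle but defer it to the local cube model, so as written the proposal is a valid outline rather than a complete proof.

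One small correction: the form $\theta_{T'}\wedge(\theta_{a_{\ell_1+1}}''\wedge\cdots\wedge\theta_{a_{\ell_1+\ell_2}}'')$ is supported on the sub-$V_{T''}$-bundle $\widehat{E}^{T''}\times\widehat{B}_{T'}$ inside $\widehat{E}^{T_\ell(p,q)}(\sigma,\alpha)$ (see the diagram preceding Lemma~\ref{lem:fiber-int}), not on a product $\widehat{E}^{T'}(V_v)\times\widehat{E}^{T''}$. This does not affect your argument but should be fixed when you write it up.
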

\begin{proof}
Let $v$ be the internal $\ell$-valent vertex of $T$. Let $e$ be the internal edge of $\sigma$, and $e_+,e_-$ be the half-edge decomposition of $e$. For a vertex orientation $o(v)$, we denote by $o(v)'$ the wedge product of all the external half-edges of $\sigma$ incident to $v_1$ and $v_2$ whose order of the product agrees with that of $o(v)$ when the internal edge of $\sigma$ is contracted. 
We assume that the vertex orientations $o(v),o(v_1),o(v_2)$ satisfies the following identity:
\begin{equation}\label{eq:o(v)}
 o(v_1)\wedge o(v_2)=(e_{+}\wedge e_{-})\wedge o(v)'.
\end{equation}
This is consistent with the orientations of the terms of $\text{split}(\Gamma,v)$ in (\ref{eq:dGamma}).

According to Definition~\ref{def:ori} in \S\ref{ss:ori-chain}, the parameter space $\widehat{B}_{T}$ is oriented so that
\begin{equation}\label{eq:o(B)}
 o(\widehat{B}_T)\wedge o(D^{2k})=\tbigwedge_{j=1}^\ell \theta_j, 
\end{equation}
where $\theta_j$ are the leaf forms in the $V$-bundle $\widehat{E}^T$ as in Lemma~\ref{lem:leaf-form}. The product (\ref{eq:o(B)}) gives an orientation of $\widehat{E}_T$.
By Definition~\ref{def:ori} and (\ref{eq:o(v)}), the parameter spaces $\widehat{B}_{T'}$ and $\widehat{B}_{T''}$ are oriented so that
\begin{equation}\label{eq:o(BB)}
 o(\widehat{B}_{T'})\wedge o(\widehat{B}_{T''})\wedge o(D^{2k})^{\wedge 2}=(\eta_1\wedge \eta_2)\wedge \tbigwedge_{j=1}^\ell \theta_j',
\end{equation}
where $\theta_j'$ are the pullbacks of the leaf forms in the $V_{T'}\times V_{T''}$-bundle over $K(\sigma)/{A_{T_\ell(p,q)}}$ as in Lemma~\ref{lem:leaf-form}, and $\eta_i$ ($i=1,2$) is the leaf form for the leaf corresponding to the internal half-edge incident to the $i$-th internal vertex.

To relate $o(\widehat{B}_T)$ and $o(\widehat{B}_{T'})\wedge o(\widehat{B}_{T''})$, we consider the induced orientation at the fiber $V\times V$ over $\partial \widehat{B}_T$ from $o(\widehat{B}_T)\wedge o(D^{2k})^{\wedge 2}$.  
Since the product $\tbigwedge_{j=1}^\ell \theta_j'$ in (\ref{eq:o(BB)}) converges to $\tbigwedge_{j=1}^\ell \theta_j$ in (\ref{eq:o(B)}) when a point in $V\times V$ come close to the diagonal $\Delta_V$, we can compare (\ref{eq:o(B)}) and (\ref{eq:o(BB)}) near $\Delta_V$.

We prove that $\rho=1$. We consider a bundle over $\widehat{B}_T$ with fiber $V\times V$ associated to $\pi_{T_\ell(p,q)}$, and consider the limit of (\ref{eq:o(BB)}) at a boundary point where the two points collide. 
If we identify $V$ with the diagonal $\Delta_V$ in $V\times V$, the orientation (\ref{eq:o(B)}) canonically induces an orientation 
\begin{equation}\label{eq:o(BDD)}
 o(\widehat{B}_T)\wedge o(D^{2k})^{\wedge 2}=(\tbigwedge_{j=1}^\ell \theta_j)\wedge o(N\Delta_V) 
\end{equation}
of the $V\times V$-bundle over $\widehat{B}_T$ at a point on $\Delta_V$, where $o(N\Delta_V)$ is the orientation of the normal bundle of $\Delta_V$ such that $o(D^{2k})^{\wedge 2}=o(\Delta_V)\wedge o(N\Delta_V)$. We may take $\eta_{\Delta_V}$ compatible with $o(N\Delta_V)$ to represent $o(N\Delta_V)$. Thus the following closed form gives the orientation (\ref{eq:o(BDD)}).
\begin{equation}\label{eq:vertical-d}
 \eta_{\Delta_V}\wedge \tbigwedge_{j=1}^\ell \theta_j. 
\end{equation}

Next we consider the orientation (\ref{eq:o(BB)}). 
Let $V_1,V_2\subset V$ be disjoint handlebodies obtained as small regular neighborhoods of the two $\Psi$-graphs for $\sigma$ (as $U$ in Figure~\ref{fig:U-4-valent}, left). The product (\ref{eq:o(BB)}) gives an orientation of the $V_1\times V_2$-bundle $\varpi_\sigma\colon \widehat{E}^{T'}\times \widehat{E}^{T''}\to \widehat{B}_{T'}\times \widehat{B}_{T''}$. Let $U$ be a small open ball in $\widehat{B}_{T'}\times \widehat{B}_{T''}$, and let $\widetilde{U}$ be a half open ball in $\widehat{B}_{T}$ such that $\partial \widetilde{U}$ is included in the face $K(\sigma)/{A_{T_\ell(p,q)}}$ and the support of the volume form $\bigwedge_{j=1}^\ell\theta_j$ is included in $\pi_{T_\ell(p,q)}^{-1}(\widetilde{U})$. We assume that the image of $U$ under the map $\widehat{B}_T(\sigma)=\widehat{B}_{T'}\times \widehat{B}_{T''}\to K(\sigma)/{A_{T_\ell(p,q)}}$ is included in $\partial\widetilde{U}$. We consider the associated $V\times V$-bundle $\widetilde{U}\times (V\times V)\to \widetilde{U}$ to $\pi_T=\pi_{T_\ell(p,q)}\colon \pi_T^{-1}(\widetilde{U})\to \widetilde{U}$ and its pullback $U\times (V\times V)\to U$ by the map $U\to \partial \widetilde{U}$. 

The closed form $\eta_1\wedge \eta_2$ in the RHS of (\ref{eq:o(BB)}) gives a cohomology class in $H^*(\varpi^{-1}_\sigma(U);\R)\cong H^*(V_1\times V_2;\R)$. Let $\varpi_T\colon \widetilde{U}\times (V\times V)\to \widetilde{U}$ be the $V\times V$-bundle associated to $\pi_T$. The following claim follows from Lemma~\ref{lem:ext-closed}.
\begin{Claim}\label{cla:eta-extend}
The closed form $\eta_1\wedge \eta_2$ on $\varpi^{-1}_\sigma(U)=U\times (V_1\times V_2)$ can be extended to a form $\widetilde{\eta}$ on $\widetilde{U}\times (V\times V)$ such that
\begin{itemize}
\item the restriction of $\widetilde{\eta}$ to a collar $([0,\ve)\times \partial U)\times (V_1\times V_2)\subset \widetilde{U}\times (V\times V)$ agrees with the pullback of $\eta_1\wedge\eta_2$,
\item the restriction of $\widetilde{\eta}$ to $\widetilde{U}\times (V\times V-N\Delta_V)$ is closed, and 
\item the de Rham cohomology class of $d\widetilde{\eta}\wedge\bigwedge_{j=1}^\ell\theta_j'$ in $H^{4k+\dim{\widehat{B}_{T}}}_c(\varpi_T^{-1}(\widetilde{U}),\partial \varpi_T^{-1}(\widetilde{U});\R)$ agrees with that of $\upsilon_T\wedge\bigwedge_{j=1}^\ell\theta_j'$, where $\upsilon_T$ is a closed $2k$-form supported on a small tubular neighborhood of $\widetilde{U}\times\Delta_V$ that is obtained by pulling back a standard volume form on $\R^{2k}$ with compact support by the map $\R^{2k}\times \R^{2k}\to \R^{2k}$; $(x,y)\mapsto y-x$. 
\end{itemize}
\end{Claim}
Since the supports of $\bigwedge_{j\in E(T')}\theta_j'$ and $\bigwedge_{j\in E(T'')}\theta_j'$ are included in $V_1$ and $V_2$, respectively, the support of $(\eta_1\wedge\eta_2)\wedge \bigwedge_j\theta_j'$ is included in $V_1\times V_2$.
The cohomology class of $\eta_1\wedge \eta_2$ in $\varpi_\sigma^{-1}(U)$ is the unique class which gives the linking number of two component links in $V_1\cup V_2$. 
The same formula $\upsilon_T\wedge\bigwedge_{j=1}^\ell\theta_j'$ in Claim~\ref{cla:eta-extend} above is obtained for other face graphs $\sigma$ of codimension 1.
This shows that the orientations on $\widehat{B}_T(\sigma)$ obtained by applying the above convention are all ``induced'' from the same form $\tbigwedge_{j=1}^\ell \theta_j$.

We obtain the following.
\begin{itemize}
\item We have $[\eta_{\Delta_V}\wedge \tbigwedge_{j=1}^\ell \theta_j]=[\upsilon_T\wedge\bigwedge_{j=1}^\ell\theta_j']$ in $H^{4k+\dim{\widehat{B}_T}}_c(\varpi_T^{-1}(\widetilde{U}),\partial \varpi_T^{-1}(\widetilde{U});\R)$.
\item The class of $d\widetilde{\eta}\wedge\bigwedge_{j=1}^\ell\theta_j'$ induces $o(\widehat{B}_{T'})\wedge o(\widehat{B}_{T''})\wedge o(D^{2k})^{\wedge 2}$ by $\delta^{-1}$, where $\delta^{-1}$ is the inverse of the connecting homomorphism 
\[ \delta\colon H^{4k+\dim{\widehat{B}_T}-1}_c(\partial\varpi_T^{-1}(\widetilde{U});\R)\to H^{4k+\dim{\widehat{B}_T}}_c(\varpi_T^{-1}(\widetilde{U}),\partial \varpi_T^{-1}(\widetilde{U});\R) \]
between the top degree cohomologies.
\end{itemize}
Since the class $[\eta_{\Delta_V}\wedge \tbigwedge_{j=1}^\ell \theta_j]$ represents (\ref{eq:o(BDD)}), it follows that the orientation on $\widehat{B}_{T'}\times \widehat{B}_{T''}$ induced from $o(\widehat{B}_T)$ is $o(\widehat{B}_{T'})\wedge o(\widehat{B}_{T''})$.
\end{proof}

\begin{proof}[Proof of Theorem~\ref{thm:chainmap}]
Now let us compute the orientation of a face of $\widehat{B}_{\text{split}(\Gamma,v_j)}$ induced from $o(\widehat{B}_{(\Gamma,\alpha)})$, where $v_j$ is an $\ell$-valent vertex of $\Gamma$. We fix a labelling on the external half-edges of $T(v_j)$ that is consistent with the vertex orientation $o(v_j)$. Let $\Gamma_\sigma$ be a graph of a term in the sum $\mathrm{split}(\Gamma,v_j)$ corresponding to $\sigma\in\overrightarrow{\calP}_{T(v_j)}$. 
Let $e$ be the edge of $\Gamma_\sigma$ obtained by splitting $v_j$. Assume that the orientation of $(\Gamma,\alpha)$ can be rewritten as
\[ \bigwedge_{v\in V(\Gamma)}o(v)=\mu\,o(v_j)\wedge\bigwedge_{{v\in V(\Gamma)}\atop{v\neq v_j}}o(v) \]
for some $\mu\in\{-1,1\}$. According to the orientation converntion for $\text{split}(\Gamma,v)$, this induces the following orientation of $\Gamma_\sigma$:
\[ 
  o(\Gamma_\sigma):=\mu\,(e_+\wedge e_-)\wedge o(v_j)'\wedge\bigwedge_{v\neq v_j}o(v)=\mu\,o(v_{j1})\wedge o(v_{j2})\wedge\bigwedge_{v\neq v_j}o(v), \]
where $v_{j1},v_{j2}$ are the boundary vertices of $e$, and the second equality follows by (\ref{eq:o(v)}). Recalling that $o(v_j)$ determines $o(\widehat{B}_{T(v_j)})$, $o(v_{j1})\wedge o(v_{j2})$ determines $o(\widehat{B}_{T'})\wedge o(\widehat{B}_{T''})$, and that $o(\widehat{B}_{T(v_j)})$ induces $o(\widehat{B}_{T'})\wedge o(\widehat{B}_{T''})$ by Lemma~\ref{lem:o(BB)}, we see that the orientation on $\widehat{B}_{\Gamma_\sigma}$ induced from $o(\widehat{B}_{(\Gamma,\alpha)})=\bigwedge_{v}o(\widehat{B}_{T(v)})=\mu\,o(\widehat{B}_{T(v_j)})\wedge \bigwedge_{v\neq v_j}o(\widehat{B}_{T(v)})$ is 
\[ \mu\,o(\widehat{B}_{T'})\wedge o(\widehat{B}_{T''})\wedge\bigwedge_{v\neq v_j}o(\widehat{B}_{T(v)}). \]
This agrees with that determined by $o(\Gamma_\sigma)$. 
\end{proof}

\section{Concluding remarks}\label{s:concluding}

The construction of the basic bracket operations for valence $\ell\geq 5$ in this paper is less explicit. 
\begin{Prob}\label{prob:explicit}
Construct the basic bracket operations for $\ell\geq 5$ from explicit families of string links.
\end{Prob}
To require the main properties of the basic bracket operations, we needed to impose the condition $n\geq 2\ell-3$, which resulted in the restriction $2k\geq 2\mu^2+8\mu+10$ in Theorem~\ref{thm:main}. This would be improved by a positive resolution of this problem\footnote{For only the improvement of the dimension restriction, there may be other strategies.}.
In particular, if our results can be extended to $2k=4$, the main results of \cite{LX} are strengthened. We know that there is a natural analogue of the constructon of Lemma~\ref{lem:null-iso} for $S^n\times S^n\times S^n\times S^n$, which partially looks like the Fulton--MacPherson compactification of the configuration space of 4 points on $D^n$. However, it was difficult for us to relate it to the decomposition structure of the 10T-link (in \S\ref{ss:10T-link}) in the boundary of a path, and we do not know whether such a model works. It would be interesting to compare our construction with the works of \cite{KKV20,KKV24,Koy,Kos24a,Kos24b}.

It is known that the rational homotopy types of $\overline{\Emb}_\partial (I^{m_1}\tcoprod\cdots\tcoprod I^{m_r}, I^n)_\iota$ for $n-m_i\geq 3$ can be expressed in terms of (hairy) graph complex (\cite{AT,BoaW,DH,FTW}) (see \S\ref{ss:v-surgery} for the notation $\overline{\Emb}_\partial (A, B)_{f_0}$).
A resolution of the following problem would be helpful to geometrically construct the rational homotopy type of the spaces of embeddings.
\begin{Prob}\label{prob:emb}
Give a construction similar to this paper for
\[ \begin{split}
  &H_*(\overline{\Emb}_\partial (I^m, I^n)_\iota; \Q)\quad(n-m\geq 3),\text{ or }\\
  &H_*(\overline{\Emb}_\partial (I^{m_1}\tcoprod\cdots\tcoprod I^{m_r}, I^n)_\iota; \Q)\quad(n-m_i\geq 3)
\end{split}\]
using hairy graphs, and relate it to the constructions by Longoni and K. Sakai, which involves 4-valent vertex (\cite{Lo,Sa}, see also \cite{PS}). 
\end{Prob}

\begin{appendix}

\section{Leaf forms via spanning manifolds}\label{s:leaf-form-mfd}

The leaf forms of Lemma~\ref{lem:leaf-form} could be given by the $\eta$-forms of spanning manifolds of string links. We see that the family of string links for the basic bracket for a 4-valent vertex admits spanning manifolds with the desired intersection property. Although the content of this section is not needed in the main result of this paper, it would help understanding the proofs of Lemma~\ref{lem:leaf-form} and Theorem~\ref{thm:chainmap} in a more explicit manner.

\subsection{Spanning manifolds for the basic brackets and their 4-fold intersection}

Let \[e_t\colon (I^{3n-2})^{\cup 4}\to I^{4n-1}, \ \ \  \mbox{(where $t\in [0,1]$)}\] be the null-isotopy of the IHX-link $e_1\colon (I^{3n-2})^{\cup 4}\to I^{4n-1}$. Thus $e_1$ is a sum of the three string links $e_1^1,e_1^2,e_1^3\colon (I^{3n-2})^{\cup 4}\to I^{4n-1}$ corresponding to the 3 terms in the IHX relation, respectively. For each $j\in\{1,2,3\}$ and for each component $a_\ell^j$ of $e_1^j$, there is a compact oriented $(3n-1)$-submanifold $S(a_\ell^j)$ of $I^{4n-1}$ such that
\begin{itemize}
\item $\partial S(a_\ell^j)$ is the closure of the string $a_\ell^j$ (taken along $\partial I^{4n-1}$),
\item $S(a_\ell^j)\cap a_m^j=\emptyset$ if $m\neq \ell$.
\end{itemize}
(See Figure~\ref{fig:double-link}.)
\begin{figure}[h]
\includegraphics[height=50mm]{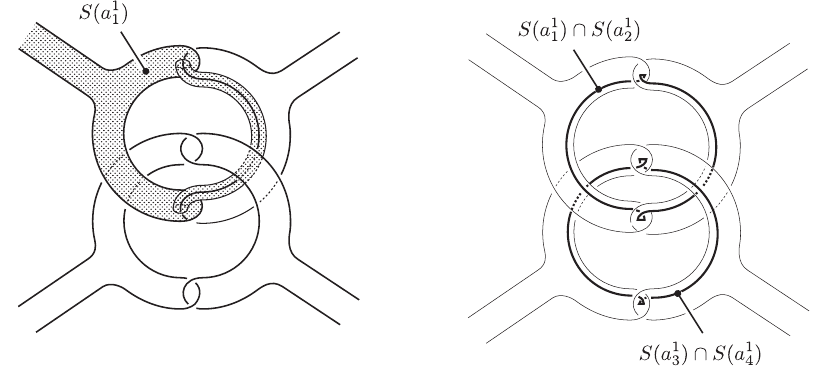}
\caption{Spanning manifolds $S(a_\ell^1)$ (for the first term). The loci of the double points form a link with linking number $\pm 1$.}\label{fig:double-link}
\end{figure}

The sum of the $(3n-1)$-submanifolds $S(a_\ell^j)$ can be closed in $I^{4n}=I^{4n-1}\times I$ as follows.
\begin{itemize}
\item Let $\widetilde{a}_\ell$ ($\ell=1,2,3,4$) be the locus of the $\ell$-th component of $e_t$ in $I^{4n}=I^{4n-1}\times I$. 
\item Let $a_\ell^0$ ($\ell=1,2,3,4$) be the components of the standard inclusion $e_0$. 
\item Let $S(a_\ell^0)$ be the spanning disks of $a_\ell^0$ in $I^{4n-1}\times \{0\}$.
\item  Let $S(a_\ell^1)\natural S(a_\ell^2)\natural S(a_\ell^3)$ denote the $(3n-1)$-submanifold of $I^{4n-1}\times \{1\}$ obtained by concatenating $S(a_\ell^j)$ and by rescaling, so that it bounds the closure of the $\ell$-th component of $e_1$. 
\end{itemize}
Let $S_\ell$ be the closed $(3n-1)$-manifold in $I^{4n}$ obtained by closing the $(3n-1)$-manifold 
\[ (S(a_\ell^1)\natural S(a_\ell^2)\natural S(a_\ell^3) )\cup \widetilde{a}_\ell \cup -S(a_\ell^0)\subset I^{4n} \]
by gluing $(\partial S(a_\ell^0)-\mathrm{Int}\,a_\ell^0)\times I\subset \partial{I^{4n-1}}\times I$ along the boundaries. (See Figure~\ref{fig:spanning-submfd}.) 
\begin{figure}[h]
\[ \includegraphics[height=40mm]{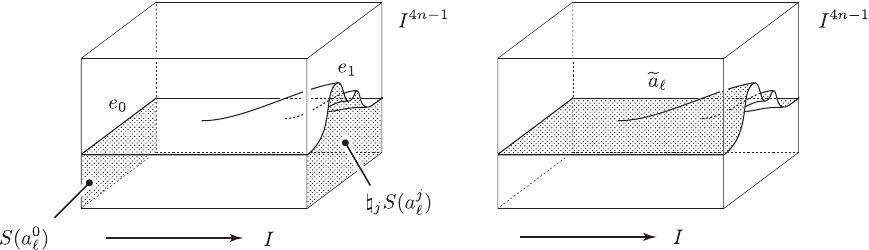} \]
\caption{The $(3n-1)$-manifolds $\natural_j S(a_\ell^j)$, $S(a_\ell^0)$, and $\widetilde{a}_\ell$ of $I^{4n-1}\times I$.}\label{fig:spanning-submfd}
\end{figure}

For an integer $\lambda>0$, let $\lambda S_\ell$ denote the closed $(3n-1)$-manifold in $I^{4n}$ defined similarly as $S_\ell$ but using the sum of $\lambda$ copies of $S(a_\ell^1)\natural S(a_\ell^2)\natural S(a_\ell^3)$.
The following proposition is a crucial property of the 4-valent vertex surgery to compute the configuration space integral. 

\begin{Prop}\label{prop:4-fold-int}
There exists an integer $\lambda>0$ such that for each $\ell$, the $(3n-1)$-manifold $\lambda S_\ell$ bounds a compact, framed, oriented $3n$-submanifold $C_\ell$ in $I^{4n}$ satisfying the following properties.
\begin{enumerate}
\item $C_\ell\cap \widetilde{a}_m=\emptyset$ if $m\neq \ell$.
\item The algebraic 4-fold intersection number 
$\langle C_1,C_2,C_3,C_4\rangle$ is nonzero.
\end{enumerate}
\end{Prop}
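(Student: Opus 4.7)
The plan is to combine the Brunnian property of the null-isotopy $\{e_t\}$ with the nondegeneracy of leaf forms (Lemma~\ref{lem:leaf-form}) to produce the spanning submanifolds $C_\ell$ and then compute their algebraic quadruple intersection.

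To obtain $C_\ell$ I would exploit the Brunnian property of $\{e_t\}$ established in Lemma~\ref{lem:brunnian-3}: removing the $\ell$-th component from the null-isotopy yields a family that is relatively homotopic to the constant family at the standard inclusion. By the parametrized isotopy extension theorem this lifts to an ambient isotopy of $I^{4n}$ carrying $\tbigcup_{m\neq\ell}\widetilde{a}_m$ onto three standardly embedded parallel copies of $I^{3n-1}$. Hence the complement $X_\ell := I^{4n}-\tbigcup_{m\neq \ell}\widetilde{a}_m$ is homotopy equivalent to $S^n\vee S^n\vee S^n$, so $H_{3n-1}(X_\ell;\Z)=0$ for $n\geq 2$. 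Therefore $S_\ell$ bounds a compact oriented embedded $3n$-submanifold $C_\ell\subset X_\ell$, which is automatically disjoint from each $\widetilde{a}_m$ with $m\neq \ell$. The normal bundle of $C_\ell$ carries a framing over its boundary $S_\ell$ coming from the framings of the string-link components; the obstructions to extending this framing across the interior lie in $H^j(C_\ell,\partial C_\ell;\pi_{j-1}(SO_n))$ and are of finite order in positive degrees. Passing to a sufficiently divisible multiple $\lambda S_\ell$ kills these obstructions simultaneously and yields the framed oriented $3n$-submanifold $C_\ell$.

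Next I would compute the $4$-fold intersection number. Using the framing, each normal bundle has a canonical Thom form $\eta_{C_\ell}$---a closed $n$-form on $I^{4n}$ with compact support in a tubular neighborhood of $C_\ell$. After a generic perturbation the four $C_\ell$'s meet transversely in finitely many interior points, and the signed count equals
\[
\langle C_1,C_2,C_3,C_4\rangle=\int_{I^{4n}}\eta_{C_1}\wedge\eta_{C_2}\wedge\eta_{C_3}\wedge\eta_{C_4}.
\]
To evaluate the integral, I would view $I^{4n}=I^{4n-1}\times I$: on the slab $I^{4n-1}\times[0,\tfrac{1}{2}]$ each $C_\ell$ is isotopic to the standard disk $S(a_\ell^0)$, so the wedge product is exact there and, by Stokes' theorem, the integral is localized near the slice $\{t=1\}$. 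On that slice the restrictions of the $\eta_{C_\ell}$ (rescaled by the factor $\lambda$ accounting for the $\lambda$-fold boundary $\partial C_\ell=\lambda S_\ell$) are leaf forms for the meridian spheres of the IHX-link in the sense of Definition~\ref{def:leaf-form}. Applying the nondegeneracy clause of Lemma~\ref{lem:leaf-form} (equivalently Lemma~\ref{lem:int-wedge-theta}) with $\ell=4$ yields $\pm\lambda^4 m_4\neq 0$.

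The principal obstacle is \emph{not} the geometric existence of $C_\ell$, which is immediate once Lemma~\ref{lem:brunnian-3} is invoked, but rather the precise identification of the ambient wedge-integral over $I^{4n}$ with the abstract leaf-form integral of Lemma~\ref{lem:int-wedge-theta}. This requires simultaneous compatibility between the framings on $C_\ell$, the extensions of $\eta_{C_\ell}$ away from tubular neighborhoods, and the leaf forms on the $V$-bundle for the basic bracket at a $4$-valent vertex, together with careful bookkeeping of the Stokes'-theorem contributions from $\partial I^{4n}$. A secondary subtlety is the framing-extension step, where the integer $\lambda$ is dictated by the finite-order obstructions in $H^*(C_\ell,\partial C_\ell;\pi_*(SO_n))$.
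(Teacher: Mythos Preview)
Your proposal takes a genuinely different route from the paper, and both parts have issues worth flagging.

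For the existence of $C_\ell$, the paper argues via the Pontrjagin--Thom construction applied to the complement $E$ of \emph{all four} loci $\widetilde{a}_m$: one seeks to extend a map $E\to S^n$ (prescribed on $\partial E$ by the given spanning data $\lambda S_\ell$), and the obstructions live in $H^{3n}(E,\partial E;\pi_{3n-1}(S^n))$ and $H^{4n}(E,\partial E;\pi_{4n-1}(S^n))$, both of which are killed by a suitable $\lambda$. Your Brunnian argument showing $X_\ell\simeq S^n\vee S^n\vee S^n$ is correct and pleasant, but the step ``$H_{3n-1}(X_\ell;\Z)=0$, therefore $S_\ell$ bounds a compact oriented embedded $3n$-submanifold'' does not follow directly: homological nullity does not by itself produce an embedded (let alone framed) bounding submanifold. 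You still need a Pontrjagin--Thom argument, and once you set that up you are essentially back to the paper's approach with the obstructions now reinterpreted inside $X_\ell$.

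For the nonzero $4$-fold intersection, the paper's key device is Lemma~\ref{lem:local-model-4-fold}, which is an elementary geometric statement: for any partition $\{\{i,j\},\{\ell,m\}\}$ of $\{1,2,3,4\}$ one has
\[
\langle C_1,C_2,C_3,C_4\rangle=\pm\Lk(\lambda S_i\cap\lambda S_j,\ \lambda S_\ell\cap\lambda S_m),
\]
and this linking number is read off directly from the explicit spanning submanifolds $S(a_\ell^j)$ at the slice $t=1$ (Figure~\ref{fig:double-link}) as $\pm\lambda$. Your approach instead invokes Lemma~\ref{lem:leaf-form}/Lemma~\ref{lem:int-wedge-theta}. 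There are two problems with this. First, those lemmas are stated for $\widehat{E}^{T_\ell(p,q)}$, the $V$-bundle arising from the \emph{general} bracket after suspensions and deloopings, whereas the appendix works with the \emph{basic} bracket over the bare interval $I$; you would need to formulate and prove an analogue for the basic setting, and you do not. Second, your Stokes ``localization to $\{t=1\}$'' step is not well-posed: the integrand $\eta_{C_1}\wedge\cdots\wedge\eta_{C_4}$ is a top-degree form on $I^{4n}$, so its restriction to the $(4n-1)$-dimensional slice is not what gets integrated, and even if the $C_\ell$ are standard on $[0,\tfrac12]$ this only makes the integrand vanish there, not localize to a slice. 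The actual $4$-fold intersection points lie somewhere in the interior, and you have not related their count to the boundary data without something like Lemma~\ref{lem:local-model-4-fold}.

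There is also a structural point: the explicit purpose of Appendix~\ref{s:leaf-form-mfd} is to give a concrete geometric instance of the nondegeneracy in Lemma~\ref{lem:leaf-form}, so deducing Proposition~\ref{prop:4-fold-int} from Lemma~\ref{lem:leaf-form}---even if the technical identifications could be made---inverts the intended logical flow and forfeits the elementary, self-contained character of the appendix argument.
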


The existence of $\lambda$ and a submanifold $C_\ell$ can be verified by the Pontrjagin--Thom construction: by using the obstruction for the extension of a (smooth) map from the complement $E$ of an open tubular neighborhood of the strings $\widetilde{a}_1\cup \widetilde{a}_2\cup \widetilde{a}_3\cup \widetilde{a}_4\subset I^{4n}$ to $S^n$, which may exist in $H^{3n}(E,\partial E;\pi_{3n-1}(S^n))$ and $H^{4n}(E,\partial E;\pi_{4n-1}(S^n))$, and the finiteness of $\pi_{3n-1}(S^n)$ and $\pi_{4n-1}(S^n)$, which vanishes after possibly taking finite copies. Hence rather nontrivial part is the statement about the 4-fold intersection number.

\begin{Rem}
The manifold $S(a_\ell^1)\natural S(a_\ell^2)\natural S(a_\ell^3)$ should not be replaced by that induced from $S(a_\ell^0)$ by the isotopy extension along the null-isotopy, in which case the 4-fold intersection is zero. We must fix the behavior on the IHX side to normalize propagators. Note that $S(a_\ell^i)$ can be taken explicitly.
\end{Rem}

We thank Masamichi Takase for suggesting the following lemma.

\begin{Lem}\label{lem:local-model-4-fold}
Let $\lambda S_1,\lambda S_2,\lambda S_3,\lambda S_4\subset I^{4n}$ be as above. Suppose that
\begin{itemize}
\item $\lambda S_1,\lambda S_2,\lambda S_3,\lambda S_4$ intersects transversally in $\partial I^{4n}$ (see \S\ref{ss:comp-leaf-forms} for the transversality) with only double intersections (and without triple intersections),
\item for each pair $(i,j)$, the intersection $\lambda S_i\cap \lambda S_j$ is an embedded $(2n-1)$-sphere in $\partial I^{4n}$,
\item for each $i$, the $(3n-1)$-manifold $\lambda S_i$ bounds a $3n$-submanifold $C_i\subset I^{4n}$,
\item the intersections among $C_i$ are of \emph{generic} type.
\end{itemize}
Let $\langle C_1,C_2,C_3,C_4\rangle$ be the algebraic number of the 4-fold intersection among $C_1,C_2,C_3,C_4$ (counted with signs). For any partition 
$\{\{i,j\}, \{\ell,m\}\}$ of $\{1,2,3,4\}$, we have the following identity:
\[ \langle C_1,C_2,C_3,C_4\rangle=\pm \Lk(\lambda S_i\cap \lambda S_j, \lambda S_\ell\cap \lambda S_m). \]
\end{Lem}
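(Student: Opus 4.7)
\smallskip

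\noindent\emph{Proof plan.}
The plan is to reduce the four-fold intersection in $I^{4n}$ to a pairwise intersection of the two ``half'' intersection manifolds cut out by the chosen partition, and then to recognize that this pairwise intersection in $I^{4n}$ is computed by the linking number of the boundaries in $\partial I^{4n}\cong S^{4n-1}$. The proof thus proceeds in three steps.

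First, fix the partition $\{\{i,j\},\{\ell,m\}\}$ and observe a dimension count: by the generic transversality hypothesis, $F:=C_i\cap C_j$ and $B:=C_\ell\cap C_m$ are transversely cut out $2n$-submanifolds of $I^{4n}$, and the quadruple transverse intersection $C_1\cap C_2\cap C_3\cap C_4$ coincides set-theoretically (with compatible local orientations) with $F\cap B$. Since $\dim F+\dim B=4n=\dim I^{4n}$, the signed count $F\cdot B$ is defined, and associativity of transverse intersection gives $\langle C_1,C_2,C_3,C_4\rangle=\pm F\cdot B$, with a sign determined by how the partition $(i,j,\ell,m)$ compares to $(1,2,3,4)$ and by the conventions for orienting transverse intersections.

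Next, identify the boundary data. The boundary $\partial F$ sits in $\partial I^{4n}$ and equals $\lambda S_i\cap \lambda S_j$, an embedded $(2n-1)$-sphere by the double-intersection hypothesis; likewise $\partial B=\lambda S_\ell\cap \lambda S_m$ is an embedded $(2n-1)$-sphere. The assumption that the $\lambda S_\bullet$ meet only in doubles on $\partial I^{4n}$ implies in particular that they have no common quadruple point, so $\partial F\cap \partial B=\emptyset$ in $\partial I^{4n}\cong S^{4n-1}$. The dimensions match the linking-number condition $(2n-1)+(2n-1)=(4n-1)-1$, so $\Lk(\partial F,\partial B)$ is well defined.

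The final step is the standard identification: if $F,B\subset D^{4n}$ are transverse compact oriented submanifolds of complementary dimensions with boundaries $\alpha=\partial F$, $\beta=\partial B$ in $S^{4n-1}$ satisfying $\alpha\cap\beta=\emptyset$, then
\[ F\cdot B=\pm\,\Lk(\alpha,\beta). \]
Combining this with Step~1 yields the claimed formula. I expect Step~3 to be the main obstacle---it is standard but worth stating carefully. Two clean proofs are available. Geometrically, choose a Seifert $(2n)$-submanifold $G\subset S^{4n-1}$ for $\beta$, disjoint from $\alpha$, so that $B\cup G$ is a closed $2n$-submanifold of $D^{4n}$, hence null-homologous; computing $F\cdot(B\cup G)=0$ and rearranging identifies $F\cdot B$ with the signed count $F\cap G=\alpha\cdot G$ in $S^{4n-1}$, which is the standard definition of $\Lk(\alpha,\beta)$. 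Cohomologically, $F$ represents a class in $H_{2n}(I^{4n},\partial I^{4n}\setminus\beta)$ whose Poincar\'{e}--Lefschetz dual restricts, via Alexander duality for $\beta\subset S^{4n-1}$, to the linking class; pairing with $[B]$ returns the same identity. The only subtlety is keeping track of the overall sign, which is absorbed into the ``$\pm$'' in the statement of the lemma and is fixed once orientation conventions for $C_i$, $\lambda S_i$, and $\partial$ are specified.
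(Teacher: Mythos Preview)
Your proof is correct and shares the paper's first move: both arguments observe that the quadruple intersection $\langle C_1,C_2,C_3,C_4\rangle$ equals the pairwise intersection number $(C_i\cap C_j)\cdot(C_\ell\cap C_m)$ of the two $2n$-dimensional ``half'' intersections in $I^{4n}$. Where you diverge is in justifying why this pairwise number equals the linking number of the boundary spheres. You invoke the standard global fact that for complementary-dimensional bounding chains $F,B$ in a ball with disjoint boundaries, $F\cdot B=\pm\Lk(\partial F,\partial B)$, proved by capping $B$ with a Seifert chain $G\subset S^{4n-1}$ and using $H_{2n}(D^{4n})=0$. The paper instead argues pointwise: it examines a small $(4n-1)$-sphere around each quadruple intersection point, reads off the local link structure of the $C_i$ there, and checks that each such point contributes $\pm 1$ to $\Lk(\lambda S_i\cap\lambda S_j,\lambda S_\ell\cap\lambda S_m)$ for every partition simultaneously. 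Your route is cleaner and avoids the local-model figure; the paper's route has the advantage of making visible that the contribution is uniform across all three partitions (which is the feature actually used in the subsequent Proposition), though of course your argument yields that too by applying it to each partition.
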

\begin{figure}[h]
\includegraphics[height=45mm]{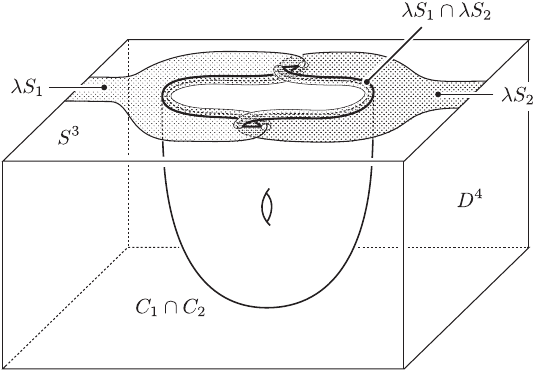}
\caption{Example for $n=1$. The knot $\lambda S_1\cap \lambda S_2$ bounds a surface $C_1\cap C_2$ in $I^4$.}\label{fig:C1-C2}
\end{figure}
\begin{proof}
For a partition $P=\{\{i,j\}, \{\ell,m\}\}$ of $\{1,2,3,4\}$, the \emph{submanifolds} $C_i\cap C_j$ and $C_\ell\cap C_m$ of $I^{4n}$ may intersect in finitely many points each of which gives a point of $C_1\cap C_2\cap C_3\cap C_4$ where $\Lk(\lambda S_i\cap \lambda S_j, \lambda S_\ell\cap \lambda S_m)$ changes. 
Conversely, any point of $C_1\cap C_2\cap C_3\cap C_4$ can be viewed as a bifurcation point for $\Lk(\lambda S_i\cap \lambda S_j, \lambda S_\ell\cap \lambda S_m)$. Note that the intersections among the double surfaces $C_i\cap C_j$ may not be generic in the sense that, for example, the intersection between $C_i\cap C_j$ and $C_\ell\cap C_m$ occurs at the same time as that between $C_i\cap C_\ell$ and $C_j\cap C_m$. 
It follows from the local model of a 4-fold intersection point (Figure~\ref{fig:link-4-fold-point}) that at each 4-fold interesction point, the number $\Lk(\lambda S_i\cap \lambda S_j, \lambda S_\ell\cap \lambda S_m)$ changes for all $P$. 
\end{proof}
\begin{figure}[h]
\[ \includegraphics[height=50mm]{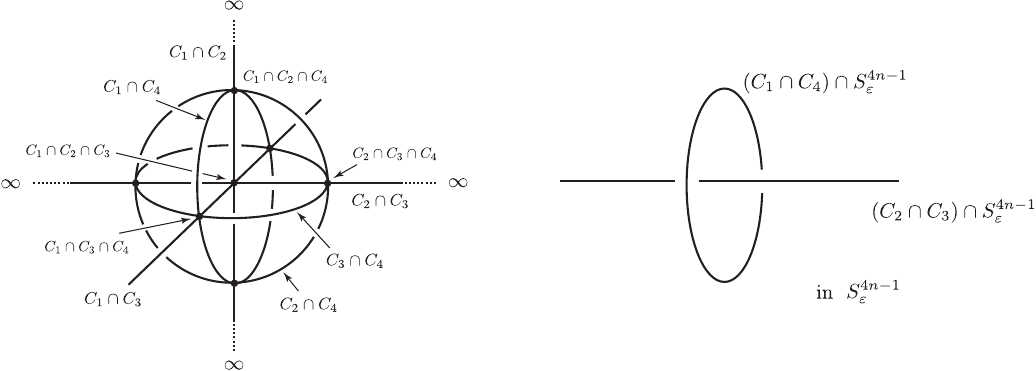} \]
\caption{The intersection $(C_1\cup C_2\cup C_3\cup C_3)\cap S^{4n-1}_\ve$ with a small $(4n-1)$-sphere $S^{4n-1}_\ve\subset \mathrm{Int}\,I^{4n}$ around a point of $C_1\cap C_2\cap C_3\cap C_3$. The linking number $\Lk((C_i\cap C_j)\cap S^{4n-1}_\ve,(C_\ell\cap C_m)\cap S^{4n-1}_\ve)$ is $\pm 1$ for any partition $\{i,j\}\cup\{\ell,m\}=\{1,2,3,4\}$.)}\label{fig:link-4-fold-point}
\end{figure}

\begin{Rem}
The assumption of Lemma~\ref{lem:local-model-4-fold} that the intersection among $\lambda S_i$ does not have triple point is in fact unnecessary. If there are triple points, then the double spheres $\lambda S_i\cap \lambda S_j$ are immersed, and $C_i\cap C_j$ are immersed, too. 
\end{Rem}

\begin{proof}[Proof of Proposition~\ref{prop:4-fold-int}]
As mentioned above, there are submanifolds $C_\ell$ in $I^{4n}$ such that $\partial C_\ell=\lambda S_\ell$. Suppose without loss of generality that the intersection among $C_1,C_2,C_3,C_4$ is of generic type and that the assumption of Lemma~\ref{lem:local-model-4-fold} is satisfied. 
Then by Lemma~\ref{lem:local-model-4-fold} and by $\Lk(\lambda S_i\cap \lambda S_j, \lambda S_\ell\cap \lambda S_m)=\pm \lambda$ (see Figure~\ref{fig:double-link}), we obtain that the number of points of $C_1\cap C_2\cap C_3\cap C_4$ is nonzero.
\end{proof}

\begin{Rem}\label{rem:MC-eq}
Let $\theta_i=\eta_{C_i}$ ($i=1,2,3,4$) be the $\eta$-forms of the submanifold $C_i$ of $E$ of Proposition~\ref{prop:4-fold-int}. Let $\theta_{ij}$ ($1\leq i<j\leq 4$), $\theta_{ijk}$ ($1\leq i<j<k\leq 4$), and $\theta_{ij,k}$ ($1\leq i<j\leq 4$, $1\leq k\leq 4$) be forms on $E$ of degrees $2n-1$, $3n-1$, $3n-2$, respectively, such that
\[ 
  d\theta_{ij}=-\theta_i\wedge\theta_j,\qquad d\theta_{ijk}=-\theta_i\wedge\theta_j\wedge\theta_k,\qquad d\theta_{ij,k}=\theta_{ijk}-\theta_{ij}\wedge\theta_k,
\]
where we consider that $\theta_{\sigma(i)\sigma(j)\sigma(k)}=(-1)^{n^2r}\theta_{ijk}$ for a permutation $\sigma\colon\{i,j,k\}\to \{i,j,k\}$ when $i<j<k$ and when $\theta_{\sigma(i)}\wedge\theta_{\sigma(j)}\wedge\theta_{\sigma(k)}=(-1)^{n^2r}\theta_i\wedge\theta_j\wedge\theta_k$, and that $\theta_{ijk}=0$ if at least two of $i,j,k$ agree.
The form $\theta_{ij}$ can be represented by the $\eta$-form of spanning disks of the disjoint union of spheres $C_i\cap C_j$ (in $I^{4n}$, modified near the boundary $C_i\cap C_j$), the form $\theta_{ijk}$ can be represented by the $\eta$-form of a spanning disk of the sphere $C_i\cap C_j\cap C_k$, and the form $\theta_{ij,k}$ can be represented by a spanning disk between the two spanning disks of the sphere $C_i\cap C_j\cap C_k$ taken for $\theta_{ijk}$ and $\theta_{ij}\wedge \theta_k$, respectively.
Let $\L=\L(x_1,x_2,x_3,x_4)$ be the free $L_\infty$-algebra generated over $\R$ by degree $n$ variables $x_1,x_2,x_3,x_4$ with $\partial x_i=0$, $\partial[x_i,x_j]=0$, and $\partial[[x_i,x_j],x_k]=0$. Let $I^4\L$ be the subspace of $\L$ generated by brackets with at least 4 inputs.
Let $\theta\in\Omega^*(E)\otimes\L/I^4\L$ be defined by
\[ \begin{split}
\theta=\sum_{1\leq i\leq 4}\theta_ix_i+\sum_{1\leq i<j\leq 4}\theta_{ij}[x_i,x_j]
&+\sum_{{1\leq i<j\leq 4,}\atop{1\leq k\leq 4}}\theta_{ij,k}[[x_i,x_j],x_k]\\
&+\sum_{1\leq i<j<k\leq 4}\theta_{ijk}[x_i,x_j,x_k] \quad (\text{mod $I^4\L$}).
\end{split}\]
Then it satisfies a version of the Maurer--Cartan equation:
\[ d\theta-\partial\theta+\frac{1}{2!}[\theta,\theta]+\frac{1}{3!}[\theta,\theta,\theta]=0\quad (\text{mod $I^4\L$}).\]
This is analogous to the construction of \cite[Example~3]{Hai} for the Borromean rings in $S^3$.
\end{Rem}
\section{Extending closed forms}\label{s:ext-closed}

\begin{Lem}\label{lem:ext-closed}
Let $M$ be a compact manifold and let $D$ be a codimension 0 compact submanifold of $M$. Suppose that closed $i$-forms $\alpha,\beta$ on $D,M$, respectively, are given. If the following identity holds for any piecewise smooth $i$-cycle $c$ in $S_i(D;\R)$: 
\[ \int_c\alpha=\int_c\beta, \]
then there is a closed extension of $\alpha$ over $M$ that is cohomologous to $\beta$.
\end{Lem}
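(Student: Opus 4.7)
The plan is to reduce the statement to de Rham theory on $D$ and then perform a standard extension.

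First, I would interpret the hypothesis cohomologically. Since every piecewise smooth singular cycle in $S_i(D;\R)$ can be represented by a smooth singular cycle, and since the pairing of de Rham cohomology with smooth singular homology is the de Rham isomorphism, the condition $\int_c \alpha = \int_c \beta$ for all $i$-cycles $c$ in $D$ says exactly that $\alpha$ and $\beta|_D$ represent the same class in $H^i_{\mathrm{dR}}(D;\R)$. In particular, there exists an $(i-1)$-form $\gamma \in \Omega^{i-1}(D)$ such that
\[ \alpha - \beta|_D = d\gamma \quad \text{on } D. \]

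Second, I would extend $\gamma$ to a global form on $M$. Because $D$ is a codimension $0$ compact submanifold (with boundary) of $M$, it admits a collar neighborhood $\partial D \times [0,\varepsilon) \hookrightarrow M$, and by the Seeley-type extension theorem for smooth forms (or more concretely, by first extending $\gamma$ across the collar using a bump function in the normal coordinate and then extending by zero outside a slightly larger neighborhood), one obtains a smooth form $\widetilde\gamma \in \Omega^{i-1}(M)$ with $\widetilde\gamma|_D = \gamma$. Now set
\[ \widetilde\alpha := \beta + d\widetilde\gamma \in \Omega^i(M). \]
This form is closed on $M$ because $d\beta = 0$ and $d^2\widetilde\gamma = 0$; it satisfies $\widetilde\alpha|_D = \beta|_D + d\gamma = \alpha$, so it extends $\alpha$; and $\widetilde\alpha - \beta = d\widetilde\gamma$ is exact, so $\widetilde\alpha$ is cohomologous to $\beta$ on $M$.

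The only genuinely technical point is the smooth extension of $\gamma$ from $D$ to $M$; this is routine since $D$ is a compact codimension $0$ submanifold with (possibly empty) boundary, and one only needs to extend across a collar of $\partial D$ and cut off. The rest is bookkeeping: de Rham's theorem to convert the integral hypothesis into exactness of $\alpha - \beta|_D$, and then the identity $\widetilde\alpha = \beta + d\widetilde\gamma$. If one wants, one can additionally arrange $\widetilde\alpha$ to agree with $\alpha$ on an open neighborhood of $D$ by choosing the extension $\widetilde\gamma$ to agree with $\gamma$ on such a neighborhood, which is also a standard consequence of the collar construction.
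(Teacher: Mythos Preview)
Your proof is correct and follows essentially the same approach as the paper. Both arguments use de Rham's theorem to convert the integral hypothesis into $[\alpha]=j^*[\beta]$ in $H^i(D;\R)$; the paper then simply asserts that this yields the desired extension, whereas you spell out the explicit construction $\widetilde\alpha=\beta+d\widetilde\gamma$ via a collar extension of a primitive $\gamma$.
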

\begin{proof}
Let $j\colon D\to M$ be the inclusion. The assumption implies that $j^*[\beta]=[\alpha]$ in $H^i(D;\R)$, which is identified with $\mathrm{Hom}(H_i(D;\R),\R)$ by the integration $\omega\leftrightarrow(c\mapsto \int_c\omega)$. This shows that $\alpha$ has a desired extension.
\end{proof}

\section{The signs in the Jacobi identity}\label{s:sign-jacobi}

\begin{Lem}\label{lem:sign-jacobi}
The following identity holds for the triple bracket defined as in \S\ref{ss:explict-null}:
\[ \partial[a,b,c]
=[[a,b],c]+(-1)^{pq+pr}[[b,c],a]+(-1)^{pr+qr}[[c,a],b]. \]
\end{Lem}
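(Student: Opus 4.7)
The plan is to carefully track the signs produced by the orientation conventions used to build $[a,b,g]$, $[[f,g],h]$, etc., from the decomposition $S^{p+q+r-1}=\partial(D^p\times D^q\times D^r)$ into the three pieces $\partial N_1,\partial N_2,\partial N_3$ of Lemma~\ref{lem:null-iso}, and then to compare each piece with a standard embedded iterated Whitehead product in the sense of Definition~\ref{def:emb-wh}.

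First, I would fix the product orientation $o(D^p)\wedge o(D^q)\wedge o(D^r)$ on $D^p\times D^q\times D^r$ and the induced outward-normal-first orientation on $S:=\partial(D^p\times D^q\times D^r)$. The three contractible subcomplexes $K_1',K_2',K_3'$ of \S\ref{ss:Sigma_IHX} are then canonically cooriented, and the boundaries $\partial N_1,\partial N_2,\partial N_3$ inherit orientations as boundaries of oriented $(p+q+r-1)$-disks in $S$. The triple bracket $[a,b,c]$ is the null-isotopy along $W_{\mathrm{IHX}}=S-\mathrm{Int}\,N_{\mathrm{IHX}}$, so by the Stokes-type relation $\partial[a,b,c]=\partial N_1\,\#\,\partial N_2\,\#\,\partial N_3$ as an element of $S_0(\calE;\Q)$, where each $\partial N_i$ represents a specific iterated Whitehead product.

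Next, I would compare each $\partial N_i$ with the \emph{standard} embedded Whitehead product. The piece $\partial N_1$ corresponds to attaching a $p$-handle along $f$ on top of a standard $(p+q)$-skeleton of the $(q,r)$-plumbing, and its orientation, read off from $o(D^p)\wedge o(D^q)\wedge o(D^r)$ via the inward-normal-first convention, matches the orientation of $[[f,g],h]$ from Definition~\ref{def:emb-wh} with no extra sign. For $\partial N_2$ and $\partial N_3$ the analogous identifications require cyclically permuting the factors in the product decomposition so that the ``attaching handle'' $D^q$ (resp.\ $D^r$) sits as the outermost factor in the order used in Definition~\ref{def:emb-wh}; these permutations are $(D^q,D^r,D^p)\to(D^p,D^q,D^r)$ and $(D^r,D^p,D^q)\to(D^p,D^q,D^r)$, producing signs $(-1)^{p(q+r)}=(-1)^{pq+pr}$ and $(-1)^{(p+q)r}=(-1)^{pr+qr}$ respectively. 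This would give the desired identity up to the symmetries of the triple bracket.

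The main obstacle is the bookkeeping for the inward-normal-first convention of Definition~\ref{def:emb-wh} combined with the product decomposition of $S$: the cyclic identifications must be implemented using explicit diffeomorphisms of the smaller handlebodies $N_i$ that realize the cyclic permutation of the three disk factors, and one must check that these diffeomorphisms are orientation-preserving on $S$ precisely when the sign agrees with the Koszul sign of the permutation on $o(D^p)\wedge o(D^q)\wedge o(D^r)$. A clean way to handle this would be to apply the abstract graded-symmetry lemma (which I would state and use as Lemma~C.2 below, if needed) to reduce the problem to checking a single sign, namely that of $[[f,g],h]$ against $\partial N_1$, and then to derive the signs of the other two terms from the Jacobi identity for Whitehead products in Corollary~\ref{cor:jacobi-wh} under $p\geq 2$, $q\geq 2$, $r\geq 2$. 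Since the unsigned identity $\partial[a,b,c]=\partial N_1\,\#\,\partial N_2\,\#\,\partial N_3$ holds by construction and the signed Jacobi identity holds in $\pi_{p+q+r-3}$, the two together pin down the signs $(-1)^{pq+pr}$ and $(-1)^{pr+qr}$ uniquely.
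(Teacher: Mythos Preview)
Your approach is essentially the same as the paper's: both track the orientation induced from $o(D^p)\wedge o(D^q)\wedge o(D^r)$ on the three boundary pieces of $\partial(D^p\times D^q\times D^r)$ and compare with the orientation coming from Definition~\ref{def:emb-wh}. The paper simply carries out the ``bookkeeping obstacle'' you flag rather than circumventing it: it fixes explicit basepoints $(t,0,v)$, $(t,u,0)$, $(0,t,u)$ on the three relevant faces, writes the induced orientation of the null-isotopy disk $[a,b,c]$ at each point in the coordinate basis $\partial x_i,\partial y_j,\partial z_k$, separately writes the orientation of $[[a,b],c]$, $[[b,c],a]$, $[[c,a],b]$ from Definition~\ref{def:emb-wh} at the same points, and reads off the ratio. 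Your Koszul-sign heuristic for the cyclic permutations does produce the correct answers $(-1)^{p(q+r)}$ and $(-1)^{(p+q)r}$, but note that even the base case $\partial N_1\leftrightarrow[[f,g],h]$ involves a nontrivial $(-1)^{p+q-1}$ on each side that must be seen to cancel; the paper checks this explicitly rather than assuming it.

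One point you should drop: your fallback to Corollary~\ref{cor:jacobi-wh} is circular within the paper. The text immediately following that corollary states that its signs are those established in the present lemma, so invoking it here reverses the intended logical flow. The classical Jacobi identity is of course available from \cite{Hil,NT,UM}, and your linear-independence argument (any null combination of the three iterated brackets is a scalar multiple of the Jacobi relation, so verifying one coefficient pins down the others) is valid; but since the paper presents Corollary~\ref{cor:jacobi-wh} as a \emph{consequence} of this lemma, using it as an input would undercut that presentation.
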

\begin{proof}
We consider the unit disks $D^p,D^q,D^r,D^s$ of $\R^p,\R^q,\R^r,\R^s$, respectively.
We take points $t=(1,0,\ldots,0)\in D^p$, $u=(1,0,\ldots,0)\in D^q$, $v=(1,0,\ldots,0)\in D^r$. We consider the cell $D^p\times D^q\times D^r=D^p\times D^q\times D^r\times\{0\}$ in $D^p\times D^q\times D^r\times D^s$. We choose the following orientation of $D^p\times D^q\times D^r$:
\[ o=\partial x_1\wedge\cdots\wedge\partial x_p\wedge \partial y_1\wedge\cdots\wedge\partial y_q\wedge
\partial z_1\wedge\cdots\wedge \partial z_r, \]
where we abbreviate $\frac{\partial}{\partial x_1}$ etc. as $\partial x_1$ etc. 
The orientations on the boundary faces induced from $o$ are as follows:
\begin{enumerate}
\item $\partial D^p\times D^q\times D^r$ at $(t,0,0)$:
$\partial x_2\wedge\cdots\wedge\partial x_p\wedge \partial y_1\wedge\cdots\wedge\partial y_q\wedge
\partial z_1\wedge\cdots\wedge \partial z_r$
\item $D^p\times\partial D^q\times D^r$ at $(0,u,0)$:
$(-1)^p\partial x_1\wedge\cdots\wedge\partial x_p\wedge \partial y_2\wedge\cdots\wedge\partial y_q\wedge
\partial z_1\wedge\cdots\wedge \partial z_r$
\item $D^p\times D^q\times \partial D^r$ at $(0,0,v)$:
$(-1)^{p+q}\partial x_1\wedge\cdots\wedge\partial x_p\wedge \partial y_1\wedge\cdots\wedge\partial y_q\wedge
\partial z_2\wedge\cdots\wedge \partial z_r$
\end{enumerate}

The $(p+q+r-2)$-cycle $[[a,b],c]$ in $\partial(D^p\times D^q\times D^r)$ can be obtained from the $(p+q+r-2)$-manifold $\partial(D^p\times D^q)\times \partial D^r\cong S^{p+q-1}\times S^{r-1}$ by attaching an $r$-handle (having $D^r$ as the core) along $\mathrm{pt}\times S^{r-1}$. The orientation of $\partial(D^p\times D^q)\times \partial D^r$ at $(t,0,v)$ induced from (3) is
\[ (-1)^{p+q}\partial x_2\wedge\cdots\wedge\partial x_p\wedge \partial y_1\wedge\cdots\wedge\partial y_q\wedge
\partial z_2\wedge\cdots\wedge \partial z_r. \]
Since the null-isotopy disk $[a,b,c]\subset \partial(D^p\times D^q\times D^r)$ is complemental to $(D^p_{1-\ve}\times D^q_{1-\ve})\times \partial D^r$, it has the orientation 
\[(-1)^{p+q-1}\partial x_2\wedge\cdots\wedge\partial x_p\wedge \partial y_1\wedge\cdots\wedge\partial y_q\wedge
\partial z_2\wedge\cdots\wedge \partial z_r \]
at $(t,0,v)$. 
On the other hand, $[a,b]$, which is represented by the sphere $\partial(D^p\times D^q)$, has orientation $\partial x_2\wedge\cdots\wedge \partial x_p\wedge \partial y_1\wedge\cdots\wedge \partial y_q$ at $(t,0)\in\partial D^p\times D^q$. Let $U^{p+q-1}\subset \partial(D^p\times D^q)$ be the $(p+q-1)$-cell. Then the cycle $[[a,b],c]$, which is represented by the sphere $\partial(U^{p+q-1}\times D^r)$, has the orientation 
\[ (-1)^{p+q-1}\partial x_2\wedge\cdots\wedge \partial x_p\wedge \partial y_1\wedge\cdots\wedge \partial y_q\wedge \partial z_2\wedge\cdots\wedge \partial z_r \]
at $(t,0,v)$. This gives the coefficient 1 of $[[a,b],c]$.

The $(p+q+r-2)$-cycle $[[b,c],a]$ in $\partial(D^p\times D^q\times D^r)$ can be obtained from the $(p+q+r-2)$-manifold $\partial D^p\times \partial(D^q\times D^r)\cong S^{p-1}\times S^{q+r-1}$ by attaching a $p$-handle (having $D^p$ as the core) along $S^{p-1}\times\mathrm{pt}$. The orientation of $\partial D^p\times \partial(D^q\times D^r)$ at $(t,u,0)$ induced from (1) is
\[ (-1)^{p-1}\partial x_2\wedge\cdots\wedge\partial x_p\wedge \partial y_2\wedge\cdots\wedge\partial y_q\wedge
\partial z_2\wedge\cdots\wedge \partial z_r. \]
Since the null-isotopy disk $[a,b,c]\subset \partial(D^p\times D^q\times D^r)$ is complemental to $\partial D^p\times (D^q_{1-\ve}\times D^r_{1-\ve})$, it has the orientation 
\[ (-1)^{p}\partial x_2\wedge\cdots\wedge\partial x_p\wedge \partial y_2\wedge\cdots\wedge\partial y_q\wedge
\partial z_2\wedge\cdots\wedge \partial z_r \]
at $(t,u,0)$. 
On the other hand, $[b,c]$, which is represented by the sphere $\partial(D^q\times D^r)$, has orientation $\partial y_2\wedge\cdots\wedge \partial y_q\wedge \partial z_1\wedge\cdots\wedge \partial z_r$ at $(v,0)\in\partial D^q\times D^r$. Let $U^{q+r-1}\subset \partial(D^q\times D^r)$ be the $(q+r-1)$-cell. Then the cycle $[[b,c],a]$, which is represented by the sphere $\partial(U^{q+r-1}\times D^p)$, has the orientation 
\[ \begin{split}
&(-1)^{q+r-1}\partial y_2\wedge\cdots\wedge \partial y_q\wedge \partial z_1\wedge\cdots\wedge \partial z_r\wedge \partial x_2\wedge\cdots\wedge \partial x_p\\
&=(-1)^{p(q+r-1)} \partial x_2\wedge\cdots\wedge \partial x_p\wedge\partial y_2\wedge\cdots\wedge \partial y_q\wedge \partial z_1\wedge\cdots\wedge \partial z_r
\end{split} \]
at $(t,u,0)$. This gives the coefficient $(-1)^{p(q+r-1)}(-1)^{p}=(-1)^{pq+pr}$ of $[[b,c],a]$.

The $(p+q+r-2)$-cycle $[[c,a],b]$ in $\partial(D^p\times D^q\times D^r)$ can be obtained from the $(p+q+r-2)$-manifold $\partial(D^r\times D^p)\times \partial D^q\cong S^{r+p-1}\times S^{q-1}$ by attaching a $q$-handle (having $D^q$ as the core) along $\mathrm{pt}\times S^{q-1}$. The orientation of $\partial(D^r\times D^p)\times \partial D^q$ at $(0,t,u)$ induced from (2) is
\[ (-1)^p\partial x_2\wedge\cdots\wedge\partial x_p\wedge \partial y_2\wedge\cdots\wedge\partial y_q\wedge
\partial z_1\wedge\cdots\wedge \partial z_r. \]
Since the null-isotopy disk $[a,b,c]\subset \partial(D^p\times D^q\times D^r)$ is complemental to $(D^r_{1-\ve}\times D^p_{1-\ve})\times \partial D^q$, it has the orientation 
\[ (-1)^{p-1}\partial x_2\wedge\cdots\wedge\partial x_p\wedge \partial y_2\wedge\cdots\wedge\partial y_q\wedge
\partial z_1\wedge\cdots\wedge \partial z_r \]
at $(0,t,u)$. 
On the other hand, $[c,a]$, which is represented by the sphere $\partial(D^r\times D^p)$, has orientation $(-1)^{pr}\partial x_2\wedge\cdots\wedge \partial x_q\wedge \partial z_1\wedge\cdots\wedge \partial z_r$ at $(0,t)\in D^r\times \partial D^p$. Let $U^{p+r-1}\subset \partial(D^r\times D^p)$ be the $(p+r-1)$-cell. Then the cycle $[[c,a],b]$, which is represented by the sphere $\partial(U^{p+r-1}\times D^q)$, has the orientation 
\[ \begin{split}
&(-1)^{pr}(-1)^{p+r-1}\partial x_2\wedge\cdots\wedge \partial x_p\wedge \partial z_1\wedge\cdots\wedge \partial z_r\wedge \partial y_2\wedge\cdots\wedge \partial y_q\\
&=(-1)^{pr+qr+p-1} \partial x_2\wedge\cdots\wedge \partial x_p\wedge \partial z_1\wedge\cdots\wedge \partial z_r\wedge \partial y_2\wedge\cdots\wedge \partial y_q
\end{split} \]
at $(0,t,u)$. This gives the coefficient $(-1)^{pr+qr+p-1}(-1)^{p-1}=(-1)^{pr+qr}$ of $[[c,a],b]$.
\end{proof}

\begin{Lem}\label{lem:sym-relation}
\[ \begin{split}
& [b,a,c]=(-1)^{pq}[a,b,c],\quad [a,c,b]=(-1)^{qr}[a,b,c],\quad 
[c,b,a]=(-1)^{pq+pr+qr}[a,b,c],\\
& [c,a,b]=(-1)^{qr+pr}[a,b,c],\quad [b,c,a]=(-1)^{pq+pr}[a,b,c].
\end{split} \]
In other words, $[-,-,-]$ is symmetric in the graded sense.
\end{Lem}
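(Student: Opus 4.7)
The plan is to establish each of the five identities by exhibiting an explicit diffeomorphism of $D^p\times D^q\times D^r$ that permutes the three factors, and then tracking the Koszul sign it induces on the null-isotopy disk $W_{\mathrm{IHX}}$ constructed in \S\ref{ss:Sigma_IHX}.

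First I would observe that the whole construction of \S\ref{ss:Sigma_IHX} is manifestly $S_3$-equivariant with respect to the action permuting the three disk factors. More precisely, the three subsets $K_1',K_2',K_3'\subset \partial(D^p\times D^q\times D^r)$ are defined cyclically in terms of the factors, and the symmetrization $N_{\mathrm{IHX}}=N_1\natural N_2\natural N_3$ together with its complementary disk $W_{\mathrm{IHX}}=S-\mathrm{Int}\,N_{\mathrm{IHX}}$ depends only on the \emph{unordered} data of the three factors. Hence, for any $\sigma\in S_3$, the obvious factor-permuting diffeomorphism $\widetilde\sigma\colon \partial(D^p\times D^q\times D^r)\to \partial(D^{\sigma(p)}\times D^{\sigma(q)}\times D^{\sigma(r)})$ sends $W_{\mathrm{IHX}}^{(a,b,c)}$ onto $W_{\mathrm{IHX}}^{\sigma(a,b,c)}$, and since the connect-sum with the fixed standard disk $L_{\mathrm{st}}^{p+q+r-2}$ is performed away from the $K_i'$'s and is invariant under $\widetilde\sigma$, the path $[a,b,c]$ is carried to $[\sigma(a,b,c)]$ up to orientation.

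Second, I would compute the orientation change induced by $\widetilde\sigma$. The preferred orientation on $D^p\times D^q\times D^r$ is $\partial x_1\wedge\cdots\wedge\partial x_p\wedge\partial y_1\wedge\cdots\wedge\partial y_q\wedge\partial z_1\wedge\cdots\wedge\partial z_r$, and a factor-permuting linear diffeomorphism acts on it by the standard Koszul sign for permuting graded vectors of degrees $p,q,r$: a transposition of the $D^p,D^q$ factors contributes $(-1)^{pq}$, of $D^q,D^r$ contributes $(-1)^{qr}$, of $D^p,D^r$ contributes $(-1)^{pq+qr+pr}$ (since $D^p$ must commute past both $D^q$ and $D^r$, and then $D^r$ past $D^q$), while the three-cycles $(a\,c\,b)$ and $(a\,b\,c)$ contribute $(-1)^{qr+pr}$ and $(-1)^{pq+pr}$ respectively. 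Since $W_{\mathrm{IHX}}$ is a codimension-one piece of $\partial(D^p\times D^q\times D^r)$ and the boundary orientation convention commutes with linear factor permutations, this same sign governs the transformation of the oriented null-isotopy path $[a,b,c]$, yielding all five claimed identities at once.

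The main obstacle will be checking that the orientation convention of Definition~\ref{def:emb-wh} for the embedded Whitehead products---the ``inward-normal-first'' convention from $o(H)=o(S^m)\wedge o(S^k)$---transforms under $\widetilde\sigma$ precisely by the Koszul signs tabulated above, and not by some additional correction coming from the boundary identifications $\partial N_i$. Concretely one must verify, for a single generating transposition (say swapping $D^p$ and $D^q$), that the two spanning submanifolds $N_1,N_2$ get interchanged with the expected sign $(-1)^{pq}$ on their boundary spheres; this is the same bookkeeping already carried out in cases (1)--(3) of the proof of Lemma~\ref{lem:sign-jacobi}, adapted to a swap of factors rather than a cyclic relabelling. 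Once one transposition is done, the remaining four identities follow automatically from the Koszul 2-cocycle condition since the $S_3$-action factors through transpositions, and composition of signs agrees with composition of the permutations.
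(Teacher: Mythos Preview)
Your approach is correct and takes a genuinely different route from the paper. You argue directly that the factor-permuting diffeomorphism $\widetilde\sigma$ carries the oriented spanning disk $W_{\mathrm{IHX}}\subset\partial(D^p\times D^q\times D^r)$ onto the corresponding disk for the permuted construction, with the orientation changing by the Koszul sign of the permutation acting on the product orientation $o$; since the boundary-orientation convention commutes with linear factor permutations, this sign transfers to the null-isotopy path $[a,b,c]$ without correction.

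The paper instead takes an indirect route: it notes (as you do) that the construction is $S_3$-symmetric up to sign, so $[\sigma(a,b,c)]=\epsilon_\sigma[a,b,c]$ for some $\epsilon_\sigma\in\{\pm1\}$, and then \emph{reads off} $\epsilon_\sigma$ by applying $\partial$ to both sides and invoking Lemma~\ref{lem:sign-jacobi}. Since $\partial[a,b,c]$ is an explicit nonzero sum of iterated Whitehead products with known graded-commutation behavior, comparing the two expansions forces the value of $\epsilon_\sigma$. This avoids any direct orientation analysis on $W_{\mathrm{IHX}}$ at the cost of depending on the sign computations already carried out in Lemma~\ref{lem:sign-jacobi}; your approach is self-contained but requires the orientation bookkeeping you outline (which is indeed routine). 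Both arguments are valid.
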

\begin{proof}
It follows from the construction of the triple bracket that the null-isotopy disk to define $[a,b,c]$ is symmetric up to sign with respect to the permutation of the factors.
The relations of the lemma follow by taking the boundaries on both sides and then applying Lemma~\ref{lem:sign-jacobi}.
\end{proof}

\end{appendix}


\end{document}